\providecommand{\U}[1]{\protect\rule{.1in}{.1in}}
\theoremstyle{plain}
\newtheorem{acknowledgement}{Acknowledgement}
\newtheorem{corollary}{Corollary}
\newtheorem{definition}{Definition}
\newtheorem{example}{Example}
\newtheorem{lemma}{Lemma}
\newtheorem{proposition}{Proposition}
\newtheorem{remark}{Remark}
\newtheorem{theorem}{Theorem}
\numberwithin{equation}{section}
\begin{document}
\frontmatter
\title[Laws of Large Numbers]{Extracting information from random data. Applications of laws of large numbers
in technical sciences and statistics.}
\author[Pawe\l \ J. Szab\l owski]{Pawe\l \ J. Szab\l owski}
\address{Department of Mathematics and Information Sciences\\
Warsaw University of Technology\\
ul. Koszykowa, 75, 00-662 Warszawa, Poland}
\email{pawel.szablowski@gmail.com;paweljs@mini.pw.edu.pl}
\subjclass[2000]{Primary 40G99, 42C05, 42G10, 60F99, 62L15, 62L20,62G07,62G08}

\begin{abstract}
We formulate conditions for convergence of Laws of Large Numbers and show its
links with of the parts of mathematical analysis such as summation theory,
convergence of orthogonal series. We present also applications of the Law of
Large Numbers such as Stochastic Approximation, Density and Regression
Estimation, Identification.

\end{abstract}
\maketitle
\tableofcontents

\pagebreak

\begin{acknowledgement}
The author would like to thank two of his students and co-workers dr. dr.
Marcin Dudzi\'{n}ski and Wojciech Matysiak for correction of the manuscript.
\end{acknowledgement}

%

\printindex

\chapter*{List of symbols and denotations}

$x$, $y$, $\ldots$, $\alpha$, $\beta$, $\ldots-$ real numbers,

$\mathbf{x}$, $\mathbf{y}$, $\ldots$ -vectors, always column

$\mathbf{A}$, $\mathbf{B}$, $\ldots$ -matrices,

$\mathbf{x}^{T}$, $\mathbf{A}^{T},\ldots$ -transposition of vector, matrix

$A$, $B$, $\ldots$ -events, subsets of some space of elementary events
$\Omega$,

$A^{c}$, $\bar{A}$ -composition of event $A,$

$A\cap B$, $A\cup B$, $A\vartriangle B$ respectively product, union and the
symmetric difference of two sets $A$ and $B,$

$\omega$ element of the space of elementary events - elementary event,
$\omega\in\Omega$

$P(A)$ -probability of an event $A,$

$I(A)(\omega)=\left\{
\begin{array}
[c]{ccc}%
1, & if & \omega\in A\\
0, & if & \omega\notin A
\end{array}
\right.  $ -indicator function of the event $A$, often denoted simply by
$I(A),$

$X$, $Y$, $Z$, $\ldots$ -random variables,

$\mathbf{X,Y,\ldots}$ -random vectors,

$EX$, $EY$, $\ldots$ -expectations of the random variables,

$\sigma(X),\sigma(\mathbf{Y),}\sigma(X,Y,Z)$ -$\sigma-$ fields generated
respectively by the random variable $X$, random vector $\mathbf{Y}$, random
variables $X,Y,Z,$

$\mathcal{A}$, $\mathcal{B},\mathcal{C}$, $\ldots$ -denotations of $\sigma- $ fields,

$E(X|\mathcal{F})$ -conditional expectation with respect to the $\sigma-$
field $\mathcal{F},$

$P(A|\mathcal{G)}$ -conditional probability with respect to the $\sigma-$
field $\mathcal{G},$

$\left\{  x_{i}\right\}  _{i\geq1}$, $\left\{  Y_{i}\right\}  _{i\geq0}$,
$\left\{  A_{i}\right\}  _{i\geq2}$ -sequences respectively, of real numbers,
random variables, events,

$%
\mathbb{N}
,%
\mathbb{R}
,%
\mathbb{Z}
,%
\mathbb{C}
$ --sets respectively, of natural numbers, real numbers, integers and complex numbers,

$\#A$ -cardinality of the set $A,$

$\left\vert A\right\vert $ - Lebesgue measure of the set $A.$

$\left\{  X<x\right\}  $, $\left\{  \sum X_{i}\;\text{converges }\right\}  $
-shortened denotation of the events $\left\{  \omega:X(\omega)<x\right\}
$,\newline$\left\{  \omega:\sum X_{i}(\omega)\;\text{converges}\right\}  ,$

$x^{+}=\left\{
\begin{array}
[c]{ccc}%
x & if & x\geq0\\
0 & if & x<0
\end{array}
\right.  $ -other words positive part of $x.$

a.e., a.s. - respectively almost everywhere, almost surely. Refers to events
that have zero measures or probabilities.

\chapter*{Preface}

By iterative we mean those random phenomena that can be presented in the
following form:
\[%
\begin{tabular}
[c]{|ccccc|}\hline
\emph{Next observation} & = & \emph{function of (the present observation)} &
+ & \emph{correction,}\\\hline
\end{tabular}
\
\]
It means that the information based on the knowledge collected so far,
complemented by the actually observed "correction" is the base of the
knowledge about the future behavior of the examined random phenomenon.

A typical example of such \textquotedblright iterative\textquotedblright%
\ approach is the so-called law of large numbers\emph{,} or behavior of the
averages of observed measurements. More precisely, if the measured quantities
are denoted by $x_{i}$, $i=\allowbreak1,\allowbreak2,\allowbreak\ldots$, then
their averages are $y_{n}=\frac{x_{1}+\cdots+x_{n}}{n}$. In an obvious way the
sequence $\left\{  y_{n}\right\}  _{n\geq1}$ can be presented in one of the
following forms:
\begin{align*}
y_{n+1}  &  =y_{n}+\frac{1}{n+1}(x_{n+1}-y_{n}),\\
y_{n+1}  &  =(1-\frac{1}{n+1})y_{n}+\frac{1}{n+1}x_{n+1}.
\end{align*}

In probability theory, such iterative forms appear quite often, although we do
not always use them, or even are not aware that a given quantity can be
presented in such, iterative way.

On the other hand, it is well known that some Markov processes can be
presented in an 'iterative' way. In particular, processes connected with
filtration problems (corrections are here called innovation processes), or
some processes appearing in the analysis of queuing systems can be naturally
presented in an iterative way. Such Markov processes not necessarily converge
(or more generally stabilize their behavior) as the number of iterations tends
to infinity. The behavior of such processes for large values of indices can be
very complex and sometimes exceeds the scope of this monograph. We will not
consider such general situations.

Instead, we will concentrate on iterative procedures converging to nonrandom
constants. The number of random phenomena, that can be described by such
procedures is so large that not all of them will be analyzed here. We will
concentrate here on:

\begin{itemize}
\item \emph{laws of large numbers (LLN) and their connections to mathematical
analysis and In particular, to the theory of summability and the theory of
orthogonal series,}

\item \emph{some procedures of stochastic approximation,}

\item \emph{some procedures of density estimation,}

\item \emph{some procedures of identification.}
\end{itemize}

The title refers to the laws of large numbers and their applications in
technology and statistics. Typical applications of the laws of large numbers
in technology or physics are applications in the theory of measurements.
Suppose that we are given a series of measurements of some quantity. Then, if
some relatively mild conditions, under which the measurements were performed,
are satisfied, the arithmetic means of the measured values can be considered
as good approximations of the measured quantity. It is worth noticing that
this approximation is getting better if the number of measurements is greater.

The typical applications of LLN in statistics are estimators. Usually, we
observe, that the larger the sample the estimator is based upon, the closer
the value of an estimator is to the theoretical value of the parameter. This
basic property is called consistency (of an estimator). In fact, the fact that
LLN can be applied is the base on which one states that the given estimator is
strongly consistent or not.

Other rather typical applications of LLN are the so-called Monte Carlo methods
and based on them, simulations. Monte Carlo methods are used in numerical
methods to estimate values of some difficult to find constants (given by, say,
hard to compute integrals) and also in physics to estimate, hard to get
directly, constants used in the description of physical phenomena, mainly, but
not necessarily, in statistical physics.

Variants of LLN are used in estimation theory, measurement theory, Monte Carlo
methods or statistical physics described above are simple. In most cases, one
assumes that random variables used there are independent identically
distributed (i.i.d.). Convergence problem then is very simple. Necessary and
sufficient conditions guaranteeing, that a random variable in question satisfy
LLN, are known and simple. They will be discussed in sections \ref{sspwl_iid}
and \ref{smpwl_iid} of chapter \ref{simpwl}. Difficulties associated with say
Monte Carlo methods lie elsewhere. Namely, they lie in defining estimator with
good properties or finding such physical experiment that could be easily
simulated and in which the estimated constant would appear. Discussion of
these problems would lead us too far from probability theory and would require
a separate book. The reader interested in Monte Carlo methods or stochastic
simulations, we refer to the monographs of R. Zieli\'{n}ski \cite{Zielinski75}
and D. W. Heermann \cite{Heermann97}.

To be consistent with the title we decided to present three applications of
LLN important in technology (identification, density estimation) or stochastic
optimization (stochastic approximation). These applications can be also
considered as parts of mathematical statistics, less known and less obviously
associated with LLN. Moreover, we were able to indicate formal similarities in
the description and formulation of these problems and convergence problems
appearing in LLN. It turns out that the methods developed in chapter
\ref{zbiez}, can be applied in chapter \ref{simpwl} dedicated to the laws of
large numbers as well as in chapters \ref{aproksymacja}, \ref{metody_jadrowe},
\ref{identyf} dedicated respectively to stochastic approximation, kernel
methods of density estimation or identification methods. On the other hand,
each of the mentioned applications contains dozens of cases. Each of these
applications is extensively described in the literature. Thus, it is
impossible to present it exhaustively. On should write the separate volumes to
make such presentation. Besides, it is not the aim of this book.

As it was already stated above the aim of the author was to present problems
connected with LLN and indicate their connection with classical parts of
mathematical analysis such as summation theory, convergence theory of
orthogonal series. As it was mentioned before the aim of the author was
relatively extensive presenting of problems associated with LLN and indicate
strong bonds with classical sections of mathematical analysis and at the same
time indicate that laws of large numbers are the base for intensively
developing sections of statistics such as stochastic optimization
nonparametric estimation or adaptive identification. We are convinced that
many statisticians working in stochastic optimization or nonparametric
estimation are not aware of how closely they are in their research to
classical problems of analysis. Similarly, mathematicians working in the
theory of summability or orthogonal series are not aware that their results
can have practical applications. The author wanted to visualize those facts to
both groups of researchers. To do this, one must not be mired in the details.

Basically, the book was written for students of mathematics or physics or for
the engineers applying mathematics. The author assumes that the reader knows
the basic course of probability and elements of mathematical statistics.
Nevertheless, some important notions and facts that are important to the logic
of the argument were recalled. The book is written as a mathematical text that
is facts are presented in the form of theorems. Proofs of the majority of
theorems are presented in the main bulk of the book. Some of the proofs that
are less important or are longer are shifted to the appendix. The facts that
came from deeper or more complicated theories are recalled without proofs.

The aims of the book are different and depend on the reader. Students are
exposed here to interesting applications of mathematics that make them aware
that many issues coming from different sections of mathematics can be treated
by the same methods. The book makes mathematicians or statisticians realize
that the methods developed specially for one section of mathematics can be
useful in the other. Finally, those readers that do not work in stochastic
optimization or nonparametric estimation are acquainted with those sections of statistics.

It should be underlined that neither of the topics raised in the book is
exhausted. What seems to be the book's advantage is that it presents a unified
approach to different, at first sight, applications. We use, in fact the same
basic theorems to prove convergence of some orthogonal series, procedures of
stochastic approximation, iterative procedures of density estimation or
iterative procedures of identification.

Another advantage of the book seems to be great number and variety of examples
illustrated by drawings made basically by MathCad and Mathematica. Looking at
these examples one can get an idea of how effective are the discussed methods
or how quick is the convergence in the described random phenomenon.

\mainmatter

\chapter{Overview of the most important random phenomena.}

Instead of an introduction, we will present the most important random
phenomena called sometimes \emph{pearls of probability} \emph{\ }(see, e.g.
Hoffman-Jorgensen \cite{hoffman}). By pearls of probability, we mean
\emph{laws of large numbers} (LLN), \emph{central limit theorem }%
(CLT)\emph{\ }and\emph{\ }the\emph{ law of iterated logarithm }(LIL)\emph{.
}In the sequel, we will show that these phenomena can be presented in an
iterative form so that problems appearing in their analysis lie naturally
within the scope of this monograph. Not all of these problems could be solved
by the simple methods developed in this book. Sometimes one should refer to
more advanced means. Mathematical problems appearing in the analysis of these
pearls of probability are connected mainly with convergence. The types of
convergence considered in probability are recalled in appendix \ref{rzbiez}.

In the three subsequent sections, we will present the three random phenomena
mentioned above, point out analogies and differences between them and present
some of the related open problems. As it will turn out that the forms of these
phenomena are very similar. The differences concern properties of some of the
parameters and the types of convergence that these phenomena obey. So first we
will present these random phenomena and later we will return to general questions.

\section{Laws of Large numbers}

Let $\{X_{n}\}_{n\geq1}$ be a sequence of the random variables having
expectations. Let us denote $m_{n}=EX_{n};n\geq1.$

\begin{definition}
\label{LLN}%
\index{Law!of large numbers}%
We say that the sequence $\{X_{n}\}_{n\geq1}$ satisfies \emph{weak (strong)
}law of large numbers\emph{\ }(briefly WLLN (SLLN)), \emph{if}
\[
Y_{N}=\frac{\sum_{n=1}^{N}(X_{n}-m_{n})}{N}\rightarrow0;\;\text{when}%
\;N\rightarrow\infty,
\]
where convergence is either in probability (for the WLLN) or with probability
$1$ (for the SLLN).
\end{definition}

\begin{remark}
On considers also the so-called generalized LLN, that is, sequences of the
random variables that are summable by the so-called Riesz method. More
precisely, we say that the sequence $\left\{  X_{n}\right\}  _{n\geq1}$
satisfies \emph{weak (strong) }generalized law of large numbers (briefly
WGLLN, SGLLN) with weights $\left\{  \alpha_{i}\right\}  _{i\geq0}$, if
\[
Y_{N}=\frac{\sum_{i=0}^{N-1}\alpha_{i}\left(  X_{i+1}-m_{i+1}\right)  }%
{\sum_{i=0}^{N-1}\alpha_{i}}\longrightarrow0,\text{when }N\longrightarrow
\infty,
\]
where, as before, convergence is in probability for the WGLLN and with
probability $1$ for the SGLLN. We will return to this definition in section
\ref{Riesz}
\end{remark}

\begin{remark}
Following intentions of this book we will present the random sequence
$\left\{  Y_{N}\right\}  _{N\geq1}$ in a recursive (iterative) form. Namely,
we have : \newline$Y_{N+1}=\allowbreak\frac{N}{N+1}Y_{N}+\allowbreak\frac
{1}{N+1}(X_{N+1}-m_{N+1})$, or equivalently in slightly different more general
forms:
\begin{align}
Y_{N+1}  &  =\allowbreak\left(  1-\mu_{N}\right)  Y_{N}+\allowbreak\mu
_{N}(X_{N+1}-m_{N+1}),\label{forma1}\\
Y_{N+1}  &  =\allowbreak Y_{N}+\allowbreak\mu_{N}\left(  (X_{N+1}%
-m_{N+1})-Y_{N}\right)  , \label{forma2}%
\end{align}
where $\mu_{N}=\frac{1}{N+1}$. In the sequel, we will be interested in the
convergence of the sequence $\left\{  Y_{N}\right\}  $ to zero, and also a
convergence of the series $\sum_{N\geq1}\mu_{N}(X_{N+1}-m_{N+1})$. The form
(\ref{forma1}) will be more useful in examining convergence, while
(\ref{forma2}) will be more useful in analyzing stochastic approximation
procedures since due to it one can easily notice the connections between the
stochastic approximation and laws of large numbers.
\end{remark}

\begin{example}
As the first example of the application of the law of large numbers, let us
consider the problem of measuring the unknown quantity $m$. As the result of
independent, and performed in the same conditions, measurements, we obtain
observations: $x_{1},\allowbreak x_{2},\allowbreak\ldots,\allowbreak x_{n}$.
We assume the following model of taking measurement:
\[
x_{i}=m+\varepsilon_{i},\;E\varepsilon_{i}=0,\;i=1,2,\ldots,n.
\]
If one can assume that the sequence of measurements $x_{i}$, $i=1,2,\ldots,n$
satisfies SLLN, then the sequence of quantities $\left\{  \frac{x_{1}%
+\cdots+x_{n}}{n}\right\}  _{n\geq1}$ converges almost surely to $Ex_{i}=m$.
Hence, the postulate to approximate the measured quantity by the mean of the
measurements makes sense.
\end{example}

\begin{example}
Another more spectacular example of the application of LLN concerns estimating
the number of fish in the pond. Suppose that we would like to get information
on the number of fish without emptying the pond which would inevitably kill
the fish. To this end, we release $N$ marked fish (those can be fish of the
other species) to the pond. Next, we perform $n$ catches with return. Each
time we note if the caught fish was marked or not. Let $M$ be the unknown
number of fish in the pond. Let us denote:
\[
X_{i}=\left\{
\begin{array}
[c]{ll}%
1 & \text{if in }i\text{-th catch there was a marked fish}\\
0 & \text{otherwise.}%
\end{array}
.\right.
\]
Notice that $EX_{i}=P(X_{i}=1)=\frac{N}{N+M}$. If one can assume that the
sequence $\left\{  X_{i}\right\}  _{i\geq1}$ satisfies LLN, then for
sufficiently large $n$ we have approximate equality:
\begin{align*}
\frac{\sum_{i=1}^{n}X_{i}}{n}  &  =\allowbreak\frac{\text{number of caught
marked fish}}{n}\\
&  =\text{fraction of marked fish}\allowbreak\approx\frac{N}{N+M}.
\end{align*}
Now it is elementary to solve this equality for $M$.
\end{example}

\begin{example}
[identification]\label{ident}In the last example, let us consider the
following time series (i.e. solution of the following recursive equation):
\[
X_{i+1}=\alpha X_{i}+\zeta_{i+1}\allowbreak,\;x_{0}=xo\allowbreak,\;i\geq0.
\]
We assume that random variables $\left\{  \zeta_{i}\right\}  _{i\geq1}$ form
an i.i.d. (independent identically distributed) sequence with zero
expectations. Let us suppose that we are given observations $\left\{
X_{i}\right\}  _{i\geq1}$ and using only them, we would like to estimate the
value of parameter $\alpha$. Can one find a sequence of functions of these
observations that would converge to $\alpha$ ? It turns out that if one
assumes that the sequences $\left\{  X_{i}\zeta_{i+1}\right\}  _{i\geq1}$ and
$\left\{  X_{i}^{2}\right\}  _{i\geq1}$ satisfy strong law of large numbers
and moreover , that $EX_{i}^{2}\neq0$, then such a sequence is defined by the
following formula:
\[
a_{n}=\frac{\sum_{i=1}^{n}X_{i}X_{i+1}}{\sum_{i=1}^{n}X_{i}^{2}}.
\]
To be convinced let us notice that $X_{i+1}X_{i}=\alpha X_{i}^{2}+\zeta
_{i+1}X_{i}$. Moreover, as it can be easily noticed, for every $i\geq1$,
$X_{i}$ is a function of $\zeta_{j}$ for $j\leq i$ (other words is
$\sigma(\zeta_{1},\ldots,\zeta_{i})$ measurable), hence $EX_{i}\zeta_{i+1}=0$.
As a result we have
\[
a_{n}=\frac{\alpha\frac{1}{n}\sum_{i=1}^{n}X_{i}^{2}+\frac{1}{n}\sum_{i=1}%
^{n}X_{i}\zeta_{i+1}}{\frac{1}{n}\sum_{i=1}^{n}X_{i}^{2}}%
\underset{n\rightarrow\infty}{\longrightarrow}\alpha,
\]
with probability $1$.
\end{example}

The above-mentioned examples underline how important is to be sure that a
given sequence satisfies or not a version of the LLN.

Under what assumptions a given sequence of the random variables $\left\{
X_{i}\right\}  _{i\geq1}$ satisfies a version of the law of large numbers will
be presented in detail in chapter \ref{simpwl}.

In this part we will present only one simulation illustrating the law of large
numbers. It will illustrate the example \ref{ident}. First, there were
generated $N=300000$ observations of $\left\{  X_{i}\right\}  _{i\geq1}$ with
$\alpha=.99$ and the sequence of $\left\{  \zeta_{i}\right\}  _{i\geq1}$
consisting an i.i.d. sequence with normal distribution $N(0,3)$. Then one
created sequence $\left\{  a_{n}\right\}  _{n\geq1}$ as in the example
\ref{ident}. The behavior of this sequence is presented below where however,
only the sequence $\left\{  a_{100\ast j}\right\}  _{j=1}^{3000}$ is
presented.%
\begin{figure}[ptb]%
\centering
\includegraphics[
height=1.9865in,
width=3.2136in
]%
{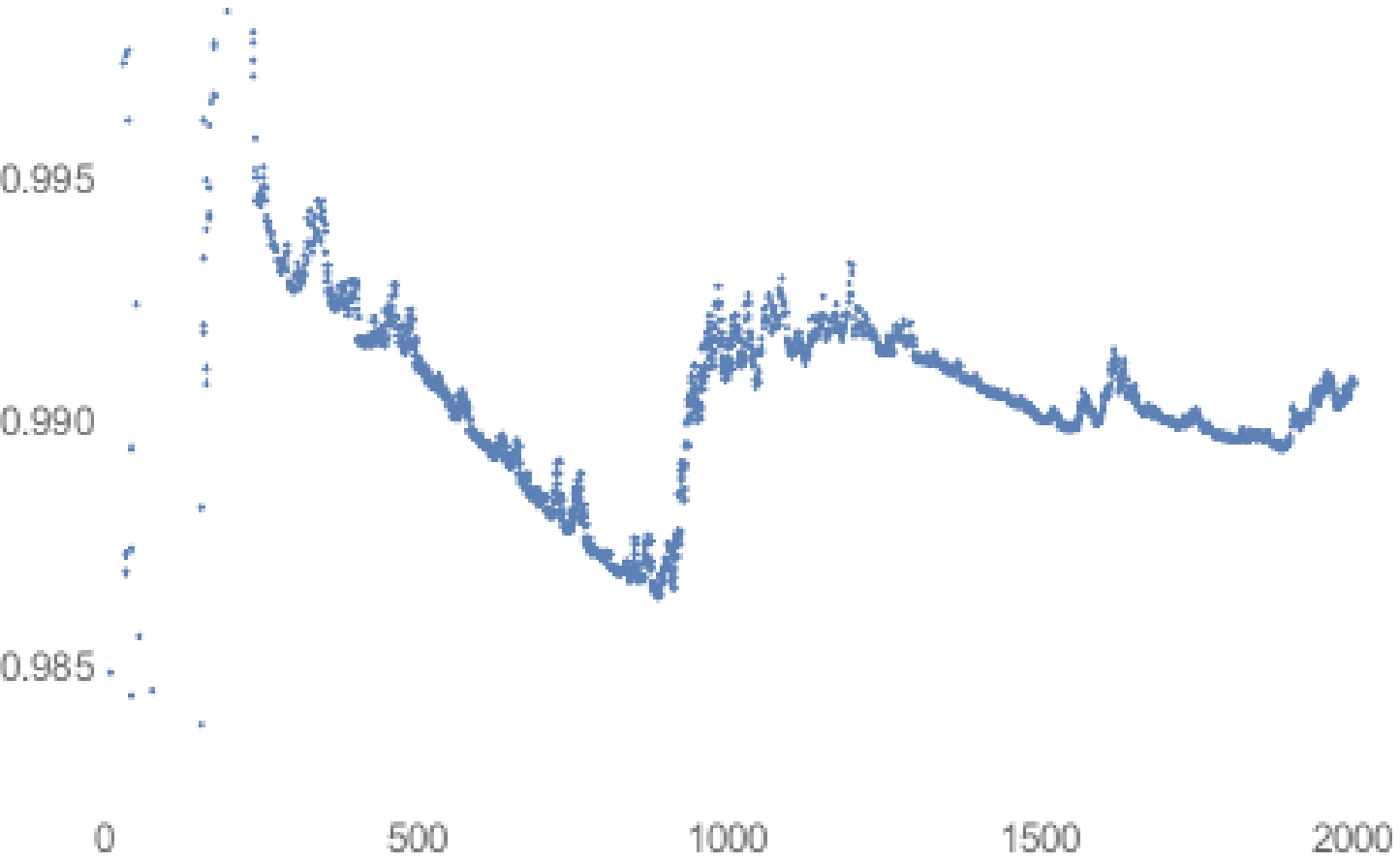}%
\end{figure}

\section{Central limit theorem}

\label{sek_ctg}Let $\{X_{n}\}_{n\geq1}$ be a sequence of the random variables
with the finite second moments. Let us denote $m_{n}=EX_{n}$, $v_{n}%
=\operatorname*{var}(X_{n}).$

\begin{definition}%
\index{Central limit theorem}%
We say that the sequence $\{X_{n}\}_{n\geq1}$ satisfies central limit theorem
(CLT), if the following auxiliary sequence :
\[
Y_{N}=\frac{\sum_{n=1}^{N}(X_{n}-m_{n})}{\sqrt{\operatorname*{var}(\sum
_{n=1}^{N}X_{n})}},
\]
converges in distribution to a random variable $N(0,1)$ (normal with zero mean
and variance equal $1$).
\end{definition}

\begin{remark}
$\forall N\geq1:EY_{N}=0,\;\operatorname*{var}(Y_{N})=1.$
\end{remark}

\begin{remark}
Again as before we can present elements of the sequence $\left\{
Y_{N}\right\}  $ in an iterative way. Namely:
\[
Y_{N+1}=(1-\mu_{N})Y_{N}+\mu_{N}^{^{\prime}}(X_{N+1}-m_{N+1}),
\]
where we denoted : $\mu_{N}=1-\sqrt{\frac{\operatorname*{var}(\sum_{n=1}%
^{N}X_{n})}{\operatorname*{var}(\sum_{n=1}^{N+1}X_{n})}}$, $\mu_{N}^{^{\prime
}}=\sqrt{\frac{1}{\operatorname*{var}(\sum_{n=1}^{N+1}X_{n})}}$. Notice that
if random variables $\left\{  X_{n}\right\}  $ are independent, then :
$Y_{N}=\frac{\sum_{n=1}^{N}(X_{n}-m_{n})}{\sqrt{\sum_{n=1}^{N}v_{n}}}$. It is
easy then to notice that under some additional technical assumptions
concerning variances $v_{i}$ of the random variables $X_{i}$ we get: $\mu
_{n}\approx\frac{v_{n+1}}{2\sum_{i=1}^{n+1}v_{i}}$, and $\mu_{n}^{^{\prime}%
}=\frac{1}{\sqrt{\sum_{i=1}^{n+1}v_{i}}}.$
\end{remark}

\begin{remark}
\label{szykosc_CTG}Notice also that if the random variables $\{X_{n}%
\}_{n\geq1}$ posses variances and are not correlated, then
$\operatorname*{var}(\sum_{n=1}^{N}X_{n})=N\operatorname*{var}(X_{1})$.
Moreover, we have:
\[
Y_{N}=\sqrt{\frac{N}{\nu_{1}}}\left[  \frac{1}{N}\sum_{n=1}^{N}(X_{n}%
-m_{n})\right]  .
\]
Assuming that the sequence $\{X_{n}\}_{n\geq1}$, satisfies CLT or equivalently
that the sequence $\left\{  Y_{N}\right\}  _{N\geq1}$ converges weakly to
normal ($N(0,1))$ random variable and remembering that, $\frac{1}{N}\sum
_{n=1}^{N}(X_{n}-m_{n})\rightarrow0$, as $N\rightarrow\infty$ (i.e. LLN is
satisfied) we see that, the fact that CLT\ is satisfied to tell us something
about the speed of convergence in LLN.
\end{remark}

\subsubsection{Criteria for the sequence of independent random variables to
satisfy CLT}

\begin{proposition}
\label{ctg_iid}Let $\{X_{n}\}_{n\geq1}$ be a sequence of i.i.d. random
variables having greater than zero variance. Then the sequence $\{X_{n}%
\}_{n\geq1}$ satisfies CLT.
\end{proposition}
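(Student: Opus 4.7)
The plan is to prove convergence of $Y_N$ to $N(0,1)$ via characteristic functions, culminating in an application of L\'evy's continuity theorem. Write $m=EX_1$ and $\sigma^2=\operatorname{var}(X_1)>0$, and set $Z_n=(X_n-m)/\sigma$, so that the $Z_n$ are i.i.d.\ with $EZ_n=0$ and $\operatorname{var}(Z_n)=1$. Under the independence assumption $\operatorname{var}(\sum_{n=1}^N X_n)=N\sigma^2$, so that
\[
Y_N=\frac{1}{\sqrt{N}}\sum_{n=1}^N Z_n,
\]
and it suffices to show $Y_N\xrightarrow{d} N(0,1)$.

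First I would introduce the characteristic function $\varphi(t)=E e^{itZ_1}$. Since $E|Z_1|^2<\infty$, a standard lemma on characteristic functions (differentiation under the integral twice, with justification by dominated convergence using $|Z_1|^2$ as dominant) gives that $\varphi$ is twice continuously differentiable at $0$ with $\varphi(0)=1$, $\varphi'(0)=0$ and $\varphi''(0)=-1$. Hence the Taylor expansion
\[
\varphi(t)=1-\tfrac{1}{2}t^{2}+o(t^{2})\quad\text{as }t\to 0
\]
holds. By independence, the characteristic function of $Y_N$ is $\varphi_N(t)=\varphi(t/\sqrt{N})^{N}$.

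Next I would evaluate the limit of $\varphi_N(t)$ for each fixed $t\in\mathbb{R}$. Substituting $t/\sqrt{N}$ into the expansion gives
\[
\varphi\!\left(\tfrac{t}{\sqrt{N}}\right)=1-\tfrac{t^{2}}{2N}+o\!\left(\tfrac{1}{N}\right).
\]
Using the standard fact that if $c_N\to c$ then $(1+c_N/N)^N\to e^{c}$ (or, equivalently, passing to the principal branch of $\log$ for $N$ large enough that $\varphi(t/\sqrt{N})$ stays in a neighborhood of $1$, and expanding $\log(1+w)=w+O(|w|^{2})$), one obtains
\[
\varphi_N(t)=\left(1-\tfrac{t^{2}}{2N}+o\!\left(\tfrac{1}{N}\right)\right)^{N}\longrightarrow e^{-t^{2}/2}
\]
as $N\to\infty$. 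The right-hand side is the characteristic function of the $N(0,1)$ distribution, which is continuous at $0$. By L\'evy's continuity theorem, this pointwise convergence of characteristic functions implies $Y_N\xrightarrow{d} N(0,1)$, as desired.

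The main obstacle is the justification of the expansion $\varphi(t)=1-t^{2}/2+o(t^{2})$: this is the place where the finite-variance hypothesis is actually used. One must argue carefully that the remainder is genuinely $o(t^{2})$, not merely $O(t^{2})$; the cleanest way is to write $e^{itZ_1}=1+itZ_1-\tfrac{1}{2}t^{2}Z_1^{2}+R(tZ_1)$ with $|R(u)|\le |u|^{2}\wedge |u|^{3}$, take expectations, and invoke dominated convergence with dominant $Z_1^{2}$ to conclude $ER(tZ_1)/t^{2}\to 0$. Once this is in place, the propagation through $\varphi_N(t)=\varphi(t/\sqrt{N})^N$ is routine, and L\'evy's theorem finishes the proof.
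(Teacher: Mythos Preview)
Your proof is correct and follows essentially the same approach as the paper: standardize, expand the characteristic function as $1-t^{2}/2+o(t^{2})$ using finite second moment, raise to the $N$th power, and pass to the limit via the logarithm to obtain $e^{-t^{2}/2}$. You are somewhat more explicit than the paper about justifying the $o(t^{2})$ remainder and about invoking L\'evy's continuity theorem, but the argument is the same.
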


\begin{proof}
Let us denote $EX_{1}=m$ and $\operatorname{var}(X_{1})=\sigma^{2}$. Let
$\varphi(t)$ be a characteristic function of the random variable $\frac
{X_{1}-EX_{1}}{\sigma}$. Since the variance of this random variable is equal
to $1$ and its expectation is equal to $0$ we have $\varphi(t)=1-\frac{t^{2}%
}{2}+o(t^{2})$. Let $\psi_{n}(t)$ be the characteristic function of the random
variable: $Y_{n}=\frac{\sum_{i=1}^{n}(X_{i}-m)}{\sigma\sqrt{n}}$. Obviously it
is related to function $\varphi$ in the following way: $\psi_{n}%
(t)=\allowbreak\varphi^{n}(\frac{t}{\sqrt{n}})$. Let us consider logarithm of
this function and recall that $\log\left(  1-x\right)  =-x+o(x)$, we get:
\[
\log\psi_{n}(t)\allowbreak=n\log(1-\frac{t^{2}}{2n}+o(\frac{t^{2}}%
{n}))\allowbreak=-\frac{t^{2}}{2}+no_{1}(\frac{t^{2}}{n}).
\]
Hence it can be easily seen that for fixed $t$ we have:
$\underset{n\rightarrow\infty}{\lim}\psi_{n}(t)\allowbreak=\allowbreak
\exp(-\frac{t^{2}}{2})$. Now it is enough to recall that $\exp(-\frac{t^{2}%
}{2})$ is the characteristic function of the normal distribution with zero
mean and variance equal to $1$.
\end{proof}

\begin{theorem}
[Lindeberg]%
\index{Theorem!Lindenberg-Feller}%
Let $\{X_{n}\}_{n\geq1}$ be a sequence of independent random variables having
finite second moments. Let us denote: $m_{n}=EX_{n}$, $\sigma_{n}%
^{2}=\operatorname*{var}(X_{n})$, $s_{n}^{2}=\sum_{i=1}^{n}\sigma_{i}^{2}$. If
the sequence $\{X_{n}\}_{n\geq1}$ satisfies the following condition: \newline%
\[
\forall\epsilon>0:\underset{N\rightarrow\infty}{\lim}\frac{1}{s_{N}^{2}}%
\sum_{n=1}^{N}E\left(  X_{n}-m_{n}\right)  ^{2}I(\left\vert X_{n}%
-m_{n}\right\vert >\epsilon s_{N})=0,
\]
then the sequence $\{X_{n}\}_{n\geq1}$ satisfies CLT.
\end{theorem}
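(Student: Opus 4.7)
The plan is to prove convergence of the characteristic functions $\psi_N(t)$ of $Y_N$ to $e^{-t^2/2}$, essentially mimicking the i.i.d. argument of Proposition \ref{ctg_iid} but keeping careful track of the Lindeberg condition as a quantitative replacement for the identical-distribution hypothesis. First I would reduce to the centered case by setting $\tilde X_n = X_n - m_n$, and then rescale by defining the triangular array
\[
X_{n,N} = \frac{X_n - m_n}{s_N}, \qquad \sigma_{n,N}^2 = \operatorname{var}(X_{n,N}) = \sigma_n^2/s_N^2,
\]
so that $Y_N = \sum_{n=1}^N X_{n,N}$, the variables in row $N$ are independent with mean zero, and $\sum_{n=1}^N \sigma_{n,N}^2 = 1$. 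The Lindeberg hypothesis rewrites cleanly as
\[
L_N(\epsilon) := \sum_{n=1}^N E\bigl[X_{n,N}^2\, I(|X_{n,N}|>\epsilon)\bigr] \xrightarrow[N\to\infty]{} 0 \quad \text{for every } \epsilon>0.
\]

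Next I would record the standard elementary fact (a quick preliminary lemma) that
\[
\max_{1\le n\le N} \sigma_{n,N}^2 \le \epsilon^2 + L_N(\epsilon),
\]
obtained by splitting the second moment of $X_{n,N}$ into the regions $\{|X_{n,N}|\le\epsilon\}$ and its complement; hence the individual variances are uniformly small as $N\to\infty$. Combined with the sharp Taylor estimate
\[
\Bigl|e^{iu} - 1 - iu + \tfrac{u^2}{2}\Bigr| \le \min\Bigl(u^2,\, \tfrac{|u|^3}{6}\Bigr),
\]
applied inside the expectation defining $\varphi_{n,N}(t) = E e^{itX_{n,N}}$, splitting the integration region at $|X_{n,N}| = \epsilon$ and using $Em_{X_{n,N}} = 0$, this yields the key bound
\[
\Bigl|\varphi_{n,N}(t) - 1 + \tfrac{t^2}{2}\sigma_{n,N}^2\Bigr| \le \tfrac{|t|^3}{6}\,\epsilon\,\sigma_{n,N}^2 + t^2\, E\bigl[X_{n,N}^2\, I(|X_{n,N}|>\epsilon)\bigr].
\]
Summing over $n$ and using $\sum_n \sigma_{n,N}^2 = 1$ and the Lindeberg condition, the right-hand side tends to $|t|^3\epsilon/6$; since $\epsilon$ is arbitrary,
\[
\sum_{n=1}^N \bigl(\varphi_{n,N}(t) - 1\bigr) \xrightarrow[N\to\infty]{} -\tfrac{t^2}{2}.
\]

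Finally I would pass from this sum back to the product $\psi_N(t) = \prod_{n=1}^N \varphi_{n,N}(t)$ via the standard product-difference inequality: if $|a_j|,|b_j|\le 1$ then $|\prod a_j - \prod b_j| \le \sum |a_j - b_j|$. Applying this with $a_j = \varphi_{j,N}(t)$ and $b_j = \exp(\varphi_{j,N}(t)-1)$, together with the elementary bound $|e^z - 1 - z| \le |z|^2$ valid for $|z|\le 1/2$, shows that $\psi_N(t) - \exp\bigl(\sum_n (\varphi_{n,N}(t)-1)\bigr) \to 0$, so $\psi_N(t) \to e^{-t^2/2}$ and Lévy's continuity theorem gives convergence in distribution to $N(0,1)$. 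The step I expect to be most delicate is the dichotomous Taylor estimate leading to the two-term bound on $|\varphi_{n,N}(t) - 1 + t^2\sigma_{n,N}^2/2|$: one must use the cubic remainder exactly where $|X_{n,N}|\le\epsilon$ (to get an $\epsilon$ gain that survives summation against $\sum \sigma_{n,N}^2 = 1$) and the quadratic remainder on the tail (where only the Lindeberg quantity controls it), and this splitting is precisely what forces the Lindeberg condition into the argument rather than any weaker moment hypothesis.
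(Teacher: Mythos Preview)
Your proposal is the standard, correct characteristic-function proof of the Lindeberg CLT: normalize to a triangular array with unit total variance, use the dichotomous Taylor remainder (cubic on $\{|X_{n,N}|\le\epsilon\}$, quadratic on the tail) to show $\sum_n(\varphi_{n,N}(t)-1)\to -t^2/2$, and then use the product--difference inequality together with $\max_n\sigma_{n,N}^2\to 0$ to replace the sum by the product. All steps are sound; the only cosmetic slip is the stray ``$Em_{X_{n,N}}=0$'' where you mean $EX_{n,N}=0$.

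There is nothing to compare against: the paper does not prove this theorem at all, but simply remarks that the proof is ``somewhat complicated'' and refers the reader to Feller \cite{feller1}. Your argument is exactly the textbook proof one would find there (or in Billingsley, Durrett, etc.), so it is entirely appropriate here.
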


\begin{proof}
Proof of this theorem is somewhat complicated and is present in every more
detailed textbook on probability. In particular, one can find it in e.g.
\cite{feller1}
\end{proof}

We will illustrate the Central Limit Theorem with the help of the following example.

\begin{example}
We consider a sequence of independent observations drawn from exponential
distribution $Exp(1)$ i.e. having density $I(x\geq0)\exp(-x)$. Let us denote
these observations by $\left\{  X_{i}\right\}  _{i\geq1}$. It is elementary to
notice that $EX=1$ and $\operatorname*{var}(X)=1$. We constructed histograms
of the random variables : let $T_{i}=X_{i}-1,i\geq1$, $\eta4_{i}=\frac
{T_{4i}+T_{4i+1}+T_{4i+2}+T_{4i+3}}{2}$, $\eta10_{i}=\frac{1}{\sqrt{10}}%
\sum_{k=0}^{9}T_{10i+k}$, $\eta25_{i}=\frac{1}{5}\sum_{k=0}^{24}T_{25i+k}$,
$\eta100_{i}=\frac{1}{10}\sum_{k=0}^{99}T_{100i+k}$, where $i$ ranges in the
first case between $1,\ldots,25000$. In the second case between $1,\ldots
,10000$, in the third case between $1,\ldots,4000$ and between $1,\ldots,1000$
in the last case. Those of the readers who are not familiar with the notion of
the histogram we refer to the beginning of chapter \ref{metody_jadrowe}. The
results were divided by respectively $25000$, $10000$, $4000$ and $1000$. The
following figures were obtained for random variables respectively: $\eta4_{i}$
$\eta10_{i}$, $\eta25_{i}$ and $\eta100_{i}$%

{\parbox[b]{2.4059in}{\begin{center}
\includegraphics[
height=1.4987in,
width=2.4059in
]%
{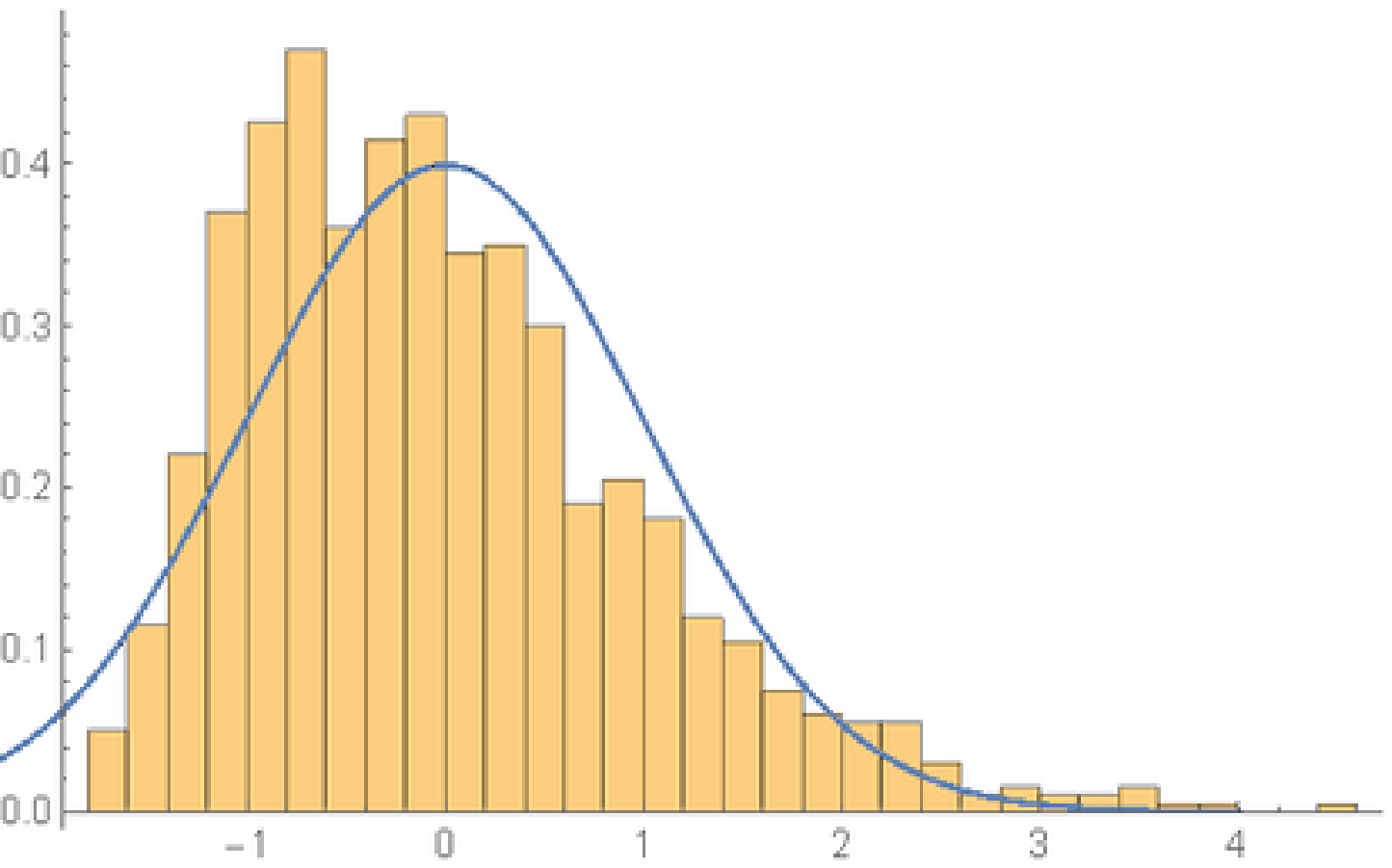}%
\\
$\eta4$%
\end{center}}}%
{\parbox[b]{2.1715in}{\begin{center}
\includegraphics[
height=1.3534in,
width=2.1715in
]%
{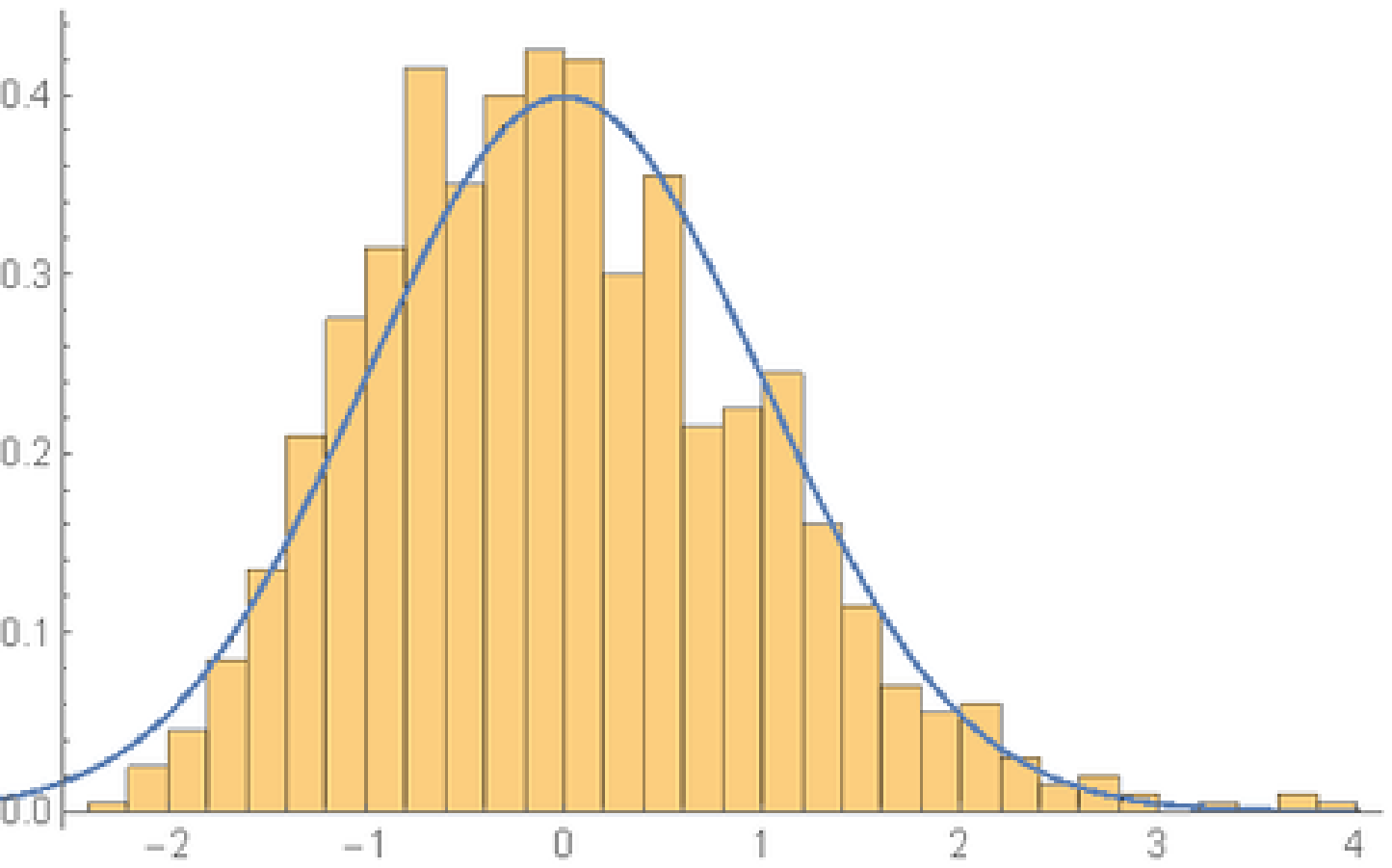}%
\\
$\eta10$%
\end{center}}}%
%

{\parbox[b]{2.4431in}{\begin{center}
\includegraphics[
height=1.5212in,
width=2.4431in
]%
{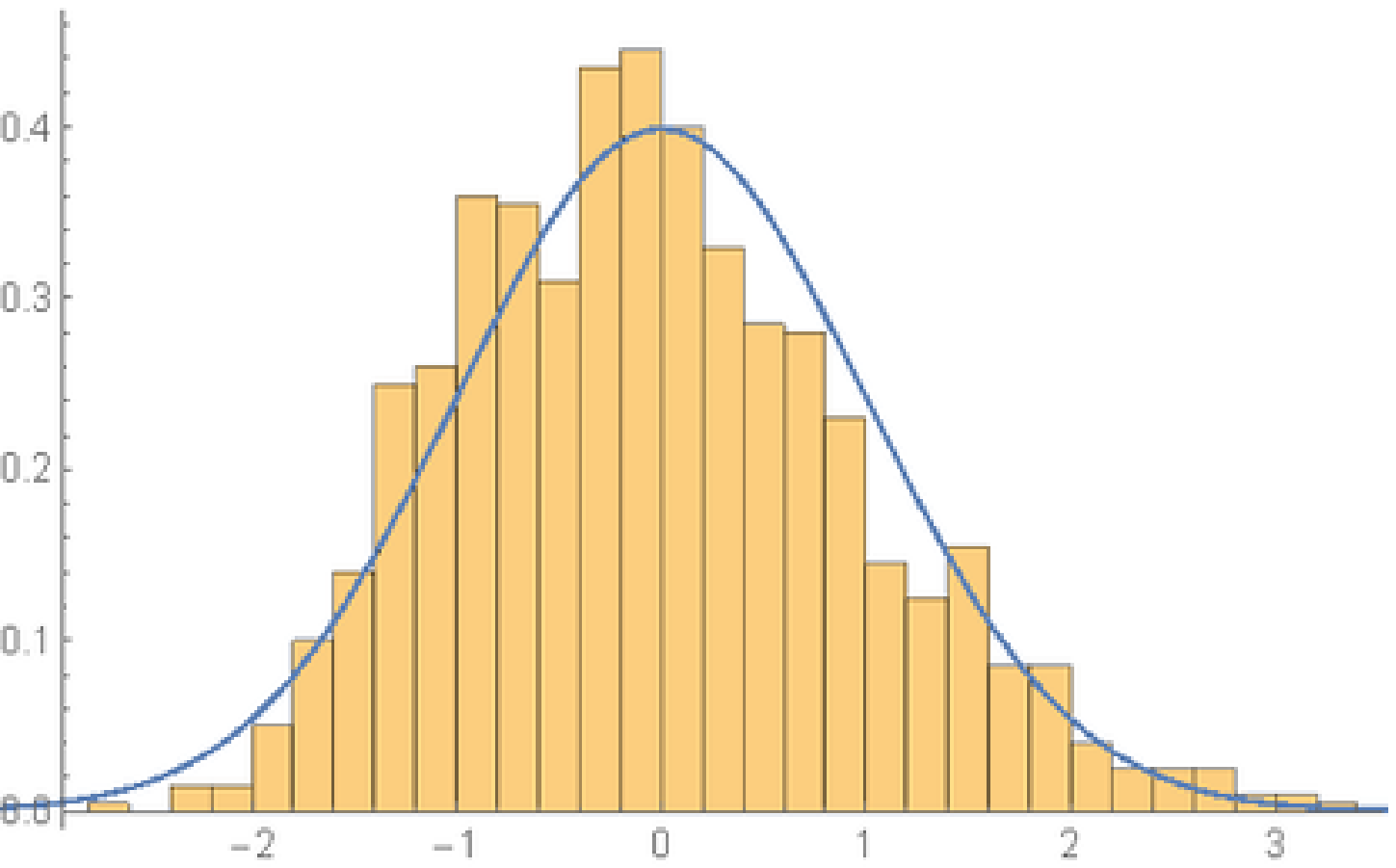}%
\\
$\eta25$%
\end{center}}}%
{\parbox[b]{2.1612in}{\begin{center}
\includegraphics[
height=1.3465in,
width=2.1612in
]%
{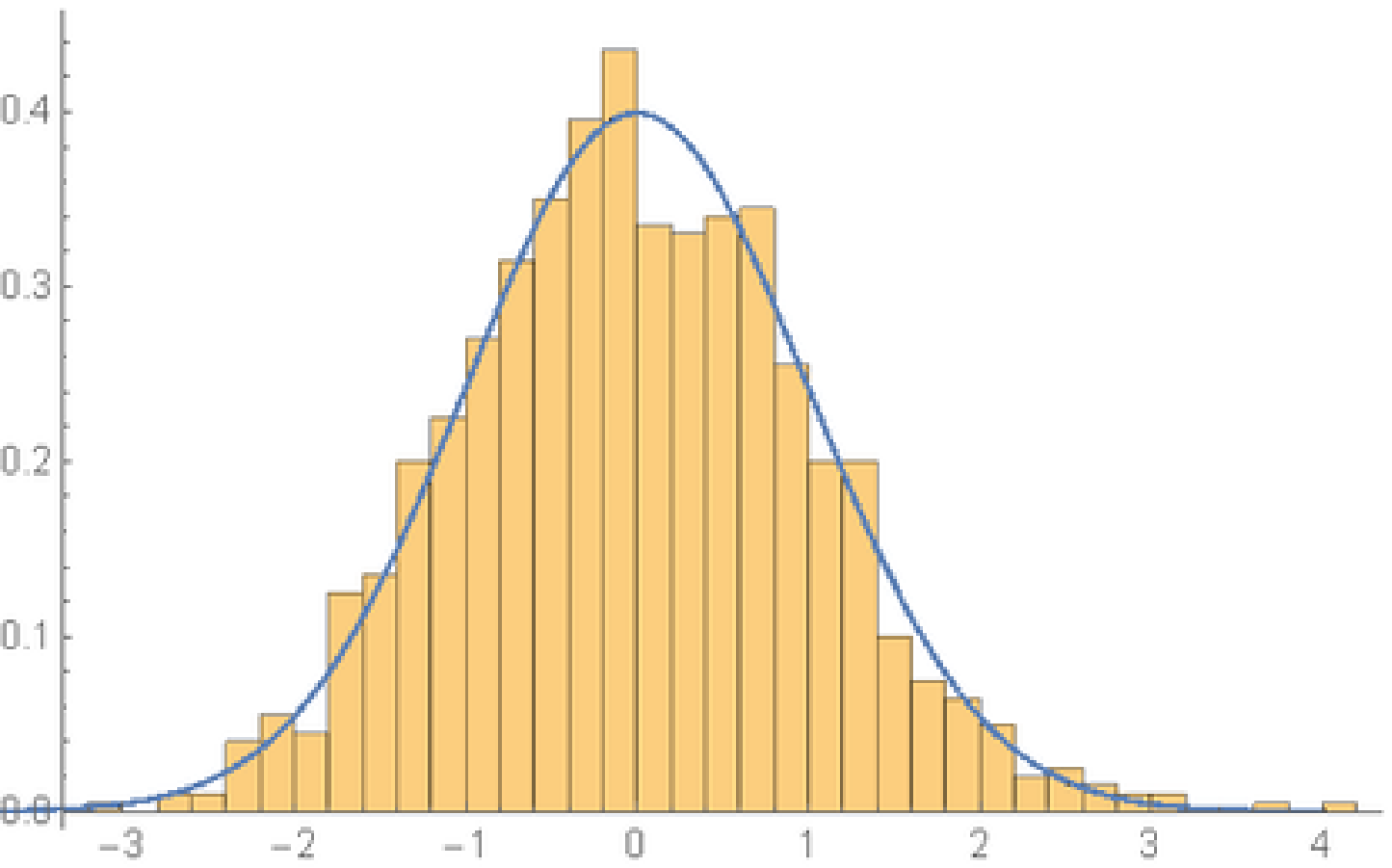}%
\\
$\eta100$%
\end{center}}}%

\end{example}

\section{Law of iterated logarithm}

Let $\{X_{n}\}_{n\geq1}$ be a sequence of the random variables with finite
second moments. Let us denote $m_{n}=EX_{n}$, $v_{n}=\operatorname*{var}%
(X_{n})$ and $s_{n}^{2}=\operatorname*{var}(\sum_{i=1}^{n}X_{i})$

\begin{definition}%
\index{Law!of Iterated logarthm}%
We say that the sequence $\left\{  X_{i}\right\}  _{i\geq1}$ satisfies Law of
Iterated Logarithm (LIL), if:
\begin{align*}
\underset{n\rightarrow\infty}{\lim\inf}\,\frac{\sum_{i=1}^{n}(X_{i}-m_{i}%
)}{\sqrt{2s_{n}^{2}\log\log s_{n}^{2}}}  &  =1,\\
\underset{n\rightarrow\infty}{\lim\inf}\,\frac{\sum_{i=1}^{n}(X_{i}-m_{i}%
)}{\sqrt{2s_{n}^{2}\log\log s_{n}^{2}}}  &  =-1.
\end{align*}

\end{definition}

Law of iterated logarithm is in fact a statement about the speed of
convergence in LLN. This time one can estimate this speed quite precisely
(compare remarks concerning CTG in particular \ref{szykosc_CTG}). It can be
clearly seen if one assumes, that $\{X_{n}\}_{n\geq1}$ is a sequence of
uncorrelated random variables with zero mean a identical variances equal
$\sigma^{2}$. If for this sequence LIL is satisfied then we have:
\[
\underset{n\rightarrow\infty}{\lim\inf}\,\frac{\sum_{i=1}^{n}X_{i}}{\sigma
n}\sqrt{\frac{n}{2\log\log n}}=1\text{ and }\underset{n\rightarrow\infty
}{\lim\inf}\,\frac{\sum_{i=1}^{n}X_{i}}{\sigma n}\sqrt{\frac{n}{2\log\log n}%
}=-1,
\]
since $\frac{\log\log n}{\log\log(n\sigma^{2})}\cong1$ for any $\sigma^{2}>0$ .

LIL is not satisfied by any sequences of the random variables. Majority of
results concern sequences of independent random variables (see e.g. papers of
\cite{Hartman41}, \cite{Strassen65}, \cite{Strass65}, were known earlier
results were generalized). There exist also results concerning sequences of
dependent random variables e.g. so-called martingale differences (for the
definition of martingales see Appendix \ref{martyngaly}
page\pageref{martyngaly}).

Moreover, one can present random variables
\[
Z_{n}=\frac{\sum_{i=1}^{n}(X_{i}-m_{i})}{\sqrt{2s_{n}^{2}\log\log s_{n}^{2}}%
},
\]
in an iterative way. Namely, we have:
\[
Z_{n+1}=\left(  1-\frac{d_{n+1}-d_{n}}{d_{n+1}}\right)  Z_{n}+\frac{1}%
{d_{n+1}}(X_{n+1}-m_{n+1}),
\]
where we denoted%
\[
d_{n}=\sqrt{2s_{n}^{2}\log\log s_{n}^{2}},
\]
similarly, as it was done in the previous section when we discussed CLT.
Unfortunately, , as before, the iterative form helps in the analysis, only a
little. Methods presented in chapter \ref{zbiez} should be modified and
improved in order to be applied in the analysis of LIL or CLT. It is a
challenge for the astute reader. To analyze LIL and CLT other methods were
developed that not necessarily utilize iterative forms. These methods are not
in the main course of this book hence we will present them briefly just to
give the readers the scent of the difficulties associated with examining these
two random phenomena.

As it was mentioned the majority of papers dedicated to LIL concern the case
of independent random variables. This group of papers again can be divided on
the group when the case of identical distributions is concerned. One should
mention here in this group the following Hartman Wintnera Theorem
\cite{Hartman41}

\begin{theorem}%
\index{Thorem!Hartman-Wintner}%
Let $\{X_{n}\}_{n\geq1}$ be a sequence of independent random variables having
identical distributions and such that Then $EX_{1}=0$, $EX_{1}^{2}=\sigma
^{2}<\infty$. Then this sequence satisfies LIL.
\end{theorem}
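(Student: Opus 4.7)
The plan is to prove the Hartman-Wintner LIL in two steps: first establish the LIL under the stronger hypothesis that the summands are uniformly bounded (Kolmogorov's LIL), and then reduce the general finite-variance i.i.d.\ case to the bounded case by truncation. A slicker modern alternative, which I would keep in reserve, is to use Skorokhod embedding: realize the partial sums $S_n$ as $B_{T_n}$ where $B_t$ is a standard Brownian motion and the increments $T_n - T_{n-1}$ are i.i.d.\ waiting times with mean $\sigma^{2}$; the strong law gives $T_n/n \to \sigma^{2}$ a.s., and Khinchin's LIL for $B_t$ then transfers directly to $S_n$. I describe the classical truncation route below, since it stays within the circle of ideas (independence, variances, exponential bounds, Borel-Cantelli) developed in this chapter.

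For the bounded case, assume $|X_n|\leq c_n$ with $c_n = o(s_n/\sqrt{\log\log s_n^{2}})$ and derive two-sided exponential bounds of the form $P(S_n > \lambda s_n) \leq \exp(-(1-\varepsilon)\lambda^{2}/2)$ together with a matching lower bound $\exp(-(1+\varepsilon)\lambda^{2}/2)$, valid for $\lambda$ in a range that grows with $n$. The upper bound comes from a Chernoff estimate after second-order expansion of $\log E\exp(tX_n)$; the lower bound requires an exponential change of measure (tilting). Combining the upper bound with Kolmogorov's maximal inequality along a geometric subsequence $n_k = \lfloor\theta^{k}\rfloor$ and the first Borel-Cantelli lemma yields $\limsup S_n/\sqrt{2 s_n^{2}\log\log s_n^{2}} \leq \theta$ a.s.; letting $\theta\downarrow 1$ gives the $\leq 1$ half. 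The $\geq 1$ half follows by applying the lower exponential bound to the \emph{independent} increments $S_{n_{k+1}} - S_{n_k}$ and invoking the second Borel-Cantelli lemma, after absorbing an a priori bound on $|S_{n_k}|$ into $\theta$.

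For the reduction, decompose $X_n = X_n' + X_n''$ with $X_n' = X_n I(|X_n| \leq b_n)$ for a carefully tuned threshold $b_n$, centre, and apply step~1 to the truncated sequence. Three conditions must hold simultaneously: (i) $\sum P(|X_n| > b_n) < \infty$, so that the discarded pieces contribute only finitely often by the first Borel-Cantelli lemma; (ii) $\sum(EX_n' - EX_n) = o(\sqrt{n\log\log n})$, so that recentering is negligible; and (iii) $b_n = o(\sqrt{n/\log\log n})$, so that the boundedness hypothesis of step~1 is satisfied and still $\operatorname{var}(X_n') \to \sigma^{2}$. The delicate point---and what I expect to be the main obstacle---is the tight interplay between (i) and (iii): the naive choice $b_n = \sqrt{n}$ makes (i) immediate from $EX_1^{2}<\infty$ but violates the boundedness requirement of step~1, whereas a tighter truncation typically demands a moment slightly stronger than $EX_1^{2}<\infty$ unless one performs a \emph{layered} truncation into blocks $\{b_n < |X_n| \leq 2b_n\}, \{2b_n < |X_n| \leq 4b_n\}, \ldots$ and sums the contributions using Kronecker's lemma. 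This balancing act is what makes Hartman-Wintner substantially harder than its CLT or SLLN counterparts in this chapter, and it is the reason the main text is content to cite the result rather than prove it in full.
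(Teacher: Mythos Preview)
Your outline is a correct roadmap to Hartman--Wintner, and in particular your Step~1 is exactly the architecture the paper uses in Appendix~\ref{PIL}. However, the paper does \emph{not} prove the theorem as stated: it only carries out the argument for i.i.d.\ $N(0,1)$ summands, where the exponential bound $E\exp(\eta S_n)=\exp(\eta^{2}n/2)$ is an identity rather than an approximation, and the lower tail bound comes from the explicit Mills-ratio estimate of Lemma~\ref{o normalnych} rather than from tilting. The geometric-subsequence-plus-Borel--Cantelli structure you describe matches the appendix proof line for line: the upper half via the maximal inequality for the submartingale $\exp(\eta S_n)$ along $n=K^{j}$, the lower half via the independent increments $S_{N^{n+1}}-S_{N^{n}}$ and the second Borel--Cantelli lemma.

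What you add beyond the paper is the reduction step. Your diagnosis of the difficulty is accurate: the tension between Kolmogorov's smallness condition $c_n=o(s_n/\sqrt{\log\log s_n^{2}})$ and the summability $\sum P(|X_n|>b_n)<\infty$ under a bare second-moment hypothesis is precisely why the original Hartman--Wintner argument is long, and why the Skorokhod-embedding route you mention (Strassen's proof) is now the standard textbook approach. Your remark that the paper ``is content to cite the result rather than prove it in full'' is exactly right; the appendix explicitly frames the Gaussian case as a ``foretaste of the difficulties'' and does not attempt the truncation. So: your proposal is a genuine and correct extension of what the paper provides, not a restatement of it.
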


To give a foretaste of the difficulties that appear while proving LIL we
present proof of the simplified version of the law of iterated logarithm for
the i.i.d. sequence of the random variables having Normal $\mathcal{N}(0,1)$
distribution in Appendix \ref{PIL} .

The figure below presents simulation connected with LIL. Sequence marked
\emph{green} denotes the sequence of partial sums of independent identically
distributed random variables having zero means and positive finite variances.
The sequence marked \emph{blue} denotes partial sums of independent
identically distributed random variables having zero means and having no
variances. More precisely, we took random variables having distribution as
$\operatorname*{sgn}(C)\sqrt{\left\vert C\right\vert }$, where $C$ has Cauchy
distribution. Let us notice that the law of iterated logarithm can be
interpreted in the following way. Let: $S_{n}=\sum_{i=1}^{n}X_{i}%
\allowbreak:\allowbreak n\geq1$. If the sequence $\{X_{n}\}_{n\geq1}$ satisfy
LIL, then for any $\varepsilon>0$, the events
\[
G_{n}=\left\{  S_{n}\notin(-(1+\varepsilon)\sigma\sqrt{2n\log\log
n},(1+\varepsilon)\sigma\sqrt{2n\log\log n})\right\}  ,
\]
will occur an only finite number of times with probability $1$. Moreover, also
with probability $1$ infinite number of times we will have:
\[
S_{n}>(1-\varepsilon)\sigma\sqrt{2n\log\log n}%
\]
and
\[
S_{n}<-(1-\varepsilon)\sigma\sqrt{2n\log\log n}.
\]
Hence the first of these sequences (green one) rather satisfies LIL ( by
Hartman-Wintner Theorem we know that it satisfies). However the second
sequence (blue) rather does nor satisfy. This is so since the sequence of
partial sums reaches far beyond the area $(-\sigma\sqrt{2n\log\log
n},\allowbreak\sigma\sqrt{2n\log\log n})$, despite very large number of
observations (approximately $8\ast10^{6})$.%
\begin{figure}[ptb]%
\centering
\includegraphics[
height=1.977in,
width=3.4506in
]%
{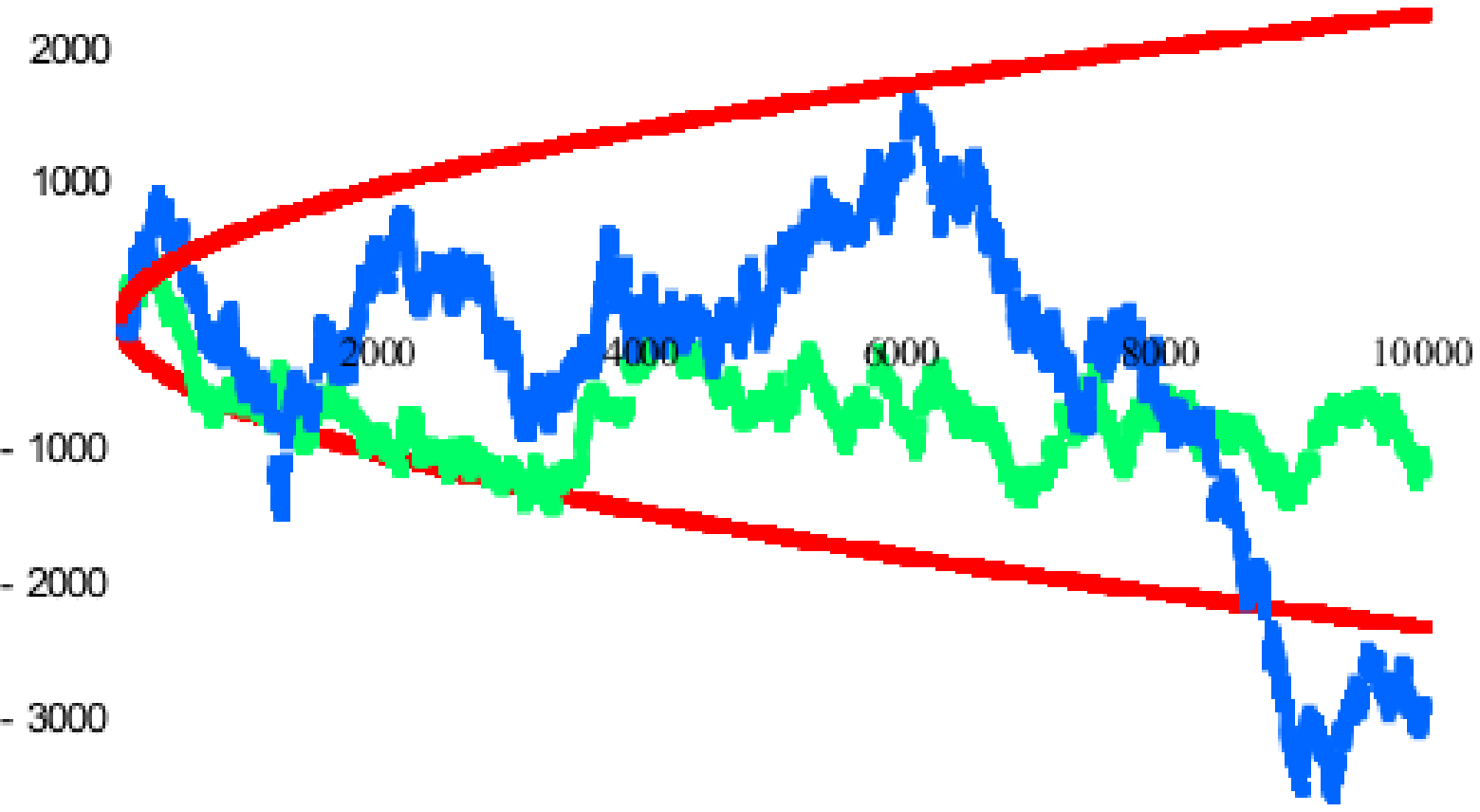}%
\end{figure}

Results if this simulation as far as the 'blue' sequence is concerned can be
justified and supported by the following theorem that is, in fact, a reverse
of the law of iterated logarithm.

\begin{theorem}
[V. Strassen]Let $\{X_{n}\}_{n\geq1}$ be a sequence of independent random
variables having identical distributions. If with a positive probability we
have:
\begin{equation}
\underset{n\rightarrow\infty}{\lim\inf}\,\frac{\left\vert \sum_{i=1}^{n}%
X_{i}\right\vert }{\sqrt{2n\log\log n}}<\infty, \label{odwr_pil}%
\end{equation}
then $EX_{1}^{2}<\infty$ and $EX_{1}=0.$
\end{theorem}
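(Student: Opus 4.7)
My plan has three stages: upgrade the positive-probability hypothesis to an almost-sure statement via Kolmogorov's zero-one law, reduce the conclusion $EX_1=0$ to the moment bound via Hartman--Wintner and the SLLN, and establish $EX_1^2<\infty$ by a symmetrization / Skorohod-embedding argument.

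For the first stage, observe that
\[
L:=\liminf_{n\to\infty}\frac{|S_n|}{\sqrt{2n\log\log n}}
\]
is measurable with respect to the tail $\sigma$-field of the independent sequence $\{X_n\}_{n\geq 1}$: altering finitely many $X_i$ perturbs $|S_n|$ by a bounded amount, absorbed by the denominator as $n\to\infty$. By Kolmogorov's zero-one law $L$ is almost surely constant, and by (\ref{odwr_pil}) this constant is finite, say $L=C\geq 0$. Consequently $|S_n|\leq(C+1)\sqrt{2n\log\log n}$ for infinitely many $n$, almost surely.

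For the second stage, suppose temporarily that $EX_1^2<\infty$ has already been established. If $EX_1=\mu\neq 0$, then $S_n/n\to\mu$ a.s.\ by the strong law, which forces $|S_n|/\sqrt{2n\log\log n}\to\infty$ and contradicts the first stage. Hence $EX_1=0$ is automatic once $EX_1^2<\infty$ is known (and Hartman--Wintner then supplies the precise LIL rate). It therefore suffices to prove $EX_1^2<\infty$.

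For the third stage, argue by contradiction: assume $EX_1^2=\infty$. Symmetrize by taking an independent copy $\{X_n'\}$ on a product space and setting $\tilde X_n=X_n-X_n'$, $\tilde S_n=\sum_{i=1}^n \tilde X_i$. The inequality $|\tilde S_n|\leq|S_n|+|S_n'|$ propagates the first-stage bound to $\liminf|\tilde S_n|/\sqrt{2n\log\log n}\leq 2C$ a.s., while a median-centering argument shows $E\tilde X_1^2=\infty$. Now apply the Skorohod embedding to realize $\tilde S_n=B(\tau_n)$, where $B$ is a standard Brownian motion and $\tau_n=\sum_{i=1}^n \xi_i$ with $\{\xi_i\}$ i.i.d.\ nonnegative satisfying $E\xi_1=E\tilde X_1^2=\infty$. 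The generalized SLLN for nonnegative i.i.d.\ variables with infinite mean yields $\tau_n/n\to\infty$ a.s. Combining this time-dilation with the oscillation structure of $B$ at scale $\sqrt{\tau_n\log\log\tau_n}$ (via Chung's LIL or the Dvoretzky--Erd\H{o}s lower-envelope theorem), one derives $|\tilde S_n|/\sqrt{2n\log\log n}\to\infty$, contradicting the propagated liminf bound.

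The principal obstacle is the third stage. The hypothesis controls only the \emph{liminf} of $|S_n|$, not its limsup, and a direct Borel--Cantelli estimate on the tail of a single $X_1$ produces only limsup information at the LIL scale. Skorohod's embedding elegantly sidesteps this by converting infinite variance into the time dilation $\tau_n\gg n$; the remaining technical core is to rule out that $|B(\tau_n)|$ returns infinitely often to the region $|b|\lesssim\sqrt{n\log\log n}$ along \emph{some} (possibly random) subsequence, which is where Brownian small-ball or lower-envelope estimates are indispensable.
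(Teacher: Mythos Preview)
Your third stage has a genuine gap, and it traces back to what is almost certainly a typo in the stated theorem. Strassen's original result, and the paper's own proof in the appendix, concern the $\limsup$, not the $\liminf$: the paper's argument concludes by showing that $EX_1^2=\infty$ forces $\limsup_n |S_n|/\sqrt{2n\log\log n}=\infty$. Read literally with $\liminf$, the statement is false: take $X_1$ symmetric with $E|X_1|<\infty$ but $EX_1^2=\infty$; then by Chung--Fuchs the walk $(S_n)$ is recurrent, so $\liminf_n|S_n|<\infty$ and hence $\liminf_n|S_n|/\sqrt{2n\log\log n}=0$, yet $EX_1^2=\infty$. This is precisely the obstruction you yourself flagged at the end --- ruling out that $|B(\tau_n)|$ returns infinitely often below $\sqrt{n\log\log n}$ --- and it cannot be ruled out, because Brownian motion is recurrent and Chung's small-ball LIL controls $\max_{s\le t}|B(s)|$, not $|B(t)|$ itself. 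So no amount of lower-envelope technology will close the $\liminf$ argument.

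For the intended $\limsup$ version, your Skorohod route is a legitimate alternative but quite different from what the paper does. The paper's proof is more elementary and avoids embedding entirely: after symmetrizing, it truncates $X$ at a level $c_M$ chosen so that the truncated variable $X^{<M}$ has variance at least $M$, applies the forward Hartman--Wintner LIL to the truncated partial sums $S_n'$ to get $S_n'>(Mn\log\log n)^{1/2}$ infinitely often, and then uses the distributional identity $X\stackrel{d}{=}X^{>M}-X^{<M}$ (valid for symmetric $X$) together with the Hewitt--Savage $0$--$1$ law to transfer this lower bound from $S_n'$ to $S_n$. Since $M$ is arbitrary, $\limsup|S_n|/\sqrt{2n\log\log n}=\infty$. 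Your embedding approach would trade the Hewitt--Savage symmetry trick for the Skorohod representation plus a Brownian $\limsup$ estimate along the random times $(\tau_n)$; that can be made to work for $\limsup$, but it is heavier machinery than the paper's truncation argument.
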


Proof of this theorem is placed in Appendix \ref{strassen}.

\section{Iterative form of random phenomena}

Let us sum up the problems presented in previous sections. Let be given a
sequence of the random variables $\left\{  X_{i}\right\}  _{i\geq1}$. In the
case of LLN one has to find conditions under which the sequence $\left\{
\bar{X}_{i}\right\}  _{i\geq1}$ generated by the iterative procedure:
\begin{equation}
\bar{X}_{n+1}=(1-\mu_{n})\bar{X}_{n}+\nu_{n}(X_{n+1}-EX_{n+1}),\;n\geq0,
\label{og_iteracja}%
\end{equation}
and initial condition $\bar{X}_{0}=0$ \textbf{converges almost surely to
zero}. For the LLN we have $\nu_{n}=\mu_{n},\;n\geq0$. Sequence $\left\{
\mu_{n}\right\}  _{n\geq0}$ is defined for the LLN as $\mu_{n}=\frac{1}%
{n+1},\;n\geq0$, while for the generalized LLN as any sequence of positive
numbers such that
\begin{equation}
\mu_{0}=1,\mu_{n}\in(0,1)\text{, when }n\geq1\;\text{and\ }\sum_{n\geq0}%
\mu_{n}=\infty. \label{og_ciag}%
\end{equation}

As far as the law of iterated logarithm is concerned, we have to give
condition under which the sequence of the random variables generated by the
procedure (\ref{og_iteracja}) is bounded with probability one. One has to find
also the limits: $\underset{n\rightarrow\infty}{\lim\sup\,}\bar{X}_{n}$ and
$\underset{n\,\longrightarrow\infty}{\lim\inf}\,\bar{X}_{n}$. Number sequences
$\left\{  \mu_{n}\right\}  _{n\geq0}$ and $\left\{  \nu_{n}\right\}  _{n\geq
0}$ are in this case the following: $\mu_{n}=\frac{d_{n+1}-d_{n}}{d_{n+1}%
}\equiv O(\frac{1}{2(n+1)})$, $\nu_{n}=\frac{1}{d_{n+1}}$, where $d_{n}%
=\sqrt{2s_{n}^{2}\log\log s_{n}^{2}}$, $s_{n}^{2}=\operatorname*{var}%
(\sum_{i=1}^{n}X_{i})$, when $n\geq3$ and $d_{n}=1$, $n=0,1,2.$

In the case of CLT one has to give\emph{\ }conditions under which sequence of
the random variables generated by the iterative procedure (\ref{og_iteracja})
converges in distribution to the Normal one. Number sequences $\left\{
\mu_{n}\right\}  _{n\geq0}$ and $\left\{  \nu_{n}\right\}  _{n\geq0}$ are in
this case given by $\mu_{n}=\frac{s_{n+1}-s_{n}}{s_{n+1}}$, $\nu_{n}=\frac
{1}{s_{n}}$, where $s_{n}^{2}$ is defined in the same way as in the case of
law of iterated logarithm.

Let us notice that the differences between these problems can be reduced to
considering number sequences $\left\{  \mu_{i}\right\}  _{i\geq0}$ and
$\left\{  \nu_{i}\right\}  _{i\geq0}$, (different sets for a different
problem) and also considering a different type of convergence. The form of the
recursive equation is the same in all these three cases. As it will turn out
in chapter \ref{simpwl} due to such a general approach and getting acquainted
with the general properties of iterative procedures, that we were able to
depart from the traditional assumptions traditionally assumed in the case of
LLN (independence of elements of the sequence $\left\{  X_{i}\right\}
_{i\geq1}$, considering more general number sequences $\left\{  \mu
_{i}\right\}  _{i\geq0}$ than the traditional $\mu_{i}=\frac{1}{i+1})$. Can
one do the same in the case of CLT or LIL? It is not known. If it can be done,
then it is very difficult and the methods developed in this book are not sufficient.

\chapter{Convergence of iterative procedures\label{zbiez}}

In this chapter, we have included facts, methods, tools and mental schemes
that will be used in the sequel. It is essential, very important for the
farther parts of the book.

\section{Auxiliary facts\label{pom_zbiez}}

\begin{proposition}
\label{w-ocz} Let $X$ be nonnegative and integrable random variable and let
$F$, be its cumulative distribution function (cdf). Then,
\[
EX=\int_{0}^{\infty}P(X>t)dt=\int_{0}^{\infty}(1-F(t))dt.
\]

\end{proposition}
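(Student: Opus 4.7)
The plan is to prove the identity via the pointwise ``layer cake'' representation of $X$ combined with the Tonelli theorem. Because $X$ is nonnegative, no integrability gymnastics are needed to justify the interchange of integrals; integrability of $X$ is used only to conclude that both sides of the resulting equality are finite (and in fact equal).

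First I would observe that for every $\omega \in \Omega$ one has the deterministic identity
\begin{equation*}
X(\omega) \;=\; \int_0^{X(\omega)} dt \;=\; \int_0^{\infty} I(X(\omega) > t)\, dt,
\end{equation*}
where in the last expression the indicator is viewed as a function of $t$ with $\omega$ fixed. This is immediate from the definition of the Lebesgue integral on the interval $[0,X(\omega)]$.

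Next I would integrate both sides with respect to the probability measure $P$. The function $(\omega,t)\mapsto I(X(\omega)>t)$ is nonnegative and jointly measurable on $\Omega\times[0,\infty)$ (joint measurability follows from the measurability of $X$ together with the Borel measurability of the set $\{(x,t): x>t\}$). Tonelli's theorem therefore allows us to swap the order of integration, giving
\begin{equation*}
EX \;=\; E \int_0^{\infty} I(X>t)\, dt \;=\; \int_0^{\infty} E\,I(X>t)\, dt \;=\; \int_0^{\infty} P(X>t)\, dt.
\end{equation*}
The second equality of the proposition is then just the definition of the cdf: $P(X>t) = 1 - P(X\le t) = 1 - F(t)$.

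I do not expect a serious obstacle here; the only point deserving a line of care is the joint measurability required for Tonelli, and this is standard. A purely probabilistic alternative would be to verify the formula first for simple nonnegative random variables (where it reduces to an elementary rearrangement of a finite sum over level sets) and then extend to general nonnegative $X$ by approximating from below by simple functions and invoking the monotone convergence theorem on both sides. Either route reaches the same conclusion; the Tonelli argument is shorter and is what I would present.
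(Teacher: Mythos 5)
Your proof is correct, but it follows a different route from the paper. The paper works entirely with one-variable Stieltjes calculus: it integrates $\int_{0}^{T}x\,dF(x)$ by parts to get $-T[1-F(T)]+\int_{0}^{T}[1-F(x)]\,dx$ and then kills the boundary term by the tail estimate $T[1-F(T)]=T\int_{T}^{\infty}dF(x)\leq\int_{T}^{\infty}x\,dF(x)\rightarrow0$, which is exactly where integrability of $X$ enters. You instead use the pointwise layer-cake identity $X=\int_{0}^{\infty}I(X>t)\,dt$ and Tonelli's theorem to interchange $E$ and $\int_{0}^{\infty}\cdot\,dt$. Your argument is shorter, needs no boundary-term analysis, and in fact proves the identity $EX=\int_{0}^{\infty}P(X>t)\,dt$ for every nonnegative $X$ with both sides possibly infinite, integrability serving only to make the common value finite; it is also the same mechanism the paper's subsequent Remark \ref{calkowalnosc} exploits in discrete form. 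What the paper's integration-by-parts route buys is the explicit tail behaviour $x[1-F(x)]\rightarrow0$, which the text reuses shortly afterwards as a necessary condition for existence of the expectation, and a template that adapts directly to the later generalization $EX^{\alpha}=\alpha\int_{0}^{\infty}x^{\alpha-1}[1-F(x)]\,dx$ (though your Tonelli argument handles that case too, with $I(X>t)$ replaced by $\alpha t^{\alpha-1}I(X>t)$). One cosmetic point: $P(X>t)$ and $1-F(t)$ can differ at atoms of $X$ under the left-continuous convention for $F$, but only on a countable set of $t$, so the two integrals coincide regardless; a half-sentence noting this would make the last step airtight.
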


\begin{proof}
Integrating by parts we have for $T>0$: $\int_{0}^{T}xdF(x)=TF(T)-\int_{0}%
^{T}F(x)dx$\allowbreak$=-T[1-F(T)]+\int_{0}^{T}[1-F(x)]dx$. However
$T[1-F(T)]=T\int_{T}^{\infty}dF(x)<\int_{T}^{\infty}xdF(x)\rightarrow0$, as
$T\rightarrow\infty$, since $\int_{0}^{\infty}xdF(x)<\infty.$
\end{proof}

\begin{remark}
\label{calkowalnosc}The above mentioned proposition has its discrete version.
Namely, notice that $\sum_{i=1}^{\infty}I(X\geq i)$ $\allowbreak=\allowbreak$
integer part of nonnegative random variable $X$ ( for random variables
assuming nonnegative integer values we have $X\allowbreak=\allowbreak
\sum_{i=1}^{\infty}I(X\geq i)$ with probability $1$). Hence, we have for all
elementary events:
\begin{equation}
\sum_{i=1}^{\infty}I(X\geq i)\leq X<1+\sum_{i=1}^{\infty}I(X\geq i).
\label{EX_dod}%
\end{equation}
Thus, we see that nonnegative random variable $X$ is integrable if and only
if
\[
\sum_{i\geq0}P(X\geq i)<\infty.
\]
Moreover, for random variables assuming nonnegative integer values we have:
\begin{equation}
EX=\sum_{i\geq1}P(X\geq i). \label{EX_dod_calk}%
\end{equation}

\end{remark}

As an immediate application of this remark, we have the following interesting proposition.

\begin{proposition}
\label{wlasnosc_iid}Let $\{X_{n}\}_{n\geq1}$ be a sequence of independent
random variables having identical distributions. Moreover, let us assume that
$E\left\vert X_{1}\right\vert =\infty$. Then
\[
\forall k>0:\sum_{i\geq1}P(\left\vert X_{i}\right\vert \geq ki)=\infty,
\]
and
\[
\underset{n\rightarrow\infty}{\lim\sup\,}\frac{\left\vert X_{n}\right\vert
}{n}=\infty.
\]

\end{proposition}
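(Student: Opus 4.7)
The plan is to reduce both conclusions to the discrete integrability criterion of Remark \ref{calkowalnosc} together with the second Borel--Cantelli lemma (independence is used only in the latter step).

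First I would prove the divergence of the series. Fix $k>0$ and let $Y=\lfloor |X_{1}|/k\rfloor$, so that $Y$ is a nonnegative integer-valued random variable with $Y\leq |X_{1}|/k<Y+1$. If $Y$ were integrable then $|X_{1}|/k$ would be integrable too, contradicting $E|X_{1}|=\infty$; hence $EY=\infty$. By the equivalence in Remark \ref{calkowalnosc} (the finiteness of $EY$ for a nonnegative integer-valued $Y$ is equivalent to $\sum_{i\geq1}P(Y\geq i)<\infty$), we conclude
\[
\sum_{i\geq1}P(Y\geq i)=\infty.
\]
Now $\{Y\geq i\}=\{|X_{1}|\geq ki\}$, and since the $X_{i}$ are identically distributed, $P(|X_{i}|\geq ki)=P(|X_{1}|\geq ki)$. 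This gives the first claim.

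Next I would derive the $\limsup$ statement from the first claim using independence. For fixed $k>0$ set $A_{i}^{(k)}=\{|X_{i}|\geq ki\}$. The events $\{A_{i}^{(k)}\}_{i\geq1}$ are independent because the $X_{i}$ are, and by the first part $\sum_{i\geq1}P(A_{i}^{(k)})=\infty$. The second Borel--Cantelli lemma then yields $P(A_{i}^{(k)}\text{ i.o.})=1$, i.e.\ with probability one $|X_{n}|/n\geq k$ for infinitely many $n$, so
\[
\underset{n\to\infty}{\lim\sup}\,\frac{|X_{n}|}{n}\geq k\quad\text{a.s.}
\]
Finally, intersecting the corresponding probability-one events over $k\in\mathbb{N}$ (a countable intersection of almost-sure events is almost sure) gives $\limsup_{n\to\infty}|X_{n}|/n=\infty$ a.s.

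There is no real obstacle; the only point requiring a small amount of care is ensuring the passage from $E|X_{1}|=\infty$ to $EY=\infty$ via the two-sided bound $Y\leq |X_{1}|/k<Y+1$, and remembering that the second Borel--Cantelli lemma—rather than the first—is what is needed, which is where the i.i.d.\ hypothesis (independence) enters.
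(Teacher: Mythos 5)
Your proposal is correct and follows essentially the same route as the paper: reduce divergence of $\sum_{i\geq1}P(|X_{1}|\geq ki)$ to the non-integrability of $|X_{1}|/k$ via Remark \ref{calkowalnosc}, then apply assertion $ii)$ of the Borel--Cantelli lemma using independence to get the events infinitely often and hence the $\limsup$ conclusion. Your explicit floor-function bound $Y\leq|X_{1}|/k<Y+1$ and the countable intersection over $k\in\mathbb{N}$ merely spell out details the paper leaves implicit.
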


\begin{proof}
We see that for any $k>0$ random variable $\left\vert X_{1}\right\vert /k$
also is not integrable. Hence, we have:
\[
\sum_{i\geq1}P(\left\vert X_{1}\right\vert \geq ki)=\infty,
\]
basing on the remark \ref{calkowalnosc}. Since $\{X_{n}\}_{n\geq1}$ have the
same distributions we have
\[
\sum_{i\geq1}P(\left\vert X_{i}\right\vert \geq ki)=\infty.
\]
Now we apply assertion $ii)$ of Borel-Cantelli lemma (see Appendix
\ref{Borel-Cantelli}) and deduce that the events $\left\{  \left\vert
X_{i}\right\vert \geq ki\right\}  ,\allowbreak i\geq1$ occur infinite number
of times for any $k$. This means however that $\underset{n\rightarrow
\infty}{\lim\sup\,}\frac{\left\vert X_{n}\right\vert }{n}=\infty.$
\end{proof}

It turns out that in the case of any random variables having finite
expectations we have:
\[
EX=\int_{0}^{\infty}[1-F(x)+F(-x)]dx,
\]
and that the necessary condition for the existence of expectation is the
following one:
\[
\underset{x\rightarrow-\infty}{\lim}xF(x)=\underset{x\rightarrow\infty}{\lim
}x[1-F(x)]=0.
\]

There exist a generalization of the statement \ref{w-ocz}, namely:

\begin{proposition}
If $X$ is a nonnegative random variable possessing moment of order $\alpha
\geq1$, then :
\[
EX^{\,\alpha}=\alpha\int_{0}^{\infty}x^{\alpha-1}[1-F(x)]dx.
\]

\end{proposition}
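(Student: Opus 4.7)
The plan is to mimic the proof of Proposition \ref{w-ocz} by integration by parts, using the fact that the moment of order $\alpha$ exists to kill off the boundary term at infinity.

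First I would write $EX^{\alpha} = \int_{0}^{\infty} x^{\alpha} dF(x)$ and, for $T>0$, apply integration by parts to $\int_{0}^{T} x^{\alpha} dF(x)$. To bring in $1-F$ rather than $F$, I would substitute $F(x) = 1 - (1-F(x))$ inside the integrated term, so that
\[
\int_{0}^{T} x^{\alpha} dF(x) = T^{\alpha} F(T) - \alpha\int_{0}^{T} x^{\alpha-1} F(x) dx = -T^{\alpha}[1-F(T)] + \alpha\int_{0}^{T} x^{\alpha-1}[1-F(x)] dx,
\]
using $\alpha\int_{0}^{T} x^{\alpha-1} dx = T^{\alpha}$.

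The main step is then to show that $T^{\alpha}[1-F(T)] \to 0$ as $T \to \infty$. This is where the hypothesis $EX^{\alpha}<\infty$ enters: I would estimate
\[
T^{\alpha}[1-F(T)] = T^{\alpha}\int_{T}^{\infty} dF(x) \leq \int_{T}^{\infty} x^{\alpha} dF(x),
\]
and the right-hand side tends to $0$ as $T\to\infty$ because $\int_{0}^{\infty} x^{\alpha} dF(x) < \infty$ forces its tails to vanish. This exactly parallels the estimate $T[1-F(T)] \leq \int_{T}^{\infty} x \,dF(x)$ used in Proposition \ref{w-ocz}, with the same mechanism doing the work; I expect this to be the only nontrivial step.

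Having established the tail bound, I would let $T \to \infty$ in the identity above. The left-hand side converges to $EX^{\alpha}$ by monotone (or dominated) convergence, and the integral on the right converges to $\alpha\int_{0}^{\infty} x^{\alpha-1}[1-F(x)] dx$ by monotone convergence, since the integrand is nonnegative. This yields the claimed formula. As a sanity check, for $\alpha = 1$ we recover Proposition \ref{w-ocz} exactly, which is reassuring since the argument is literally its generalization.
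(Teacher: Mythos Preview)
Your proposal is correct and is precisely the argument the paper has in mind: the paper omits the proof, stating only that it is ``very similar to the proof of the proposition \ref{w-ocz},'' and your integration-by-parts argument with the tail estimate $T^{\alpha}[1-F(T)]\leq\int_{T}^{\infty}x^{\alpha}\,dF(x)\to 0$ is exactly that generalization.
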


\begin{proof}
Proof will be omitted. It is very similar to the proof of the proposition
\ref{w-ocz}.
\end{proof}

In order to formulate theorems concerning almost sure convergence of the
sequences of the random variables, and formulate conditions in terms of
moments of these random variables it is useful to remember about the following
simple$\allowbreak$ facts :

\begin{lemma}
[Fatou]\label{Fatou}%
\index{Lemma!Fatou}%
Let $\left\{  X_{i}\right\}  _{i\geq1}$ be a sequence of nonnegative,
integrable random variables. Then
\[
E\,\underset{i\rightarrow\infty}{\lim\inf}\,X_{i}\leq\underset{i\,\rightarrow
\infty}{\lim\inf}\,EX_{i}.
\]

\end{lemma}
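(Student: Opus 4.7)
The plan is to reduce Fatou's lemma to the Monotone Convergence Theorem (MCT) by replacing $\{X_i\}$ with the auxiliary sequence of running infima, a device that turns $\liminf$ into an ordinary monotone limit.

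First, I would set $Y_n = \inf_{k \geq n} X_k$ for each $n \geq 1$. Since we are taking the infimum over a smaller and smaller tail, the sequence $\{Y_n\}_{n \geq 1}$ is pointwise nondecreasing, and each $Y_n$ is a nonnegative random variable (as the infimum of countably many nonnegative measurable functions). By the very definition of $\liminf$, we have the pointwise identity
\[
\lim_{n \to \infty} Y_n = \sup_{n \geq 1} \inf_{k \geq n} X_k = \underset{i \to \infty}{\liminf}\, X_i.
\]

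Next I would apply the Monotone Convergence Theorem to $\{Y_n\}$: since $0 \leq Y_n \uparrow \liminf X_i$, it follows that
\[
E Y_n \longrightarrow E\bigl(\underset{i \to \infty}{\liminf}\, X_i\bigr) \quad \text{as } n \to \infty.
\]
On the other hand, for every fixed $k \geq n$ we have the pointwise bound $Y_n \leq X_k$, so by monotonicity of expectation $E Y_n \leq E X_k$, and taking the infimum over $k \geq n$ on the right gives $E Y_n \leq \inf_{k \geq n} E X_k$. Passing to the limit as $n \to \infty$, the left-hand side tends to $E(\liminf X_i)$ by MCT, while the right-hand side tends to $\liminf_{k \to \infty} E X_k$ by definition, yielding the desired inequality.

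The only real obstacle is that the argument rests entirely on the Monotone Convergence Theorem, which must be available as a prior result; since this book does not reprove MCT here, I would simply cite it as a standard fact from measure theory. A minor subtlety worth a line of comment is that the integrability hypothesis on the $X_i$ is not actually needed for the inequality to hold in $[0, \infty]$ — nonnegativity suffices — so no care is required about $\infty - \infty$ issues; the conclusion simply becomes trivial if the right-hand side is $+\infty$.
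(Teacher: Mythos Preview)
Your proof is the standard and correct derivation of Fatou's lemma from the Monotone Convergence Theorem, and the argument is complete as written. The paper, however, does not supply a proof at all: it simply states Fatou's lemma and Lebesgue's monotone convergence theorem side by side and refers the reader to a textbook on integration theory (\L ojasiewicz) for both proofs. So your proposal actually provides more than the paper does; the only point worth noting is that the paper lists MCT \emph{after} Fatou rather than before, so if you were to insert your argument into the text you would be citing a result stated a few lines later --- logically harmless, but worth a forward reference.
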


\begin{theorem}
[Lebesgue'a]%
\index{Theorem!Lebesgue}%
Let $\left\{  X_{i}\right\}  _{i\geq1}$ be a monotone, nonnegative sequence of
the random variables ( i.e. $X_{i}\uparrow X$ or $X_{i}\downarrow X$ a.s.),
then
\[
EX_{n}\underset{n\rightarrow\infty}{\longrightarrow}EX.
\]

\end{theorem}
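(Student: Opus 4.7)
The plan is to split into the two monotonicity cases and reduce each to Fatou's Lemma (Lemma~\ref{Fatou}), which has just been stated and can be used as a black box.

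First I would handle the increasing case $X_i \uparrow X$ a.s. Since $0 \leq X_i \leq X_{i+1} \leq X$, monotonicity of the expectation gives $EX_i \leq EX_{i+1} \leq EX$, so $\{EX_i\}$ is a non-decreasing numerical sequence bounded above by $EX$; in particular $\lim_{n\to\infty} EX_n$ exists in $[0,\infty]$ and does not exceed $EX$. For the reverse inequality I would note that since $X_i \to X$ a.s. we have $\liminf_i X_i = X$ a.s., so Fatou's Lemma yields $EX = E\liminf_i X_i \leq \liminf_i EX_i = \lim_i EX_i$. Combining both bounds gives $EX_n \to EX$.

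Second I would handle the decreasing case $X_i \downarrow X$ a.s. Here one needs the tacit assumption $EX_1 < \infty$ (otherwise a decreasing nonnegative sequence can have $EX_n \equiv \infty$ while $X \equiv 0$, making the claim false; the statement should be read with $X_1$ integrable, consistent with the integrability hypothesis in the Fatou Lemma just quoted). Define the auxiliary sequence $Y_i = X_1 - X_i$. Then $Y_i \geq 0$, $Y_i$ is non-decreasing, and $Y_i \uparrow X_1 - X$ a.s. Applying the increasing case already proved, $EY_i \to E(X_1 - X)$, i.e.\ $EX_1 - EX_i \to EX_1 - EX$, and since $EX_1 < \infty$ we may subtract it from both sides to conclude $EX_i \to EX$.

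The only real obstacle is the bookkeeping in the decreasing case: I need to make sure the integrability of $X_1$ is available so that $Y_i$ is nonnegative and integrable (hence Fatou applies in the form stated), and so that the final subtraction of the finite quantity $EX_1$ is legitimate. The increasing case is essentially immediate once Fatou's lemma is in hand, so the whole proof is short, with the decreasing case reduced by the standard trick of passing to the complementary sequence $X_1 - X_i$.
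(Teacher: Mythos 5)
Your argument is correct, and your caveat in the decreasing case is genuinely needed: as stated, the theorem is false without integrability of some $X_n$ (take $X_n(\omega)=1/(n\omega)$ on $(0,1)$ with Lebesgue measure, so $EX_n=\infty$ for all $n$ while $X_n\downarrow 0$), so reading an implicit integrability hypothesis into the statement, as you do, is the right repair. Note, however, that the paper does not prove this theorem at all: its ``proof'' is only a pointer to the analysis literature (e.g.\ \cite{lojasiewicz}), so there is no argument of the paper's to compare yours against step by step. What your route buys is a short, self-contained deduction of the monotone convergence statement from Fatou's Lemma \ref{Fatou}, with the decreasing case handled by the standard complementation trick $Y_i=X_1-X_i$. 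The one logical caution is ordering: in most textbook developments Fatou is itself deduced from monotone convergence, so your derivation is non-circular only because the paper grants Fatou as an independent black box (it, too, is stated without proof); if one had to build everything from the definition of the integral, one would prove monotone convergence first, or prove Fatou directly. A second small point: the paper's version of Fatou assumes the $X_i$ are integrable, so in your increasing case you should note that if some $EX_i=\infty$ then $EX_n=\infty$ eventually and $EX=\infty$, making the conclusion trivial, and Fatou is only invoked when all $X_i$ are integrable. With those two remarks your proof is complete.
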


\begin{proof}
Proof of the Lebesgue theorem as well as of Fatou's lemma one can find in any
book on analysis containing a theory of integration. One can find it in e.g.
\cite{lojasiewicz}.
\end{proof}

\begin{corollary}
\label{zbieznoscbezwzgledna} If the series $\sum_{i\geq1}E\left\vert
X_{i}\right\vert $ converges, then the series $\sum_{i\geq1}X_{i}$ converges
almost surely.
\end{corollary}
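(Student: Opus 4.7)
The plan is to deduce almost sure convergence of $\sum_{i\geq 1} X_i$ from almost sure absolute convergence, that is, from $\sum_{i\geq 1} |X_i| < \infty$ a.s. Once the series of absolute values is finite almost everywhere, the partial sums $S_n = \sum_{i=1}^n X_i$ form (on that full-measure event) a Cauchy sequence of real numbers, hence convergent.

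First I would set $T_n = \sum_{i=1}^n |X_i|$. The sequence $\{T_n\}_{n\geq 1}$ is nonnegative and nondecreasing, so by the monotone version of Lebesgue's theorem quoted just above, $T_n \uparrow T := \sum_{i\geq 1} |X_i|$ almost surely and
\[
E T = \lim_{n\to\infty} E T_n = \lim_{n\to\infty} \sum_{i=1}^n E|X_i| = \sum_{i\geq 1} E|X_i| < \infty,
\]
where the last quantity is finite by hypothesis. (Alternatively, this is Beppo Levi / monotone convergence applied directly, with $E$ and $\sum$ interchanged.)

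Since $ET<\infty$, the nonnegative random variable $T$ is integrable and hence finite almost surely: there is an event $\Omega_0$ with $P(\Omega_0)=1$ such that for every $\omega\in\Omega_0$, $\sum_{i\geq 1}|X_i(\omega)|<\infty$.

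Finally, for each fixed $\omega\in\Omega_0$ the numerical series $\sum_{i\geq 1} X_i(\omega)$ is absolutely convergent, and absolute convergence of a series of real numbers implies ordinary convergence (the partial sums are Cauchy because $|S_{n+p}(\omega)-S_n(\omega)|\leq \sum_{i=n+1}^{n+p}|X_i(\omega)|\to 0$). Therefore $\sum_{i\geq 1} X_i$ converges on $\Omega_0$, i.e.\ almost surely. There is no real obstacle here; the only point worth noticing is the legitimacy of swapping $E$ and the (infinite) sum, which is exactly what Lebesgue's monotone theorem delivers for the nonnegative partial sums $T_n$.
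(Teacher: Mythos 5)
Your proof is correct and follows essentially the same route as the paper: apply the monotone (Lebesgue/Beppo Levi) convergence theorem to the increasing partial sums of $\left\vert X_{i}\right\vert$, conclude the limit is integrable hence finite almost surely, and deduce ordinary convergence from absolute convergence pointwise. Nothing further is needed.
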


\begin{proof}
A sequence of the random variables $\left\{  \sum_{i=1}^{n}\left\vert
X_{i}\right\vert \right\}  _{n\geq1}$ is increasing almost surely, hence by
the Lebesgue theorem, if only the sequence of its expectations converges to a
finite limit, then the sequence converges almost surely to an integrable
limit, that is obviously finite almost surely. Hence, the series $\sum
_{i\geq1}X_{i}$ converges absolutely for almost every $\omega$. In particular,
it converges also conditionally.
\end{proof}

It will turn out that the following notion of \emph{uniform integrability of
the family of the random variables} and its properties are of use.

\begin{proposition}
\label{alfa_calk}Let us assume that the sequence $\{X_{n}\}_{n\geq1}$
converges in probability to $X$, and moreover , that for some $\alpha>1$
\[
\underset{n}{\sup}E\left\vert X_{n}\right\vert ^{\alpha}<\infty,
\]
then
\[
E\underset{n\rightarrow\infty}{\lim}X_{n}=\underset{n\rightarrow\infty}{\lim
}EX_{n},
\]
and
\[
E\left\vert X_{n}-X\right\vert \underset{n\rightarrow\infty}{\rightarrow}0.
\]

\end{proposition}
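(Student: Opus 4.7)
My plan is to establish the stronger conclusion $E|X_n - X| \to 0$, from which $EX_n \to EX$ follows by the triangle inequality $|EX_n - EX| \le E|X_n - X|$. The natural machinery is the Vitali-type principle: convergence in probability together with uniform integrability of $\{X_n\}$ is equivalent to $L^1$ convergence. The hypothesis $\sup_n E|X_n|^\alpha < \infty$ with $\alpha > 1$ is precisely what delivers uniform integrability.

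First I would verify uniform integrability by the standard Markov-type estimate. Setting $M = \sup_n E|X_n|^\alpha < \infty$, for every $K > 0$ and every $n$,
\[
E|X_n| I(|X_n| > K) \le K^{1-\alpha} E|X_n|^\alpha I(|X_n| > K) \le K^{1-\alpha} M,
\]
and since $\alpha > 1$ this goes to $0$ as $K \to \infty$ uniformly in $n$. Next I would check that $X$ itself is integrable. From convergence in probability one can extract a subsequence $X_{n_k} \to X$ almost surely; Fatou's lemma (Lemma~\ref{Fatou}) then yields $E|X|^\alpha \le \liminf_k E|X_{n_k}|^\alpha \le M$, so $X \in L^\alpha \subset L^1$.

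Now I would estimate $E|X_n - X|$. Fix $\varepsilon > 0$ and $K > 0$ and split
\[
E|X_n - X| = E|X_n - X| I(|X_n - X| \le \varepsilon) + E|X_n - X| I(|X_n - X| > \varepsilon).
\]
The first summand is bounded by $\varepsilon$. For the second, the bound $|X_n - X| \le |X_n| + |X|$ reduces matters to controlling $E|X_n| I(A_n)$ and $E|X| I(A_n)$, where $A_n = \{|X_n - X| > \varepsilon\}$. On $\{|X_n| \le K\}$ the first of these is at most $K \cdot P(A_n)$, which vanishes as $n \to \infty$ by convergence in probability; on $\{|X_n| > K\}$ it is at most $K^{1-\alpha} M$ by Step~1. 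For the term $E|X| I(A_n)$, the integrands are dominated by $|X| \in L^1$ and the indicators tend to $0$ in probability, so the integral tends to $0$. Thus $\limsup_n E|X_n - X| \le \varepsilon + K^{1-\alpha} M$, and letting $K \to \infty$ and then $\varepsilon \to 0$ completes the proof.

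The main obstacle is the step $E|X| I(A_n) \to 0$: we have only convergence in probability of $I(A_n)$ to $0$, not almost sure convergence, so Lebesgue's monotone/dominated convergence theorem as stated above does not apply directly. I would resolve this by the subsequence principle: every subsequence of $\{A_n\}$ admits a sub-subsequence along which $I(A_{n_{k_j}}) \to 0$ almost surely, whence dominated convergence gives $E|X| I(A_{n_{k_j}}) \to 0$; since every subsequence of the nonnegative scalar sequence $E|X| I(A_n)$ has a sub-subsequence converging to $0$, the whole sequence converges to $0$. Everything else is an application of Markov's inequality and the hypothesis.
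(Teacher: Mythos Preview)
Your proof is correct and follows the same overall strategy as the paper: the moment bound gives uniform integrability (this is the paper's Example~\ref{alpha}), and uniform integrability together with convergence in probability yields $L^1$ convergence (the paper's ``Theorem on convergence in $L_1$''). The paper's actual proof of Proposition~\ref{alfa_calk} is just a two-line appeal to these two ingredients.

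Where you differ is in the execution of the Vitali step. The paper proves the $L^1$ convergence theorem via the truncation $\varphi_K(x) = \max(-K,\min(K,x))$: one controls $E|\varphi_K(X_n)-X_n|$ and $E|\varphi_K(X)-X|$ by uniform integrability, and then $E|\varphi_K(X_n)-\varphi_K(X)|$ by the bounded convergence theorem (Theorem~\ref{UI2}), which the paper proves directly for sequences converging in probability. Your route instead splits on the event $\{|X_n-X|>\varepsilon\}$ and handles the $E|X|\,I(A_n)$ term via the subsequence principle and dominated convergence. Both are standard; the truncation approach is a bit more symmetric and avoids the subsequence detour, while yours is more hands-on but requires that extra argument to upgrade convergence in probability of the indicator to convergence of the integral.
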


\begin{proof}
Can be found in Appendix \ref{UnInt}.
\end{proof}

\section{A few numerical lemmas}

\label{lemliczbowe} The lemmas presented below come mainly from papers
\cite{[SZA22]}, \cite{[SZA320]} and \cite{Sza872}.

We will start by recalling some basic facts.

\begin{proposition}
\label{o_exp}$\forall$ $x\in%
\mathbb{R}
:\exp(-x)\geq1-x.$
\end{proposition}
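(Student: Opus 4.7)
The plan is to reduce the inequality to a single–variable calculus exercise by introducing the auxiliary function
\[
f(x) = \exp(-x) - 1 + x,
\]
and showing that $f(x) \geq 0$ for every $x \in \mathbb{R}$. This is equivalent to the stated inequality, so it suffices to locate the global minimum of $f$ and verify that the minimum value is zero.

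First I would compute the derivative $f'(x) = 1 - \exp(-x)$, which is well defined on all of $\mathbb{R}$ since $\exp$ is smooth. Observing that $\exp$ is strictly increasing, I get $f'(x) < 0$ for $x < 0$, $f'(0) = 0$, and $f'(x) > 0$ for $x > 0$. Hence $f$ is strictly decreasing on $(-\infty,0]$ and strictly increasing on $[0,\infty)$, so $x = 0$ is the unique global minimum of $f$. A direct evaluation gives $f(0) = \exp(0) - 1 + 0 = 0$, and therefore $f(x) \geq f(0) = 0$ for every real $x$, which is exactly the desired inequality. Equality holds if and only if $x = 0$.

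There is really no obstacle here; the only small point worth noting is that the argument is uniform in the sign of $x$, so one does not need to split into cases (for $x \geq 1$ the right hand side $1-x$ is already non-positive while $\exp(-x) > 0$, but this observation is not needed with the monotonicity proof). An alternative route would be to invoke the convexity of $\exp$ and the tangent–line inequality $\exp(y) \geq 1 + y$ at $y = 0$, then substitute $y = -x$; I would mention this as a one–line variant but keep the derivative argument as the primary proof since it is fully self-contained and assumes nothing beyond differentiability of $\exp$.
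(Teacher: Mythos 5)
Your argument is correct: the function $f(x)=\exp(-x)-1+x$ has derivative $f'(x)=1-\exp(-x)$, which changes sign exactly at $x=0$, so $f$ attains its global minimum value $f(0)=0$ there, giving $\exp(-x)\geq 1-x$ for all real $x$ with equality only at $x=0$. The paper's own proof is precisely the one-line variant you mention at the end: it simply invokes the convexity of $x\mapsto\exp(-x)$, so that the graph lies above its tangent line $1-x$ at $x=0$. The two routes are essentially equivalent in content (your monotonicity analysis of $f'$ is what underlies the tangent-line inequality for a convex function), but your version is more self-contained, requiring only differentiability and monotonicity of $\exp$, whereas the paper's version is shorter at the cost of appealing to the general fact that a convex differentiable function dominates its tangents. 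Either is acceptable; there is no gap.
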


\begin{proof}
follows directly convexity of the function $\exp(-x).$
\end{proof}

\begin{proposition}
\label{o_iloczynach}Let $\left\{  a_{n}\right\}  _{n\geq1}$ be a number sequence.

$i)$ If
\[
\exists N\,\forall n\geq N:a_{n}\geq0\text{ or }a_{n}\leq0,
\]
then an infinite product $\prod_{i\geq1}(1+a_{n})$ converges if and only if,
the series $\sum_{i\geq1}a_{n}$ converges.

$ii)$ If the series $\sum_{i\geq1}a_{n}$ and $\sum_{i\geq1}a_{n}^{2}$ are
convergent then convergent is also the infinite product $\prod_{i\geq
1}(1+a_{n}).$
\end{proposition}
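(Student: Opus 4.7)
The plan is to pass from the infinite product to the logarithmic series, exploiting the fact that $\prod_{n\ge 1}(1+a_n)$ converges (to a nonzero limit) if and only if $\sum_{n\ge N}\log(1+a_n)$ converges for some $N$ large enough that $1+a_n>0$ for $n\ge N$. Since both hypotheses of the proposition force $a_n\to 0$ (in (i) because $\sum a_n$ has constant-sign tail and this is a necessary condition; in (ii) again because $\sum a_n$ converges; conversely, convergence of a product requires $1+a_n\to 1$), I may discard finitely many initial factors and assume throughout that $|a_n|$ is as small as I wish.

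For part (i), I would use the limit comparison test on series of terms of constant sign. Assume $a_n\ge 0$ for $n\ge N$ (the case $a_n\le 0$ is identical after noting $a_n\to 0$ keeps $1+a_n$ bounded away from $0$). From the asymptotic $\log(1+x)/x\to 1$ as $x\to 0$, the nonnegative terms $a_n$ and $\log(1+a_n)$ satisfy $\log(1+a_n)/a_n\to 1$. Hence by the limit comparison test, $\sum_{n\ge N}a_n$ and $\sum_{n\ge N}\log(1+a_n)$ converge simultaneously, which in view of the first paragraph is exactly the equivalence claimed.

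For part (ii), I would replace the first-order comparison by the second-order one. The elementary inequality $|\log(1+x)-x|\le x^{2}$ valid for $|x|\le 1/2$ (proved by Taylor's theorem with remainder, or directly by integrating the expansion of $1/(1+t)$) yields, for $n$ large enough that $|a_n|\le 1/2$,
\[
\bigl|\log(1+a_n)-a_n\bigr|\le a_n^{2}.
\]
Thus $\sum_{n\ge N}\bigl(\log(1+a_n)-a_n\bigr)$ converges absolutely, being dominated by $\sum a_n^{2}<\infty$. Adding the convergent series $\sum a_n$ gives convergence of $\sum\log(1+a_n)$, hence of the product.

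The only subtle point is bookkeeping around the definition of convergence of infinite products. One must check that the removal of finitely many initial factors does not affect the equivalence, and that convergence of the log series indeed delivers a nonzero product limit; both are standard but deserve mention. The genuinely interesting calculation is the quadratic estimate in part (ii), where the sign-cancellation in $\sum a_n$ alone would be insufficient, and the hypothesis $\sum a_n^{2}<\infty$ is precisely what controls the error in the first-order Taylor approximation of $\log(1+x)$.
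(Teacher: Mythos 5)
Your argument is correct, and it is essentially the classical textbook proof: pass to $\sum\log(1+a_n)$ after discarding finitely many factors, use two-sided comparison of $\log(1+a_n)$ with $a_n$ on a constant-sign tail for part (i), and the quadratic estimate $\left\vert \log(1+x)-x\right\vert \leq x^{2}$ for $\left\vert x\right\vert \leq 1/2$ together with absolute convergence of the error series for part (ii). The paper itself offers no argument at all here -- its ``proof'' is a citation to the second volume of Fichtenholz -- so your write-up supplies exactly the proof the reference contains, rather than a genuinely different route. Two small bookkeeping remarks: the equivalence ``product converges iff the log series converges'' uses the convention that a convergent infinite product has a nonzero limit (you note this, and it is needed, since with $a_n\leq 0$ a product tending to $0$ would otherwise give a counterexample); and the ratio form of the limit comparison in part (i) is undefined when infinitely many $a_n$ vanish, which is harmless if you instead use the inequalities $\frac{x}{1+x}\leq\log(1+x)\leq x$ valid for $x>-1$ on the tail where $1+a_n\geq 1/2$. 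Neither point affects the substance of the proof.
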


\begin{proof}
Can be found in e.g. second volume of \cite{Fihtenholtz64}
\end{proof}

\begin{lemma}
\label{zmaxem} Let $\left\{  d_{i}\right\}  _{i\geq1}$, $\left\{  \epsilon
_{i}\right\}  _{i\geq1}$, $\left\{  \lambda_{i}\right\}  _{i\geq1}$ be three
nonnegative number sequences such that:
\[
{\large \exists}N{\large \,\,\,\forall}n{\large \geq}N\,\,\,d_{n+1}\leq
\lambda_{n}\max(d_{n},\epsilon_{n}).
\]
If only
\[
\underset{n,k}{\sup}\prod_{i=k}^{n}\lambda_{i}<\infty\text{, and
}{\large \forall}k:\prod_{i=k}^{n}\lambda_{i}\underset{n\rightarrow
\infty}{\rightarrow}0,
\]
then:
\[
\underset{k\rightarrow\infty}{\lim\inf}\,\;d_{k}\underset{n\geq k}{\sup}%
\prod_{i=k}^{n}\lambda_{i}\leq\underset{k\rightarrow\infty}{\lim\sup
}\;\epsilon_{k}\underset{n\geq k}{\sup}\prod_{i=k}^{n}\lambda_{i}.
\]

\end{lemma}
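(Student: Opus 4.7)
The plan is to iterate the recurrence $d_{n+1}\le\lambda_n\max(d_n,\epsilon_n)$ explicitly, then weight the resulting bound by $M_n:=\sup_{l\ge n}\prod_{i=n}^{l}\lambda_i$, exploiting a telescoping identity for the products. Write $S_k:=d_kM_k$ and $T_k:=\epsilon_kM_k$, so the goal becomes $\liminf_k S_k\le\limsup_k T_k$.

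First I would prove by induction on $n>k\ge N$ that
\[
d_n\le\max\Bigl(d_k\prod_{j=k}^{n-1}\lambda_j,\;\max_{k\le m\le n-1}\epsilon_m\prod_{j=m}^{n-1}\lambda_j\Bigr).
\]
The base case $n=k+1$ is exactly the hypothesis. The induction step distributes $\lambda_n$ through the inductive max, absorbing it into the products, and then takes a further max with $\lambda_n\epsilon_n=\epsilon_n\prod_{j=n}^{n}\lambda_j$.

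Next I would multiply the bound above by $M_n$ and apply the key telescoping identity
\[
\prod_{j=m}^{n-1}\lambda_j\cdot M_n=\sup_{l\ge n}\prod_{i=m}^{l}\lambda_i\le M_m,\qquad m\le n-1,
\]
which yields, for each fixed $k\ge N$ and all $n>k$,
\[
S_n\le\max\Bigl(d_k\sup_{l\ge n}\prod_{i=k}^{l}\lambda_i,\;\sup_{m\ge k}T_m\Bigr).
\]
The hypothesis $\prod_{i=k}^{l}\lambda_i\to0$ as $l\to\infty$ forces the tail supremum $\sup_{l\ge n}\prod_{i=k}^{l}\lambda_i$ to tend to $0$ as $n\to\infty$ as well, so the first entry of the max vanishes. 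Hence $\limsup_n S_n\le\sup_{m\ge k}T_m$ for every $k\ge N$. Letting $k\to\infty$, the right side decreases monotonically to $\limsup_m T_m$, which gives $\liminf_n S_n\le\limsup_n S_n\le\limsup_m T_m$, the stated inequality.

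The main obstacle is spotting the correct weighting: the reason $M_n$ is the right multiplier is precisely the telescoping $\prod_{j=m}^{n-1}\lambda_j\cdot\prod_{i=n}^{l}\lambda_i=\prod_{i=m}^{l}\lambda_i$, which re-indexes each iterated contribution into a clean expression $\epsilon_m M_m=T_m$. A secondary subtlety is the decay of the ``initial-condition'' term $d_k\sup_{l\ge n}\prod_{i=k}^{l}\lambda_i$ as $n\to\infty$ — this uses vanishing of a tail supremum, not merely of individual products, but a short $\varepsilon$-threshold argument extracts this from the pointwise hypothesis, with boundedness $\sup_{n,k}\prod_{i=k}^{n}\lambda_i<\infty$ ensuring everything in sight stays finite.
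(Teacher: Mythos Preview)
Your proof is correct, and in fact you have established the stronger inequality
\[
\limsup_{n}d_nM_n\le\limsup_{m}\epsilon_mM_m,
\]
not merely the $\liminf\le\limsup$ version stated. (This stronger form is exactly what the paper uses when it later applies the lemma inside the proof of Lemma~\ref{iteracje}.)

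Your route differs genuinely from the paper's. You argue directly: unroll the recurrence into the explicit bound
\[
d_n\le\max\Bigl(d_k\prod_{j=k}^{n-1}\lambda_j,\ \max_{k\le m\le n-1}\epsilon_m\prod_{j=m}^{n-1}\lambda_j\Bigr),
\]
multiply through by $M_n$, and use the telescoping identity $\prod_{j=m}^{n-1}\lambda_j\cdot M_n\le M_m$ to collapse each $\epsilon$-contribution into a clean $T_m$. Sending $n\to\infty$ kills the initial-condition term, then $k\to\infty$ finishes. The paper instead argues by contradiction: it supposes there is a threshold $\theta$ separating $\liminf d_kq_k$ from $\limsup\epsilon_kq_k$, defines the set $\mathcal{K}$ of indices where $d_kq_k\ge\theta$, and shows via a backward-propagation argument (if $m\in\mathcal{K}$ then $m-1\in\mathcal{K}$) that $\mathcal{K}$ must contain an entire tail, which is then ruled out by iterating the recurrence far enough that the product $\prod\lambda_i$ drives everything below $\theta$. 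Your approach is more transparent and constructive; the paper's is perhaps more suggestive of how one might discover the result in the first place (by asking ``can $d_kq_k$ stay large forever?''), but it is also somewhat harder to read because of index bookkeeping.
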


\begin{proof}
Let us denote $J_{n,k}=\prod_{i=k}^{n}\lambda_{i}$, $q_{k}\allowbreak
=\allowbreak\sup J_{n,k}$. From assumptions \emph{\ }it follows that $\forall
k\,\,\,\,J_{n,k}$\emph{\ }$\underset{n\rightarrow\infty}{\rightarrow}0$, and
that $\underset{k}{\sup\,}q_{k}<\infty$. If $\underset{k\rightarrow
\infty}{\lim\inf}\,\allowbreak\epsilon_{k}\allowbreak\underset{n}{\sup}%
\prod_{i=k}^{n}\lambda_{i}\allowbreak=\infty$ then, the lemma is true. Let us
assume that this quantity is finite. Let us suppose also that the lemma is not
true. Then there exists such constant $\theta$ and a sequence $\left\{
k_{i}\right\}  $ of naturals that $d_{k_{i}}q_{k_{i}}\geq\theta
>\underset{k\rightarrow\infty}{\lim\inf}\,\epsilon_{k}q_{k}$. Let us denote
$\mathcal{M\allowbreak=\allowbreak}\left\{  i:d_{i}q_{i}\geq\theta\right\}  $.
By definition of the upper bound we have $\exists j$ $\forall i\geq j:$
$q_{i}\epsilon_{i}<\theta$. Let us set $\mathcal{K=M}\cap\allowbreak\left\{
i:i\geq j\right\}  \cap\allowbreak\left\{  n:n\geq N\right\}  $,
$k=\inf\mathcal{K}$. $k$ exists and belongs to $\mathcal{K}$, since
$\mathcal{K}$ is a subset of natural numbers. Let us take any $m\in
\mathcal{K}$ such that $m>k$. Then we have $m>j$ and
\begin{align*}
q_{m-1}\epsilon_{m-1}  &  <\theta\leq q_{m}d_{m}\leq q_{m}\lambda_{m-1}%
\max(d_{m-1},\epsilon_{m-1})\\
&  \leq q_{m-1}\max(d_{m-1},\epsilon_{m-1}).
\end{align*}
Hence $q_{m-1}d_{m-1}\geq\theta$. It means that $m-1\in\mathcal{K}$. Similarly
one can show that $m-2,\ldots,m-(m-k-1)\in\mathcal{K}$. Since $m$ was selected
to any member of $\mathcal{K}$ greater than any $k$ we see that $\forall m\geq
k$, $m\in\mathcal{K}$. This is however, impossible since taking $n$ big
enough, to satisfy $J_{n,k}d_{k}\sup q_{m}<\theta$ (our assumptions assure
that it is possible) and using definition of the set $\mathcal{K}$ we get:
\begin{align*}
\theta &  \leq q_{n+1}d_{n+1}\leq\max(q_{n+1}J_{n,k}d_{k},q_{n+1}%
J_{n,k}\epsilon_{k},\ldots,q_{n+1}J_{n,n}\epsilon_{n})\\
&  \leq\max(q_{n+1}J_{n,k}d_{k},\underset{m\geq k}{\sup}\epsilon_{m}%
q_{m})<\theta.
\end{align*}
since obviously by the definition of $J_{n,k}$ and $q_{k}$, $n\geq k\geq1$ we
have
\[
q_{n+1}J_{n,i}=J_{n+1,i}\leq q_{i}\text{ for }1\leq i\leq n.
\]
Thus, $\mathcal{K}$ has to be finite.
\end{proof}

\begin{lemma}
\label{iteracje} Let $\left\{  \mu_{n}\right\}  _{n\geq0}$ be a number
sequence such that \newline\emph{i) }$\mu_{0}=1,\mu_{n}\in(0,1),$ $n\geq1$,
$\sum_{n\geq0}\mu_{n}=\infty,\,\mu_{n}\underset{n\rightarrow\infty
}{\rightarrow}0$, \newline Let further $\left\{  x_{n}\right\}  _{n\geq0}$ and
$\left\{  b_{n}\right\}  _{n\geq0}$ be such nonnegative number sequences, that
\newline\emph{ii)}
\[
x_{n+1}=(1-\mu_{n})x_{n}+\mu_{n}b_{n},\,\,n=0,1,\ldots\,.
\]
Then:
\[
\underset{n\rightarrow\infty}{\lim\inf}\,b_{n}\leq\underset{n\rightarrow
\infty}{\lim\inf}\,x_{n}\leq\underset{n\rightarrow\infty}{\lim\inf}\,x_{n}%
\leq\underset{n\rightarrow\infty}{\lim\inf}\,b_{n}.
\]

\end{lemma}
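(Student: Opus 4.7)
The plan is to rewrite the recursion explicitly as a weighted average of the $b_k$'s and then run a standard tail-truncation argument, noting that the statement as printed contains a typo and should read $\liminf b_n \leq \liminf x_n \leq \limsup x_n \leq \limsup b_n$.

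First I would unroll the recursion. By induction on $n$,
\[
x_{n+1}=\Bigl(\prod_{j=0}^{n}(1-\mu_j)\Bigr)x_0+\sum_{k=0}^{n} w_{k,n}\,b_k,\qquad w_{k,n}:=\mu_k\prod_{j=k+1}^{n}(1-\mu_j),
\]
with the empty product equal to $1$. Because $\mu_0=1$, the coefficient of $x_0$ is identically zero, so the recursion forgets the initial condition after one step. A short induction (using $w_{k,n}=(1-\mu_n)w_{k,n-1}$ for $k<n$ and $w_{n,n}=\mu_n$) gives $\sum_{k=0}^{n} w_{k,n}=1$, so $x_{n+1}$ is literally a convex combination of $b_0,\ldots,b_n$.

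Next I would establish the tail mass estimate. Fix $N\geq 1$. A telescoping computation yields
\[
\sum_{k=0}^{N-1} w_{k,n}=\prod_{j=N}^{n}(1-\mu_j),\qquad \sum_{k=N}^{n} w_{k,n}=1-\prod_{j=N}^{n}(1-\mu_j).
\]
Applying Proposition \ref{o_exp} term by term gives $\prod_{j=N}^{n}(1-\mu_j)\leq \exp\bigl(-\sum_{j=N}^{n}\mu_j\bigr)$, which tends to $0$ as $n\to\infty$ because $\sum_{n\geq 0}\mu_n=\infty$. Hence the weight carried by the initial segment $k<N$ vanishes, while the weight on the tail $k\geq N$ tends to $1$.

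With these tools the two inequalities are standard. For the upper bound, assume $\limsup b_n<\infty$ (otherwise there is nothing to prove), fix $\varepsilon>0$, and pick $N$ with $b_k\leq \limsup b_n+\varepsilon$ for all $k\geq N$. Writing $M=\max_{0\leq k<N}b_k$, the decomposition and the weight identities above give
\[
x_{n+1}\leq M\prod_{j=N}^{n}(1-\mu_j)+\bigl(\limsup b_n+\varepsilon\bigr)\Bigl(1-\prod_{j=N}^{n}(1-\mu_j)\Bigr),
\]
whose right-hand side tends to $\limsup b_n+\varepsilon$; letting $\varepsilon\downarrow 0$ yields $\limsup x_n\leq\limsup b_n$. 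For the lower bound, if $\liminf b_n=0$ there is nothing to show (since $x_n\geq 0$); otherwise, for $\varepsilon\in(0,\liminf b_n)$ choose $N$ with $b_k\geq\liminf b_n-\varepsilon$ for $k\geq N$, drop the nonnegative initial-segment contribution, and use $\sum_{k=N}^n w_{k,n}\to 1$ to get $\liminf x_n\geq\liminf b_n-\varepsilon$. The only real obstacle is the product estimate, already handled above via Proposition \ref{o_exp}; everything else is convex-combination bookkeeping and a routine $\varepsilon$-argument.
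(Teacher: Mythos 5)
Your proof is correct, and you have also correctly diagnosed the misprint: the intended conclusion (and what the paper actually proves) is $\liminf_n b_n \leq \liminf_n x_n \leq \limsup_n x_n \leq \limsup_n b_n$. Your route, however, is genuinely different from the paper's. You unroll the recursion explicitly: since $\mu_0=1$ the initial condition drops out and $x_{n+1}=\sum_{k=0}^{n}w_{k,n}b_k$ with $w_{k,n}=\mu_k\prod_{j=k+1}^{n}(1-\mu_j)$, row sums equal to $1$, and initial-segment mass $\sum_{k<N}w_{k,n}=\prod_{j=N}^{n}(1-\mu_j)\leq\exp\bigl(-\sum_{j=N}^{n}\mu_j\bigr)\to 0$; the two inequalities then follow by a Toeplitz-style $\varepsilon$-argument (this is essentially regularity of the induced summation matrix, in the spirit of Theorem \ref{toeplitz}). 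The paper instead applies Lemma \ref{zmaxem} twice: once directly, after showing $x_{n+1}\leq(1-\varepsilon\mu_n)\max(x_n,\epsilon_n)$ with $\epsilon_n=b_n/(1-\varepsilon)$, and once after passing to reciprocals $z_n=1/x_n$ to obtain the lower bound. Your argument is more elementary and in fact slightly more general, since you never use the hypothesis $\mu_n\to 0$, and it makes transparent that $x_{n+1}$ is a weighted (Riesz-type) mean of the $b_k$; what the paper's heavier route buys is that Lemma \ref{zmaxem} and the $\max$-type estimate are exactly the machinery reused later (Corollary \ref{nieiter}, where only the inequality $0\leq x_{n+1}\leq(1-\mu_n)x_n+\mu_n b_n$ is available, and the stochastic approximation convergence proofs), so the proof of this lemma doubles as a rehearsal of that technique; note that your exact unrolling still yields the upper bound under such an inequality, but not the lower one. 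The only cosmetic point in your write-up is the case $\liminf_n b_n=+\infty$ in the lower bound, where ``$\liminf b_n-\varepsilon$'' should be replaced by an arbitrary level $L$ with $b_k\geq L$ for $k\geq N$; the argument is otherwise unchanged.
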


\begin{proof}
We will prove first inequality $\underset{n\rightarrow\infty}{\lim\inf}%
\,x_{n}\leq\underset{n\rightarrow\infty}{\lim\inf}\,b_{n}$. Let us take
$\varepsilon\in(0,1)$ and consider inequality
\[
x_{n+1}\leq(1-\varepsilon\mu_{n})x_{n}.
\]
It is true, when $(\varepsilon-1)\mu_{n}x_{n}+\mu_{n}b_{n}\leq0$ or
equivalently, when
\[
x_{n}\geq\frac{b_{n}}{1-\varepsilon}\overset{df}{=}\epsilon_{n}.
\]
Let us now suppose, that $x_{n}<\epsilon_{n}$. We have then
\[
x_{n+1}=(1-\mu_{n})x_{n}+\mu_{n}b_{n}<(1-\mu_{n})\epsilon_{n}+\mu_{n}%
b_{n}=(1-\varepsilon\mu_{n})\epsilon_{n}.
\]
Hence in both cases for any $\varepsilon\in(0,1)$ we have:
\[
x_{n+1}\leq(1-\varepsilon\mu_{n})\max(x_{n},\epsilon_{n}).
\]
Since $\sum_{n\geq0}\mu_{n}=\infty$ and $\mu_{n}\underset{n\rightarrow
\infty}{\longrightarrow}0$, we have $\underset{n\rightarrow\infty}{\lim\inf
}(1-\varepsilon\mu_{n})>0$, and moreover , $\forall k\in%
\mathbb{N}
:$ $\prod_{n=k}^{N}(1-\varepsilon\mu_{n})\underset{N\rightarrow\infty
}{\rightarrow}0$ and
\[
\underset{N\geq k}{\sup}\prod_{n=k}^{N}(1-\varepsilon\mu_{n})\allowbreak
=\allowbreak(1-\varepsilon\mu_{k}).
\]
Now we apply Lemma \ref{zmaxem} and get
\[
\underset{n\rightarrow\infty}{\forall\varepsilon\in(0,1):\lim\sup
}(1-\varepsilon\mu_{n})x_{n}\leq\underset{n\rightarrow\infty}{\lim\sup
}(1-\varepsilon\mu_{n})\epsilon_{n}.
\]
Since, that $\mu_{n}\rightarrow0$ it is easy to get desired inequality.

We will prove now inequality $\underset{n\rightarrow\infty}{\lim\inf}%
\,b_{n}\leq\underset{n\rightarrow\infty}{\lim\inf}\,x_{n}$. Firstly, let us
notice that if $\underset{n\rightarrow\infty}{\lim\inf}\,b_{n}=0$, then there
is nothing to prove. Hence, let us assume that $\underset{n\rightarrow
\infty}{\lim\inf}\,b_{n}>0$ and let $j$ be the smallest index for which
$b_{j}>0$. Then let us notice that $\forall n>j$ $:x_{n}>0$. For $n>j$ let us
denote $z_{n}=1/x_{n}$. We have then:
\[
z_{n+1}=\frac{z_{n}}{(1-\mu_{n})+\mu_{n}b_{n}z_{n}}.
\]
Let us consider inequality
\[
z_{n+1}=\frac{z_{n}}{(1-\mu_{n})+\mu_{n}b_{n}z_{n}}\leq(1-\varepsilon\mu
_{n})z_{n},
\]
for some $1>\varepsilon>0$. It is true, when
\[
z_{n}\geq\frac{1+\varepsilon-\varepsilon\mu_{n}}{b_{n}(1-\varepsilon\mu_{n}%
)}\overset{df}{=}\epsilon_{n}.
\]
Further for $z_{n}<\epsilon_{n}$ we have
\[
z_{n+1}<\frac{\epsilon_{n}}{(1-\mu_{n})+\mu_{n}b_{n}\epsilon_{n}%
}=(1-\varepsilon\mu_{n})\epsilon_{n},
\]
since function $f(x)=\frac{x}{A+Bx}$ is decreasing for $x>0$ when $A,B>0$, and
Moreover, $\epsilon_{n}$ was defined in such way as to satisfy equation:
\[
\frac{1}{(1-\mu_{n})+\mu_{n}b_{n}\epsilon_{n}}=(1-\varepsilon\mu_{n}).
\]
Hence, in both cases
\[
z_{n+1}\leq(1-\varepsilon\mu_{n})\max(z_{n},\epsilon_{n}).
\]
Thus, utilizing Lemma \ref{zmaxem}, we get
\[
\underset{n\rightarrow\infty}{\lim\sup\,}z_{n}=\allowbreak\frac{1}%
{\underset{n\,\longrightarrow\infty}{\lim\inf}\,x_{n}}\leq
\underset{n\rightarrow\infty}{\,\lim\sup}\epsilon_{n}=\allowbreak
\frac{1+\varepsilon}{\underset{n\,\longrightarrow\infty}{\lim\inf}\,b_{n}}.
\]
Since $\varepsilon$ was any number this proves our inequality.
\end{proof}

\begin{corollary}
\label{nieiter} Let $\left\{  \mu_{n}\right\}  _{n\geq0}$ be a sequence
considered in lemma \ref{iteracje}. Let us assume that
\[
\exists N\,\,\,\,\forall n\geq N:b_{n}\geq0\,\,\,\text{and }\,0\leq
x_{n+1}\leq(1-\mu_{n})x_{n}+\mu_{n}b_{n}.
\]
Then
\[
\underset{n\rightarrow\infty}{\lim\inf}\,x_{n}\leq\underset{n\rightarrow
\infty}{\,\lim\sup}\,b_{n}.
\]

\end{corollary}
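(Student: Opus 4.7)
The plan is to dominate the sequence $\{x_n\}$ by an auxiliary sequence that actually satisfies the recursion of Lemma~\ref{iteracje} with equality, and then invoke that lemma. Fix an index $M \geq N+1$ so that $x_M \geq 0$ (such $M$ exists by the hypothesis $0\leq x_{n+1}$ for $n\geq N$). Define
\[
\tilde{x}_M = x_M,\qquad \tilde{x}_{n+1}=(1-\mu_n)\tilde{x}_n+\mu_n b_n,\quad n\geq M.
\]
Since $\mu_n\in(0,1)$, $b_n\geq0$ and $\tilde{x}_M\geq0$, a trivial induction gives $\tilde{x}_n\geq 0$ for every $n\geq M$. A second induction, using the hypothesis $x_{n+1}\leq(1-\mu_n)x_n+\mu_n b_n$ together with $\mu_n\in(0,1)$, shows $x_n\leq \tilde{x}_n$ for all $n\geq M$.

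Next I would apply Lemma~\ref{iteracje} to the nonnegative sequences $\{\tilde{x}_n\}_{n\geq M}$ and $\{b_n\}_{n\geq M}$. The only delicate point here is that Lemma~\ref{iteracje} was stated for a sequence $\{\mu_n\}_{n\geq 0}$ with $\mu_0=1$ and for a recursion starting at $n=0$, whereas we are starting at $n=M$. This is harmless: the conclusion of Lemma~\ref{iteracje} depends only on the tail properties $\mu_n\in(0,1)$, $\mu_n\to 0$ and $\sum_{n\geq M}\mu_n=\infty$, which are inherited from the original sequence. (Formally one may reindex by $\mu'_k=\mu_{M+k}$ for $k\geq 1$ and set $\mu'_0=1$; the shifted sequence still satisfies condition~(i) of Lemma~\ref{iteracje}.) Reading off the rightmost inequality in Lemma~\ref{iteracje} yields
\[
\limsup_{n\to\infty}\tilde{x}_n \;\leq\; \limsup_{n\to\infty} b_n.
\]

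Finally I would combine the two steps: since $x_n\leq \tilde{x}_n$ for every $n\geq M$, we have
\[
\liminf_{n\to\infty}x_n \;\leq\; \liminf_{n\to\infty}\tilde{x}_n \;\leq\; \limsup_{n\to\infty}\tilde{x}_n \;\leq\; \limsup_{n\to\infty}b_n,
\]
which is the desired conclusion. The main (and only) obstacle is the already-mentioned bookkeeping issue of the starting index and of the fact that Lemma~\ref{iteracje} is stated with $\mu_0=1$ playing a special role; once one notices that the conclusion of that lemma is purely asymptotic, the rest of the argument is just the monotone comparison $x_n\leq\tilde{x}_n$.
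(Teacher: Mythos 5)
Your argument is correct and is essentially the paper's own proof: your $\tilde{x}_n$ is exactly the paper's comparison sequence $t_n$ (the exact solution of the recursion started from the value of $x$ at a suitable index), the domination $x_n\leq \tilde{x}_n$ is established by the same induction, and the conclusion is read off from Lemma~\ref{iteracje} applied to $\{\tilde{x}_n\}$. Your remark on reindexing (the role of $\mu_0=1$ and the starting index) is a minor tidying of a point the paper passes over silently, not a different method.
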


\begin{proof}
Let us denote by $t_{n}$, $n\geq N$ a solution of the iterative equation
\[
t_{n+1}=(1-\mu_{n})t_{n}+\mu_{n}b_{n},
\]
with an initial condition $t_{N}=x_{N}$. Let us suppose further, that for
$k\allowbreak=\allowbreak N,N+1,\ldots,n$ we have $t_{k}\geq x_{k}$. Of
course, we have also
\[
t_{n+1}=(1-\mu_{n})x_{n}+\mu_{n}b_{n}\geq x_{n+1}.
\]
Thus, by the indiction assumption we deduce that $\forall n\geq N$ $:x_{n}\leq
t_{n}$. Hence, using lemma \ref{iteracje} we get:
\[
\underset{n\rightarrow\infty}{\lim\inf}\,x_{n}\leq\underset{n\rightarrow
\infty}{\lim\inf}\,t_{n}\leq\underset{n\rightarrow\infty}{\lim\inf}\,b_{n}.
\]

\end{proof}

\begin{definition}%
\index{Sequence!Normal}%
Positive number sequence $\left\{  \mu_{i}\right\}  _{i\geq0}$, satisfying
assumption \emph{i) }of the lemma \ref{iteracje} we will call \emph{normal}.
\end{definition}

\begin{lemma}
\label{podstawowy} Let the sequence $\left\{  \mu_{n}\right\}  _{n\geq0}$ be
normal. Let us assume that the sequences $\left\{  x_{n}\right\}  _{n\geq0}$
and $\left\{  b_{n}\right\}  _{n\geq0}$ are such that
\[
{\large \exists}N\,\,\,\,{\large \forall}n\geq N:x_{n+1}=(1-\mu_{n})x_{n}%
+\mu_{n}b_{n}.
\]
Then the following statements are equivalent:
\[
\sum_{n\geq0}\mu_{n}b_{n}\,\,\,\,\,\text{is convergent}\Longleftrightarrow
x_{n}\underset{n\rightarrow\infty}{\rightarrow}0\,\,\,\text{and }\sum_{n\geq
0}\mu_{n}x_{n}\text{ is convergent.}%
\]

\end{lemma}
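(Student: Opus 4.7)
The backbone of the proof is a summation-by-parts identity. Rewriting the recursion as $x_{n+1}-x_n=\mu_n(b_n-x_n)$ and summing from $n=N$ to $n=M$ yields
\[
x_{M+1}=x_N+\sum_{n=N}^{M}\mu_n b_n-\sum_{n=N}^{M}\mu_n x_n. \qquad(\star)
\]
This single identity ties the three quantities in the statement together and does most of the work in both directions.

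The direction $(\Leftarrow)$ is immediate from $(\star)$: if $x_n\to 0$ and $\sum\mu_n x_n$ converges, letting $M\to\infty$ shows that $\sum_{n=N}^{M}\mu_n b_n$ tends to a finite limit, so the whole series $\sum_{n\geq 0}\mu_n b_n$ converges.

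For the harder direction $(\Rightarrow)$, assume $\sum\mu_n b_n$ converges. It is enough to prove $x_n\to 0$, for then letting $M\to\infty$ in $(\star)$ immediately yields convergence of $\sum\mu_n x_n$. To establish $x_n\to 0$ I introduce the tail $t_n:=\sum_{i\geq n}\mu_i b_i$, which is well defined under the hypothesis, tends to $0$, and satisfies $\mu_n b_n=t_n-t_{n+1}$. Substituting and setting $y_n:=x_n+t_n$ transforms the original recursion into
\[
y_{n+1}=(1-\mu_n)y_n+\mu_n t_n,
\]
a first-order linear recursion whose forcing term now vanishes at infinity.

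Because $y_n$ and $t_n$ may change sign, Lemma \ref{iteracje} cannot be applied to $y_n$ directly. I pass to absolute values to obtain $|y_{n+1}|\leq(1-\mu_n)|y_n|+\mu_n|t_n|$ and compare with the equality-driven nonnegative sequence $z_{n+1}=(1-\mu_n)z_n+\mu_n|t_n|$ started at $z_N=|y_N|$; a one-line induction gives $|y_n|\leq z_n$ for all $n\geq N$. Lemma \ref{iteracje} applied to $z_n$ with source $|t_n|\to 0$ forces $z_n\to 0$, hence $|y_n|\to 0$, and thus $x_n=y_n-t_n\to 0$. Substituting $x_n\to 0$ back into $(\star)$ completes the proof. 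The expected main obstacle is precisely this forward direction: $(\star)$ alone says only that $x_{M+1}+\sum_{n=N}^{M}\mu_n x_n$ has a finite limit, which is one relation in two unknown limits. The tail substitution $y_n=x_n+t_n$ is the decisive device that decouples them by reducing the problem to a recursion driven by a source converging to zero, to which the earlier numerical lemmas apply.
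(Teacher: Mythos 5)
Your proof is correct and follows essentially the same route as the paper: the same summation identity, the same tail substitution ($t_n$ and $y_n=x_n+t_n$ are exactly the paper's $G_n$ and $D_n$), and the same comparison-with-the-equality-case device. The only difference is that you carry out that comparison argument inline via Lemma \ref{iteracje}, whereas the paper delegates it to Corollary \ref{nieiter}; your version is if anything slightly more explicit, since it yields the needed $\limsup|y_n|\leq\limsup|t_n|=0$ directly.
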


\begin{proof}
First let us add side by side the equality $x_{n+1}=(1-\mu_{n})x_{n}+\mu
_{n}b_{n}$ for $n=N,\ldots,M$. We get then $\sum_{n=N}^{M}x_{n+1}=\sum
_{n=N}^{M}x_{n}-\sum_{n=N}^{M}\mu_{n}x_{n}+\sum_{n=N}^{M}\mu_{n}b_{n}$, which
after little algebra reduces to the following equality:
\begin{equation}
x_{M+1}-x_{N}=-\sum_{n=N}^{M}\mu_{n}x_{n}+\sum_{n=N}^{M}\mu_{n}b_{n}.
\label{tozsamosc_podstawowa}%
\end{equation}
Proof of the implication $\Longleftarrow$. Taking $N=0$ in the above identity,
passing with $M$ to infinity and taking into account assumptions, we get
convergence of the series $\sum_{n\geq0}\mu_{n}b_{n}$ .

Proof of the implication $\Longrightarrow$. Let us denote $G_{n}=\sum_{i\geq
n}\mu_{i}b_{i}\,$ and $D_{n}=x_{n}+G_{n}$. For $n\geq N$ let us add $G_{n+1}$
to both sides of equality $x_{n+1}=(1-\mu_{n})x_{n}+\mu_{n}b_{n}$. We obtain
then:
\[
D_{n+1}=(1-\mu_{n})x_{n}+G_{n}\allowbreak=(1-\mu_{n})D_{n}+\mu_{n}G_{n}.
\]
Let us denote further $d_{n}=\left\vert D_{n}\right\vert $ and $g_{n}%
=\left\vert G_{n}\right\vert $. We have then
\[
d_{n+1}\leq(1-\mu_{n})d_{n}+\mu_{n}g_{n}.
\]
Now we apply corollary \ref{nieiter} and deduce that $\underset{n\rightarrow
\infty}{\lim}D_{n}=0$, since of course $\underset{n\rightarrow\infty}{\lim
}G_{n}\allowbreak=\allowbreak0$, consequently we have $\underset{n\rightarrow
\infty}{\lim}g_{n}=0$. Further, since $\underset{n\rightarrow\infty}{\lim
}G_{n}=0$, we get $\underset{n\rightarrow\infty}{\lim}x_{n}=0$. To prove
convergence of the series $\sum_{n\geq0}\mu_{n}x_{n}$ we utilize identity
(\ref{tozsamosc_podstawowa}), assumed convergence of the series $\sum_{n\geq
0}\mu_{n}b_{n}$ and proved convergence to zero of the sequence $\left\{
x_{n}\right\}  .$
\end{proof}

The following corollary can be deduced from the above-mentioned lemma.

\begin{corollary}
\label{podstawowy1} Let $\left\{  \mu_{n}\right\}  _{n\geq0}$ be a normal
sequence i.e. considered in lemma \ref{iteracje}. Let us assume that
\[
\exists N\,\,\,\,\forall n\geq N:b_{n}\geq0\,\,\text{\thinspace\ and }\,0\leq
x_{n+1}\leq(1-\mu_{n})x_{n}+\mu_{n}b_{n},
\]
then the following implication is true:
\[
\sum_{n\geq0}\mu_{n}b_{n}\,\,\,\,\,\text{is convergent}\Rightarrow
x_{n}\underset{n\rightarrow\infty}{\rightarrow}0\,\,\,\text{and }\sum_{n\geq
0}\mu_{n}x_{n}\text{ is convergent.}%
\]

\end{corollary}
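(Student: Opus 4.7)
The strategy is to dominate $\{x_n\}$ by the sequence satisfying the corresponding equality and then invoke Lemma \ref{podstawowy}, mirroring the way Corollary \ref{nieiter} was deduced from Lemma \ref{iteracje}.

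More precisely, I would define an auxiliary sequence $\{t_n\}_{n\geq N}$ by the initial condition $t_N = x_N$ and the recursion
\[
t_{n+1}=(1-\mu_n)t_n+\mu_n b_n,\qquad n\geq N.
\]
Since $b_n\geq 0$, $x_N\geq 0$, and $\mu_n\in(0,1)$, a trivial induction shows $t_n\geq 0$ for every $n\geq N$. A second induction, entirely analogous to the one in the proof of Corollary \ref{nieiter}, compares $x_n$ with $t_n$: assuming $x_n\leq t_n$, the hypothesis of the corollary together with $\mu_n\in(0,1)$ gives
\[
x_{n+1}\leq (1-\mu_n)x_n+\mu_n b_n\leq (1-\mu_n)t_n+\mu_n b_n=t_{n+1},
\]
so $0\leq x_n\leq t_n$ for all $n\geq N$.

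Now Lemma \ref{podstawowy} applies directly to the pair $(\{t_n\},\{b_n\})$, because $\{t_n\}$ satisfies the recursion with equality from index $N$ onward. Under the assumption that $\sum_{n\geq 0}\mu_n b_n$ converges, the lemma yields $t_n\to 0$ and convergence of $\sum_{n\geq 0}\mu_n t_n$. The comparison $0\leq x_n\leq t_n$ then gives $x_n\to 0$ immediately, and $0\leq \mu_n x_n\leq \mu_n t_n$ together with the comparison test for series of nonnegative terms yields convergence of $\sum_{n\geq 0}\mu_n x_n$, which is the desired conclusion.

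The argument is essentially bookkeeping; the only subtlety is checking that nonnegativity of $\{t_n\}$ (needed so that the inequality $x_n\leq t_n$ has the right sign when multiplied by $\mu_n$) really propagates, and this follows at once from $b_n\geq 0$ and $\mu_n\in(0,1)$. I do not anticipate any genuine obstacle, since all the analytic work was done in Lemma \ref{podstawowy}; the role of this corollary is only to replace an equality hypothesis with an inequality one.
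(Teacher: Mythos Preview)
Your proposal is correct and follows essentially the same approach as the paper's own proof: introduce the dominating sequence $\{t_n\}$ satisfying the recursion with equality (exactly as in the proof of Corollary~\ref{nieiter}), apply Lemma~\ref{podstawowy} to it, and then use the pointwise comparison $0\leq x_n\leq t_n$ together with nonnegativity to transfer both conclusions to $\{x_n\}$. Your write-up is in fact somewhat more explicit about the nonnegativity check for $\{t_n\}$ and the comparison test, but the underlying argument is identical.
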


\begin{proof}
At the beginning, we argue as in the proof of the Corollary \ref{nieiter}
introducing sequence $\left\{  t_{n}\right\}  $ such that $x_{n}\leq t_{n},$
$n\geq0$. Next we apply Lemma \ref{podstawowy} to iterative equality defining
sequence $\left\{  t_{n}\right\}  _{n\geq1}$. We infer that convergence of the
series $\sum_{n\geq1}\mu_{n}b_{n}$ implies convergence of the sequence
$\left\{  t_{n}\right\}  _{n\geq1}$ to zero and convergence of the series
$\sum_{n\geq1}\mu_{n}t_{n}$. Remembering that the sequences $\left\{
x_{n}\right\}  _{n\geq1}$ and $\left\{  b_{n}\right\}  _{n\geq1}$ are
nonnegative it is now elementary to get the assertion.
\end{proof}

\begin{corollary}
\label{podobne}Let sequences $\left\{  \mu_{n}\right\}  _{n\geq0}$, $\left\{
\mu_{n}^{^{\prime}}\right\}  $ be normal. Let us assume that the sequences
$\left\{  x_{n}\right\}  _{n\geq0}$, $\left\{  x_{n}^{^{\prime}}\right\}
_{n\geq0}$and $\left\{  b_{n}\right\}  _{n\geq0}$ are such that:
\begin{align}
{\large \exists}N\,\,\,\,{\large \forall}n  &  \geq N:x_{n+1}=(1-\mu_{n}%
)x_{n}+\mu_{n}b_{n+1},\label{rown1}\\
x_{n+1}^{^{\prime}}  &  =(1-\mu_{n}^{^{\prime}})x_{n}^{^{\prime}}+\mu
_{n}^{^{\prime}}b_{n+1}. \label{rown2}%
\end{align}
If for some positive constant $M$ the series
\[
\sum_{n\geq0}\mu_{n}b_{n+1}\text{ and }\sum_{n\geq0}(\mu_{n}-M\mu
_{n}^{^{\prime}})b_{n+1},
\]
are convergent, then the sequences $\left\{  x_{n}\right\}  _{n\geq0}$ and
$\left\{  x_{n}^{^{\prime}}\right\}  _{n\geq0}$ are convergent to zero, while
series $\sum_{n\geq0}\mu_{n}x_{n}$ and $\sum_{n\geq0}\mu_{n}^{^{\prime}}%
x_{n}^{^{\prime}}$ are convergent.
\end{corollary}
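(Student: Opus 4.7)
The plan is to reduce this corollary to two separate applications of Lemma \ref{podstawowy}, one for each of the iterative equations (\ref{rown1}) and (\ref{rown2}). The only ingredient not furnished directly by the hypotheses is the convergence of the series $\sum_{n\geq 0}\mu_{n}^{\prime}b_{n+1}$, which must be extracted from the two given convergent series by linearity. All other conclusions will follow mechanically from the lemma.

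First I would handle the unprimed sequence. The hypothesis says $\{\mu_n\}$ is normal and, by assumption, $\sum_{n\geq 0}\mu_{n}b_{n+1}$ converges. Applying Lemma \ref{podstawowy} to (\ref{rown1}) (with the trivial reindexing that the lemma's $b_n$ corresponds here to $b_{n+1}$, which changes nothing in the derivation of the identity (\ref{tozsamosc_podstawowa}) used in that proof), one obtains $x_{n}\to 0$ and convergence of $\sum_{n\geq 0}\mu_{n}x_{n}$.

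Second, I would produce convergence of $\sum_{n\geq 0}\mu_{n}^{\prime}b_{n+1}$ by writing
\[
\sum_{n\geq 0}M\mu_{n}^{\prime}b_{n+1}=\sum_{n\geq 0}\mu_{n}b_{n+1}-\sum_{n\geq 0}(\mu_{n}-M\mu_{n}^{\prime})b_{n+1},
\]
noting that both series on the right are convergent by hypothesis. Since $M>0$, the series $\sum_{n\geq 0}\mu_{n}^{\prime}b_{n+1}$ is therefore convergent as well. Applying Lemma \ref{podstawowy} a second time, now to equation (\ref{rown2}) together with the normal sequence $\{\mu_{n}^{\prime}\}$ and the just-established convergent series, gives $x_{n}^{\prime}\to 0$ and convergence of $\sum_{n\geq 0}\mu_{n}^{\prime}x_{n}^{\prime}$, completing the argument.

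The proof is essentially bookkeeping: there is no real obstacle, since once the linearity step is in place the corollary is a direct double application of Lemma \ref{podstawowy}. The only point that might deserve an explicit sentence is the innocuous index shift between the lemma (where the right-hand side contains $b_{n}$) and the corollary (where it contains $b_{n+1}$); this can either be absorbed by setting $\tilde{b}_{n}:=b_{n+1}$ or by observing that the proof of Lemma \ref{podstawowy} goes through verbatim with the shifted index.
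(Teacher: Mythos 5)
Your proof is correct and follows essentially the same route as the paper: one application of Lemma \ref{podstawowy} to (\ref{rown1}), then extraction of the convergence of $\sum_{n\geq0}\mu_{n}^{\prime}b_{n+1}$ by subtracting the two given convergent series and dividing by $M$, followed by a second application of the lemma to (\ref{rown2}). Your explicit treatment of the linearity step and of the harmless index shift only spells out what the paper leaves implicit.
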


\begin{proof}
Since the series $\sum_{n\geq0}\mu_{n}b_{n+1}$ is convergent and the sequence
$\left\{  \mu_{n}\right\}  _{n\geq0}$ is normal, then the sequence $\left\{
x_{n}\right\}  $ converges to zero, and the series $\sum_{n\geq0}\mu_{n}x_{n}$
is convergent on the base of Lemma \ref{podstawowy}. Further, since together
with the series $\sum_{n\geq0}\mu_{n}b_{n+1}$ converges the series
$\sum_{n\geq0}(\mu_{n}-M\mu_{n}^{^{\prime}})b_{n+1}$, hence by Lemma
\ref{podstawowy} and equality (\ref{rown2}) we deduce that the sequence
$\left\{  x_{n}^{^{\prime}}\right\}  $ converges to zero, and the series
$\sum_{n\geq0}\mu_{n}^{^{\prime}}x_{n}^{^{\prime}}$ converges.
\end{proof}

We will show that from Lemma \ref{podstawowy} follows well known Kronecker's
Lemma. Let $\left\{  x_{i}\right\}  _{i\geq1}$ be a sequence real numbers,
while $\left\{  a_{i}\right\}  _{i\geq1}$ increasing to infinity sequence
positive numbers. Let us denote
\[
m_{n}=\frac{\sum_{i=1}^{n}x_{i}}{a_{n}}.
\]
Let us notice that the sequence $\left\{  m_{n}\right\}  _{n\geq1}$ satisfies
the following recurrent relationship:
\[
m_{n+1}=(1-(a_{n+1}-a_{n})/a_{n+1})m_{n}+x_{n+1}/a_{n+1}.
\]
Let us denote $\mu_{n}=(a_{n+1}-a_{n})/a_{n+1}$. Let us notice also that
$1>\mu_{n}>0$ and $\prod_{i=1}^{n}(1-\mu_{i})=\frac{a_{1}}{a_{n+1}}%
\rightarrow0$, as $n\rightarrow\infty$. Hence, $\sum_{i\geq1}\mu_{i}=\infty$.
Thus, one can apply Lemma \ref{podstawowy} and get the following lemma, that
is in fact a generalization of the Kronecker's Lemma
\index{Lemma!Kronecker}%
.

\begin{lemma}
Series $\sum_{n\geq1}x_{n}/a_{n}$ is convergent if and only if, sequence
$\left\{  \frac{\sum_{i=1}^{n}x_{i}}{a_{n}}\right\}  _{n\geq1}$ converges to
zero and the series $\sum_{n\geq1}\left(  \frac{1}{a_{n}}-\frac{1}{a_{n+1}%
}\right)  \sum_{i=1}^{n}x_{i}$ is convergent.
\end{lemma}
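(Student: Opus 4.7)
The plan is to recognize that the author has already produced the recursion that fits the hypotheses of Lemma \ref{podstawowy}, so the proof is essentially a translation. Set $s_n=\sum_{i=1}^{n}x_{i}$, $m_n=s_n/a_n$, and $\mu_n=(a_{n+1}-a_n)/a_{n+1}$; then the identity $m_{n+1}=(1-\mu_n)m_n+x_{n+1}/a_{n+1}$ displayed above already has the desired shape once we write $x_{n+1}/a_{n+1}=\mu_n b_n$, i.e.\ define
\[
b_n=\frac{x_{n+1}}{a_{n+1}-a_n}.
\]
With this choice, the recursion becomes $m_{n+1}=(1-\mu_n)m_n+\mu_n b_n$, which is precisely the hypothesis of Lemma \ref{podstawowy} applied to $\{m_n\}$ and $\{b_n\}$.

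Next, I would verify that $\{\mu_n\}_{n\geq 0}$ (after setting $\mu_0=1$) is a normal sequence: the author has already shown $\mu_n\in(0,1)$ and $\sum_{i\geq 1}\mu_i=\infty$ (via $\prod(1-\mu_i)=a_1/a_{n+1}\to 0$), so only $\mu_n\to 0$, equivalently $a_{n+1}/a_n\to 1$, is left; this is the standard tacit regularity assumption in Kronecker-type lemmas, and I would state it explicitly if needed.

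Then I would translate the two series in Lemma \ref{podstawowy} back to the original data. On the one hand,
\[
\sum_{n\geq 0}\mu_n b_n=\sum_{n\geq 0}\frac{x_{n+1}}{a_{n+1}},
\]
which differs from $\sum_{n\geq 1}x_n/a_n$ only by possibly the term with $n=0$ (and by a trivial shift of index), so the two series converge simultaneously. On the other hand,
\[
\sum_{n\geq 0}\mu_n m_n=\sum_{n\geq 0}\frac{a_{n+1}-a_n}{a_{n+1}}\cdot\frac{s_n}{a_n}=\sum_{n\geq 0}\left(\frac{1}{a_n}-\frac{1}{a_{n+1}}\right)\sum_{i=1}^{n}x_i,
\]
which is the series appearing in the statement (the $n=0$ term vanishes since $s_0=0$).

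Applying Lemma \ref{podstawowy} to $\{m_n\}$ and $\{b_n\}$ now gives exactly the asserted equivalence: $\sum_{n\geq 1}x_n/a_n$ converges if and only if $m_n\to 0$ and $\sum_{n\geq 1}(1/a_n-1/a_{n+1})\sum_{i=1}^{n}x_i$ converges. The main subtlety is not computational but conceptual: Lemma \ref{podstawowy} does not impose sign restrictions on $\{b_n\}$ or $\{x_n\}$, so the proof applies to arbitrary real $\{x_n\}$; the only real care is in ensuring $\{\mu_n\}$ qualifies as normal, which is the one condition that goes slightly beyond the classical Kronecker hypothesis of mere monotone divergence of $\{a_n\}$.
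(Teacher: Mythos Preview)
Your proposal is correct and follows essentially the same route as the paper: define $\mu_n=(a_{n+1}-a_n)/a_{n+1}$, recognize the recursion $m_{n+1}=(1-\mu_n)m_n+\mu_n b_n$ with $b_n=x_{n+1}/(a_{n+1}-a_n)$, and apply Lemma~\ref{podstawowy}, then unwind $\sum\mu_n b_n$ and $\sum\mu_n m_n$ into the two series of the statement exactly as you do. You are in fact slightly more careful than the paper, which silently applies Lemma~\ref{podstawowy} without checking the clause $\mu_n\to 0$ of the definition of a normal sequence; your remark that this amounts to the extra hypothesis $a_{n+1}/a_n\to 1$ is the one honest caveat in the argument.
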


\begin{remark}
Kronecker's Lemma is in fact the following corollary: \newline If the series
$\sum_{n\geq1}x_{n}/a_{n}$ is convergent, then the sequence $\left\{
\frac{\sum_{i=1}^{n}x_{i}}{a_{n}}\right\}  _{n\geq1}$ converges to zero.
\end{remark}

\section{Summability}

\label{Riesz}Summability theory, it is a part of the analysis that assigns
some numbers or functions (if one deals with a function sequence) to divergent
sequences. These numbers (or functions) are called their \emph{limits}
or\emph{ sums} (in the case of a series). Recall that an infinite series can
be understood as a sequence of its partial sums. There exist in the literature
many different methods (i.e. ways to assign these numbers or functions) of
summing sequences (summing of series it is nothing else than summing of the
sequence of its partial sums). Of course, every reasonable method of
summability should have the following property:
\[
\text{\emph{sequences\thinspace\thinspace or \thinspace\thinspace
series\thinspace\thinspace\thinspace that converge\thinspace\thinspace should
be summed to its limits.}}%
\]
Summability methods satisfying this condition will be called\emph{\ regular}.
The following theorem of Toeplitz is true:

\begin{theorem}
[Toeplitz]\label{toeplitz}%
\index{Theorem!Toeplitz}%
Let $T=[t_{ij}]_{i,j\geq0}$ be an infinite matrix having nonnegative entries.
Let us consider the following summation method of the $\left\{  q_{n}\right\}
_{n\geq0}$.
\[
Q_{n}=\sum_{k=0}^{\infty}t_{nk}q_{k};n\geq0.
\]
This method is regular if and only if:
\begin{gather}
\underset{n\rightarrow\infty}{\lim}\sum_{k=0}^{\infty}t_{nk}=1,\label{_1}\\
\forall k\geq0\underset{n\rightarrow\infty}{:\lim}t_{nk}=0. \label{_2}%
\end{gather}

\end{theorem}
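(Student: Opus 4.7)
The plan is to split the biconditional into two directions: the necessity direction reduces to testing the method on two explicit convergent sequences, while the sufficiency direction is a standard $\varepsilon$-$K$ truncation argument. Throughout I would write $s_n = \sum_{k=0}^\infty t_{nk}$ for the row sums.

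For necessity, assume the method is regular. Applying it to the constant sequence $q_k \equiv 1$, which converges to $1$, forces $Q_n = s_n$ to be finite for every $n$ and to tend to $1$; this is exactly \eqref{_1}. Applying it instead to the sequence $q_k = \delta_{kj}$ for a fixed $j$, which converges to $0$, gives $Q_n = t_{nj}$, and regularity forces $t_{nj} \to 0$, which is \eqref{_2}.

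For sufficiency, assume \eqref{_1} and \eqref{_2} and let $q_k \to q$. Start from the decomposition
\[
Q_n - q = \sum_{k=0}^\infty t_{nk}(q_k - q) + q(s_n - 1).
\]
The second summand vanishes by \eqref{_1}. For the first, fix $\varepsilon > 0$ and pick $K$ so that $|q_k - q| < \varepsilon$ for every $k > K$; then
\[
\Bigl|\sum_{k=0}^\infty t_{nk}(q_k - q)\Bigr| \leq \sum_{k=0}^K t_{nk}|q_k - q| + \varepsilon\, s_n.
\]
The finite sum on the right tends to $0$ by \eqref{_2} (it is a finite linear combination of entries that individually tend to zero), while the second term is bounded by $\varepsilon \sup_n s_n$, which is finite because $s_n \to 1$. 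Passing to $\limsup$ in $n$ and then letting $\varepsilon \to 0$ delivers $Q_n \to q$.

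The only point that could masquerade as a difficulty, rather than a genuine obstacle, is recognizing that the uniform bound $\sup_n s_n < \infty$ is not an additional hypothesis but a free byproduct of \eqref{_1} combined with the nonnegativity of the $t_{nk}$. For matrices with signed entries one would have to postulate $\sup_n \sum_k |t_{nk}| < \infty$ separately, as in the full Silverman--Toeplitz statement; under the nonnegativity assumption made here that condition comes gratis.
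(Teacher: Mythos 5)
Your proposal is correct and follows essentially the same route as the paper: necessity via the constant sequence and the Kronecker-delta sequences, sufficiency via the standard $\varepsilon$--$K$ truncation, with your decomposition $Q_n-q=\sum_k t_{nk}(q_k-q)+q(s_n-1)$ being only a cosmetic rearrangement of the paper's three-term split. Your closing remark that $\sup_n s_n<\infty$ comes for free from the row-sum condition together with nonnegativity is a harmless explicit statement of something the paper leaves implicit.
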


\begin{proof}
Necessity. Let us take constant sequence i.e. $q_{n}=q$, $n=0,1,2,\ldots$.
Then we have $Q_{n}=q\sum_{k=0}^{\infty}t_{nk}$. The method is regular i.e.
$Q_{n}\rightarrow q$, for $n\rightarrow\infty$, hence the condition (\ref{_1})
must be satisfied. To show the necessity of (\ref{_2}), let us take the
following sequence: let us fix $k$, then $q_{n}=0$ for $n\neq k$ and $q_{k}$
$=q\neq0$. Then of course $Q_{n}=qt_{nk}$. Regularity implies that
$Q_{n}\underset{n\rightarrow\infty}{\longrightarrow}0$. Hence, (\ref{_2}) must
also be satisfied.

Sufficiency. Let $q_{n}\underset{n\rightarrow\infty}{\longrightarrow}q$. Let
us take any $\varepsilon$. By $K$ let us denote such index that for $k>K:$
$\left\vert q_{k}-q\right\vert <\varepsilon$. We have then
\[
Q_{n}=q\sum_{k=0}^{\infty}t_{nk}+\sum_{k=0}^{K}t_{nk}(q_{k}-q)+\sum
_{k>K}t_{nk}(q_{k}-q).
\]
Moreover,
\[
\left\vert \sum_{k=0}^{K}t_{nk}(q_{k}-q)\right\vert \leq\underset{0\leq k\leq
K}{\max}\left\vert q_{k}-q\right\vert \sum_{k=0}^{K}t_{nk}\rightarrow0,
\]
as $n\rightarrow\infty$, since we have (\ref{_2}) and
\[
\left\vert \sum_{k>K}t_{nk}(q_{k}-q)\right\vert \leq\varepsilon\sum_{k\geq
K}t_{nk}\rightarrow\varepsilon,
\]
for $n\rightarrow\infty$, by (\ref{_1}).
\end{proof}

\subsection{Ces\`{a}ro methods of summation
\index{Summability!Cesaro}%
}

Ces\`{a}ro methods are the very popular methods of summing divergent
sequences. We will be concerned mostly with the so-called Riesz methods of
summation mainly because of its strong connection with laws of large numbers.
It turns out that the Ces\`{a}ro method of order $1$ is the same as the
Riesz's method with weights equal to $1$. Moreover, as it will turn out due to
some properties of the Ces\`{a}ro methods it will be possible to prove in a
simple way a basic inequality for the orthogonal series (see Lemma
\ref{fundamental_inequality}).

Let us denote
\begin{equation}
A_{n}^{\alpha}=\binom{n+\alpha}{n},\;\alpha\neq-1,-2,\ldots\;.
\label{wsp_Cesaro}%
\end{equation}
As it can be easily shown coefficient $A_{n}^{\alpha}$ is equal to the
coefficient by the $n-$th power of $x$ in the power series expansion of
$\left(  1-x\right)  ^{-1-\alpha}$. Let $\left\{  q_{n}\right\}  _{n\geq0}$ be
a number sequence. Let us define sequence $\left\{  q_{n}^{\alpha}\right\}
_{n\geq0}$ using relationship:
\begin{equation}
\sum_{n\geq0}q_{n}^{\alpha}x^{n}\overset{df}{=}\frac{\sum_{n\geq0}q_{n}x^{n}%
}{\left(  1-x\right)  ^{\alpha}}. \label{definicjaCesaro}%
\end{equation}
The following quantity
\begin{equation}
Q_{n}^{\alpha}=\frac{q_{n}^{\alpha}}{A_{n}^{\alpha}}, \label{sredniaCesaro}%
\end{equation}
is called $n-$th Ces\`{a}ro mean of order $\alpha$ of the sequence $\left\{
q_{n}\right\}  _{n\geq0}$ , briefly $n-$th $(C,\alpha)-$mean. Let us consider
Ces\`{a}ro summation methods, that is $(C,\alpha)$ summation methods for
$\alpha>-1$. Their most important features are collected in the lemma below.

\begin{lemma}
\label{Cesaro}Let be given sequence $\left\{  q_{n}\right\}  _{n\geq0}$. For
all $\alpha>-1$ we have:

i) $Q_{n}^{\alpha}=\frac{1}{A_{n}^{\alpha}}\sum_{k=0}^{n}A_{n-k}^{\alpha
-1}q_{k}.$

ii) $\sum_{k=0}^{n}A_{k}^{\alpha}=A_{n}^{\alpha+1}.$

iii) $\exists$ $0<K_{1}<K_{2},\forall n\geq1:K_{1}<\frac{A_{n}^{\alpha}%
}{n^{\alpha}}<K_{2},\frac{A_{n-k}^{\alpha-1}}{A_{n}^{\alpha}}=O(\frac{1}{n}).$

iv) $\forall\beta>0:Q_{n}^{\alpha+\beta}=\frac{1}{A_{n}^{\alpha+\beta}}%
\sum_{k=0}^{n}A_{n-k}^{\beta-1}A_{k}^{\alpha}Q_{k}^{\alpha}$. \newline In
particular, $Q_{n}^{\alpha+1}=\frac{1}{A_{n}^{\alpha+1}}\sum_{k=0}^{n}%
A_{k}^{\alpha}Q_{k}^{\alpha}.$

v) Methods $(C,\alpha)$ are regular for all $\alpha>0.$

vi) If the sequence $\left\{  q_{n}\right\}  _{n\geq0}$ is $(C,\alpha)$
summable for some $\alpha>-1$, then it is also $(C,\alpha+\beta)$ summable for
$\beta>0.$

vii) Let $s_{n}=\sum_{k=0}^{n}q_{k},$ $n\geq0,$ be the sequence of partial
sums of the series $\sum_{k\geq0}q_{k}$. Let $S_{n}^{\alpha}$ be the $n-$th
Ces\`{a}ro of order $\alpha$ mean of the sequence $\left\{  s_{n}\right\}
_{n\geq0}$ . Then:
\begin{equation}
S_{n}^{\alpha}=\frac{1}{A_{n}^{\alpha}}\sum_{k=0}^{n}A_{n-k}^{\alpha}q_{k}.
\label{sr_szeregu}%
\end{equation}
In particular, $S_{n}^{0}=s_{n};n\geq0.$
\end{lemma}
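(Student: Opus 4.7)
My plan is to treat parts (i), (ii), (iv), (vii) as formal-power-series identities, derive the asymptotic (iii) from Stirling, and then deduce the regularity statements (v) and (vi) by verifying the hypotheses of Toeplitz's Theorem~\ref{toeplitz} for matrices that come straight out of (i) and (iv).

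For the generating-function identities, set $f(x)=\sum_{n\geq 0}q_n x^n$ and recall that $(1-x)^{-\beta}=\sum_{n\geq 0}A_n^{\beta-1}x^n$. From the defining relation (\ref{definicjaCesaro}) we have $\sum q_n^\alpha x^n=f(x)(1-x)^{-\alpha}$; reading off the coefficient of $x^n$ and dividing by $A_n^\alpha$ gives (i). For (ii), multiply $\sum A_n^\alpha x^n=(1-x)^{-\alpha-1}$ by $(1-x)^{-1}$ and compare the result with $(1-x)^{-\alpha-2}=\sum A_n^{\alpha+1}x^n$. For (iv), factor $(1-x)^{-\alpha-\beta}=(1-x)^{-\beta}(1-x)^{-\alpha}$, obtain $q_n^{\alpha+\beta}=\sum_{k=0}^n A_{n-k}^{\beta-1}q_k^\alpha$, substitute $q_k^\alpha=A_k^\alpha Q_k^\alpha$ and divide by $A_n^{\alpha+\beta}$; the special case $\beta=1$ uses $A_{n-k}^0=1$. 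For (vii), the generating function of $\{s_n\}$ is $f(x)/(1-x)$, so $\sum s_n^\alpha x^n=f(x)(1-x)^{-\alpha-1}$, whose $n$-th coefficient is $\sum_{k=0}^n A_{n-k}^\alpha q_k$; the case $\alpha=0$ reduces to $S_n^0=s_n$.

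Part (iii) follows from $A_n^\alpha=\Gamma(n+\alpha+1)/(\Gamma(\alpha+1)\Gamma(n+1))\sim n^\alpha/\Gamma(\alpha+1)$, which is positive and convergent, hence bounded above and below by positive constants for $n\geq 1$. Applying the same estimate to both numerator and denominator of $A_{n-k}^{\alpha-1}/A_n^\alpha$ gives order $O(1/n)$ for each fixed $k$.

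Finally, (v) and (vi) invoke Theorem~\ref{toeplitz}. From (i), $(C,\alpha)$ is realized by the matrix $t_{nk}=A_{n-k}^{\alpha-1}/A_n^\alpha$ for $k\leq n$ and $0$ otherwise. For $\alpha>0$ the entries are nonnegative; by (ii) the row sums satisfy $\sum_{k=0}^n t_{nk}=\sum_{j=0}^n A_j^{\alpha-1}/A_n^\alpha=1$, so (\ref{_1}) is immediate; and by (iii) one has $t_{nk}\to 0$ for each fixed $k$, which is (\ref{_2}). For (vi), part (iv) presents $Q_n^{\alpha+\beta}$ as a Toeplitz transform of $\{Q_k^\alpha\}$ with weights $t_{nk}=A_{n-k}^{\beta-1}A_k^\alpha/A_n^{\alpha+\beta}$; nonnegativity holds for $\beta>0$ and $\alpha>-1$, the row sums are exactly $1$ by the convolution identity $\sum_{k=0}^n A_{n-k}^{\beta-1}A_k^\alpha=A_n^{\alpha+\beta}$ (obtained from $(1-x)^{-\beta}(1-x)^{-\alpha-1}=(1-x)^{-\alpha-\beta-1}$), and the column-null condition follows once more from (iii). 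The principal obstacle is bookkeeping: one must track the admissible range of $\alpha$ (and $\beta$) at every step so that the Cesàro coefficients remain nonnegative and the asymptotic estimates apply; once this is handled, the proof is essentially mechanical.
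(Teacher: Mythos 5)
Your proof is correct and follows essentially the same route as the paper: generating-function/convolution identities for (i), (ii), (iv) and (vii), an asymptotic bound on $A_n^\alpha$ for (iii), and Toeplitz's theorem applied to the matrices $t_{nk}=A_{n-k}^{\alpha-1}/A_n^\alpha$ and $t_{nk}=A_{n-k}^{\beta-1}A_k^\alpha/A_n^{\alpha+\beta}$ for (v) and (vi). The only cosmetic differences are that the paper gets (iii) by estimating $\log A_n^\alpha=\sum_{k=1}^n\log(1+\alpha/k)$ rather than via Gamma-function asymptotics, and proves (vii) by interchanging the double sum $\sum_{k=0}^n A_{n-k}^{\alpha-1}\sum_{j=0}^k q_j$ and invoking (ii) instead of multiplying generating functions.
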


\begin{proof}
Using well known formula for the product of two series and formula
(\ref{sredniaCesaro}) we get assertion i). Using formula
\[
\sum_{k\geq0}q_{k}^{\alpha+\beta}x^{k}=\frac{\sum_{k\geq0}q_{k}x^{k}}{\left(
1-x\right)  ^{\alpha}}\frac{1}{\left(  1-x\right)  ^{\beta}}=\sum_{k\geq
0}q_{k}^{\alpha}x^{k}\sum_{j\geq0}A_{j}^{\beta-1}x^{j},
\]
and then using formula for the product of power series we get assertion iv).
Assertion ii) we get by the straightforward, easy algebra. Assertion iii) we
get with the help of the following estimation:
\begin{align*}
\log A_{n}^{\alpha}  &  =\sum_{k=1}^{n}\log(1+\frac{\alpha}{k})=\sum_{k=1}%
^{n}\frac{\alpha}{k}+\sum_{k=1}^{n}O(\frac{1}{k^{2}})\\
&  =\alpha\log n+\alpha C+o(1)+\sum_{k=1}^{n}O(\frac{1}{k^{2}}),
\end{align*}
where $C$ denotes Euler's constant. Hence,
\[
\left\vert \log A_{n}^{\alpha}-\alpha\log n\right\vert \leq\left\vert
\alpha\right\vert C+o(1)+\left\vert \sum_{k=1}^{\infty}O(\frac{1}{k^{2}%
})\right\vert \leq K,
\]
where $K$ denotes some positive constant. Hence, we have the first assertion
of iii). In order to get the second one, let us notice that:
\[
A_{n-k}^{\alpha-1}/A_{n}^{\alpha}=O\left(  \frac{\left(  n-k\right)
^{\alpha-1}}{n^{\alpha}}\right)  =O\left(  \frac{1}{n}\right)  .
\]
Assertions v) and vi) follow straightforwardly (since we have $t_{nk}%
=\frac{A_{n-k}^{\alpha-1}}{A_{n}^{\alpha}}$ for $1\leq k\leq n$ and $0$ for
the remaining $k)$ from the properties ii), iii) and iv), and also from
Toeplitz's theorem \ref{toeplitz}. Thus, it remained to prove assertion vii).
We have:
\begin{align*}
A_{n}^{\alpha}S_{n}^{\alpha}  &  =\sum_{k=0}^{n}A_{n-k}^{\alpha-1}\sum
_{j=0}^{k}q_{j}=\\
&  =\sum_{j=0}^{n}q_{j}\sum_{k=j}^{n}A_{n-k}^{\alpha-1}=\\
&  =\sum_{j=0}^{n}q_{j}A_{n-j}^{\alpha},
\end{align*}
by the property ii).
\end{proof}

\begin{corollary}
Let $\left\{  q_{n}\right\}  _{n\geq0}$ be a number sequence. We have then:
\begin{gather}
Q_{n}^{\alpha+1}=\frac{\sum_{k=0}^{n}A_{k}^{\alpha}q_{k}}{A_{n}^{\alpha+1}%
},\label{alpha+1}\\
A_{n}^{0}=1,\;A_{n}^{1}=n+1,\;A_{n}^{2}=\frac{(n+2)(n+1)}{2},\nonumber\\
q_{n}^{1}=\sum_{i=0}^{n}q_{i}\;\text{ and }\;Q_{n}^{1}=q_{n}^{1}%
/(n+1),\nonumber\\
q_{n}^{2}=\sum_{i=0}^{n}(n+1-i)q_{i},\;Q_{n}^{2}=\frac{2}{n+2}\sum_{i=0}%
^{n}\left(  1-\frac{i}{n+1}\right)  q_{i}. \label{n1in2}%
\end{gather}

\end{corollary}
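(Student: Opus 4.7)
The plan is to unpack the results of Lemma \ref{Cesaro} and the generating-function definition (\ref{definicjaCesaro}) in the special cases $\alpha=0,1,2$, so essentially nothing more than specialization and bookkeeping will be required.

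First I would establish the top identity (\ref{alpha+1}). Applying Lemma \ref{Cesaro}i) with the index $\alpha+1$ in place of $\alpha$ yields immediately
\[
Q_{n}^{\alpha+1}=\frac{1}{A_{n}^{\alpha+1}}\sum_{k=0}^{n}A_{n-k}^{\alpha}q_{k},
\]
and a relabelling of the summation index gives the stated form (so really this line is just a rewriting of i)). Next, the explicit values of $A_n^0$, $A_n^1$, $A_n^2$ follow directly from the definition $A_n^\alpha=\binom{n+\alpha}{n}$ in (\ref{wsp_Cesaro}): for $\alpha=0$ we get $\binom{n}{n}=1$, for $\alpha=1$ we get $\binom{n+1}{n}=n+1$, and for $\alpha=2$ we get $\binom{n+2}{n}=(n+1)(n+2)/2$.

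For the formulas involving $q_n^1$ and $q_n^2$, I would return to the defining generating function (\ref{definicjaCesaro}). Taking $\alpha=1$,
\[
\sum_{n\geq 0} q_n^{1} x^n = \frac{1}{1-x}\sum_{n\geq 0} q_n x^n = \sum_{n\geq 0}\Bigl(\sum_{i=0}^{n} q_i\Bigr)x^n,
\]
which reads off $q_n^{1}=\sum_{i=0}^{n} q_i$, and dividing by $A_n^1=n+1$ produces $Q_n^1=q_n^1/(n+1)$. Similarly, for $\alpha=2$, using $(1-x)^{-2}=\sum_{j\geq 0}(j+1)x^j$ together with the Cauchy product gives
\[
q_n^{2}=\sum_{k=0}^{n}(n-k+1)q_k = \sum_{i=0}^{n}(n+1-i)q_i,
\]
after which (\ref{n1in2}) follows by dividing by $A_n^2 = (n+1)(n+2)/2$ and factoring $(n+1)$ out of the summand.

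There is no substantial obstacle here — the proof is a direct translation of the lemma into the three smallest cases — so the only thing I would need to take care with is the routine bookkeeping: keeping track of whether $A_{n-k}^\alpha$ or $A_k^\alpha$ appears after the relabelling, and making sure the rewriting $\frac{2}{(n+1)(n+2)}(n+1-i)=\frac{2}{n+2}\bigl(1-\tfrac{i}{n+1}\bigr)$ is done correctly so that the final displayed form of $Q_n^2$ matches (\ref{n1in2}).
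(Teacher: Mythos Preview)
Your approach is essentially the same as the paper's, which simply cites formula (\ref{wsp_Cesaro}) and assertion~i) of Lemma~\ref{Cesaro} and leaves the specializations to the reader; your write-up is a faithful unpacking of that.

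One point of care you flagged but did not resolve: the ``relabelling'' step for (\ref{alpha+1}) does not actually produce the displayed form. From Lemma~\ref{Cesaro}i) with $\alpha+1$ you get $Q_n^{\alpha+1}=\frac{1}{A_n^{\alpha+1}}\sum_{k=0}^n A_{n-k}^{\alpha}q_k$, and the substitution $j=n-k$ yields $\sum_j A_j^{\alpha}q_{n-j}$, not $\sum_k A_k^{\alpha}q_k$. The display (\ref{alpha+1}) as printed thus appears to carry a typo (it should read $A_{n-k}^{\alpha}$), which is consistent with the $q_n^2$ formula further down. Your later computations for $q_n^1$, $Q_n^1$, $q_n^2$, $Q_n^2$ are all correct and match the intended content.
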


\begin{proof}
It follows directly from formula (\ref{wsp_Cesaro}) and assertion $i)$ of
Lemma \ref{Cesaro}.
\end{proof}

\subsection{Riesz's summation method
\index{Summability!Riesz}%
}

Among different summation methods, the Riesz's method is interesting from the
point of view of this book, since the sequence of Riesz's means can be
presented in a recursive form.

\begin{definition}
\label{Sum_Riesz}We say that the sequence $\left\{  q_{i}\right\}  _{i\geq0}$
is summable by the Riesz's method with the sequence of (nonnegative) weights
$\left\{  \alpha_{i}\right\}  _{i\geq0}$, if the sequence \newline$\left\{
\sum_{j=0}^{i}\alpha_{j}q_{j}/\sum_{j=0}^{i}\alpha_{i}\right\}  _{i\geq1}$ is convergent.
\end{definition}

\begin{remark}
If the sequence of weights $\left\{  \alpha_{i}\right\}  $ consists of $1$,
then, as it can be easily seen, Riesz's method is equivalent in this case to
the Ces\`{a}ro method of order $1$.
\end{remark}

Below we will give a sufficient condition of the regularity of Riesz's method,
and also will present a useful lemma exposing essential features of this method.

Let $\left\{  \alpha_{i}\right\}  _{i\geq0}$ be a nonnegative number sequence,
such that
\[
\alpha_{0}=1,\,\,\sum_{i\geq0}\alpha_{i}=\infty.
\]
For every such sequence we will define a sequence $\left\{  \mu_{i}\right\}
_{i\geq0}$ in the following way:
\begin{equation}
\mu_{0}=1;\mu_{i}=\frac{\alpha_{i}}{\sum_{k=0}^{i}\alpha_{k}},i\geq1.
\label{sprzez}%
\end{equation}

\begin{proposition}
\label{o_ciagach_normalnych}\emph{i) }$\forall i\geq1$ $\mu_{i}\in(0,1)$.

\emph{ii) }Every sequence $\left\{  \mu_{i}\right\}  _{i\geq0}$ satisfying
\emph{i), }uniquely defines the sequence $\left\{  \alpha_{i}\right\}  $ and
$\alpha_{0}=1.$

\emph{iii) }$\sum_{i\geq0}\alpha_{i}=\infty\Longleftrightarrow\sum_{i\geq0}%
\mu_{i}=\infty.$

\emph{iv) }$\sum_{i\geq0}\mu_{i}=\infty$, $\mu_{n}\underset{n\rightarrow
\infty}{\rightarrow}0\Longleftrightarrow\underset{1\leq i\leq n}{\max}%
\alpha_{i}/\sum_{i=0}^{n}\alpha_{i}\underset{n\rightarrow\infty}{\rightarrow
}0.$
\end{proposition}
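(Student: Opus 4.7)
Throughout, let $S_n = \sum_{k=0}^{n}\alpha_k$, so that by (\ref{sprzez}) one has $\mu_n = \alpha_n/S_n$ for $n\geq 1$ and, crucially, the telescoping identity
\[
1-\mu_i \;=\; \frac{S_{i-1}}{S_i},\qquad \prod_{i=1}^{n}(1-\mu_i)\;=\;\frac{S_0}{S_n}\;=\;\frac{1}{S_n}.
\]
This identity is the engine behind all four parts, so I would record it first and then run through the assertions in order.

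For i), since $\alpha_0=1$, we have $S_i\geq 1$ for every $i\geq 0$, so $\mu_i$ is well defined and nonnegative; moreover for $i\geq 1$, $\mu_i=\alpha_i/(S_{i-1}+\alpha_i)\leq \alpha_i/(1+\alpha_i)<1$ (one should read the parenthesis $(0,1)$ as $[0,1)$ when some $\alpha_i$ vanish, or assume strict positivity throughout, as is done implicitly in the subsequent chapters). For ii), given $\{\mu_i\}$ with $\mu_i\in(0,1)$ and setting $S_0=1$, the recurrence $S_i=S_{i-1}/(1-\mu_i)$ determines $S_i$ uniquely, and hence $\alpha_i=S_i-S_{i-1}=\mu_i S_{i-1}/(1-\mu_i)$ is uniquely determined as well; this is a direct computation once the above telescoping is written down.

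For iii), I would invoke the key identity to rewrite the condition $\sum_{i\geq 0}\alpha_i=\infty$ as $S_n\to\infty$, equivalently $\prod_{i=1}^{n}(1-\mu_i)\to 0$. By Proposition \ref{o_exp} we have $1-\mu_i\leq\exp(-\mu_i)$, so $\prod_{i=1}^{n}(1-\mu_i)\leq\exp(-\sum_{i=1}^{n}\mu_i)$, which gives the implication $\sum\mu_i=\infty\Rightarrow\prod(1-\mu_i)=0$. The converse, $\prod(1-\mu_i)=0\Rightarrow\sum\mu_i=\infty$, is precisely (the contrapositive of) assertion i) of Proposition \ref{o_iloczynach} applied to $a_i=-\mu_i$, since all $-\mu_i$ are of one sign.

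Part iv) is the only one that needs a little care and is the part I expect to be the main obstacle, because one must manage two indices at once (the inner index over which the maximum is taken and the outer index $n$ inside $S_n$). The plan is as follows. Assume first $\sum\mu_i=\infty$ and $\mu_n\to 0$. Fix $\varepsilon>0$ and pick $N$ so that $\mu_i<\varepsilon$ for all $i>N$. For $n>N$ split
\[
\max_{1\leq i\leq n}\frac{\alpha_i}{S_n}\;\leq\;\max\!\left(\frac{S_N}{S_n},\;\max_{N<i\leq n}\frac{\alpha_i}{S_i}\right)\;=\;\max\!\left(\frac{S_N}{S_n},\;\max_{N<i\leq n}\mu_i\right)\;<\;\max\!\left(\frac{S_N}{S_n},\,\varepsilon\right),
\]
and since $\sum\mu_i=\infty$ forces $S_n\to\infty$ by iii), the first term tends to $0$, so the whole maximum is eventually below $\varepsilon$. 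Conversely, assume $\max_{1\leq i\leq n}\alpha_i/S_n\to 0$. Taking the particular index $i=n$ immediately yields $\mu_n=\alpha_n/S_n\to 0$. For $\sum\mu_i=\infty$ I would argue by contradiction: if $\sum\mu_i<\infty$, then by iii) $\sum\alpha_i<\infty$, whence $S_n\to S_\infty<\infty$; but then for any fixed $i_0\geq 1$ with $\alpha_{i_0}>0$ the ratio $\alpha_{i_0}/S_n$ stays bounded below by $\alpha_{i_0}/S_\infty>0$, contradicting the assumption that the maximum vanishes. (The edge case $\alpha_i=0$ for all $i\geq 1$ is incompatible with $\sum\alpha_i=\infty$, hence with the standing hypothesis on $\{\alpha_i\}$.) This closes the equivalence.
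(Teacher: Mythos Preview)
Your proof is correct and rests on the same telescoping identity $\prod_{i=1}^{n}(1-\mu_i)=1/S_n$ that drives the paper's argument; parts i)--iii) match the paper essentially line by line. The only noteworthy difference is in iv): for the forward implication the paper argues by contradiction (assuming the maximum does not vanish, it extracts a subsequence of maximising indices and splits into the cases $\sup i_{n_k}<\infty$ versus $i_{n_k}\to\infty$), whereas you give a cleaner direct $\varepsilon$--$N$ argument via the split $\max(S_N/S_n,\,\max_{N<i\le n}\mu_i)$; for the reverse implication the paper simply declares it ``obvious'' (tacitly using the standing hypothesis $\sum\alpha_i=\infty$ so that only $\mu_n\to 0$ needs checking), while you supply the full verification including the contradiction on $\sum\mu_i<\infty$. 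Your treatment of iv) is arguably more transparent and self-contained.
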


\begin{proof}
Assertion \textit{i) } follows directly from equality (\ref{sprzez}).
Assertion \textit{ii):} Solving sequentially equalities (\ref{sprzez}) with
respect $\alpha_{i}$, we get: $\alpha_{0}=1;$ $\alpha_{i}=\mu_{i}/\prod
_{j=1}^{i}(1-\mu_{j});$ $i\geq1$. Assertion \textit{iii): }from \textit{ii)}
we have
\[
\sum_{i=0}^{n}\alpha_{i}=\allowbreak\frac{\alpha_{n}}{\mu_{n}}=\allowbreak
1/\prod_{i=1}^{n}(1-\mu_{i})\allowbreak\geq\exp(\sum_{i=1}^{n}\mu_{i}).
\]
Hence, if $\sum_{i\geq0}\mu_{i}=\infty$, then $\sum_{i\geq0}\alpha_{i}=\infty
$. On the other hand if $\sum_{i\geq0}\alpha_{i}=\infty$, then $\prod_{i\geq
1}(1-\mu_{i})\allowbreak=0$, that implies condition $\sum_{i\geq0}\mu
_{i}=\infty$. Assertion \textit{iv):}\emph{\ }implication $\Leftarrow$ is
obvious. Implication $\Rightarrow$. If\thinspace%
\[
\underset{n\rightarrow\infty}{\lim\inf}\,\underset{1\leq i\leq n}{\max}%
\alpha_{i}/\sum_{i=0}^{n}\alpha_{i}=\delta>0,
\]
then there exists such sequence of indices $k$, such that :
\[
k>k_{0}\;\,\alpha_{i_{n_{k}}}/\sum_{i=0}^{n_{k}}\alpha_{i}>\delta/2>0.
\]
However, if $\sup i_{n_{k}}<\infty$, then it is impossible by assertion $iii)$
and condition $\sum_{i\geq0}\alpha_{i}=\infty$, if however $i_{n_{k}%
}\underset{k\rightarrow\infty}{\rightarrow}\infty$, then
\[
\alpha_{i_{n_{k}}}/\sum_{i=0}^{n_{k}}\alpha_{i}\leq\alpha_{i_{n_{k}}}%
/\sum_{i=0}^{i_{n_{k}}}\alpha_{i}=\mu_{i_{n_{k}}}%
\]
and $\mu_{i_{n_{k}}}>\delta/2>0$ that is also impossible by the fact that the
condition $\mu_{n}\underset{n\rightarrow\infty}{\rightarrow}0.$
\end{proof}

\begin{remark}
Let us notice that the conditions: $\mu_{n}\geq0$ and $\sum_{i\geq0}\mu
_{i}=\infty$ are necessary and sufficient for the regularity of Riesz's
method. It easily follows Toeplitz' theorem and the lemma
\ref{o_ciagach_normalnych}.
\end{remark}

The relationship between $\left\{  \alpha_{i}\right\}  _{i\geq0}$ and
$\left\{  \mu_{i}\right\}  _{i\geq0}$ will be denoted in the following way:
$\left\{  \mu_{i}\right\}  \allowbreak=\allowbreak\overline{\left\{
\alpha_{i}\right\}  },\,$ and $\widehat{\left\{  \mu_{i}\right\}  }=\left\{
\alpha_{i}\right\}  $. Sequence$\left\{  \alpha_{i}\right\}  _{i\geq0}$ will
be called conjugate
\index{Sequence!conjugate}
with respect to the sequence $\left\{  \mu_{i}\right\}  _{i\geq0}$ .

One can easily calculate using formulae (\ref{sprzez}), that
\begin{align*}
\overline{\left\{  1\right\}  }\allowbreak &  =\left\{  \frac{1}{i+1}\right\}
,\allowbreak\overline{\left\{  i+1\right\}  }=\left\{  \frac{2}{i+2}\right\}
,\\
\overline{\left\{  (i+1)^{2}\right\}  }  &  =\allowbreak\left\{
\allowbreak\frac{(i+1)^{2}}{\sum_{j=0}^{i}(j+1)^{2}}\right\}  \allowbreak
=\allowbreak\left\{  \frac{6(i+1)}{(i+2)(2i+3)}\right\}  ,\\
\overline{\left\{  \exp(\alpha i)\right\}  }\allowbreak &  =\allowbreak
\left\{  \frac{\exp(\alpha)-1}{\exp(\alpha)-\exp(-i\alpha)}\right\}
;\allowbreak\alpha>0,\\
\overline{\left\{  1,q/(1-q),q/(1-q)^{2},\ldots\right\}  }\allowbreak &
=\left\{  q\right\}  ;q\in(0,1)\;\;\;\text{and so on}.
\end{align*}

Let $\left\{  x_{i}\right\}  _{i\geq1}$ be a sequence of real numbers. Having
given sequence $\left\{  \alpha_{i}\right\}  _{i\geq0}$, we define the
following sequences :
\begin{align}
\overline{x}_{0}  &  =0,\,\,\overline{x}_{i}=\frac{\sum_{j=0}^{i-1}\alpha
_{j}x_{j+1}}{\sum_{j=0}^{i-1}\alpha_{j}},i\geq1,\label{srednie}\\
s_{0}  &  =0,\,\,s_{i}=\sum_{j=0}^{i-1}\mu_{j}x_{j+1},i\geq1,\label{sumy}\\
\overline{s}_{i}  &  =\frac{\sum_{j=0}^{i}\alpha_{j}s_{j}}{\sum_{j=0}%
^{i}\alpha_{j}},i\geq0,\label{srsumy}\\
\widehat{s}_{i}  &  =\sum_{j=0}^{i}\mu_{j}\overline{x}_{j},.i\geq0
\label{sumpom}%
\end{align}

Sequence $\left\{  \overline{x}_{i}\right\}  _{i\geq0}$ it is, as it can be
seen, the sequence of Riesz's means of numbers $\left\{  x_{i}\right\}
_{i\geq1}$ with respect to the sequence $\left\{  \alpha_{i}\right\}
_{i\geq0}$, sequence $\left\{  s_{i}\right\}  _{i\geq0}$ it is the sequence of
partial sums of the series $\sum_{i\geq0}\mu_{i}x_{i+1}$, sequence $\left\{
\overline{s}_{i}\right\}  _{i\geq0}$ it is the sequence of Riesz's means of
the sequence $\left\{  s_{i}\right\}  _{i\geq0}$, while the sequence $\left\{
\widehat{s}_{i}\right\}  _{i\geq0}$ it is the sequence partial partial sums of
the series $\sum_{i\geq0}\mu_{i}\overline{x}_{i}$. The mutual relationships
between those means are exposed in the Lemma below.

\begin{lemma}
\label{lemosr}

\emph{i) }$\forall i\geq0:\widehat{s}_{i}=\overline{s}_{i},$

\emph{ii)} let us assume additionally, that $\sum_{i\geq0}\mu_{i}^{2}%
x_{i+1}^{2}<\infty$, $\sum_{i\geq0}\mu_{i}\overline{x}_{i}^{2}<\infty$, then
we have: \newline\hspace*{2cm}\emph{iia) }$\overline{x}_{i}\rightarrow0$ as
$i\rightarrow\infty$ if and only if, when series $\sum_{i\geq0}\mu
_{i}\overline{x}_{i}x_{i+1}$ converges,\newline\hspace*{2cm}\emph{iib) }%
$\frac{\sum_{j=0}^{n}\alpha_{j}\overline{x}_{j}^{2}}{\sum_{j=0}^{n}\alpha_{j}%
}\underset{n\rightarrow\infty}{\longrightarrow}0$.\newline\hspace
*{0pt}0pt2cm\emph{iic) }Let\emph{\ }$v_{n}\overset{df}{=}\allowbreak\frac
{\sum_{j=0}^{n}\alpha_{j}(s_{j}-\overline{s}_{j})^{2}}{\sum_{j=0}^{n}%
\alpha_{j}}\allowbreak$ and $\kappa_{n}\overset{df}{=}\allowbreak\frac
{\sum_{j=0}^{n}\alpha_{j}s_{j}^{2}}{\sum_{j=0}^{n}\alpha_{j}}-\overline{s}%
_{n}^{2}$. Then $\forall n\geq1$ $\kappa_{n}\geq0$, and Moreover, both
$\nu_{n}\underset{n\rightarrow\infty}{\longrightarrow}0$ and $\sum_{n\geq1}%
\mu_{n+1}v_{n}<\infty$, as well as and $\kappa_{n}\underset{n\rightarrow
\infty}{\rightarrow}0$ and $\sum_{n\geq1}\mu_{n+1}\kappa_{n}<\infty.$
\end{lemma}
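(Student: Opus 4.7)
The plan is to first extract recursions for $\bar x_i$ and $\bar s_i$ from the definitions, and derive the central identity $s_{i+1}-\bar s_i=\bar x_{i+1}$ from which everything else follows. Writing $A_i=\sum_{j=0}^{i-1}\alpha_j$ and $B_i=\sum_{j=0}^i\alpha_j$ (so that $A_{i+1}=B_i$ and $\mu_k B_k=\alpha_k$), the definitions give at once $\bar x_{i+1}=(1-\mu_i)\bar x_i+\mu_i x_{i+1}$ and $\bar s_{i+1}=(1-\mu_{i+1})\bar s_i+\mu_{i+1} s_{i+1}$. To prove the identity, I would swap the order of summation in $B_i\bar s_i=\sum_{j=0}^i\alpha_j\sum_{k=0}^{j-1}\mu_k x_{k+1}$ and use $\mu_k B_k=\alpha_k$ to obtain $\bar s_i=s_i-(1-\mu_i)\bar x_i$; combined with $s_{i+1}-s_i=\mu_i x_{i+1}$ this gives $s_{i+1}-\bar s_i=\bar x_{i+1}$. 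Part (i) is then immediate: $\hat s_0=\bar s_0=0$ and both sequences satisfy the common first difference $u_{i+1}-u_i=\mu_{i+1}\bar x_{i+1}$.

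For part (iia), squaring the recursion for $\bar x_{i+1}$ and summing from $0$ to $N$ yields
\[
\bar x_{N+1}^{2}=-2\sum_{i=0}^{N}\mu_i\bar x_i^{2}+\sum_{i=0}^{N}\mu_i^{2}\bar x_i^{2}+2\sum_{i=0}^{N}(1-\mu_i)\mu_i\bar x_i x_{i+1}+\sum_{i=0}^{N}\mu_i^{2}x_{i+1}^{2}.
\]
The two standing hypotheses, together with $\mu_i\le 1$ and the bound $|\mu_i^{2}\bar x_i x_{i+1}|\le\tfrac12\mu_i^{2}(\bar x_i^{2}+x_{i+1}^{2})$, make every sum on the right absolutely convergent except possibly $\sum\mu_i\bar x_i x_{i+1}$; this gives the stated equivalence between convergence of $\bar x_N^{2}$ and convergence of $\sum\mu_i\bar x_i x_{i+1}$. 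The limit $L=\lim \bar x_N^{2}$, if it exists, must be zero, for otherwise $\bar x_i^{2}>L/2$ eventually and $\sum\mu_i=\infty$ would force $\sum\mu_i\bar x_i^{2}=\infty$, contradicting the hypothesis.

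For part (iib), set $y_j=\bar x_j^{2}$ and $t_n=\sum_{j=0}^n\mu_j y_j$, which converges to some $t_\infty$ by hypothesis. Since $\alpha_j=\mu_j B_j$, Abel summation gives
\[
\sum_{j=0}^n\alpha_j y_j=B_n t_n-\sum_{j=0}^{n-1}\alpha_{j+1}t_j.
\]
Dividing by $B_n$ and using $\sum_{j=0}^{n-1}\alpha_{j+1}/B_n=(B_n-1)/B_n\to 1$, Theorem \ref{toeplitz} identifies the second term as a regular transform of $\{t_j\}$, which tends to $t_\infty$; hence the whole expression tends to $0$.

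Finally, for part (iic), nonnegativity $\kappa_n\ge 0$ is Jensen's inequality for $x\mapsto x^{2}$ with the probability weights $\alpha_j/B_n$. From the identity $s_j-\bar s_j=(1-\mu_j)\bar x_j$ derived above, $v_n\le\frac1{B_n}\sum\alpha_j\bar x_j^{2}\to 0$ by (iib). For $\sum\mu_{n+1}v_n$ I would swap summation and telescope via $\alpha_{n+1}/(B_n B_{n+1})=1/B_n-1/B_{n+1}$, collapsing the double sum to $\sum_j\mu_j(1-\mu_j)^{2}\bar x_j^{2}\le\sum\mu_j\bar x_j^{2}<\infty$. The main obstacle is obtaining the recursion for $\kappa_n$: starting from $B_{n+1}(\kappa_{n+1}+\bar s_{n+1}^{2})=B_n(\kappa_n+\bar s_n^{2})+\alpha_{n+1}s_{n+1}^{2}$ and expanding $\bar s_{n+1}^{2}=((1-\mu_{n+1})\bar s_n+\mu_{n+1}s_{n+1})^{2}$, the identity $(1-\mu)a^{2}+\mu b^{2}-((1-\mu)a+\mu b)^{2}=\mu(1-\mu)(a-b)^{2}$ makes the cross terms collapse to $\mu_{n+1}(1-\mu_{n+1})(s_{n+1}-\bar s_n)^{2}=\mu_{n+1}(1-\mu_{n+1})\bar x_{n+1}^{2}$, where the last equality uses the key identity from part (i). This yields
\[
\kappa_{n+1}=(1-\mu_{n+1})\kappa_n+\mu_{n+1}(1-\mu_{n+1})\bar x_{n+1}^{2},
\]
to which Lemma \ref{podstawowy} (applied with $x_n=\kappa_{n-1}$, $b_n=(1-\mu_n)\bar x_n^{2}$ and the convergent majorant $\sum\mu_n b_n\le\sum\mu_n\bar x_n^{2}<\infty$) gives both $\kappa_n\to 0$ and $\sum\mu_{n+1}\kappa_n<\infty$. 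Everything after this recursion is routine.
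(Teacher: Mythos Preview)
Your proof is correct and rests on the same key identity $s_{i+1}-\bar s_i=\bar x_{i+1}$ (equivalently $s_j-\bar s_j=(1-\mu_j)\bar x_j$) that the paper derives; for $\kappa_n$ you obtain exactly the paper's recursion and close with Lemma~\ref{podstawowy}. The differences lie in parts (iia), (iib) and the $v_n$ half of (iic). The paper treats all of these uniformly by writing each quantity in the iterative form $y_{n+1}=(1-\mu_n')y_n+\mu_n' b_n$ and invoking Lemma~\ref{podstawowy}: for (iia) with the normal sequence $\{\mu_i(2-\mu_i)\}$, for (iib) with $y_n=\tfrac{1}{B_n}\sum\alpha_j\bar x_j^2$, and for $v_n$ via the recursion $v_{n+1}=(1-\mu_{n+1})v_n+\mu_{n+1}(s_{n+1}-\bar s_{n+1})^2$. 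You instead argue by hand in each case---a telescoped square sum in (iia), Abel summation plus Toeplitz in (iib), and a direct Fubini/telescoping of $\alpha_{n+1}/(B_nB_{n+1})=1/B_n-1/B_{n+1}$ for $\sum\mu_{n+1}v_n$. Your arguments are more elementary and self-contained; the paper's approach has the virtue of showing that every assertion here is an instance of the single machine (Lemma~\ref{podstawowy}) developed earlier, which is the point the author is making throughout.
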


\begin{proof}
$\emph{i)}$ Let us notice that the sequences $\left\{  \overline{x}%
_{n}\right\}  $ and $\left\{  \overline{s}_{i}\right\}  $ satisfy the
following recursive equations:
\begin{equation}
\emph{\ }\overline{x}_{i+1}=(1-\mu_{i})\overline{x}_{i}+\mu_{i}x_{i+1};i\geq1,
\label{postaciter}%
\end{equation}%
\begin{equation}
\overline{s}_{i+1}=(1-\mu_{i+1})\overline{s}_{i}+\mu_{i+1}s_{i+1};i\geq0.
\label{sumiter}%
\end{equation}
Let us add side by side equality (\ref{postaciter}) for $i=0,1,\ldots,n$. We
will get then: $\overline{x}_{n+1}=\bar{x}_{0}-\hat{s}_{n}+s_{n+1}$. Taking
into account definition of the sequence $\left\{  \bar{x}_{i}\right\}  $, we
get finally:
\begin{equation}
s_{n+1}=\hat{s}_{n}+\bar{x}_{n+1}. \label{waznazal}%
\end{equation}
Let us now perform the same operation on the equality (\ref{sumiter}). We will
get then:
\begin{equation}
\bar{s}_{n+1}=-\sum_{i=0}^{n}\mu_{i+1}\bar{s}_{i}+\sum_{i=0}^{n}\mu
_{i+1}s_{i+1}. \label{tozsamosc}%
\end{equation}
Now notice that $\hat{s}_{0}=\bar{s}_{0}$ and let us make an induction
assumption, that $\hat{s}_{i}=\bar{s}_{i}$for $i\leq n$. Now let us put in
(\ref{tozsamosc}) instead $s_{i}$, the value that follows from (\ref{waznazal}%
). We get then:
\[
\bar{s}_{n+1}=-\sum_{i=0}^{n}\mu_{i+1}\bar{s}_{i}+\sum_{i=0}^{n}\mu_{i+1}%
\hat{s}_{i}+\sum_{i=0}^{n}\mu_{i+1}\bar{x}_{i+1}=\hat{s}_{n+1}.
\]

\emph{ii)} Let us calculate squares of both sided of (\ref{postaciter})
\[
\emph{\ }\overline{x}_{i+1}^{2}=(1-2\mu_{i}+\mu_{i}^{2})\overline{x}_{i}%
^{2}+2\mu_{i}(1-\mu_{i})\overline{x}_{i}x_{i+1}+\mu_{i}^{2}x_{i+1}^{2}.
\]
We apply now Lemma \ref{podstawowy} except that the r\^{o}le of the sequence
$\left\{  \mu_{n}\right\}  _{n\geq0}$ will now be played by the $\left\{
\mu_{i}(2-\mu_{i})\right\}  $. Let us notice that the series $\sum_{i\geq0}%
\mu_{i}^{2}\overline{x}_{i}x_{i+1}$ is convergent, since we have
\[
\left\vert \sum_{i\geq0}\mu_{i}^{2}\overline{x}_{i}x_{i+1}\right\vert
\leq\sqrt{\sum_{i\geq0}\mu_{i}\left\vert \overline{x}\right\vert ^{2}}%
\sqrt{\sum_{i\geq0}\mu_{i}^{3}x_{i+1}^{2}}.
\]
Hence the lemma can be used. Assertion \emph{iia) }is a simple consequence of
the lemma and the assumptions. In order to prove assertion \emph{iib) }let us
present $y_{n}=\allowbreak\frac{\sum_{j=0}^{n}\alpha_{j}\overline{x}_{j}^{2}%
}{\sum_{j=0}^{n}\alpha_{j}}$ in a recursive form. We have:
\[
y_{n+1}=(1-\mu_{n+1})y_{n}+\mu_{n+1}\bar{x}_{n+1}^{2}.
\]
Using assumed convergence of the series $\sum_{n\geq1}\mu_{n}\bar{x}_{n}^{2}$
and using Lemma \ref{podstawowy} we get immediately the assertion. In order to
prove \emph{iic) } let us present $v_{n}$ again in a recursive form:
\begin{equation}
v_{n+1}=(1-\mu_{n+1})v_{n}+\mu_{n+1}(s_{n+1}-\bar{s}_{n+1})^{2}. \label{vn}%
\end{equation}
Remembering about relationship (\ref{waznazal}) and the relationship in
\emph{i) }we see that
\[
s_{n+1}-\bar{s}_{n+1}=\bar{x}_{n+1}-\mu_{n+1}\bar{x}_{n+1}.
\]
And again using Lemma \ref{podstawowy} and assumptions \emph{ii)} we get
convergence of the sequence $\left\{  v_{n}\right\}  _{n\geq1}$ to zero and
also the convergence of the series $\sum_{n\geq1}\mu_{n+1}v_{n}$. Let us
concentrate now on the sequence $\left\{  \kappa_{n}\right\}  $. Let us
denote: $K_{n}=\frac{\sum_{j=0}^{n}\alpha_{j}s_{j}^{2}}{\sum_{j=0}^{n}%
\alpha_{j}}$. We have:
\begin{equation}
K_{n+1}=(1-\mu_{n+1})K_{n}+\mu_{n+1}s_{n+1}^{2}. \label{1*}%
\end{equation}
Calculating squares on both sides of the identity (\ref{sumiter}) we get:
\begin{equation}
\bar{s}_{n+1}^{2}=(1-\mu_{n+1})^{2}\bar{s}_{n}^{2}+2\mu_{n+1}(1-\mu_{n+1}%
)\bar{s}_{n}s_{n+1}+\mu_{n+1}^{2}s_{n+1}^{2}. \label{2*}%
\end{equation}
Subtracting side by side (\ref{2*}) from (\ref{1*}) we get:
\begin{align}
\kappa_{n+1}  &  =K_{n+1}-\bar{s}_{n+1}^{2}=\label{vn1}\\
&  =(1-\mu_{n+1})\kappa_{n}+\mu_{n+1}(1-\mu_{n+1})(\bar{s}_{n}^{2}-2\bar
{s}_{n}s_{n+1}+s_{n+1}^{2})=\\
&  =(1-\mu_{n+1})\kappa_{n}+\mu_{n+1}(1-\mu_{n+1})\bar{x}_{n+1}^{2}.
\end{align}
Now it is easy to get the assertion using again Lemma \ref{podstawowy} and
convergence of the series $\sum_{i\geq1}\mu_{i}\bar{x}_{i}^{2}$.
\end{proof}

Let us recall that in probability theory, we often meet the problem of almost
sure convergence of the sequences of the random variables of the form
$\left\{  Y_{n}=\frac{\sum_{i=1}^{n}X_{i}}{n}\right\}  $, where $\left\{
X_{i}\right\}  _{i\geq1}$ is the sequence of some random variables. We say
then that the strong law of large numbers is satisfied by the sequence
$\left\{  X_{i}\right\}  _{i\geq1}$. However the sequence $\left\{
Y_{n}\right\}  $ can be viewed as the sequence of Riesz's means of the
sequence of the random variables $\left\{  X_{i}\right\}  _{i\geq1}$ with
respect to the weight sequence $\left\{  1\right\}  $. Let us now recall
Remark following definition \ref{LLN}. We extend the notion of LLN in\emph{\ }
the following way:

\begin{definition}%
\index{Law!Large Numbers!Generalized}%
\label{uog_pwl}Let $\left\{  X_{i}\right\}  _{i\geq1}$ be a sequence random
variables such that $\forall i\geq1:$ $E\left\vert X_{i}\right\vert <\infty$.
For for some sequence positive numbers $\left\{  \alpha_{i}\right\}  _{i\geq
0}$ the following sequence: \newline%
\[
\left\{  \frac{\sum_{i=0}^{n-1}\alpha_{i}\left(  X_{i+1}-EX_{i+1}\right)
}{\sum_{i=0}^{n-1}\alpha_{i}}\right\}  _{n\geq1},
\]
is almost surely (in probability) convergent, then we say that the sequence
$\left\{  X_{i}\right\}  _{i\geq1}$ satisfies generalized strong (weak) law of
large numbers with respect to the sequence\emph{\ }$\left\{  \alpha
_{i}\right\}  _{i\geq0}.$
\end{definition}

Hence the generalized strong laws of large numbers are nothing else than
summing of some sequences of the random variables by the Riesz's method with
some weights. Let us notice that from the Lemma \ref{podstawowy} it follows
that the fact that SLLN is satisfied is strictly connected with the almost
sure convergence of some series composed of the random variables. Examining
the almost sure convergence of a series under very general assumptions
concerning random variables $\{X_{n}\}_{n\geq1}$ is very difficult and there
are not many results concerning this question. There exist, however many
results stating strong convergence of such series under some additional
assumptions concerning this sequence, such as independence, or lack of
correlation. There exists, as it turns out one more extremely important class
of sequences $\{X_{n}\}_{n\geq1}$ constituting the intermediate case between
independence, and a lack of correlation. Namely, the class of
'\emph{martingale differences}'. In the sequel, we will present series of
results concerning almost sure convergence of a series of the random
variables, under the assumption, that the random variables $\{X_{n}\}_{n\geq
1}$ are either martingale differences or are uncorrelated (that is orthogonal
in other terminology). Let us recall by the way, that the problem of
convergence of the so-called \emph{orthogonal series }is\emph{\ }sometimes
presented in more general, not only probabilistic context. We will present its
partial solution. By the way, we will try the methods presented above, by
examining the almost sure convergence of orthogonal and others, connected with
them, functional series. The notions of martingale and martingale difference
we discuss in Appendix \ref{martyngaly}.

\section{Convergence of series of the random variables}

In this section we will present a few results concerning almost sure
convergence of the following series
\begin{equation}
\sum_{i\geq1}X_{i}, \label{szer_podst}%
\end{equation}
where $\{X_{n}\}_{n\geq1}$ is the sequence of martingale differences with
respect to filtration $\left\{  \mathcal{G}_{n}\right\}  _{n\geq1}$. In
particular, sequence $\{X_{n}\}_{n\geq1}$ can consist of independent random
variables. When random variables have variances we have immediately:

\begin{theorem}
If the sequence $\{X_{n}\}_{n\geq1}$ consists of martingale differences with
respect to $\left\{  \mathcal{G}_{n}\right\}  _{n\geq1}$ and $\sum_{i\geq
1}\operatorname*{var}(X_{i})<\infty$, then the series (\ref{szer_podst})
converges almost surely.
\end{theorem}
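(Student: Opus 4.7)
The plan is to reduce the claim to an $L^2$-boundedness argument on the partial sums and then invoke the martingale machinery recalled in Appendix \ref{martyngaly}. Set $S_n = \sum_{i=1}^n X_i$. Since the $X_i$ are martingale differences with respect to $\{\mathcal{G}_n\}_{n\geq 1}$, the sequence $\{S_n\}_{n\geq 1}$ itself is a martingale adapted to $\{\mathcal{G}_n\}_{n\geq 1}$, and $ES_n = 0$. In particular $m_n := EX_n = 0$, so I shall compute second moments of sums directly.

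Next I would exploit the key orthogonality property of martingale differences: for $i<j$, conditioning on $\mathcal{G}_{j-1}$ gives $E(X_i X_j) = E(X_i E(X_j\mid\mathcal{G}_{j-1})) = 0$. Consequently
\[
ES_n^2 \;=\; \sum_{i=1}^n EX_i^2 \;=\; \sum_{i=1}^n \operatorname{var}(X_i),
\]
and the hypothesis $\sum_{i\geq 1}\operatorname{var}(X_i)<\infty$ shows that $\sup_n ES_n^2<\infty$. The same computation applied to the increment $S_n-S_m$ yields $E(S_n-S_m)^2 = \sum_{i=m+1}^n \operatorname{var}(X_i)$, which tends to zero as $m,n\to\infty$.

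Finally I would combine this with Doob's maximal inequality for the nonnegative submartingale $\{(S_n-S_m)^2\}_{n\geq m}$, giving
\[
P\Bigl(\max_{m\leq k\leq n}|S_k-S_m|\geq\varepsilon\Bigr) \;\leq\; \frac{1}{\varepsilon^2}\sum_{i=m+1}^n \operatorname{var}(X_i).
\]
Letting $n\to\infty$ and then $m\to\infty$, the right-hand side vanishes, so the partial sums form an a.s.\ Cauchy sequence and therefore converge almost surely. Equivalently, one may cite directly the $L^2$-bounded martingale convergence theorem, which from $\sup_n ES_n^2<\infty$ delivers a.s.\ (and $L^2$) convergence of $\{S_n\}$. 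The only real obstacle is the availability of Doob's inequality (or the $L^2$ martingale convergence theorem) in the form stated; this is exactly what the appendix on martingales is meant to supply, so the step is invocation rather than genuine work.
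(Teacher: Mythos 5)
Your proposal is correct and matches the paper's argument, which simply observes that the partial sums form a martingale bounded in $L_{2}$ (by the same orthogonality computation you carry out) and then invokes the martingale convergence theorem. Your extra route via Doob's maximal inequality applied to $\{(S_{k}-S_{m})^{2}\}_{k\geq m}$ to get an a.s.\ Cauchy sequence is a sound, slightly more self-contained variant of the same idea, and your closing sentence citing $L^{2}$-bounded martingale convergence is exactly the paper's proof.
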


\begin{proof}
It is enough to notice, that the sequence of partial sums of the series
(\ref{szer_podst}) is a martingale with respect to filtration $\left\{
\sigma(X_{1},\ldots,X_{n})\right\}  _{n\geq1}$, bounded in $L_{2}$, hence a.s. convergent.
\end{proof}

There exists an extension of this theorem that is coming from Doob.

\begin{theorem}
\label{ozbwL2}If the sequence $\{X_{n}\}_{n\geq1}$ consists of martingale
differences with respect to $\left\{  \mathcal{G}_{n}\right\}  _{n\geq1}$,
then for almost every elementary event $\omega$ we have:
\[
\sum_{i\geq1}E(X_{i}^{2}|\mathcal{G}_{i-1})<\infty\;\;\;\Rightarrow
\;\;\;\text{series}\;\sum_{i\geq1}X_{i}\;\text{converges.}%
\]

If additionally we assume that $E\left(  \underset{n}{\sup}\left\vert
X_{n}\right\vert ^{2}\right)  <\infty$, then we have also the following
implication that is satisfied for almost all $\omega$:
\[
\text{series}\;\sum_{i\geq1}X_{i}\;\text{converges.\ \ }\Rightarrow
\;\;\sum_{i\geq1}E(X_{i}^{2}|\mathcal{G}_{i-1})<\infty\;\text{.}%
\]

\end{theorem}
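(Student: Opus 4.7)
The plan is to reduce both implications to the preceding $L^2$-bounded martingale theorem via localization. Set $S_n = \sum_{i=1}^n X_i$ and $A_n = \sum_{i=1}^n E(X_i^2\mid \mathcal{G}_{i-1})$; the increasing process $\{A_n\}$ is predictable, and is the conditional quadratic variation of the martingale $\{S_n\}$.

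For the forward implication I would introduce, for each $K\in\mathbb{N}$, the predictable stopping time $\tau_K = \inf\{n\geq 1 : A_{n+1} > K\}$, with $\inf\emptyset = \infty$. Predictability ($\{\tau_K \geq n\}\in\mathcal{G}_{n-1}$) follows from $\mathcal{G}_{n-1}$-measurability of $A_n$. Hence $X_n^{(K)} := X_n\,I(n\leq \tau_K)$ is again a martingale difference sequence with respect to $\{\mathcal{G}_n\}$, and by construction
\[
\sum_{n\geq 1} E\bigl((X_n^{(K)})^2\mid \mathcal{G}_{n-1}\bigr) = A_{\tau_K} \leq K.
\]
Taking expectations gives $\sum_n E((X_n^{(K)})^2) \leq K$, so the previous theorem yields almost sure convergence of $\sum_n X_n^{(K)}$. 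On $\{A_\infty < \infty\}$ we have $\tau_K = \infty$ as soon as $K > A_\infty(\omega)$, so there $X_n^{(K)} = X_n$ for all $n$ and the original series $\sum X_i$ converges. A countable union of null sets over $K\in\mathbb{N}$ covers the event.

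For the converse I would use the dual stopping times $\sigma_L = \inf\{n\geq 1 : |S_n| > L\}$, observing that $\{\sigma_L \geq i\} = \{|S_1|\leq L,\ldots,|S_{i-1}|\leq L\} \in \mathcal{G}_{i-1}$. Since $|S_{\sigma_L -1}|\leq L$ on $\{\sigma_L <\infty\}$, the stopped martingale $S_n^L := S_{n\wedge \sigma_L}$ is pointwise controlled by $|S_n^L| \leq L + \sup_{k\geq 1} |X_k|$, so by the extra hypothesis $E\bigl((S_n^L)^2\bigr) \leq 2L^2 + 2E\bigl(\sup_k |X_k|^2\bigr) < \infty$ uniformly in $n$. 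Orthogonality of martingale differences and predictability of $I(i\leq\sigma_L)$ give
\[
E\bigl((S_n^L)^2\bigr) = \sum_{i=1}^{n} E\bigl(X_i^2\,I(i\leq\sigma_L)\bigr) = E\Bigl(\sum_{i=1}^{n} E(X_i^2\mid\mathcal{G}_{i-1})\,I(i\leq\sigma_L)\Bigr).
\]
Letting $n\to\infty$ and invoking monotone convergence forces $\sum_{i\geq 1} E(X_i^2\mid\mathcal{G}_{i-1})\,I(i\leq\sigma_L) < \infty$ almost surely. On $\{\sum X_i \text{ converges}\}$ the sequence $\{S_n(\omega)\}$ is bounded, so $\sigma_L(\omega)=\infty$ for some finite $L$, and a countable union over $L\in\mathbb{N}$ yields the assertion.

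The main obstacle is the converse direction: the whole $L^2$-estimate hinges on controlling the overshoot $|S_{\sigma_L}| - L$, which can be arbitrarily large in general. The hypothesis $E(\sup_n |X_n|^2) < \infty$ is precisely what tames this overshoot and preserves the uniform $L^2$-bound on $S_n^L$; without it, the identity $E((S_n^L)^2) = E(\sum_{i\leq n} E(X_i^2\mid\mathcal{G}_{i-1}) I(i\leq\sigma_L))$ can be finite for every $n$ while blowing up in the limit, and the conclusion cannot be transferred back to the original series.
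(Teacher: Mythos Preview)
Your proof is correct and follows essentially the same approach as the paper: localize by a predictable stopping time on $A_n$ for the forward direction and by the first exit time of $|S_n|$ from $[-L,L]$ for the converse, then use the $L^2$-bounded martingale convergence theorem in each case. Your presentation is in fact slightly cleaner---you isolate the overshoot bound $|S_n^L|\leq L+\sup_k|X_k|$ in one line, whereas the paper splits into cases $T_K>n$ and $T_K\le n$---but the stopping times, the key $L^2$ estimates, and the countable-union argument over $K$ (resp.\ $L$) are identical.
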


\begin{proof}
Let us assume that $X_{1}=0$ (one can always assume so, it will not affect
convergence). Let us fix $K>0$. Let $T_{K}$ be the smallest natural number $n$
such that $\sum_{i=1}^{n+1}E(X_{i}^{2}|\mathcal{G}_{i-1})>K$, if such $n$
exists and $T_{K}=\infty$, if there is not such $n$. $T_{K}$ is a stopping
time (see Appendix \ref{czas}), since the event $\left\{  T_{K}\leq n\right\}
$ depends only on random variables $E(X_{i}^{2}|\mathcal{G}_{i-1})$ for
$i=1,\ldots,n$. Let $S_{n}^{(T_{K})}=\sum_{i=1}^{n}X_{i}I(T_{K}\geq i)$.
$\left\{  S_{n}^{(T_{K})}\right\}  _{n\geq1}$ is a martingale and the sequence
$\left\{  X_{i}I(T_{K}\geq i)\right\}  _{i\geq1}$ consist of martingale
differences, since random variable $I(T_{K}\geq n)=1-I(T_{K}\leq n-1)$ is
$\mathcal{G}_{n-1}-$measurable and we have:
\[
E\left(  X_{n}I(T_{K}\geq n)|\mathcal{G}_{n-1}\right)  =I(T_{K}\geq n)E\left(
X_{n}|\mathcal{G}_{n-1}\right)  =0\;a.s.\,~.
\]
Because there is no correlation between the variables $\left\{  X_{i}%
I(T_{K}\geq i)\right\}  _{i\geq1}$ we have:
\begin{align*}
E\left(  S_{n}^{(T_{K})}\right)  ^{2}  &  =E\sum_{i=1}^{n}X_{i}^{2}I(T_{K}\geq
i)=E\sum_{i=1}^{n}E\left(  X_{i}^{2}I(T_{K}\geq i)|\mathcal{G}_{i-1}\right) \\
&  =E\sum_{i=1}^{n}I(T_{K}\geq i)E\left(  X_{i}^{2}|\mathcal{G}_{i-1}\right)
=E\sum_{i=1}^{\min(T_{K},n)}E\left(  X_{i}^{2}|\mathcal{G}_{i-1}\right)  \leq
K,
\end{align*}
since we have not reached yet the moment when the sum under the expectation
exceeds $K$. Martingale\allowbreak\ $\left\{  S_{n}^{(T_{K})}\right\}
_{n\geq1}$ is bounded in $L_{2}$, hence convergent. If $T_{K}=\infty$, notice,
that then the series $\sum_{i\geq1}X_{i}$ is convergent.\ Further, we have
$\left\{  \sum_{i\geq1}E(X_{i}^{2}|\mathcal{G}_{i-1})<\infty\right\}
\allowbreak=\allowbreak\bigcup_{K=1}^{\infty}\left\{  T_{K}=\infty\right\}  $,
hence indeed on the event \newline$\left\{  \sum_{i\geq1}E(X_{i}%
^{2}|\mathcal{G}_{i-1})<\infty\right\}  $ series $\sum_{i\geq1}X_{i}$ is convergent.

In order to get the second assertion for the fixed $K>1$, let us consider
random variable $T_{K}$ defined in the following way: $T_{K}$ is the smallest
natural number $n$ such that $\left\vert \sum_{i=1}^{n}X_{i}\right\vert >K$ or
$0$, if such natural number does not exist. $T_{K}$ is a stopping time. Let us
denote: $S_{n}^{T_{K}}=\sum_{i=1}^{n}X_{i}I(T_{K}\geq i)$. If $T_{K}>n$, then
of course $\left(  S_{n}^{T_{K}}\right)  ^{2}\leq K^{2}$, if $T_{K}\leq n$,
then
\begin{align*}
\left(  S_{n}^{T_{K}}\right)  ^{2}  &  =\left(  S_{T_{K}-1}+X_{T_{K}}\right)
^{2}\leq2\left(  S_{T_{K}-1}\right)  ^{2}+2\underset{n}{\sup}\left(
X_{n}\right)  ^{2}\\
&  \leq2K^{2}+2\underset{n}{\sup}\left(  X_{n}\right)  ^{2},
\end{align*}
since $T_{K}$ is the first number $n$ such that $\left\vert S_{n}\right\vert
>K$, hence earlier, that is e.g. at $T_{K}-1$ we had $\left\vert S_{T_{K}%
-1}\right\vert \leq K.$

Because of assumptions $E\underset{n}{\sup}X_{n}^{2}<\infty$ and random
variables $\left\{  X_{i}I(T_{K}\geq i)\right\}  _{i\geq1}$ are martingale
differences we have
\[
\infty>\underset{n}{\sup}E(S_{n}^{T_{K}})^{2}=\sum_{i\geq1}EX_{i}^{2}%
I(T_{K}\geq i).
\]
Moreover, we have:
\[
\sum_{i\geq1}E\left(  X_{i}^{2}I(T_{K}\geq i)\right)  \allowbreak=\allowbreak
E\sum_{i\geq1}I(T_{K}\geq i)E(X_{i}^{2}|\mathcal{G}_{i-1}).
\]
This means that series $\sum_{i\geq1}I(T_{K}\geq i)E(X_{i}^{2}|\mathcal{G}%
_{i-1})$ is almost surely convergent. In particular, the event $\left\{
T_{K}=\infty\right\}  $ implies convergence of the series $\sum_{i\geq
1}E(X_{i}^{2}|\mathcal{G}_{i-1})$. Finally, lest us notice, that the event
$\left\{  \text{series }\sum_{i\geq1}X_{i}\text{ is convergent}\right\}
\allowbreak$\newline$=\allowbreak\bigcup_{K=1}^{\infty}\left\{  T_{K}%
=\infty\right\}  .$
\end{proof}

Now we will apply this theorem to special random variables, namely variables
of the form $I(B_{i})$, where $\left\{  B_{i}\right\}  _{i\geq1}$ is some
sequence of events such that $B_{i}\in\mathcal{G}_{i}$. Let us notice that the
variables $\left\{  I(B_{i})-P\left(  B_{i}|\mathcal{G}_{i-1}\right)
\right\}  _{i\geq2}$ are martingale differences. We have the following
statement being generalization of assertion $i)$ of the Borel- Cantelli' Lemma
(see Appendix \ref{Borel-Cantelli}):

\begin{proposition}
\label{uog_Borel} $i)$ Event $\sum_{i\geq2}P\left(  B_{i}|\mathcal{G}%
_{i-1}\right)  <\infty$ implies \newline$\sum_{i\geq1}I(B_{i})<\infty$, or
equivalently $\left\{  B_{i}:f.o.\right\}  .$

$ii)$ Event $\sum_{i\geq2}P\left(  B_{i}|\mathcal{G}_{i-1}\right)  =\infty$
implies then $\underset{n\rightarrow\infty}{\lim}\frac{\sum_{i=1}^{n}I(B_{i}%
)}{\sum_{i=2}^{n}P\left(  B_{i}|\mathcal{G}_{i-1}\right)  }=1.$
\end{proposition}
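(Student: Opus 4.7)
The plan is to reduce both assertions to Theorem \ref{ozbwL2} applied to the bounded martingale difference sequence $X_i=I(B_i)-P(B_i\mid\mathcal{G}_{i-1})$, $i\geq 2$. Since $|X_i|\leq 1$, the hypothesis $E(\sup_n X_n^2)<\infty$ of Theorem \ref{ozbwL2} is trivially satisfied, and moreover
\[
E(X_i^2\mid\mathcal{G}_{i-1})=P(B_i\mid\mathcal{G}_{i-1})(1-P(B_i\mid\mathcal{G}_{i-1}))\leq P(B_i\mid\mathcal{G}_{i-1}).
\]

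For assertion \emph{i)}, on the event $A=\{\sum_{i\geq 2}P(B_i\mid\mathcal{G}_{i-1})<\infty\}$ the above bound gives $\sum_{i\geq 2}E(X_i^2\mid\mathcal{G}_{i-1})<\infty$, so by Theorem \ref{ozbwL2} the series $\sum_{i\geq 2}X_i$ converges almost surely on $A$. Writing $I(B_i)=X_i+P(B_i\mid\mathcal{G}_{i-1})$ and summing, the convergent series on the right forces $\sum_{i\geq 1}I(B_i)<\infty$ a.s.\ on $A$, which is exactly the statement $\{B_i\text{ f.o.}\}$.

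For assertion \emph{ii)}, set $a_n=\sum_{i=2}^{n}P(B_i\mid\mathcal{G}_{i-1})$ and $B=\{a_\infty=\infty\}$. Consider the martingale $M_n=\sum_{i=2}^{n}\frac{X_i}{1+a_i}$; note that $a_i$ is $\mathcal{G}_{i-1}$-measurable, so $X_i/(1+a_i)$ is indeed a martingale difference adapted to $\{\mathcal{G}_i\}$. Its conditional second moments satisfy
\[
\sum_{i\geq 2}E\!\left(\tfrac{X_i^2}{(1+a_i)^2}\,\Big|\,\mathcal{G}_{i-1}\right)\leq \sum_{i\geq 2}\frac{a_i-a_{i-1}}{(1+a_i)^2}\leq \int_{0}^{\infty}\frac{dt}{(1+t)^2}<\infty\quad\text{a.s.,}
\]
by a standard comparison of the telescoping sum with the convergent integral (this bound holds whether $a_\infty$ is finite or infinite). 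Hence Theorem \ref{ozbwL2} yields that $M_n$ converges a.s. Applying Kronecker's Lemma (in the form proved via Lemma \ref{podstawowy} at the end of Section \ref{lemliczbowe}) to the divergent sequence $\{1+a_n\}_{n\geq 2}$ on the event $B$, we conclude $\frac{1}{1+a_n}\sum_{i=2}^{n}X_i\to 0$ a.s.\ on $B$. Substituting the definition of $X_i$, this is exactly
\[
\frac{\sum_{i=2}^{n}I(B_i)-a_n}{1+a_n}\longrightarrow 0\quad\text{a.s. on }B,
\]
and incorporating the bounded term $I(B_1)$ (and the harmless discrepancy between $a_n$ and $1+a_n$) yields the claimed ratio limit equal to $1$.

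The routine half is clearly assertion \emph{i)}; the delicate point is in \emph{ii)}, namely picking the right normalization $1+a_i$ so that the random denominator is $\mathcal{G}_{i-1}$-measurable (preserving the martingale property) while still dominating the conditional variance sum by a deterministic convergent integral. Once that is set up, the appeal to Kronecker's Lemma proved earlier is straightforward.
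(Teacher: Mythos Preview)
Your proof is correct and follows essentially the same route as the paper's: part~\emph{i)} is identical, and part~\emph{ii)} uses the same weighted-martingale-plus-Kronecker scheme. The one noteworthy difference is your choice of normalizer: you divide by $1+a_i$ with $a_i=\sum_{j\le i}P(B_j\mid\mathcal{G}_{j-1})$, whereas the paper divides by $1+A_i$ with $A_i=\sum_{j\le i}P(B_j\mid\mathcal{G}_{j-1})(1-P(B_j\mid\mathcal{G}_{j-1}))$, which forces it into a case split on whether $A_\infty$ is finite and then a final comparison $A_n\le a_n$. Your normalization is the cleaner choice here, since the telescoping bound $\sum(a_i-a_{i-1})/(1+a_i)^2\le 1$ holds uniformly and lands you directly on the ratio in the statement without any case analysis.
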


\begin{proof}
Since the sequence $M_{n}=\sum_{i=2}^{n}\left[  I(B_{i})-P\left(
B_{i}|\mathcal{G}_{i-1}\right)  \right]  $, $n\geq2$ is a martingale, then
from the beginning of the previous theorem it follows that it converges, if
only series $\sum_{i\geq2}E\left[  \left(  I(B_{i})-P(B_{i}|\mathcal{G}%
_{i-1})\right)  ^{2}|\mathcal{G}_{i-1}\right]  $ converges. But we have:
\[
E\left[  \left(  I(B_{i})-P\left(  B_{i}|\mathcal{G}_{i-1}\right)  \right)
^{2}|\mathcal{G}_{i-1}\right]  =\allowbreak P\left(  B_{i}|\mathcal{G}%
_{i-1}\right)  (1-P\left(  B_{i}|\mathcal{G}_{i-1}\right)  ).
\]
Let us denote for brevity:
\begin{align*}
Y_{n}  &  =\sum_{i=2}^{n}P\left(  B_{i}|\mathcal{G}_{i-1}\right)  ,\\
Z_{n}  &  =\sum_{i=1}^{n}I(B_{i}),\\
A_{n}  &  =\sum_{i=2}^{n}P\left(  B_{i}|\mathcal{G}_{i-1}\right)  (1-P\left(
B_{i}|\mathcal{G}_{i-1}\right)  ),\;n=2,3,\ldots\,.
\end{align*}
Of course, convergence of the sequence $\left\{  Y_{n}\right\}  _{n\geq2}$
implies convergence of the sequence $\left\{  A_{n}\right\}  _{n\geq2}$. In
other words convergence of the sequence $\left\{  Y_{n}\right\}  _{n\geq2}$
implies convergence of the series $\sum_{i\geq1}I(B_{i})$. \newline Let us
suppose now, that $\sum_{i\geq2}P\left(  B_{i}|\mathcal{G}_{i-1}\right)
=\infty$. There are the following possibilities. Either sequence $\left\{
A_{n}\right\}  _{n\geq2}$ is convergent, then martingale\ $\left\{
M_{n}\right\}  _{n\geq2}$ is convergent and now it is easy to get the
assertion. However, if the sequence $\left\{  A_{n}\right\}  _{n\geq2}$ is
divergent, then we argue in the following way. Let us consider the sequence
\[
W_{n}=\sum_{i=1}^{n}\frac{M_{i}-M_{i-1}}{1+A_{i}}.
\]
It is martingale, since random variable $A_{i}$ is $\mathcal{G}_{i-1}$
measurable. We have
\[
E\left(  (W_{n}-W_{n-1})^{2}|\mathcal{G}_{n-1}\right)  \allowbreak
=\allowbreak(1+A_{n})^{-2}(A_{n}-A_{n-1})
\]
and
\[
(1+A_{n})^{-2}(A_{n}-A_{n-1})\leq(1+A_{n-1})^{-1}-(1+A_{n})^{-1}.
\]
Hence the series
\[
\sum_{n\geq2}E(W_{n}-W_{n-1})^{2}|\mathcal{G}_{n-1})
\]
is convergent. It means that the series
\[
\sum_{i=1}^{\infty}\frac{M_{i}-M_{i-1}}{1+A_{i}}%
\]
is a convergent martingale. For every elementary event belonging to the event
\[
\left\{  \sum_{i\geq1}P\left(  B_{i}|\mathcal{G}_{i-1}\right)  (1-P\left(
B_{i}|\mathcal{G}_{i-1}\right)  )=\infty\right\}  ,
\]
we apply Kronecker's Lemma, getting $\frac{M_{n}}{A_{n}}\rightarrow0$, when
$n\rightarrow\infty$. Consequently remembering that $A_{n}\leq\sum_{i=2}%
^{n}P\left(  B_{i}|\mathcal{G}_{i-1}\right)  $. we see that
\[
\frac{M_{n}}{\sum_{i=2}^{n}P\left(  B_{i}|\mathcal{G}_{i-1}\right)
}\rightarrow0,\text{that is~}\frac{\sum_{i=1}^{n}I(B_{i})}{\sum_{i=2}%
^{n}P\left(  B_{i}|\mathcal{G}_{i-1}\right)  }\rightarrow1,
\]
when $n\rightarrow\infty.$
\end{proof}

We have also the following theorem:

\begin{theorem}
\label{3_ciagi_mart}Let $\{X_{n}\}_{n\geq1}$ be a sequence adapted to the
filtration $\left\{  \mathcal{G}_{n}\right\}  _{n\geq1}$ (i.e. $X_{n}$ jest
$\mathcal{G}_{n}$ measurable). Then the series $\sum_{i\geq1}X_{i}$ converges
almost surely on an event such that for some constant $C>0$:
\begin{gather}
\sum_{i\geq1}P\left(  \left\vert X_{i}\right\vert >C|\mathcal{G}_{i-1}\right)
<\infty,\label{tw_3ciagi1}\\
\text{series }\sum_{i\geq1}E\left(  X_{i}I(\left\vert X_{i}\right\vert \leq
C)|\mathcal{G}_{i-1}\right)  \;\text{converges,}\label{tw_3ciagi2}\\
\text{and }\sum_{i\geq1}\operatorname*{var}\left(  X_{i}I(\left\vert
X_{i}\right\vert >C)|\mathcal{G}_{i-1}\right)  <\infty. \label{tw_3ciagi3}%
\end{gather}

\end{theorem}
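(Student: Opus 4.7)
The plan is the classical Kolmogorov three--series argument carried out in the martingale setting by truncating each $X_i$ at level $C$. Set
\[
Y_i = X_i I(|X_i|\le C),\qquad R_i = X_i I(|X_i|>C),
\]
so that $X_i = Y_i + R_i$, and decompose $Y_i = Z_i + W_i$ where $W_i = E(Y_i\mid\mathcal{G}_{i-1})$ and $Z_i = Y_i - W_i$. Note that $Z_i$ and $W_i$ are both $\mathcal{G}_i$-measurable, $\{Z_i\}$ is a sequence of martingale differences with $|Z_i|\le 2C$, and (assuming what I take to be a typo, with the truncation $|X_i|\le C$ in condition (\ref{tw_3ciagi3})) we have $E(Z_i^2\mid\mathcal{G}_{i-1}) = \operatorname{var}(Y_i\mid\mathcal{G}_{i-1})$.

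The argument then splits into three pieces, one per hypothesis. First, on the event (\ref{tw_3ciagi1}) I apply part (i) of Proposition \ref{uog_Borel} to the events $B_i = \{|X_i|>C\}\in\mathcal{G}_i$: the convergence of $\sum P(B_i\mid\mathcal{G}_{i-1})$ forces $I(B_i)=0$ from some index on, so $\sum_i R_i$ is almost surely a finite sum and hence converges trivially. Second, on the event (\ref{tw_3ciagi2}) the series $\sum W_i$ converges by assumption. Third, on the event (\ref{tw_3ciagi3}) we have $\sum E(Z_i^2\mid\mathcal{G}_{i-1})<\infty$, and since $\{Z_i\}$ is a martingale difference sequence, Theorem \ref{ozbwL2} yields the almost sure convergence of $\sum Z_i$.

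Adding the three pieces on the intersection of the three events gives the almost sure convergence of $\sum X_i$. The only delicate point is the usual one: each of the three auxiliary statements concerns almost sure behaviour on a random event, not on the whole space. This is exactly the ``convergence on an event'' form in which Proposition \ref{uog_Borel} and Theorem \ref{ozbwL2} are stated, so no additional localization (by stopping times, say) is required beyond what those results already contain; in particular the stopping-time truncation used inside the proof of Theorem \ref{ozbwL2} takes care of the fact that $\sum E(Z_i^2\mid\mathcal{G}_{i-1})$ need not be globally finite. The main technical hurdle is therefore the bookkeeping that aligns these three ``on-the-event'' conclusions and the verification that the $Z_i$ indeed form a bounded martingale difference sequence so that Theorem \ref{ozbwL2} applies with its $L_2$-based conclusion; once that is in place the combination is immediate.
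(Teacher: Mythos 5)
Your argument is correct and follows essentially the same route as the paper's own proof: truncate at $C$, use Proposition \ref{uog_Borel}(i) with condition (\ref{tw_3ciagi1}) to dispose of the tail terms $X_iI(|X_i|>C)$, use (\ref{tw_3ciagi2}) for the conditional means, and apply Theorem \ref{ozbwL2} with (\ref{tw_3ciagi3}) to the centered truncated martingale differences. Your reading of (\ref{tw_3ciagi3}) as a typo (the variance should involve $I(|X_i|\le C)$) is also exactly how the paper's proof in fact uses that hypothesis.
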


\begin{proof}
Let $A$ denote an event defined by the relationships (\ref{tw_3ciagi1}),
(\ref{tw_3ciagi2}), (\ref{tw_3ciagi3}). Since (\ref{tw_3ciagi1}) is true, then
using Proposition \ref{uog_Borel} we see that events $\left\{  \left\vert
X_{i}\right\vert >C\right\}  _{i\geq1}$ will happen only a finite number of
times, hence the series
\[
\sum_{i\geq1}X_{i}I(\left\vert X_{i}\right\vert >C)
\]
is convergent.\ Further, it means that events $\left\{  \sum_{i\geq1}%
X_{i},\;\text{converges}\right\}  $ and \newline$\left\{  \sum_{i\geq1}%
X_{i}I(\left\vert X_{i}\right\vert \leq C)\;\text{converges}\right\}  $ are
identical on $A$. Since we have (\ref{tw_3ciagi2}), then of course we have
also
\[
\left\{  \sum_{i\geq1}X_{i},\;\text{converges}\right\}  =\left\{  \sum
_{i\geq1}X_{i}I(\left\vert X_{i}\right\vert \leq C)-E\left(  X_{i}I(\left\vert
X_{i}\right\vert \leq C)|\mathcal{G}_{i-1}\right)  \;\text{converges}\right\}
.
\]
Series
\[
\sum_{i\geq1}X_{i}I(\left\vert X_{i}\right\vert \leq C)-E\left(
X_{i}I(\left\vert X_{i}\right\vert \leq C)|\mathcal{G}_{i-1}\right)
\]
is a martingale, that converges by Theorem \ref{ozbwL2}, since we have
(\ref{tw_3ciagi3}).
\end{proof}

When we deal with random variables that are independent the theorem can be
reversed. Namely, we have:

\begin{theorem}
[Ko\l mogorov's three series]%
\index{Theorem!Kolmogorov!3 series}%
Let $\{X_{n}\}_{n\geq1}$ be a sequence of independent random variables. Series
$\sum_{n\geq1}X_{n}$ converges if and only if, for some $K>0$ the following
three series are convergent:
\begin{subequations}
\begin{gather}
\sum_{n\geq1}P(\left\vert X_{n}\right\vert >K),\label{3s1}\\
\sum_{n\geq1}EX_{n}^{K},\label{3s2}\\
\sum_{n\geq1}\operatorname*{var}(X_{n}^{K}), \label{3s3}%
\end{gather}
where we denoted
\end{subequations}
\[
X_{n}^{K}=\left\{
\begin{array}
[c]{ccc}%
X_{n} & gdy & \left\vert X_{n}\right\vert \leq K\\
0 & gdy & \left\vert X_{n}\right\vert >K
\end{array}
\right.  .
\]

\end{theorem}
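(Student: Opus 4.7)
The plan is to apply Theorem \ref{3_ciagi_mart} to the filtration $\mathcal{G}_n=\sigma(X_1,\ldots,X_n)$. Independence of $\{X_n\}$ forces all conditional expressions to reduce to unconditional ones: $P(|X_i|>K\,|\,\mathcal{G}_{i-1})=P(|X_i|>K)$, $E(X_iI(|X_i|\le K)\,|\,\mathcal{G}_{i-1})=EX_i^{K}$, and $\operatorname{var}(X_iI(|X_i|\le K)\,|\,\mathcal{G}_{i-1})=\operatorname{var}(X_i^{K})$. The three hypotheses of Theorem \ref{3_ciagi_mart} therefore coincide with (\ref{3s1})--(\ref{3s3}), and the a.s.\ convergence of $\sum X_n$ follows immediately.

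\textbf{Necessity; the first series.} Assume $\sum X_n$ converges a.s., so in particular $X_n\to 0$ a.s. Then for the given $K>0$ the events $A_n=\{|X_n|>K\}$ are independent and occur only finitely often with probability one. The (second part of the) Borel--Cantelli lemma in Appendix \ref{Borel-Cantelli} then forces $\sum_{n\ge1}P(|X_n|>K)<\infty$, which is (\ref{3s1}). Moreover, by the first part of the same lemma, $X_n=X_n^{K}$ eventually a.s., so the series $\sum X_n^{K}$ is also a.s.\ convergent.

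\textbf{Necessity; symmetrization to get (\ref{3s3}).} Introduce an independent copy $\{X_n'\}$ of $\{X_n\}$ and set $Y_n=X_n^{K}-X_n'^{K}$. The $Y_n$ are independent, symmetric, bounded by $2K$, and $\sum Y_n$ converges a.s. The crux of the proof is to show
\[
\sum_{n\ge1}EY_n^{2}<\infty,
\]
for this will yield $\sum\operatorname{var}(X_n^{K})=\tfrac12\sum EY_n^{2}<\infty$, i.e.\ (\ref{3s3}). I would argue by contradiction: if $s_N^{2}=\sum_{n=1}^{N}EY_n^{2}\to\infty$, then the lower Kolmogorov inequality for independent, zero-mean, uniformly bounded variables,
\[
P\!\left(\max_{k\le N}\Bigl|\sum_{n=1}^{k}Y_n\Bigr|\ge t\right)\ge 1-\frac{(t+2K)^{2}}{s_N^{2}},
\]
gives $P(\sup_k|\sum_{n=1}^{k}Y_n|\ge t)=1$ for every $t>0$, contradicting the a.s.\ convergence (and hence boundedness) of the partial sums of $\sum Y_n$. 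This lower inequality is the main obstacle: it is not among the tools explicitly developed earlier in the excerpt, and requires a separate variance-based estimate on $P(\max_{k\le N}|S_k|<t)$ of the Paley--Zygmund flavor.

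\textbf{Necessity; concluding with (\ref{3s2}).} Once (\ref{3s3}) is in hand, the martingale differences $Z_n=X_n^{K}-EX_n^{K}$ satisfy $\sum\operatorname{var}(Z_n)<\infty$, so the first theorem of this section yields a.s.\ convergence of $\sum Z_n$. Comparing with the already established a.s.\ convergence of $\sum X_n^{K}$ shows that the deterministic tail $\sum EX_n^{K}$ also converges, giving (\ref{3s2}) and completing the proof.
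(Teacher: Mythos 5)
Your proof follows the same skeleton as the paper's: sufficiency by feeding the unconditional quantities into Theorem \ref{3_ciagi_mart}, necessity of (\ref{3s1}) from $X_n\to 0$ a.s.\ plus independence and the converse part of the Borel--Cantelli lemma, then symmetrization $X_n^{s}=X_n^{K}-(X_n^{K})'$ to reach (\ref{3s3}), and finally (\ref{3s2}) by centering and comparing with the convergent series $\sum X_n^{K}$. The one place where you genuinely diverge is the crucial step from a.s.\ convergence of $\sum X_n^{s}$ to $\sum EY_n^{2}<\infty$: you argue by contradiction via the lower (second) Kolmogorov maximal inequality for uniformly bounded, centered, independent summands. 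That argument is correct as stated, but, as you yourself note, the inequality is not among the tools developed in the text and would need a separate proof. The paper instead closes exactly this step with the second assertion of Theorem \ref{ozbwL2}: the variables $X_n^{s}$ are martingale differences with $|X_n^{s}|\le 2K$, so $E\bigl(\sup_n |X_n^{s}|^{2}\bigr)<\infty$, and a.s.\ convergence of $\sum X_n^{s}$ forces $\sum_n E\bigl((X_n^{s})^{2}\mid\sigma(X_1,\ldots,X_{n-1})\bigr)=2\sum_n\operatorname*{var}(X_n^{K})<\infty$, since by independence the conditional second moments are deterministic. So the gap you flag can be closed without any new inequality, simply by invoking that already-proved converse half of the Doob-type theorem; your route buys a more classical, self-standing argument (Kolmogorov's two inequalities), while the paper's buys economy, reusing the stopping-time machinery it has just built.
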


\begin{proof}
Implication $\Leftarrow$ is obvious. We apply Theorem \ref{3_ciagi_mart} and
remember, that for independent random variables one has to substitute
conditional expectations by unconditional ones.

Implication $\Rightarrow$, that is, let us assume that the series $\sum
_{n\geq1}X_{n}$ is convergent. It means, in particular, that
$\underset{n\rightarrow\infty}{\lim}X_{n}=0$ almost surely. This fact on its
side, implies that the events $\left\{  \left(  \left\vert X_{n}\right\vert
>K\right)  \right\}  _{n\geq1}$ will happen only a finite number of times.
Independence and assertion \emph{iii) }of\emph{\ }the\emph{ }Borel-Cantelli'
Lemma give convergence of the series (\ref{3s1}). In order to show the
convergence of the remaining series let us consider symmetrization of the
random variables $X_{n}^{K}$, $n\geq1$, i.e. let us consider their independent
copies $\left(  X_{n}^{K}\right)  ^{\prime}$, $n\geq1$ and random variables
$X_{n}^{s}=X_{n}^{K}-\left(  X_{n}^{K}\right)  ^{\prime}$. Of course,
convergence of the series $\sum_{n\geq1}X_{n}^{K}$ implies convergence of the
series $\sum_{n\geq1}X_{n}^{s}$. We have also $\left\vert X_{n}^{s}\right\vert
\leq2K$ for all $n\in%
\mathbb{N}
$. Now we apply the second part of Theorem \ref{ozbwL2} and deduce that
$\sum_{i\geq1}\operatorname*{var}(X_{i}^{K})<\infty$, since
\[
E(\left(  X_{n}^{s}\right)  ^{2}|\sigma(X_{1},\ldots,X_{n-1}%
))=2\operatorname*{var}(X_{n}^{K}).
\]
Further convergence of the series $\sum_{i\geq1}\operatorname*{var}(X_{i}%
^{K})$ implies convergence of the series $\sum_{i\geq1}(X_{i}^{K}-EX_{i}^{K}%
)$, which connected with the convergence of the series $\sum_{i\geq1}X_{i}%
^{K}$ gives convergence of the series (\ref{3s2}).
\end{proof}

\subsection{Orthogonal series}

\label{sz_ort}Orthogonal series it is an interesting class of functional
series. It was intensively examined in 1920-60 by many excellent
mathematicians such as Menchoff, Steinhaus, Kaczmarz, Zygmund, Riesz, Hardy
and Littlewood. Some of their results will be possible to get directly from
the presented above lemmas and theorems concerning convergence number
sequences. The present chapter\ can be viewed as the "test for the usefulness
of methods developed above".

Since there exist strong links of the present subsection with the mathematical
analysis we will present first the problem of convergence of orthogonal series
generally using terminology accepted in the analysis. Later we shall confine
ourselves to probabilistic terminology.

Let on the measure space $([a,b],\mathcal{B}\emph{(}[a,b]),\mu(.))$, (where
$\mathcal{B}\emph{(}[a,b])$ is Borel $\sigma$-field of the segment $[a,b]$,
and $\mu$ some finite measure on \emph{B)} be defined the following
functions:
\begin{gather*}
\phi_{i}:[a,b]\rightarrow%
\mathbb{R}
;\;\int_{[a,b]}\left\vert \phi_{i}(x)\right\vert ^{2}\mu\left(  dx\right)
=1,\\
\int_{\lbrack a,b]}\phi_{i}(x)\phi_{j}(x)\mu(dx)=0;\;i,j=1,\ldots;\;i\neq j.
\end{gather*}
The cases $a\allowbreak=\allowbreak-\infty$ and $b\allowbreak=\allowbreak
\infty$ are allowed.

Such sequence of functions is called \emph{orthonormal system }%
\index{System!orthonormal}%
. For any of functions $f\in L^{2}([a,b],\mathcal{B}([a,b]),\mu(.))$ we define
series
\[
\mathcal{S}_{f}=\sum_{i\geq1}c_{i}\phi_{i},
\]
where $c_{i}=\int_{[a,b]}f(x)\phi_{i}(x)\mu(dx)$. Does the series
$\mathcal{S}_{f}$ has any connection with the function $f$ ? It turns out that
it converges in $L^{2}$ to $f$ if and only if, the following Parseval's
identity is satisfied:
\[
\int_{\lbrack a,b]}f^{2}(x)\mu(dx)=\sum_{i\geq1}c_{i}^{2}.
\]
Does it converge almost everywhere to $f\,?$ It turns out that not always.
Moreover, it turns out, that the answer depends:

\begin{enumerate}
\item on coefficients $\left\{  c_{i}\right\}  _{i\geq1};$ more precisely, on
the speed, with which they converge to zero

\item on the form of the functions $\left\{  \phi_{i}\right\}  _{i\geq1}$
constituting the orthonormal system.
\end{enumerate}

We will present now two theorems concerning those two points.

On the way we will use the following conventions and notation:

\begin{itemize}
\item all considered below logarithms will be with base $2,$

\item $\log_{+}x=\max(\log x,1)$, for $x>0,$

\item $\log_{+}\frac{a}{b}=\log(a/b)$ when $(a/b)\geq2$ and $1$ if
$(a/b)\in\lbrack0,2)$ or $b=0.$
\end{itemize}

As far as the first property, we have the following result.

\begin{theorem}
[Rademacher-Menchoff's]%
\index{Theorem!Rademacher-Menchoff}%
\label{menshov} Let be given an orthonormal system $\left\{  \phi_{i}\right\}
_{i\geq1}$. If the real sequence $\left\{  c_{i}\right\}  _{i\geq1}$ is such
that
\begin{equation}
\sum_{i\geq1}c_{i}^{2}\log^{2}i<\infty, \label{rademacher-menchoff}%
\end{equation}
then the functional series $\sum_{i\geq1}c_{i}\phi_{i}(x)$ converges for
almost every (mod $\mu)$ $x\in\lbrack a,b].$
\end{theorem}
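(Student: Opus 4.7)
The plan is to combine a subsequence argument along powers of $2$ with the classical Rademacher-Menchoff maximal inequality. Write $S_n(x)=\sum_{i=1}^n c_i\phi_i(x)$. First I will prove that the subsequence $\{S_{2^k}\}_{k\geq 0}$ converges $\mu$-a.e., and then I will use a maximal estimate to control the oscillation of $S_n$ inside each dyadic block $[2^k,2^{k+1})$, upgrading subsequence convergence to full a.e.\ convergence.

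By orthonormality of $\{\phi_i\}$,
\[
\sum_{k\geq 0}\int(S_{2^{k+1}}-S_{2^k})^2\,d\mu=\sum_{i\geq 2}c_i^2<\infty,
\]
since $\sum c_i^2\log^2 i<\infty$ is stronger than $\sum c_i^2<\infty$. Corollary \ref{zbieznoscbezwzgledna} then gives $\sum_k(S_{2^{k+1}}-S_{2^k})^2<\infty$ $\mu$-a.e., so $\{S_{2^k}(x)\}$ is Cauchy and converges $\mu$-a.e. Set $D_k(x)=\max_{2^k\leq n<2^{k+1}}|S_n(x)-S_{2^k}(x)|$; it then suffices to show $D_k\to 0$ $\mu$-a.e. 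For this I will establish the maximal estimate
\[
\int D_k^2\,d\mu\leq C\,k^2\sum_{i=2^k+1}^{2^{k+1}}c_i^2,
\]
with $C$ universal. Summing over $k$ and using $k^2\leq 4\log^2 i$ for $i>2^k$ yields $\sum_k\int D_k^2\,d\mu<\infty$, so $\sum_k D_k^2<\infty$ $\mu$-a.e., hence $D_k\to 0$ $\mu$-a.e., finishing the proof.

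The main obstacle, and the technical heart of the argument, is the maximal estimate. I would prove it by decomposing any $S_n-S_{2^k}$ with $2^k<n\leq 2^{k+1}$ along the binary expansion of $n-2^k$: the increment splits into at most $k+1$ partial sums of the series over disjoint dyadic index intervals, one per binary level $j=0,\ldots,k$. Letting $T_j(x)$ denote the maximum over the $2^{k-j}$ dyadic intervals of length $2^j$ at level $j$ of the absolute value of the corresponding partial sum, one has $D_k\leq\sum_{j=0}^k T_j$, whence by Cauchy-Schwarz $D_k^2\leq(k+1)\sum_j T_j^2$. Orthonormality of $\{\phi_i\}$ gives $\int T_j^2\,d\mu\leq\sum_l\int(S_{2^k+(l+1)2^j}-S_{2^k+l\cdot 2^j})^2\,d\mu=\sum_{i=2^k+1}^{2^{k+1}}c_i^2$ uniformly in $j$, and the claimed bound follows. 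The subtlety is not the inequality for a fixed $n$ but its \emph{uniformity} over all $n$ in the block; this is exactly what the hierarchical dyadic organization delivers, and it is precisely the kind of basic inequality for orthogonal series that the paper has flagged via the Cesàro machinery of Lemma \ref{Cesaro}.
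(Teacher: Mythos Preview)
Your overall architecture matches the paper's: prove a.e.\ convergence of $\{S_{2^k}\}$, then kill the oscillation within dyadic blocks by a maximal inequality. But your first step has a genuine gap. From $\sum_k\int(S_{2^{k+1}}-S_{2^k})^2\,d\mu<\infty$ you only get $\sum_k(S_{2^{k+1}}-S_{2^k})^2<\infty$ a.e., i.e.\ the increments are square-summable. That does \emph{not} make $\{S_{2^k}\}$ Cauchy: square-summable increments can still have divergent partial sums (think $S_{2^{k+1}}-S_{2^k}\sim 1/k$). You have used only $\sum c_i^2<\infty$ here, and that is too weak---there exist orthonormal systems and $\ell^2$ coefficients for which even $\{S_{2^k}\}$ diverges a.e. The fix is to use the full hypothesis on the \emph{tails}, as the paper does: with $r_N=\sum_{i\geq N}c_i\phi_i$ one has
\[
\int r_{2^k}^2\,d\mu=\sum_{i\geq 2^k}c_i^2\leq\frac{1}{k^2}\sum_{i\geq 2^k}c_i^2\log^2 i\leq\frac{C}{k^2},
\]
so $\sum_k\int r_{2^k}^2\,d\mu<\infty$ and $r_{2^k}\to0$ a.e., giving $S_{2^k}\to S$ a.e.\ where $S$ is the $L^2$ sum.

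For the maximal inequality your dyadic (binary-expansion) argument is correct and is the classical Rademacher route. The paper takes a different path: it proves Lemma~\ref{fundamental_inequality} by dominating $|S_\nu|$ via the identity from Lemma~\ref{Cesaro}(iv) with $\alpha=-1/2$, $\beta=1/2$, i.e.\ through Ces\`aro means $S_k^{-1/2}$ and the estimate $\sum_{k\leq\nu}(A_{\nu-k}^{-1/2})^2=O(\log\nu)$. Both methods yield the same $O(\log^2 n)$ bound; yours is more elementary and self-contained, while the paper's approach explains the connection to summability theory that it has been developing and later reuses (e.g.\ in Lemma~\ref{uog_fundamental}).
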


Proof of this theorem is elementary, although not simple. It is based on the
following lemma.

\begin{lemma}
\label{fundamental_inequality}Let $\left\{  \theta_{i}(x)\right\}
_{i=1}^{\infty}$ be a sequence of mutually orthogonal functions defined on
$([a,b],\mathcal{B}\emph{(}[a,b]),\mu(.))$. Let $S_{i}=\sum_{j=1}^{i}%
\theta_{j}$. Then:
\begin{equation}
\int_{a}^{b}\left(  \underset{1\leq i\leq n}{\max}S_{i}^{2}\right)
d\mu(x)\leq O(\log^{2}n)\sum_{i=1}^{n}\int_{a}^{b}\theta_{i}^{2}d\mu(x).
\label{nier_fund}%
\end{equation}

\end{lemma}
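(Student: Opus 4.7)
The plan is to exploit a dyadic (binary) decomposition of indices, which is the classical route to this Rademacher--Menchoff bound. I would set $m=\lceil \log_{2} n\rceil$ and extend $\theta_{l}\equiv 0$ for $l>n$, so that all partial sums $S_{1},\ldots,S_{n}$ fit into a length-$2^{m}$ array. For each scale $k\in\{0,1,\ldots,m-1\}$ and offset $j\in\{0,1,\ldots,2^{m-k}-1\}$, I would introduce the aligned dyadic block sums
\[
B_{k,j}=\sum_{l=j\cdot 2^{k}+1}^{(j+1)\cdot 2^{k}}\theta_{l}.
\]

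The first step, and the combinatorial heart of the argument, is to show that for every $i\leq n$ the partial sum $S_{i}$ decomposes as a sum of at most $m$ aligned block sums, one per scale. Writing $i$ in binary as $i=\sum_{s=0}^{m-1}\varepsilon_{s}(i)\,2^{s}$ and setting $p_{s}(i)=\sum_{k=s}^{m-1}\varepsilon_{k}(i)\,2^{k}$, the telescoping
\[
S_{i}=\sum_{s=0}^{m-1}\bigl(S_{p_{s}(i)}-S_{p_{s+1}(i)}\bigr)
\]
reveals that each nonzero summand equals some $B_{s,j_{s}(i)}$ with $j_{s}(i)=p_{s+1}(i)/2^{s}$, where the alignment is guaranteed by the fact that $p_{s+1}(i)$ is a multiple of $2^{s+1}$. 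The number of nonzero summands coincides with the number of $1$'s in the binary expansion of $i$, hence is at most $m$.

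Granted this decomposition, Cauchy--Schwarz applied to a sum of at most $m$ terms gives, uniformly in $i$,
\[
S_{i}^{2}\le m\sum_{s=0}^{m-1}B_{s,j_{s}(i)}^{2}\le m\sum_{s=0}^{m-1}\max_{0\le j<2^{m-s}}B_{s,j}^{2}.
\]
Since the right-hand side does not depend on $i$, I may replace the left side by $\max_{1\le i\le n}S_{i}^{2}$. Integrating and bounding $\max_{j}B_{s,j}^{2}\le\sum_{j}B_{s,j}^{2}$, I would then use orthogonality of the $\theta_{l}$'s: the blocks at a fixed scale $s$ have disjoint supports and partition $\{1,\ldots,2^{m}\}$, so
\[
\sum_{j=0}^{2^{m-s}-1}\int_{a}^{b}B_{s,j}^{2}\,d\mu=\sum_{l=1}^{n}\int_{a}^{b}\theta_{l}^{2}\,d\mu.
\]
Summing over the $m$ scales yields
\[
\int_{a}^{b}\Bigl(\max_{1\le i\le n}S_{i}^{2}\Bigr)\,d\mu\le m^{2}\sum_{l=1}^{n}\int_{a}^{b}\theta_{l}^{2}\,d\mu=O(\log^{2}n)\sum_{l=1}^{n}\int_{a}^{b}\theta_{l}^{2}\,d\mu.
\]

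The main obstacle I anticipate is the dyadic decomposition step: one must verify carefully that the binary expansion produces an \emph{aligned} telescoping, not just an arbitrary chopping of $S_{i}$, because alignment is precisely what lets orthogonality collapse $\sum_{j}\int B_{s,j}^{2}\,d\mu$ to $\sum_{l}\int\theta_{l}^{2}\,d\mu$ at each scale. After that setup, Cauchy--Schwarz and orthogonality handle the rest routinely.
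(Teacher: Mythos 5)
Your proof is correct, but it follows a genuinely different route from the paper. You use the classical dyadic decomposition behind the Rademacher--Menchoff inequality: every index $i$ is split according to its binary expansion into at most $\lceil\log_{2}n\rceil$ \emph{aligned} dyadic blocks, Cauchy--Schwarz over the at most $\log_{2}n$ blocks gives one logarithmic factor, and the bound $\max_{j}B_{s,j}^{2}\le\sum_{j}B_{s,j}^{2}$ together with orthogonality (the blocks at a fixed scale partition the index set) gives the second; your alignment check via the divisibility of $p_{s+1}(i)$ by $2^{s+1}$ is exactly the point that needs care, and you handle it correctly. The paper instead proves the lemma through the Ces\`{a}ro summability machinery developed in the same section: writing $S_{\nu_{n}}$, at the maximizing index $\nu_{n}(x)$, as the $(C,0)$-mean expressed through the $(C,-1/2)$-means via assertion $iv)$ of Lemma \ref{Cesaro}, applying Cauchy--Schwarz there, and using the estimates $A_{k}^{-1/2}=O(k^{-1/2})$ and assertion $vii)$ to extract one factor $O(\log n)$ from $\sum_{k}\bigl(A_{\nu_{n}-k}^{-1/2}\bigr)^{2}$ and a second from the inner sums $\sum_{k>j}O\bigl(\tfrac{1}{n-k}\bigr)$. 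Your argument is more elementary and self-contained (it needs nothing beyond orthogonality and Cauchy--Schwarz), whereas the paper's version deliberately exercises the fractional Ces\`{a}ro toolkit it has just built, which is part of the book's point. One small edge case in your write-up: with $m=\lceil\log_{2}n\rceil$ and $n=2^{m}$, the index $i=2^{m}$ has no expansion in bits $0,\ldots,m-1$, so you should either allow a block at scale $m$ (namely $B_{m,0}=S_{2^{m}}$) or take $m=\lceil\log_{2}(n+1)\rceil$; this changes nothing in the $O(\log^{2}n)$ bound.
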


\begin{proof}
Let us set $S_{0}=0$. By $\nu_{n}(x)$ let us denote an index (possibly
depending on $x)$, not greater than $n$, such that.:
\[
\underset{0\leq i\leq n}{\max}\left\vert S_{i}\right\vert =|S_{\nu_{n}}|.
\]
Let us denote by $S_{k}^{\alpha}$ the $k-$ th $(C,\alpha)$ mean, $\alpha>-1$.
Let us notice also, that from formula (\ref{sr_szeregu}) it follows that
$S_{k}$ is equal to the $k-$th $(C,0)$ mean of our series. Let us apply
assertion $iv)$ of Lemma \ref{Cesaro}. We will get:
\begin{align*}
\underset{0\leq i\leq n}{\max}\left\vert S_{i}\right\vert  &  =|S_{\nu_{n}%
}^{o}|\leq\sum_{k=0}^{\nu_{n}}A_{\nu_{n}-k}^{-1/2}A_{k}^{-1/2}\left\vert
S_{k}^{-1/2}\right\vert \leq\\
&  \leq\sqrt{\sum_{k=0}^{\nu_{n}}\left(  A_{\nu_{n}-k}^{-1/2}\right)  ^{2}%
\sum_{k=0}^{n}\left(  A_{k}^{-1/2}S_{k}^{-1/2}\right)  ^{2}}\overset{df}{=}%
\delta_{n}(x).
\end{align*}
Taking advantage of assertion $iii)$ of Lemma \ref{Cesaro} we get
$A_{k}^{-1/2}=O\left(  k^{-1/2}\right)  $ and further:
\[
\sum_{k=0}^{\nu_{n}}\left(  A_{\nu_{n}-k}^{-1/2}\right)  ^{2}=1+\sum
_{k=0}^{\nu_{n}-1}O(\frac{1}{\nu_{n}-k})=O(\log(\nu_{n}))\leq O(\log n).
\]
Further, using assertion $vii)$ of Lemma \ref{Cesaro} and the above mentioned
estimation we get:
\begin{align*}
\int_{a}^{b}\delta_{n}^{2}(x)d\mu(x)  &  \leq O(\log n)\sum_{k=0}^{n}\int%
_{a}^{b}\left(  \sum_{j=1}^{k}A_{k-j}^{-1/2}\theta_{j}(x)\right)  ^{2}%
d\mu(x)=\\
&  =O(\log n)\sum_{k=0}^{n}\sum_{j=1}^{k}\left(  A_{k-j}^{-1/2}\right)
^{2}\int_{a}^{b}\theta_{j}^{2}(x)d\mu(x)=\\
&  =O(\log n)\sum_{j=1}^{n}\int_{a}^{b}\theta_{j}^{2}(x)d\mu(x)\left[
1+\sum_{k=j+1}^{n}O\left(  \frac{1}{n-k}\right)  \right]  =\\
&  =O(\log^{2}n)\sum_{j=1}^{n}\int_{a}^{b}\theta_{j}^{2}(x)d\mu(x).
\end{align*}

\end{proof}

\begin{proof}
[Proof of the Rademacher-Menchoff theorem]In order to prove Theorem
\ref{menshov} let us denote $r_{n}=\sum_{i=n}^{\infty}c_{i}\phi_{i}$. We
have:
\begin{gather*}
\int_{a}^{b}r_{2^{n}}^{2}d\mu(x)\allowbreak=\sum_{i=2^{n}}^{\infty}\int%
_{a}^{b}c_{i}^{2}\phi_{i}^{2}d\mu(x)=\\
\frac{1}{\left(  \log2^{n}\right)  ^{2}}\sum_{i=2^{n}}^{\infty}(\log2^{n}%
)^{2}c_{i}^{2}\int_{a}^{b}\phi_{i}^{2}d\mu(x)\leq\\
\frac{1}{\left(  \log2^{n}\right)  ^{2}}\sum_{i=2^{n}}^{\infty}(\log
i)^{2}c_{i}^{2}\int_{a}^{b}\phi_{i}^{2}d\mu(x)\leq\frac{C}{n^{2}},\allowbreak
\end{gather*}
where $C=\sum_{i\geq2}\left(  \log i\right)  ^{2}c_{i}^{2}\int_{a}^{b}\phi
_{i}^{2}(x)d\mu(x).$Thus, the series $\sum_{n\geq1}\int_{a}^{b}r_{2^{n}}%
^{2}d\mu(x)$ is convergent, and consequently sequence $\left\{  r_{2^{n}%
}\right\}  _{n\geq1}$ converges almost everywhere to zero. This means that
also the subsequence $\left\{  S_{2^{n}}\right\}  _{n\geq1}$ of the sequence
of partial sums $\left\{  S_{i}\right\}  _{i\geq1}$ of the series $\sum
_{i\geq1}c_{i}\phi_{i}(x)$ converges almost everywhere. In order to show, that
the sequence $\left\{  S_{i}\right\}  _{i\geq1}$ converges almost everywhere,
it is enough to show, that the functional sequence :
\[
\underset{2^{n}<i\leq2^{n+1}}{\max}(S_{i}-S_{2^{n}})^{2}%
\]
converges to zero almost everywhere. We have however on the basis of Lemma
\ref{fundamental_inequality}:
\begin{align*}
\int_{a}^{b}\underset{2^{n}<i\leq2^{n+1}}{\max}(S_{i}-S_{2^{n}})^{2}d\mu(x)
&  \leq\left[  \log(2^{n+1}-2^{n})\right]  ^{2}\sum_{j=2^{n}+1}^{2^{n+1}}%
c_{j}^{2}\int_{a}^{b}\phi_{j}^{2}d\mu(x)\\
\allowbreak &  =n^{2}\sum_{j=2^{n}+1}^{2^{n+1}}c_{j}^{2}\int_{a}^{b}\phi
_{j}^{2}d\mu(x).
\end{align*}
Moreover, we have:
\begin{align*}
\sum_{n\geq1}(\log2^{n})^{2}\sum_{j=2^{n}+1}^{2^{n+1}}c_{j}^{2}\int_{a}%
^{b}\phi_{j}^{2}d\mu(x)\allowbreak &  \leq\sum_{n\geq1}\sum_{j=2^{n}%
+1}^{2^{n+1}}\left(  \log j\right)  ^{2}c_{j}^{2}\int_{a}^{b}\phi_{j}^{2}%
d\mu(x)\\
&  =\sum_{j\geq3}\left(  \log j\right)  ^{2}c_{j}^{2}\int_{a}^{b}\phi_{j}%
^{2}d\mu(x)\allowbreak<\infty.
\end{align*}
A hence sequence $\underset{2^{n}\leq i<2^{n+1}}{\max}(S_{i}-S_{2^{n}})^{2}$
converges almost everywhere to zero.
\end{proof}

In order to illustrate the second point, we quote the following second
Menchoff's Theorem:

\begin{theorem}
[Menchoff]%
\index{Theorem!Menchoff}%
For every non-increasing number sequence $\left\{  c_{i}^{2}\right\}
_{i\geq1}$, and satisfying conditions $\sum_{i\geq1}c_{i}^{2}<\infty$ and
$\sum_{i\geq1}c_{i}^{2}\log^{2}i=\infty$ it is possible to construct such
orthonormal system $\left\{  \phi_{i}\right\}  _{i\geq1}$, that the series
$\sum_{i\geq1}c_{i}\phi_{i}(x)$ is almost everywhere divergent!
\end{theorem}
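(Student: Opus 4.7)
The plan is to show that the hypothesis $\sum c_i^2 \log^2 i = \infty$ is tight for the Rademacher--Menchoff theorem by constructing an orthonormal system in which divergence arises block by block. The system will be supported on disjoint pieces of $[0,1]$, each block carrying a ``Menchoff bad block'' whose partial sums swing out logarithmically on a large portion of its support.

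First, I would establish the key combinatorial lemma: for every $N\geq 2$ there exist orthonormal functions $\psi_1,\dots,\psi_N \in L^2[0,1]$ and real weights $b_1,\dots,b_N$ with $\sum_{j=1}^N b_j^2 = 1$ such that
\[
\int_0^1 \max_{1\leq k\leq N} \Bigl|\sum_{j=1}^k b_j \psi_j(x)\Bigr|^2\,dx \geq \gamma \log^2 N
\]
for some absolute constant $\gamma>0$; from this, Chebyshev gives $\max_k |\sum_{j\leq k} b_j \psi_j(x)| \geq \tfrac12\sqrt{\gamma}\log N$ on a set of measure at least $\tfrac14$. The construction takes $N=2^m$ and uses a carefully overlapping family of step functions on a doubly-indexed dyadic grid, followed by a bounded-cost Gram--Schmidt correction to achieve orthonormality.

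Second, using that $\{c_i^2\}$ is non-increasing with $\sum c_i^2<\infty$ and $\sum c_i^2 \log^2 i = \infty$, I would partition $\mathbb{N}$ into consecutive blocks $I_k = \{n_{k-1}+1,\dots,n_k\}$ with $n_k = 2^{m_k}$ chosen so that $B_k := \sum_{i\in I_k} c_i^2$ satisfies $B_k \log^2 n_k \geq 1$ for every $k$; this is possible precisely because the weighted series diverges. Third, I would split $[0,1]$ into disjoint intervals $J_k$ with $\sum_k |J_k|=1$ (say $|J_k|=2^{-k}$) and on each $J_k$ place a rescaled copy of the bad block: for $i\in I_k$ set
\[
\phi_i(x) = |J_k|^{-1/2}\,\tilde\psi_{i-n_{k-1}}^{(k)}\bigl((x-\alpha_k)/|J_k|\bigr)\,\mathbf{1}_{J_k}(x),
\]
where $\alpha_k$ is the left endpoint of $J_k$ and $\{\tilde\psi_j^{(k)}\}$ is chosen so that, after multiplying by $c_i$, the partial-sum behaviour matches the bad-block estimate with weights $b_j \propto c_i/\sqrt{B_k}$. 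Orthonormality of the global system follows from orthonormality within each block plus disjoint supports across blocks. Then inside a subset $E_k\subset J_k$ of measure at least $|J_k|/4$ one has $\max_{n_{k-1}<m\leq n_k}\bigl|\sum_{i=n_{k-1}+1}^m c_i\phi_i(x)\bigr| \geq \gamma' \sqrt{B_k/|J_k|}\,\log N_k \to \infty$, so within each block the partial sums fail to be Cauchy at every $x\in E_k$. A second-Borel--Cantelli-type argument (applied after an obvious measure-preserving reparametrisation of $[0,1]$ as $\prod_k J_k$ so that the events $E_k$ become independent) then makes $\{E_k \text{ i.o.}\}$ a set of full measure, on which $\sum c_i \phi_i$ diverges.

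The main obstacle is Step 1: producing orthonormal systems whose partial-sum maxima achieve the sharp $\log^2 N$ growth in mean square. The natural candidates (Rademacher, Haar, trigonometric) do not display this behaviour, and Menchoff's classical construction uses calibrated overlaps between indicator functions on an $m$-level dyadic nested family, plus a Gram--Schmidt correction whose sup-norm cost must be controlled so as not to absorb the logarithmic gain. Once this bad-block lemma is available, the remaining steps are essentially bookkeeping with the non-increasingness and summability of $\{c_i^2\}$ and the divergence of $\sum c_i^2 \log^2 i$.
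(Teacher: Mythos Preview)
The paper does not actually prove this theorem; it states only that the proof is very complex and refers the reader to Alexits' monograph. So there is no in-paper argument to compare against, and your outline must be judged on its own.

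Your overall architecture --- Menchoff's ``bad block'' lemma giving $\log N$ growth of the maximal partial sum, a blocking of $\mathbb{N}$ driven by the divergence of $\sum c_i^2\log^2 i$, and a Borel--Cantelli conclusion --- is indeed the classical route. However, the concrete construction you describe has a genuine gap. You place the $k$-th block of functions on an interval $J_k$ with the $J_k$ pairwise disjoint. Then every $x\in[0,1]$ lies in exactly one $J_{k_0}$, so $\phi_i(x)=0$ for all $i\notin I_{k_0}$; the partial sums $S_n(x)$ are eventually constant (for $n>n_{k_0}$) and the series \emph{converges} at every point. The large oscillation you produce inside block $k_0$ is a one-time event, not a failure of the Cauchy criterion. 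Your remark about a ``measure-preserving reparametrisation of $[0,1]$ as $\prod_k J_k$ so that the events $E_k$ become independent'' is the right instinct for a repair, but it is not compatible with what you wrote before: after such a reparametrisation the functions $\phi_i$ are no longer supported on the slice $J_k$, they become functions of the $k$-th coordinate only, and you must then \emph{check orthogonality across blocks}. That requires each $\psi_j^{(k)}$ to have mean zero on $[0,1]$ (so that independence gives $E\phi_i\phi_{i'}=E\phi_i\cdot E\phi_{i'}=0$ for $i,i'$ in different blocks), a constraint that has to be built into the bad-block lemma from the start and is not free.

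A second, smaller point: your block choice gives $B_k\log^2 n_k\ge 1$, but the bad-block lemma yields oscillation proportional to $\log N_k$ with $N_k=|I_k|$, not $\log n_k$. You therefore also need to arrange the blocks so that $N_k$ is comparable to $n_k$ (e.g.\ $n_k\sim 2n_{k-1}$), which is easy but should be said. Once the product/independence structure is set up correctly and the mean-zero condition is incorporated into the block construction, the Borel--Cantelli argument goes through as you indicate.
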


Proof of this theorem is very complex. It can be found e.g. in the book of
Alexits \cite{Alexits}.

Above mentioned theorems state, that if only sequence of coefficients
$\left\{  c_{i}^{2}\right\}  _{i\geq1}$ is monotone, then condition
(\ref{rademacher-menchoff}) guaranteeing convergence of the series
$\sum_{i\geq1}\allowbreak c_{i}\phi_{i}(x)$, cannot be improved. It turns out,
however, that when the sequence $\left\{  c_{i}^{2}\right\}  _{i\geq1}$ is not
monotone, then this condition can be improved. In 1965 Tandori in the paper
\cite{Tandori65} replaced\ condition (\ref{rademacher-menchoff}) with the
condition
\begin{equation}
\sum_{k=3}c_{k}^{2}\log k\log_{+}\frac{1}{c_{k}^{2}}<\infty. \label{tandori}%
\end{equation}
It turns out that this condition and (\ref{rademacher-menchoff}) are
equivalent, when the sequence $\left\{  c_{i}^{2}\right\}  _{i\geq1}$ is
non-increasing M\'{o}ricz and Tandori have improved slightly condition
(\ref{tandori}) for the first time in the paper \cite{Moricz94} and then in
the paper \cite{Moricz96}, namely it turned out, that if only $\exists
\varepsilon\in(0,2]:$
\[
\sum_{n\geq0}\sum_{k\in Z(n)}c_{k}^{2}\left(  \log k\right)  ^{\varepsilon
}\left(  \log_{+}\frac{2A_{n}}{c_{k}^{2}}\right)  ^{2-\varepsilon}<\infty,
\]
where
\[
Z(n)=\left\{  2^{n}+1,2^{n}+2,\ldots,2^{n+1}\right\}  ,~A_{n}=\sum_{k\in
Z(n)}c_{k}^{2},
\]
then the orthogonal series $\sum_{k\geq1}c_{k}\phi_{k}(x)$ is convergent.

Hence only for some orthogonal systems, one can expect equivalence of
convergence in $L^{2}$ and almost sure convergence. What are those systems? A
great achievement of mathematical analysis of the $60-$ties was
\emph{Carleson's Theorem}%
\index{Theorem!Carleson}
stating, that \emph{system of trigonometric functions} has this property. And
what about other, broader classes of such orthogonal systems?

Let us notice that the fact that we have considered so far space
$([a,b],\allowbreak\mathcal{B}\emph{(}[a,b]),\allowbreak\mu(.))$ is not very
important. Orthogonality can be defined on any finite measure space. It is
also not important that the measure $\mu$ could have been not normalized.
Hence, one can consider some probability space $(\Omega,\mathcal{F},P)$ and
the above mentioned problems express in probabilistic terms. Namely, the
r\^{o}le of functions $\left\{  \phi_{i}\right\}  _{i\geq1}$ satisfy sequences
$\left\{  X_{i}\right\}  $ of uncorrelated random variables having variances
equal to $1$ and zero (for $i\geq2)$ expectations. R\^{o}le of functions $f$
would be played by the sums $\sum_{i\geq1}c_{i}X_{i}$ such that $\sum_{i\geq
1}c_{i}^{2}<\infty$. The question about almost everywhere convergence of the
orthogonal series would concern classes of sequences $\left\{  X_{i}\right\}
_{i\geq1}$, for which convergence in $L^{2}$ of the series $\sum_{i\geq1}%
c_{i}X_{i}$ implies almost sure convergence.

Finally, let us notice, that there exists a strict connection between
orthogonal series, and generalized, strong laws of large numbers for
uncorrelated random variables. Namely, let $\left\{  X_{i}\right\}  _{i\geq1}$
be a sequence uncorrelated random variables, and $\sum_{i\geq1}c_{i}X_{i}$ let
be any orthogonal series, constructed with the help those random variables.
Let further $\left\{  \mu_{i}\right\}  _{i\geq0}$ be any sequence positive
numbers, satisfying assumption \emph{i) }of\emph{\ }Lemma \ref{iteracje}, i.e.
normal sequence. Let $\overline{\left\{  \alpha_{i}\right\}  }_{i\geq
0}=\left\{  \mu_{i}\right\}  _{i\geq0}$. Let us denote:
\begin{subequations}
\begin{align}
T_{0}  &  =0;~T_{n}=\frac{\sum_{i=0}^{n-1}\alpha_{i}c_{i+1}X_{i+1}/\mu_{i}%
}{\sum_{i=0}^{n-1}\alpha_{i}};n\geq1,\label{defti}\\
S_{0}  &  =0;~S_{n}=\sum_{i=1}^{n}c_{i}X_{i};n\geq1,\label{sumaszer}\\
\bar{S}_{n}  &  =\frac{\sum_{i=0}^{n}\alpha_{i}S_{i}}{\sum_{i=0}^{n}\alpha
_{i}};n\geq0. \label{srszumszer}%
\end{align}

In view of the above mentioned considerations it is clear, that the sequence
$\left\{  T_{i}\right\}  _{i\geq0}$ is a sequence of Riesz's means of the
sequence $\left\{  c_{i+1}X_{i+1}/\mu_{i}\right\}  _{i\geq0}$ of uncorrelated
random variables with respect to the sequence of weights $\left\{  \alpha
_{i}\right\}  _{i\geq0}$ and satisfies the following iterative equation:
\end{subequations}
\begin{equation}
T_{n+1}=(1-\mu_{n})T_{n}+c_{n+1}X_{n+1}. \label{rekurencyjneti}%
\end{equation}

As it follows from the auxiliary lemmas presented in sections
\ref{lemliczbowe} and \ref{Riesz} there exists a strict connection between
almost surely convergence of the series $\sum_{i\geq1}c_{i}X_{i}$, and almost
surely convergence to zero of the sequence $\left\{  T_{i}\right\}  _{i\geq0}$.

Conversely, having given a sequence of Riesz's means $\left\{  \bar{X}%
_{n}=\frac{\sum_{i=0}^{n-1}\alpha_{i}X_{i}}{\sum_{i=0}^{n-1}\alpha_{i}%
}\right\}  _{n\geq1}$ of the sequence of uncorrelated random variables
$\left\{  X_{i}\right\}  _{i\geq1}$ with respect to sequence $\left\{
\alpha_{i}\right\}  _{i\geq1}$, we can present it in a recursive form:
\[
\bar{X}_{n+1}=(1-\mu_{n})\bar{X}_{n}+\mu_{n}X_{n+1}.
\]
And again, there appears orthogonal series $\sum_{i\geq0}\mu_{i}X_{i+1}.$

As it follows from lemmas presented in sections \ref{lemliczbowe} and
\ref{Riesz}, examining of convergence of Riesz's means requires examining of
the convergence of some series, and examining of convergence of the series is
connected with examining the convergence of some Riesz's means.

In order to briefly describe properties of Riesz's means of orthogonal series,
let us introduce also the following sequence of indices $\left\{
n_{k}\right\}  _{k\geq1}$ defined in the following way:
\begin{equation}
\sum_{j=n_{k}+1}^{n_{k+1}}\mu_{j}=O(1);k\geq1. \label{indeksy}%
\end{equation}

We have the following simple, general lemma.

\begin{lemma}
\label{prostypomocniczy} Let be given converging in $L_{2}$ orthogonal series
$\sum_{i\geq1}c_{i}X_{i}$ and normal number sequence $\left\{  \mu
_{i}\right\}  _{i\geq0}$. Let sequences of the random variables $\left\{
T_{i}\right\}  _{i\geq0}$, $\left\{  S_{i}\right\}  _{i\geq0}$, $\left\{
\bar{S}_{i}\right\}  _{i\geq0}$ be defined relationships respectively
(\ref{defti}), (\ref{sumaszer}), (\ref{srszumszer}). Then:

\begin{enumerate}
\item \label{pierwsza} series $\sum_{i\geq0}\mu_{i}ET_{i}^{2}$ is convergent
and series $\sum_{i\geq0}\mu_{i}T_{i}^{2}$ is convergent a.s.,

\item \label{sr_sumcz} $\bar{S}_{n}=\sum_{i=0}^{n}\mu_{i}T_{i}$ a.s. for
$n=0,1,\ldots$ ,

\item \label{zb_sred}$\frac{\sum_{i=0}^{n}\alpha_{i}T_{i}^{2}}{\sum_{i=0}%
^{n}\alpha_{i}}\underset{n\rightarrow\infty}{\longrightarrow}0$ a.s.,

\item \label{zb_podciagu} $T_{n_{k}}\underset{k\rightarrow\infty
}{\longrightarrow}0$ a.s.,

\item \label{zb_wariancji}Let $V_{n}=\frac{\sum_{i=0}^{n}\alpha_{i}S_{i}^{2}%
}{\sum_{i=0}^{n}\alpha_{i}}-\left(  \bar{S}_{n}\right)  ^{2}$. Then almost
surely $V_{n}\underset{n\rightarrow\infty}{\longrightarrow}0$ and the series
$\sum_{i\geq1}\mu_{i}V_{i}$ is convergent,

\item \label{zb_podciagu2} Subsequence $\left\{  S_{n_{k}}\right\}  _{k\geq1}$
converges almost surely to some square integrable random variable if and only
if, the series $\sum_{i\geq0}\mu_{i}T_{i}$ converges almost surely,

\item \label{zb_kwadrat}$T_{n}\underset{n\rightarrow\infty}{\longrightarrow}0$
a.s. if and only if the series $\sum_{i\geq1}\mu_{i}T_{i}X_{i+1}$ converges
almost surely,

\item \label{zb_podciagu3} If almost surely $T_{n}\underset{n\rightarrow
\infty}{\longrightarrow}0$, then almost sure convergence of the sequence
$\left\{  S_{n}\right\}  _{n\geq0}$ to some square integrable random variable
is equivalent to the a.s. convergence of the subsequence $\left\{  S_{n_{k}%
}\right\}  _{k\geq1}$ to the same random variable.
\end{enumerate}
\end{lemma}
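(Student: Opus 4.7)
The plan is to derive items~\ref{pierwsza}--\ref{zb_wariancji} and~\ref{zb_kwadrat} from the numerical auxiliaries in Sections~\ref{lemliczbowe}--\ref{Riesz}, and to obtain items~\ref{zb_podciagu}, \ref{zb_podciagu2}, \ref{zb_podciagu3} by short bridging arguments built around the identity~(\ref{waznazal}) and the block condition~(\ref{indeksy}).

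I would begin with item~\ref{pierwsza}. Squaring the recursion $T_{n+1}=(1-\mu_n)T_n+c_{n+1}X_{n+1}$ and taking expectations, the cross term dies by orthogonality of $X_{n+1}$ to the span of $X_1,\dots,X_n$, leaving $ET_{n+1}^{2}=(1-\mu_n)^{2}ET_n^{2}+c_{n+1}^{2}\le(1-\mu_n)ET_n^{2}+\mu_n\cdot(c_{n+1}^{2}/\mu_n)$. Since $\sum_n\mu_n\cdot(c_{n+1}^{2}/\mu_n)=\sum_n c_n^{2}<\infty$, Corollary~\ref{podstawowy1} delivers $ET_n^{2}\to 0$ and $\sum\mu_n ET_n^{2}<\infty$; Tonelli upgrades the latter to $\sum\mu_n T_n^{2}<\infty$ almost surely. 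Item~\ref{sr_sumcz} is then immediate from assertion i) of Lemma~\ref{lemosr} applied to the input $x_{i+1}=c_{i+1}X_{i+1}/\mu_i$, under which $\bar x_n=T_n$, $s_n=S_n$, $\bar s_n=\bar S_n$ and $\hat s_n=\sum_{j=0}^n\mu_j T_j$. Keeping the same identification, items~\ref{zb_sred} and~\ref{zb_wariancji} come out of assertions iib) and iic) of Lemma~\ref{lemosr}: the required hypotheses $\sum\mu_i\bar x_i^{2}<\infty$ and $\sum\mu_i^{2}x_{i+1}^{2}<\infty$ follow a.s.\ from item~\ref{pierwsza} and from $E\sum c_{i+1}^{2}X_{i+1}^{2}=\sum c_{i+1}^{2}<\infty$ respectively. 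For item~\ref{zb_kwadrat} I would apply the ``iff'' part of Lemma~\ref{podstawowy} to the exact squared recursion $T_{n+1}^{2}=(1-\mu_n)T_n^{2}+\mu_n b_n$, with $\mu_n b_n=-\mu_n(1-\mu_n)T_n^{2}+2(1-\mu_n)c_{n+1}T_nX_{n+1}+c_{n+1}^{2}X_{n+1}^{2}$; the first summand has a.s.\ summable $\mu_n$-weighted series by item~\ref{pierwsza}, the third by a direct variance estimate, so the equivalence $T_n\to 0 \Leftrightarrow \sum\mu_i T_i X_{i+1}\text{ converges}$ is read off the ``iff'' once the cross-term series is identified with the one displayed in item~\ref{zb_kwadrat}.

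The principal obstacle is item~\ref{zb_podciagu}. The plan is to iterate the recursion over a single block and write $T_{n_{k+1}}=\pi_k T_{n_k}+R_k$ with contraction factor $\pi_k=\prod_{j=n_k}^{n_{k+1}-1}(1-\mu_j)$ and innovation $R_k=\sum_{i=n_k+1}^{n_{k+1}}c_i X_i\prod_{j=i}^{n_{k+1}-1}(1-\mu_j)$. Orthonormality gives $ER_k^{2}\le\sum_{i=n_k+1}^{n_{k+1}}c_i^{2}$, a tail of a convergent series, so $\sum_k ER_k^{2}<\infty$ and $R_k\to 0$ almost surely by the first Borel--Cantelli lemma. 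Reading~(\ref{indeksy}) in its standard sense --- $n_{k+1}$ chosen minimal with $\sum_{n_k+1}^{n_{k+1}}\mu_j\ge 1$, so the block weights are comparable to $1$ --- the factor $\pi_k$ lies in a fixed interval $[\delta_0,\delta_1]\subset(0,1)$, and Corollary~\ref{podstawowy1} applied to the blockwise inequality $T_{n_{k+1}}^{2}\le(1-\delta)T_{n_k}^{2}+b_k$ with summable $b_k$ forces $T_{n_k}\to 0$. The delicate point is to secure the genuine contraction and the almost sure vanishing of the $R_k$ on a common event of full probability, and to justify passing from $L^{2}$ control of $R_k$ to an almost sure control strong enough to iterate.

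Finally, items~\ref{zb_podciagu2} and~\ref{zb_podciagu3} follow from identity~(\ref{waznazal}), which in our notation reads $S_n=\bar S_{n-1}+T_n$. Once item~\ref{zb_podciagu} gives $T_{n_k}\to 0$, convergence of $\{S_{n_k}\}$ is equivalent to convergence of $\{\bar S_{n_k-1}\}$; and for $n\in[n_k,n_{k+1}]$, Cauchy--Schwarz with~(\ref{indeksy}) and item~\ref{pierwsza} yields
\[
|\bar S_n-\bar S_{n_k-1}|^{2}\le\Bigl(\sum_{j=n_k}^n\mu_j\Bigr)\Bigl(\sum_{j\ge n_k}\mu_j T_j^{2}\Bigr)\to 0,
\]
so convergence of $\bar S_{n_k-1}$ propagates to convergence of the full $\bar S_n$, that is of $\sum_{i\ge 0}\mu_i T_i$ by item~\ref{sr_sumcz}; this is item~\ref{zb_podciagu2}. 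For item~\ref{zb_podciagu3}, if $T_n\to 0$ then $S_n-\bar S_{n-1}\to 0$ and the same block estimate promotes convergence of $\{S_{n_k}\}$ to convergence of $\{S_n\}$ to the same limit.
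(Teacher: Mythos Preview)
Your proposal is correct and follows essentially the same route as the paper. Items~\ref{sr_sumcz}, \ref{zb_sred}, \ref{zb_wariancji}, \ref{zb_kwadrat} are, in both treatments, direct specializations of Lemma~\ref{lemosr} under the identification $x_{i+1}=c_{i+1}X_{i+1}/\mu_i$; your argument for item~\ref{pierwsza} via Corollary~\ref{podstawowy1} is a minor repackaging of the paper's use of Lemma~\ref{podstawowy} on the same squared recursion; and your blockwise decomposition $T_{n_{k+1}}=\pi_kT_{n_k}+R_k$ for item~\ref{zb_podciagu} is literally the paper's $(1-\eta_k)T_{n_k}+\eta_kW_k$ in different notation (since $1-\eta_k=\prod_{j=n_k+1}^{n_{k+1}}(1-\mu_j)$), after which both arguments invoke convexity and Corollary~\ref{podstawowy1}. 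Items~\ref{zb_podciagu2} and~\ref{zb_podciagu3} are handled identically via the identity $S_{n+1}=\bar S_n+T_{n+1}$ and the same Cauchy--Schwarz block estimate.
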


Before we will present proof of this lemma, we will make a few remarks.

\begin{remark}
Let us notice that assertions \ref{zb_podciagu} and \ref{zb_podciagu2} remain
true, if the subsequence $\left\{  n_{k}\right\}  _{k\geq1}$ was defined in
the following way: $1/\sum_{j=n_{k}+1}^{n_{k+1}}\mu_{j}\underset{k\rightarrow
\infty}{\longrightarrow}0$. On the other hand assertion \ref{zb_podciagu3}
remain true, if the subsequence $\left\{  n_{k}\right\}  _{k\geq1}$ was
defined by the relationship: $\sum_{j=n_{k}+1}^{n_{k+1}}\mu_{j}%
\underset{k\rightarrow\infty}{\longrightarrow}0.$
\end{remark}

\begin{remark}%
\index{Theorem!Zygmund}%
Assertion \ref{zb_podciagu2} together with assertion \ref{sr_sumcz} are
strictly connected with Zygmund's theorem concerning Riesz summability of
orthogonal series (see \cite{Alexits}, th.. 2.8.7). Let us recall that this
theorem states, that Riesz summability of the orthogonal series with some
weights that converges in $L_{2}$ is equivalent to convergence of some
subsequences (defined by the system of weights) of the sequence of partial
sums. Strictly speaking, Zygmund understands Riesz summability of series in a
slightly different way, namely he defines summability to $s$ of the series
$\sum_{i\geq0}u_{i}$ with respect to some increasing weight sequence $\left\{
\lambda_{n}\right\}  _{n\geq0}$ as the convergence to $s$ of the sequence
\[
\left\{  \sum_{k=0}^{n}(1-\frac{\lambda_{k}}{\lambda_{n+1}})u_{k}\right\}
_{n\geq1}.
\]
We leave it to the reader as a simple exercise to check, that this definition
and considered above definition \ref{Sum_Riesz} are equivalent as far as the
series are concerned, when one takes $\lambda_{n}=\sum_{i=0}^{n-1}\alpha_{i}$.
Using Zygmund's terminology, Zygmund's Theorem states, that orthogonal series
$\sum_{i\geq0}c_{i}\phi_{i}(x)$, whose coefficients satisfy condition
$\sum_{i\geq0}c_{i}^{2}<\infty$, is summable almost everywhere with respect to
sequence $\left\{  \lambda_{n}\right\}  _{n\geq0}$ if and only if, the
following subsequence sequence of partial sums $\left\{  \sum_{i=\geq0}%
^{n_{k}}c_{i}\phi_{i}(x)\right\}  _{k\geq1}$ is convergent almost surely, here
sequence of indices $\left\{  n_{k}\right\}  $ is defined with the help of the
following condition:
\[
1<q\leq\frac{\lambda_{n_{k+1}}}{\lambda_{n_{k}}}\leq r,
\]
where $1<q\leq r$ are some real numbers. We will show that this theorem is in
fact equivalent to assertions \ref{zb_podciagu2} and \ref{sr_sumcz} of the
lemma. It can be deduced arguing in the following way. Firstly, from assertion
\ref{sr_sumcz} we know, that the sequence of partial sums of the series
$\sum_{i\geq1}\mu_{i}T_{i}$ constitutes also a sequence respective Riesz's
means of partial sums of the orthogonal series. Assertion \ref{zb_podciagu2}
gives an equivalence of summability of the series $\sum_{i\geq1}\mu_{i}T_{i}$
and the convergence of the respective subsequence of the sequence of partial
sums of the orthogonal series. Thus, it remained to check, if the subsequence
defined in assertion \ref{zb_podciagu2} is the same, as in Zygmund's theorem.
Let us denote $\lambda_{n}=\sum_{i=0}^{n-1}\alpha_{i}$. Let $\overline
{\left\{  \alpha_{i}\right\}  }=\left\{  \mu_{i}\right\}  $. Then, as it
follows from the proof of proposition \ref{o_ciagach_normalnych} we have:
\[
\lambda_{n}=\prod_{i=1}^{n-1}(1-\mu_{i})^{-1}\geq\exp(\sum_{i=0}^{n-1}\mu
_{i}).
\]
Hence, using inequality $\frac{1}{1-x}\leq1+x+3x^{2}$ that is true for all
$x\leq2/3$ and taking $k$ big enough that $\mu_{n_{k}}\leq2/3$ (it is
possible, since the sequence $\left\{  \mu_{n}\right\}  $ converges to zero),
we get:
\begin{align*}
r  &  \geq\exp\left(  \sum_{j=n_{k}+1}^{n_{k+1}-1}\mu_{j}+3\sum_{j=n_{k}%
+1}^{n_{k+1}-1}\mu_{j}^{2}\right)  \geq\prod_{i=n_{k}+1}^{n_{k+1}-1}(1+\mu
_{i}+3\mu_{i}^{2})\geq\\
&  \geq\prod_{i=n_{k}+1}^{n_{k+1}-1}(1-\mu_{i})^{-1}=\lambda_{n_{k+1}}%
/\lambda_{n_{k}}\geq\exp(\sum_{j=n_{k}+1}^{n_{k+1}-1}\mu_{j})\geq q>1,
\end{align*}
where
\[
q=\exp(\underset{k\rightarrow\infty}{\lim\inf}\,\sum_{j=n_{k}+1}^{n_{k+1}%
-1}\mu_{j}),~r=\exp(\underset{k\rightarrow\infty}{\lim\inf}\,\sum_{j=n_{k}%
+1}^{n_{k+1}-1}\mu_{j}).
\]
Sequence$\left\{  n_{k}\right\}  $ defined by (\ref{indeksy}) satisfies
conditions of Zygmund's theorem.
\end{remark}

\begin{remark}
It is easy to get the following observation basing on assertion \ref{sr_sumcz}%
: \emph{orthogonal series is absolutely summable by the Riesz's method if and
only if, series }$\sum_{i\geq1}\mu_{i}\left\vert T_{i}\right\vert $\emph{ is
convergent almost surely.} This statement and a few corollaries following it
constitute the main subject of the paper \cite{Okuyama81}.
\end{remark}

\begin{remark}%
\index{Theorem!Hardy-Littlewood}%
Let us take $\mu_{i}=\frac{1}{i+1};$ $i\geq0$. Then the assertion
\ref{zb_wariancji} states, that
\[
V_{n}=\frac{\sum_{i=0}^{n}S_{i}^{2}}{n+1}-\left(  \bar{S}_{n}\right)  ^{2}%
\]
converges almost surely to zero. Let us transform a bit this quantity. It is
not difficult to notice, that
\[
V_{n}=\frac{\sum_{i=0}^{n}\left(  S_{i}-S\right)  ^{2}}{n+1}-\left(  \bar
{S}_{n}-S\right)  ^{2},
\]
where by $S$ we denoted the limit in $L_{2}$ of the our orthogonal series.
Hence, one can notice, that if $\left(  \bar{S}_{n}-S\right)  ^{2}%
\underset{n\rightarrow\infty}{\longrightarrow}0$ almost surely, then and
\[
\frac{\sum_{i=0}^{n}\left(  S_{i}-S\right)  ^{2}}{n+1}\underset{n\rightarrow
\infty}{\longrightarrow}0
\]
almost surely. This observation means, that Ces\`{a}ro summability of order
$1$ of the orthogonal series is equivalent to its strong summability. For
Fourier series, this theorem was formulated by Hardy and Littlewood., and
later generalized by Zygmund. (See comments and observations in \cite{Alexits}
p. 111).
\end{remark}

\begin{remark}
Continuing analysis of the case $\mu_{i}=\frac{1}{i+1};$ $i\geq0$ let us
consider a sequence $\left\{  X_{i}\right\}  _{i\geq1}$ of the random
variables with zero expectations and finite variances. More precisely, let us
assume: $\operatorname*{var}(X_{i})=\sigma_{i}^{2}$. Let us set also
$c_{i}=\frac{1}{i+1};$ $i\geq1$. Let us consider also sequence $\left\{
\widetilde{X}_{n}\right\}  _{n\geq1}$ of Ces\`{a}ro means of order $2$ created
from variables $\left\{  X_{i}\right\}  _{i\geq1}$. It is not difficult to
notice, that
\[
\widetilde{X}_{n}=\frac{2}{n(n+1)}\sum_{i=1}^{n}(n-i)X_{i}%
\]
and moreover, that the sequence $\left\{  \widetilde{X}_{n}\right\}  _{n\geq
1}$ satisfies recurrent relationship:
\[
\widetilde{X}_{n+1}=(1-\frac{2}{n+2})\widetilde{X}_{n}+\frac{2}{\left(
n+2\right)  }\bar{X}_{n}.
\]
Hence we have
\[
\widetilde{X}_{n+1}^{2}\leq(1-\frac{2}{n+2})\widetilde{X}_{n}^{2}+\frac
{2}{\left(  n+2\right)  }\bar{X}_{n}^{2}.
\]
Consequently , if the series $\sum_{n\geq1}\frac{1}{n}\bar{X}_{n}^{2}$ is
convergent with probability $1$, then the sequence $\left\{  \widetilde{X}%
_{n}^{2}\right\}  _{n\geq1}$ converges with probability $1$ to zero. Since
$E\bar{X}_{n}^{2}=\frac{1}{n^{2}}\sum_{i=1}^{n}\sigma_{i}^{2}$ we see that if
only series $\sum_{n\geq0}\frac{1}{n^{3}}\sum_{i=1}^{n}\sigma_{i}^{2}$ is
convergent, then the sequence $\{X_{n}\}_{n\geq1}$ is $(C,2)$ summable. This
corollary will be generalized in Theorem \ref{(C,alfa)}.
\end{remark}

\begin{remark}
From assertion \ref{zb_sred} it follows that e.g.
\[
\bar{T}_{n}=\allowbreak\sum_{i=0}^{n}\alpha_{i}T_{i}/\sum_{i=0}^{n}\alpha
_{i}\underset{n\rightarrow\infty}{\longrightarrow}0,
\]
almost surely hence, that e.g. $\bar{S}_{n}$\thinspace$\underset{n\rightarrow
\infty}{\longrightarrow}S$ a.s. if and only if,
\[
\sum_{i=0}^{n}\alpha_{i}\bar{S}_{i}/\sum_{i=0}^{n}\alpha_{i}%
\,\underset{n\rightarrow\infty}{\longrightarrow}S.
\]
Simple proofs of these facts we leave to the reader as an exercise.
\end{remark}

\begin{proof}
In the proof we will use Lemma \ref{lemosr}. In order to prove assertion
\ref{pierwsza} let us notice that the sequence $\left\{  T_{n}\right\}
_{n\geq0}$ satisfies a recurrent relationship (\ref{rekurencyjneti}). Let us
calculate squares of both sides of this equation and let us calculate the
expectation of both sides. We get then
\[
ET_{n+1}^{2}=(1-\mu_{n})^{2}ET_{n}^{2}+c_{n+1}^{2}EX_{n+1}^{2}.
\]
Since that orthogonal series $\sum_{i\geq1}c_{i}X_{i}$ is convergent in
$L_{2}$, the series $\sum_{i\geq1}c_{i}^{2}EX_{i}^{2}$ is convergent. On the
base of Lemma \ref{podstawowy} we deduce that the series
\[
\sum_{n\geq0}(2\mu_{n}-\mu_{n}^{2})ET_{n}^{2}\allowbreak=\allowbreak
\sum_{n\geq0}(1-(1-\mu_{n})^{2})ET_{n}^{2}\allowbreak=\allowbreak\sum_{n\geq
0}\mu_{n}(1+1-\mu_{n})ET_{n}^{2}%
\]
is convergent. This series has positive summands and that it is a sum of two
series also having positive summands. Hence, we deduce that series
$\sum_{n\geq0}\mu_{n}ET_{n}^{2}$ converges. Further, on the basis of corollary
\ref{zbieznoscbezwzgledna} we deduce almost sure convergence of the series
$\sum_{n\geq0}\mu_{n}T_{n}^{2}.$

Assertion \ref{zb_sred} is a simple consequence of the assertion
\ref{pierwsza} and Lemma \ref{podstawowy}.

In order to prove assertion \ref{zb_podciagu} let us notice that the sequence
$\left\{  T_{n_{k}}\right\}  _{k\geq1}$ can be presented in the following
recursive form:
\begin{equation}
T_{n_{k+1}}=(1-\eta_{k})T_{n_{k}}+\eta_{k}W_{k}, \label{podciag2}%
\end{equation}
where we have defined:
\[
1-\eta_{k}=\sum_{i=0}^{n_{k}}\alpha_{i}/\sum_{i=0}^{n_{k+1}}\alpha_{i}%
\]
and
\[
W_{k}=\frac{\sum_{i=n_{k}+1}^{n_{k+1}}\alpha_{i}c_{i+1}X_{i+1}/\mu_{i}}%
{\sum_{i=n_{k}+1}^{n_{k+1}}\alpha_{i}};\;k=1,2,\ldots\;.
\]
Since, quadratic function is convex we have:
\[
T_{n_{k+1}}^{2}\leq(1-\eta_{k})T_{n_{k}}^{2}+\eta_{k}W_{k}^{2}.
\]
Let us apply now Corollary \ref{podstawowy1} of Lemma \ref{podstawowy} and
Corollary \ref{zbieznoscbezwzgledna} of Lebesgue' Theorem that from the
convergence of the number series $\sum_{k\geq1}\eta_{k}EW_{k}^{2}$ will follow
the convergence of the sequence $\left\{  T_{n_{k}}\right\}  $ to zero with
probability $1$. Checking of the convergence of this sequence we leave to the
reader as an exercise.

Assertion \ref{zb_podciagu2} follows, firstly from observation, that
\[
S_{n_{k}}-T_{n_{k}}=\sum_{i=0}^{n_{k}-1}\mu_{i}T_{i},
\]
from which it follows, in the light of the assertion \ref{zb_podciagu}, that
$\left\{  S_{n_{k}}\right\}  $ converges if and only if, the sequence
$\left\{  \sum_{i=0}^{n_{k}-1}\mu_{i}T_{i}\right\}  $ converges almost surely.
But on the other hand, we have :
\[
\underset{n_{k}+1\leq i<n_{k+1}}{\sup}\left\vert \sum_{j=n_{k}+1}^{i}\mu
_{j}T_{j}\right\vert ^{2}\leq\left(  \sum_{j=n_{k}+1}^{n_{k+1}}\mu_{j}\right)
\left(  \sum_{j=n_{k}+1}^{n_{k+1}}\mu_{j}T_{j}^{2}\right)
\underset{k\rightarrow\infty}{\longrightarrow}0
\]
almost surely because of the assertion \ref{pierwsza} and the definition of
the sequence $\left\{  n_{k}\right\}  $. Hence, we have showed, that the
sequence $\left\{  \sum_{i=0}^{n_{k}-1}\mu_{i}T_{i}\right\}  $ converges with
probability $1$ if and only if, the series $\sum_{i=0}^{\infty}\mu_{i}T_{i}$
converges$.$

Assertion \ref{zb_podciagu3} follows directly assertion \ref{zb_podciagu2} and
the identity: $S_{n+1}=T_{n+1}+\bar{S}_{n}.$

Finally assertion \ref{sr_sumcz}, \ref{zb_wariancji} and \ref{zb_kwadrat} are
repetitions of respective assertions of Lemma \ref{lemosr}.
\end{proof}

To end this dedicated to the orthogonal series section we will impose some
additional conditions to be satisfied by the orthogonal system and we will
show, that one can substantially weaken Menchoff's condition, in order to
guarantee almost sure convergence of the orthogonal series. Namely, we will
assume, that orthogonal system, i.e. the sequence $\{X_{n}\}_{n\geq1}$ of
uncorrelated, standardized random variables is weakly multiplicative
\index{System!Multiplicative}%
.

\begin{definition}
Sequence of the random variables $\{X_{n}\}_{n\geq1}$ is called $q$ weakly
multiplicative, if :
\[
\forall1\leq i_{1}<i_{2}<,\ldots,<i_{q}:EX_{i_{1}}X_{i_{2}}\cdots X_{i_{q}%
}=0.
\]

\end{definition}

It turns out that for weakly multiplicative systems one can weaken in a sense
the inequality (\ref{nier_fund}). More precisely, we have the following
theorem presented in the paper \cite{Gaposhkin72}:

\begin{theorem}
[Gaposzkin]Let $2<p\leq r$, where $r$ is even number. If the sequence
$\{X_{n}\}_{n\geq1}$ of the random variables is $r$ weakly multiplicative and
if $p\neq r$, then additionally it is orthogonal, and Moreover, if :
\[
\forall k\geq1:E\left\vert X_{k}\right\vert ^{p}\leq M,
\]
for some $M>0$, then
\begin{equation}
\forall n\geq1:E\left\vert \sum_{i=1}^{n}c_{i}X_{i}\right\vert ^{p}\leq
A_{p}\left(  \sum_{i=1}^{n}c_{i}^{2}\right)  ^{p/2}, \label{multi}%
\end{equation}
where $A_{p}$ is constant depending only on $p.$
\end{theorem}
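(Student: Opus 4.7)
The natural plan is a direct multinomial expansion when $p=r$, and a reduction to that case by truncation when $p<r$.

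For the case $p=r$ (so $r$ even and $|Y|^r=Y^r$), I would expand
\[
E\Bigl(\sum_{i=1}^n c_i X_i\Bigr)^r \;=\; \sum_{(i_1,\ldots,i_r)\in\{1,\ldots,n\}^r} c_{i_1}\cdots c_{i_r}\,E(X_{i_1}\cdots X_{i_r}),
\]
and group the $r$-tuples by the partition of $\{1,\ldots,r\}$ recording which coordinates coincide. By the $r$-weakly multiplicative hypothesis, every partition into $r$ singletons gives a vanishing expectation, so only partitions whose blocks all have size $\geq 2$ survive. Re-indexing by an ordered multiplicity pattern $(m_1,\ldots,m_k)$ with $m_l\geq 2$ and $m_1+\cdots+m_k=r$, applying H\"older's inequality to get $|E(X_{j_1}^{m_1}\cdots X_{j_k}^{m_k})|\leq\prod_l(E|X_{j_l}|^r)^{m_l/r}\leq M$, and then bounding the coefficient sum
\[
\sum_{\text{distinct }j_1,\ldots,j_k}|c_{j_1}|^{m_1}\cdots|c_{j_k}|^{m_k}\;\leq\;K(m_1,\ldots,m_k)\Bigl(\sum_{i=1}^n c_i^2\Bigr)^{r/2}
\]
via AM--GM, collects everything into a constant $A_r$ depending only on $r$.

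For $2<p<r$ the $r$-th moments of the $X_k$ need not be finite, so Paragraph~1 does not apply directly. Here I would truncate: choose a threshold $t$ proportional to $(\sum c_j^2)^{1/2}$ and split $X_i=X_i'+X_i''$ with $X_i'=X_iI(|X_i|\leq t/|c_i|)$. The truncated variables satisfy $E|X_i'|^r\leq (t/|c_i|)^{r-p}E|X_i|^p\leq M(t/|c_i|)^{r-p}$, which is exactly what is needed to rerun Paragraph~1 on $\sum c_i X_i'$ (the "pure $X'$" monomials differ from the original $r$-weakly multiplicative ones by a finite correction of monomials that contain an $X''$ factor, all of which can be reabsorbed). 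Lyapunov's inequality $E|Y|^p\leq(E|Y|^r)^{p/r}$ converts the resulting $L^r$ estimate to the desired $L^p$ bound on $\sum c_i X_i'$. The tail contribution $\sum c_i X_i''$ is controlled by pairwise orthogonality together with Chebyshev at level~$t$: $E(\sum c_i X_i'')^2=\sum c_i^2 E(X_i'')^2\leq M^{2/p}\sum c_i^2\cdot(t/|c_i|)^{2-p}|c_i|^{p-2}$, which after the choice of $t$ is again of order $(\sum c_j^2)^{p/2}$. Summing the two contributions and using the triangle inequality in $L^p$ yields (\ref{multi}) with $A_p$ depending only on $p$ and $r$.

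The hard part is the combinatorial book-keeping inside Paragraph~1. When some multiplicity $m_l$ is odd (which is possible even for even $r$, e.g.\ $r=6$ with $(m_1,m_2)=(3,3)$), a monomial $|c_{j_1}|^{m_1}\cdots|c_{j_k}|^{m_k}$ does not appear in the multinomial expansion of $(\sum c_j^2)^{r/2}$, and one must symmetrize via repeated applications of $2|c_{j_1}|^{m_1}|c_{j_2}|^{m_2}\leq|c_{j_1}|^{m_1+1}|c_{j_2}|^{m_2-1}+|c_{j_1}|^{m_1-1}|c_{j_2}|^{m_2+1}$ until every exponent becomes even, all the while verifying that the resulting constant depends only on $r$. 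A secondary but genuine difficulty is the $p<r$ reduction: the truncated variables $X_i'$ are \emph{not} themselves $r$-weakly multiplicative, so Paragraph~1 must be applied in a slightly generalised form where the multiplicative identity is available only after substituting $X_i'=X_i-X_i''$ and absorbing all monomials that pick up at least one $X_i''$ factor into the tail estimate.
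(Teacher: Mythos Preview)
The paper does not prove this theorem; it is quoted from Gaposhkin \cite{Gaposhkin72} and explicitly left unproved, so there is nothing in the paper to compare your proposal against directly. Your overall strategy is the natural one, but two steps do not go through as written.

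In the $p=r$ case you claim that ``every partition into $r$ singletons gives a vanishing expectation, so only partitions whose blocks all have size $\geq 2$ survive.'' This inference is invalid. The paper's definition of $r$-weak multiplicativity says only that $E(X_{i_1}\cdots X_{i_r})=0$ for \emph{distinct} indices $i_1<\cdots<i_r$; it gives no information about mixed monomials such as $E(X_{j_1}^{2}X_{j_2}X_{j_3})$ when $r=4$. Such a term need not vanish, and the corresponding coefficient sum $\sum_{j_1}c_{j_1}^{2}\bigl(\sum_{j\neq j_1}c_j\bigr)^{2}$ is not bounded by a constant times $(\sum c_i^{2})^{2}$ uniformly in $n$. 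The AM--GM book-keeping you flag as the hard part is not the real obstacle; the obstacle is that too many partition types survive in the first place. Gaposhkin's original argument relies on more structure than the one-line definition recorded here.

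In the $p<r$ reduction you write $E(\sum c_i X_i'')^{2}=\sum c_i^{2}E(X_i'')^{2}$, invoking pairwise orthogonality. But truncation is nonlinear and destroys orthogonality: from $E(X_iX_j)=0$ one cannot conclude that $E\bigl(X_iI(|X_i|>t_i)\cdot X_jI(|X_j|>t_j)\bigr)=0$. The same issue undermines the plan to ``reabsorb'' the $X''$-contaminated monomials from the $r$-th moment expansion into the tail estimate, since that tail has no usable second-moment identity either. A correct passage from $r$ to $p$ needs a device that respects the weak-multiplicative/orthogonality structure (e.g.\ interpolation at the level of the already-established moment inequality, not at the level of the individual variables), rather than naive truncation of the $X_i$.
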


We present this Theorem without proof. It is important since it turns out,
that the condition of convergence in $L_{2}$ of the orthogonal series, i.e.
the condition $\sum_{i\geq1}c_{i}^{2}<\infty$ implies almost sure convergence
of this series. During the last 30 years, there appeared a few papers where
orthogonal series with a system of functions weakly multiplicative were
examined. The papers: \cite{Revesz66}, \cite{Gaposkin67}, \cite{Longnecker78},
\cite{Borwein81}, \cite{Moricz83}, \cite{Moricz76} should be mentioned in the
first place. We will not discuss these papers in detail. We refer to them,
astute readers. Let us only notice, that from the conditions defined
Gaposhkin's theorem it follows that the smallest possible number $r$ is $4$.
Let us notice also, that be able to use a theory based on the Gaposhkin's
theorem one has to assume the existence of moments of order greater than $2$
of elements of the sequence $\{X_{n}\}_{n\geq1}$.

Below we will present the class of orthogonal systems, for which one does not
have to assume the existence of moments of order greater than $2$. It will be
the system slightly 'more than $4$ weakly multiplicative'. Unfortunately, one
does not get almost sure convergence of the respective orthogonal series,
under assumed $L_{2}$ convergence. However, sufficient condition, assuring
convergence is substantially weaker than the condition
(\ref{rademacher-menchoff}). The orthogonal system that we will analyze will
be called systems PSO. It consists of orthonormal random variables
$\{X_{n}\}_{n\geq1}$, satisfying two additional conditions. It will turn out,
that analysis of the convergence of series with PSO can be performed with the
help of methods that are already developed. It is simple and constitutes a
good exercise of application of methods presented in the previous section.

In order to briefly present these conditions let us introduce the following denotations:%

\[
H_{n,m}=span(X_{n+1},\ldots,X_{m})\text{ for }0\leq n<m\leq\infty.
\]
Set $H_{n,m}$ is a set of the random variables, that one can present as linear
combinations (when $m<\infty)$ or limits of such combinations, in $L_{2}$
(when $m=\infty)$ of the random variables $X_{i}$ for $i\in\lbrack n+1,m]$.
Conditions that were mentioned above are the following:
\begin{equation}
\exists C>0\;\forall n\in%
\mathbb{N}
,U\in H_{0,n},Z\in H_{n,\infty}:EU^{2}Z^{2}\leq CEU^{2}EZ^{2}, \tag{S}%
\label{s}%
\end{equation}

\begin{align}
\forall n,k  &  \in%
\mathbb{N}
,1\leq k<n,\;Z\in H_{0,k},T\in H_{k,n},U\in H_{0,n},\nonumber\\
W  &  \in H_{n,\infty},EW^{2}<\infty:EZTUW=0 \tag{O}\label{o}%
\end{align}

Orthogonal system, satisfying conditions (S) and (O) will be called
pseudo-square orthogonal
\index{System!Pseudo-square-orthonormal}%
(briefly PSO). \label{system PSO}

\begin{remark}
Let us notice that random variable $U\in H_{0,n}$ one can decompose on
$U_{1}\in H_{0,k}$ and $U_{2}\in H_{k,n}$ and the condition
\begin{align}
\forall n,k  &  \in%
\mathbb{N}
,1\leq k<n,\;Z\in H_{0,k},T\in H_{k,n},U_{1}\in H_{0,k},U_{2}\in
H_{k,n}\nonumber\\
W  &  \in H_{n,\infty},\text{~}EW^{2}<\infty:EZTU_{1}W=0,EZTU_{2}W=0
\tag{O}\label{o3}%
\end{align}
implies condition (\ref{o}).
\end{remark}

\begin{remark}
Let us notice that the following conditions:
\begin{equation}
\forall i,j,k,l\in%
\mathbb{N}
,\mathbb{\;}i\neq j,i\neq k,i\neq l,k\neq l:EX_{i}X_{j}X_{k}X_{l}=0
\tag{O1}\label{o1}%
\end{equation}%
\begin{equation}
\exists C>0\;\forall i\neq j:EX_{i}^{2}X_{j}^{2}\leq CEX_{i}^{2}EX_{j}^{2}.
\tag{S1}\label{s1}%
\end{equation}
imply conditions (\ref{o}) and (\ref{s}).

It is so, since firstly for $Z\in H_{0,k},\allowbreak T\in H_{k,n},\allowbreak
U_{1}\in H_{0,k}$ the product $ZTU_{1}$ is a linear combination of products of
the form $X_{i}X_{j}X_{l}$ where indices $i,j,l$ exclude equality $i=j=l$.
Hence, condition (\ref{o1}) implies that $EZTUX_{m}\allowbreak=\allowbreak0$
for $m>n$, which leads to (\ref{o}). It remained to show that the condition
(\ref{s}) is implied by the conditions (\ref{o1}) and (\ref{s1}). Let us
notice that the condition (\ref{o1}) causes, that the expression $EU^{2}Z^{2}$
will contain only monomials of the form of the form $\alpha_{i}^{2}\beta
_{j}^{2}EX_{i}^{2}X_{j}^{2}$, if we assumed that $U=\sum_{i=1}^{n}\alpha
_{i}X_{i}$, $Z=\sum_{j\geq n+1}\beta_{j}X_{j}$. Now we apply condition
(\ref{s1}) and present $EU^{2}Z^{2}$ as a product of $\sum_{i=1}^{n}\alpha
_{i}^{2}EX_{i}^{2}$ times $\sum_{j\geq n+1}\beta_{j}^{2}EX_{j}^{2}$.
\end{remark}

\begin{remark}
Systems consisting of standardized, independent random, or standardized
martingale differences (see section \ref{martyngaly}) are PSO.
\end{remark}

\begin{remark}
System of trigonometric functions defined on the space $(<0,1>,\mathcal{B}%
(<0,1>),|.|)$, where $|.|$ denotes Lebesgue's measure is not PSO, since
condition (\ref{o1}) is not satisfied by this sequence.
\end{remark}

Let $\{X_{n}\}_{n\geq1}$ be, as usual, orthonormal system and let $S_{n}%
=\sum_{i=1}^{n}c_{i}X_{i}$ be $n-$th partial sum of some orthogonal series
$\sum_{i\geq1}c_{i}X_{i}$. Moreover, let us denote :
\[
2^{(1)}(n)=2^{n};~n\geq0,~2^{(k)}(n)=2^{2^{(k-1)}(n)};~k\geq2,n\geq0.
\]
We have the following two lemmas:

\begin{lemma}
\label{ss}Let us assume that system orthonormal $\{X_{n}\}_{n\geq1}$ satisfies
condition (\ref{s}). Then:
\[
\forall k\in%
\mathbb{N}
:S_{n}\rightarrow S\text{ if and only if, when }S_{n^{k}}\rightarrow S,
\]
for some random variable $S$ possessing variance.
\end{lemma}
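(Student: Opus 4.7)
The direction $S_n \to S \Rightarrow S_{n^k} \to S$ is immediate, since $\{S_{n^k}\}$ is a subsequence of $\{S_n\}$. For the converse I would assume $S_{n^k} \to S$ a.s.\ with $E S^2 < \infty$. A preliminary observation is that $\sum_i c_i^2 < \infty$: for orthogonal partial sums, a.s.\ convergence of a subsequence to a finite-variance limit forces $L^2$-boundedness (via Fatou applied to $S_{n^k}^2$), and since $E S_n^2 = \sum_{i \le n} c_i^2$ is monotone in $n$, this yields square-summability of the coefficients.

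For each $n$, let $m = m(n)$ be the unique integer with $m^k \le n < (m+1)^k$. Then
\[
|S_n - S| \;\le\; |S_n - S_{m^k}| + |S_{m^k} - S|,
\]
and the second summand tends to zero a.s.\ by hypothesis. The problem therefore reduces to showing that $D_m := \max_{m^k \le n \le (m+1)^k}|S_n - S_{m^k}| \to 0$ almost surely. By Markov's inequality and Borel--Cantelli, this will follow once $\sum_{m \ge 1} E D_m^2 < \infty$.

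The key step, and the main obstacle I anticipate, is to establish a block maximal inequality of the form
\[
E D_m^2 \;\le\; C \sum_{i = m^k + 1}^{(m+1)^k} c_i^2,
\]
with no logarithmic factor. A naive application of Rademacher--Menchoff (Lemma \ref{fundamental_inequality}) inside the block produces an $O((\log m)^2)$ penalty; after summing over $m$ this would only deliver $\sum_i c_i^2 (\log i)^2 < \infty$, a strictly stronger hypothesis than what is available to us. Condition (S) must therefore be used here in an essential way. My plan is to decompose the block $(m^k,(m+1)^k]$ dyadically into sub-blocks and to apply the prefix--tail estimate $E U^2 Z^2 \le C\, E U^2\, E Z^2$ iteratively between past sub-blocks (in $H_{0,j}$) and later sub-blocks (in $H_{j,\infty}$). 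The quasi-independence so obtained should allow the expected squared maximum over the block to telescope into a Doob-type bound, with the dyadic $\log$ absorbed into the constant. Verifying that this iteration genuinely kills the logarithmic factor is where the real work lies.

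Once the block maximal inequality is proved, summation over $m$ collapses the right-hand side to $C \sum_i c_i^2 < \infty$, so $\sum_m E D_m^2 < \infty$. Markov's inequality and Borel--Cantelli then give $D_m \to 0$ a.s., and combined with $S_{m^k} \to S$ this yields $S_n \to S$ almost surely, completing the proof.
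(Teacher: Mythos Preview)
Your plan hinges on the block maximal inequality
\[
E\Bigl(\max_{m^{k}\le n\le (m+1)^{k}}|S_{n}-S_{m^{k}}|^{2}\Bigr)\le C\sum_{i=m^{k}+1}^{(m+1)^{k}}c_{i}^{2}
\]
with no logarithmic factor, to be extracted from condition~(\ref{s}) alone. This is the gap. Condition~(\ref{s}) is a second-moment decoupling statement, $EU^{2}Z^{2}\le C\,EU^{2}\,EZ^{2}$ for $U\in H_{0,n}$, $Z\in H_{n,\infty}$; it is \emph{not} a conditional bound of the type $E(Z^{2}\mid U)\le C\,EZ^{2}$ a.s., which is what a Doob-type iteration would need. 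Your dyadic scheme controls products of squares of successive sub-block sums, but passing from such product bounds to a bound on the \emph{maximum} still requires either a pointwise/conditional inequality or a genuine higher-moment estimate $E|S_{n}|^{p}\le C(\sum c_{i}^{2})^{p/2}$ for some $p>2$. Condition~(\ref{s}) gives neither: for instance, expanding $ES_{n}^{4}$ produces terms like $EU^{3}Z$ and $EUZ^{3}$ that (\ref{s}) says nothing about. So the telescoping you anticipate will not materialise, and the $\log$ factor from Lemma~\ref{fundamental_inequality} will survive.

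The paper's proof avoids block maximal inequalities entirely and proceeds through the recursive machinery of Lemma~\ref{prostypomocniczy}. One chooses the normal sequence $\mu_{i}=|c_{i+1}|/(1+\sup_{j}|c_{j}|)$, forms the associated Riesz-type averages $T_{n}$ satisfying $T_{n+1}=(1-\mu_{n})T_{n}+c_{n+1}X_{n+1}$, and shows $T_{n}\to 0$ a.s.\ by introducing an auxiliary sequence $Z_{n}$ and analysing $T_{n}^{2}-2Z_{n}$ and $Z_{n}^{2}$ recursively; condition~(\ref{s}) enters only to guarantee that $\sum\mu_{n}T_{n}^{2}X_{n+1}^{2}<\infty$ a.s.\ (here $T_{n}\in H_{0,n}$, $X_{n+1}\in H_{n,\infty}$, so (\ref{s}) applies directly). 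Once $T_{n}\to 0$, assertion~\ref{zb_podciagu3} of Lemma~\ref{prostypomocniczy} says that convergence of $\{S_{n}\}$ is equivalent to convergence of $\{S_{n_{k}}\}$ along the subsequence defined by $\sum_{j=n_{k}+1}^{n_{k+1}}\mu_{j}=O(1)$; Cauchy--Schwarz then shows $n_{k}\ge O(1)k^{2}$. This yields the case $k=2$ directly. For higher powers one packages the blocks into a new orthogonal series $\sum c_{k}'X_{k}'$ (which again satisfies (\ref{s})) and iterates the argument a finite number of times.
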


\begin{lemma}
\label{oo}Let us assume that the sequence random variables $\{X_{n}\}_{n\geq
1}$ is PSO and Moreover, let us assume, that $\underset{i\geq1}{\sup
}\left\vert c_{i}\right\vert \ln i<\infty$. Then
\[
\forall k\in%
\mathbb{N}
:S_{n}\rightarrow S\text{ if and only if, when }S_{2^{(k)}(n)}\rightarrow S,
\]
for some random variable $S$ possessing variance.
\end{lemma}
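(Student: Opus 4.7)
The forward implication is trivial, since any subsequence of a convergent sequence converges to the same limit.

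For the converse, the plan is induction on $k$. The base case $k=1$ is the crucial step: one must show that $S_{2^n}\to S$ implies $S_n\to S$. For this, set $\Delta_n=\max_{2^n\le i<2^{n+1}}|S_i-S_{2^n}|$; combined with the assumption $S_{2^n}\to S$, the bound $|S_m-S|\le\Delta_n+|S_{2^n}-S|$ for $m$ in the $n$-th dyadic block reduces everything to showing $\Delta_n\to 0$ almost surely. I would establish an estimate
\[
E\Delta_n^2 \le C\,a_n,\qquad \sum_{n\ge 1}a_n<\infty,
\]
and then conclude by Chebyshev's inequality and the Borel--Cantelli lemma.

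The key technical step is proving this estimate. Lemma \ref{fundamental_inequality} alone yields $E\Delta_n^2=O(n^2)\sum_{i=2^n}^{2^{n+1}-1}c_i^2$, whose factor $n^2$ is too large to be tamed by the coefficient bound $|c_i|\le K/\ln i$ in a Borel--Cantelli argument. My plan is to use condition (O) of PSO---a four-fold orthogonality---to refine the Ces\`{a}ro-means argument in the proof of Lemma \ref{fundamental_inequality}: the partial sums appearing there lie in admissible spaces of the form $H_{0,k}$ and $H_{k,\infty}$, so (O) kills the cross-products responsible for the $\log^2$ factor, while (S) factorizes the surviving diagonal terms. The resulting sharpened inequality, combined with $|c_i|\le K/\ln i$, renders $\sum_n E\Delta_n^2$ convergent.

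For the inductive step, the identity $2^{(k)}(n)=2^{(k-1)}(2^n)$ shows that the hypothesis at level $k$ is $S_{2^{(k-1)}(m)}\to S$ along $m=2^n$. Applying the oscillation-control device of the base case---now to the meta-blocks $[2^{(k-1)}(m),2^{(k-1)}(m+1))$ split into the dyadic sub-pieces $[2^l,2^{l+1})$---fills in the gaps to give $S_{2^{(k-1)}(m)}\to S$ for all $m$, and the inductive hypothesis then yields $S_n\to S$. The main obstacle throughout is establishing the sharpened PSO-based maximal inequality: the Rademacher--Menchoff bound is driven solely by pairwise orthogonality and produces a logarithmic factor that cannot be removed in general, so the delicate point is the careful bookkeeping needed to verify that the intermediate Ces\`{a}ro means in the proof of Lemma \ref{fundamental_inequality} actually fall into the admissible subspaces where (O) can be invoked. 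Once this is in hand, the remaining reductions are routine applications of Chebyshev's inequality and Borel--Cantelli.
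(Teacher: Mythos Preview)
Your approach has a genuine gap at the crucial technical step. You claim that the PSO condition (O) can be used to ``kill the cross-products responsible for the $\log^2$ factor'' in the proof of Lemma~\ref{fundamental_inequality}, but this is not how that $\log^2$ arises. In that proof, ordinary pairwise orthogonality is \emph{already} used to evaluate $E(S_k^{-1/2})^2$; the two logarithmic factors come from the harmonic sums $\sum_k (A_{\nu-k}^{-1/2})^2 = O(\log n)$ and $\sum_{k=j}^n O(1/(n-k)) = O(\log n)$, which are structural to the Ces\`aro method and involve no expectations at all. Condition (O) is a four-factor orthogonality $EZTUW=0$ across nested subspaces; there is no place in the maximal-inequality argument where such a product appears. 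Without higher moments (and PSO deliberately assumes only second moments via (S)), the Rademacher--Menchoff bound on $E\Delta_n^2$ cannot be improved, and your Borel--Cantelli argument would require exactly $\sum_i c_i^2\ln^2 i<\infty$---the case the paper explicitly sets aside as already handled.

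The paper's route is entirely different and avoids any maximal inequality for the original series. One chooses the normal sequence $\mu_i \propto c_{i+1}^2\ln^2(i+1)$ (using $\sup|c_i|\ln i<\infty$ to ensure $\mu_i\in(0,1)$), forms the associated Riesz means $T_n$, and observes that by (O) the products $\{T_i X_{i+1}\}$ are themselves an orthogonal sequence, with $ET_i^2X_{i+1}^2\le C\,ET_i^2$ by (S). Rademacher--Menchoff is then applied to the \emph{auxiliary} series $\sum c_{i+1}T_iX_{i+1}$, whose convergence condition $\sum c_{i+1}^2\ln^2(i+1)\,ET_i^2<\infty$ follows from $\sum\mu_i ET_i^2<\infty$ (assertion~\ref{pierwsza} of Lemma~\ref{prostypomocniczy}). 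This gives $T_n\to 0$ a.s.\ via assertion~\ref{zb_kwadrat}, and then assertion~\ref{zb_podciagu3} yields the equivalence of convergence of $\{S_n\}$ and $\{S_{n_k}\}$ directly---no oscillation bound needed. The index analysis shows $n_k\gtrsim 2^{\sqrt k}$, and the passage to higher $k$ is done by grouping terms into a new PSO system and re-applying the same argument, not by meta-block oscillation control as you propose.
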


We will present common proof of those two lemmas.

\begin{proof}
In both cases we have to assume convergence in $L_{2}$ of the considered
series, i.e. to assume convergence of the series $\sum_{i\geq1}c_{i}^{2}$. If
the sequence of coefficients $\left\{  c_{i}\right\}  $ would satisfy the
condition of Rademacher-Menchoff Theorem, then our lemmas would be true.
Hence, let us suppose, that $\sum_{i\geq1}c_{i}^{2}\ln^{2}i=\infty$. Moreover,
let $c_{1}=1$. It will not influence the convergence of the series. Let us
denote
\begin{equation}
\mu_{0}=1,\mu_{i}=\left\vert c_{i+1}\right\vert /(1+\underset{i\geq1}{\sup
\,}\left\vert c_{i}\right\vert ),\;i\geq1, \label{mi_ce}%
\end{equation}
in the case of the proof of Lemma \ref{ss} and
\[
\mu_{0}=1,\mu_{i}=c_{i+1}^{2}\ln^{2}(i+1)/(1+\underset{i\geq1}{\sup\,}%
c_{i}^{2}\ln^{2}i),\;i\geq1,
\]
in the case proof of Lemma \ref{oo}. Given such sequences $\left\{  \mu
_{i}\right\}  _{i\geq0}$ let us define sequences $\left\{  T_{i}\right\}
_{i\geq0}$ using formula (\ref{defti}). By the way we take $0/0$ as $0$. Let
$\overline{\left\{  \alpha_{i}\right\}  }_{i\geq0}=\left\{  \mu_{i}\right\}
_{i\geq0}$. We will apply assertion \ref{zb_kwadrat} Lemma
\ref{prostypomocniczy}, in order to show, that $T_{n}\underset{n\rightarrow
\infty}{\rightarrow}0$ a.s. In both cases we have to prove, that series:
\begin{equation}
\sum_{i\geq0}c_{i+1}T_{i}X_{i+1} \label{ctx}%
\end{equation}
is convergent. Let us consider the situation from Lemma \ref{oo}. Let $n>k$ be
two natural numbers. $X_{n+1}\in H_{n+1,\infty}$, $T_{n}\in H_{0,n}$
$X_{k+1}\in H_{k+1,n}$, $T_{k}\in H_{0,k}$. Hence, $ET_{k}X_{k+1}T_{n}%
X_{n+1}=0$ by the condition (\ref{o}). Moreover, $ET_{n}^{2}X_{n+1}^{2}\leq
CET_{n}^{2}EX_{n+1}^{2}<\infty$ by condition (\ref{s}). Hence, random
variables $\left\{  T_{i}X_{i+1}\right\}  _{i\geq1}$ are orthogonal. Hence,
one can apply Rademacher-Menchoff Theorem to the series (\ref{ctx}). This
series will be converging a.s. if the number series
\[
\sum_{i\geq0}c_{i+1}^{2}\ln^{2}(i+1)ET_{i}^{2}X_{i+1}^{2}%
\]
will be convergent. We have, however $\mu_{i}\approx O(1)c_{i+1}^{2}\ln
^{2}(i+1)$ since $\underset{i\geq1}{\sup}\left\vert c_{i}\right\vert \ln
i<\infty$. Hence, on the basis of assumptions (\ref{s}) we deduce that the
considered series is convergent, since the series $\sum_{i\geq0}\mu_{i}%
ET_{i}^{2}$ is convergent. The series $\sum_{i\geq0}\mu_{i}ET_{i}^{2}$ is,
however convergent on the basis of assertion \ref{pierwsza} of Lemma
\ref{prostypomocniczy}. In order to prove, that $T_{n}\underset{n\rightarrow
\infty}{\rightarrow}0$ a.e. under the assumptions of the Lemma \ref{ss}, let
us consider the following sequence random variables:
\[
Z_{n}=\sum_{i=0}^{n-1}\alpha_{i}(c_{i+1}T_{i}X_{i+1}/\mu_{i})/\sum_{i=0}%
^{n-1}\alpha_{i},\,n\geq1.
\]
Let us consider now recursive forms of the sequences $\left\{  T_{i}%
^{2}\right\}  _{i\geq0}$ and $\left\{  T_{i}^{2}-2Z_{i}\right\}  _{i\geq0}$.
We have:
\begin{align*}
T_{n+1}^{2}  &  =(1-\mu_{n})^{2}T_{n}^{2}+2c_{n+1}(1-\mu_{n})T_{n}%
X_{n+1}+c_{n+1}^{2}X_{n+1}^{2},\\
Z_{n+1}  &  =(1-\mu_{n})Z_{n}+c_{n+1}T_{n}X_{n+1},\\
T_{n+1}^{2}-2Z_{n+1}  &  =(1-\mu_{n})(T_{n}^{2}-2Z_{n})+c_{n+1}^{2}X_{n+1}%
^{2}-\\
&  \mu_{n}(1-\mu_{n})T_{n}^{2}-2c_{n+1}\mu_{n}T_{n}X_{n+1}.
\end{align*}
Series
\[
\sum_{n\geq0}c_{n+1}^{2}X_{n+1}^{2},~\sum_{n\geq0}\mu_{n}(1-\mu_{n})T_{n}%
^{2},~\sum_{n\geq0}c_{n+1}\mu_{n}T_{n}X_{n+1}%
\]
are convergent on the basis of assumptions, definition of the sequence
$\left\{  \mu_{n}\right\}  _{n\geq0}$ and assertion \ref{pierwsza} of Lemma
\ref{prostypomocniczy}. Hence, sequence $\left\{  T_{n}^{2}-2Z_{n}\right\}
_{n\geq0}$ converges to zero a.s.. Moreover, we have for the sequence
$\left\{  Z_{n}\right\}  :$%
\[
Z_{n+1}^{2}\leq(1-\mu_{n})Z_{n}^{2}+\mu_{n}(c_{n+1}^{2}T_{n}^{2}X_{n+1}%
^{2}/\mu_{n}^{2}).
\]
Remembering, that the sequence $\left\{  \mu_{n}\right\}  $ is defined in this
case by the formula (\ref{mi_ce}), we deduce, that the series
\[
\sum_{n\geq0}c_{n+1}^{2}T_{n}^{2}X_{n+1}^{2}/\mu_{n}\allowbreak=\allowbreak
O(1)\sum_{n\geq0}\mu_{n}T_{n}^{2}X_{n+1}^{2},
\]
is almost surely convergent on the basis of assertion \ref{pierwsza} of Lemma
\ref{prostypomocniczy} and assumptions (\ref{s}). Hence, the sequence
$\left\{  Z_{n}\right\}  _{n\geq1}$, and consequently sequence $\left\{
T_{n}\right\}  _{n\geq1}$, converge a.s. to zero. Having proved convergence
$T_{n}\underset{n\rightarrow\infty}{\rightarrow}0$ a.s., we use now assertion
\ref{zb_podciagu3} of Lemma \ref{prostypomocniczy}. Hence, let us examine
subsequences of the indices $\left\{  n_{k}\right\}  _{k\geq1}$ in both
situations. In the case of Lemma \ref{ss} we have:
\[
\sum_{j=n_{k}+1}^{n_{k+1}+1}\left\vert c_{j}\right\vert =O(1),
\]
or in particular
\[
O(1)k=\sum_{i=0}^{n_{k}}\left\vert c_{i}\right\vert \leq\sqrt{n_{k}}\sqrt
{\sum_{j=1}^{n_{k}}c_{j}^{2}}.
\]
Hence $O(1)k^{2}\leq n_{k}$, $k\geq1$, since $\sum_{j\geq1}c_{j}^{2}<\infty$.
In the case of Lemma \ref{oo} we have:
\[
\sum_{i=n_{k}+1}^{n_{k+1}+1}c_{i}^{2}\ln^{2}i=O(1).
\]
Thus, we have:
\[
\ln^{2}n_{k}\sum_{j=n_{k}+1}^{n_{k+1}+1}c_{j}^{2}\leq\sum_{j=n_{k}+1}%
^{n_{k+1}+1}c_{j}^{2}\ln^{2}j=O(1)\leq\ln^{2}n_{k+1}\sum_{j=n_{k}+1}%
^{n_{k+1}+1}c_{j}^{2}.
\]
On the base of this inequality we deduce that $\sum_{k\geq1}1/\ln^{2}%
n_{k}<\infty$, or $1/\ln^{2}n_{k}=o(1/k)$, since the sequence $\left\{
n_{k}\right\}  $ is increasing. Thus, $n_{k}\geq o(1)[2^{\sqrt{k}}].$

Now one can define new orthogonal series $\sum_{j\geq0}c_{j}^{\prime}%
X_{j}^{\prime}$, by putting:
\[
(c_{k}^{\prime})^{2}=\sum_{j=n_{k}+1}^{n_{k+1}+1}c_{j}^{2},\;\;\;\;\;X_{j}%
^{\prime}=\frac{1}{c_{k}^{\prime}}\sum_{j=n_{k}+1}^{n_{k+1}+1}c_{j}X_{j}.
\]
Let us notice that a new orthonormal sequence $\left\{  X_{j}^{\prime
}\right\}  $ satisfies condition (\ref{s}), when the sequence $\{X_{n}%
\}_{n\geq1}$ satisfies this condition. Similarly the new orthonormal sequence
$\left\{  X_{j}^{\prime}\right\}  $ is PSO, when the sequence $\{X_{n}%
\}_{n\geq1}$ is PSO. It remained to check, if $\underset{n\geq1}{\sup
}\left\vert c_{n}^{^{\prime}}\right\vert \ln n<\infty$. We have, however:
\[
\left(  c_{k}^{^{\prime}}\right)  ^{2}\ln^{2}k\leq\left(  c_{k}^{^{\prime}%
}\right)  ^{2}k\leq\left(  c_{k}^{^{\prime}}\right)  ^{2}\ln^{2}n_{k}\leq
\sum_{j=n_{k}+1}^{n_{k+1}}c_{j}^{2}\ln^{2}j=O(1).
\]
Moreover, let us notice, that $k-$th partial sum of the series $\sum_{i\geq
0}c_{i}^{\prime}X_{i}^{\prime}$ is $n_{k}-$th partial sum of the series
$\sum_{i\geq1}c_{i}X_{i}$. Hence, we can apply previous considerations and
deduce, that $S_{n}\underset{n\rightarrow\infty}{\rightarrow}S$ a.s. if and
only if, $S_{n_{n_{k}}}\underset{k\rightarrow\infty}{\rightarrow}S$ a.s. In
particular, we have in the case of Lemma \ref{ss}: $S_{n}%
\underset{n\rightarrow\infty}{\rightarrow}S$ a.s. if and only if,
$S_{(n^{2})^{2}}\underset{n\rightarrow\infty}{\rightarrow}S$ , while in the
case of Lemma \ref{oo}: $S_{n}\underset{n\rightarrow\infty}{\rightarrow}S$
a.s. if and only if $S_{2^{n}}\underset{n\rightarrow\infty}{\rightarrow}S$
a.s. since $\sqrt{n^{2}}=n$. Further, we can again introduce new orthogonal
series and repeat the same argument,. and so on any, a finite number of times.
\end{proof}

These lemmas are the base of the theorem, whose proof will not be presented
because of lack of space and the fact, that it is not difficult however not
too short, and not probabilistic. Its probabilistic essence is contained in
lemmas \ref{oo} and \ref{ss}. The main idea of the proof can be reduced to the
decomposition of the series on the so-called lacunary series, that is partial
ones. For every of such lacunary series, we prove convergence, making use of
Lemma \ref{oo} and respective version of the Rademacher-Menchoff's Theorem.
Proof can be found in the paper \cite{szab4}. In order to formulate briefly
this theorem, let us introduce the following notation: $\ln^{(1)}n=\log_{+}n;$
$\ln^{(j)}n=\log_{+}(\ln^{(j-1)}n)$, $n,j\in%
\mathbb{N}
.$

\begin{theorem}
\label{o PSO}Let $\{X_{n}\}_{n\geq1}$ be PSO system. Then, if for some $k\in%
\mathbb{N}
:$ $\sum_{i\geq1}c_{i}^{2}(\ln^{(k)}i)^{2}<\infty$, then orthogonal series
$\sum_{i\geq1}c_{i}X_{i}$ is convergent almost surely.
\end{theorem}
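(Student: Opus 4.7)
The plan is to argue by induction on $k$, following the lacunary-decomposition strategy alluded to in the paragraph preceding the statement.

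For the base case $k=1$, the hypothesis $\sum_{i\geq 1} c_i^2 (\ln^{(1)} i)^2 < \infty$ coincides with the Rademacher--Menchoff condition (\ref{rademacher-menchoff}), and since any PSO system is in particular orthonormal, Theorem~\ref{menshov} directly yields the almost sure convergence of $\sum_i c_i X_i$.

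For the inductive step, assume the conclusion for $k-1$ and suppose $\sum_{i\geq 1} c_i^2 (\ln^{(k)} i)^2 < \infty$. Partition $\mathbb{N}$ into dyadic blocks $B_j = (2^{j-1}, 2^j]$ and form the grouped orthonormal system
\[
d_j = \Bigl(\sum_{i\in B_j} c_i^2\Bigr)^{1/2}, \qquad Y_j = d_j^{-1} \sum_{i \in B_j} c_i X_i \quad (\text{when } d_j>0).
\]
Each $Y_j$ lies in the block span $H_{2^{j-1},2^j}$, and because (S) and (O) for $\{X_i\}_{i\geq 1}$ are formulated in terms of the nested spaces $H_{n,m}$, they transfer by linearity to the grouped system $\{Y_j\}_{j\geq 1}$, which is therefore again PSO. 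For $i \in B_j$ one has $\ln^{(1)} i \in (j-1, j]$, so $\ln^{(k)} i \sim \ln^{(k-1)} j$ uniformly in $i\in B_j$ as $j \to \infty$, whence
\[
\sum_{j\geq 1} d_j^2 (\ln^{(k-1)} j)^2 \;\leq\; C \sum_{j\geq 1} \sum_{i\in B_j} c_i^2 (\ln^{(k)} i)^2 \;=\; C \sum_{i\geq 1} c_i^2 (\ln^{(k)} i)^2 \;<\; \infty
\]
for some absolute constant $C>0$. The inductive hypothesis applied to $\{Y_j\}$ gives almost sure convergence of $\sum_j d_j Y_j$; but its $n$-th partial sum is precisely $S_{2^n}$, so the subsequence $(S_{2^n})_{n\geq 1}$ converges almost surely.

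To promote this to convergence of the full sequence $(S_n)$, I would invoke Lemma~\ref{oo} with its parameter $k=1$: for a PSO system satisfying $\sup_i |c_i| \ln i < \infty$, $S_n \to S$ a.s. if and only if $S_{2^n} \to S$ a.s. for some square integrable $S$. The supremum condition must be secured in advance by a harmless preliminary regrouping (merging finitely many blocks containing any coefficient with $|c_i| \ln i$ large; only finitely many such terms can exist because $\sum c_i^2 (\ln^{(k)} i)^2 < \infty$ forces $|c_i| \ln^{(k)} i \to 0$, and merging preserves the PSO property). Combined with the previous step, this yields $S_n \to S$ a.s.

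The main obstacle I expect is the careful verification that properties (S) and (O) persist under the block regrouping. For (O) one must check that $E Y_{j_1} Y_{j_2} Y_{j_3} Y_{j_4} = 0$ whenever $j_1 < j_2 < j_3 < j_4$: by linearity this reduces to a sum of terms $E X_{i_1} X_{i_2} X_{i_3} X_{i_4}$ with the $i_l$ lying in four distinct blocks, which vanish by applying (O) with $n = 2^{j_3}$, $k = 2^{j_2}$ (so that $Y_{j_4} \in H_{2^{j_3},\infty}$, while the remaining factors decompose across $H_{0, 2^{j_2}}$ and $H_{2^{j_2}, 2^{j_3}}$). For (S) one uses the corresponding bound on pairwise fourth-moment expectations. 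Both verifications are technical but mechanical; the heart of the argument is the logarithmic shrinkage that the dyadic grouping achieves.
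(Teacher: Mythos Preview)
Your inductive scheme and the dyadic regrouping are exactly the skeleton the paper has in mind, and your verification that the grouped system $\{Y_j\}$ is again PSO with $\sum_j d_j^2(\ln^{(k-1)}j)^2<\infty$ is correct. The genuine difficulty, however, is precisely the step you try to dispose of in one sentence: passing from convergence of $S_{2^n}$ back to convergence of $S_n$.

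You invoke Lemma~\ref{oo} on the \emph{original} system, but that lemma carries the hypothesis $\sup_i|c_i|\ln i<\infty$. Your justification that this can be arranged by ``merging finitely many blocks'' rests on the claim that $\sum_ic_i^2(\ln^{(k)}i)^2<\infty$ forces all but finitely many $i$ to satisfy $|c_i|\ln i\le M$. That is false for $k\ge 2$: take for instance $c_i=(\ln i)^{-1/2}$ along the sparse sequence $i=2^{2^m}$ and $c_i=0$ otherwise; then $\sum_ic_i^2(\ln^{(2)}i)^2=\sum_m m^2 2^{-m}<\infty$, yet $|c_i|\ln i=(\ln i)^{1/2}=2^{m/2}\to\infty$. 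So no finite regrouping repairs the supremum condition, and Lemma~\ref{oo} is not available on the original series. Note that applying Lemma~\ref{oo} to the \emph{grouped} system (where $\sup_j|d_j|\ln j<\infty$ does hold automatically) is useless here: it relates $S_{2^n}$ to $S_{2^{2^m}}$, not to $S_n$. The naive maximal-inequality filling between dyadic points fails too, since it would require $\sum_ic_i^2\ln^2 i<\infty$.

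This gap is exactly why the paper says the full argument ``is not difficult however not too short'' and defers it to \cite{szab4}: one needs a more elaborate lacunary decomposition of $\mathbb{N}$ into pieces on each of which the supremum hypothesis of Lemma~\ref{oo} (or an analogue) can be legitimately verified, and then a recombination of those pieces. Your single dyadic blocking collapses one iterated logarithm but does not by itself control the oscillation of $S_m-S_{2^n}$ within a block; handling that is where the real work lies.
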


\begin{remark}
The above-mentioned theorem was formatted and proved by P. R\'{e}v\'{e}sz in
1966 (see \cite{Revesz66}) under somewhat different but equivalent condition
imposed on a multiplicative system of orthogonal functions. We quote it to
show that in fact, it follows from the two Lemmas mentioned above just
providing another proof of this Theorem based on methods developed in this book.
\end{remark}

\begin{remark}
Gaposhkin in \cite{Gaposkin67} showed that assertion of theorem \ref{o PSO}
can be strengthened. Namely, one can drop condition for some $k\in%
\mathbb{N}
:$ $\sum_{i\geq1}c_{i}^{2}(\ln^{(k)}i)^{2}<\infty.$ More precisely, Gaposhkin
showed that every PSO system satisfies condition (\ref{multi}) with $p=4.$
\end{remark}

\chapter{Laws of Large Numbers\label{simpwl}}

In this chapter, we will give a few criteria for LLN weak and strong to be
satisfied. Generally speaking, we will consider generalized laws of large
numbers in the sense of definition \ref{uog_pwl}. Since, the classical case,
i.e., constant weights $\left\{  \alpha_{i}\right\}  _{i\geq0}$ is the most
important, sometimes we will present only those versions of laws of large
numbers, leaving to the reader formulation and proving more general version.
In any case, if we will talk about LLN without mentioning weights we will mean
constant weights equal to $1$.

Methods and results on which we will base proofs respective theorems were
presented in the previous chapter

It is worth to mention, that in 1967 appeared a classical book entitled
\textquotedblright The Laws of Large Numbers\textquotedblright\ by P\'{a}l
R\'{e}v\'{e}sz \cite{Revesz67}. We will not, of course, quote all theorems
from this book. For completeness, we will quote only a few the most important ones.

\section{Necessary condition}

We will start with the following simple sufficient condition.

\begin{proposition}
\label{war_kon}If the sequence $\{X_{n}\}_{n\geq1}$ satisfies SLLN (resp.
MLLN), then the sequence $\left\{  X_{n}/n\right\}  _{n\geq1}$ converges to
zero in probability (resp. with probability $1$).
\end{proposition}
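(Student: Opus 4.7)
The plan is to exploit the recursive identity for $Y_N$ given in the remark following Definition \ref{LLN}. Rearranging
\[
Y_{N+1} = \frac{N}{N+1}Y_N + \frac{1}{N+1}(X_{N+1}-m_{N+1})
\]
isolates the increment,
\[
\frac{X_{N+1}-m_{N+1}}{N+1} = Y_{N+1} - \frac{N}{N+1}\,Y_N.
\]
This single algebraic identity is the entire engine of the proof.

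For the SLLN case I would argue as follows. By hypothesis $Y_N \to 0$ almost surely, hence $Y_{N+1} \to 0$ a.s. as well, and since $N/(N+1) \to 1$ while $Y_N \to 0$ a.s., the product $\tfrac{N}{N+1}Y_N$ also tends to $0$ a.s. Subtracting, the right-hand side of the displayed identity converges to $0$ a.s., so $(X_{N+1}-m_{N+1})/(N+1) \to 0$ a.s., which after a harmless shift of index is the conclusion about $X_n/n$ (read in the natural centered sense). For the WLLN case the same line goes through verbatim with ``a.s.'' replaced by ``in probability'' throughout, using only the elementary facts that convergence in probability is preserved under subtraction and under multiplication by a bounded deterministic sequence.

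There is essentially no obstacle here; the only subtlety is notational. The author's LLN is defined for the centered variables $X_n-m_n$, so the genuine output of the above argument is $(X_n-m_n)/n \to 0$ in the appropriate mode. Stating this as ``$X_n/n \to 0$'' either silently identifies $X_n$ with its centered version or implicitly presumes $m_n/n \to 0$; under either reading the conclusion follows from the one-line identity above together with the fact that $N/(N+1)$ is bounded and tends to $1$.
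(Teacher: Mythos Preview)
Your proof is correct and is essentially the paper's own argument: the paper writes $S_n=\sum_{i=1}^n X_i$, observes that both $S_n/n$ and $S_{n+1}/n$ tend to the same limit, and subtracts to obtain $X_{n+1}/n\to 0$ --- which is exactly your identity $Y_{N+1}-\tfrac{N}{N+1}Y_N=\tfrac{X_{N+1}-m_{N+1}}{N+1}$ in slightly different normalization. Your explicit remark about centering (that the argument really yields $(X_n-m_n)/n\to 0$) is a useful clarification that the paper leaves implicit.
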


\begin{proof}
Let us denote $S_{n}=\sum_{i=1}^{n}X_{i}$. To fix notations let us consider
the case of SLLN. Since $\left\{  S_{n}/n\right\}  _{n\geq1}$ converges in
probability do finite limit, then also $\left\{  S_{n+1}/n\right\}  _{n\geq1}$
converges to the same limit. Hence, and sequence $\left\{  (S_{n}%
-S_{n+1})/n\right\}  _{n\geq1}$ converges to zero. But of course we have
$\left(  S_{n+1}-S_{n}\right)  /n=X_{n}/n$. Similarly we argue in the case of MLLN.
\end{proof}

\section{Weak laws of large numbers}%

\index{Law!Large numbers!weak}%
In this section, we will prove a few criteria concerning weak laws of large
numbers under different assumptions concerning sequence random variables
$\left\{  X_{n}\right\}  _{n\geq1}$.

\subsection{For independent random variables}

First, we will assume that random variables in question may not have variances.

\subsubsection{Have identical distributions\label{sspwl_iid}}

Let us start with the results presented in the paper \cite{Jamison65}.

\begin{theorem}
\label{spwl_iid}If only $\frac{\alpha_{n}}{\sum_{i=0}^{n}\alpha_{i}%
}\longrightarrow0$ and $\sum_{i\geq1}^{n}\alpha_{i}\rightarrow\infty$, when
$n\,\longrightarrow\infty$ (i.e. when sequence $\left\{  \frac{\alpha_{n}%
}{\sum_{i=0}^{n}\alpha_{i}}\right\}  _{n\geq0}$ is normal), then the sequence
$\left\{  X_{n}\right\}  _{n\geq1}$ of independent random variables having
identical distributions satisfies WLLN if and only if,
\begin{equation}
\underset{T\rightarrow\infty}{\lim}TP\left\{  \left\vert X_{1}\right\vert \geq
T\right\}  =0\text{ and }\underset{T\rightarrow\infty}{\lim}\int_{\left\vert
x\right\vert \leq T}xdF\text{ exists,} \label{w_spwl_iid}%
\end{equation}
\footnote{It follows from Pitman's theorem that these conditions are
equivalent to the existence of the first derivative if the characteristic
function of the random variable $X_{1}.$} where $F$ is cumulative distribution
function (cdf) of the random variable $X_{1}.$
\end{theorem}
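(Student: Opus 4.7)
Write $A_n=\sum_{i=0}^{n-1}\alpha_i$, $\mu_{i-1}=\alpha_{i-1}/A_i$, and let $F$ be the common cdf of $X_1$; put $m(T):=\int_{|x|\le T}x\,dF$ and let $m=\lim_{T\to\infty}m(T)$ be the limit from \eqref{w_spwl_iid}. I would treat sufficiency and necessity separately.

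For sufficiency, the strategy is row-truncation at a weight-dependent level: set $Y_{i,n}:=X_i\,I(\alpha_{i-1}|X_i|\le A_n)$ for $1\le i\le n$. Three claims drive the argument. First, $P(\exists\, i\le n:X_i\neq Y_{i,n})\le\sum_{i=1}^n P(|X_1|>A_n/\alpha_{i-1})\to 0$; this uses $TP(|X_1|>T)=o(1)$ together with the fact that $\max_{i\le n}\alpha_{i-1}/A_n\to 0$, which is assertion \emph{iv)} of Proposition~\ref{o_ciagach_normalnych}, so $\min_i A_n/\alpha_{i-1}\to\infty$ and the bound collapses after noting $\sum_i\alpha_{i-1}/A_n=1$. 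Second, the truncated centering is asymptotically $m$: the identity $A_n^{-1}\sum_i\alpha_{i-1}EY_{i,n}=\sum_i(\alpha_{i-1}/A_n)\,m(A_n/\alpha_{i-1})$ together with a Toeplitz argument (Theorem~\ref{toeplitz}) applied to the triangular array $\{\alpha_{i-1}/A_n\}_{i\le n}$ gives the convergence to $m$, since each argument $A_n/\alpha_{i-1}$ tends to infinity uniformly in $i$. Third, the variance vanishes: using integration by parts to get $E[X_1^2 I(|X_1|\le T)]\le 2\int_0^T tP(|X_1|>t)\,dt=o(T)$, one bounds
\begin{equation*}
\mathrm{Var}\Bigl(A_n^{-1}\sum_{i=1}^n\alpha_{i-1}Y_{i,n}\Bigr)\le A_n^{-2}\sum_{i=1}^n\alpha_{i-1}^2\,E[X_1^2 I(|X_1|\le A_n/\alpha_{i-1})]=\sum_{i=1}^n\frac{\alpha_{i-1}}{A_n}\,h\!\left(\frac{A_n}{\alpha_{i-1}}\right),
\end{equation*}
where $h(T):=E[X_1^2 I(|X_1|\le T)]/T\to 0$. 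Since $\min_i A_n/\alpha_{i-1}\to\infty$, the right-hand side is a convex combination of quantities tending to zero uniformly in $i$ and hence goes to zero. Chebyshev's inequality now delivers $A_n^{-1}\sum\alpha_{i-1}Y_{i,n}\to m$ in probability, and the first claim transfers this to the untruncated series.

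For necessity, I would pass to characteristic functions. Writing $\varphi$ for the c.f.\ of $X_1$, convergence in probability of $Y_N$ (after an appropriate centering) forces $\prod_{i=0}^{N-1}\varphi(\alpha_i t/A_N)\to e^{iat}$ for every $t\in\mathbb{R}$. Symmetrizing by replacing $X_1$ with $X_1-X_1'$ (an independent copy) removes the centering and yields $\prod|\varphi(\alpha_i t/A_N)|^2\to 1$; taking logarithms (each factor is close to $1$ because $\max\alpha_i/A_N\to 0$) reduces the problem to $\sum_i\int(1-\cos(\alpha_i tx/A_N))\,dF(x)\to 0$, and a routine tail estimate extracts $TP(|X_1|>T)\to 0$. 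An analogous real-part argument applied to the original (non-symmetrized) product recovers the existence of $\lim_{T\to\infty} m(T)$; equivalently, one may invoke the footnoted Pitman equivalence, which packages both conclusions as the existence of $\varphi'(0)$.

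The main obstacle is the variance bound in step three of sufficiency: the identity $E[X_1^2 I(|X_1|\le T)]=o(T)$, although standard, has to be combined with the triangular-array fact $\max_i\alpha_{i-1}/A_n\to 0$ in precisely the right way to turn $A_n^{-2}\sum\alpha_{i-1}^2(\cdot)$ into a convex combination of quantities tending to zero uniformly in $i$. On the necessity side, the delicate point is that a single convergence statement must be parlayed into two separate tail conditions; symmetrization is the standard device for decoupling them.
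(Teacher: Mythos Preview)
Your sufficiency argument is essentially the paper's: the same weight-dependent truncation $Y_{i,n}=X_iI(|X_i|\le A_n/\alpha_{i-1})$, the same three steps (coupling via $\sum_i P(|X_1|>A_n/\alpha_{i-1})\to 0$, centering via $m(A_n/\alpha_{i-1})\to m$, variance via $T^{-1}\int_{|x|\le T}x^2\,dF\to 0$), the same appeal to Proposition~\ref{o_ciagach_normalnych}\,\emph{iv)} for $\max_i\alpha_{i-1}/A_n\to 0$, and Chebyshev to finish. One quibble: Theorem~\ref{toeplitz} as stated sums a \emph{fixed} sequence $\{q_k\}$, whereas your $m(A_n/\alpha_{i-1})$ depends on $n$ through both indices; what actually carries claim~2 is your parenthetical ``uniformly in $i$,'' and that is exactly how the paper argues it (without invoking Toeplitz).

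For necessity the paper proves nothing and simply cites Lo\`eve; your characteristic-function/symmetrization sketch is the standard route and is presumably what is in that reference. One small slip: after symmetrization the measure in $\sum_i\int(1-\cos(\alpha_i tx/A_N))\,d(\cdot)$ should be the law of $X_1-X_1'$, not $F$; recovering the tail condition for $X_1$ itself then needs a symmetrization inequality such as $P(|X_1-X_1'|>t)\ge\tfrac12 P(|X_1-\mathrm{med}\,X_1|>t)$.
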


\begin{proof}
Necessity condition is proved in the book \cite{Loeve55}, hence we will not
prove it. Now let us assume that the condition (\ref{w_spwl_iid}) is
satisfied. Let us consider $X_{k,n}=X_{k}I(\left\vert X_{k}\right\vert
\leq\sum_{i=0}^{n-1}\alpha_{i}/\alpha_{k-1})$, that is random variable $X_{k}$
cut at the level $\sum_{i=0}^{n-1}\alpha_{i}/\alpha_{k-1}$. Moreover, let us
denote: $S_{n}\allowbreak=\allowbreak\sum_{i=0}^{n}\alpha_{i}X_{i+1}$,
$S_{nn}=\sum_{i=0}^{n-1}\alpha_{i}X_{i+1,n}$. Let us recall that the condition
$\alpha_{n}\allowbreak/\sum_{i=0}^{n}\alpha_{i}\allowbreak\rightarrow0$,
\ $n\rightarrow\infty$ implies that $\underset{0\leq i\leq n}{\max}\alpha
_{i}/\sum_{i=0}^{n}\alpha_{i}\rightarrow0,\;n\rightarrow\infty$ (assertion
\emph{iv) }of\emph{\ }Proposition \ref{o_ciagach_normalnych}). Thus, in
particular, taking into account conditions (\ref{w_spwl_iid}) we get:
\begin{equation}
\underset{n\rightarrow\infty}{\lim}\,\underset{0\leq i\leq n}{\max}\left(
\frac{\sum_{j=0}^{n-1}\alpha_{j}}{\alpha_{i}}P\left(  |X|>\frac{\sum
_{j=0}^{n-1}\alpha_{j}}{\alpha_{i}}\right)  \right)  =0. \label{do0}%
\end{equation}
Since $\frac{\alpha_{n}}{\sum_{i=0}^{n}\alpha_{i}}\longrightarrow0$,
$n\,\longrightarrow\infty$ and including identity of distributions of the
sequence $\left\{  X_{i}\right\}  _{i\geq1}$ we get for sufficiently large
$n$:
\begin{align*}
P(S_{nn}  &  \neq S_{n})\leq\sum_{i=1}^{n}P(X_{i+1,n}\neq X_{i})\\
&  =\sum_{i=1}^{n}P(\left\vert X_{1}\right\vert \geq\sum_{j=0}^{n-1}\alpha
_{j}/\alpha_{i-1})\leq\varepsilon\sum_{k=0}^{n-1}\frac{\alpha_{k}}{\sum
_{i=0}^{n-1}\alpha_{i}}=\varepsilon,
\end{align*}
where $\varepsilon$ is such number, that
\[
\underset{0\leq i\leq n}{\max}\left(  \frac{\sum_{j=0}^{n-1}\alpha_{j}}%
{\alpha_{i}}P(|X_{1}|>\frac{\sum_{j=0}^{n-1}\alpha_{j}}{\alpha_{i}})\right)
<\varepsilon.
\]
This particular choice of $\varepsilon$ is possible since we have (\ref{do0}).
Hence, it is enough to consider $S_{nn}$ instead $S_{n}$. We have:
\[
E\frac{S_{nn}}{\sum_{i=0}^{n-1}\alpha_{i}}=\frac{1}{\sum_{i=0}^{n-1}\alpha
_{i}}\sum_{k=0}^{n-1}\alpha_{k}\int_{\left\vert x\right\vert <\sum_{i=0}%
^{n-1}\alpha_{i}/\alpha_{k}}xdF\longrightarrow\kappa,
\]
where $\kappa$ denotes the second of the limits in (\ref{w_spwl_iid}).
Moreover, by integrating by parts we get:
\[
\frac{1}{T}\int_{\left\vert x\right\vert <T}x^{2}dF=\frac{1}{T}\left[
-T^{2}P(\left\vert X\right\vert \geq T)+2\int_{0\leq x<T}xP(\left\vert
X\right\vert \geq x)dx\right]  \longrightarrow0,
\]
when $T\longrightarrow\infty$. In particular, we have:
\begin{equation}
\underset{n\rightarrow\infty}{\lim}\,\underset{0\leq i\leq n}{\max}\left(
\frac{\alpha_{i}}{\sum_{j=0}^{n-1}\alpha_{j}}\int_{\left\vert x\right\vert
\leq\sum_{j=0}^{n-1}\alpha_{j}/\alpha_{i}}x^{2}dF\right)  =0. \label{do01}%
\end{equation}
Thus:
\begin{align*}
\operatorname{var}\frac{S_{nn}}{\sum_{i=0}^{n-1}\alpha_{i}}  &  =\frac
{1}{\left(  \sum_{i=0}^{n-1}\alpha_{i}\right)  ^{2}}\sum_{i=0}^{n-1}\alpha
_{i}^{2}\operatorname{var}(X_{i+1,n})\\
&  \leq\frac{1}{\left(  \sum_{i=0}^{n-1}\alpha_{i}\right)  ^{2}}\sum
_{i=0}^{n-1}\alpha_{i}^{2}\int_{\left\vert x\right\vert \leq\sum_{j=0}%
^{n-1}\alpha_{j}/\alpha_{i}}x^{2}dF\\
&  \leq\frac{1}{\left(  \sum_{i=0}^{n-1}\alpha_{i}\right)  ^{2}}\sum
_{i=0}^{n-1}\alpha_{i}^{2}\varepsilon\frac{\sum_{j=0}^{n-1}\alpha_{j}}%
{\alpha_{i}}=\varepsilon,
\end{align*}
where $\varepsilon$ is such number, that%
\[
\underset{0\leq i\leq n}{\max}\left(  \frac{\alpha_{i}}{\sum_{j=0}^{n-1}%
\alpha_{j}}\int_{\left\vert x\right\vert \leq\sum_{j=0}^{n-1}\alpha_{j}%
/\alpha_{i}}x^{2}dF\right)  \leq\varepsilon.
\]
Now it remains to apply Chebyshev inequality (see Appendix \ref{czebyszew}),
in order to get assertion.
\end{proof}

\begin{example}
We will illustrate this theorem by the following example. One took $N=1000000$
observations of the random variables $\left\{  \xi_{i}\right\}  _{i\geq1}$ of
the form $\xi_{i}\allowbreak=\allowbreak S_{i}\zeta_{i},$ where the random
variables $S_{i}$ and $\zeta_{i}$ are independent, having identical
distributions and $P(S_{1}=-1)\allowbreak=\allowbreak P(S_{1}=1)\allowbreak
=\allowbreak1/2$, $\zeta_{1}$ has cdf $F_{\gamma}(x)$, where
\begin{equation}
F_{\gamma}(x)=\left\{
\begin{array}
[c]{ccc}%
0, & gdy & x\leq1\\
1-\frac{1}{x^{\gamma}}, & gdy & x>1
\end{array}
\right.  . \label{pareto}%
\end{equation}
We have assumed $\gamma=1.05$. Hence, $E\left\vert \xi_{1}\right\vert =\infty
$. Moreover, we took $\alpha_{i}=(i+1)^{2};i\geq0$, that is one examined
convergence of the sequence $\left\{  X_{n}=\sum_{i=1}^{n}i^{2}\xi_{i}%
/\sum_{i=1}^{n}i^{2}\right\}  _{n\geq1}$. The conditions given in theorem
\ref{spwl_iid} are satisfied, since distribution $\xi_{1}$ is symmetric hence
the second of the limits in condition (\ref{spwl_iid}) exists,\ Further,
$TP(\left\vert \xi_{1}\right\vert >T)\allowbreak=\allowbreak TP(\zeta
_{1}>T)\allowbreak=\allowbreak T\frac{1}{T^{1.05}}\allowbreak=\allowbreak
\frac{1}{T^{.05}}\allowbreak\rightarrow0$, when $T\rightarrow\infty$. One
obtained the following plot where the values of variable $X_{n}$ were marked
every $100$ observations.
\begin{figure}[ptb]%
\centering
\includegraphics[
height=1.8187in,
width=3.0338in
]%
{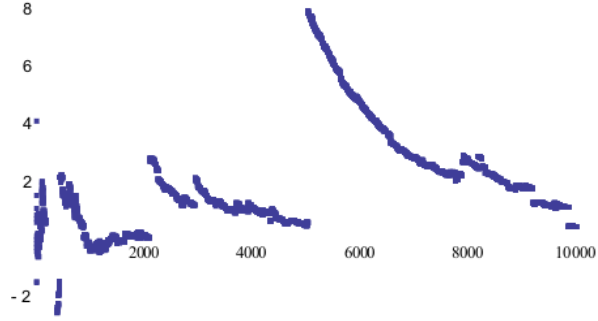}%
\caption{{\protect\tiny Law of large numbers for iid random variables not
possesing expactations}}%
\end{figure}

\end{example}

\begin{example}
As far as speed of convergence is concerned true is the following Katz'
Theorem being generalization earlier Erd\"{o}s Theorem.
\end{example}

\begin{theorem}
[Katz]Let $\left\{  X_{i}\right\}  _{i\geq1}$ be a sequence of independent
random variables having identical distributions such that $EX_{1}=0$. Then
$E\left\vert X_{1}\right\vert ^{t}<\infty$ for some $t\geq1$ if and only if,:
\[
\sum_{n\geq1}n^{t-2}P\left(  \left\vert \frac{\sum_{i=1}^{n}X_{i}}%
{n}\right\vert \geq\varepsilon\right)  <\infty
\]
for any $\varepsilon>0.$
\end{theorem}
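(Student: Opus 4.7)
My plan is to prove both implications by reducing the problem to the well-known equivalence $E|X_1|^t < \infty \Longleftrightarrow \sum_{n \geq 1} n^{t-1} P(|X_1| \geq n) < \infty$, which is the fractional-moment analogue of Remark~\ref{calkowalnosc} (obtained by writing $E|X_1|^t = t\int_0^\infty x^{t-1} P(|X_1| > x)\, dx$ and comparing the integral with a dyadic sum). The heart of Katz's theorem is thus a passage between the tail of $X_1$ and the tail of $S_n/n$, where $S_n = \sum_{i=1}^n X_i$.

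For the sufficiency ($E|X_1|^t < \infty \Rightarrow$ the series converges), I will use the standard truncation at level $n$. Set $Y_{i,n} = X_i I(|X_i| \leq n)$ and $T_n = \sum_{i=1}^n Y_{i,n}$. Split
\[
P(|S_n| \geq \varepsilon n) \leq P(S_n \neq T_n) + P(|T_n - ET_n| \geq \varepsilon n - |ET_n|).
\]
The first term is bounded by $n P(|X_1| > n)$, so $\sum n^{t-2} \cdot n P(|X_1| > n) \asymp E|X_1|^t < \infty$. For the second term, since $EX_1 = 0$ we have $|ET_n| = n|E X_1 I(|X_1| > n)| = o(n)$ by dominated convergence, so eventually $|ET_n| \leq \varepsilon n/2$. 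When $1 \leq t \leq 2$, Chebyshev's inequality gives $P(|T_n - ET_n| \geq \varepsilon n/2) \leq 4 n E X_1^2 I(|X_1| \leq n)/(\varepsilon^2 n^2)$, and Fubini yields
\[
\sum_{n \geq 1} n^{t-3}\, E X_1^2 I(|X_1| \leq n) = E\!\left[X_1^2 \sum_{n \geq |X_1|} n^{t-3}\right] \leq C\, E|X_1|^t < \infty,
\]
since $\sum_{n \geq m} n^{t-3} = O(m^{t-2})$ for $t < 2$ and is logarithmic for $t = 2$ (handled with a minor modification). For $t > 2$, Chebyshev is too weak and one must invoke a Rosenthal- or Marcinkiewicz--Zygmund-type inequality for $E|T_n - ET_n|^p$ with some $p > t$, together with a second truncation controlling the contribution of large $|X_i|$.

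For the necessity, assume the series converges for every $\varepsilon > 0$; I will derive $E|X_1|^t < \infty$ by producing an upper bound on $n^{t-1} P(|X_1| > cn)$ summed in $n$. Using $X_i = S_i - S_{i-1}$ one obtains $\max_{i \leq n}|X_i| \leq 2 \max_{i \leq n} |S_i|$, and by Etemadi's maximal inequality (or Lévy's inequality after symmetrization, which costs only constants thanks to $EX_1 = 0$) one has $P(\max_{i \leq n}|S_i| \geq \lambda) \leq 3 P(|S_n| \geq \lambda/3)$. Combining,
\[
1 - (1 - P(|X_1| \geq 6\varepsilon n))^n = P\!\left(\max_{i \leq n}|X_i| \geq 6\varepsilon n\right) \leq 9\, P(|S_n| \geq \varepsilon n).
\]
The elementary inequality $1 - (1-x)^n \geq \tfrac{1}{2} n x$ valid while $nx \leq 1$ yields $n P(|X_1| \geq 6\varepsilon n) \leq 18\, P(|S_n/n| \geq \varepsilon)$ for all $n$ large enough (handling the regime $nP(|X_1| \geq 6\varepsilon n) > 1$ separately, where it forces $P(|S_n/n| \geq \varepsilon)$ to be bounded below by a positive constant, contradicting convergence of the series). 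Multiplying by $n^{t-2}$ and summing,
\[
\sum_{n \geq 1} n^{t-1} P(|X_1| \geq 6\varepsilon n) < \infty,
\]
which by the fractional-moment criterion gives $E|X_1|^t < \infty$.

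The main obstacle is the sufficiency direction when $t > 2$: plain Chebyshev bounds do not suffice, and one needs a higher-moment (Rosenthal / Marcinkiewicz--Zygmund) inequality together with a refined double-truncation. The necessity is conceptually cleaner but hinges on the availability of a maximal inequality such as Etemadi's, which is stronger than the tools explicitly invoked so far in the monograph.
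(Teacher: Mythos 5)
The paper itself gives no proof of this theorem --- it only points to R\'{e}v\'{e}sz's monograph --- so there is no in-text argument to compare yours against. Your sketch follows the classical Katz/Baum--Katz route (truncation plus moment bounds for sufficiency; symmetrization plus a maximal inequality for necessity), which is the right skeleton, but the two steps you label as routine are exactly where the work lies, and as written both halves have genuine gaps.

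In the sufficiency direction your Chebyshev estimate already breaks down at $t=2$, not only for $t>2$: there $\sum_n n^{t-3}E X_1^2 I(|X_1|\le n)$ behaves like $\sigma^2\sum_n n^{-1}=\infty$, so the promised ``minor modification'' does not exist within a second-moment bound. The endpoint $t=2$ (the Hsu--Robbins--Erd\H{o}s case) needs precisely the higher-moment machinery you defer to for $t>2$, e.g.\ a Rosenthal or Marcinkiewicz--Zygmund bound of order $p>2$ for the truncated centered sums, after which $\sum_n n^{t-2-p}\, n E|X_1|^p I(|X_1|\le n)$ is handled by the same Fubini computation you used for $t<2$; since this is never carried out, the sufficiency half is incomplete for all $t\ge 2$. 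In the necessity direction, dismissing the regime $nP(|X_1|\ge 6\varepsilon n)>1$ because it ``contradicts convergence of the series'' is valid only for $t\ge 2$: for $1\le t<2$ a sparse set of such $n$ contributes terms of size $c\,n^{t-2}$, whose sum can perfectly well converge, so there is no immediate contradiction. The standard repair uses monotonicity of the tail: with $p_n=P(|X_1^{s}|\ge \varepsilon n)$, if $np_n>1$ then $mp_m>1/2$ for every $m\in[n/2,n]$, so each bad $n$ forces a fixed positive contribution to $\sum_m m^{t-2}\min(1,mp_m)$ over the block $[n/2,n]$, and infinitely many bad $n$ along disjoint blocks would make the series diverge. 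Relatedly, the maximal inequality you invoke is not what Etemadi's inequality gives (its right-hand side involves $\max_{k\le n}P(|S_k|\ge\lambda/3)$, not $P(|S_n|\ge\lambda/3)$); you should commit to L\'{e}vy's inequality after symmetrization, and then desymmetrize at the end via the weak symmetrization inequalities to pass from $E|X_1^{s}|^t<\infty$ back to $E|X_1|^t<\infty$. With these repairs your outline becomes the standard proof, but the $t\ge2$ sufficiency and the $1\le t<2$ necessity steps are missing as written.
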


\begin{proof}
see \cite{Revesz67}.
\end{proof}

\subsubsection{Having different distributions}

For simplicity, we will be concerned with only classical case, i.e. when
weights are equal to $1$. In this situation, we have classical
Gniedenko\&Kolmogorov Theorem \cite{Gnedenko54}

\begin{theorem}
[Gniedenko-Kolmogorov]Sequence $\left\{  X_{i}\right\}  _{i\geq1}$ satisfies
SLLN if and only if, :
\begin{align*}
&  \sum_{k\geq1}^{n}P\left(  \left\vert X_{k}-m(X_{k})\right\vert \geq
n\right)  \underset{n\rightarrow\infty}{\rightarrow}0,\\
&  \frac{1}{n^{2}}\sum_{k\geq1}^{n}\int_{\left\vert x\right\vert \leq
n}\left(  x-m(X_{k})\right)  ^{2}dF_{k}(x)\underset{n\rightarrow
\infty}{\rightarrow}0,
\end{align*}
where $F_{k}(x)$ is cdf of a random variable $X_{k}$ and $m(X_{k})$ is its median.
\end{theorem}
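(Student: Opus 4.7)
The plan is to prove sufficiency by a truncation scheme inspired by the classical Gnedenko-Kolmogorov argument, then upgrade to almost sure convergence via subsequence and maximal inequality techniques, and finally to derive necessity from easy consequences of a.s.\ convergence. Set $Y_k = X_k - m(X_k)$, and for each $n$ introduce the truncated variables $Y_k^{(n)} = Y_k\,I(|Y_k|\leq n)$ together with their centerings $\tilde Y_k^{(n)} = Y_k^{(n)} - EY_k^{(n)}$. Because the $X_k$ are independent, so are the truncated and centered versions. The role of the two hypotheses is transparent: the first bounds the probability that any $Y_k$, $k\leq n$, differs from its truncation, while the second bounds $n^{-2}\operatorname{var}\bigl(\sum_{k=1}^n \tilde Y_k^{(n)}\bigr)$ since the variance of each truncated variable is dominated by the integral appearing in the hypothesis.

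For sufficiency I would first reduce by Chebyshev's inequality and condition (ii) to the statement that $n^{-1}\sum_{k=1}^n \tilde Y_k^{(n)} \to 0$ in probability; condition (i) then allows the truncation to be removed and the median centerings to be replaced by means with negligible error. To strengthen this to \emph{almost sure} convergence I would pass to a geometrically spaced subsequence $n_j = 2^j$, apply Kolmogorov's maximal inequality to the independent centered truncated sums $\sum_{k=1}^m \tilde Y_k^{(n_j)}$, $n_j\leq m\leq n_{j+1}$, to control fluctuations of $S_m/m$ between consecutive indices of the subsequence, and combine this with the first Borel-Cantelli lemma (applied via condition (i) along the subsequence) to remove the truncation for all large $j$ with probability one. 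Necessity follows the standard route: a.s.\ convergence of $S_n/n$ forces $X_n/n \to 0$ a.s., so Proposition \ref{wlasnosc_iid}-type reasoning together with the second Borel-Cantelli lemma (valid here by independence) yields condition (i); a variance bound on appropriately truncated partial sums, obtained by undoing the truncation and centering, yields condition (ii).

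The hard part will be the upgrade from weak to strong convergence: the conditions as stated are variance-type bounds keyed to the truncation level $n$ itself, so direct summability arguments (for instance via Kronecker's Lemma applied to a fixed series, or via Borel-Cantelli applied to the events $\{|S_n/n|>\varepsilon\}$) do not immediately produce an a.s.\ statement. The subsequence passage, the blockwise control by Kolmogorov's maximal inequality, and the careful interplay between the truncation level changing with $n$ and the centering by medians form the technically delicate portion of the argument; one may additionally need to exploit boundedness of the centered truncated variables to obtain sharper exponential estimates, or to tighten condition (i) along the subsequence so that Borel-Cantelli applies without loss.
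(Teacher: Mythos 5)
Your truncation scheme for the sufficiency half --- centering at medians, truncating at level $n$, bounding $P(\zeta_n\neq\zeta_n'')$ by condition (i), and applying Chebyshev via condition (ii) --- is exactly what the paper does, and it correctly yields convergence \emph{in probability}. The genuine gap is the step you yourself flag as "the hard part": the upgrade to almost sure convergence. That step cannot be carried out, because the Gnedenko--Kolmogorov conditions characterize the \emph{weak} law, not the strong one; the word "SLLN" in the statement is a slip (the theorem sits in the section on weak laws, and the paper's proof establishes only $P(|\zeta_n-A_n|\geq\varepsilon)\to 0$). A concrete obstruction: take $\left\{ X_i\right\}_{i\geq 1}$ i.i.d.\ and symmetric with $P(|X_1|>x)=1/(x\log x)$ for large $x$. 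Then $nP(|X_1|>n)=1/\log n\to 0$ and the truncated second moment condition holds, so both hypotheses are satisfied and the weak law holds; but $E|X_1|=\int P(|X_1|>x)\,dx=\infty$, so by Theorem \ref{kolmogor} (equivalently, by Proposition \ref{wlasnosc_iid} together with Proposition \ref{war_kon}) the strong law fails. Hence no combination of geometric subsequences, Kolmogorov's maximal inequality, and Borel--Cantelli can close the gap --- the conclusion you are aiming for is false under the stated hypotheses.

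Two smaller points. First, your necessity sketch leans on almost sure convergence of $S_n/n$ to force $X_n/n\to 0$ a.s.\ and then invokes the second Borel--Cantelli lemma; even granting a.s.\ convergence, that route produces summability of $\sum_n P(|X_n|>\varepsilon n)$, which is not the same shape as condition (i), namely $\sum_{k\leq n}P(|X_k-m(X_k)|\geq n)\to 0$ (a triangular-array condition, not a summability condition). The paper explicitly declines to prove necessity, noting that the known argument is an arduous one via characteristic functions. Second, once the statement is read as the weak law, your worry about "replacing median centerings by means with negligible error" is handled in the paper simply by letting the centering constants be $A_n=\frac{1}{n}\sum_{i=1}^{n}\left( m(X_i)+EX_{i,n}''\right)$ rather than trying to show the medians and means coincide asymptotically.
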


\begin{proof}
Sketch of the proof. Proof of necessity of the above-mentioned conditions is
somewhat arduous and strongly uses the properties of characteristic functions
hardly mentioned in this book. We will not present here this proof of
necessity, since, one would have to present in detail needed properties of
characteristic functions. It is presented, e.g. in \cite{Revesz67}.

Proof of sufficiency of the conditions defined in this theorem is somewhat
typical, it utilizes the truncation method, used already e.g. in the proof of
theorems \ref{spwl_iid}.

Let us denote:%

\begin{align*}
X_{k}^{\prime}  &  =X_{k}-m(X_{k}),\\
F_{k}^{\prime}(x)  &  =P(X_{k}^{\prime}<x)=F_{k}(x+m(X_{k})),\\
X_{k,n}^{\prime\prime}  &  =\left\{
\begin{array}
[c]{ccc}%
X_{k}^{\prime} & gdy & \left\vert X_{k}^{\prime}\right\vert \leq n\\
0 & gdy & \left\vert X_{k}^{\prime}\right\vert >n
\end{array}
\right.  ,\\
A_{n}  &  =\frac{1}{n}\sum_{i=1}^{n}\left(  m(X_{i})+EX_{i,n}^{\prime\prime
}\right)  ,\\
\zeta_{n}^{\prime}  &  =\frac{1}{n}\sum_{i=1}^{n}X_{i}^{\prime},\;\zeta
_{n}^{\prime\prime}=\frac{1}{n}\sum_{i=1}^{n}X_{i}^{\prime\prime},\\
B_{n}  &  =\left\{  \omega:\zeta_{n}^{\prime}(\omega)=\zeta_{n}^{\prime\prime
}(\omega)\right\}  .
\end{align*}
We have of course
\[
P(\overline{B}_{n})\leq\sum_{i=1}^{n}P(\left\vert X_{i}^{\prime}\right\vert
>n)=\sum_{i=1}^{n}\int_{\left\vert x\right\vert >n}dF_{i}^{\prime}(x).
\]
For any $\varepsilon$ we have
\[
P(\left\vert \zeta_{n}-A_{n}\right\vert \geq\varepsilon)=P(B_{n})P(\left\vert
\zeta_{n}-A_{n}\right\vert \geq\varepsilon|B_{n})+P(\overline{B}%
_{n})P(\left\vert \zeta_{n}-A_{n}\right\vert \geq\varepsilon|\overline{B}%
_{n}).
\]
Further we have
\begin{align*}
P(B_{n})P(\left\vert \zeta_{n}-A_{n}\right\vert  &  \geq\varepsilon|B_{n})\leq
P\left(  \left\vert \zeta_{n}^{\prime\prime}-E\zeta_{n}^{\prime\prime
}\right\vert \geq\varepsilon\right)  \leq\frac{\operatorname*{var}(\zeta
_{n}^{\prime\prime})}{\varepsilon^{2}}\\
&  \leq\frac{1}{\varepsilon^{2}}\sum_{i=1}^{n}E\left(  X_{i}^{\prime\prime
}\right)  ^{2}.
\end{align*}
Hence on the basis of assumptions, for any $\varepsilon>0$ we have:
\[
P(\left\vert \zeta_{n}-A_{n}\right\vert \geq\varepsilon)\leq\frac
{1}{\varepsilon^{2}n^{2}}\sum_{i=1}^{n}\int_{\left\vert x\right\vert \leq
n}x^{2}dF_{k}^{\prime}(x)+\sum_{i=1}^{n}\int_{\left\vert x\right\vert
>n}dF_{i}^{\prime}(x)\rightarrow0,
\]
when $n\rightarrow\infty$ .
\end{proof}

As far as speed convergence is concerned in this the case, the following Katz'
Theorem is true

\begin{theorem}
Let $\left\{  X_{i}\right\}  _{i\geq1}$ be a sequence of independent random
variables with zero expectations, such that:
\[
\exists t\in\{3,4,\ldots\}\,\exists C>0\,\forall k\in\{1,2,\ldots
\}:E\left\vert X_{k}\right\vert ^{t}\leq C,
\]
then:
\[
P\left(  \left\vert \frac{\sum_{i=1}^{n}X_{i}}{n}\right\vert \geq
\varepsilon\right)  =O\left(  \frac{1}{n^{t-1}}\right)  ,
\]
for any $\varepsilon>0.$
\end{theorem}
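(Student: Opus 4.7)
The natural attack is Markov's inequality at the $t$-th moment: writing $S_n = \sum_{k=1}^n X_k$,
\begin{equation*}
P\bigl(|S_n/n|\geq\varepsilon\bigr) \leq \frac{E|S_n|^t}{\varepsilon^t n^t},
\end{equation*}
so the entire theorem reduces to an upper bound on $E|S_n|^t$ of the appropriate order to match the rate $n^{-(t-1)}$ after dividing by $n^t$.

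For \emph{even} $t$ this is the cleanest setting. Here $E|S_n|^t = ES_n^t$, and multinomial expansion yields
\begin{equation*}
ES_n^t = \sum_{(i_1,\ldots,i_t)\in\{1,\ldots,n\}^t} E\bigl[X_{i_1} X_{i_2} \cdots X_{i_t}\bigr].
\end{equation*}
Independence combined with $EX_k = 0$ annihilates every tuple in which some index occurs exactly once, so the surviving tuples are those whose index multiset has all multiplicities at least $2$. I would organise these tuples by the induced set-partition of $\{1,\ldots,t\}$ into blocks of sizes $r_1,\ldots,r_s$ with $r_j \geq 2$ and $\sum_j r_j = t$, then bound each factor $E|X_{i_j}|^{r_j}$ by $C^{r_j/t}$ via H\"older's inequality (permissible because $r_j \leq t$ and we have the uniform bound on the $t$-th absolute moment), and finally count the admissible ordered tuples for each partition type to produce the desired power of $n$. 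For \emph{odd} $t$ the absolute value blocks direct polynomial expansion, so I would pass to the even case, either by a Cauchy--Schwarz step such as $E|S_n|^t \leq (ES_n^{2(t-1)})^{1/4}(ES_n^{2(t+1)})^{1/4}$ combined with a preliminary truncation of the $X_k$ (to create the higher moments that the hypothesis does not provide), or by splitting $|S_n|^t = (S_n)_+^t + (-S_n)_+^t$ and applying Markov's inequality to each nonnegative piece separately.

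The crux of the argument is the combinatorial book-keeping in the second step: one must verify that among all admissible block-size vectors $(r_1,\ldots,r_s)$ with $r_j \geq 2$ and $\sum_j r_j = t$, the dominant partition contributes exactly the power of $n$ needed to produce the advertised rate after division by $n^t$. The secondary difficulty, which I expect to be the genuine obstacle, is the odd-$t$ case, because the hypothesis furnishes \emph{only} a bound on $E|X_k|^t$ and no moment of higher order can be assumed for free; a truncation argument, with the discarded portion controlled by Markov's inequality in the form of Proposition~\ref{w-ocz}, should restore the missing bound and permit the reduction to the even-exponent analysis.
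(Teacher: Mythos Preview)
The paper gives no proof of this statement: it simply cites R\'{e}v\'{e}sz's monograph and calls the argument ``somewhat tedious.'' So there is nothing to compare against directly, but your proposal can still be assessed on its own terms, and it contains a genuine gap.

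Your plan is Markov's inequality at the $t$-th moment together with $E|S_n|^t = O(n^{?})$ from the partition expansion. But the dominant partition when all blocks have size at least $2$ is the all-pairs partition (for even $t$), which has $t/2$ blocks and hence contributes a factor of order $n^{t/2}$. More generally, by Rosenthal's inequality one has $E|S_n|^t \leq C_t\bigl[(\sum_k EX_k^2)^{t/2} + \sum_k E|X_k|^t\bigr] = O(n^{t/2})$, and this order is sharp. Dividing by $n^t$ yields $O(n^{-t/2})$, which for every $t\geq 3$ is strictly weaker than the claimed $O(n^{-(t-1)})$: already at $t=4$ you obtain $O(n^{-2})$ versus the asserted $O(n^{-3})$. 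So the combinatorial book-keeping you flag as the crux will \emph{not} work out; the approach is too coarse in principle, not just in execution. The odd-$t$ difficulty you worry about is a red herring by comparison.

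What is actually needed is truncation combined with Markov's inequality at a moment \emph{higher} than $t$. Write $X_k = X_k' + X_k''$ with $X_k' = X_k I(|X_k|\leq n\varepsilon/2)$. The tail part is handled by a union bound, $P(\exists k\leq n:\ X_k''\neq 0)\leq n\cdot C(n\varepsilon/2)^{-t}=O(n^{-(t-1)})$. For the bounded part, after recentering (the recentering shift is $O(n^{2-t})$ and is negligible for $t\geq 3$), the truncated variables $Y_k$ satisfy $|Y_k|\leq n\varepsilon$ and $E|Y_k|^{2m}\leq (n\varepsilon/2)^{2m-t}C$ for any $2m\geq t$. A Rosenthal-type bound then gives $E|\sum Y_k|^{2m}=O(n^m+n^{2m-t+1})$, and choosing $m=t-1$ makes both terms $O(n^{t-1})$; Markov at the $2(t-1)$-th moment now delivers $O(n^{-(t-1)})$. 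The essential idea your proposal misses is that truncation is not a device for handling odd $t$ but the mechanism that manufactures the higher moments needed to reach the rate.
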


\begin{proof}
Proof of this theorem is presented in \cite{Revesz67}. It is somewhat tedious
and that is why we will not present it here.
\end{proof}

\subsection{For random variables possessing variances}

\begin{proposition}
If
\[
\operatorname*{var}\left(  \frac{\sum_{n=1}^{N}\alpha_{n}X_{n}}{\sum_{n=1}%
^{N}\alpha_{n}}\right)  \rightarrow0;N\rightarrow\infty,
\]
for some sequence $\left\{  \alpha_{n}\right\}  _{n\geq1}$ satisfying
conditions Theorem \ref{spwl_iid}, then $\{X_{n}\}_{n\geq1}$ satisfies
\emph{WGLLN} $.$
\end{proposition}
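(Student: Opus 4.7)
The plan is straightforward: this is essentially a one-line consequence of Chebyshev's inequality, and the strategy is simply to identify the centered weighted average with the random variable whose variance is assumed to tend to zero.

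First I would set $Y_N = \frac{\sum_{n=1}^N \alpha_n X_n}{\sum_{n=1}^N \alpha_n}$, observe by linearity of expectation that $EY_N = \frac{\sum_{n=1}^N \alpha_n EX_n}{\sum_{n=1}^N \alpha_n}$, and note that the quantity appearing in Definition \ref{uog_pwl} of WGLLN is (up to a harmless index shift matching the convention $\alpha_i$ vs.\ $\alpha_n$) exactly $Y_N - EY_N$. Since shifting an additive constant does not change the variance, the hypothesis $\operatorname{var}(Y_N)\to 0$ is the same as $\operatorname{var}(Y_N - EY_N)\to 0$.

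Next I would apply Chebyshev's inequality (recalled in Appendix \ref{czebyszew}): for any $\varepsilon>0$,
\[
P\!\left(\left|Y_N - EY_N\right| \geq \varepsilon\right) \leq \frac{\operatorname{var}(Y_N)}{\varepsilon^{2}} \underset{N\rightarrow\infty}{\longrightarrow} 0.
\]
This gives convergence to zero in probability of $Y_N - EY_N$, which is precisely the statement that $\{X_n\}_{n\geq 1}$ satisfies the weak generalized law of large numbers with weights $\{\alpha_n\}$.

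There is essentially no obstacle here: the role of the assumption ``$\{\alpha_n\}$ satisfies conditions of Theorem \ref{spwl_iid}'' (i.e.\ $\alpha_n/\sum_{i\leq n}\alpha_i\to 0$ and $\sum \alpha_i = \infty$) is only to guarantee, via Proposition \ref{o_ciagach_normalnych}, that the associated Riesz method is regular and that the normalization $\sum_{n=1}^N \alpha_n$ is well behaved, so that the statement of WGLLN makes sense in the form given by Definition \ref{uog_pwl}. The whole content of the proposition is the Chebyshev estimate above; no summability lemma from Section \ref{lemliczbowe} is actually needed.
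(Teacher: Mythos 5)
Your proof is correct and is essentially identical to the paper's: both reduce the statement to a single application of Chebyshev's inequality to the centered weighted average $\frac{\sum_{n=1}^{N}\alpha_{n}(X_{n}-EX_{n})}{\sum_{n=1}^{N}\alpha_{n}}$, whose variance is the quantity assumed to tend to zero. Your added remark on the role of the conditions on $\{\alpha_{n}\}$ is a harmless clarification and does not change the argument.
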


\begin{proof}
By Chebyshev's inequality we have:
\begin{align*}
P\left(  \left\vert \frac{\sum_{n=1}^{N}\alpha_{n}(X_{n}-EX_{n})}{\sum
_{n=1}^{N}\alpha_{n}}\right\vert >\varepsilon\right)  \allowbreak &  \leq
E\left(  \frac{\sum_{n=1}^{N}\alpha_{n}(X_{n}-EX_{n})}{\sum_{n=1}^{N}%
\alpha_{n}}\right)  ^{2}/\varepsilon^{2}\allowbreak\\
&  =\operatorname*{var}\left(  \frac{\sum_{n=1}^{N}\alpha_{n}X_{n}}{\sum
_{n=1}^{N}\alpha_{n}}\right)  /\varepsilon^{2}.
\end{align*}
Hence, if \newline$\operatorname*{var}\left(  \frac{\sum_{n=1}^{N}\alpha
_{n}X_{n}}{\sum_{n=1}^{N}\alpha_{n}}\right)  \underset{N\rightarrow
\infty}{\longrightarrow}0$, then
\[
\forall\varepsilon>0:\allowbreak P\left(  \left\vert \frac{\sum_{n=1}%
^{N}\alpha_{n}(X_{n}-EX_{n})}{\sum_{n=1}^{N}\alpha_{n}}\right\vert
>\varepsilon\right)  \allowbreak\underset{N\rightarrow\infty}{\longrightarrow
}0.
\]

\end{proof}

\begin{example}
In particular, if e.g. $\left\{  X_{n}\right\}  $ are \emph{uncorrelated} and
have identical variances, then the sequence $\{X_{n}\}_{n\geq1}$ satisfies the
weak law of large numbers (SLLN). Since, we have, then $\operatorname{var}%
\left(  \sum_{i=1}^{N}X_{i}\right)  \allowbreak=\allowbreak\sum_{i=1}%
^{N}\operatorname{var}(X_{i})$, hence $\operatorname{var}\left(  \frac
{\sum_{i=1}^{N}X_{i}}{N}\right)  \allowbreak=\frac{\operatorname{var}(X_{1}%
)}{N}\allowbreak\underset{N\rightarrow\infty}{\longrightarrow}0.$
\end{example}

\begin{example}
\label{slabe_pwl}When $\left\{  X_{n}\right\}  $ are \emph{uncorrelated} and
$\operatorname*{var}(X_{n})\approx n^{\alpha};$ $\alpha<1$, then the sequence
$\{X_{n}\}_{n\geq1}$ satisfies the weak law of large numbers (WLLN). Since
arguing in the similar fashion, we have:
\[
\operatorname{var}\left(  \frac{\sum_{n=1}^{N}X_{n}}{N}\right)  \allowbreak
\approx\frac{\sum_{n=1}^{N}n^{\,\alpha}}{N^{2}}\allowbreak\approx
\frac{N^{\alpha+1}}{(\alpha+1)N^{2}}\underset{N\rightarrow\infty
}{\longrightarrow}0,
\]
if only $\alpha<1$. This example we will illustrate by the following
simulation. One took $N=4000000$ observations $\left\{  \tau_{i}\right\}
_{i=1}^{N}$ of the random variables of the form $\tau_{i}=i^{\beta}(\xi
_{i}-E\xi_{i})$, where the random variables $\left\{  \xi_{i}\right\}  $ are
independent and have the same distributions with cdf given by (\ref{pareto})
for $\gamma=\frac{7}{3}$. One took $\beta=\frac{6}{14}$. Let us notice that
then $\operatorname*{var}(\tau_{i})=\frac{\gamma}{(\gamma-2)(\gamma-1)^{2}%
}i^{2\beta}$, hence $\alpha=2\beta=\frac{6}{7}$. One obtained the following
course of averages $Y_{n}=\frac{1}{n}\sum_{i=1}^{n}\tau_{i}$, $n\geq1$. Again,
as before sampling at every $K\allowbreak=\allowbreak200$, i.e. number of
iteration $=$ number on the plot times $200$.
\begin{figure}[ptb]%
\centering
\includegraphics[
height=1.9043in,
width=3.2474in
]%
{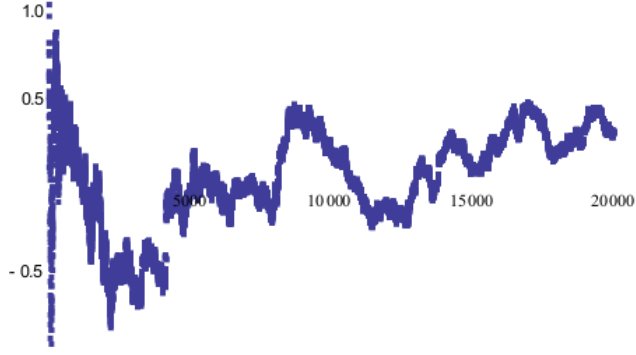}%
\caption{{\protect\tiny Weak law of large numbers for independent random
variables having increasing variances}}%
\end{figure}
As one can see the convergence is rather very slow.
\end{example}

\section{Strong laws of large numbers%
\index{Law!Large numbers!strong}%
}

\label{pwl}Let us notice that Lemmas \ref{podstawowy} and \ref{lemosr} supply
tools to examine the conditions under which generalized, strong laws of large
numbers is satisfied. As we mentioned before, the sequence of Riesz's means
$\left\{  \bar{X}_{n}=\frac{\sum_{i=0}^{n-1}\alpha_{i}X_{i+1}}{\sum
_{i=0}^{n-1}\alpha_{i}}\right\}  _{n\geq1}$ of the sequence $\left\{
X_{i}\right\}  _{i\geq1}$ with respect to the sequence $\left\{  \alpha
_{i}\right\}  _{i\geq0}$ satisfies the following recursive equation:
\[
\bar{X}_{n+1}=(1-\mu_{n})\bar{X}_{n}+\mu_{n}X_{n+1},
\]
where $\left\{  \mu_{i}\right\}  _{i\geq0}$ $=\allowbreak$ $\overline{\left\{
\alpha_{i}\right\}  _{i\geq0}}.$

\begin{remark}
Let us notice also, that for sequences of the random variables $\left\{
X_{i}\right\}  _{i\geq1}$, having second moments and satisfying conditions:
\begin{align*}
\sum_{i\geq0}\mu_{i}^{2}X_{i+1}^{2}  &  <\infty\,\,\;a.s.~,\\
\sum_{i\geq1}\mu_{i}\bar{X}_{i}^{2}  &  <\infty\,\,\;a.s.~,
\end{align*}
convergence of the sequence $\left\{  \bar{X}_{n}\right\}  _{n\geq1}$ to zero
one can prove in two ways.\newline\emph{Firstly,} one can prove convergence
a.s. of the series $\sum_{n\geq1}\mu_{n}\bar{X}_{n}X_{n+1}$, which in the
light of Lemma \ref{lemosr}, is equivalent to proving convergence a.s. of the
sequence $\left\{  \bar{X}_{n}\right\}  $ to zero.\newline\emph{Secondly, }one
can prove convergence a.s. of the series $\sum_{i\geq0}\mu_{i}X_{i+1}$, which
in light of Lemma \ref{podstawowy} implies convergence a.s. sequence $\left\{
\bar{X}_{n}\right\}  $ to zero.
\end{remark}

\begin{remark}
If we choose the second method proposed in the previous remark, to have almost
sure convergence of the series $\sum_{i\geq0}\mu_{i}X_{i+1}$, we have also
almost sure convergence series of the form $\sum_{i\geq0}\mu_{i}^{\prime
}X_{i+1}$, where the sequence is normal $\left\{  \mu_{i}^{\prime}\right\}
_{i\geq0}$ is selected, that e.g. number series \newline$\sum_{i\geq
1}\left\vert \mu_{i}-\beta\mu_{i}^{\prime}\right\vert E\left\vert
X_{i+1}\right\vert $ is convergent for some $\beta$ (compare corollary
\ref{podobne}). Let us recall that a bit different sequences $\left\{  \mu
_{i}\right\}  _{i\geq0}$ and $\left\{  \mu_{i}^{\prime}\right\}  $ may imply
very different conjugate $\left\{  \alpha_{i}\right\}  $ and $\left\{
\alpha_{i}^{\prime}\right\}  $ such that $\overline{\left\{  \alpha
_{i}\right\}  }\allowbreak=\allowbreak\left\{  \mu_{i}\right\}  $ and
$\overline{\left\{  \alpha_{i}^{\prime}\right\}  }\allowbreak=\allowbreak
\left\{  \mu_{i}^{\prime}\right\}  $. Hence, e.g. if we have proven
convergence of the series $\sum_{i\geq1}\frac{X_{i}}{i}$, then of course
sequence $\left\{  \frac{\sum_{i=1}^{n}X_{i}}{n}\right\}  _{n\geq1}$ converges
to zero. On the other hand, since series $\sum_{i\geq1}\frac{X_{i}}{i}$
converges, then converges also series $\sum_{i\geq1}\frac{2X_{i}}{i+1}$ (why?
and what else has to be assumed about the moments? we leave it as an exercise
to the reader), hence converges to zero also sequence $\left\{  \frac
{\sum_{i=1}^{n}iX_{i}}{\sum_{i=1}^{n}i}\right\}  _{n\geq1}$ (we have
$\overline{\left\{  i+1\right\}  }=\left\{  \frac{2}{i+2}\right\}  )$.
Similarly, one can show that the sequence $\left\{  \frac{\sum_{i=1}^{n}%
i^{2}X_{i}}{\sum_{i=1}^{n}i^{2}}\right\}  _{n\geq1}$ converges a.s. to zero.
Why? and what additional technical assumptions have to be made? Formulation
and justification of respective simple fact again we leave to the reader.
\end{remark}

\subsection{For independent random variables}

\subsubsection{Having identical distributions\label{smpwl_iid}}

We will start with the classical Kolmogorov's result. Proof of this theorem
will not be however classical in the sense, that it is different from the
original Kolmogorov's proof and uses theorems on reverse martingale
convergence and $0-1$ Hewitt-Savege' law;

\begin{theorem}
[Kolmogorow]%
\index{Theorem!Kolmogorov}%
\label{kolmogor}If $\{X_{n}\}_{n\geq1}$ are independent random variables
having identical distributions, in order that it satisfied SLLN it is
necessary and sufficient that $E\left\vert X_{1}\right\vert <\infty.$
\end{theorem}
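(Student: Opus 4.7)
The plan is to treat the two directions separately. For necessity, I would invoke results already established in the book: by Proposition \ref{war_kon}, if SLLN holds then $X_n/n \to 0$ in probability (for the iid case actually almost surely, since the statement for SLLN gives a.s. convergence of $S_n/n$, hence of $(S_n-S_{n-1})/n = X_n/n$). On the other hand, Proposition \ref{wlasnosc_iid} states that for an iid sequence with $E|X_1|=\infty$ one has $\limsup_{n\to\infty}|X_n|/n = \infty$ almost surely. These two facts are incompatible, so $E|X_1|<\infty$ is forced.

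For sufficiency, I would use the reverse martingale approach. Set $\mathcal{F}_n = \sigma(S_n,S_{n+1},S_{n+2},\ldots) = \sigma(S_n,X_{n+1},X_{n+2},\ldots)$, a decreasing family of $\sigma$-fields. The key observation is that by the exchangeability of $X_1,\ldots,X_n$ given $\mathcal{F}_n$, one has $E(X_i\mid \mathcal{F}_n) = E(X_1\mid \mathcal{F}_n)$ for every $1\le i\le n$, so
\[
S_n = E(S_n\mid \mathcal{F}_n) = \sum_{i=1}^{n} E(X_i\mid \mathcal{F}_n) = n\,E(X_1\mid\mathcal{F}_n),
\]
whence $S_n/n = E(X_1\mid \mathcal{F}_n)$. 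This identifies $\{S_n/n,\mathcal{F}_n\}_{n\geq 1}$ as a reverse martingale of a fixed integrable random variable.

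I would then apply the reverse martingale convergence theorem to conclude that $S_n/n$ converges almost surely and in $L^1$ to $E(X_1\mid \mathcal{F}_\infty)$, where $\mathcal{F}_\infty = \bigcap_n \mathcal{F}_n$. To finish, I would invoke the Hewitt--Savage $0$--$1$ law: $\mathcal{F}_\infty$ is contained in the exchangeable $\sigma$-field of the iid sequence $\{X_n\}$, which is $P$-trivial. Hence the limit is almost surely a constant, and since convergence also holds in $L^1$, that constant must equal $E(S_n/n) = EX_1$.

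The main obstacle is not any single calculation but rather marshalling the two external ingredients (reverse martingale convergence and the Hewitt--Savage law) cleanly; in particular, the identification $E(X_1\mid\mathcal{F}_n) = S_n/n$ via exchangeability is the conceptual heart of the proof, and the verification that $\mathcal{F}_\infty$ lies in the exchangeable $\sigma$-field (so that Hewitt--Savage applies to make the limit deterministic) is the step where one must be careful rather than merely symbolic.
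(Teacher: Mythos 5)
Your proposal follows essentially the same route as the paper's own proof: necessity via Proposition \ref{war_kon} combined with Proposition \ref{wlasnosc_iid}, and sufficiency by identifying $S_n/n = E(X_1\mid\sigma(S_n,S_{n+1},\ldots))$ through exchangeability, applying reverse martingale convergence (Theorem \ref{odwrotny_martyngal}), the Hewitt--Savage $0$--$1$ law to make the limit constant, and uniform integrability ($L^1$ convergence) to identify that constant as $EX_1$. The argument is correct as stated.
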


\begin{proof}
Let us denote $S_{n}=\sum_{i=1}^{n}X_{i}$, $\mathcal{B}_{-n}=\sigma
(S_{n},S_{n+1},\ldots)\;,n=1,2,\ldots\;$. Let us notice that $S_{n}%
=\allowbreak E(S_{n}|\mathcal{B}_{-n})=\allowbreak\sum_{i=1}^{n}%
E(X_{i}|\mathcal{B}_{-n})=\allowbreak nE(X_{1}|\mathcal{B}_{n})\,\;a.s.$,
since $E(X_{1}|\mathcal{B}_{n})=E(X_{i}|\mathcal{B}_{n})$ a.s. for $i\leq n$
because of symmetry. Hence,
\[
E(X_{1}|\mathcal{B}_{-n})=S_{n}/n\;a.s.~.
\]
Moreover, we have of course
\[
E(E(X_{1}|\mathcal{B}_{-n+1})|\mathcal{B}_{-n})\allowbreak=E(\frac{S_{n-1}%
}{n-1}|\mathcal{B}_{-n})\allowbreak=\frac{(n-1)E(X_{1}|\mathcal{B}_{-n})}%
{n-1}\allowbreak=E(X_{1}|\mathcal{B}_{-n}),
\]
since of course $\mathcal{B}_{-n}\supseteq\mathcal{B}_{-n-1},\;n=1,2,\ldots
$.\ Further, we have $E\left\vert E(X_{1}|\mathcal{B}_{-n})\right\vert \leq
E\left\vert X_{1}\right\vert $. Hence, one can make use of Theorem
\ref{odwrotny_martyngal}. Now we deduce, that the sequence $\left\{
S_{n}/n\right\}  $ converges with probability $1$. Let $L$ denote this the
limit. Events $\left\{  \omega:L(\omega)<x\right\}  $ are symmetric in the
sense of definition \ref{sym3}. Now by $0-1$ Hewitt-Savege' law (see Appendix
\ref{prawo01}) it follows that the probability of this event is $0$ or $1$. In
other words cdf of a random variable $L$ is a jump function having one jump,
that is the random variable $L$ has degenerated distribution. Consequently
there exists such constant $c$, that $P(L=c)=1$. Being aware that we have also
convergence in $L_{1}$ (family of the random variables $\left\{
S_{n}/n\right\}  _{n\geq1}$ is uniformly integrable see Appendix \ref{UnInt}),
we have: $c=EL=\underset{n\rightarrow\infty}{\lim}E\frac{S_{n}}{n}=EX_{1}.$

Let us suppose now, that the sequence $\left\{  S_{n}/n\right\}  $ converges
almost surely to a finite limit In such a case this sequence on the basis of
Proposition \ref{war_kon}, as well as the sequence $\left\{  X_{n}/n\right\}
$ converges to a finite limit (equal to zero). If $E\left\vert X_{1}%
\right\vert =\infty$, then as we know by Proposition \ref{wlasnosc_iid} we
would have $\underset{n\rightarrow\infty}{\lim\sup\,}\frac{\left\vert
X_{n}\right\vert }{n}=\infty$. Hence, we must have $E\left\vert X_{1}%
\right\vert <\infty.$
\end{proof}

In the sequel, we will consider necessary and sufficient conditions for the
SGLLN to be satisfied under the assumption, that $E\left\vert X_{1}\right\vert
\allowbreak<\allowbreak\infty$. It will be the result of B. Jamison, S. Orey
and W. Pruitt from the paper \cite{Jamison65} .

Let $\left\{  \alpha_{i}\right\}  _{i\geq0}$, $\alpha_{0}=1$ be a sequence
weights, a $\left\{  X_{i}\right\}  _{i\geq1}$ sequence independent random
variables having identical distributions. We will consider a sequence:
\[
M_{n}=\frac{\sum_{i=0}^{n-1}\alpha_{i}X_{i+1}}{\sum_{i=0}^{n-1}\alpha_{i}}%
\]
and examine its convergence with probability $1$ to a constant. Let us recall
that the sequence $\left\{  M_{n}\right\}  _{n\geq1}$ satisfies the following
recurrent relationship:
\begin{equation}
M_{n+1}=(1-\mu_{n})M_{n}+\mu_{n}X_{n+1}. \label{tozsam}%
\end{equation}
Let us denote $\overline{\left\{  \alpha_{i}\right\}  }_{i\geq0}=\left\{
\mu_{i}\right\}  _{i\geq0}$, i.e. $\mu_{i}=\alpha_{i}/\sum_{k=0}^{i}\alpha
_{k}$, $i\geq1.$

\begin{remark}
If $\sum_{i\geq0}\alpha_{i}<\infty$ or equivalently $\sum_{i\geq0}\mu
_{i}<\infty$, then the convergence of the sequence $\left\{  M_{n}\right\}  $
is equivalent to the convergence of the series $\sum_{i\geq0}\alpha_{i}%
X_{i+1}$. This series except for the trivial case of degenerated distribution
of the random variable $X_{1}$ cannot converge to a constant. Hence, if
$\sum_{i\geq0}\alpha_{i}<\infty$, then LLN is not satisfied.
\end{remark}

\begin{remark}
Thus, let us assume that $\sum_{i\geq0}\mu_{i}=\infty$. If LLN\ is satisfied,
i.e. the sequence of the random variables $\left\{  M_{n}\right\}  _{n\geq1}$
converges to a constant, then from the identity (\ref{tozsam}) it follows that
$\mu_{n}\rightarrow0$, as $n\rightarrow\infty$ and moreover, that $\mu
_{n}X_{n+1}\rightarrow0$ with probability $1$, as $n\rightarrow\infty.$
\end{remark}

Let $N(x)$, $x>0$ denote the number of those $n$, for which $1/x\leq\mu_{n-1}%
$, i.e. $N(x)=\#\left\{  n:\frac{1}{\mu_{n-1}}\leq x\right\}  .$

\begin{proposition}
\label{o_N(X)}\emph{ }Let\emph{ }
\begin{equation}
N(x)=\sum_{n\geq1}I\left(  x\geq1/\mu_{n-1}\right)  , \label{def_Nx}%
\end{equation}
\emph{ }then the\emph{ }function $N(x)$ is a nondecreasing step function, with
jumps of size $1$, at values of the sequence $\left\{  \frac{1}{\mu_{n-1}%
}\right\}  _{n\geq1}.$
\end{proposition}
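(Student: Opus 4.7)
The plan is to read off the three claimed properties directly from the defining sum, using only elementary properties of indicator functions and the fact (already recorded in earlier remarks) that $\mu_n \to 0$ as $n \to \infty$ whenever $\{\mu_n\}_{n \geq 0}$ is a normal sequence and the LLN is satisfied.

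First I would observe that each summand $x \mapsto I(x \geq 1/\mu_{n-1})$ is, on its own, a nondecreasing step function taking the value $0$ on $[0, 1/\mu_{n-1})$ and the value $1$ on $[1/\mu_{n-1}, \infty)$. A sum of such indicators is therefore nondecreasing in $x$ as soon as it is finite at each point. To dispose of finiteness, I would invoke $\mu_n \to 0$, which forces $1/\mu_{n-1} \to \infty$; consequently, for every fixed $x \geq 0$, only finitely many indices $n \geq 1$ satisfy $1/\mu_{n-1} \leq x$, so the series (\ref{def_Nx}) reduces to a finite sum and $N(x) = \#\{n \geq 1 : 1/\mu_{n-1} \leq x\}$ as indicated.

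Next I would localize the jumps. On any open interval $(a,b) \subset [0,\infty)$ containing none of the numbers $1/\mu_{n-1}$, each indicator is constant, and since only finitely many indicators are nonzero on $[0,b]$, the function $N$ is constant on $(a,b)$. At a point $v$ of the sequence $\{1/\mu_{n-1}\}_{n \geq 1}$, the contribution to $N(x) - N(x-)$ as $x \uparrow v$ equals the number of indices $n$ with $1/\mu_{n-1} = v$; when these values are all distinct this is exactly $1$, yielding a jump of unit size at each $v$.

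The only potential snag is interpreting the phrase \emph{jumps of size $1$}: if the sequence $\{1/\mu_{n-1}\}$ happens to take some value more than once, the jump at that value is the multiplicity. I would therefore read the proposition in the natural way, namely that $N$ is expressed as the superposition of unit step functions, one per index $n \geq 1$, each placed at the abscissa $1/\mu_{n-1}$, with the understanding that coincident abscissae stack. Beyond this minor point of phrasing, the proof is entirely a matter of unpacking the definition, and no serious obstacle arises.
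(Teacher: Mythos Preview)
Your proof is correct and follows the same approach as the paper, which simply records the result as ``Obvious. It follows directly from the definition.'' You have merely supplied the details that the paper omits, including the helpful observation about finiteness via $\mu_n\to 0$ and the caveat about multiplicities, so there is nothing to correct.
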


\begin{proof}
Obvious. It follows directly from the definition.
\end{proof}

We have also the following lemma.

\begin{lemma}
\label{lem_oNx}If the sequence $\left\{  M_{n}\right\}  _{n\geq1}$ defined by
the relationship (\ref{tozsam}) converges to a constant, then
\[
\forall c>0:EN\left(  c\left\vert X_{1}\right\vert \right)  <\infty.
\]

\end{lemma}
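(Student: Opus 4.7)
The plan is to translate the conclusion $EN(c|X_1|)<\infty$ into a summability statement about tail probabilities of $|X_1|$, and then extract that summability from the almost sure convergence of $\{M_n\}$ via a Borel--Cantelli argument made possible by the independence of the $X_n$'s.

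First I would rewrite the expectation. Since $N(x)=\sum_{n\geq 1}I(x\geq 1/\mu_{n-1})$ is a nonnegative integer-valued random variable when $x$ is a nonnegative random variable, the discrete version of Proposition~\ref{w-ocz} (see Remark~\ref{calkowalnosc}, formula (\ref{EX_dod_calk})) gives
\[
EN(c|X_1|)=\sum_{n\geq 1}P\!\left(c|X_1|\geq \frac{1}{\mu_{n-1}}\right)
=\sum_{n\geq 1}P(c\mu_{n-1}|X_n|\geq 1),
\]
where in the last equality I used that the $X_n$ are identically distributed. Set $B_n=\{c\mu_{n-1}|X_n|\geq 1\}$; since the $X_n$ are independent and the $\mu_{n-1}$ are deterministic, the events $\{B_n\}_{n\geq 2}$ are independent. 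Thus, by the converse (third) assertion of the Borel--Cantelli lemma (Appendix~\ref{Borel-Cantelli}), it suffices to show that $B_n$ occurs only finitely often almost surely.

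Next I would exploit the hypothesis on $\{M_n\}$. As the author already observed in the preceding remark, convergence of $M_n$ to a constant combined with the recursion $M_{n+1}=(1-\mu_n)M_n+\mu_n X_{n+1}$ and the assumption $\sum \mu_i=\infty$ forces $\mu_n\to 0$ and $\mu_n X_{n+1}\to 0$ almost surely. Equivalently, $c\mu_{n-1}|X_n|\to 0$ almost surely for every $c>0$, and hence $B_n$ occurs only finitely often with probability one. Combining with the independence of $\{B_n\}$ and Borel--Cantelli, we conclude $\sum_{n\geq 1}P(B_n)<\infty$, which by the opening display yields $EN(c|X_1|)<\infty$, as required.

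The main obstacle, and really the only nontrivial step, is the derivation of $\mu_n X_{n+1}\to 0$ a.s.\ from convergence of $M_n$; but this is just the content of the preceding remark and follows immediately from $M_{n+1}-M_n=-\mu_n M_n+\mu_n X_{n+1}$, since both $\mu_n M_n\to 0$ (because $M_n$ is bounded and $\mu_n\to 0$) and $M_{n+1}-M_n\to 0$. Everything else is bookkeeping: rewriting $EN(c|X_1|)$ as a sum of tail probabilities and invoking the independent direction of Borel--Cantelli.
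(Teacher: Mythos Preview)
Your proof is correct and follows essentially the same route as the paper's: both arguments use the preceding remark to get $\mu_n X_{n+1}\to 0$ a.s., infer that the events $\{\mu_{n-1}|X_n|\geq 1/c\}$ occur only finitely often, apply the converse Borel--Cantelli lemma via independence to obtain summability of their probabilities, and identify the resulting sum with $EN(c|X_1|)$ using the identical distributions and the representation $N(x)=\sum_{n\geq 1}I(x\geq 1/\mu_{n-1})$. The only cosmetic difference is the order of presentation (you begin from $EN(c|X_1|)$ and work backward, the paper starts from $\mu_nX_{n+1}\to 0$ and works forward); note also that your citation of formula (\ref{EX_dod_calk}) is slightly off --- the identity you actually use is simply Tonelli applied to the defining sum for $N$, not the $EX=\sum P(X\geq i)$ formula.
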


\begin{proof}
We saw already that, convergence of the sequence $\left\{  M_{n}\right\}  $ to
a constant implies convergence of the sequence $\left\{  \mu_{n}%
X_{n+1}\right\}  _{n\geq1}$ to zero with probability $1$. In other words, the
event $\left\{  \mu_{n}\left\vert X_{n+1}\right\vert >\varepsilon\right\}  $
occurs a finite number of times, for every $\varepsilon>0$. From assertion
$iii)$ of the Borel-Cantelli Lemma (see Appendix \ref{Borel-Cantelli}) it
follows that taking into account independence of the random variables $X_{i}$,
$i>0$ we have
\[
\sum_{i\geq0}P\left(  \mu_{i}\left\vert X_{i+1}\right\vert \geq\varepsilon
\right)  <\infty.
\]
Due to the assumption of the same distributions of the random variables
$\{X_{n}\}_{n\geq1}$ the last condition can be presented the following way,
due to formula(\ref{def_Nx}), :
\[
\sum_{i\geq0}\int_{\left\vert x\right\vert \geq\varepsilon/\mu_{i}}%
dF(x)=\int\sum_{i\geq1}I(\left\vert x\right\vert \geq\varepsilon/\mu
_{i-1})dF(x)=\int N\left(  \frac{\left\vert x\right\vert }{\varepsilon
}\right)  dF(x),
\]
where $F$ denotes cdf of the random variable $X_{1}.$
\end{proof}

\begin{theorem}
\label{mpwl_iid}Let $\left\{  X_{n}\right\}  _{n\geq1}$ be a sequence of
independent random variables having identical distributions, and such that
$E\left\vert X_{1}\right\vert <\infty$, and let $\left\{  \alpha_{i}\right\}
_{i\geq0}$ be a sequence of positive weights. Sequence$\{X_{n}\}_{n\geq1}$
satisfies generalized MLLN with weights $\left\{  \alpha_{i}\right\}
_{i\geq0}$ if and only if,
\begin{equation}
\text{ }\underset{x\rightarrow\infty}{\lim\inf}\,\frac{N(x)}{x}<\infty,
\label{ogr_nx}%
\end{equation}
where we denoted:
\[
N(x)=\#\{n:\frac{\sum_{i=0}^{n-1}\alpha_{i}}{\alpha_{n-1}}\leq x\},
\]
for positive $x.$
\end{theorem}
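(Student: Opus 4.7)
The plan is to split the equivalence at its analytic pivot: the condition
$\sum_{n\geq 1} P(|X_1| \geq 1/\mu_{n-1}) < \infty$, which by Fubini equals $EN(|X_1|)$, because $N(x)=\sum_n I(x\geq 1/\mu_{n-1})$. Lemma \ref{lem_oNx} already shows that a.s.\ convergence of $M_n$ to a constant forces $EN(c|X_1|)<\infty$ for every $c>0$. The whole argument then reduces to passing between this probabilistic condition and the deterministic $\liminf_{x\to\infty} N(x)/x<\infty$.

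For necessity, I would argue by contradiction. If $\liminf N(x)/x=\infty$, the jumps $\{1/\mu_{n-1}\}$ of $N$ (Proposition \ref{o_N(X)}) are so densely packed that one can exhibit an iid distribution with $E|X_1|<\infty$ for which $EN(|X_1|)=\int_0^\infty \bar F(t)\,dN(t)=\infty$, contradicting Lemma \ref{lem_oNx}. Concretely, one tailors $\bar F(t):=P(|X_1|>t)$ so that $\int_0^\infty \bar F(t)\,dt<\infty$ while $\int_0^\infty \bar F(t)\,dN(t)=\infty$; the super-linear growth of $N$ makes this calibration possible via Stieltjes integration by parts.

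For sufficiency, assume $E|X_1|<\infty$ and $\liminf N(x)/x=L<\infty$. I would use truncation: set $Y_n=X_n\,I(|X_n|\leq 1/\mu_{n-1})$ and proceed as follows.
\begin{enumerate}
\item Show $\sum_n P(X_n\neq Y_n)=EN(|X_1|)<\infty$ from the two hypotheses, so by Borel--Cantelli one may replace $X_n$ with $Y_n$ when proving the SGLLN.
\item Verify $\sum_n \mu_{n-1}^2\,EY_n^2<\infty$: by Fubini this equals $\int x^2\bigl(\sum_{n:\,1/\mu_{n-1}\geq |x|}\mu_{n-1}^2\bigr)\,dF(x)$, and the inner sum is bounded by $C/|x|$ by a dyadic argument anchored on the sparse subsequence where $N(x)\leq Cx$; the outer integral is then $O(E|X_1|)<\infty$.
\item Invoke Theorem \ref{ozbwL2} (or Kolmogorov's three-series theorem) to conclude that $\sum_n \mu_{n-1}(Y_n-EY_n)$ converges almost surely.
\item Apply Lemma \ref{podstawowy} to the recursion $\bar Y_{n+1}=(1-\mu_n)\bar Y_n+\mu_n(Y_{n+1}-EY_{n+1})$ to conclude $\bar Y_n\to 0$ a.s.
\item Show that $\sum_{i=0}^{n-1}\alpha_i\,EY_{i+1}/\sum_{i=0}^{n-1}\alpha_i\to EX_1$ by dominated convergence and the regularity of Riesz's method (Theorem \ref{toeplitz}), closing the argument.
\end{enumerate}

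The main obstacle I anticipate is step~2 of sufficiency: extracting the estimate $\sum_{n:\,1/\mu_{n-1}\geq t}\mu_{n-1}^2=O(1/t)$ from the one-sided control $\liminf N(x)/x<\infty$, since that control holds only along a subsequence rather than uniformly. A dyadic partition of the interval $[t,\infty)$ combined with the jump structure of $N$ should suffice, but balancing the bound against the integrability $\int |x|\,dF<\infty$ is where the calculation becomes delicate. The necessity construction is similarly subtle, as it requires producing a distribution precisely calibrated to the spacing of the sequence $\{1/\mu_{n-1}\}$.
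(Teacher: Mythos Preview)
Your outline matches the paper's proof almost step for step: the same truncation $Y_n=X_nI(|X_n|\le 1/\mu_{n-1})$, the identification $\sum_n P(X_n\neq Y_n)=EN(|X_1|)$, Borel--Cantelli, the variance-series bound $\sum_n\mu_{n-1}^2\operatorname{var}(Y_n)<\infty$, martingale convergence, and Lemma~\ref{podstawowy}; the necessity argument likewise constructs a distribution with $E|X_1|<\infty$ but $EN(|X_1|)=\infty$.

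The one place the paper differs from your sketch is exactly where you flagged a difficulty. Rather than a dyadic decomposition, the paper handles $\sum_{\{n:\,1/\mu_{n-1}\ge |x|\}}\mu_{n-1}^2$ by rewriting it as the Stieltjes integral $\int_{|x|}^\infty y^{-2}\,dN(y)$ and integrating by parts to obtain the bound $2\int_{|x|}^\infty N(y)y^{-3}\,dy$, which is then controlled by $N(y)\le My$. Note that this uses a \emph{uniform} bound $M=\sup_{x>0}N(x)/x$; the paper's necessity argument begins ``assume $\limsup_{x\to\infty}N(x)/x=\infty$'', so despite the $\liminf$ in the displayed statement, the proof is written for the $\limsup$ condition (the Jamison--Orey--Pruitt condition). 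Your worry about extracting the estimate from a subsequential $\liminf$ is therefore legitimate, and the paper's resolution is simply that the intended hypothesis gives uniform control, making the integration-by-parts route clean and avoiding any dyadic patching.
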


\begin{proof}
Without loss of generality, let us assume that the random variables
$\{X_{n}\}_{n\geq1}$ have zero expectations. The main idea of the proof
consists on considering \textquotedblright truncated\textquotedblright\ random
variables $\left\{  Y_{n}\right\}  _{n\geq1}$ defined in the following way:
\[
Y_{n}\allowbreak=\allowbreak X_{n}I\left(  \left\vert X_{n}\right\vert
<\frac{\sum_{i=0}^{n-1}\alpha_{i}}{\alpha_{n-1}}\right)  .
\]
We have:
\begin{gather*}
\sum_{n\geq1}P(Y_{n}\neq X_{n})=\sum_{n\geq1}P\left(  \left\vert
X_{n}\right\vert \geq\frac{\sum_{i=0}^{n-1}\alpha_{i}}{\alpha_{n-1}}\right)
=\\
=\sum_{n\geq1}P\left(  \left\vert X_{1}\right\vert \geq\frac{\sum_{i=0}%
^{n-1}\alpha_{i}}{\alpha_{n-1}}\right)  =E\sum_{n\geq1}I\left(  \left\vert
X_{1}\right\vert \geq\frac{\sum_{i=0}^{n-1}\alpha_{i}}{\alpha_{n-1}}\right)
=EN(\left\vert X_{1}\right\vert ).
\end{gather*}
We utilized here identity of distributions of the variables $\{X_{n}%
\}_{n\geq1}$, part \emph{ii)} of Proposition \ref{o_N(X)} and the formula
\ref{EX_dod_calk}. Hence, if $EN(\left\vert X_{1}\right\vert )<\infty$, then
from Borel-Cantelli Lemma, it will follow, that it is enough to examine random
variables $\left\{  Y_{n}\right\}  _{n\geq1}$. However condition
(\ref{ogr_nx}) guarantees, that $E\left\vert X_{1}\right\vert <\infty$
$\Rightarrow$ $EN(\left\vert X_{1}\right\vert )<\infty$. Let us denote
\[
T_{n}=\sum_{i=0}^{n-1}\alpha_{i}Y_{i+1}/\sum_{i=0}^{n-1}\alpha_{i}.
\]
Let us also notice, that since the events $\left\{  Y_{i}\neq X_{i}\right\}
_{i\geq1}$ have occured only a finite number of times, we must have
$EY_{n}\rightarrow EX_{1}$, when $n\rightarrow\infty$. Thus, if we will show,
that
\begin{equation}
\sum_{i=0}^{n-1}\alpha_{i}(Y_{i+1}-EY_{i+1})/\sum_{i=0}^{n-1}\alpha
_{i}\rightarrow0\;a.s. \label{zb_yi}%
\end{equation}
as $n\rightarrow\infty$, then the generalized strong law of large numbers will
be satisfied, i.e. we will have the following convergence
\[
\sum_{i=0}^{n-1}\alpha_{i}(X_{i+1}-EX_{i+1})/\sum_{i=0}^{n-1}\alpha
_{i}\rightarrow0\;a.s.
\]
as $n\rightarrow\infty$. From Lemma \ref{podstawowy} it follows that for the
condition (\ref{zb_yi}) to be satisfied it is enough that the series
\begin{equation}
\sum_{i\geq1}\mu_{i}(Y_{i+1}-EY_{i+1}), \label{szer}%
\end{equation}
converge almost surely, whereas usually we denoted $\mu_{n}=\alpha_{n}%
/\sum_{i=0}^{n}\alpha_{i}$. The sequence of partial sums of this series is
(taking into account independence of the random variables $\left\{
Y_{i}\right\}  _{i\geq1})$ a martingale. Hence, it is enough to, e.g., that
the series
\begin{equation}
\sum_{i\geq1}\mu_{i}^{2}\operatorname*{var}(Y_{i+1}) \label{szer_var}%
\end{equation}
and the respective martingale\ are being convergent almost surely and
consequently strong law of large numbers is being satisfied. Let us examine
the condition (\ref{szer_var}). We have:
\begin{gather*}
\sum_{i\geq1}\mu_{i}^{2}\int_{%
\mathbb{R}
}x^{2}I(\left\vert x\right\vert <1/\mu_{i})dF(x)\\
=\int_{%
\mathbb{R}
}x^{2}\left(  \sum_{i\geq1}\mu_{i}^{2}I(\left\vert x\right\vert <1/\mu
_{i})\right)  dF(x),
\end{gather*}
where we denoted by $F(x)$ the cdf of random the variable $X_{1}$. We have
further:
\[
\sum_{i\geq1}\mu_{i}^{2}I(\left\vert x\right\vert <1/\mu_{i})=\sum_{\left\{
i:\left\vert x\right\vert <1/\mu_{i}\right\}  }\mu_{i}^{2}.
\]
From the remark concerning jumps of the function $N(x)$, it follows that:
\[
\sum_{\left\{  i:\left\vert x\right\vert <1/\mu_{i}\leq z\right\}  }\mu
_{i}^{2}=\int_{\left\vert x\right\vert <y\leq z}\frac{dN(y)}{y^{2}}.
\]
In the last integral let us integrate by parts. We get, then:
\[
\int_{\left\vert x\right\vert <y\leq z}\frac{dN(y)}{y^{2}}=\frac{N(z)}{z^{2}%
}-\frac{N(\left\vert x\right\vert )}{x^{2}}+2\int_{\left\vert x\right\vert
<y\leq z}\frac{N(y)}{y^{3}}dy.
\]
Let us notice also, that
\begin{align*}
\frac{N(z)}{z^{2}}  &  =-\int_{z}^{\infty}d(\frac{N(t)}{t^{2}})\\
&  =-\int_{z}^{\infty}\frac{dN(t)}{t^{2}}+2\int_{z}^{\infty}\frac{N(t)}{t^{3}%
}dt\leq2\int_{z}^{\infty}\frac{N(t)}{t^{3}}dt.
\end{align*}
Hence
\begin{align*}
\int_{\left\vert x\right\vert <y\leq z}\frac{dN(y)}{y^{2}}  &  \leq2\left(
\int_{z}\frac{N(t)}{t^{3}}dt+\int_{\left\vert x\right\vert <y\leq z}%
\frac{N(y)}{y^{3}}dy\right) \\
&  =2\int_{\left\vert x\right\vert }^{\infty}\frac{N(y)}{y^{3}}dy.
\end{align*}
Thus, we have:
\begin{gather*}
\sum_{i\geq1}\mu_{i}^{2}\int_{%
\mathbb{R}
}x^{2}I(\left\vert x\right\vert <1/\mu_{i})dF(x)\leq2\int x^{2}\int%
_{\left\vert x\right\vert }^{\infty}\frac{N(y)}{y^{3}}dydF(x)\leq\\
2\int_{%
\mathbb{R}
}x^{2}\int_{\left\vert x\right\vert }^{\infty}\frac{M}{y^{2}}dydF(x)=2\int_{%
\mathbb{R}
}x^{2}\frac{M}{\left\vert x\right\vert }dF(x)=2ME\left\vert X_{1}\right\vert ,
\end{gather*}
where we denoted $\underset{x>0}{\sup}\frac{N(x)}{x}=M$. Hence, we have shown,
that series (\ref{szer}) converges, consequently, that the generalized strong
law of large numbers is satisfied.

Let us concentrate now on the sufficient condition. We will prove indirectly.
Let us assume that $\underset{x\rightarrow\infty}{\lim\sup\,}\frac{N(x)}%
{x}\allowbreak=\allowbreak\infty$. It means that there exists such number
sequence $\left\{  x_{i}\right\}  _{i\geq1}$, that $\frac{N(x_{k})}{x_{k}%
}\rightarrow\infty$, as $k\rightarrow\infty$. Hence, we can select such a
sequence $\left\{  f_{k}\right\}  _{k\geq1}$ of positive numbers, summing to
one such that $\sum_{i\geq1}x_{i}f_{i}<\infty$ and $\sum_{i\geq1}f_{i}%
N(x_{i})\allowbreak=\allowbreak\infty$. Treating sequence $\left\{
f_{i}\right\}  $ as step sizes of some some random variable $\left\vert
X_{1}\right\vert $, we see that $E\left\vert X_{1}\right\vert <\infty$,
however that $EN\left(  \left\vert X_{1}\right\vert \right)  =\infty$. The
last condition does not allow that SLLN be satisfied in the light of Lemma
\ref{lem_oNx}.
\end{proof}

\begin{remark}
Notice that in the classical case, i.e. when $\mu_{n}\allowbreak
=\allowbreak1/(n+1)$, $n\geq0$ we have $N(x)\allowbreak=\allowbreak
\left\lfloor x\right\rfloor $, i.e. $N(x)$ is equal to the largest integer not
exceeding $x$. We have, then of course $\underset{x\rightarrow\infty}{\lim
\sup}\frac{N(x)}{x}\leq1.$
\end{remark}

\begin{example}
The above mentioned Theorem will also be illustrated by the following example.
\newline Let $\left\{  \xi_{i}\right\}  _{i\geq1}$ be a sequence of
independent random variables having identical distributions. Let us assume
that $\xi_{1}\sim F_{\gamma}(x)$, $\gamma>1$, where $F_{\gamma}$ is defined by
(\ref{pareto}). Then of course we have $E\xi_{1}=\frac{\gamma}{\gamma-1}%
$.\ Further, let $\mu_{i}=\frac{4(i+1)}{(i+2)^{2}};i\geq0$, i.e. $\left\{
\frac{4(i+1)}{(i+2)^{2}}\right\}  =\overline{\left\{  (i+1)^{2}\right\}  }$
and $X_{n}=\sum_{i=1}^{n}i^{2}\xi_{i}/\sum_{i=1}^{n}i^{2}$. In this example we
took $\gamma=\frac{5}{4}$, i.e. $E\xi_{1}=5$. Moreover, let us notice that
$\xi_{1}$ does not have a variance.
\begin{figure}[ptb]%
\centering
\includegraphics[
height=1.5264in,
width=2.6013in
]%
{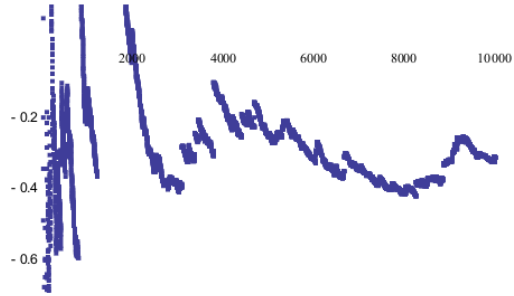}%
\caption{{\protect\tiny Strong law of large numbers for independent random
variables not possesing variances}}%
\end{figure}
In order to omit technical difficulties we presented on the plot only every
$K=200$'th average (i.e. $X_{200n}$; $n\allowbreak=\allowbreak1,...$). Hence,
the plot is based on $N=2000000$ observations. Let us notice also, that
convergence is rather slow. It follows from the fact, that distribution of the
random variable $\xi_{1}$ has the so-called \textquotedblright fat
tails\textquotedblright, i.e. function $1-F_{\gamma}(x)$ slowly decreases as
$x\rightarrow\infty$. It means simply that relatively often we get very large
values of $\xi_{i}$. To illustrate this, below we present a table of numbers
of records of observations $\left\{  \xi_{i}\right\}  $ and values of these
records that is the table of numbers $\left(  m,M_{m}\right)  $, where these
numbers are defined recursively $m(0)\allowbreak=0$, $\allowbreak
m(1)\allowbreak=\allowbreak1$, $m(n+1)\allowbreak=\allowbreak\min\{j:\xi
_{j}>M_{m(n)}\}$ and $\xi_{n}\allowbreak=\allowbreak M_{m(n)}$. \newline%
\begin{tabular}
[c]{llllllllll}%
${\small 3}$ & ${\small 14}$ & $15$ & $17$ & $42$ & $189$ & $327$ & $99184$ &
$101942$ & $461831$\\
$-2.20$ & $-0.27$ & $0.68$ & $1.21$ & $39.55$ & $102.11$ & $4602.51$ &
$9292.62$ & $73342.5$ & $84457$%
\end{tabular}
\newline As one can see the first record happened to observation $3$ and had
valued $-2.2$, the second in the $14$-th observation and had value $-.27$ and
so on. Finally, the $11$-th record happened to observation $13338828$ and had
value equal $87444$ .
\end{example}

\subsubsection{Having different distributions}

Recall that in the case of identical distributions of the sequence of
independent random variables, it was sufficient to assume the existence of
expectations to get \ MLLN (compare theorem \ref{kolmogor}) to be satisfied by
$\{X_{n}\}_{n\geq1}$. The case of different distributions is different. It
turns out that even the existence of second moments is not enough. It turns
out that these moments must satisfy some additional conditions. This case will
be treated in a whole in the next section together with the case of dependent
elements of the sequence $\{X_{n}\}_{n\geq1}$. Whereas here we will present
examples of sequences of independent random variables, not satisfying MLLN.
Interesting and instructive examples presented below, are modifications of
examples taken from the book of R\'{e}v\'{e}sz \cite{Revesz67}.

\begin{example}
Let sequence $\left\{  \sigma_{n}^{2}\right\}  _{n\geq1}$ will be such
sequence of positive numbers, that
\begin{equation}
\sum_{n\geq1}\frac{\sigma_{n}^{2}}{n^{2}}=\infty. \label{war_rozbieznosci}%
\end{equation}
Let sequence $\{X_{n}\}_{n\geq1}$ be a sequence independent random variables
having the following distributions:
\begin{align*}
P(X_{n}  &  =n)=P(X_{n}=-n)=\frac{\sigma_{n}^{2}}{2n^{2}},\\
P(X_{n}  &  =0)=1-\frac{\sigma_{n}^{2}}{n^{2}},
\end{align*}
when $\sigma_{n}^{2}\leq n^{2}$ and
\[
P(X_{n}=\sigma_{n})=P\left(  X_{n}=-\sigma_{n}\right)  =\frac{1}{2},
\]
when $\sigma_{n}^{2}>n^{2}$. Let us notice that $EX_{n}=0$,
$\operatorname*{var}(X_{n})=\sigma_{n}^{2,}$ and Moreover, $P(\left\vert
X_{n}\right\vert \geq n)\allowbreak\geq\allowbreak\sigma_{n}^{2}/n^{2}$. From
the last estimation, it follows that $P(\underset{n\rightarrow\infty}{\lim
}\left\vert \frac{X_{n}}{n}\right\vert =0)\allowbreak=\allowbreak0$, in other
words necessary condition for the SLLN to be satisfied is not satisfied (see
Proposition \ref{war_kon}). Thus, we have constructed a sequence of
independent random variables with given variances, that is not satisfying
SLLN. Condition (\ref{war_rozbieznosci}) indicates how quickly variances of
elements of the sequence of independent random variables have to increase, so
that MLLN is not satisfied for this sequence.
\end{example}

In connection with the previous example, there appears a question, can one
select the sequence of weights in such a way, as to make a sequence of
independent random variables having increasing variances satisfy a generalized
SLLN. It turns out that one can give a similar condition to
(\ref{war_rozbieznosci}), imposed on the speed with which variances of
elements of the sequence $\{X_{n}\}_{n\geq1}$ increase, so that for any system
of weights generalized SLLN is not satisfied.

To describe this situation more precisely, let us consider the sequence
$\{X_{n}\}_{n\geq1}$ of independent random variables such that $EX_{n}=0$,
$\operatorname*{var}(X_{n})=\sigma_{n}^{2}$. We have the following simple lemma.

\begin{lemma}
$\min\operatorname*{var}(\sum_{i=1}^{n}a_{i}X_{i})=1/\sum_{i=1}^{n}%
1/\sigma_{i}^{2}$, where the minimum is taken over all systems of positive
numbers $\left\{  a_{i}\right\}  _{i=1}^{n}$, such that $\sum_{i=1}^{n}%
a_{i}=1.$
\end{lemma}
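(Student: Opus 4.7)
The plan is to begin by using independence of the $X_i$'s, which gives
\[
\operatorname{var}\left(\sum_{i=1}^n a_i X_i\right) = \sum_{i=1}^n a_i^2 \sigma_i^2,
\]
so the problem reduces to a purely deterministic constrained optimization: minimize $\sum_{i=1}^n a_i^2 \sigma_i^2$ subject to $\sum_{i=1}^n a_i = 1$ with $a_i > 0$.

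Next I would apply the Cauchy--Schwarz inequality to the two sequences $\{a_i \sigma_i\}$ and $\{1/\sigma_i\}$:
\[
\left(\sum_{i=1}^n a_i\right)^{\!2} = \left(\sum_{i=1}^n (a_i \sigma_i) \cdot \frac{1}{\sigma_i}\right)^{\!2} \le \left(\sum_{i=1}^n a_i^2 \sigma_i^2\right)\!\left(\sum_{i=1}^n \frac{1}{\sigma_i^2}\right).
\]
Using the constraint $\sum_i a_i = 1$ this rearranges to the lower bound $\sum_i a_i^2 \sigma_i^2 \ge 1/\sum_i \sigma_i^{-2}$, which is the claimed minimum.

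Finally I would exhibit a minimizer to show the bound is attained. The equality case of Cauchy--Schwarz requires $a_i \sigma_i$ to be proportional to $1/\sigma_i$, i.e.\ $a_i \propto 1/\sigma_i^2$, and the normalization $\sum_i a_i = 1$ then forces
\[
a_i^\ast = \frac{1/\sigma_i^2}{\sum_{j=1}^n 1/\sigma_j^2}, \qquad i = 1,\ldots,n.
\]
A direct substitution gives $\sum_i (a_i^\ast)^2 \sigma_i^2 = 1/\sum_j \sigma_j^{-2}$, confirming that the infimum is in fact a minimum and that the $a_i^\ast$ are positive (so the constraint $a_i > 0$ is preserved). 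There is no real obstacle here; the only small subtlety is checking that the optimal weights are strictly positive, which is immediate since $\sigma_i^2 > 0$. (Alternatively one could obtain the same answer by Lagrange multipliers on the strictly convex objective $\sum_i a_i^2 \sigma_i^2$, but Cauchy--Schwarz is shorter and delivers both the bound and the minimizer in one step.)
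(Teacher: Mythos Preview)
Your proof is correct and follows essentially the same approach as the paper: both apply the Cauchy--Schwarz inequality to the sequences $\{a_i\sigma_i\}$ and $\{1/\sigma_i\}$, deduce the lower bound from the constraint $\sum_i a_i=1$, and then identify the minimizer from the equality case $a_i\propto 1/\sigma_i^2$, verifying it by direct substitution.
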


\begin{proof}
We have
\[
1=\allowbreak\sum_{i=1}^{n}a_{i}\allowbreak=\sum_{i=1}^{n}a_{i}\sigma_{i}%
\frac{1}{\sigma_{i}}\leq\sqrt{\sum_{i=1}^{n}a_{i}^{2}\sigma_{i}^{2}}\sqrt
{\sum_{i=1}^{n}1/\sigma_{i}^{2}}.
\]
Remembering, that $\operatorname*{var}(\sum_{i=1}^{n}a_{i}X_{i})=\sum
_{i=1}^{n}a_{i}^{2}\sigma_{i}^{2},$ we see that $\operatorname*{var}%
(\sum_{i=1}^{n}a_{i}X_{i})\geq1/\sum_{i=1}^{n}1/\sigma_{i}^{2}$. It is known,
that equality in inequality Schwarz that we applied satisfies, when
$a_{i}\sigma_{i}=\lambda/\sigma_{i};i=1,\ldots,n$ for some $\lambda$. Hence,
taking into account condition $1=\allowbreak\sum_{i=1}^{n}a_{i}\allowbreak$,
when $a_{i}=\eta/\sigma_{i}^{2}$, where $\eta=1/\sum_{i=1}^{n}1/\sigma_{i}%
^{2}$. We have moreover ,
\[
\operatorname*{var}(\sum_{i=1}^{n}X_{i}\eta/\sigma_{i}^{2})\allowbreak
=\allowbreak\eta^{2}\sum_{i=1}^{n}\sigma_{i}^{2}/\sigma_{i}^{4}\allowbreak
=\allowbreak1/\sum_{i=1}^{n}1/\sigma_{i}^{2}.
\]

\end{proof}

Having this lemma let us assume, that our sequence of the random variables
$\{X_{n}\}_{n\geq1}$ is such that
\begin{equation}
\sum_{i\geq1}\frac{1}{\sigma_{i}^{2}}<\infty. \label{rozb_suma}%
\end{equation}
Then of course we would have for any system of weights $\left\{
a_{in}\right\}  _{i\geq1}$ such that $\forall n=1,2,\ldots\;\sum_{i=1}%
^{n}a_{in}=1,$%
\[
\operatorname*{var}(\sum_{i=1}^{n}a_{in}X_{i})\geq1/\sum_{i=1}^{n}1/\sigma
_{i}^{2}\geq1/\sum_{i\geq1}\frac{1}{\sigma_{i}^{2}}>0.
\]
If the sequence $\{X_{n}\}_{n\geq1}$ satisfied\ generalized $SLLN$, then of
course respective sequence of averages (i.e. the sequence $\sum_{i=1}%
^{n}a_{in}X_{i})$ would converge with probability to zero. In particular, the
sequence of variances of its elements would converge\ to zero, which is
impossible in this case. Thus, the sequence of independent random variables
having variances satisfying condition (\ref{rozb_suma}) does not satisfy any
generalized SLLN.

\subsection{For random variables possessing variances}

\subsubsection{For uncorrelated random variables.}

Properties of the sequence of averages $\left\{  \bar{X}_{i}\right\}
_{i\geq1}$ are described in Lemmas \ref{lemosr} and \ref{prostypomocniczy}.
For convenience, we will recall below, these results compiled in one, new lemma.

\begin{lemma}
\label{o_zbieznosci}If the series $\sum_{i\geq1}\mu_{i}^{2}EX_{i+1}^{2}$ is
convergent, then: \newline\emph{i)} series $\sum_{i\geq0}\mu_{i}\bar{X}%
_{i}^{2}$ is convergent almost surely, \newline\emph{ii) }sequence $\left\{
\frac{\sum_{i=1}^{n}\alpha_{i}\bar{X}_{i}^{2}}{\sum_{i=1}^{n}\alpha_{i}%
}\right\}  _{n\geq1}$ converges almost surely to zero,\newline$\allowbreak
$\emph{iii)} \thinspace$\bar{X}_{n}$ $\underset{n\rightarrow\infty
}{\longrightarrow}$ $0$ if and only if, series $\sum_{i\geq0}\mu_{i}\bar
{X}_{i}X_{i+1}$ is convergent almost surely, \newline\emph{iv) }Let $\left\{
n_{k}\right\}  _{k\geq1}$ be a sequence of indices defined by the relationship
$\sum_{i=n_{k}+1}^{n_{k+1}}\mu_{i}\allowbreak=\allowbreak O(1)$, then $\bar
{X}_{n_{k}}\underset{k\rightarrow\infty}{\longrightarrow}0$ almost surely.
\end{lemma}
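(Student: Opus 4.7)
The plan is to reduce each of the four assertions to the numerical toolkit already developed in Lemmas \ref{podstawowy}, \ref{lemosr} and \ref{prostypomocniczy}, exploiting the fact that $\{\bar{X}_n\}$ obeys the recursion $\bar{X}_{n+1} = (1-\mu_n)\bar{X}_n + \mu_n X_{n+1}$. For assertion (i) I would square this recursion and take expectations; because $\bar{X}_n$ lies in the linear span of $X_1,\ldots,X_n$ and these are uncorrelated with (zero-mean) $X_{n+1}$, the cross term drops out, giving
\begin{equation*}
E\bar{X}_{n+1}^2 = (1-\mu_n)^2 E\bar{X}_n^2 + \mu_n^2 EX_{n+1}^2 = (1-\lambda_n)E\bar{X}_n^2 + \lambda_n c_n,
\end{equation*}
with $\lambda_n = \mu_n(2-\mu_n)$ (normal, since $\mu_n\to 0$) and $\lambda_n c_n = \mu_n^2 EX_{n+1}^2$. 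The hypothesis $\sum \mu_n^2 EX_{n+1}^2 < \infty$ is exactly $\sum \lambda_n c_n < \infty$, so Lemma \ref{podstawowy} forces $\sum \lambda_n E\bar{X}_n^2 < \infty$, hence $\sum \mu_n E\bar{X}_n^2 < \infty$. Corollary \ref{zbieznoscbezwzgledna} then upgrades this to the a.s. convergence of $\sum \mu_n \bar{X}_n^2$ demanded by (i).

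For (ii) I would set $y_n = (\sum_{j=0}^{n}\alpha_j \bar{X}_j^2)/(\sum_{j=0}^{n}\alpha_j)$; an elementary manipulation shows $y_{n+1} = (1-\mu_{n+1})y_n + \mu_{n+1}\bar{X}_{n+1}^2$, and a second application of Lemma \ref{podstawowy} — using the a.s. convergence of $\sum \mu_n \bar{X}_n^2$ just proved in (i) — forces $y_n \to 0$ a.s. For (iii) I would square the basic recursion and telescope:
\begin{equation*}
\bar{X}_{N+1}^2 = \bar{X}_0^2 - \sum_{n=0}^{N}\mu_n(2-\mu_n)\bar{X}_n^2 + 2\sum_{n=0}^{N}\mu_n(1-\mu_n)\bar{X}_n X_{n+1} + \sum_{n=0}^{N}\mu_n^2 X_{n+1}^2.
\end{equation*}
By (i) the first sum converges a.s., and by Corollary \ref{zbieznoscbezwzgledna} so does the third (its expectation being $\sum \mu_n^2 EX_{n+1}^2 < \infty$). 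Hence $\bar{X}_N^2$ has an a.s. limit if and only if $\sum_n \mu_n(1-\mu_n)\bar{X}_n X_{n+1}$ converges a.s., which, since $\mu_n\to 0$, is equivalent to a.s. convergence of $\sum_n \mu_n \bar{X}_n X_{n+1}$. Finally, (i) combined with $\sum \mu_n = \infty$ forces the limit, whenever it exists, to be $0$, so $\bar{X}_n \to 0$ a.s.

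For (iv) I would telescope the basic recursion across the block $[n_k,n_{k+1}-1]$ to obtain $\bar{X}_{n_{k+1}} = (1-\eta_k)\bar{X}_{n_k} + \eta_k W_k$, where
\begin{equation*}
1-\eta_k = \prod_{j=n_k}^{n_{k+1}-1}(1-\mu_j) = \frac{\sum_{i=0}^{n_k-1}\alpha_i}{\sum_{i=0}^{n_{k+1}-1}\alpha_i}, \qquad W_k = \frac{\sum_{j=n_k}^{n_{k+1}-1}\alpha_j X_{j+1}}{\sum_{j=n_k}^{n_{k+1}-1}\alpha_j}.
\end{equation*}
The hypothesis $\sum_{i=n_k+1}^{n_{k+1}}\mu_i = O(1)$ (understood two-sidedly) makes $\{\eta_k\}$ a normal sequence. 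Convexity gives $\bar{X}_{n_{k+1}}^2 \leq (1-\eta_k)\bar{X}_{n_k}^2 + \eta_k W_k^2$, and uncorrelatedness together with the elementary estimate $\alpha_j/\sum_{i=0}^{n_{k+1}-1}\alpha_i \leq \mu_j$ allows me to dominate $\sum_k \eta_k EW_k^2$ by a constant multiple of $\sum_j \mu_j^2 EX_{j+1}^2$, which is finite by hypothesis. Corollary \ref{zbieznoscbezwzgledna} then yields $\sum_k \eta_k W_k^2 < \infty$ a.s., and Corollary \ref{podstawowy1} applied to the block-inequality delivers $\bar{X}_{n_k}^2 \to 0$, i.e.\ $\bar{X}_{n_k} \to 0$ a.s.

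The most delicate step will be the bookkeeping in (iv): verifying, uniformly in $k$, that the block-wise products $\prod_{j=n_k}^{n_{k+1}-1}(1-\mu_j)$ stay bounded away from both $0$ and $1$, so that $\eta_k$ is genuinely normal and the ratios $\alpha_j/\sum_{i=0}^{n_{k+1}-1}\alpha_i$ may indeed be compared to $\mu_j$ up to multiplicative constants independent of $k$. This is precisely the content encoded by $\sum_{i=n_k+1}^{n_{k+1}}\mu_i = O(1)$, and it is the only place where that hypothesis is used in an essential way.
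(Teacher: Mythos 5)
Your proposal is correct and follows essentially the same route as the paper, which proves this lemma by declaring it a repetition of assertions \ref{pierwsza}, \ref{zb_sred}, \ref{zb_kwadrat} and \ref{zb_podciagu} of Lemma \ref{prostypomocniczy} (resting in turn on Lemma \ref{lemosr}, Lemma \ref{podstawowy}, Corollary \ref{zbieznoscbezwzgledna} and Corollary \ref{podstawowy1}): squaring the recursion and taking expectations for \emph{i)}, the recursive form of the weighted averages for \emph{ii)}, the squared recursion for \emph{iii)}, and the block recursion with convexity for \emph{iv)}. One small touch-up: in \emph{iii)} the passage from $\sum\mu_n(1-\mu_n)\bar X_n X_{n+1}$ to $\sum\mu_n\bar X_n X_{n+1}$ is not justified by $\mu_n\to0$ alone but by the Cauchy--Schwarz bound $\bigl|\sum\mu_n^2\bar X_n X_{n+1}\bigr|\le\bigl(\sum\mu_n\bar X_n^2\bigr)^{1/2}\bigl(\sum\mu_n^3 X_{n+1}^2\bigr)^{1/2}<\infty$ a.s., exactly as in the proof of Lemma \ref{lemosr}.
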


\begin{proof}
Proofs of \emph{i)}, \emph{ii)}, \emph{iii)}, \emph{iv) }are repetitions of
proofs of assertions \ref{pierwsza}, \ref{zb_sred}, \ref{zb_kwadrat},
\ref{zb_podciagu} of Lemma \ref{prostypomocniczy}.
\end{proof}

\begin{remark}
\label{o_wyzszych_srednich}Let us notice that having assumed convergence of
the series of assertion \emph{i) }Lemma \ref{o_zbieznosci} one can prove
(making use of the iterative form, the properties of the sequence $\left\{
\mu_{i}\right\}  _{i\geq0}$ and of Lemma \ref{podstawowy}) not only
convergence of the sequence of assertion \emph{ii)} of this lemma, but also
the convergence of e.g. sequence
\[
\left\{  \frac{\sum_{i=1}^{n}\alpha_{i}\left(  \sum_{j=0}^{i}\alpha
_{j}\right)  ^{k-1}\bar{X}_{i}^{2}}{\left(  \sum_{j=0}^{n}\alpha_{j}\right)
^{k}}\right\}  _{n\geq1}%
\]
for $k=1,2,\ldots\,.$
\end{remark}

Moreover, we have of course Menchoff ' Theorem \ref{menshov} to examine of the
general case of the sequence $\left\{  X_{n}\right\}  _{n\geq1}$ and Theorem
\ref{zb_mart} (Doob) for examining of the case of the sequence $\left\{
X_{n}\right\}  _{n\geq1}$ consisting of martingale differences, which are used
to examine almost sure convergence of the sequence $\left\{  \bar{X}%
_{n}\right\}  _{n\geq1},$together with Lemma \ref{podstawowy} gives the
following theorem.

\begin{theorem}
\label{prawowl}Let the sequence $\left\{  X_{n}\right\}  _{n\geq1}$ consist of
uncorrelated random variables having finite second moments. \newline If series
$\sum_{i\geq1}\mu_{i}^{2}EX_{i+1}^{2}\left(  \ln i\right)  ^{2}$ is convergent
\newline or if convergent is the series $\sum_{i\geq1}\mu_{i}^{2}EX_{i+1}^{2}$
and additionally sequence $\left\{  X_{n}\right\}  _{n\geq1}$ consist of
martingale differences, then:

\begin{enumerate}
\item $\bar{X}_{n}$ $\underset{n\rightarrow\infty}{\longrightarrow}$ $0$
almost surely

\item series $\sum_{i\geq1}\mu_{i}X_{i+1}$ converges almost surely to some
square integrable random variable $S.$
\end{enumerate}
\end{theorem}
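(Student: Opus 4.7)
The plan is to prove assertion 2 first and deduce assertion 1 as an immediate consequence. In both hypothesis regimes the crucial intermediate step is the almost sure convergence of the series $\sum_{i\geq 0}\mu_{i}X_{i+1}$ to a square integrable random variable $S$. Once this is established, I would apply Lemma \ref{podstawowy} to the pointwise realizations of the recursive identity
\[
\bar{X}_{n+1}=(1-\mu_{n})\bar{X}_{n}+\mu_{n}X_{n+1},
\]
taking $x_{n}=\bar{X}_{n}$ and $b_{n}=X_{n+1}$; the convergence of $\sum_{n\geq 0}\mu_{n}b_{n}$ then forces $x_{n}\to 0$, which is exactly assertion 1 (and as a bonus one obtains convergence of $\sum_{n}\mu_{n}\bar{X}_{n}$ as well).

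For assertion 2 in the orthogonal hypothesis, I would view $\{X_{i}/\sigma_{i}\}$ (with $\sigma_{i}^{2}=EX_{i}^{2}$; terms with $\sigma_{i}=0$ being dropped since they contribute zero to every sum) as an orthonormal system in $L^{2}(\Omega,\mathcal{F},P)$, so that $\sum_{i\geq 0}\mu_{i}X_{i+1}$ rewrites as an orthogonal series with coefficients $c_{i}=\mu_{i}\sigma_{i+1}$. The hypothesis $\sum_{i\geq 1}\mu_{i}^{2}EX_{i+1}^{2}(\ln i)^{2}<\infty$ is precisely $\sum c_{i}^{2}\log^{2}i<\infty$, so Theorem \ref{menshov} (Rademacher--Menchoff) yields almost sure convergence. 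Square integrability of the limit follows from $\sum c_{i}^{2}<\infty$, which gives also convergence in $L^{2}$ of the partial sums to the same limit by Parseval.

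In the martingale-difference hypothesis, the partial sums $M_{n}=\sum_{i=0}^{n-1}\mu_{i}X_{i+1}$ form a martingale with respect to $\{\mathcal{G}_{n}\}$, and
\[
EM_{n}^{2}=\sum_{i=0}^{n-1}\mu_{i}^{2}EX_{i+1}^{2}\leq \sum_{i\geq 0}\mu_{i}^{2}EX_{i+1}^{2}<\infty,
\]
so $\{M_{n}\}$ is bounded in $L^{2}$ and therefore converges almost surely and in $L^{2}$ to an $S\in L^{2}$, by the first theorem of Section on convergence of series of random variables (of which Theorem \ref{ozbwL2} is the stronger conditional version). As noted, I may assume $EX_{i}=0$ throughout (otherwise replace $X_{i}$ by $X_{i}-EX_{i}$, which preserves uncorrelation, the martingale difference property, and every numerical hypothesis since the hypotheses only involve $EX_{i}^{2}$).

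The main obstacle is formal rather than substantive: one must carefully verify that the analytic statement of the Rademacher--Menchoff theorem (which is formulated on a finite measure space $([a,b],\mathcal{B}([a,b]),\mu)$) transfers without loss to the probabilistic setting with a general orthonormal sequence of random variables, and in particular that no integrability issues arise at terms where $\sigma_{i}=0$. Beyond that checkpoint, the proof amounts simply to invoking Theorem \ref{menshov} or the $L^{2}$-bounded martingale convergence theorem to obtain convergence of $\sum \mu_{i}X_{i+1}$, and then feeding that into Lemma \ref{podstawowy}.
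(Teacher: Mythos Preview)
Your approach is correct and matches the paper's: establish almost sure convergence of $\sum_{i\geq 0}\mu_{i}X_{i+1}$ via Rademacher--Menchoff (Theorem~\ref{menshov}) in the uncorrelated case or via $L^{2}$-bounded martingale convergence in the martingale-difference case, then feed this into Lemma~\ref{podstawowy} applied to the recursion $\bar{X}_{n+1}=(1-\mu_{n})\bar{X}_{n}+\mu_{n}X_{n+1}$ to obtain $\bar{X}_{n}\to 0$ almost surely. The ``obstacle'' you flag is not one: the paper explicitly remarks in Section~\ref{sz_ort} that the measure-theoretic formulation of the Rademacher--Menchoff theorem carries over verbatim to any probability space, the segment $[a,b]$ playing no essential role.
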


If additionally system $\{X_{n}\}_{n\geq1}$ is PSO (see page.
\pageref{system PSO}), then we have Lemma \ref{oo}, and if additionally we
know, that $\sup_{n\geq1}\mu_{n}\ln n<\infty$, then we get Theorem \ref{o PSO}.

As a corollary, we have a classical theorem concerning the sequence of
independent random variables that satisfy SLLN.

\begin{corollary}
Let $\left\{  X_{i}\right\}  _{i\geq1}$ be a sequence independent random
variables having finite variances such that $\sum_{i\geq1}\operatorname*{var}%
(X_{i})/(i+1)^{2}<\infty$. Then sequence $\left\{  X_{i}\right\}  _{i\geq1}$
satisfies SLLN.
\end{corollary}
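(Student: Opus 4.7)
The plan is to deduce this as a direct specialization of Theorem~\ref{prawowl}, taking the weight sequence $\alpha_i \equiv 1$, which makes $\bar X_n$ the ordinary arithmetic mean and, by $\mu_n = \alpha_n/\sum_{k=0}^n \alpha_k$, yields the normal sequence $\mu_n = 1/(n+1)$. The branch of Theorem~\ref{prawowl} I intend to invoke is the martingale-differences branch, which requires only convergence of $\sum_{i\geq 1}\mu_i^2 EX_{i+1}^2$ (without the logarithmic factor); the payoff of choosing this branch is that we avoid the Rademacher--Menchoff condition and only need the variance series assumed in the hypothesis.

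First I would make the standard reduction to the centered case by replacing each $X_i$ by $X_i - EX_i$; this does not alter $\operatorname{var}(X_i)$, nor the quantity $Y_N$ in Definition~\ref{LLN}, so the reduction is cost-free. With $EX_i = 0$, I would then verify that the centered independent sequence is a martingale difference sequence with respect to its own filtration $\mathcal{G}_n = \sigma(X_1,\ldots,X_n)$: this is immediate since $E(X_{n+1}\mid \mathcal{G}_n) = EX_{n+1} = 0$ by independence. Thus the second hypothesis option of Theorem~\ref{prawowl} is available.

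Next I would translate the assumption $\sum_{i\geq 1}\operatorname{var}(X_i)/(i+1)^2 < \infty$ into the exact form required by Theorem~\ref{prawowl}. Since $EX_{i+1} = 0$ we have $EX_{i+1}^2 = \operatorname{var}(X_{i+1})$, and with $\mu_i = 1/(i+1)$ we get
\[
\sum_{i\geq 1}\mu_i^2 EX_{i+1}^2 \;=\; \sum_{i\geq 1}\frac{\operatorname{var}(X_{i+1})}{(i+1)^2},
\]
which differs from the hypothesis only by an index shift and the (finite) missing $i=1$ term; both versions converge or diverge together. Applying Theorem~\ref{prawowl} therefore yields $\bar X_n \to 0$ almost surely, which after undoing the centering is precisely the SLLN in the sense of Definition~\ref{LLN}.

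There is no substantive obstacle here: the entire argument is bookkeeping to match the normalization conventions and verify that independence upgrades to the martingale-difference hypothesis. The only point that deserves explicit mention is the choice of the martingale-differences branch of Theorem~\ref{prawowl} rather than the uncorrelated branch, since the latter would require $\sum \mu_i^2 EX_{i+1}^2 (\ln i)^2 < \infty$, which is strictly stronger than what we assume.
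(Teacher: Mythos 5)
Your proposal is correct and follows essentially the same route as the paper: the paper's proof likewise observes that the (centered) independent sequence consists of martingale differences and applies the martingale-differences branch of Theorem~\ref{prawowl} with $\mu_i = 1/(i+1)$, i.e.\ constant weights. Your explicit treatment of the centering and the index bookkeeping just fills in details the paper leaves implicit.
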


\begin{proof}
It follows from previous theorems, since, firstly the sequence $\{X_{i}%
\}_{i\geq1}$ is a sequence of martingale differences, secondly we took
$\mu_{i}=1/(i+1)$.
\end{proof}

\begin{example}
The simplest application of Theorem \ref{prawowl}, is to consider a sequence
$\left\{  X_{n}\right\}  _{n\geq1}$ with zero expectations and increasing
variances e.g. following the scheme : $\operatorname{var}(X_{n})\approx
\frac{n^{\alpha}}{\ln^{\beta}n}$. Then, as it follows from the above mentioned
theorems, the sequence\newline$\left\{  Y_{n}=\frac{\sum_{i=1}^{n}X_{i}}%
{n}\right\}  _{n\geq1}$ converges almost surely to zero, if only: either
$\alpha<1$, or if $\alpha=1$ and $\beta>3$ in the general (uncorrelated) case
and either $\alpha<1$, or $\alpha=1$ and $\beta>1$ of the case of martingale
differences. Good illustration of SLLN in this case is the example and
simulation discussed in example \ref{slabe_pwl}. In order to find out, how the
value of the coefficient $\alpha$ influences the quality of convergence we
will present new simulation. We took the same random variables, as in example
\ref{slabe_pwl} with the small difference that now $\beta=\frac{3}{14}$, that
is $\alpha=\frac{6}{14}<0.5$. Again, as before, we expose every 1 hundredth
observation, i.e. for $J=200$, $400$, $\ldots,4000000$ we present $y_{J}$%

\begin{figure}[ptb]%
\centering
\includegraphics[
height=1.67in,
width=2.8928in
]%
{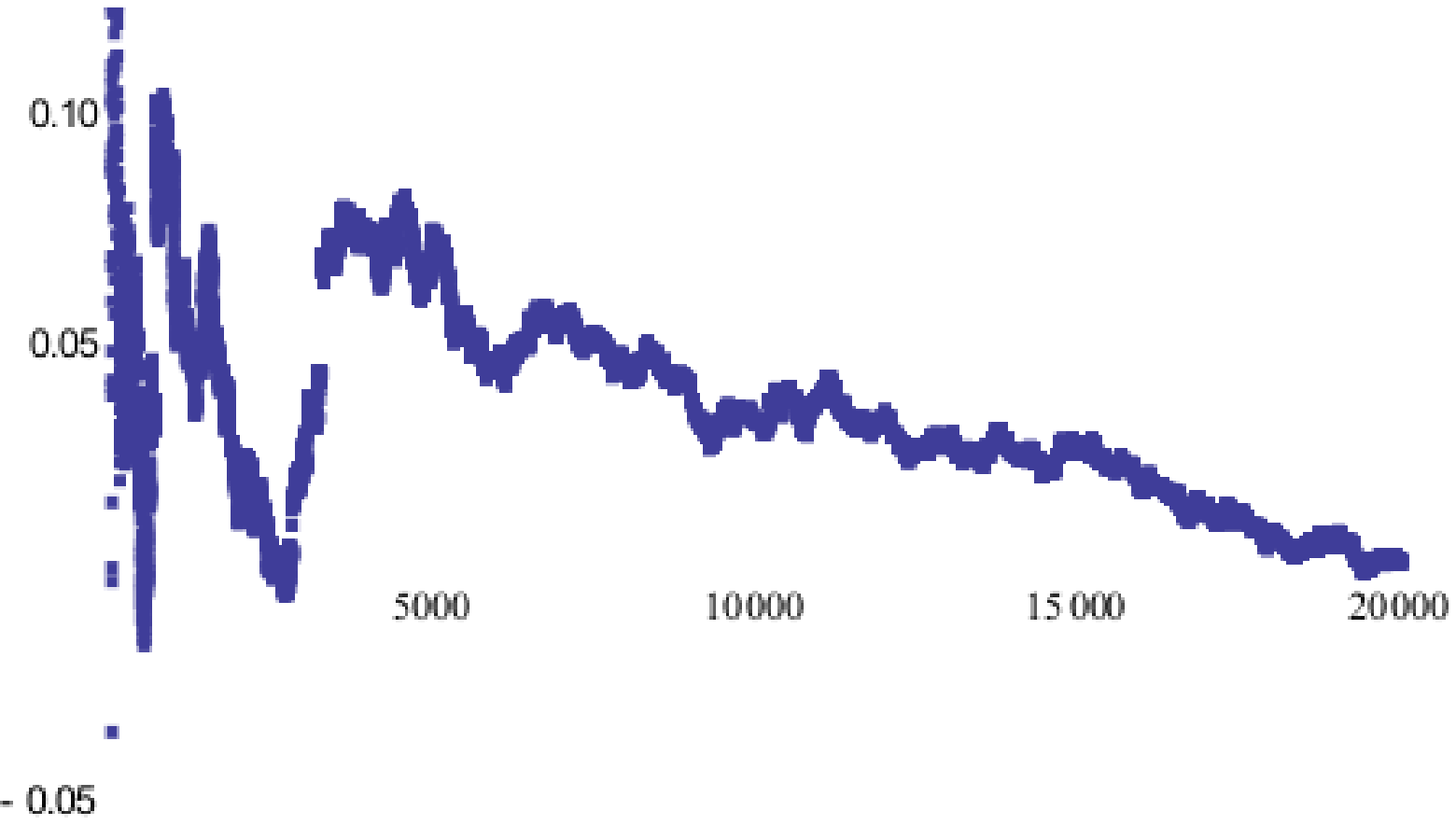}%
\end{figure}

\end{example}

Much more interesting is, however the usage of the above mentioned theorem to
examine the of speed convergence in \textquotedblright
ordinary\textquotedblright\ LLN. More precisely, let us assume, that we are
interested in speed of convergence of the sequence $\left\{  \frac{\sum
_{i=1}^{n}X_{i}}{n}\right\}  _{n\geq1}$ to zero. This means that we want to
find such, an increasing number sequence $\left\{  \chi_{n}\right\}  _{n\geq
1}$, that the sequence \newline$\left\{  \chi_{n}\frac{\sum_{i=1}^{n}X_{i}}%
{n}\right\}  _{n\geq1}$ converges almost surely to zero. Such considerations
will be performed in the next example.

\begin{example}
Let us consider, sequence of uncorrelated random variables $\left\{
X_{i}\right\}  _{i\geq1}$ such that $EX_{i}=0$, $EX_{i}^{2}=1;\allowbreak
i\geq1$. Let us set
\[
Y_{n}=\frac{\sum_{i=1}^{n}X_{i}}{\sqrt{n\ln(n+1)\ln\ln^{2}(n+2)}};n\geq1.
\]
Hence $\chi_{n}=\sqrt{\frac{n}{\ln(n+1)\ln\ln^{2}(n+2)}}$. Let us present
$Y_{n}$ in the iterative form.
\[
Y_{n+1}=(1-\nu_{n})Y_{n}+\frac{X_{n+1}}{\sqrt{\left(  n+1\right)  \ln
(n+2)\ln\ln^{2}\left(  n+3\right)  }}%
\]
where
\[
\nu_{n}=1-\sqrt{\frac{n\ln(n+1)\ln\ln^{2}(n+2)}{\left(  n+1\right)  \ln\left(
n+2\right)  \ln\ln^{2}(n+3)}}\approx\frac{1}{2n}+\frac{1}{2n\ln n}+o(\frac
{1}{n\ln n}).
\]
$\allowbreak$ Since, the series $\sum_{n\geq1}\frac{1}{n\ln(n+1)\ln\ln
^{2}(n+2)}$ converges, we deduce that if only additionally we will assume,
that the sequence $\left\{  X_{n}\right\}  _{n\geq1}$ is the sequence of
martingale differences, then sequence $\left\{  Y_{n}\right\}  _{n\geq1}$
converges almost surely to zero.

On the other hand, however, if one does not assume, that the sequence
$\left\{  X_{n}\right\}  _{n\geq1}$ consists of martingale differences, then
knowing, that the series
\[
\sum_{n\geq1}\frac{1}{n\ln(n+1)\ln\ln^{2}(n+2)}\ln^{2}n,
\]
does not converge then on the basis of Menchoff' Theorem \ref{menshov} we
deduce that series
\[
\sum_{n\geq1}X_{n}/\sqrt{n\ln(n+1)\ln\ln^{2}(n+2)}%
\]
may not converge almost surely, although it does converge in mean square. It
depends on further properties of distributions of variables $\left\{
X_{i}\right\}  _{i\geq1}$, not only on the properties of their second moments.
Similarly the sequence $\left\{  Y_{n}\right\}  $ may converge almost surely,
or may not depending on the properties of the sequence $\left\{
X_{i}\right\}  _{i\geq0}$. Following assertion \emph{iv) }of \emph{\ }Lemma
\ref{prostypomocniczy} there exists a subsequence of the sequence $\left\{
Y_{n}\right\}  _{n\geq1}$ that converges almost surely. From the form of the
sequence $\left\{  \nu_{n}\right\}  $ we deduce that the sequence $\left\{
n_{k}\right\}  $ can be chosen to be $\left\{  2^{k}\right\}  $. Hence, on the
basis of assertion \emph{iv)} of the above mentioned lemma, we deduce that
$Y_{2^{k}}\underset{k\rightarrow\infty}{\longrightarrow}0$ almost surely.
Moreover, from assertion \emph{ii)} we infer, for example, that the sequence
$\left\{  \frac{\sum_{i=1}^{n}Y_{i}^{2}/\sqrt{i}}{\sqrt{n}}\right\}  $
converges almost surely to zero. Strict justification of this fact, we leave
to the reader as an exercise.
\end{example}

Let us now suppose, that weights $\left\{  \alpha_{i}\right\}  _{i\geq0}$ are
constant. Hence, $\mu_{i}=\frac{1}{i+1}$, $i\geq0$, $\bar{X}_{n}=\frac{1}%
{n}\sum_{i=1}^{n}X_{i}$. Following remark \ref{o_wyzszych_srednich} under the
assumption, that series $\sum\mu_{i}\bar{X}_{i}^{2}$ is convergent with
probability $1$, it follows that $\frac{\sum_{i=1}^{n}i^{2}\bar{X}_{i}^{2}%
}{n^{3}}\rightarrow0$ with probability $1$ ($k=1$, $\alpha_{i}=i^{2})$. Let
$\beta$ will be any nonnegative number. From assertion $\emph{iii)}$ of Lemma
\ref{Cesaro} we have:
\[
A_{n}^{\beta+1}=O(n^{\beta+1}),\sum_{i=1}^{n}\left(  A_{n-i}^{\beta-1}\right)
^{2}\allowbreak=\allowbreak\sum_{i=1}^{n}\left(  A_{i}^{\beta-1}\right)
^{2}\allowbreak=\allowbreak\sum_{i=1}^{n}O(i^{2\beta-2})\allowbreak
=\allowbreak O(n^{2\beta-1}).
\]
Hence, $\frac{\sqrt{\sum_{i=1}^{n}\left(  A_{n-i}^{\beta-1}\right)  ^{2}%
}n^{3/2}}{A_{n}^{\beta+1}}\allowbreak\cong\allowbreak O(1)$ and consequently
with probability $1$ we have:
\[
\left\vert \frac{\sum_{i=1}^{n}A_{n-i}^{\beta-1}i\bar{X}_{i}}{A_{n}^{\beta+1}%
}\right\vert \leq\frac{\sqrt{\sum_{i=1}^{n}\left(  A_{n-i}^{\beta-1}\right)
^{2}}n^{3/2}}{A_{n}^{\beta+1}}\sqrt{\frac{\sum_{i=1}^{n}i^{2}\bar{X}_{i}^{2}%
}{n^{3}}}\rightarrow0;\,n\rightarrow\infty.
\]
Hence comparing above mentioned expression with assertion \emph{iv)} of Lemma
\ref{Cesaro} we see (taking $\alpha=1$ and $\beta>1)$, that the sequence
$\{X_{n}\}_{n\geq1}$ is $(C,\beta)$ summable for $\beta>1$. We have thus
proved the following Theorem, being a generalization of Theorems $1$ and $2$
of the paper \cite{Moricz83b}.

\begin{theorem}
\label{(C,alfa)}If $\{X_{n}\}_{n\geq1}$ is a sequence of the random variables
such that
\[
\sum_{n\geq1}\frac{EX_{n}^{2}}{(n+1)^{2}}<\infty,\;\sum_{n\geq1}\frac{E\bar
{X}_{n}^{2}}{n+1}<\infty,
\]
where $\bar{X}_{n}=\frac{1}{n}\sum_{i=1}^{n}X_{i}$, then the sequence
$\{X_{n}\}_{n\geq1}$ is $(C,\alpha)$ summable for $\alpha>1$.
\end{theorem}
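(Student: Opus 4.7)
The plan is to deduce the theorem from the sketch given immediately before its statement, turning that sketch into a proof that combines Lemma~\ref{o_zbieznosci} (via Remark~\ref{o_wyzszych_srednich}) with the identity of Lemma~\ref{Cesaro}(iv) and a Cauchy--Schwarz bound. Without loss of generality I would first reduce to the centered case $EX_n=0$. The first hypothesis is precisely $\sum_{i\geq 0}\mu_i^2 EX_{i+1}^2<\infty$ for the weights $\mu_i=1/(i+1)$ corresponding to ordinary averages, so it places us inside the framework of Lemma~\ref{o_zbieznosci}; the second hypothesis, by Tonelli, gives directly $\sum_{n\geq 1}\bar{X}_n^2/(n+1)<\infty$ a.s., which is the conclusion of Lemma~\ref{o_zbieznosci}(i).

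Next I would invoke Remark~\ref{o_wyzszych_srednich} with weights $\alpha_i\equiv 1$ and $k=3$ (so that $\sum_{j=0}^i\alpha_j=i+1$) to conclude
\[
\frac{\sum_{i=1}^{n}i^{2}\bar{X}_{i}^{2}}{n^{3}}\longrightarrow 0\qquad\text{a.s.}
\]
(A self-contained justification is also available through a Kronecker-type summation-by-parts using $\bar X_i^2=(i+1)(s_i-s_{i-1})$ with $s_n=\sum_{i=1}^n\bar X_i^2/(i+1)$.) This is the key a.s.\ control on the weighted averages.

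Now fix $\alpha>1$ and put $\beta=\alpha-1>0$. Applying Lemma~\ref{Cesaro}(iv) with the exponents $(1,\beta)$ to the sequence $\{X_n\}$ expresses its $(C,1+\beta)$-mean as $Q_n^{1+\beta}=\frac{1}{A_n^{1+\beta}}\sum_{k=0}^{n}A_{n-k}^{\beta-1}A_k^{1}Q_k^{1}$, where $A_k^{1}Q_k^{1}=\sum_{i=0}^{k}q_i$ is (up to the harmless shift of indices by one) $k\bar{X}_k$. By Cauchy--Schwarz,
\[
|Q_n^{1+\beta}|\;\leq\;\frac{1}{A_n^{\beta+1}}\sqrt{\sum_{i=1}^{n}\bigl(A_{n-i}^{\beta-1}\bigr)^{2}}\cdot n^{3/2}\cdot\sqrt{\frac{\sum_{i=1}^{n}i^{2}\bar X_i^{2}}{n^{3}}}.
\]
The asymptotics of Lemma~\ref{Cesaro}(iii) give $A_n^{\beta+1}=O(n^{\beta+1})$ and $\sum_{i=1}^n(A_{n-i}^{\beta-1})^2=O(n^{2\beta-1})$, so the prefactor is $O(1)$; the last factor tends to $0$ by the step above, yielding $Q_n^{1+\beta}\to 0$ a.s., i.e.\ $(C,\alpha)$-summability of $\{X_n\}$.

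The main obstacle is the accurate handling of the Cesaro coefficient asymptotics in the Cauchy--Schwarz bound: one must verify $A_{n-i}^{\beta-1}\asymp (n-i)^{\beta-1}$ uniformly enough to recover the stated $O(n^{2\beta-1})$ sum and thus obtain the cancellation $n^{\beta-1/2}\cdot n^{3/2}/n^{\beta+1}=O(1)$. The case $0<\beta\leq 1/2$ (where the sum over squares of coefficients is actually $O(1)$ rather than $O(n^{2\beta-1})$) requires a separate, slightly sharper application of the Schwarz inequality (concentrating the estimate on indices close to $n$), after which the resulting $(C,\alpha)$-summability for larger $\alpha$ can, if needed, be transferred to all $\alpha>1$ via Lemma~\ref{Cesaro}(vi). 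Everything else is a direct verification using the lemmas already proved.
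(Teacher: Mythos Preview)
Your proposal follows the paper's argument essentially line for line: the paper's proof (given in the paragraph immediately preceding the theorem statement) uses exactly the chain you describe --- from $\sum_{n}\bar X_n^2/(n+1)<\infty$ a.s.\ derive $n^{-3}\sum_{i\le n}i^2\bar X_i^2\to 0$ a.s.\ via Remark~\ref{o_wyzszych_srednich}, then apply Lemma~\ref{Cesaro}(iv) with $\alpha=1$, and bound the resulting sum by Cauchy--Schwarz together with the asymptotics of Lemma~\ref{Cesaro}(iii).

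You are right to flag the range $0<\beta\le 1/2$. The estimate $\sum_{i=1}^n(A_i^{\beta-1})^2=O(n^{2\beta-1})$ that the paper uses is false there (the sum is bounded, while $n^{2\beta-1}\to 0$), so the prefactor in the Cauchy--Schwarz inequality becomes $O(n^{1/2-\beta})$ rather than $O(1)$ and the argument as written does not close. The paper does not address this either, so your observation is sharper than the source on this point.

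However, your proposed repair via Lemma~\ref{Cesaro}(vi) is oriented the wrong way. That lemma says that $(C,\alpha)$-summability implies $(C,\alpha+\gamma)$-summability for $\gamma>0$: one can pass only to \emph{higher} Ces\`aro orders, never to lower ones. Thus having established $(C,\alpha)$-summability for $\alpha>3/2$ (which the straightforward Cauchy--Schwarz argument does give) cannot be ``transferred down'' to $\alpha\in(1,3/2]$ by Lemma~\ref{Cesaro}(vi). If you want to cover that range you must produce an estimate that works directly for small $\beta$; the vague ``sharper Schwarz concentrating near $k=n$'' is not obviously sufficient, since the Toeplitz weights $A_{n-k}^{\beta-1}(k+1)/A_n^{1+\beta}$ are spread over a positive fraction of the indices, not concentrated near $k=n$. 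Drop the appeal to Lemma~\ref{Cesaro}(vi) and either supply a genuine argument for $0<\beta\le 1/2$ or note explicitly (as the paper in effect does) that the displayed computation only covers $\beta>1/2$.
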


\begin{remark}
Of course, if we will assume, that the sequence $\{X_{n}\}_{n\geq1}$ consists
of orthogonal random variables, then the first of the conditions of the above
mentioned theorem entails the second one (compare assertion Lemma
\ref{o_zbieznosci}). Hence, in this case we get precisely above mentioned
theorems of the quoted paper.
\end{remark}

To end this part let us recall, that for the uncorrelated random variables
condition $\sum_{n\geq1}\frac{EX_{n}^{2}}{(n+1)^{2}}<\infty$ is not sufficient
for SLLN to occur. It is so since we have the following theorem.

\begin{theorem}
\label{o_rozbieznosci}Let $\left\{  \sigma_{n}^{2}\right\}  _{n\geq1}$ be a
sequence of positive numbers such that series :
\[
\,\sum_{n\geq1}\frac{EX_{n}^{2}\ln^{2}(n+1)}{n^{2}}%
\]
is divergent, and a number sequence $\left\{  \frac{\sigma_{n}^{2}}{n^{2}%
}\right\}  _{n\geq1}$ is non-increasing. Then one can construct the
probability space and define a sequence $\{X_{n}\}_{n\geq1}$ of uncorrelated
random variables with zero expectations such that $EX_{n}^{2}=\sigma_{n}%
^{2},\;n\geq1$ and with probability $1$
\[
\underset{n\rightarrow\infty}{\lim\inf}\,\left\vert \frac{\sum_{i=1}^{n}X_{i}%
}{n}\right\vert =\infty.
\]

\end{theorem}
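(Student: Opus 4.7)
The plan is to translate this converse LLN statement into a problem about divergent orthogonal series and then invoke Menchoff's second (constructive) theorem. Set $c_n = \sigma_n/n$; the hypotheses become $\{c_n^2\}$ non-increasing and $\sum_{n\geq 1}c_n^2 \ln^2 n = \infty$, which are exactly the standing assumptions of Menchoff's counterexample. I seek uncorrelated $X_n$ with $EX_n=0$ and $EX_n^2=\sigma_n^2$; the natural ansatz is $X_n = \sigma_n Y_n$ for some standardized orthonormal system $\{Y_n\}$ with $EY_n=0$, defined on a probability space of our choosing.

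I would split the argument according to whether $\sum_{n\geq 1}c_n^2 < \infty$. If this sum diverges, monotonicity of $c_n^2$ combined with a Kronecker-type estimate gives that $\operatorname{var}(\bar{X}_n) = n^{-2}\sum_{i=1}^n \sigma_i^2 \to \infty$; taking $Y_n$ to be i.i.d.\ standard normals then makes $\bar{X}_n$ Gaussian with variance tending to infinity, and the Kolmogorov zero--one law together with the Gaussian shape gives $|\bar{X}_n|\to\infty$ a.s. If instead $\sum c_n^2 < \infty$, the second Menchoff theorem applied on $([0,1],\mathcal{B}([0,1]),|\cdot|)$ (with an additional orthogonality to the constant $1$, so that $EY_n=0$) yields an orthonormal system $\{Y_n\}$ for which $\sum_{n\geq 1}c_n Y_n$ diverges almost everywhere. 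Since $c_n Y_n = X_n/n$, this is a.s.\ divergence of $\sum X_n/n$.

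The main obstacle is upgrading this divergence to the pointwise blow-up $\liminf_n|\bar{X}_n|=\infty$. Writing $S_k = \sum_{i=1}^k X_i/i$, Abel summation gives
\[
\bar{X}_n \;=\; S_n - \frac{1}{n}\sum_{k=1}^{n-1}S_k,
\]
so it suffices to arrange that $|S_n|$ grows strictly faster than its own Ces\`aro mean. Here one must look inside Menchoff's block construction: the system $\{Y_n\}$ is built so that on block $k$ the partial sums take the values $\pm\gamma_k$ for a deterministic $\gamma_k\to\infty$ whose growth is controlled by the divergence of $\sum c_n^2\ln^2 n$, with the signs governed by one independent Rademacher variable per block. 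By lengthening the blocks geometrically and sharpening the amplitudes $\gamma_k$ to grow polynomially (still compatible with $\sum c_n^2 < \infty$), one obtains that $|S_n|$ dominates its Ces\`aro average on every block, and a Borel--Cantelli argument over the blocks delivers $|\bar{X}_n|\to\infty$ a.s. The delicate step is this quantitative refinement of Menchoff's purely oscillatory counterexample to force the Ces\`aro means themselves to blow up; a detailed version of this construction can be found in R\'ev\'esz's monograph \cite{Revesz67}.
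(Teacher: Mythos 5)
There is a genuine gap — in fact two. First, your case $\sum c_n^2=\infty$ rests on a false claim: monotonicity of $c_n^2$ together with $\sum c_n^2=\infty$ does \emph{not} give $\operatorname{var}(\bar{X}_n)=n^{-2}\sum_{i\le n}\sigma_i^2\to\infty$. Take $\sigma_n^2=n$, i.e.\ $c_n^2=1/n$ (non-increasing, and $\sum c_n^2\ln^2 n=\infty$, so this sequence is admissible): then $n^{-2}\sum_{i\le n}\sigma_i^2\to 1/2$, and with your i.i.d.\ Gaussian choice $\bar{X}_n$ is asymptotically $N(0,1/2)$, so $\{\liminf_n|\bar{X}_n|<\infty\}$ is a tail event of positive, hence full, probability — the construction fails outright for this admissible sequence. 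Even in the subcase where the variance does diverge, the zero--one law only tells you the probability of $\{\liminf|\bar{X}_n|<\infty\}$ is $0$ or $1$; to rule out $1$ you need a quantitative rate (the events $\{|\bar{X}_n|\le M\}$ are not independent, so Borel--Cantelli requires summability of their probabilities along all $n$, which bounded-from-below or slowly growing variances do not provide).

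Second, in the case $\sum c_n^2<\infty$ the reduction to Menchoff's second theorem only yields a.e.\ divergence of $\sum c_nY_n=\sum X_n/n$, which by Lemma \ref{podstawowy} merely excludes the conjunction ``$\bar{X}_n\to 0$ and $\sum\bar{X}_n/n$ converges''; it is very far from $\liminf_n|\bar{X}_n|=\infty$ a.s. The step you describe as ``lengthening the blocks and sharpening the amplitudes'' so that $|S_n|$ dominates its own Ces\`aro mean on every block is precisely the heart of the theorem, and it is not carried out: no choice of block lengths, amplitudes, or verification that the modified coefficients still satisfy the hypotheses is given, and the Borel--Cantelli argument over blocks is only named. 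Deferring this to R\'ev\'esz's monograph does not close the gap — the construction needed here is the one the present text attributes to Tandori \cite{Tandori72} (the paper itself gives no proof, stating it is long and complicated and citing that source), and it must exploit the full strength of the divergence of $\sum c_n^2\ln^2 n$, since under only slightly stronger summability (e.g.\ $\sum c_n^2(\ln\ln n)^2<\infty$, compatible with $\sum c_n^2\ln^2 n=\infty$) summability theorems for orthogonal series obstruct naive versions of such a counterexample. A smaller but real point: Menchoff's theorem gives an orthonormal system, not one orthogonal to the constants, so $EY_n=0$ is an additional property you assert rather than arrange.
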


\begin{proof}
Is long and complicated, one can find it in the paper \cite{Tandori72}.
\end{proof}

\subsubsection{For correlated random variables.}

Lemmas \ref{lemosr} and \ref{prostypomocniczy} supply tools for better
analysis of this of the case. We have the following theorem:

\begin{theorem}
\label{mpwl} If the following series
\begin{equation}
\sum_{n=1}^{\infty}\mu_{n}^{2}\operatorname*{var}(X_{n+1}),\;\;\sum
_{n=1}^{\infty}\mu_{n}\sqrt{\operatorname*{var}(X_{n+1})\operatorname*{var}%
(\overline{X}_{n})}, \label{war_zbieznosci}%
\end{equation}
are convergent, where we denoted as usually $\overline{X}_{n}\allowbreak
=\allowbreak\sum_{i=0}^{n-1}\alpha_{i}X_{i+1}/\sum_{i=0}^{n-1}\alpha
_{i},\;\allowbreak\overline{\left\{  \alpha_{i}\right\}  }\allowbreak
=\allowbreak\left\{  \mu_{i}\right\}  $, then $\{X_{n}\}_{n\geq1}$ satisfies
\emph{generalized} \emph{SLLN} with weights $\left\{  \alpha_{i}\right\}  .$
\end{theorem}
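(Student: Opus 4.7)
The plan is to reduce the problem, without loss of generality, to the centred case $EX_n = 0$ (the hypotheses involve only variances, which are invariant under centring), so that the goal becomes $\overline{X}_n \to 0$ almost surely. The key idea is to square the basic recursive identity
\[
\overline{X}_{n+1} = (1-\mu_n)\overline{X}_n + \mu_n X_{n+1}
\]
and apply Lemma \ref{podstawowy} \emph{pathwise} to the resulting identity for $\overline{X}_n^2$.

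Carrying out the squaring and setting $\nu_n := \mu_n(2-\mu_n)$, so that $(1-\mu_n)^2 = 1 - \nu_n$, I obtain
\[
\overline{X}_{n+1}^2 = (1-\nu_n)\overline{X}_n^2 + \nu_n B_n,
\]
where $\nu_n B_n = 2\mu_n(1-\mu_n)\overline{X}_n X_{n+1} + \mu_n^2 X_{n+1}^2$. Since $\mu_n \in (0,1)$ tends to $0$ and $\sum \mu_n = \infty$, one checks directly that $\nu_0 = 1$, $\nu_n \in (0,1)$ for $n\geq 1$, $\nu_n \to 0$ and $\sum \nu_n = \infty$, so $\{\nu_n\}$ is a normal sequence. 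Hence Lemma \ref{podstawowy} is applicable (with $x_n = \overline{X}_n^2$ and $b_n = B_n$, where $B_n$ need not be non-negative), and will give $\overline{X}_n^2 \to 0$ a.s.\ provided the series $\sum_n \nu_n B_n$ converges a.s.

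The heart of the proof is to verify this a.s.\ convergence by splitting $\sum \nu_n B_n$ into its two pieces. The quadratic part is
\[
\sum_{n\geq 0} \mu_n^2 X_{n+1}^2, \qquad E\sum_{n\geq 0}\mu_n^2 X_{n+1}^2 = \sum_{n\geq 0}\mu_n^2 \operatorname*{var}(X_{n+1}) < \infty,
\]
by the first hypothesis, so Corollary \ref{zbieznoscbezwzgledna} yields a.s.\ convergence. The cross part is
\[
\sum_{n\geq 0} 2\mu_n(1-\mu_n)\overline{X}_n X_{n+1},
\]
and by Cauchy--Schwarz its expected total absolute value is bounded by
\[
2\sum_{n\geq 0} \mu_n \sqrt{\operatorname*{var}(\overline{X}_n)\,\operatorname*{var}(X_{n+1})} < \infty,
\]
by the second hypothesis, so Corollary \ref{zbieznoscbezwzgledna} again yields absolute a.s.\ convergence.

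Adding the two contributions gives a.s.\ convergence of $\sum \nu_n B_n$; Lemma \ref{podstawowy} then delivers $\overline{X}_n^2 \to 0$ a.s., hence $\overline{X}_n \to 0$ a.s., which is the generalized SLLN. The potential obstacle here is purely conceptual rather than technical: one has to notice that squaring the linear recursion preserves its $(1-\nu_n)x_n + \nu_n b_n$ structure for a \emph{new} normal sequence $\nu_n \sim 2\mu_n$, and that the two hypothesized series are precisely what is needed to control, via expectation, the two pieces of the resulting pathwise "driving term" $\nu_n B_n$. Everything else is a routine application of the numerical Lemma \ref{podstawowy} and its expectation-based consequence Corollary \ref{zbieznoscbezwzgledna}.
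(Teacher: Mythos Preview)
Your proof is correct and follows essentially the same approach as the paper: centre, square the recursion $\overline{X}_{n+1}=(1-\mu_n)\overline{X}_n+\mu_n X_{n+1}$, recognise the result as a new iteration with normal sequence $\nu_n=\mu_n(2-\mu_n)$, and invoke Lemma~\ref{podstawowy} after controlling the two driving terms via Corollary~\ref{zbieznoscbezwzgledna} and the Cauchy--Schwarz inequality. The only cosmetic difference is that you make the new normal sequence $\nu_n$ explicit, whereas the paper leaves the identification implicit.
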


\begin{proof}
Let $\left\{  X_{i}\right\}  _{i\geq1}$ be a sequence of the random variables
possessing variances. Without loss of generality one can assume, that
$EX_{i}=0$. It is easy to notice, that the sequence $\left\{  \overline{X}%
_{n}\right\}  $ satisfies the following recurrent relationship:
\[
\overline{X}_{i+1}=(1-\mu_{i})\overline{X}_{i}+\mu_{i}X_{i+1}.
\]
We multiply side by side this identity by itself, obtaining:
\[
\overline{X}_{i+1}^{2}=(1-\mu_{i}(2-\mu_{i}))\overline{X}_{i}^{2}%
+\allowbreak2(1-\mu_{i})\mu_{i}\overline{X}_{i}X_{i+1}+\allowbreak\mu_{i}%
^{2}X_{i+1}^{2}.
\]
Now we use numerical lemma \ref{podstawowy} of chapter \ref{zbiez} and see
that $\overline{X}_{i}^{2}$ converges to zero almost surely, when the series
$\sum_{i\geq0}\mu_{i}^{2}X_{i+1}^{2}$ and $\sum_{i\geq0}\mu_{i}\overline
{X}_{i}X_{i+1}$ converge almost surely. Now it is enough to apply Schwarz'
inequality to the series $\sum_{i\geq0}\mu_{i}\overline{X}_{i}X_{i+1}$ and use
corollary \ref{zbieznoscbezwzgledna} of Lebesgue' Theorem.
\end{proof}

If we set $\alpha_{i}=\frac{1}{i+1};$ $i=0,1,\ldots$, then from this Theorem
follows evident corollary.

\begin{corollary}
If the following series
\[
\sum_{i\geq1}\operatorname{var}(X_{i})/i^{2},\;\;\sum_{i\geq1}\frac{1}{i}%
\sqrt{\operatorname{var}(X_{i+1})\operatorname{var}\left(  \frac{1}{i}%
\sum_{k=1}^{i}X_{k}\right)  }%
\]
are convergent, then the sequence $\left\{  X_{i}\right\}  _{i\geq1}$
satisfies SLLN.
\end{corollary}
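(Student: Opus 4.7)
The plan is to derive the corollary as an immediate specialization of Theorem \ref{mpwl} to the case of constant weights $\alpha_i \equiv 1$, which is the setting of the classical (non-generalized) strong law of large numbers. Under this choice one has $\sum_{k=0}^{i}\alpha_k = i+1$, so the conjugate sequence becomes $\mu_i = 1/(i+1)$, and the Riesz mean collapses to the ordinary arithmetic mean $\overline{X}_n = \frac{1}{n}\sum_{j=1}^{n} X_j$. Thus showing $\overline{X}_n \to 0$ almost surely is the same as the SLLN for $\{X_i\}_{i\geq 1}$ (here assumed centered; otherwise the proof applies to $X_i - EX_i$).

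First I would translate the two hypotheses of Theorem \ref{mpwl} into the hypotheses of the corollary. With $\mu_n = 1/(n+1)$, the first series
\[
\sum_{n\geq 1}\mu_n^2\operatorname{var}(X_{n+1}) = \sum_{n\geq 1}\frac{\operatorname{var}(X_{n+1})}{(n+1)^2}
\]
is, after a shift of index, bounded by $\sum_{i\geq 1}\operatorname{var}(X_i)/i^2$, so convergence of the latter (the first hypothesis of the corollary) gives convergence of the former. Next I would rewrite the second series
\[
\sum_{n\geq 1}\mu_n\sqrt{\operatorname{var}(X_{n+1})\operatorname{var}(\overline{X}_n)} = \sum_{n\geq 1}\frac{1}{n+1}\sqrt{\operatorname{var}(X_{n+1})\operatorname{var}\!\left(\tfrac{1}{n}\textstyle\sum_{k=1}^{n}X_k\right)},
\]
which, after shifting $n \mapsto i$ and noting that $1/(n+1) \leq 1/n$, is dominated by the second series hypothesized in the corollary. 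Hence both series appearing in (\ref{war_zbieznosci}) of Theorem \ref{mpwl} converge.

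Having verified the hypotheses, I would apply Theorem \ref{mpwl} directly. Its conclusion says that $\{X_i\}_{i\geq 1}$ satisfies the generalized SLLN with weights $\{\alpha_i\} = \{1\}$, which by definition \ref{uog_pwl} is precisely the statement that $\frac{1}{n}\sum_{i=1}^{n}(X_i - EX_i) \to 0$ almost surely -- the classical SLLN in the sense of definition \ref{LLN}.

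There is essentially no obstacle here; the whole point of the corollary is to display the concrete form that the abstract conditions of Theorem \ref{mpwl} take in the classical case. The only bookkeeping care required is the harmless index shift between the ``$n+1$'' indexing used in the theorem (which starts weights from $\alpha_0$) and the ``$i$'' indexing used in the corollary, and the observation that replacing $1/(n+1)$ by the slightly larger $1/n$ preserves convergence.
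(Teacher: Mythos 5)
Your proposal is correct and is exactly the paper's route: the paper derives the corollary from Theorem \ref{mpwl} by specializing to the classical case $\mu_i=1/(i+1)$ (constant weights $\alpha_i\equiv 1$; the paper's phrase ``set $\alpha_i=\frac{1}{i+1}$'' is evidently a slip for the conjugate sequence $\mu_i$), leaving the index-shift and the comparison $1/(n+1)\leq 1/n$ implicit as ``evident.'' Your write-up just makes that bookkeeping explicit, so nothing further is needed.
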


\begin{example}
\emph{\ }Let $\left\{  X_{i}\right\}  _{i\geq1}$ will consist of the following
random variables: \newline\emph{i)}
\[
EX_{i}=0;i\geq1,\left\vert \operatorname*{cov}(X_{i},X_{j})\right\vert
\leq\left\{
\begin{tabular}
[c]{lll}%
$C(\max(i,j))^{\alpha}$ & for & $\left\vert i-j\right\vert \leq4$\\
$0$ & for & $\left\vert i-j\right\vert >4$%
\end{tabular}
\ ,\right.  \alpha<1,
\]

$\emph{ii)}$
\[
EX_{i}=0\,;i\geq1\left|  \operatorname*{cov}(X_{i},X_{j})\right|  \leq\frac
{C}{2^{\left|  i-j\right|  }};i,j\geq1.
\]

It turns out that the sequences of i) and ii) satisfy SLLN. To show this, let
us notice that estimation $E\left\vert \bar{X}_{n}\right\vert ^{2}$, examining
convergence of series $\sum_{i\geq1}\frac{E\left\vert X_{i}\right\vert ^{2}%
}{i+1}$ is essential. In the case \emph{i) }we have:
\begin{align*}
E\left\vert \bar{X}_{n}\right\vert ^{2}  &  =\frac{1}{n^{2}}E(\sum_{i=1}%
^{n}X_{i})^{2}\leq\\
&  \leq\frac{C}{n^{2}}(\sum_{i=1}^{n}i^{\alpha}+2\sum_{i=2}^{n}i^{\alpha
}\allowbreak+2\sum_{i=3}^{n}i^{\alpha}\allowbreak+2\sum_{i=4}^{n}i^{\alpha
}+\allowbreak2\sum_{i=5}^{n}i^{\alpha}))\\
&  \leq\frac{9C}{n^{1-\alpha}}.
\end{align*}
Hence for this case we have
\[
\sum_{n=1}^{\infty}\operatorname*{var}(X_{n+1})/n^{2}\allowbreak\approx
\sum_{n=1}^{\infty}Cn^{-(2-\alpha)},
\]
while for the second
\[
\sum_{n=1}^{\infty}\frac{1}{n}\sqrt{\operatorname*{var}(X_{n+1}%
)\operatorname*{var}(\frac{1}{n}\sum_{i=1}^{n}X_{i})}\allowbreak\approx
\sum_{n=1}^{\infty}\frac{C}{n^{1-\alpha/2+1/2-\alpha/2}}.
\]
The first of these series converges, when $2-\alpha>1$, or $\alpha<1$, while
the second series,when $3/2-\alpha>1$, i.e. when $\alpha<1/2.$

In of the case \emph{ii) }we have:
\begin{align*}
E\left\vert \bar{X}_{n}\right\vert ^{2}  &  =\frac{1}{n^{2}}E(\sum_{i=1}%
^{n}X_{i})^{2}\\
&  \leq\allowbreak\frac{1}{n^{2}}(nC+\allowbreak2C\frac{1}{2}(n-1)+\allowbreak
2C\frac{1}{4}(n-2)+\allowbreak\ldots\allowbreak+2C\frac{1}{2^{n-1}%
}(n-(n-1)))\allowbreak\\
&  \leq\frac{3C}{n}.
\end{align*}
Hence $\sqrt{E\left\vert \bar{X}_{n}\right\vert ^{2}}\approx O(1/\sqrt{n})$.
Thus, the series $\sum_{i\geq1}\frac{1}{n+1}\sqrt{E\left\vert \bar{X}%
_{n}\right\vert ^{2}}\sqrt{E\left\vert X_{n+1}^{2}\right\vert }$ converges.
\end{example}

\begin{remark}
Analyzing the above mentioned example, it is easy to notice, that the strong
law of large numbers is satisfied also by the following sequences $\left\{
X_{i}\right\}  _{i\geq1}$ of dependent random variables:
\[
EX_{i}=0\,;i\geq1,~\left\vert \operatorname*{cov}(X_{i},X_{j})\right\vert
\leq\allowbreak\frac{C}{\eta(\left\vert i-j\right\vert )};i,j\geq1,
\]
where the function $\eta$ satisfies the condition:\emph{\ }series $\sum
_{n\geq1}\frac{1}{n^{3/2}}\sqrt{\sum_{i=1}^{n}\frac{1}{\eta(i)}}$ is
convergent. It entails, that $E\bar{X}_{n}^{2}\leq\frac{C}{n}\sum_{i=1}%
^{n}\frac{1}{\eta(i)}$. It is also not difficult to notice, that functions
$\eta$ satisfying these conditions are e.g. $\eta(x)=\left\vert x\right\vert
^{\beta};\beta>0,\,\eta(x)=\log^{1+\gamma}(1+|x|);\gamma>0$, and so
on.\emph{\ }Below we will generalize this example. Random variables satisfying
the above mentioned conditions will be called \emph{quasi-stationary.}
\end{remark}

Theorem \ref{mpwl} gives possibility of getting the laws of large numbers for
dependent random variables. Conditions for \textquotedblright
dependence\textquotedblright\ can be expressed in terms of the covariances of
the random variables $\{X_{n}\}_{n\geq1}$. Theorem \ref{mpwl} provides quick,
\textquotedblright easy to apply\textquotedblright\ convergence criteria. In
order to present more subtle ones for SLLN to be satisfied by dependent random
variables in concise form, one has to\emph{\ assume} \emph{more }about the
mutual dependence of the\emph{\ }elements of the sequence $\{X_{n}\}_{n\geq1}%
$. Hence, let us consider an important class of the random variables, namely
the so-called \emph{quasi-stationary }random variables.
\index{variables!quasi-stationary}%

\begin{definition}
Random variables $\{X_{n}\}_{n\geq1}$ are \emph{quasi-stationary, } if
\emph{i) }$\forall n\geq1$: $E\left\vert X_{n}\right\vert ^{2}<\infty$,
\emph{ii)} $\forall i,j\geq1:$%
\[
\left\vert \operatorname{cov}(X_{i},X_{j})\right\vert \allowbreak
\leq\allowbreak\rho(\left\vert i-j\right\vert )\allowbreak\sqrt
{\operatorname*{var}(X_{i})\operatorname*{var}(X_{j})},
\]
for some sequence $\left\{  \rho(i)\right\}  _{i\geq0}$ nonnegative numbers
such, that $\rho(0)=1$.
\end{definition}

For such sequences we will give a generalization of Lemma
\ref{fundamental_inequality}, and also Rademacher-Menshoff's Theorem
\ref{menshov}. Let us suppose, that $\{X_{n}\}_{n\geq1}$ is the sequence of
\emph{quasi-stationary }random variables \emph{\ }with zero expectations, such
that $\operatorname*{var}(X_{i})=\sigma_{i}^{2}$, $\left\vert
\operatorname{cov}(X_{i},X_{j})\right\vert \leq\rho(\left\vert i-j\right\vert
)\sigma_{i}\sigma_{j}$. Let us introduce also the following denotations:
\[
\mathbf{R}_{n}=[r_{ij}]_{1\leq i,j\leq n}=\left[  \rho(|i-j|)\right]  _{1\leq
i,j\leq n},\;\mathbf{\xi}_{n}^{T}=[\beta_{1}\sigma_{1},\ldots,\beta_{n}%
\sigma_{n}].
\]

We have the following generalization of Lemma \ref{fundamental_inequality}.

\begin{lemma}
\label{uog_fundamental}Let us denote $S_{i}=\sum_{j=1}^{i}\beta_{j}X_{j}$
:$i=1,2,\ldots,n$. We have
\[
E\underset{1\leq i\leq n}{\max}S_{i}^{2}=M(n)\leq O\left(  \log^{2}n\right)
\left(  \sum_{i=0}^{n}\rho(i)\right)  \sum_{i=1}^{n}\beta_{i}^{2}\sigma
_{i}^{2}.
\]

\end{lemma}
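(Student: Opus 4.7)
The plan is to revisit the proof of Lemma \ref{fundamental_inequality} and replace only the one step that relied on orthogonality. Recall that the original argument proceeds as follows: denoting by $\nu_n(x)$ the random index realizing $\max_{0\le i\le n}|S_i|$, apply assertion iv) of Lemma \ref{Cesaro} with $\alpha=-1/2,\beta=1/2$ to write $S_i$ as a sum involving $(C,-1/2)$ means, and use Cauchy--Schwarz to obtain
\[
\max_{0\le i\le n} S_i^2 \le \Bigl(\sum_{k=0}^{\nu_n}(A_{\nu_n-k}^{-1/2})^2\Bigr)\Bigl(\sum_{k=0}^{n} (A_k^{-1/2}S_k^{-1/2})^2\Bigr),
\]
where $S_k^{-1/2}=\sum_{j=1}^{k}A_{k-j}^{-1/2}\beta_j X_j$. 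The factor involving $A_{\nu_n-k}^{-1/2}$ contributes $O(\log n)$. What remains is to bound $E(S_k^{-1/2})^2$.

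In the orthogonal case this expectation is simply $\sum_{j=1}^{k}(A_{k-j}^{-1/2})^2\beta_j^2\sigma_j^2$. Under quasi-stationarity we must instead expand
\[
E(S_k^{-1/2})^2 = \sum_{i,j=1}^{k} A_{k-i}^{-1/2}A_{k-j}^{-1/2}\beta_i\beta_j\,\mathrm{cov}(X_i,X_j),
\]
bound $|\mathrm{cov}(X_i,X_j)|\le \rho(|i-j|)\sigma_i\sigma_j$, and apply the elementary inequality $|ab|\le (a^2+b^2)/2$ to $a=|A_{k-i}^{-1/2}\beta_i|\sigma_i$ and $b=|A_{k-j}^{-1/2}\beta_j|\sigma_j$. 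Symmetry then yields
\[
E(S_k^{-1/2})^2 \le \sum_{i=1}^{k} (A_{k-i}^{-1/2})^2 \beta_i^2 \sigma_i^2 \sum_{j=1}^{k}\rho(|i-j|) \le 2\Bigl(\sum_{l=0}^{n}\rho(l)\Bigr)\sum_{i=1}^{k} (A_{k-i}^{-1/2})^2\beta_i^2\sigma_i^2.
\]
This is the one place where the hypothesis of orthogonality is used in the original proof and the one place where the extra factor $\sum_{l=0}^{n}\rho(l)$ enters.

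Substituting this back and summing over $k$ exactly as in the original proof, the double sum $\sum_{k=0}^{n}(A_k^{-1/2})^2\sum_{i=1}^{k}(A_{k-i}^{-1/2})^2\beta_i^2\sigma_i^2$ is reorganized by swapping the order of summation and invoking assertion iii) of Lemma \ref{Cesaro} to estimate $A_k^{-1/2}=O(k^{-1/2})$ and $\sum_{k=j+1}^{n}O(1/(n-k))=O(\log n)$. This produces a further factor of $O(\log n)$, so that altogether
\[
E\max_{1\le i\le n}S_i^2 \le O(\log^2 n)\Bigl(\sum_{l=0}^{n}\rho(l)\Bigr)\sum_{i=1}^{n}\beta_i^2\sigma_i^2,
\]
as claimed. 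The main obstacle, and indeed the only new input, is the covariance bound above; once it is in place, the rest of the computation is a verbatim repetition of the argument given for Lemma \ref{fundamental_inequality}.
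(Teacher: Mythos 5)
Your proof is correct and follows essentially the same route as the paper: the same Ces\`{a}ro-mean decomposition and Cauchy--Schwarz step from Lemma \ref{fundamental_inequality}, with the quasi-stationarity entering only through the bound $E\bigl(\sum_k c_k X_k\bigr)^2\leq 2\bigl(\sum_{l}\rho(l)\bigr)\sum_k c_k^2\sigma_k^2$ and a final swap of the order of summation giving the second $O(\log n)$ factor. The only cosmetic difference is that you justify that covariance bound by the elementary inequality $|ab|\leq(a^2+b^2)/2$ plus symmetry, whereas the paper bounds the quadratic form by the spectral norm of $\mathbf{R}_n$ and then by its row-sum norm; both give the identical constant.
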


\begin{proof}
Let $\nu(\omega)$ will be the smallest index such that
\[
\underset{1\leq i\leq n}{\max}S_{i}^{2}=S_{\nu}^{2}.
\]
Repeating part of arguments and calculations from the proof of Lemma
\ref{fundamental_inequality} we get:
\[
S_{\nu}^{2}\leq\left(  \sum_{j=1}^{\nu}A_{\nu-j}^{-1/2}A_{j}^{-1/2}\left\vert
S_{j}^{-1/2}\right\vert \right)  ^{2}\leq\sum_{j=1}^{\nu}\left(  A_{\nu
-j}^{-1/2}\right)  ^{2}\sum_{j=1}^{n}\left(  A_{j}^{-1/2}\left\vert
S_{j}^{-1/2}\right\vert \right)  ^{2}.
\]%
\begin{equation}
\sum_{j=1}^{\nu}\left(  A_{\nu-j}^{-1/2}\right)  ^{2}=1+\sum_{j=1}^{\nu
}O(\frac{1}{\nu-j})=O(\ln\nu)\leq O(\ln n). \label{suma_A}%
\end{equation}%
\begin{align*}
ES_{k}^{2}  &  =\left(  \sum_{i=1}^{n}\beta_{i}^{2}\sigma_{i}^{2}+2\sum_{1\leq
i<j\leq n}\beta_{i}\beta_{j}\operatorname{cov}(X_{i},X_{j})\right)  \leq\\
&  \leq\mathbf{\xi}_{n}^{T}\mathbf{R}_{n}\mathbf{\xi}_{n}\allowbreak
\leq\allowbreak\lambda_{n}\mathbf{\xi}_{n}^{T}\mathbf{\xi}_{n},
\end{align*}
where $\lambda_{n}$ denotes the greatest eigenvalue of matrix $\mathbf{R}_{n}$
(its spectral norm). Since, that spectral norm does not exceed any other
matrix norm (compare theorem. 6.1.3 in \cite{Lankaster69}) we have:
\[
ES_{k}^{2}\leq2\left(  \sum_{i=0}^{n-1}\rho(i)\right)  \sum_{i=1}^{n}\beta
_{i}^{2}\sigma_{i}^{2},
\]
since we have
\[
\left\Vert R_{n}\right\Vert =\underset{1\leq i\leq n}{\max}\sum_{j=1}%
^{n}r_{ij}=\underset{1\leq i\leq n}{\max}\left(  2\sum_{j=0}^{i-1}\rho
(j)+\sum_{j=i}^{n-1}\rho(j)\right)  \leq2\sum_{j=0}^{n-1}\rho(j).
\]

Moreover, :
\begin{align*}
E\sum_{j=1}^{n}\left(  A_{j}^{-1/2}\left\vert S_{j}^{-1/2}\right\vert \right)
^{2}  &  =\sum_{j=1}^{n}E\left(  A_{j}^{-1/2}S_{j}^{-1/2}\right)  ^{2}=\\
&  =\sum_{j=1}^{n}E\left(  \sum_{k=1}^{j}A_{j-k}^{-1/2}\beta_{k}X_{k}\right)
^{2}\leq\\
&  \leq2\sum_{j=1}^{n}\sum_{i=0}^{j-1}\rho(i)\sum_{k=1}^{j}\beta_{k}^{2}%
\sigma_{k}^{2}\left(  A_{j-k}^{-1/2}\right)  ^{2}\leq\\
&  \leq2\sum_{k=0}^{n}\rho(k)\sum_{k=1}^{n}\beta_{k}^{2}\sigma_{k}^{2}\left(
O\left(  1\right)  +O\left(  \frac{1}{2}\right)  +\ldots+O\left(  \frac{1}%
{k}\right)  \right)  \leq\\
&  \leq O(\log n)\sum_{k=0}^{n}\rho(k)\sum_{k=1}^{n}\beta_{k}^{2}\sigma
_{k}^{2}.
\end{align*}
Combining (\ref{suma_A}) and the above mentioned result we get assertion of
our lemma.
\end{proof}

Having this lemma it is easy to get Theorem similar to Rademacher-Menchoff's Theorem.

\begin{theorem}
\label{uog_RM}Let $\{X_{n}\}_{n\geq1}$ be a sequence of quasi-orthogonal
random variables such that $\operatorname*{var}(X_{i})=\sigma_{i}^{2}$,
$\left\vert \operatorname{cov}(X_{i},X_{j})\right\vert \leq\rho(\left\vert
i-j\right\vert )\sigma_{i}\sigma_{j}$. If series
\[
\sum_{i\geq1}\beta_{i}^{2}\sigma_{i}^{2}\log^{2}i\sum_{j=0}^{i}\rho(j)
\]
is convergent, then series $\sum_{i\geq1}\beta_{i}X_{i}$ is convergent with
probability $1.$
\end{theorem}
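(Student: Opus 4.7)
The plan is to mimic the proof of the Rademacher--Menchoff Theorem (Theorem \ref{menshov}), replacing Lemma \ref{fundamental_inequality} by its quasi-stationary generalization Lemma \ref{uog_fundamental}. Writing $S_n = \sum_{i=1}^{n} \beta_i X_i$, I would work along the dyadic subsequence $\{S_{2^{n}}\}_{n\geq 0}$ and separately control the oscillations within the blocks $B_n := \{k : 2^n < k \leq 2^{n+1}\}$. The two ingredients will combine to give a.s.\ convergence of $S_n$ to the same limit as the dyadic subsequence. I expect the main obstacle to be a.s.\ convergence of the correlated subsequence $\{S_{2^n}\}$, since the classical orthogonality trick $E r_{2^n}^2 = \sum_{i \geq 2^n} c_i^2$ is unavailable here.

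For the first ingredient, I apply Lemma \ref{uog_fundamental} to the shifted sequence $Y_k = X_{2^n+k}$ for $k=1,\dots,2^n$ with coefficients $\beta_{2^n+k}$. The covariance structure is preserved (with the same $\rho$), so
\[
E\,M_n^2 \;:=\; E\,\max_{k\in B_n}(S_k-S_{2^n})^2 \;\leq\; O(n^2)\Bigl(\sum_{j=0}^{2^n}\rho(j)\Bigr)\sum_{i\in B_n}\beta_i^2\sigma_i^2.
\]
Summing over $n$ and using that for $i \in B_n$ we have $n \leq \log_2 i$ and $\sum_{j=0}^{2^n}\rho(j)\leq \sum_{j=0}^{i}\rho(j)$, this yields
\[
\sum_{n\geq 0} E\,M_n^2 \;\leq\; C\sum_{i\geq 3}\beta_i^2\sigma_i^2(\log i)^2\sum_{j=0}^{i}\rho(j)\;<\;\infty,
\]
so $\sum M_n^2<\infty$ a.s.\ and, by Markov and Borel--Cantelli, $M_n \to 0$ a.s.

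For the harder second ingredient, let $D_n := S_{2^{n+1}}-S_{2^n}$. Using the bound from the proof of Lemma \ref{uog_fundamental} (AM--GM on $|\beta_i\beta_j|\sigma_i\sigma_j$),
\[
\|D_n\|_2^2 \;\leq\; 2\sum_{i\in B_n}\beta_i^2\sigma_i^2\sum_{k=0}^{2^n}\rho(k).
\]
By Cauchy--Schwarz applied to $\sum_n \|D_n\|_2 = \sum_n n^{-1}\bigl(n\|D_n\|_2\bigr)$,
\[
\Bigl(\sum_{n\geq 0}\|D_n\|_2\Bigr)^2 \;\leq\; \Bigl(\sum_{n\geq 1}n^{-2}\Bigr)\sum_{n\geq 0}n^2\|D_n\|_2^2 \;\leq\; C\sum_{i\geq 3}\beta_i^2\sigma_i^2(\log i)^2\sum_{k=0}^{i}\rho(k)\;<\;\infty,
\]
where again $n\leq\log_2 i$ for $i\in B_n$ was used. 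Hence $\sum_n\|D_n\|_1\leq\sum_n\|D_n\|_2<\infty$, which by Fubini/monotone convergence gives $\sum_n|D_n|<\infty$ a.s. Therefore the telescoping series $\sum_n D_n$ converges absolutely a.s., i.e.\ $S_{2^n}$ converges a.s.\ to some (finite, square integrable) random variable $S$.

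Finally, for arbitrary $k$ choose $n$ with $k\in B_n$; then $|S_k-S|\leq M_n + |S_{2^n}-S|$, and both terms tend to $0$ a.s.\ by the preceding steps. Hence $\sum_{i\geq 1}\beta_iX_i$ converges a.s., as claimed. The crux of the argument is that the extra factor $\log^2 i$ in the hypothesis is precisely what powers the Cauchy--Schwarz step for $\sum\|D_n\|_2<\infty$, compensating for the lack of orthogonality among the dyadic increments $D_n$.
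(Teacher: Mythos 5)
Your proof is correct. For comparison: the paper never actually writes out a proof of this theorem --- it only asserts, after establishing Lemma \ref{uog_fundamental}, that the result is ``easy to get'' by the Rademacher--Menchoff template --- so your write-up is a genuine completion of that sketch rather than a replication of an existing argument. The block-oscillation half (controlling $M_n=\max_{k\in B_n}|S_k-S_{2^n}|$ via Lemma \ref{uog_fundamental} applied to the shifted block, which is legitimate because the bound $\rho(|i-j|)\sigma_i\sigma_j$ is invariant under the index shift) is the direct transplant the paper has in mind. The part where you genuinely add something is the convergence of the dyadic subsequence: in the orthogonal case one gets $E r_{2^n}^2\le C/n^2$ from the tail of $\sum c_i^2$, but here that route is blocked since $\sum_{j\ge 0}\rho(j)$ may diverge, and your substitute --- bounding $\operatorname{var}(D_n)\le 2\bigl(\sum_{k=0}^{2^n}\rho(k)\bigr)\sum_{i\in B_n}\beta_i^2\sigma_i^2$ and then using the weighted Cauchy--Schwarz step to get $\sum_n\|D_n\|_2<\infty$, hence a.s.\ absolute convergence of the increments --- is exactly the point where the hypothesis' factor $\log^2 i$ is consumed, and it is correct. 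Two cosmetic remarks: identifying $\|D_n\|_2^2$ with the covariance double sum tacitly uses the zero-mean assumption, which is part of the paper's standing convention for quasi-stationary sequences and should be stated; and in the Cauchy--Schwarz display the $n=0$ term must be split off (its weight $n^2$ vanishes), which affects nothing since $\|D_0\|_2$ is a single finite quantity.
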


Using this theorem and making use of Lemma \ref{podstawowy} one can easily get
the following result.

\begin{corollary}
Let $\{X_{n}\}_{n\geq1}$ be a sequence of quasi-orthogonal random variables
such that $\operatorname*{var}(X_{i})=\sigma_{i}^{2}$, $\left\vert
\operatorname{cov}(X_{i},X_{j})\right\vert \leq\rho(\left\vert i-j\right\vert
)\sigma_{i}\sigma_{j}$. If series
\[
\sum_{i\geq1}\frac{\alpha_{i-1}^{2}}{\left(  \sum_{j=0}^{i-1}\alpha
_{j}\right)  ^{2}}\sigma_{i}^{2}\log^{2}i\sum_{j=0}^{i}\rho(j)
\]
is convergent, then the sequence $\left\{  \frac{\sum_{i=1}^{n}\alpha
_{i-1}X_{i}}{\sum_{i=1}^{n}\alpha_{i-1}}\right\}  _{n\geq1}$ is convergent
with probability 1 to zero.
\end{corollary}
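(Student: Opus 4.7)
The plan is to reduce the corollary to a direct composition of Theorem \ref{uog_RM} and Lemma \ref{podstawowy}, bridging them via the natural iterative identity satisfied by the Riesz means. First, without loss of generality I would assume $EX_i=0$ (otherwise replace $X_i$ by $X_i-EX_i$, which preserves the variance/covariance bounds via $\rho$, the quasi-orthogonality, and, since the conclusion is stated as a generalized SLLN, shifts the target by the corresponding mean of averages). Denote as usual $\mu_i=\alpha_i/\sum_{k=0}^{i}\alpha_k$ and $\mu_0=1$, so that $\overline{\{\alpha_i\}}=\{\mu_i\}$ and, by Proposition \ref{o_ciagach_normalnych}, the sequence $\{\mu_i\}$ is normal (the divergence $\sum\alpha_i=\infty$ being implicit in the statement, and $\mu_n\to 0$ being the meaningful case). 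The Riesz mean $\bar X_n=\sum_{i=0}^{n-1}\alpha_i X_{i+1}/\sum_{i=0}^{n-1}\alpha_i$ then satisfies the recursion
\[
\bar X_{n+1}=(1-\mu_n)\bar X_n+\mu_n X_{n+1}.
\]

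Next, I observe that the hypothesis of the corollary can be rewritten as
\[
\sum_{i\geq 1}\mu_{i-1}^{2}\,\sigma_i^{2}\,\log^{2}i\,\sum_{j=0}^{i}\rho(j)<\infty,
\]
which is precisely the condition of Theorem \ref{uog_RM} applied to the coefficient sequence $\beta_i=\mu_{i-1}$. Invoking that theorem for the quasi-orthogonal system $\{X_i\}_{i\geq 1}$, I conclude that
\[
\sum_{i\geq 1}\mu_{i-1}X_i\;=\;\sum_{k\geq 0}\mu_k X_{k+1}
\]
converges almost surely.

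Finally, I apply Lemma \ref{podstawowy} with $x_n=\bar X_n$, $b_n=X_{n+1}$, and the normal sequence $\{\mu_n\}$. The implication $\sum\mu_n b_n$ convergent $\Rightarrow$ $x_n\to 0$ is exactly the content I need; moreover, an inspection of the proof of that implication shows it proceeds through the auxiliary sequences $D_n=x_n+G_n$ and their absolute values, so it applies equally well to signed $x_n,b_n$ (as is explicitly used in the general discussion opening Section \ref{pwl}). Combining this with the almost sure convergence of $\sum_{k\geq 0}\mu_kX_{k+1}$ established in the previous step yields $\bar X_n\to 0$ almost surely.

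There is no genuine obstacle here: the argument is essentially a template, assembling Theorem \ref{uog_RM} (which packages Lemma \ref{uog_fundamental} into an $L^{2}$/a.s. convergence result for series) with the numerical Lemma \ref{podstawowy} (which transfers series convergence to convergence of iteratively defined averages). The only small verifications are the reindexing $\beta_i=\mu_{i-1}$ that makes the two conditions coincide, and the standing normality of $\{\mu_n\}$, both of which are routine.
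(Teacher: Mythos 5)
Your proposal is correct and is exactly the route the paper intends: the paper gives no written proof beyond saying the corollary follows from Theorem \ref{uog_RM} together with Lemma \ref{podstawowy}, and your argument (take $\beta_i=\mu_{i-1}=\alpha_{i-1}/\sum_{j=0}^{i-1}\alpha_j$, get a.s. convergence of $\sum_{k\geq0}\mu_k X_{k+1}$ from Theorem \ref{uog_RM}, then apply Lemma \ref{podstawowy} pathwise to the Riesz-mean recursion) is precisely that reduction. The reindexing check and the remarks on signed sequences and normality of $\{\mu_n\}$ are at the same level of (implicit) generality as the paper itself, so nothing is missing.
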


Also from Theorem \ref{uog_RM} it is easy also to get the following facts.

\begin{proposition}
Let $\{X_{n}\}_{n\geq1}$ be a sequence of quasi-orthogonal random variables
such that $\operatorname*{var}(X_{i})=\sigma_{i}^{2}$, $\left\vert
\operatorname{cov}(X_{i},X_{j})\right\vert \leq\rho(\left\vert i-j\right\vert
)\sigma_{i}\sigma_{j}$. Let $\left\{  \beta_{i}\right\}  _{i\geq1}$ be a
number sequence such that convergent is series:
\[
\sum_{i\geq1}\beta_{i}^{2}\sigma_{i}^{2}.
\]
Then with probability $1$ we have $\sum_{i=1}^{n}\beta_{i}X_{i}=O(\log
n\sqrt{\sum_{i=1}^{n}\rho(i)}).$
\end{proposition}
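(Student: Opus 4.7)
The plan is to prove in fact the stronger statement
$S_n := \sum_{i=1}^n \beta_i X_i = o\!\bigl(\log n \sqrt{R_n}\bigr)$ almost surely,
where I set $R_n := \sum_{j=0}^n \rho(j)$ (which differs from $\sum_{j=1}^n \rho(j)$ only by the harmless constant $\rho(0)=1$ and is $\geq 1$ for every $n$). The idea is to rescale the coefficients by \emph{exactly} the factor that converts the hypothesis of this Proposition into the hypothesis of Theorem \ref{uog_RM}, and then transfer the resulting a.s.\ convergence into a size estimate for partial sums via Kronecker's Lemma.

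Step 1 (reweighting and application of Theorem \ref{uog_RM}). For $n\ge 2$ let $b_n := \log n \, \sqrt{R_n}$; this is a positive, strictly increasing sequence tending to infinity. Put $\tilde \beta_i := \beta_i/b_i$ for $i\ge 2$. The Rademacher--Menchoff-type condition from Theorem \ref{uog_RM} collapses by design:
$$
\sum_{i\geq 2} \tilde\beta_i^{\,2}\,\sigma_i^2\, \log^2 i \sum_{j=0}^i \rho(j)
= \sum_{i \geq 2} \frac{\beta_i^2 \sigma_i^2}{\log^2 i \cdot R_i}\, \log^2 i\, R_i
= \sum_{i \geq 2} \beta_i^2 \sigma_i^2 < \infty,
$$
by the assumption of the Proposition. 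Since $\{X_n\}_{n\ge 1}$ is quasi-orthogonal with the same covariance bounds $\rho$, Theorem \ref{uog_RM} yields that the rescaled series
$$
\sum_{i\geq 2} \tilde\beta_i X_i \;=\; \sum_{i\geq 2} \frac{\beta_i X_i}{\log i \sqrt{R_i}}
$$
converges almost surely.

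Step 2 (Kronecker). Apply Kronecker's Lemma (the corollary recalled at the end of Section \ref{lemliczbowe}) with $x_i := \beta_i X_i$ and $a_i := b_i = \log i\sqrt{R_i}$: the sequence $\{a_i\}$ is positive and increases to infinity, and by Step 1 the series $\sum_{i\ge 2} x_i/a_i$ converges almost surely. The conclusion of Kronecker is
$$
\frac{1}{\log n \sqrt{R_n}} \sum_{i=1}^{n} \beta_i X_i \;\longrightarrow\; 0 \quad \text{a.s.,}
$$
which is precisely the desired estimate (indeed it is the stronger ``little-oh'' version of the claimed ``big-Oh'').

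There is no substantive obstacle beyond identifying the right normalization: $b_n = \log n\sqrt{R_n}$ is the unique scaling that exactly absorbs the $\log^2 i \cdot R_i$ factor appearing in the Theorem \ref{uog_RM} hypothesis, leaving the bare assumption $\sum \beta_i^2\sigma_i^2<\infty$. The only minor technicality is $\log 1 = 0$, which is handled by starting both applications at $i=2$ and absorbing the finitely many initial terms into the (random) implicit constant.
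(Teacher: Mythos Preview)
Your proof is correct and follows precisely the route the paper intends: the paper states only that this proposition ``is easy also to get'' from Theorem~\ref{uog_RM}, and your argument (rescale by $b_i=\log i\sqrt{R_i}$ to make the hypothesis of Theorem~\ref{uog_RM} collapse to $\sum\beta_i^2\sigma_i^2<\infty$, then apply Kronecker's Lemma) is exactly the natural way to make that remark precise. Your observation that this actually yields the stronger little-oh bound is correct as well.
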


Further from this Proposition follows the following one:

\begin{proposition}
Let $\{X_{n}\}_{n\geq1}$ be a sequence of quasi-orthogonal random variables
such that $\operatorname*{var}(X_{i})=\sigma_{i}^{2}$, $\left\vert
\operatorname{cov}(X_{i},X_{j})\right\vert \leq\rho(\left\vert i-j\right\vert
)\sigma_{i}\sigma_{j}$. If the series
\begin{equation}
\sum_{i\geq1}\frac{\sigma_{i}^{2}}{i^{2}} \label{war_zb}%
\end{equation}
is convergent, then with probability $1$%
\[
\underset{n\rightarrow\infty}{\lim}\frac{\sum_{i=1}^{n}\left(  X_{i}%
-EX_{i}\right)  }{n\log(n+1)\sqrt{\sum_{i=0}^{n}\rho(i)}}=0.
\]

\end{proposition}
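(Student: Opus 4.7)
The plan is to reduce the proposition to an almost sure convergence of an orthogonal-type series plus Kronecker's lemma. Without loss of generality, assume $EX_{i}=0$ for all $i\geq1$; otherwise replace $X_i$ by $X_i - EX_i$, which does not change variances or covariances.

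Set $R_{n}=\sum_{j=0}^{n}\rho(j)$, which is nondecreasing, and introduce the weights $\beta_{i}=\dfrac{1}{i\log(i+1)\sqrt{R_{i}}}$. The first step is to apply Theorem \ref{uog_RM} to the series $\sum_{i\geq1}\beta_{i}X_{i}$. For this I need to verify that
\[
\sum_{i\geq1}\beta_{i}^{2}\sigma_{i}^{2}\log^{2}i\,R_{i}<\infty .
\]
But with the chosen $\beta_{i}$ this quantity equals $\sum_{i\geq1}\dfrac{\sigma_{i}^{2}\log^{2}i}{i^{2}\log^{2}(i+1)}$, which is majorized by $\sum_{i\geq1}\sigma_{i}^{2}/i^{2}$ since $\log i\leq\log(i+1)$. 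Convergence of the latter is exactly the hypothesis (\ref{war_zb}). Hence Theorem \ref{uog_RM} yields that the series $\sum_{i\geq1} X_{i}/\bigl(i\log(i+1)\sqrt{R_{i}}\bigr)$ converges with probability one.

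Next I would apply the Kronecker lemma stated on page earlier in the chapter. Take $a_{n}=n\log(n+1)\sqrt{R_{n}}$. Since $n\log(n+1)$ is strictly increasing to $\infty$ and $\sqrt{R_{n}}$ is nondecreasing and positive, $\{a_{n}\}$ is an increasing sequence of positive numbers tending to infinity (this holds regardless of whether $\sum_{j\geq0}\rho(j)$ is finite or not). Having established almost sure convergence of $\sum_{i\geq1}X_{i}/a_{i}$, Kronecker's lemma gives
\[
\frac{\sum_{i=1}^{n}X_{i}}{a_{n}}=\frac{\sum_{i=1}^{n}X_{i}}{n\log(n+1)\sqrt{R_{n}}}\underset{n\to\infty}{\longrightarrow}0
\]
almost surely, which is precisely the claim.

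The main delicate point is the verification that the weights $\beta_{i}$ satisfy the Rademacher--Menchoff-type hypothesis of Theorem \ref{uog_RM}; the choice $\beta_{i}=1/(i\log(i+1)\sqrt{R_{i}})$ is dictated by the requirement that the $\log^{2}i$ factor from Rademacher--Menchoff and the $R_{i}$ factor from quasi-stationarity both get absorbed, leaving only the hypothesized series $\sum\sigma_{i}^{2}/i^{2}$. Everything else is routine, including monotonicity of $a_{n}$ needed for Kronecker.
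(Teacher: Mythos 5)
Your proof is correct, and it rests on the same key ingredient as the paper, namely the Rademacher--Menchoff-type Theorem \ref{uog_RM} for quasi-stationary sequences; the verification that $\beta_{i}=1/(i\log(i+1)\sqrt{R_{i}})$ makes $\sum_{i}\beta_{i}^{2}\sigma_{i}^{2}\log^{2}i\,R_{i}\leq\sum_{i}\sigma_{i}^{2}/i^{2}$ is exactly right, the reduction to zero means is legitimate (centering changes neither variances nor covariances, and Theorem \ref{uog_RM} is indeed proved for centered variables), and $a_{n}=n\log(n+1)\sqrt{R_{n}}$ is positive and strictly increasing to infinity because $\rho(j)\geq0$ and $R_{n}\geq\rho(0)=1$, so Kronecker's lemma applies pointwise on the almost sure convergence set. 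The paper instead routes the statement through the preceding proposition, i.e.\ the bound $\sum_{i=1}^{n}\beta_{i}X_{i}=O\bigl(\log n\sqrt{\sum_{i\leq n}\rho(i)}\bigr)$ a.s.\ for any weights with $\sum\beta_{i}^{2}\sigma_{i}^{2}<\infty$, taking $\beta_{i}=1/i$ and then summing by parts; your direct choice of weights absorbing both the $\log^{2}i$ factor and $R_{i}$ at once, followed by a single application of Kronecker, is a slightly cleaner path, and it sidesteps a small delicacy in the paper's chain: the intermediate $O$-bound alone only yields $\sum_{i=1}^{n}(X_{i}-EX_{i})=O\bigl(n\log n\sqrt{R_{n}}\bigr)$ after Abel summation, so to get the stated limit equal to zero one must in effect sharpen that bound to a $o$-estimate (which again comes from Theorem \ref{uog_RM} plus Kronecker), i.e.\ do essentially what you did.
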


This proposition was proved by other methods in the of paper \cite{Moricz85c}.
In this paper there is also another proof of Lemma \ref{uog_fundamental}, and
what is more construction of such sequence $\{X_{n}\}_{n\geq1}$ orthogonal
random variables, for which condition (\ref{war_zb}) is satisfied, and
\[
\underset{n\rightarrow\infty}{\lim\inf}\,\frac{1}{n\lambda_{n}}\left\vert
\sum_{i=1}^{n}\left(  X_{i}-EX_{i}\right)  _{n\geq1}\right\vert \allowbreak
=\allowbreak\infty
\]
with probability 1 for every sequence $\left\{  \lambda_{n}\right\}  $ such,
that $\underset{n\rightarrow\infty}{\lim}\lambda_{n}/\log n=0.$

SLLN for correlated random variables will be illustrated by the following example:

\begin{example}
Let sequences of the random variables $\left\{  \xi_{i}\right\}  _{i\geq1}$be
the solutions of difference equations of the form
\[
\xi_{i+1}=\sum_{j=0}^{q-1}\gamma_{j}\xi_{i-j}+\zeta_{i+1};i\geq0,
\]
where the sequence $\left\{  \zeta_{i}\right\}  _{i\geq1}$ consists of
uncorrelated random variables having zero means and identical finite variances
and the values $\xi_{-q+1},\allowbreak\ldots\allowbreak,\xi_{0}$ are given.
Sequence of such solutions are called autoregressive time series of order
$q$\emph{\
\index{Sequence!autoregresive}%
} briefly AR($q$)- sequence. If additionally solutions of the algebraic
equation
\[
x^{q}-\gamma_{0}x^{q-1}-\ldots-\gamma_{q-1}=0
\]
lie inside unit circle the complex plane, then respective time series is
called \emph{stationary}.
\end{example}

One considered stationary $\mathbf{AR}(2)-$sequence $\left\{  \xi_{i}\right\}
_{i\geq1}$ with zero mean

$3500$ observations were made. Values of averages $\left\{  S_{i}\right\}
$with selected numbers were the following: $S_{100}\allowbreak=\allowbreak
0,179;$ $S_{500}\allowbreak=\allowbreak0,02;$ $S_{1000}\allowbreak
=\allowbreak0,033;$ $S_{2000}\allowbreak=\allowbreak-0,004;$ $S_{3500}%
\allowbreak=\allowbreak-0,003$

Variability of those averages we will illustrate by the following plots%

{\includegraphics[
height=1.2626in,
width=2.0678in
]%
{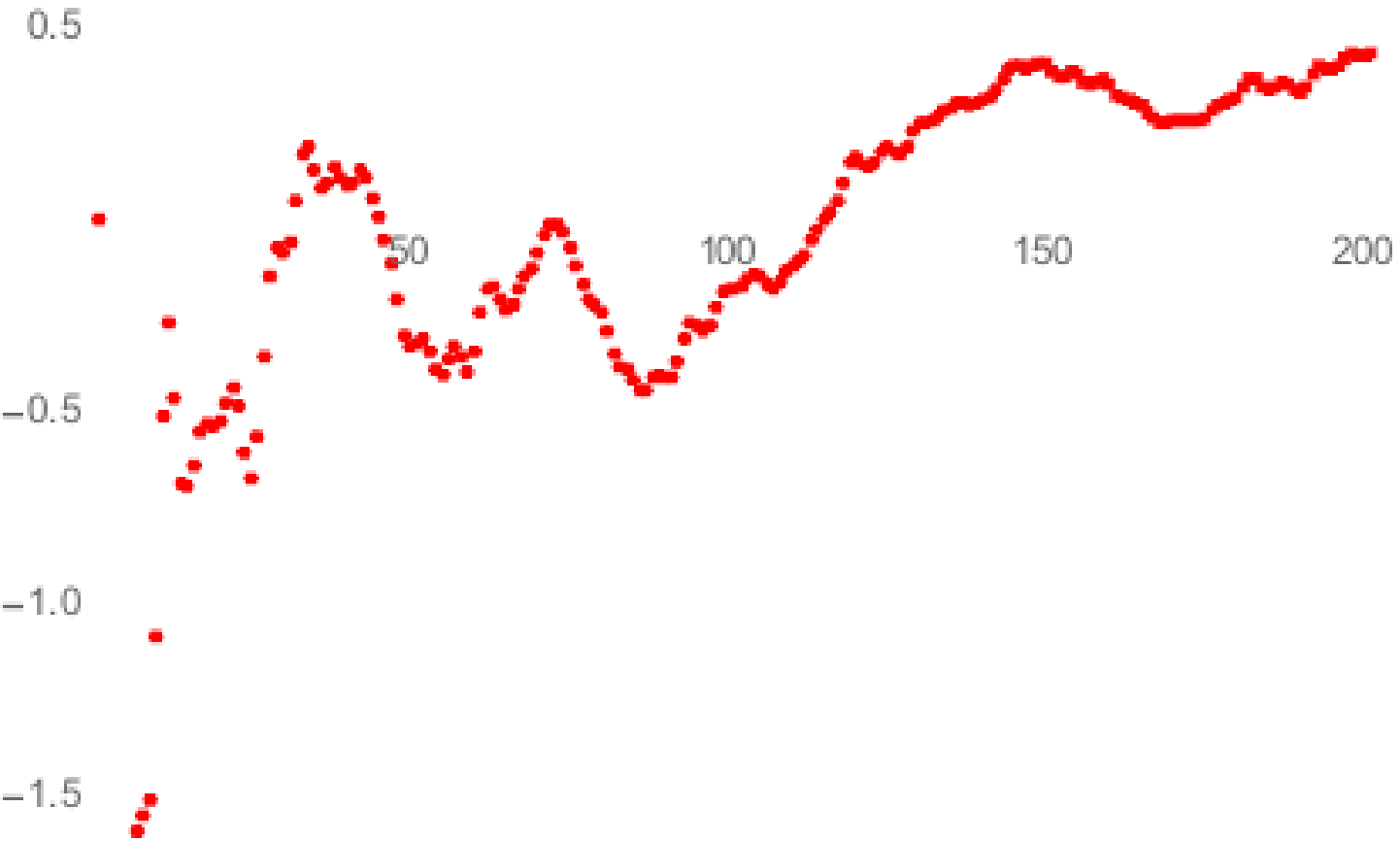}%
}%
{\includegraphics[
height=1.2574in,
width=1.8127in
]%
{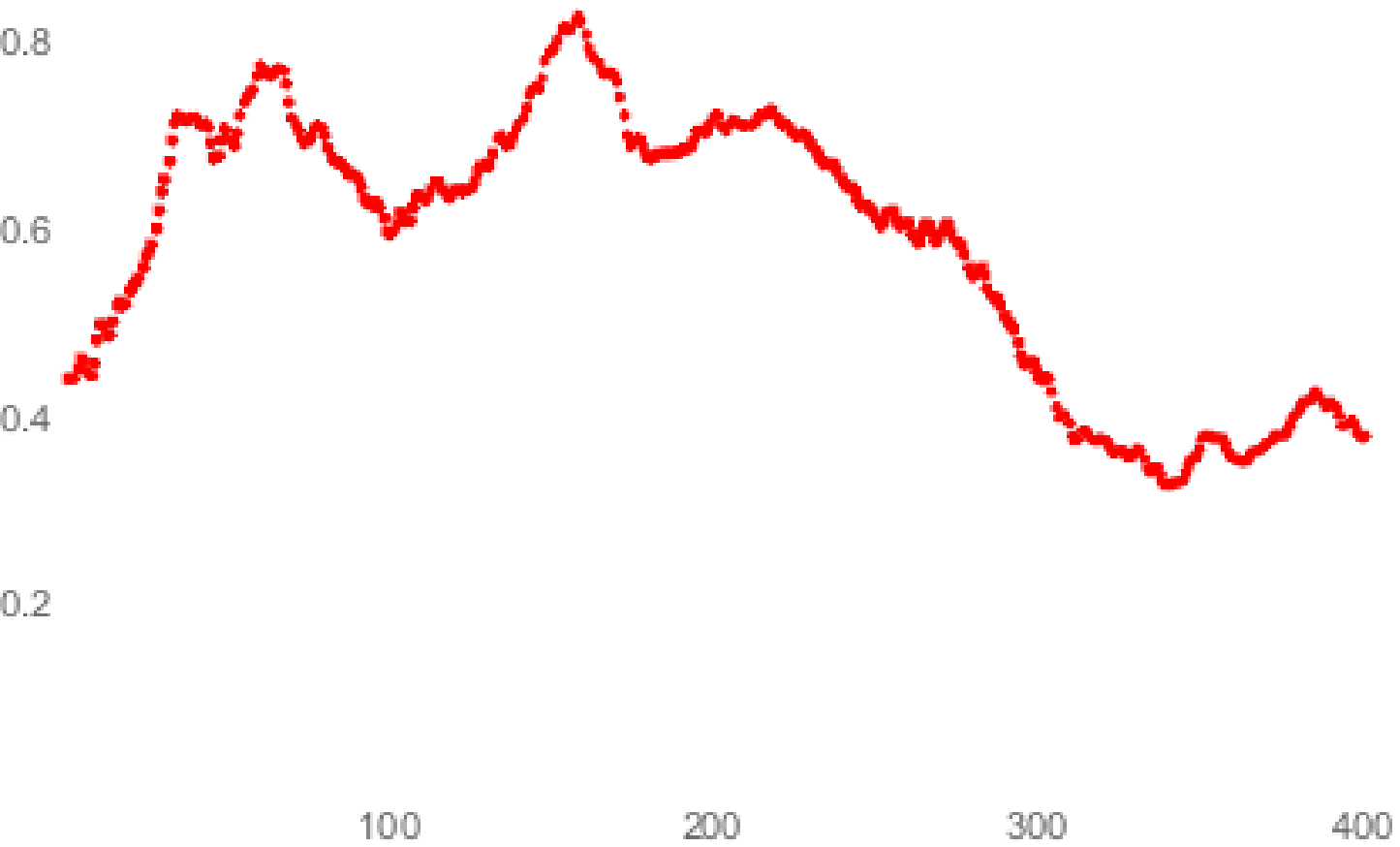}%
}%
%

{\includegraphics[
height=1.2004in,
width=2.0643in
]%
{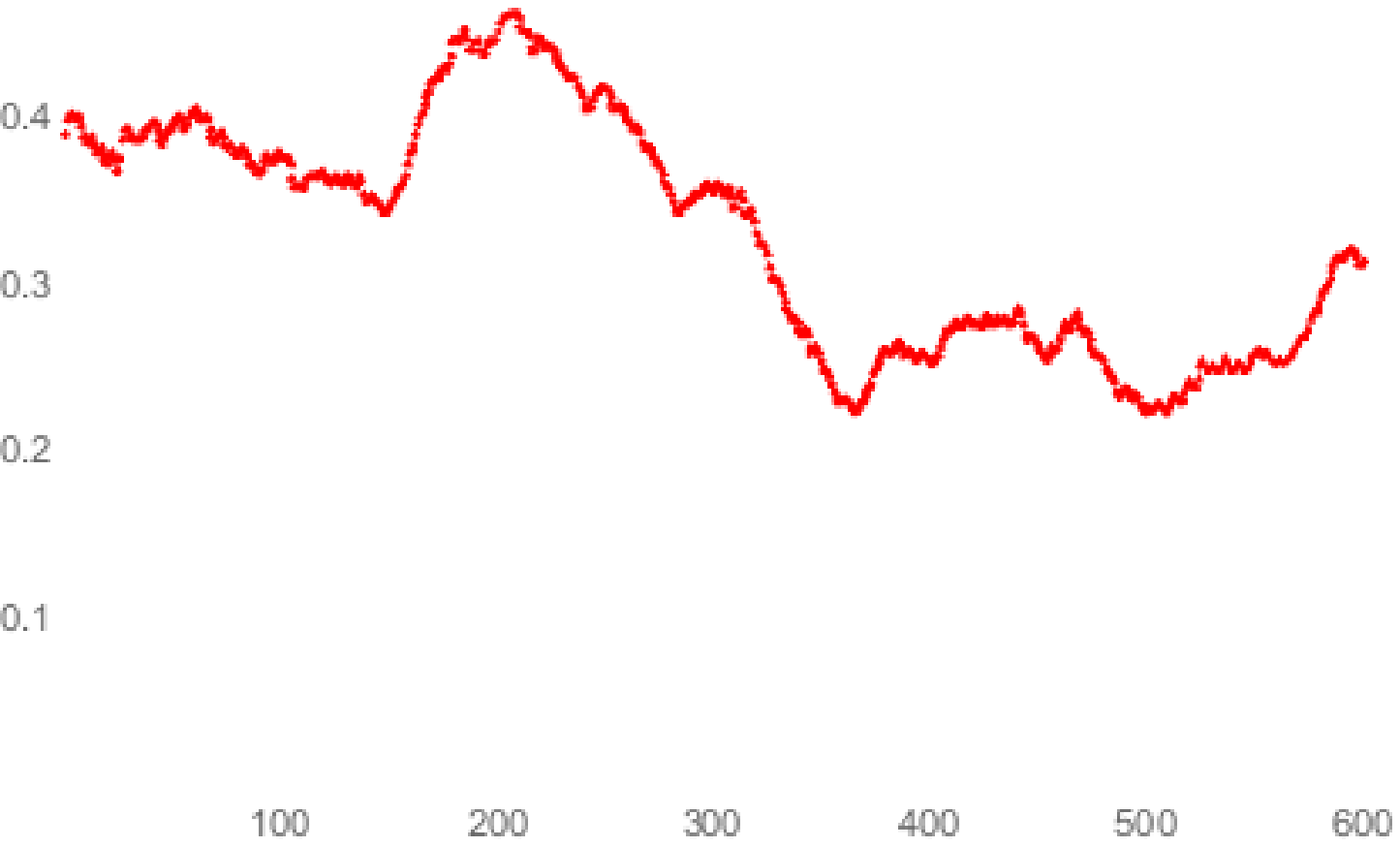}%
}%
{\includegraphics[
height=1.0516in,
width=1.8092in
]%
{pwl_ar3.eps}%
}%
%

{\includegraphics[
height=1.1882in,
width=2.0954in
]%
{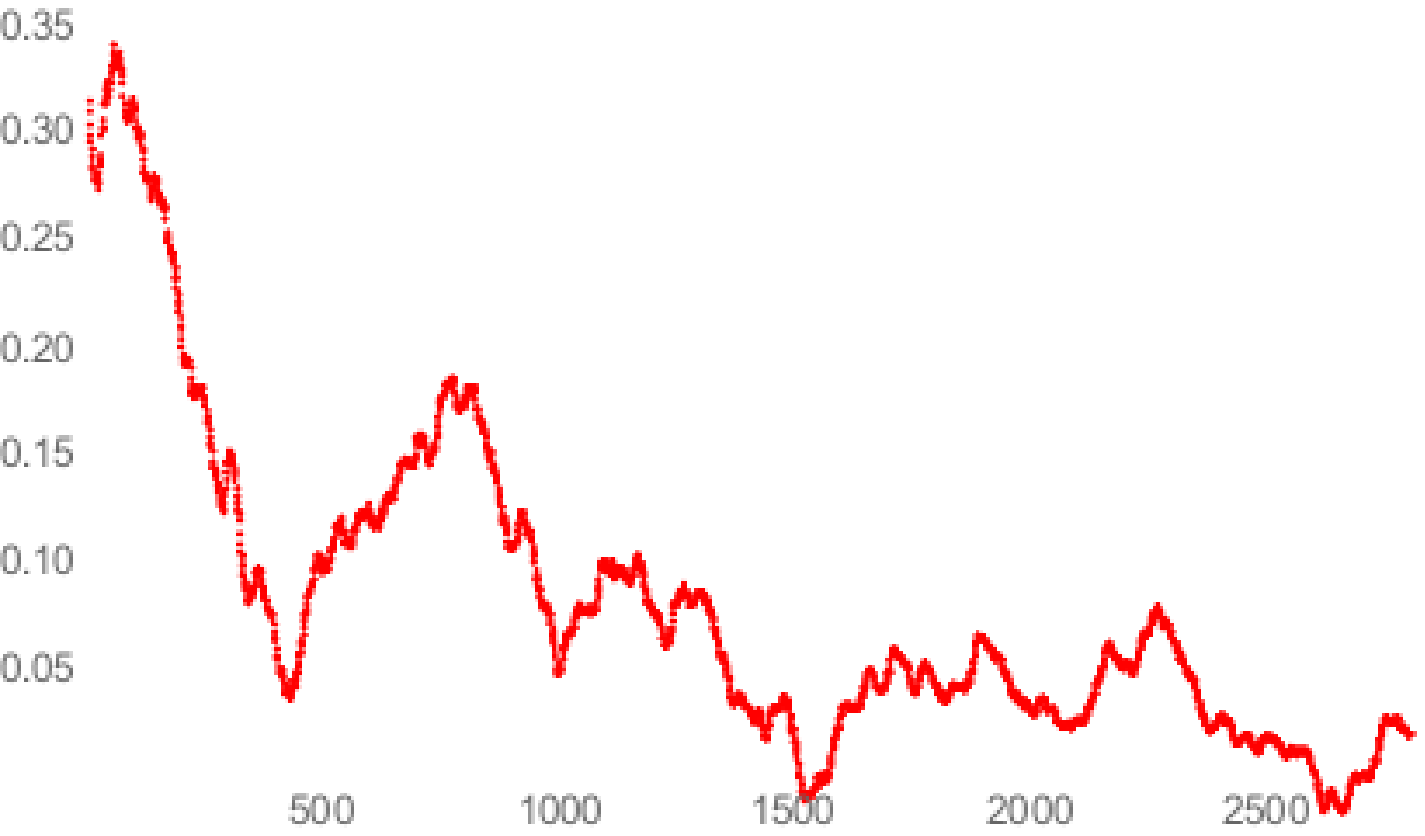}%
}%

\subsubsection{Global Central Limit Theorem Almost Surely}

As the second, not a typical example of application of SLLN we will discuss
result presented in the paper \cite{Szablowski972} concerning the so-called
local and global central limit theorems almost surely. This result is a
generalization of results from papers \cite{Brosamler88}, \cite{Csaki93},
\cite{Schatte91}. These papers concern phenomena noticed by Brosamler in the
first of these papers. We mean the so-called local central limit theorem
almost surely%
\index{Theorem!Brosamler}
(LCTGAS). More precisely, let $\left\{  X_{i}\right\}  _{i\geq1}$ be a
sequence independent random variables having identical distributions. Let us
denote
\[
S_{k}=\sum_{i=1}^{k}X_{i},k=1,2,\ldots.
\]
Let us select two sequences of real numbers $\left\{  \alpha_{i}\right\}
_{i\geq1}$ and $\left\{  \beta_{i}\right\}  _{i\geq1}$ such that $\alpha
_{i}\leq\beta_{i}$ ; $i\geq1$. Let $p_{k}=P(\alpha_{k}\leq S_{k}<\beta_{k})$.
Let us set:
\[
\eta_{k}=\left\{
\begin{array}
[c]{lll}%
\frac{I(\alpha_{k}\leq S_{k}<\beta_{k})}{p_{k}}, & gdy & p_{k}\neq0\\
1, & gdy & p_{k}=0
\end{array}
\right.  ,
\]
where $I(A)$ is the characteristic function of the event $A$. It turns out
that selecting proper assumptions concerning random variables $\left\{
X_{i}\right\}  _{i\geq0}$, and also sequences $\left\{  \alpha_{i}\right\}  $
and $\left\{  \beta_{i}\right\}  $, we observe convergence:
\begin{equation}
\frac{1}{\ln n}\sum_{k=1}^{n}\frac{\eta_{k}}{k}\underset{n\rightarrow
\infty}{\longrightarrow}1 \label{local_ctg_as}%
\end{equation}
with probability $1$. Brosamler first noticed\ this phenomenon for sequences
$\left\{  X_{i}\right\}  _{i\geq0}$ having second moments and sequences
$\left\{  \alpha_{i}\right\}  $ and $\left\{  \beta_{i}\right\}  $ of the
following of the form: $\alpha_{k}=-\infty,\beta_{k}=x\sigma\sqrt{k},k\geq1$,
where $x$ is any real number, $\sigma^{2}$ is a variance of variable $X_{1}$.
More precisely, in Brosamler's Theorem the following convergence:
\begin{equation}
\frac{1}{\ln n}\sum_{k=1}^{n}I(S_{k}\leq x\sigma\sqrt{k}%
)\underset{n\rightarrow\infty}{\longrightarrow}\Phi(x), \label{brosamler}%
\end{equation}
was proved with probability 1, here $\Phi(x)$ is distribution Normal $N(0,1)$.
However remembering that on the basis of CLT (see \ref{sek_ctg}) sequence
$\left\{  p_{k}\right\}  $ converges in this case just to $\Phi(x)$ one can
notice clear connection between convergence (\ref{local_ctg_as}) a
(\ref{brosamler}).

The phenomenon shown in (\ref{brosamler}) was called global central limit
theorem almost surely (GCTGAS). During following years one generalized and
improved this result. In particular, one considered conditions, under which we
have convergence (\ref{local_ctg_as}). The result of Cs\`{a}ki, F\"{o}ldes and
R\'{e}v\'{e}sz of the paper \cite{Csaki93} concerns just this type of
convergence. In the paper \cite{Szablowski972} this result has been
generalized, some other of similar type results has been proved estimating
also the speed with which these convergences happen. More precisely, one was
able to find an increasing number sequence $\left\{  \gamma_{k}\right\}  $
such that the sequence
\[
\gamma_{n}\frac{1}{\ln n}\sum_{k=1}^{n}\frac{\eta_{k}-1}{k}%
\]
still converges to zero with probability $1$. In obtaining this result one
used Lemma \ref{prostypomocniczy}. The result of the paper
\cite{Szablowski972} is the following:

\begin{theorem}
Let sequence $\left\{  X_{i}\right\}  _{i\geq0}$ be a sequence independent
random variables having identical distributions such that $EX_{1}=0$,
$EX_{1}^{2}=\sigma^{2}$, $E\left\vert X_{1}\right\vert ^{3}<\infty$. Moreover,
let us suppose, that \newline either \textit{1)} distribution random variable
$X_{1}$ has bounded density and the following conditions are satisfied by
sequences $\left\{  \alpha_{k}\right\}  $, $\left\{  \beta_{k}\right\}  $ and
$\left\{  p_{k}\right\}  :$%
\begin{align*}
\beta_{k}-\alpha_{k}  &  \leq ck,\text{where }c\text{ is some constant,}\\
\sum_{k=1,p_{k}\neq0}^{n}\frac{1}{k^{2}p_{k}}  &  =O(\ln n),
\end{align*}
or: \textit{2)} sequences $\left\{  \alpha_{k}\right\}  $, $\left\{  \beta
_{k}\right\}  $ and $\left\{  p_{k}\right\}  $ satisfy the conditions:
\begin{align*}
\beta_{k}-\alpha_{k}  &  \leq c\sqrt{k},\text{where }c\text{ is some
constant,}\\
\sum_{k=1,p_{k}\neq0}^{n}\frac{\ln k}{k^{3/2}p_{k}}  &  =O(\ln n),
\end{align*}
then:
\end{theorem}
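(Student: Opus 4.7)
The plan is to regard the statement as a generalized strong law of large numbers for the centered sequence $\{\eta_k - E\eta_k\}_{k\geq 1}$ with Riesz weights $\alpha_{k-1}=1/k$, and to feed the resulting covariance estimates into Theorem \ref{mpwl} (or, in the quasi-stationary guise, into the corollary of Theorem \ref{uog_RM}). Note that when $p_k\neq 0$ we have $E\eta_k=1$, and under either hypothesis (1) or (2) the set of indices with $p_k=0$ is negligible, so it suffices to prove
\[
\frac{1}{\ln n}\sum_{k=1}^{n}\frac{\eta_k-E\eta_k}{k}\underset{n\to\infty}{\longrightarrow}0\quad\text{a.s.}
\]
With $\mu_k=(1/k)/\sum_{j=1}^{k}(1/j)=O(1/(k\ln k))$, this is exactly a statement about the Riesz means $\bar{\eta}_n$ converging to zero, which by the iterative identity $\bar{\eta}_{n+1}=(1-\mu_n)\bar{\eta}_n+\mu_n(\eta_{n+1}-E\eta_{n+1})$ falls under Theorem \ref{mpwl}.

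The core of the argument is estimating $\operatorname{var}(\eta_k)$ and $\operatorname{cov}(\eta_j,\eta_k)$ for $j<k$. Since $\eta_k^2=\eta_k/p_k$ on the relevant event, we get $\operatorname{var}(\eta_k)\leq 1/p_k$, so the first series in (\ref{war_zbieznosci}) after multiplication by $\mu_n^2\sim 1/(n\ln n)^2$ contributes $\sum_k \frac{1}{k^2\ln^2 k}\cdot\frac{1}{p_k}$, which is comfortably dominated by the respective assumption on $\sum 1/(k^2 p_k)$ in case (1), and even more easily by the case (2) assumption. For the covariances I would use the splitting $S_k=S_j+T_{j,k}$ with $T_{j,k}=\sum_{i=j+1}^{k}X_i$ independent of $S_j$, so that
\[
E[\eta_j\eta_k]=\frac{1}{p_jp_k}E\bigl[I(\alpha_j\leq S_j<\beta_j)\,P(\alpha_k-S_j\leq T_{j,k}<\beta_k-S_j)\bigr].
\]
In case (1) the boundedness of the density of $X_1$ propagates (by convolution) to a uniform bound $O(1/\sqrt{k-j})$ on the density of $T_{j,k}$, giving
\[
P(\alpha_k-s\leq T_{j,k}<\beta_k-s)\leq \frac{C(\beta_k-\alpha_k)}{\sqrt{k-j}}=O\!\left(\frac{k}{\sqrt{k-j}}\right),
\]
and in case (2) the Berry--Esseen bound (available since $E|X_1|^3<\infty$) yields the analogous $O(1)$-type estimate after normalization by $\sigma\sqrt{k-j}$, together with a remainder of order $1/\sqrt{k-j}$. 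In both cases one obtains $|\operatorname{cov}(\eta_j,\eta_k)|\leq (\text{something bounded})/(p_jp_k)$ times a factor that decays in $k-j$, which is exactly the input needed for the second series in (\ref{war_zbieznosci}).

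The final step is a bookkeeping verification: one has to show that
\[
\sum_{n\geq 1}\mu_n\sqrt{\operatorname{var}(\eta_{n+1})\operatorname{var}(\bar{\eta}_n)}<\infty,
\]
where $\operatorname{var}(\bar{\eta}_n)$ is itself bounded by a Cauchy--Schwarz-type combination of the variance and covariance estimates above. Inserting $\mu_n=O(1/(n\ln n))$ and using the two hypothesized bounds $\sum 1/(k^2p_k)=O(\ln n)$ (case 1) or $\sum \ln k/(k^{3/2}p_k)=O(\ln n)$ (case 2) — the latter is tailored precisely to the extra factor $1/\sqrt{k-j}$ coming from the CLT-type estimate on $T_{j,k}$ — gives convergence of both series in Theorem \ref{mpwl}. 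Applying that theorem yields $\bar{\eta}_n\to 0$ a.s., which is the asserted conclusion (\ref{local_ctg_as}). The main obstacle I expect is bookkeeping with the covariance splitting near the diagonal $j\approx k$, where $1/\sqrt{k-j}$ is not integrable and one must use the trivial bound $|\operatorname{cov}(\eta_j,\eta_k)|\leq\sqrt{\operatorname{var}(\eta_j)\operatorname{var}(\eta_k)}$ and control the resulting terms via the assumed $O(\ln n)$ growth of $\sum 1/(k^2p_k)$ (respectively $\sum \ln k/(k^{3/2}p_k)$); everything else is a routine application of the iterative lemmas developed in chapter \ref{zbiez}.
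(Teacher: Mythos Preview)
Your proposal targets the wrong conclusion. The theorem's assertion is not (\ref{local_ctg_as}); that basic convergence is \emph{cited} from \cite{Csaki93}, together with the variance estimates $\operatorname{var}(\eta_n)\cong 1/p_n$ and $\operatorname{var}\bigl(\sum_{i\leq n}(\eta_i-1)/i\bigr)=O(\ln n)$. The theorem actually claims the sharper statements (a)--(d): convergence of the series $\sum_i (\eta_i-1)/(i\ln_+ i)$, the various weighted-mean convergences, and the rate in (d). The paper's proof takes the \cite{Csaki93} facts as black boxes and feeds them through Lemma~\ref{podstawowy} and Lemma~\ref{prostypomocniczy}; for instance, to get (a) one writes $\bar Y_n=\frac{1}{B_{n-1}}\sum_{i\leq n}Y_i/i$ in recursive form, uses $\bar Y_n\to 0$ a.s.\ and $\sum\mu_n\bar Y_n$ convergent (from (\ref{oszac1})) to conclude via Lemma~\ref{podstawowy} that $\sum\mu_n Y_{n+1}$ converges. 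Your covariance computation is essentially re-deriving the \cite{Csaki93} input, not the paper's output.

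Even for the limited goal of proving (\ref{local_ctg_as}) via Theorem~\ref{mpwl}, there is a gap in your bookkeeping. With $\mu_n\sim 1/(n\ln n)$, $\operatorname{var}(\eta_{n+1})\leq 1/p_{n+1}$, and $\operatorname{var}(\bar\eta_n)=O(1/\ln n)$, the second series in (\ref{war_zbieznosci}) becomes $\sum_n \frac{1}{n\ln^{3/2}n\sqrt{p_n}}$, and the hypothesis $\sum_{k\leq n}1/(k^2 p_k)=O(\ln n)$ does \emph{not} by itself force this to converge (try $1/p_k\sim k/\ln k$: then $\sum 1/(k^2 p_k)\sim\sum 1/(k\ln k)=O(\ln\ln n)$, fine, but $\sum 1/(n\ln^{3/2}n\sqrt{p_n})\sim\sum 1/(\sqrt{n}\ln^2 n)=\infty$). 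The route through Theorem~\ref{mpwl} is too crude here; one really needs the full covariance control from \cite{Csaki93} to get (\ref{oszac1}) and then work with the series $\sum\mu_n\bar\eta_n$ directly, as the paper does.
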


\begin{example}
\begin{theorem}
\textit{a)} series $\sum_{i\geq1}\frac{1}{i\ln_{+}i}(\eta_{i}-1)$ converges
with probability $1$, \newline$(\ln_{+}x\allowbreak\overset{df}{=}%
\allowbreak\max(\log x,1)\allowbreak:x>0)$

\textit{b) }$\frac{1}{A_{n}}\sum_{i=0}^{n}a_{i}\eta_{i+1}%
\underset{n\rightarrow\infty}{\longrightarrow}1$ with probability $1$, where
we denoted:
\[
a_{i}=\frac{\ln_{+}^{\nu}(i+1)}{i+1}:i=0,1,\ldots,
\]
for $\nu>-1$ and $\frac{1}{A_{n}}\sum_{i=0}^{n}a_{i}\ln_{+}\ln_{+}%
(i+1)\eta_{i+1}\underset{n\rightarrow\infty\infty}{\longrightarrow}1$ for
$\nu=-1$, and $A_{n}=\sum_{i=0}^{n-1}a_{i}.$

\textit{c) }$\frac{1}{n}\sum_{i=1}^{n}\frac{\eta_{i}-1}{\ln_{+}i}%
\underset{n\rightarrow\infty}{\longrightarrow}0$ with probability $1$.

If additionally we assume, that the sequence $\left\{  p_{k}\right\}  $ is
such that $\underset{k\,\longrightarrow\infty}{\lim\inf}\,\frac{1}{p_{k}%
}<\infty$, then

\textit{d) }with probability $1$ for $\gamma>\frac{2}{3}:$%
\[
\frac{\ln_{+}^{1/4}n}{\ln_{+}^{\gamma}\ln_{+}n}\frac{1}{\ln_{+}n}\sum
_{k=1}^{n}\frac{\eta_{k}-1}{k}\underset{n\rightarrow\infty}{\longrightarrow
}0,
\]
and Moreover, series
\[
\sum_{i\geq1}\frac{1}{i\ln_{+}^{3/4}i\ln_{+}\ln_{+}^{\gamma}i}(\eta_{i}-1)
\]
converges with probability $1$.
\end{theorem}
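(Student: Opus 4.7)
The plan is to reduce the four assertions to the convergence of a single centered series of the form $\sum_{i\ge 1}\mu_i(\eta_i-1)$ for suitably chosen normal sequences $\{\mu_i\}$, and then invoke Lemma \ref{podstawowy}, Lemma \ref{prostypomocniczy} and the Kronecker–type corollaries of Section \ref{Riesz} to transfer the convergence of the series to the Riesz means appearing in (a)–(d). First I would verify the elementary identities $E\eta_k = 1$ and $E(\eta_k-1)^2 = (1-p_k)/p_k \le 1/p_k$ whenever $p_k\ne 0$, so that the ``variance'' growth of $\eta_k-1$ is controlled entirely by $1/p_k$.

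The core estimate will be a covariance bound
\[
\bigl|E(\eta_i-1)(\eta_j-1)\bigr| \;\le\; R(i,j), \qquad i<j,
\]
obtained by splitting $S_j = S_i + (S_j-S_i)$, using independence of the increment $S_j-S_i$ from $\mathcal G_i=\sigma(X_1,\dots,X_i)$, and applying a Berry–Esseen–type expansion to the conditional probability $P(\alpha_j\le S_j<\beta_j\,|\,\mathcal G_i)$. In case (1) the boundedness of the density of $X_1$ gives a local CLT, so the conditional probability differs from $p_j$ by terms of order $(\beta_j-\alpha_j)/\sqrt{j}$ times an error of order $1/\sqrt{j}$; the hypothesis $\beta_k-\alpha_k\le ck$ together with $\sum 1/(k^2 p_k)=O(\log n)$ then yields $\sum_{i,j}\mu_i\mu_j R(i,j)<\infty$ for $\mu_i=1/(i\ln_+ i)$. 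In case (2) one uses the global CLT on the natural scale $\sqrt{k}$, so the bound has an extra $\log k$ factor matching the stronger summability hypothesis $\sum \log k/(k^{3/2}p_k)=O(\log n)$. The hard part of the whole argument is precisely this covariance bound: the sequence $\{\eta_k-1\}$ is neither independent nor a martingale difference sequence, so we must treat it as a quasi–stationary sequence in the sense of Definition following Theorem \ref{mpwl}, and the covariances must be summed in a weighted form dictated by the iterative identity.

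Having the covariance control, I would write $\eta_k-1$ in the iterative form by setting
\[
\overline Y_{n+1} = (1-\mu_n)\overline Y_n + \mu_n(\eta_{n+1}-1),
\]
with $\mu_i = 1/((i+1)\ln_+(i+1))$ for (a), with $\mu_i$ conjugate to $\{a_i\}$ for (b), with $\mu_i = 1/(i+1)$ combined with the additional factor $1/\ln_+ i$ for (c), and with the sharper weight $\mu_i = \ln_+^{1/4}i/((i+1)\ln_+^\gamma \ln_+ i)$ (or its analogue) for (d). Squaring and taking expectations gives
\[
E\overline Y_{n+1}^2 = (1-\mu_n)^2 E\overline Y_n^2 + 2\mu_n(1-\mu_n)E\overline Y_n(\eta_{n+1}-1) + \mu_n^2 E(\eta_{n+1}-1)^2,
\]
and Lemma \ref{podstawowy} applied to this recursion, combined with the covariance bound above and the hypothesis on $\sum 1/(k^2 p_k)$ (resp.\ $\sum \log k/(k^{3/2} p_k)$), shows $E\overline Y_n^2\to 0$ and the convergence of $\sum \mu_n E\overline Y_n^2$. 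Assertion \ref{zb_kwadrat} of Lemma \ref{prostypomocniczy}, or more directly Corollary \ref{podstawowy1}, then yields almost sure convergence of the series in (a).

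Parts (b), (c) and (d) follow by standard transfer arguments: (b) is the statement that the Riesz mean of $\{\eta_{i+1}\}$ with weights $\{a_i\}$ converges to $1$, which by Corollary \ref{podobne} is equivalent to the convergence of the series in (a) together with a summability comparison between $\mu_i$ and $a_i/\sum a_k$; (c) is obtained by applying Kronecker's Lemma to the series in (a) with $a_n=n$ (since $1/(i\ln_+ i)$ is the reciprocal of an increasing sequence); and (d) uses the additional hypothesis $\liminf 1/p_k<\infty$ to sharpen the covariance bound to one summable at the finer rate $\ln_+^{1/4}n/\ln_+^\gamma\ln_+ n$, after which the same iterative argument goes through. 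I expect the main technical obstacle to be the careful matching of the Berry–Esseen error terms to the two different summability hypotheses in cases (1) and (2); everything else is an application of the numerical lemmas already proved in Chapter \ref{zbiez}.
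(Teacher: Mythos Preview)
Your plan departs substantially from the paper's proof, and while the overall strategy (reduce everything to convergence of a weighted series and then transfer via Lemma~\ref{podstawowy}) is sound, you have inverted the logical direction at the crucial step and this creates a real gap.

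The paper does \emph{not} attempt to prove the covariance bound on $E(\eta_i-1)(\eta_j-1)$ from scratch. Instead it quotes from \cite{Csaki93} two pieces of input: the variance estimates $\operatorname{var}(\eta_n)\asymp 1/p_n$, $\operatorname{var}\bigl(\sum_{i\le n}(\eta_i-1)/i\bigr)=O(\ln_+ n)$, and the almost sure convergence $\frac{1}{\ln_+ n}\sum_{i\le n}(\eta_i-1)/i\to 0$ itself. Given these, the paper proves (d) first by writing $Z_n=\frac{\ln_+^{1/4}n}{\ln_+^\gamma\ln_+ n}\cdot\frac{1}{\ln_+ n}\sum_{k\le n}(\eta_k-1)/k$ in iterative form with $\mu_n\asymp 1/(n\ln_+ n)$, and using the quoted variance bound together with $\liminf 1/p_k<\infty$ to control the cross term $\sum\mu_n Z_n(\eta_{n+1}-1)$ via Schwarz. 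For (a) the paper runs Lemma~\ref{podstawowy} \emph{backwards}: from the known a.s.\ convergence $\overline Y_n\to 0$ and the $L^1$ (hence a.s.) convergence of $\sum\mu_n\overline Y_n$ (which follows from $\operatorname{var}(\overline Y_n)=O(1/\ln_+ n)$), one deduces convergence of $\sum\mu_n(\eta_{n+1}-1)$. Parts (b) and (c) then follow from (a) by another forward application of Lemma~\ref{podstawowy} / Corollary~\ref{podobne}.

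Your route---prove the covariance bound directly and then apply the quasi-stationary machinery of Theorem~\ref{uog_RM}---is more ambitious but has a concrete obstruction: the covariance $\operatorname{cov}(\eta_i,\eta_j)$ is not of the form $\rho(|i-j|)\sigma_i\sigma_j$, so the quasi-stationary framework does not apply as stated, and with $\mu_i\sim 1/(i\ln_+ i)$ and $\operatorname{var}(\eta_i)\sim 1/p_i$ the Rademacher--Menchoff condition $\sum\mu_i^2\sigma_i^2\log^2 i$ reduces to $\sum 1/(i^2 p_i)$, which by hypothesis is only $O(\log n)$ and hence divergent. What actually makes the argument work is the nontrivial cancellation encoded in $\operatorname{var}\bigl(\sum(\eta_i-1)/i\bigr)=O(\ln n)$, and that is exactly the input the paper borrows rather than proves. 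If you want a self-contained argument you must reproduce the Csáki--Földes--Révész covariance analysis in full; otherwise, the efficient path is the paper's: quote their result, then exploit the equivalence in Lemma~\ref{podstawowy} to pass from Riesz means to the series.
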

\end{example}

\begin{proof}
The proof uses two fundamental facts taken from the paper \cite{Csaki93}.
Namely, by assumptions of this theorem we have the following estimation:
\begin{align}
\operatorname*{var}(\eta_{n})  &  \cong\frac{1}{p_{n}}\label{oszac0}\\
\operatorname{var}(\frac{1}{\ln_{+}n}\sum_{i=1}^{n}\frac{\eta_{i}}{i})  &
\approx O(\frac{1}{\ln_{+}n}),\label{oszac1}\\
\operatorname{var}(\sum_{i=1}^{n}\frac{\eta_{i}-1}{i})  &  \approx O(\ln
_{+}n). \label{oszac2}%
\end{align}
We will prove assertion $d)$ first. Let us denote
\[
Z_{n}=\frac{\ln_{+}^{1/4}n}{\ln_{+}^{\gamma}\ln_{+}n}\frac{1}{\ln_{+}n}%
\sum_{k=1}^{n}\frac{\eta_{k}-1}{k}=\frac{1}{\ln_{+}^{3/4}n\ln_{+}^{\gamma}%
\ln_{+}n}\sum_{k=1}^{n}\frac{\eta_{k}-1}{k}.
\]
It is easy to check, that the sequence $\left\{  Z_{n}\right\}  _{n\geq1}$
satisfies for large $n$ recurrent relationship
\[
Z_{n+1}=(1-\frac{1}{2(n+1)\ln_{+}(n+1)}+v_{n+1})Z_{n}+\frac{\eta_{n+1}%
-1}{\left(  n+1\right)  \ln_{+}^{3/4}n\ln_{+}^{\gamma}\ln_{+}n},
\]
where $v_{n}=o(\frac{1}{n\ln_{+}n})$. On the base of (\ref{oszac2}) we deduce
that
\begin{equation}
\operatorname*{var}(Z_{n})\cong O(\frac{1}{\ln_{+}^{1/2}n\ln_{+}^{2\gamma}%
\ln_{+}n}). \label{oszac3}%
\end{equation}
Hence the series
\[
\sum_{n\geq1}\frac{1}{\left(  n+1\right)  \ln_{+}(n+1)}Z_{n}^{2}%
\]
converges with probability $1$. Moreover, it is easy to notice, that the
series
\[
\sum_{n\geq1}\frac{\left(  \eta_{n+1}-1\right)  ^{2}}{\left(  \left(
n+1\right)  \ln_{+}^{3/4}n\ln_{+}^{\gamma}\ln_{+}n\right)  ^{2}}%
\]
converges with probability $1$. Thus, on the base of Lemma
\ref{prostypomocniczy} we deduce that the sequence $\left\{  Z_{n}\right\}
_{n\geq1}$ converges with probability $1$ to zero if and only if, series
\[
\sum_{n\geq1}\frac{\eta_{n+1}-1}{\left(  n+1\right)  \ln_{+}^{3/4}n\ln
_{+}^{\gamma}\ln_{+}n}Z_{n}%
\]
converges with probability $1$. We have however been using (\ref{oszac0}) and
(\ref{oszac3}):
\begin{align*}
&  \sum_{n\geq1}\frac{\sqrt{\operatorname*{var}\left(  \eta_{n+1}-1\right)  }%
}{\left(  n+1\right)  \ln_{+}^{3/4}n\ln_{+}^{\gamma}\ln_{+}n}\sqrt
{\operatorname*{var}\left(  Z_{n}\right)  }\\
&  \leq\sum_{n\geq1}\frac{\sqrt{1/p_{n+1}}}{\left(  n+1\right)  \ln_{+}%
^{3/4}n\ln_{+}^{\gamma}\ln_{+}n}\sqrt{O(\frac{1}{\ln_{+}^{1/2}n\ln
_{+}^{2\gamma}\ln_{+}n})}<\infty.
\end{align*}
Hence $Z_{n}\underset{n\rightarrow\infty}{\rightarrow}0$ with probability $1$.

In order to get the second part of assertion $d)$ let us notice that the
series
\[
\sum_{n\geq1}\frac{1}{\left(  n+1\right)  \ln_{+}(n+1)}\sqrt{EZ_{n}^{2}}%
\]
converges, since we have (\ref{oszac3}). Hence, it converges with probability
$1$ together with the series
\[
\sum_{n\geq1}\frac{1}{\left(  n+1\right)  \ln_{+}(n+1)}Z_{n}.
\]
Now we apply Lemma \ref{podstawowy} and infer, that the series $\sum_{i\geq
1}\frac{1}{i\ln_{+}^{3/4}i\ln_{+}\ln_{+}^{\gamma}i}(\eta_{i}-1)$ converges
almost surely.

In order to prove assertion $a)$, $b)$ and $c)$ we use the main result of the
paper \cite{Csaki93}. Namely, with assumptions theorems it follows that the
sequence $\left\{  \frac{1}{\ln_{+}n}\sum_{i=1}^{n}\frac{\eta_{i}-1}%
{i}\right\}  _{n\geq1}$ converges to zero with probability $1$. Taking into
account (\ref{oszac1}) we deduce that the series
\begin{equation}
\sum_{n\geq1}\frac{1}{n\ln_{+}n}\frac{1}{\ln_{+}n}\sum_{i=1}^{n}\frac{\eta
_{i}-1}{i} \label{szereg}%
\end{equation}
converges with probability $1$. Let us denote $B_{i}=\sum_{j=0}^{i-1}\frac
{1}{j+1}$. It is known, that $\ln_{+}n-B_{n}\cong0.577$ for large $n$.
Denoting
\[
\alpha_{n}=\allowbreak\frac{1}{n+1},\;\mu_{n}=\allowbreak\frac{1}%
{(n+1)B_{n+1}},\,Y_{n}=\allowbreak\eta_{n}-1,\overline{Y}_{n}=\frac{1}%
{B_{n-1}}\sum_{i=1}^{n}\frac{Y_{i}}{i},
\]
we see that $\overline{Y}_{n}\underset{n\rightarrow\infty}{\rightarrow}0$
a.s., and series $\sum_{n\geq1}\mu_{n}\overline{Y}_{n}$ converges a.s. hence
on the basis of Lemma \ref{podstawowy} we deduce that also the series
$\sum_{n\geq1}\mu_{n}Y_{n+1}$ converges with probability $1$. \newline It
remained to show, that series the $\sum_{n\geq1}\left\vert \mu_{n}-\frac
{1}{\left(  n+1\right)  \ln_{+}(n+1)}\right\vert \left\vert Y_{n+1}\right\vert
$ converges almost surely. It is however very easy, since
\[
\left\vert \mu_{n}-\frac{1}{\left(  n+1\right)  \ln_{+}(n+1)}\right\vert
\cong\frac{const}{\left(  n+1\right)  \ln_{+}^{2}(n+1)},
\]
and $E\left\vert Y_{n}\right\vert \leq1+E\left\vert \eta_{n}\right\vert =2.$

The idea of the proof of assertions $b)$ and $c)$ is very similar and we will
not present those proofs with all the details. Let us denote $A_{i}=\sum
_{j=1}^{i}a_{i-1}\cong\frac{\ln_{+}^{\nu+1}i}{\nu+1}$ for $\nu>-1$ and
$\ln_{+}\ln_{+}i$ for $\nu=-1$. Hence, for $\nu>-1$ $\mu_{i}\allowbreak
\cong\allowbreak\frac{\nu+1}{\left(  i+1\right)  \ln_{+}(i+1)}$ and $\mu
_{i}\allowbreak\cong\allowbreak\frac{1}{\left(  i+1\right)  \ln_{+}%
(i+1)\ln_{+}\ln_{+}(i+1)}$. On the base of already proved assertion $a)$ we
deduce that series
\[
\sum_{i\geq1}\mu_{i}Y_{i+1}%
\]
is convergent almost surely for all $\nu\geq-1$. Hence, on the basis of Lemma
\ref{podstawowy} we deduce, from the convergence to zero of the sequence
$\overline{Y}_{n}$, that in this case takes the form described in assertion
$b)$. Similarly, on the basis of the proven assertion $a)$ changing definition
of the random variables $Y_{i}=\frac{\eta_{i}-1}{\ln_{+}i}$ and elements of
the sequence $a_{i}=1$ and $\mu_{i}=\frac{1}{i+1}$ we deduce convergence to
zero of the sequence $\overline{Y}_{n}=\frac{1}{n}\sum_{j=1}^{i}Y_{i}$ i.e. we
have assertion $c).$
\end{proof}

\section{Monte Carlo methods \label{MonteCarlo}}

\subsection{Monte Carlo methods}

Let us start with an example. Suppose, that we want to estimate values of
some, complicated integrals over the composite area. More precisely, let us
suppose, that we are interested in calculating
\[
\underset{V}{\int}f(\mathbf{x)}d\mathbf{x}\overset{df}{=}I,
\]
where $V$ is some bounded subset of $%
\mathbb{R}
^{d}$. Suppose further, that we can to find such number $a>0$, that
$V\subset<0,a>^{d}\overset{df}{=}B$. Let $\mathbf{X}$ will be $d-$dimensional
random variable having uniform distribution on $B$. Let us consider random
variable $Y=f(\mathbf{X})I(\mathbf{X}\in V)$. Notice that $EY=\frac{1}{a^{d}%
}I$. Let us generate sequence of independent observations random vector
$\mathbf{X}$, that is $\left\{  \mathbf{X}_{i}\right\}  _{i\geq1}$. This
sequence in a natural way generates a sequence of the random variables
$\left\{  Y_{i}\right\}  _{i\geq1}$. The assumptions of Theorem \ref{kolmogor}
are satisfied and we can deduce, that $\frac{1}{n}\sum_{i=1}^{n}%
Y_{i}\underset{n\rightarrow\infty}{\longrightarrow}\frac{1}{a^{d}}I$ with
probability 1. Since, also assumptions of proposition \ref{ctg_iid} are
satisfied we can e.g. estimate necessary number of observations to ensure a
given accuracy with probability not less than any given beforehand number.
\newline As a concrete example let us consider $d=1$, $f(x)=\sqrt{1-x^{4}}$,
$V=<0,1>$. In other words, we want to estimate integral $I=\int_{0}^{1}%
\sqrt{1-x^{4}}dx$. To do so, we generate sequence of independent observations
of variables $X_{i}{\sim U(0;1)}$ and consider sequence $\left\{  Y_{i}%
=\sqrt{1-X_{i}^{4}}\right\}  $. Of course, we have $EY_{1}=I$. Now one has to
find the minimal number $n$ for which condition:
\begin{equation}
P(\left\vert \frac{1}{n}\sum_{i=1}^{n}Y_{i}-I\right\vert \leq0,01)\geq0.98,
\label{w-k}%
\end{equation}
is satisfied. Using CTG we get:
\[
P(\left\vert \frac{1}{n}\sum_{i=1}^{n}Y_{i}-I\right\vert \leq0,01)\approx
2\Phi(\frac{0.1\sqrt{n}}{\sqrt{V}}),
\]
where $V$ is here the variance of the random variable $Y$, and $\Phi
(x)=\frac{1}{\sqrt{2\pi}}\int_{0}^{x}\exp(-\frac{t^{2}}{2})dt$ is the
so-called Laplace function. Because of the condition (\ref{w-k}) we have
$\sqrt{n}\geq100\sqrt{V}\Phi^{-1}(0,49)\approx233\sqrt{V}$. Let us estimate
now the quantity $V$. We have $V=EY^{2}-I^{2}=\int_{0}^{1}(1-x^{4})dx-I^{2}$.
Let us notice that for $x\in<0,1>$ we have $\sqrt{1-x^{4}}\geq\sqrt{1-x^{2}}$
and consequently we see that $I\geq\int_{0}^{1}\sqrt{1-x^{2}}dx=\allowbreak
\frac{1}{4}\pi$. Hence, $V\leq\frac{4}{5}-\left(  \frac{1}{4}\pi\right)
^{2}=0,\,18315$. Hence, $n\geq233^{2}\ast0\allowbreak,\,18315=\allowbreak
9943,03$. In other words it is enough perform $n\allowbreak=\allowbreak9944$
observations of the random variables $Y$ (trivial task, if it is to performed
on today's computer), in order to be sure with probability not less than $.98$
that quantity $\frac{1}{n}\sum_{i=1}^{n}Y_{i}$ approximates unknown integral
$\int_{0}^{1}\sqrt{1-x^{4}}dx$ with accuracy not greater than $0,01$.

Let us pay attention to the following features of the above-mentioned example:

\begin{enumerate}
\item versions of SLLN and of CTG used in the above-mentioned example were
very simple

\item potential complications and difficulties were connected with:

\begin{enumerate}
\item generating sequences of independent random variables having identical
distributions uniform on $<0,1>$,

\item generating sequences of independent random variables $\left\{
Y_{i}\right\}  $, whose expectations we would like to estimate.
\end{enumerate}
\end{enumerate}

Mentioned above features characterizes the majority of tasks of Monte Carlo
method, that is estimating values of unknown quantities (most often in the
form of expectations of some random variables) with the help of computer
simulations. Similar features can be found in typical problems of stochastic
optimization. That is to say finding minima of functions of the form
$g(y)=EF(y,X)$.

As it was mentioned before strictly probabilistic part of such tasks is rather
simple and typical. Usually it concerns the application of simple versions of
laws of large numbers and central theorems limit (point 1.). Difficulties in
this type of tasks are connected with the use of good and efficient generator
of pseudo-random number generator (point 2.a.) and possibly by setting the
problem that has translated the usually deterministic problem into the
probabilistic language (point 2.b.).

It is worth to mention, that estimated quantities can have a form of solutions
of the system of deterministic equations (generally nonlinear) or finding
extreme values of some functions or functionals.

Similar features one finds also in problems of stochastic optimization and
parametric estimation. Generally speaking and also simplifying, one has to
find zeros of maxima of the functions $g(y)=EF(y,X)$ in the situation when one
cannot observe values of functions $g$, but only values $F(y,X_{i})$ for any
$y$ and $i$ where $\{X_{n}\}_{n\geq1}$ is the sequence independent random
variables having identical, known distributions. If we give up the recursive
form of such problem, then for fixed $y$ we observe the sequence of values
$\left\{  F(y,X_{i})\right\}  _{i=1}^{n}$. Quantity $\frac{1}{n}\sum_{i=1}%
^{n}F(y,X_{i})$ approximates $g(y)$ with accuracy depending on $n$. There
exist numerical methods, that allow deducing where approximately lies zero of
examined function by knowing its approximate values. Similarly, in the case of
seeking a maximum of some function whose values cannot be observed directly.
It is now enough to apply these methods for observed approximations of values
$g(y_{k})$, $k=1,\ldots,m$.

Difficulties here are connected with the choice of the right numerical method
and not with the probabilistic model of this problem. It is here very simple.
A detailed presentation of this type of applications would lead us too far in
numerical methods.

In the next chapter we will consider similar tasks, but for the more
complicated, not a typical version of the probabilistic model. Namely, we will
consider iterative (or recurrent) versions. Or using other words, we will
assume additionally that for the given point $y$ one can observe only finite
(often equal to one) number of values of the sequence $\left\{  F(y,X_{i}%
)\right\}  $. If additionally, one would depart from requirements of identity
of distributions and independence of elements of the sequence \thinspace
\thinspace$\{X_{n}\}_{n\geq1}$, then we have precisely considered below the
problem of stochastic approximation, or having specially chosen function $F$
the problem of density estimation considered in the next chapter.

\chapter{Stochastic approximation\label{aproksymacja}}

\section{Introduction}

Stochastic approximation concerns the following problems. Let us assume that
there is given a function $f$ (as yet of one variable) $f:%
\mathbb{R}
\mathbb{\rightarrow}%
\mathbb{R}
$, not necessarily continuous, but such, that:
\[
\exists\theta\in%
\mathbb{R}
:\,\left(  \forall x>\theta\,:f(x)>0\,(f(x)<0)\right)  \&\left(  \forall
x<\theta:f(x)<0\,(f(x)>0)\right)  ,
\]
or other words, on the left and on the right from some point $\theta$ the
function has different signs. Let us further suppose, that values of functions
$f$ are not observed straightforwardly. More precisely, every observed value
of these functions is burdened with some random error. In other words, for
every point $x$ we observe the quantity
\[
y_{i}(x)=f(x)+\eta_{i}(x),i\geq1,
\]
where $\eta_{i}(x)$ is a random variable such that $\forall x$ $E\eta
_{i}(x)=0$. \ Notice that its distribution may depend on $x$. The aim of
stochastic approximation procedures is to find point $\theta$, using only the
observed values \newline$\left\{  y_{i};\allowbreak i\geq1\right\}  .$

Stochastic approximation procedures are based on the following idea. Suppose,
that in $n$ -theorem step we have some estimator $x_{n}$ of the point $\theta$
and let us assume, that the function $f$ is positive to the right of $\theta$
and negative to the left. If it happens, that the observed value at this
(estimated so far), point $x_{n}$ is less than zero, then we increase
estimator a bit (more precisely by $\mu_{n}y_{n+1}(x_{n})$, where $\mu_{n}\in%
\mathbb{R}
^{+}$ and generally\ $\mu_{n}<1)$, if however the observed value is greater
than zero, then the estimator will be decreased a bit (more precisely by
$-\mu_{n}y_{n+1}(x_{n})$, where $\mu_{n}\in%
\mathbb{R}
^{+}$ and generally\ $\mu_{n}<1)$.

In other words, considered algorithm can be presented in the following way:
\begin{equation}
x_{n+1}=x_{n}-\mu_{n}y_{n+1}(x_{n}). \label{aprstoch1}%
\end{equation}
In the present chapter we will examine if and if so then, how quickly this
procedure converges do $\theta.$

There exist stochastic approximation procedures concerned, so to say, with the
problem of minimization of functions in random conditions. We will discuss
such procedures in subsection \ref{rozszerzenia}.

Let us see in some examples, that indeed procedures (\ref{aprstoch1}) are convergent.

\begin{example}
In the first example the function, whose zero is sought, it is the function
$f(x)=(x-3)\exp(-.1(x-3))$\newline%

{\includegraphics[
height=1.6847in,
width=2.5287in
]%
{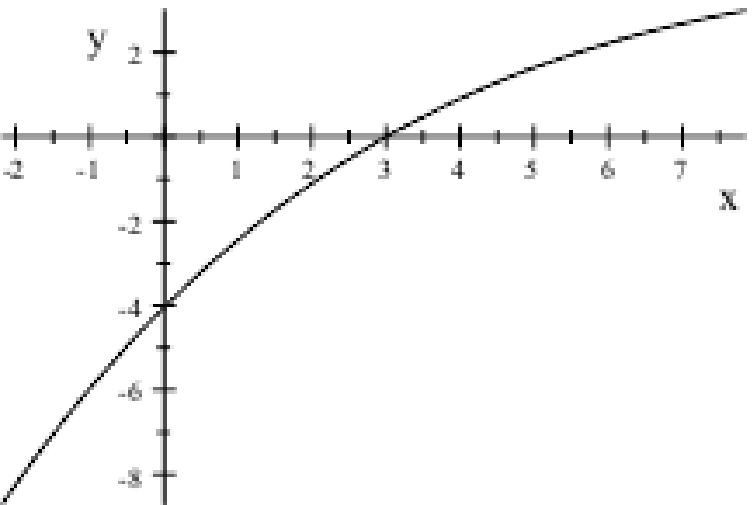}%
}%

having the plot presented above. One made $N=5000$ observations $\xi
_{1},\ldots,\xi_{N}$ of the random variables having Normal $\mathcal{N}(0,4)$
distribution and one considered procedure of the form
\begin{equation}
y_{i}=y_{i-1}-\frac{1}{i}\left(  f(y_{i-1})+\xi_{i}\right)  ;\,\,\,\,y_{0}=0.
\label{przykl-proc}%
\end{equation}
As the result of its operation, we got $y_{N}=2,9232$. The course of
iterations had the following plot:%

{\includegraphics[
height=1.8957in,
width=3.1548in
]%
{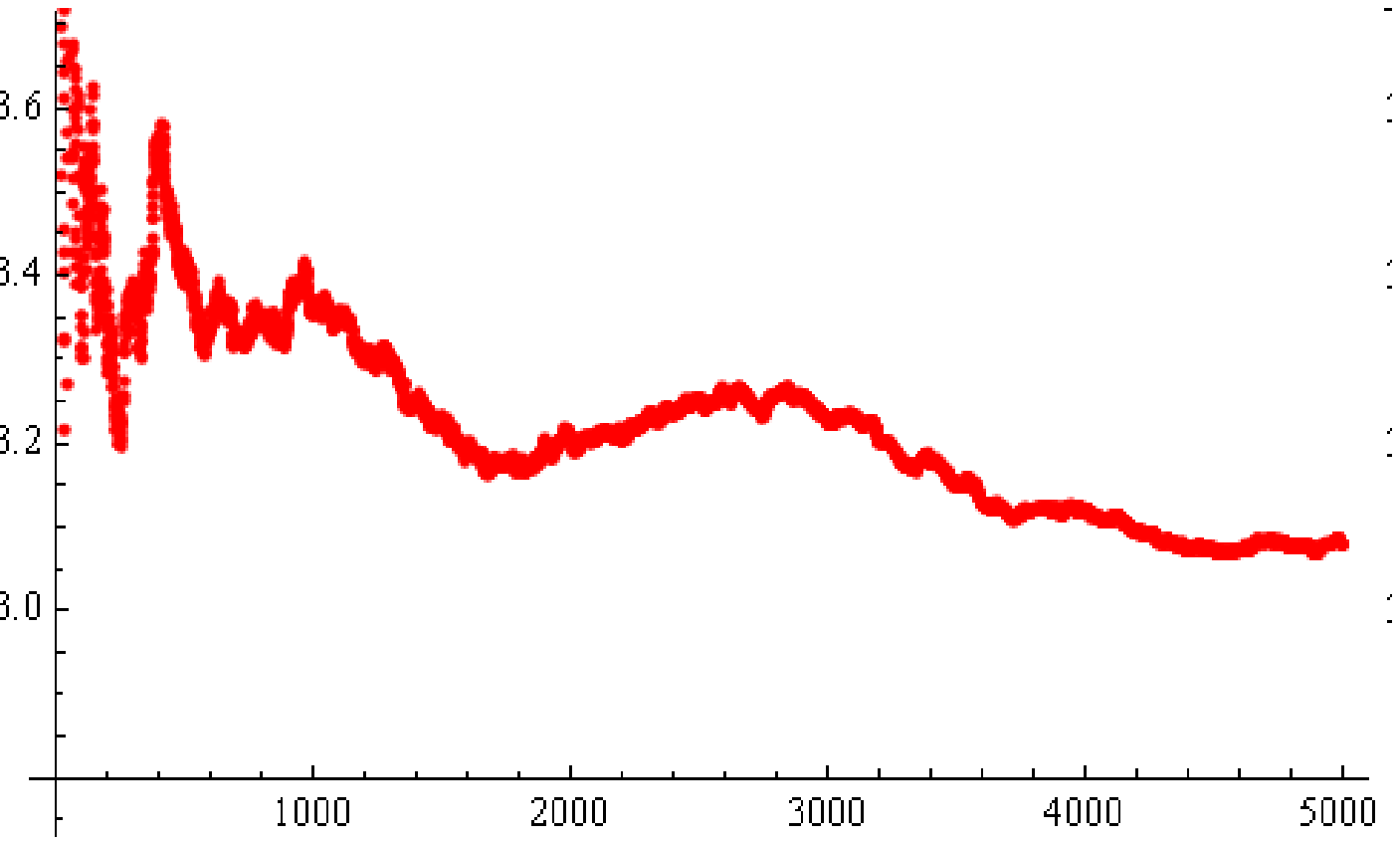}%
}%
.
\end{example}

\begin{example}
\label{przyklad_z_mala_funkcja}In the second example, one considered similar
function $f$, namely $f(x)=(x-3)\exp(-(x-3))$\newline%

{\includegraphics[
height=2.0038in,
width=3.0139in
]%
{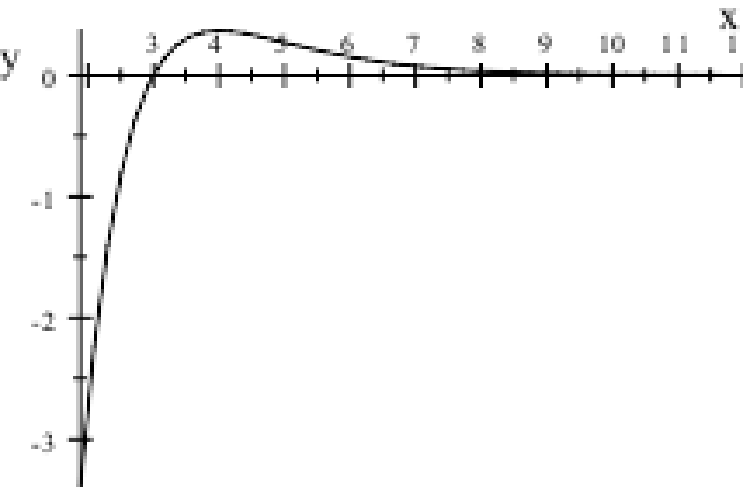}%
}%

\end{example}

having a plot as above. Similarly, as before, one took $N=5000$ observations
of the random variables $\xi_{1},\ldots,\xi_{N}$ having Normal $\mathcal{N}%
(0,4)$ distributions and one considered procedure \ref{przykl-proc} with
initial condition $y_{0}=1$. As the result of operating this procedure one got
$x_{N}=9,27$, and the course of iterations was\ the following:%

{\includegraphics[
height=1.8784in,
width=2.9853in
]%
{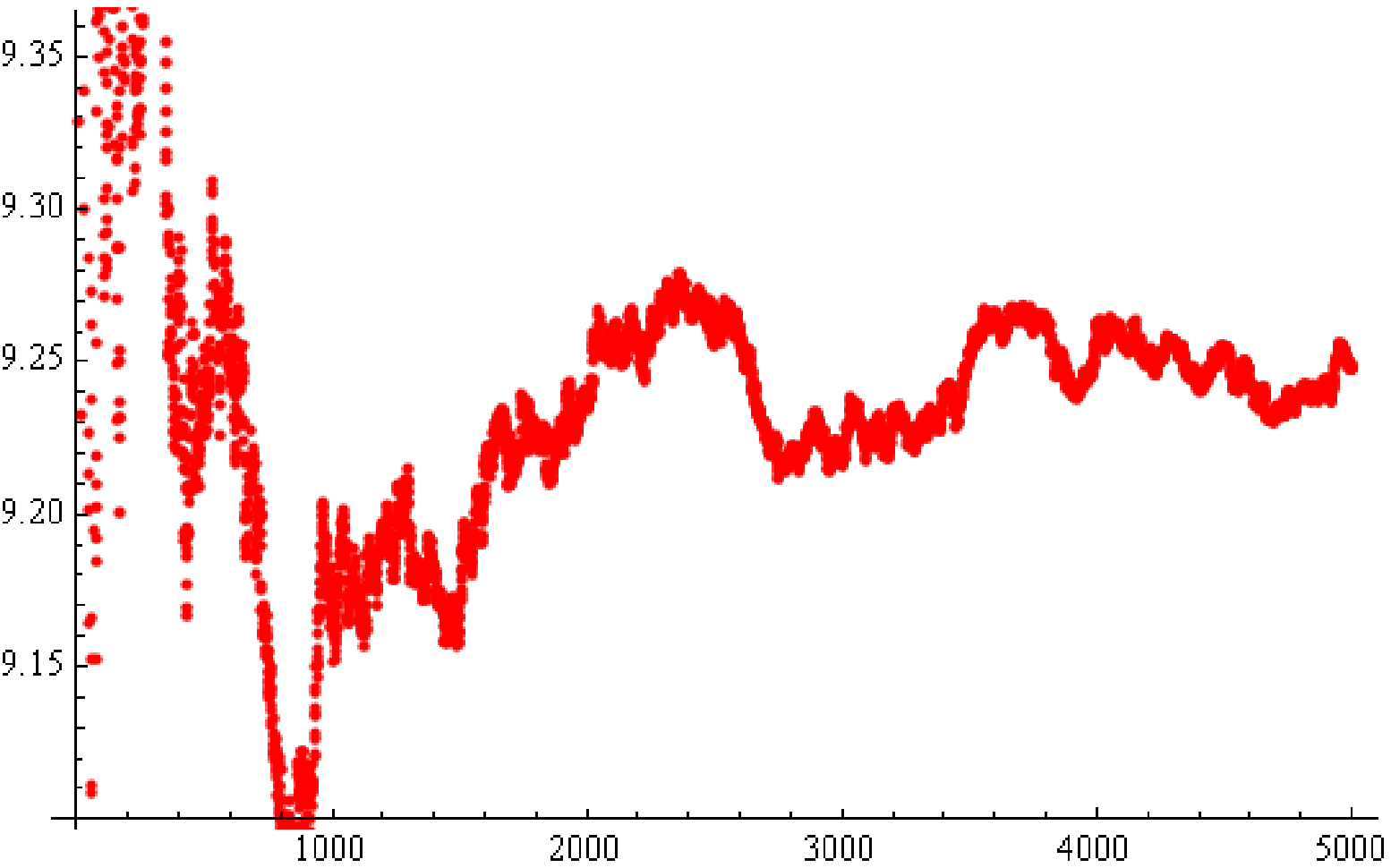}%
}%
.

\begin{example}
Why don't we observe convergence here (or in fact we observe very slow
convergence) and what is to be done in order to improve this convergence. It
will follow from the presented in the sequel mathematical analysis of the
stochastic approximation procedures.
\end{example}

\begin{example}
\label{kwantyl}In the next example, we will seek quantiles of distribution on
the basis of observations of the random sample drawn from this distribution.
This example is different from the previous ones in that now random
disturbances of the function values (whose zero, we are looking for) will
depend in this case on the values of the estimator. In particular, situation
considered in this example we will look for the $.85$ quantile of the
distribution $\mathcal{N}(0,2)$. To do so, we fix the number of iterations
$N=5000$, next we generate a sequence $\xi_{1},\ldots,\xi_{N}$ of independent
observations from this distribution. Let us define the following function
\[
v(x,z)=\left\{
\begin{array}
[c]{lll}%
1, & gdy & x\leq z\\
0, & gdy & x>z
\end{array}
\right.  .
\]
Let us notice that $Ev(\xi_{1},z)=F_{\xi}(z)$, where $F_{\xi}$ denotes cdf of
the random variable $\xi_{1}$. Hence, one can write
\[
v(\xi_{i},z)-.85=F_{\xi}(z)-.85+\zeta_{i}(z),
\]
where we denoted $\zeta_{i}(z)=v(\xi_{i},z)-F_{\xi}(z)$. The r\^{o}le of
disturbances play in this case random variables $\left\{  \zeta_{i}%
(z)\right\}  $, and $F_{\xi}(z)-.85$ is a function, whose zero is sought. We
have here $\forall z\in%
\mathbb{R}
:E\zeta_{i}(z)=0$. We will consider the following procedure
\[
z_{i}=z_{i-1}-\frac{1}{i}\left(  v(\xi_{i},z_{i-1})-.85\right)  ;z_{0}=0.
\]
Let us notice that the sequence of the random variables $\zeta_{i}(z_{i-1})$
is a sequence of martingale differences, i.e. if we denote $\mathcal{G}%
_{i}=\sigma(z_{0},\ldots,z_{i})$, then $E(\zeta_{i}(z_{i-1})|\mathcal{G}%
_{i})=0$ almost surely. Theoretical value the quantile we are looking for is
equal $2.07029$. After $N=5000$ iterations one obtained $z_{N}=1.6145$. Hence,
convergence was very bad. It follows also from the plot illustrating this
example:
\begin{center}
\includegraphics[
height=1.4019in,
width=2.4189in
]%
{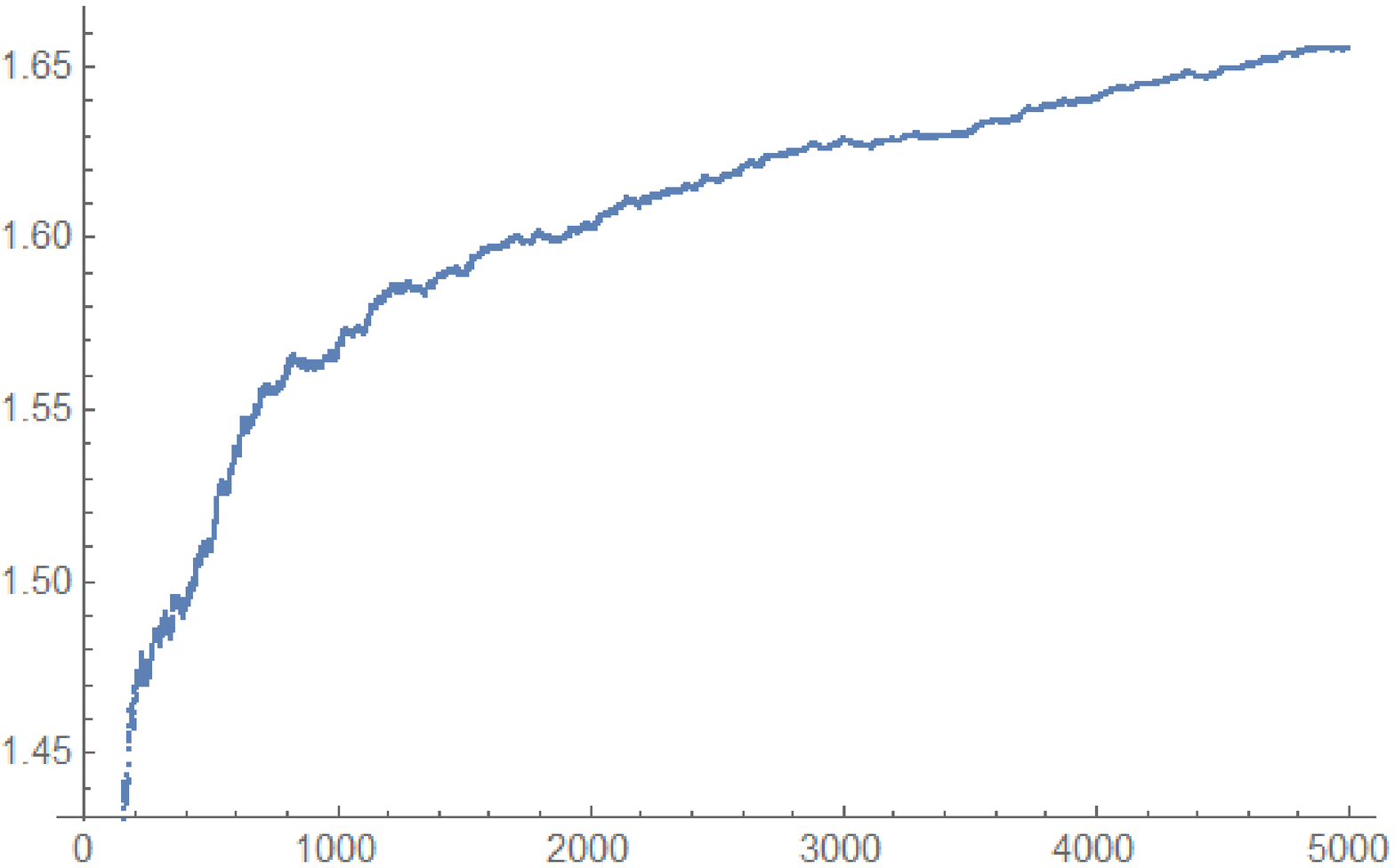}%
\\
.
\end{center}
In order to improve the performance of the procedure instead of the $\left\{
\mu_{i}=\frac{1}{i+1}\right\}  $, one took a sequence $\left\{  \mu
_{i}^{^{\prime}}\allowbreak=\allowbreak\frac{1}{\left(  i+1\right)  ^{.75}%
}\right\}  $ and the following procedure was considered:
\[
zx_{i}=zx_{i-1}-\mu_{i}^{^{\prime}}\left(  v(\xi_{i},zx_{i-1})-.85\right)
;zx_{0}=0.
\]
As the plot below shows substantial improvement of the quality of convergence
was observed. In particular, we got $zx_{N}=2.0002$.
\begin{center}
\includegraphics[
height=2.1534in,
width=2.8522in
]%
{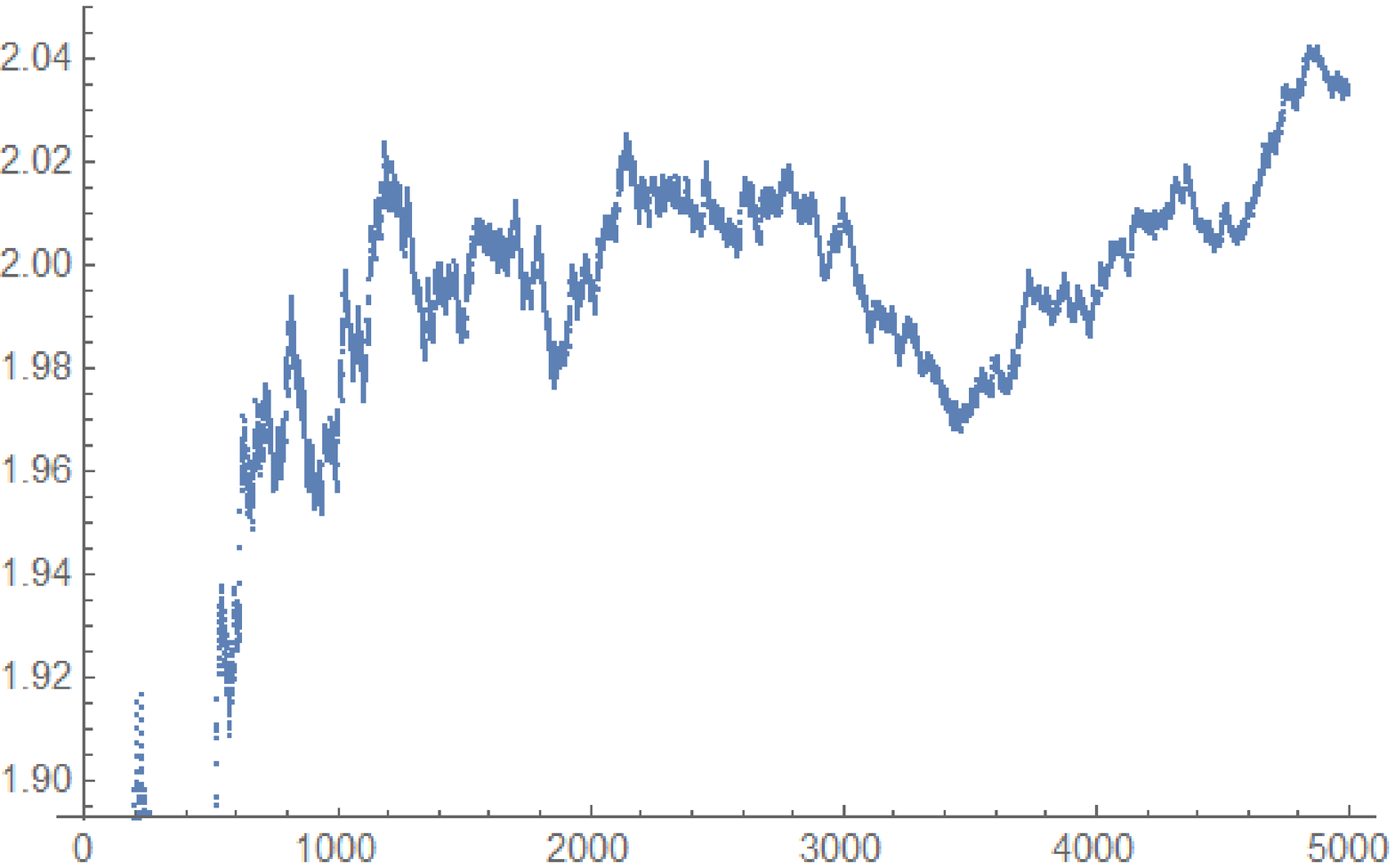}%
\end{center}
The stochastic approximation procedure was proposed in 1951 by Robbins and
Monro in the paper \cite{Robbins51} in the simplest version and its
mean-squares convergence was proved. In the next 48 years, the idea personated
in this paper was improved and generalized many times. Moreover, it became an
inspiration and the origin of several branches of applied mathematics. There
exist a few books dedicated to stochastic approximation. One of the eldest is
undoubtedly is the monograph of Nevelson and Chasminskij \cite{Nevelson72}.
There exists also very good monograph of J. Koronacki \cite{Koronacki89} in
Polish dedicated to stochastic approximation and based on it the so-called
stochastic optimization. The approach presented in this monograph differs from
the one followed in this book in the assumptions imposed on the disturbances.
In the monograph of Koronacki most often it is assumed that the disturbances
are martingale differences that (see. definition page. \pageref{martyngaly} in
Appendix \ref{martyngaly}) or are independent.
\end{example}

Indeed it is a very important class of disturbances however the above
mentioned assumption turns out to be unnecessary in many cases. Moreover, it
seems, that approach presented below is more natural (at least for the not
very experienced reader) since it exploits connections of stochastic
approximation with laws of large numbers. In fact, it turns out, that
stochastic approximation is somewhat as a connection of laws of large numbers
with deterministic procedures of finding zeros or minima of functions. These
deterministic problems are discussed in every book on numerical methods like
e.g. \cite{Demidowicz65}\cite{Ralston75}. Problems of finding minima in
different spaces and with different restrictions are discussed e.g. in
monographs \cite{Findeisen77} and \cite{Luenberger73}.

There exists extremely rich literature concerning stochastic approximation and
problems that grew on its ground. In this book, we will present only the main
chain of problems that can be derived from the main idea of Robbins and Monro.
As far as the related problems are concerned, we will refer the reader to the
literature. We hope that after understanding the main ideas the reader will be
able to study all related to stochastic approximation problem without great difficulties.

In the sequel, we will use the following denotations and conventions.

$\mathbf{x}$ denotes vector, usually the column with coordinates $x_{i}$,
$\mathbf{x}^{\prime}$ - its transposition. $\mathbf{x}^{\prime}\mathbf{y}$ is
thus a scalar product of vectors $\mathbf{x}$ and $\mathbf{y}$ i.e. the
quantity $\sum_{i}x_{i}y_{i}$. Let us denote also $\left\vert \mathbf{x}%
\right\vert =\sqrt{\mathbf{x}^{\prime}\mathbf{x}}.$

\section{The simplest version}

Let $\mathbf{f}:%
\mathbb{R}
^{m}$ $\rightarrow%
\mathbb{R}
^{m}$ will be such function, that:
\begin{align}
{\large \exists\mathbf{\theta}}  &  \in%
\mathbb{R}
^{m},\delta>0,\,{\large \forall}\mathbf{x\in}%
\mathbb{R}
^{m}:(\mathbf{x-\theta)}^{^{\prime}}\mathbf{f(x)\geq}\delta\left\vert
\mathbf{x-\theta}\right\vert ^{2},\label{war1}\\
{\large \exists\kappa}_{1},\kappa_{2}  &  >0,{\large \forall}\mathbf{x}\in%
\mathbb{R}
^{m}:\left\vert \mathbf{f(x)}\right\vert \leq\kappa_{1}\left\vert
\mathbf{x}-\mathbf{\theta}\right\vert +\kappa_{2}. \label{war2}%
\end{align}
Moreover, let $\left\{  \mathbf{\xi}_{i}\right\}  _{i\geq1}$ be a sequence of
random vectors such that the series
\begin{equation}
\sum_{i\geq0}\mu_{i}\mathbf{\xi}_{i+1} \label{sumaszumow}%
\end{equation}
converges almost surely for some normal sequence $\left\{  \mu_{i}\right\}
_{i\geq0}$.

Let us consider procedure of the form:
\begin{equation}
\mathbf{x}_{0}=\mathbf{xo,\,\,x}_{n+1}=\mathbf{x}_{n}-\mu_{n}(\mathbf{f(x}%
_{n})+\mathbf{\xi}_{n+1});n\geq0. \label{apr1}%
\end{equation}

\begin{remark}
Notice that the second of the above-mentioned conditions, i.e. (\ref{war2}),
states, that the function $\mathbf{f}$ 'grows not faster than linearly', or
possibly (if $\kappa_{1}=0)$ does not exceed a constant for large of
$\mathbf{x}$. Whereas the first of these conditions, i.e. (\ref{war1}) states,
that the function $\mathbf{f}$ grows 'at least linearly' for large
$\mathbf{x}$, not excluding the closest neighborhood of the point
$\mathbf{\theta}$. Particularly important in the proof of the below mentioned
theorems will turn out the fact of 'linear estimation' of the function's
$\mathbf{f}$ behavior. If the coordinates of the vector $\mathbf{f}$ are
differentiable, and $J_{\mathbf{f}}$ denotes the Jacobi matrix of mapping
$\mathbf{f}$, then condition (\ref{war1}) implies, that eigenvalues of the
matrix $J_{\mathbf{f}}$ are at a point $\mathbf{\theta}$ all not less than
$\delta.$
\end{remark}

\begin{remark}
Let us recall that the sequence of Riesz's means $\left\{  \bar{X}%
_{n}\right\}  _{n\geq1}$ of the sequence $\{X_{n}\}_{n\geq1}$ with respect to
sequence weights $\left\{  \alpha_{n}\right\}  _{n\geq0}\allowbreak
=\widehat{\allowbreak\left\{  \mu_{n}\right\}  }_{n\geq0}$ can be presented in
the following way:
\[
\bar{X}_{n+1}=(1-\mu_{n})\bar{X}_{n}+\mu_{n}X_{n+1},\;n\geq0\text{ with the
condition }\bar{X}_{0}=0.
\]
It is not difficult to notice, that one can also present the above mentioned
recurrent relationship in another way. Namely,
\begin{equation}
\bar{X}_{n+1}=\bar{X}_{n}-\mu_{n}\left(  \bar{X}_{n}-X_{n+1}\right)
,\;n\geq0\text{ with the condition }\bar{X}_{0}=0. \label{rek_sred}%
\end{equation}
Let us assume that $\forall n\geq1:$ $EX_{n}=\theta$ and let us denote
$\xi_{n}=\theta-X_{n}$, $f(x)=x-\theta$. Recursive equation (\ref{rek_sred})
will now assume the following form:
\[
\bar{X}_{n+1}=\bar{X}_{n}-\mu_{n}\left(  f\left(  \bar{X}_{n}\right)
+\xi_{n+1}\right)  ,\;n\geq0\text{ with the condition }\bar{X}_{0}=0.
\]
Let us recall that phenomenon of convergence of the sequence $\left\{  \bar
{X}_{n}\right\}  _{n\geq1}$ to $\theta$ we called the law of large numbers.
The conditions assuring this are discussed in chapter \ref{simpwl}. This
remark indicates thus strong connections of laws of large numbers with
stochastic approximation. In the light of the above-mentioned observations,
the fact that strong laws of large numbers are satisfied is nothing else but
a.s. convergence of stochastic approximation procedures with linear function
$f$.
\end{remark}

We have the following theorem:

\begin{theorem}
\label{najprostsze}Let us assume that the function $\mathbf{f}$ satisfies
conditions (\ref{war1}), (\ref{war2}) for some point $\mathbf{\theta}\in%
\mathbb{R}
^{m}$, while disturbances $\left\{  \mathbf{\xi}_{i}\right\}  _{i\geq1}$
satisfy condition (\ref{sumaszumow}) with some normal sequence $\left\{
\mu_{i}\right\}  _{i\geq0}$. Then procedure (\ref{apr1}) converges almost
surely to the point $\mathbf{\theta.}$
\end{theorem}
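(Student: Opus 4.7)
The plan is to reduce the stochastic iteration to an essentially deterministic one by absorbing the noise through a simple substitution. Set
\[
\mathbf{r}_n \;=\; \sum_{i\geq n}\mu_i\mathbf{\xi}_{i+1},
\]
which is well-defined almost surely by the hypothesis (\ref{sumaszumow}) and satisfies $\mathbf{r}_n\to\mathbf{0}$ a.s. Define $\mathbf{y}_n=\mathbf{x}_n-\mathbf{r}_n$. The telescoping identity $\mathbf{r}_n-\mathbf{r}_{n+1}=\mu_n\mathbf{\xi}_{n+1}$ combined with (\ref{apr1}) gives the clean recursion
\[
\mathbf{y}_{n+1} \;=\; \mathbf{y}_n - \mu_n\mathbf{f}(\mathbf{x}_n),
\]
with no explicit noise on the right-hand side. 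Since $\mathbf{r}_n\to\mathbf{0}$ a.s., the assertion $\mathbf{x}_n\to\mathbf{\theta}$ a.s. is equivalent to $\mathbf{y}_n\to\mathbf{\theta}$ a.s.

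Writing $\tilde{\mathbf{e}}_n=\mathbf{y}_n-\mathbf{\theta}$ and $\mathbf{e}_n=\mathbf{x}_n-\mathbf{\theta}=\tilde{\mathbf{e}}_n+\mathbf{r}_n$, squaring the recursion yields
\[
|\tilde{\mathbf{e}}_{n+1}|^2 \;=\; |\tilde{\mathbf{e}}_n|^2 - 2\mu_n\tilde{\mathbf{e}}_n^T\mathbf{f}(\mathbf{x}_n) + \mu_n^2|\mathbf{f}(\mathbf{x}_n)|^2.
\]
I would split $\tilde{\mathbf{e}}_n^T\mathbf{f}(\mathbf{x}_n)=\mathbf{e}_n^T\mathbf{f}(\mathbf{x}_n)-\mathbf{r}_n^T\mathbf{f}(\mathbf{x}_n)$, apply (\ref{war1}) to the first piece, and bound both the cross-term $|\mathbf{r}_n^T\mathbf{f}(\mathbf{x}_n)|$ and $|\mathbf{f}(\mathbf{x}_n)|^2$ via (\ref{war2}) together with elementary inequalities of the form $2ab\leq\varepsilon a^{2}+\varepsilon^{-1}b^{2}$. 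Passing between $|\mathbf{e}_n|^2$ and $|\tilde{\mathbf{e}}_n|^2$ through $|\mathbf{e}_n|^{2}\geq\tfrac12|\tilde{\mathbf{e}}_n|^{2}-|\mathbf{r}_n|^{2}$ and $|\mathbf{e}_n|^{2}\leq 2|\tilde{\mathbf{e}}_n|^{2}+2|\mathbf{r}_n|^{2}$, and using $\mu_n\to 0$ to absorb the quadratic term $\mu_n^{2}\kappa_1^{2}|\mathbf{e}_n|^{2}$ into the drift, I aim to arrive at
\[
|\tilde{\mathbf{e}}_{n+1}|^2 \;\leq\; (1-c\mu_n)\,|\tilde{\mathbf{e}}_n|^2 + \mu_n B_n,
\]
valid for all sufficiently large $n$, with a fixed constant $c>0$ (for instance $c=\delta/4$) and a nonnegative $B_n$ built only from $|\mathbf{r}_n|$, $|\mathbf{r}_n|^{2}$, and $\mu_n$.

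Because $|\mathbf{r}_n|\to 0$ and $\mu_n\to 0$ a.s., we have $B_n\to 0$ a.s. The sequence $\{c\mu_n\}$ is normal for large $n$, so by comparing with the solution of the equality recursion $t_{n+1}=(1-c\mu_n)t_n+c\mu_n(B_n/c)$ satisfying $t_n\geq|\tilde{\mathbf{e}}_n|^{2}$ (the construction used in the proof of Corollary \ref{nieiter}), Lemma \ref{iteracje} gives $\limsup|\tilde{\mathbf{e}}_n|^{2}\leq\limsup B_n=0$, hence $|\tilde{\mathbf{e}}_n|\to 0$ a.s. Combined with $\mathbf{r}_n\to\mathbf{0}$ a.s., this yields $\mathbf{x}_n\to\mathbf{\theta}$ a.s.

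The principal obstacle is the squared-error estimate itself: (\ref{war1}) supplies its drift in terms of $|\mathbf{e}_n|^{2}$ while the natural Lyapunov function is $|\tilde{\mathbf{e}}_n|^{2}$, and the growth bound (\ref{war2}) forces a $\mu_n^{2}|\mathbf{e}_n|^{2}$ contribution that must be dominated by the linear-in-$\mu_n$ drift. Choosing the $\varepsilon$-splittings so that the cross-term $|\mathbf{r}_n|\cdot|\mathbf{e}_n|$ does not eat up more than half of the drift, and verifying that every residual contribution collected into $B_n$ is a tail-of-noise quantity which tends to zero, is the technical heart of the argument; once this is done, the convergence conclusion follows essentially automatically from the numerical machinery of chapter \ref{zbiez}.
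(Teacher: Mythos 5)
Your proposal is correct and is essentially the paper's own proof: the tail sum $\mathbf{r}_n$ is exactly the paper's $\mathbf{S}_{n}=\sum_{i\geq n}\mu_{i}\mathbf{\xi}_{i+1}$, the shifted recursion $\mathbf{y}_{n+1}=\mathbf{y}_n-\mu_n\mathbf{f}(\mathbf{x}_n)$ is the paper's identity (\ref{pom1}), and the squared-error estimate via (\ref{war1}), (\ref{war2}) is the same. The only (harmless) difference is in closing the recursion: you absorb the cross term with Young's inequality to reach a linear bound $(1-c\mu_n)|\tilde{\mathbf{e}}_n|^2+\mu_n B_n$ and invoke Lemma \ref{iteracje}/Corollary \ref{nieiter}, while the paper keeps the term linear in $d_n$, takes $\epsilon_n$ as the positive root of $\delta x^{2}-xg_{n}-h_{n}=0$, and applies Lemma \ref{zmaxem} directly — two interchangeable uses of the same chapter \ref{zbiez} machinery.
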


\begin{proof}
Let us denote $\mathbf{S}_{n}=\sum_{i\geq n}\mu_{i}\mathbf{\xi}_{i+1}$. It
follows the assumption that the sequence $\left\{  \mathbf{S}_{n}\right\}
_{n\geq1}$ converges almost surely to zero. Moreover, we have $\mathbf{S}%
_{n}=\mathbf{S}_{n+1}+\mu_{n}\mathbf{\xi}_{n+1}$. Let us subtract
$\mathbf{S}_{n+1}+\mathbf{\theta}$ from both sides of procedure (\ref{apr1}).
We get then:
\begin{equation}
\mathbf{x}_{n+1}-\mathbf{\theta-S}_{n+1}=\mathbf{x}_{n}-\mathbf{\theta
}-\mathbf{S}_{n}-\mu_{n}\mathbf{f(x}_{n}). \label{pom1}%
\end{equation}
Let us denote: $d_{n}=\left\vert \mathbf{x}_{n}-\mathbf{\theta-S}%
_{n}\right\vert $. Multiplying both sides of equality (\ref{pom1}) by its
transposition and using definition of $d_{n}$, we get:
\begin{equation}
d_{n+1}^{2}=d_{n}^{2}-2\mu_{n}(\mathbf{x}_{n}-\mathbf{S}_{n}-\mathbf{\theta
)}^{^{\prime}}\mathbf{f(x}_{n})+\mu_{n}^{2}|\mathbf{f(x}_{n})|^{2}.
\label{pom2}%
\end{equation}
Taking advantage assumption (\ref{war1}) we get :
\[
(\mathbf{x}_{n}-\mathbf{S}_{n}-\mathbf{\theta)}^{^{\prime}}\mathbf{f(x}%
_{n})\geq\delta\left\vert \mathbf{x}_{n}-\mathbf{\theta}\right\vert
^{2}-\mathbf{S}_{n}^{^{\prime}}\mathbf{f(x}_{n})=
\]%
\[
=\delta\left\vert \mathbf{x}_{n}-\mathbf{S}_{n}-\mathbf{\theta+S}%
_{n}\right\vert ^{2}-\mathbf{S}_{n}^{^{\prime}}\mathbf{f(x}_{n})=
\]%
\[
=\delta d_{n}^{2}+2\delta(\mathbf{x}_{n}-\mathbf{S}_{n}-\mathbf{\theta
)}^{^{\prime}}\mathbf{S}_{n}+\delta\left\vert \mathbf{S}_{n}\right\vert
^{2}-\mathbf{S}_{n}^{^{\prime}}\mathbf{f(x}_{n})\geq
\]%
\[
\geq\delta d_{n}^{2}-2\delta d_{n}\left\vert \mathbf{S}_{n}\right\vert
+\delta\left\vert \mathbf{S}_{n}\right\vert ^{2}-\mathbf{S}_{n}^{^{\prime}%
}\mathbf{f(x}_{n}).
\]
Now let us notice that $\left\vert \mathbf{x}_{n}-\mathbf{\theta}\right\vert
=\left\vert \mathbf{x}_{n}-\mathbf{S}_{n}-\mathbf{\theta+S}_{n}\right\vert
\leq d_{n}+\left\vert \mathbf{S}_{n}\right\vert $. Hence, we have further
\begin{gather*}
(\mathbf{x}_{n}-\mathbf{S}_{n}-\mathbf{\theta)}^{^{\prime}}\mathbf{f(x}%
_{n})\geq\delta d_{n}^{2}-2\delta d_{n}\eta_{n}+\delta\eta_{n}^{2}-\eta
_{n}(\kappa_{1}d_{n}+\kappa_{1}\eta_{n}+\kappa_{2})=\\
=\delta d_{n}^{2}-d_{n}\eta_{n}(2\delta+\kappa_{1})-\eta_{n}(\kappa_{2}%
+\kappa_{1}\eta_{n}-\delta\eta_{n}),
\end{gather*}
where we denoted $\eta_{n}=\left\vert \mathbf{S}_{n}\right\vert $. Moreover,
we have:
\begin{gather*}
\left\vert \mathbf{f(x}_{n})\right\vert ^{2}\leq(\kappa_{1}\left\vert
\mathbf{x}_{n}-\mathbf{\theta}\right\vert +\kappa_{2})^{2}\leq\\
\leq(\kappa_{1}d_{n}+\kappa_{1}\eta_{n}+\kappa_{2})^{2}\leq3(\kappa_{1}%
^{2}d_{n}^{2}+\kappa_{1}^{2}\eta_{n}^{2}+\kappa_{2}^{2}).
\end{gather*}

Thus, we have recurrent relationship:
\begin{gather*}
d_{n+1}^{2}\leq(1-2\mu_{n}\delta+3\mu_{n}^{2}\kappa_{1}^{2})d_{n}^{2}+2\mu
_{n}d_{n}\eta_{n}\left(  2\delta+\kappa_{1}\right)  +\\
+2\mu_{n}\eta_{n}(\kappa_{2}+\kappa_{1}\eta_{n}-\delta\eta_{n})+3\mu_{n}%
^{2}(\kappa_{1}^{2}\eta_{n}^{2}+\kappa_{2}^{2})\\
\overset{df}{=}(1-2\mu_{n}\delta+3\mu_{n}^{2}\kappa_{1}^{2})d_{n}^{2}+\mu
_{n}d_{n}g_{n}+\mu_{n}h_{n},
\end{gather*}

where
\[
g_{n}=2\eta_{n}(2\delta+\kappa_{1}),h_{n}=2\eta_{n}(\kappa_{2}+\kappa_{1}%
\eta_{n}-\delta\eta_{n})+3\mu_{n}(\kappa_{1}^{2}\eta_{n}^{2}+\kappa_{2}^{2}).
\]
Let us notice that $g_{n}\underset{n\rightarrow\infty}{\longrightarrow}0$ and
$h_{n}\underset{n\rightarrow\infty}{\longrightarrow}0$ almost surely, since
the sequences $\left\{  \eta_{n}\right\}  _{n\geq1}$ and $\left\{  \mu
_{n}\right\}  _{n\geq1}$ converge to zero almost surely. Let us consider the
first $N$ such that the quantity $(1-2\mu_{n}\delta+3\mu_{n}^{2}\kappa_{1}%
^{2})$ is positive. Since, $\mu_{n}\underset{n\rightarrow\infty
}{\longrightarrow}0$, $N$ exists. Let us now examine now, for which $d_{n}$
the following inequality is satisfied:
\begin{gather*}
(1-2\mu_{n}\delta+3\mu_{n}^{2}\kappa_{1}^{2})d_{n}^{2}+\mu_{n}d_{n}g_{n}%
+\mu_{n}h_{n}\leq\\
\leq(1-\mu_{n}\delta+3\mu_{n}^{2}\kappa_{1}^{2})d_{n}^{2}\overset{df}{=}%
\lambda_{n}d_{n}^{2}.
\end{gather*}
Of course it happens, when $\delta d_{n}^{2}-d_{n}g_{n}-h_{n}\geq0$. That is,
when $d_{n}\geq\epsilon_{n}$, where $\epsilon_{n}$ is a positive root of the
equation:
\begin{equation}
\delta x^{2}-xg_{n}-h_{n}=0 \label{rownanie}%
\end{equation}

Since, that $g_{n},\,h_{n}\underset{n\rightarrow\infty}{\longrightarrow}0$
almost surely, $\epsilon_{n}\underset{n\rightarrow\infty}{\longrightarrow}0$
almost surely. Moreover, for $d_{n}\leq\epsilon_{n}$ and for $n\geq N$ we
have:
\begin{gather*}
d_{n+1}^{2}\leq(1-2\mu_{n}\delta+3\mu_{n}^{2}\kappa_{1}^{2})\epsilon_{n}%
^{2}+\mu_{n}\epsilon_{n}g_{n}+\mu_{n}h_{n}=\\
\lambda_{n}\epsilon_{n}^{2}-\mu_{n}\delta\epsilon_{n}^{2}+\mu_{n}\epsilon
_{n}g_{n}+\mu_{n}h_{n}=\lambda_{n}\epsilon_{n}^{2}.
\end{gather*}
Hence, in both cases we have:
\begin{equation}
d_{n+1}^{2}\leq\lambda_{n}\max(d_{n}^{2},\epsilon_{n}^{2}).
\label{oszac_podst}%
\end{equation}
Let us now notice that
\[
\forall k\geq1:\prod_{i=k}^{n}\lambda_{i}\allowbreak\leq\exp(-\sum_{i=k}%
^{n}\mu_{i}(\underset{n\rightarrow\infty}{\delta-2\mu_{i}\kappa_{n}%
^{2})\longrightarrow}0,
\]
since the sequence $\left\{  \mu_{i}\right\}  _{i\geq0}$ is normal. Thus,
assumptions Lemma \ref{zmaxem} are satisfied. We deduce, that $d_{n}%
\underset{n\rightarrow\infty}{\longrightarrow}0$ almost surely, and since
$\mathbf{S}_{n}\underset{n\rightarrow\infty}{\longrightarrow}0$, hence and
$\left\vert \mathbf{x}_{n}-\mathbf{\theta}\right\vert \underset{n\rightarrow
\infty}{\longrightarrow}0$ almost surely.
\end{proof}

\section{Remarks and commentaries}

\begin{remark}
Let us notice that if there are no disturbances, i.e. $\forall n\geq
0:\mathbf{S}_{n}=0$, then we have a procedure:
\begin{equation}
\mathbf{x}_{n+1}-\mathbf{\theta}=\mathbf{x}_{n}-\mathbf{\theta}-\mu
_{n}\mathbf{f(x}_{n}). \label{bezzakl}%
\end{equation}
Estimating similarly as above and using same denotations, we get:
\[
d_{n+1}^{2}\leq(1-2\mu_{n}\delta+3\mu_{n}^{2}\kappa_{1}^{2})d_{n}^{2}+3\mu
_{n}^{2}\kappa_{2}^{2}.
\]
If additionally we assume that the function $\mathbf{f}$ Lipschitz i.e., that
$\kappa_{2}=0$, then we have estimation:
\[
d_{n+1}^{2}\leq(1-2\mu_{n}\delta+3\mu_{n}^{2}\kappa_{1}^{2})d_{n}^{2}.
\]
Iterating this inequality from $i=N$ do $n-1$ we get:
\[
d_{n}^{2}\leq d_{N}^{2}\prod_{i=N}^{n-1}(1-2\mu_{i}\delta+3\mu_{i}^{2}%
\kappa_{1}^{2})\leq d_{N}^{2}\exp(-\sum_{i=N}^{n-1}\mu_{i}(2\delta-3\mu
_{i}\kappa_{1}^{2})).
\]
Hence we can conclude, that in order to ensure convergence of the procedure
(\ref{bezzakl}), the sequence $\left\{  \mu_{n}\right\}  _{i\geq0}$ does not
have to converge to zero. Its best choice is a constant sequence, satisfying
inequality $0<\mu_{i}<\frac{2\delta}{3\kappa_{1}^{2}}$. It confirms known
property of deterministic procedures 'seeking zeros of functions' (comp. e.g.
\cite{Ralston75}).
\end{remark}

\begin{remark}
If, however $\kappa_{2}\neq0$, sequence $\mu_{n}$ must converge to zero, in
order to guarantee convergence of the sequence $d_{n}$ to zero.
\end{remark}

\begin{remark}
Condition (\ref{sumaszumow}) is e.g,. satisfied, when the random variables
$\left\{  \xi_{i}\right\}  _{i\geq1}$ are martingale differences such that
$\underset{n}{\sup}E\xi_{n}^{2}<\infty$, a sequence $\left\{  \mu_{i}\right\}
_{i\geq0}$ is such that $\sum_{i\geq0}\mu_{i}^{2}<\infty$. These are the
typical, appearing in the majority of theorems concerning convergence of
stochastic approximation procedures. Developed in chapter \ref{zbiez} methods
of summing the series of dependent random variables, enable to extend class
sequences $\left\{  \xi_{i}\right\}  _{i\geq1}$and $\left\{  \mu_{i}\right\}
_{i\geq0}.$
\end{remark}

\begin{remark}
\label{mpwl_szumow}Let us notice also, that assumption that the disturbances
$\left\{  \mathbf{\xi}_{i}\right\}  _{i\geq1}$ have to satisfy condition
(\ref{sumaszumow}) can be weakened a little, by subtracting from both sides of
(\ref{apr1}) $\mathbf{\theta}$ and the equation:
\begin{equation}
\mathbf{\zeta}_{n+1}=(1-\mu_{n})\mathbf{\zeta}_{n}+\mu_{n}\mathbf{\xi}_{n+1}.
\label{srednieszumy}%
\end{equation}
It is easy to notice, that $\mathbf{\zeta}_{n}=\frac{\sum_{i=0}^{n-1}%
\alpha_{i}\mathbf{\xi}_{i+1}}{\sum_{i=0}^{n-1}\alpha_{i}};$ $n\geq1$ where
$\overline{\left\{  \alpha_{i}\right\}  }=\left\{  \mu_{i}\right\}  $. Hence,
instead of demanding that instead the series (\ref{sumaszumow}), converges
a.s. we demand that the sequence $\left\{  \mathbf{\zeta}_{n}\right\}
_{n\geq1}$ converges to zero, that we demand that the sequence of disturbances
$\left\{  \mathbf{\xi}_{n}\right\}  _{n\geq1}$ satisfies generalized laws of
large numbers. We will get them after a little algebra:
\[
\mathbf{x}_{n+1}-\mathbf{\theta-\zeta}_{n+1}=\mathbf{x}_{n}-\mathbf{\theta
-\zeta}_{n}-\mu_{n}(\mathbf{f(x}_{n})+\mathbf{\zeta}_{n}),
\]
(compare with the formula (\ref{pom1})) and further we argue as above,
assuming, that the sequence $\left\{  \mathbf{\zeta}_{n}\right\}  _{n\geq0}$
converges almost surely to zero. This would lead to slight complications in
estimation similar to presented above that lead to formula (\ref{pom2}).
\end{remark}

\begin{remark}
Let us notice that in turn that the sequence $\left\{  \mathbf{S}_{n}\right\}
$ converges to zero more quickly than is the convergence of the sequence
$\left\{  \mu_{n}\right\}  _{n\geq0}$ to zero. Hence, the choice of the
sequence " of amplifiers" $\left\{  \mu_{n}\right\}  _{n\geq0}$ has to be a
compromise: the slower this sequence converges to zero, more quickly converges
to zero the sequence $\left\{  \prod_{i=N}^{n}(1-2\mu_{i}\delta+3\mu_{i}%
^{2}\kappa_{1}^{2})\right\}  _{n\geq N}$. On the other hand more quickly
sequence $\left\{  \mu_{n}\right\}  _{n\geq0}$ converges to zero, the quicker
is the convergence of the sequence $\left\{  \mathbf{S}_{n}\right\}  _{n\geq
1}$ to zero.
\end{remark}

\begin{remark}
\label{aspekty}It follows from the above mentioned considerations that one can
distinguish so to say two aspects of the convergence of the procedure
(\ref{apr1}): \newline-\emph{deterministic, }associated with the deterministic
procedure:
\begin{equation}
\mathbf{y}_{n+1}=\mathbf{y}_{n}-\mu_{n}\mathbf{f(y}_{n}), \label{determin}%
\end{equation}
finding zero of functions the $\mathbf{f}$ and \newline-\emph{random,
}associated with 'averaging' of the disturbances $\left\{  \mathbf{\xi}%
_{i}\right\}  _{i\geq1}.$

To ensure quick convergence of the procedure (\ref{determin}) it is suggested
that the sequence $\left\{  \mu_{n}\right\}  _{n\geq0}$ converges to zero as
slow as possible (in the extreme case when $\kappa_{2}=0$ it can be constant).
On the other hand, the random aspect of convergence of the procedure
(\ref{apr1}) requires that the sequence $\left\{  \mu_{i}\right\}  _{i\geq0}$
converged to zero as quickly as possible. Hence, it seems that a reasonable
choice of the sequence $\left\{  \mu_{i}\right\}  _{i\geq1}$ is the following:
\newline first, we keep sequence relatively slowly converging to zero, in
order to reach the area close to the solution as quickly as possible, then we
increase the rate with which the sequence $\left\{  \mu_{i}\right\}  _{i\geq
0}$ decreases to zero in order to start 'averaging' the noises.

One could of course reason more subtly basing on the estimation
(\ref{oszac_podst}). It is not difficult then to notice, that the speed of the
deterministic aspect is connected with the speed of convergence to zero of the
sequence%
\[
\prod_{i=k}^{n}\lambda_{i}\cong\exp(-2\delta\sum_{i=k}^{n}\mu_{i}).
\]
The speed of the random aspect is connected with the speed of convergence to
zero of the sequence $\left\{  \epsilon_{n}^{2}\right\}  _{n\geq0}$, which on
its side is associated with the speed of convergence of expectations of this
sequence that is roughly $\left\{  \mu_{n}\right\}  _{n\geq1}.$

Since, the demand of such choice of the sequence, to make the two sequences
\newline$\left\{  \exp(-2\delta\sum_{i=k}^{n}\mu_{i})\right\}  $ and $\left\{
\mu_{n}\right\}  _{n\geq0}$ possibly quickly simultaneously converge to zero
contains a contradiction, it seems that the only reasonable choice of the
sequence $\left\{  \mu_{n}\right\}  _{n\geq0}$ is to select it to be of the
form $\mu_{n}\allowbreak=\allowbreak a/(n+1)$ for suitable constant $a$. Let
us notice that such choice gives $\exp(-2\delta\sum_{i=k}^{n}\mu
_{i})\allowbreak\cong\allowbreak An^{-2\delta a}$ for some $A$. How to select
coefficient $a?$ Well in such a way as to make $2\delta a\allowbreak
>\allowbreak1$. The choice of coefficient $a$ and estimation of the speed of
convergence of stochastic approximation procedures were subjects of research
of many mathematicians. Precise analysis of the possible choice of $\left\{
\mu_{i}\right\}  _{i\geq0}$ can be found in papers by Fabian \cite{Fabian60},
Kushner and Gavin \cite{KushnerGavin73}, Koronacki \cite{Koronacki80}.
\end{remark}

\section{Extensions and generalizations}

So far we have assumed that the function $\mathbf{f}$ satisfied a condition:
\begin{equation}
{\large \exists}\mathbf{\theta}\in%
\mathbb{R}
^{m}{\large ,}\delta\,>0{\large \,\,\,}\forall\mathbf{x\in}%
\mathbb{R}
^{m}:\left(  \mathbf{x-\theta}\right)  ^{^{\prime}}\mathbf{f(x)>}%
\delta\left\vert \mathbf{x-\theta}\right\vert ^{2} \label{warzdelta}%
\end{equation}
It turns out that this condition can be weakened relatively substantially:
namely function $\mathbf{f}$ does not have to be almost linear in the
neighborhood of point $\mathbf{\theta}$, as it was assumed in condition
(\ref{warzdelta}), but it is enough, that the condition (\ref{warzdelta}) is
satisfied outside every ringlike neighborhood of the point $\theta$. More
precisely, now we will assume instead condition (\ref{warzdelta}) that the
following condition is satisfied:
\begin{equation}
{\large \exists}\mathbf{\theta}\in%
\mathbb{R}
^{m}{\large ,\,\,\,\mathbf{\forall}}1{\large >}\varepsilon
>0:\underset{1/\varepsilon\geq\left\vert \mathbf{x-\theta}\right\vert
\geq\varepsilon}{\inf}\left(  \mathbf{x-\theta}\right)  ^{^{\prime}%
}\mathbf{f(x)>0.} \label{warnasup}%
\end{equation}

\begin{remark}
Condition (\ref{warnasup}) is equivalent to the following one:
\begin{align}
\mathbf{\exists\theta}{\large \,\,}\forall\varepsilon &  \in(0,1)\,\,\,\exists
\delta{\large >0}:1/\varepsilon\geq\left\vert \mathbf{x-\theta}\right\vert
\geq\varepsilon\label{warzdeltaeps}\\
&  \Rightarrow\left(  \mathbf{x-\theta}\right)  ^{^{\prime}}\mathbf{f(x)\geq
}\delta\mathbf{(}\varepsilon\mathbf{)}\left\vert \mathbf{x-\theta}\right\vert
^{2}.\nonumber
\end{align}

\end{remark}

\begin{proof}
The fact that the condition (\ref{warzdeltaeps}) is implied by the condition
(\ref{warnasup}), is obvious. In order to show, that from the condition
(\ref{warnasup}) follows condition (\ref{warzdeltaeps}), let us consider
quantity $\frac{(\mathbf{x}-\mathbf{\theta)}^{^{\prime}}\mathbf{f(x)}%
}{\left\vert \mathbf{x-\theta}\right\vert ^{2}}$. If limes inferior of this
quantity over all $\mathbf{x}$ satisfying $1/\varepsilon\geq\left\vert
\mathbf{x-\theta}\right\vert \geq\varepsilon$ is greater than zero, then
indeed the condition (\ref{warnasup}) implies a condition (\ref{warzdeltaeps}%
). Hence, let us suppose, that limes inferior is equal zero. Then for every
$\chi>0$ there would exist a point $\mathbf{x}_{\chi}$, satisfying inequality
$1/\varepsilon\geq\left\vert \mathbf{x}_{\chi}\mathbf{-\theta}\right\vert
\geq\varepsilon$ and such that $(\mathbf{x}_{\chi}-\mathbf{\theta)}^{\prime
}\mathbf{f(\mathbf{x}_{\chi})\leq\chi}\left\vert \mathbf{x}_{\chi
}\mathbf{-\theta}\right\vert ^{2}\leq\chi/\varepsilon^{2}$. Taking into
account freedom of choice of $\chi$ and the closeness of the set $\left\{
\mathbf{x:\varepsilon\allowbreak\leq}\left\vert \mathbf{x-\theta}\right\vert
\allowbreak\leq\allowbreak1/\varepsilon\right\}  $, $\allowbreak\varepsilon
\in(0,1)$, This inequality, contradicts condition (\ref{warnasup}).
\end{proof}

Moreover, let us suppose, that the following condition concerning the function
$\mathbf{f}$:%

\begin{equation}
{\large \exists\kappa}_{1},\kappa_{2}>0{\large \,\,\forall}\mathbf{x}\in%
\mathbb{R}
^{m}:\left\vert \mathbf{f(x)}\right\vert \leq\kappa_{1}\left\vert
\mathbf{x}-\mathbf{\theta}\right\vert +\kappa_{2}, \label{war26}%
\end{equation}
is also satisfied.\ Further, let us assume almost sure convergence of the
series
\begin{equation}
\sum_{i\geq0}\mu_{i}\mathbf{\xi}_{i+1}. \label{sumaszumow6}%
\end{equation}
The sequence $\left\{  \mu_{i}\right\}  _{i\geq0}$ is assumed to be normal.

Let us consider procedure of the form:
\begin{equation}
\mathbf{x}_{0}=\mathbf{xo,\,\,x}_{n+1}=\mathbf{x}_{n}-\mu_{n}(\mathbf{f(x}%
_{n})+\mathbf{\xi}_{n+1});n\geq0. \label{apr16}%
\end{equation}

\begin{theorem}
\label{najprostsze_eps}Suppose that the function $\mathbf{f}$ satisfies
conditions (\ref{warzdeltaeps}), (\ref{war26}) for some $\mathbf{\theta}\in%
\mathbb{R}
^{m}$, and noises $\left\{  \mathbf{\xi}_{i}\right\}  _{i\geq1}$ satisfy
condition (\ref{sumaszumow6}) for any normal sequence $\left\{  \mu
_{i}\right\}  _{i\geq0}$. Let us suppose also, that the sequence $\left\{
\mathbf{x}_{n}\right\}  _{n\geq1}$ is bounded with probability $1$, that is,
there exists such random variable $M$, that $P(\underset{n\geq1}{\sup
}\left\vert \mathbf{x}_{n}\right\vert \allowbreak\leq\allowbreak M)=1$. Then,
the procedure (\ref{apr16}) converges almost surely to the point
$\mathbf{\theta.}$
\end{theorem}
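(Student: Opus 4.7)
The plan is to adapt the proof of Theorem \ref{najprostsze}, retaining the substitution $\mathbf{S}_n=\sum_{i\geq n}\mu_i\mathbf{\xi}_{i+1}$ and $d_n=|\mathbf{x}_n-\mathbf{\theta}-\mathbf{S}_n|$, but replacing the global almost-linear estimate by a localised version that is activated only on annuli around $\mathbf{\theta}$. I will work throughout on the probability-one event on which $\mathbf{S}_n\to\mathbf{0}$ and $\sup_n|\mathbf{x}_n|\leq M$ is finite; setting $R=M+|\mathbf{\theta}|+\sup_n|\mathbf{S}_n|<\infty$, every iterate lies in the closed $R$-ball around $\mathbf{\theta}$, and all the quantities $\eta_n=|\mathbf{S}_n|$, $g_n$, $h_n$ appearing in the earlier proof converge to zero.

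First I would fix $\varepsilon>0$, choose $\varepsilon'=\min(\varepsilon,1/R)$, and invoke (\ref{warzdeltaeps}) (which is the equivalent pointwise form of (\ref{warnasup}) recorded in the remark) to produce a constant $\delta=\delta(\varepsilon')>0$ such that $(\mathbf{x}-\mathbf{\theta})^{\prime}\mathbf{f}(\mathbf{x})\geq \delta|\mathbf{x}-\mathbf{\theta}|^{2}$ on the annulus $\{\mathbf{x}:\varepsilon'\leq|\mathbf{x}-\mathbf{\theta}|\leq R\}$. On the indices for which $\mathbf{x}_n$ belongs to this annulus, the entire chain of estimates leading in the proof of Theorem \ref{najprostsze} to
\[
d_{n+1}^{2}\leq \lambda_n\max(d_n^{2},\epsilon_n^{2}),\qquad \lambda_n=1-\mu_n\delta+3\mu_n^{2}\kappa_1^{2},
\]
carries over unchanged; Lemma \ref{zmaxem} applied along this (possibly sparse) subsequence of indices will then force $\liminf_n d_n$ to be arbitrarily small.

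The second and more delicate task is to control the iterate during excursions into the inner ball $\{|\mathbf{x}-\mathbf{\theta}|<\varepsilon'\}$, where the above recurrence is inactive. Here the key observation is that
\[
|\mathbf{x}_{n+1}-\mathbf{x}_n|\leq \mu_n|\mathbf{f}(\mathbf{x}_n)|+|\mathbf{S}_n-\mathbf{S}_{n+1}|,
\]
and both summands tend to zero: the first because of (\ref{war26}), the boundedness of $\{\mathbf{x}_n\}$ and $\mu_n\to 0$, and the second because $\mathbf{S}_n\to\mathbf{0}$. Consequently, for $n$ large enough an iterate that sits in the inner ball can leap in one step at most to a set of radius $\varepsilon'+o(1)<\varepsilon$. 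I would then combine the two regimes by a standard excursion/trapping argument: decompose the time axis into maximal intervals on which $|\mathbf{x}_n-\mathbf{\theta}|\geq\varepsilon'$ (the "excursions") alternating with trapped intervals, use the recurrence to drive $d_n$ down on each excursion, and use the vanishing step size to bound the $d$-value inherited by the next excursion from an exit of the inner ball. Letting $\varepsilon\downarrow 0$ at the end yields $d_n\to 0$ and hence $\mathbf{x}_n\to\mathbf{\theta}$.

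The hard part is not the annular estimate, which is an almost mechanical adaptation of the earlier argument, but the excursion bookkeeping. Even though individual step sizes vanish and the averaged noise $\mathbf{S}_n$ is small, one must rule out the possibility that $\mathbf{x}_n$ performs infinitely many oscillations of macroscopic amplitude between the inner ball and the complement of a larger ball, and that these oscillations contribute non-summably to $d_n$. I expect the resolution to rest on the fact that $\prod \lambda_n\to 0$, so each sufficiently long excursion strictly contracts $d_n$, while the boundary values inherited at excursion starts are driven to zero by the vanishing one-step increments.
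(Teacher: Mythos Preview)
Your strategy is the paper's: same substitution $\mathbf{S}_n$, same $d_n$, same split into an annular regime (contracting) and an inner-ball regime (non-expanding). Where you leave things vague and worry about oscillations, the paper has a clean resolution that you should adopt.

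The key device you are missing is a \emph{universal} one-step bound valid everywhere, not only on the annulus. From (\ref{warnasup}) one has $(\mathbf{x}-\mathbf{\theta})'\mathbf{f}(\mathbf{x})\geq 0$ for all $\mathbf{x}$, so the identity (\ref{pom26}) together with (\ref{war26}) yields
\[
d_{n+1}^{2}\leq (1+3\mu_n^{2}\kappa_1^{2})d_n^{2}+\mu_n d_n g_n+\mu_n h_n
\]
with $g_n,h_n\to 0$. In particular, if $d_n\leq\upsilon$ then $d_{n+1}\leq K_n(\upsilon)$ where $K_n(\upsilon)=\sqrt{(1+3\mu_n^{2}\kappa_1^{2})\upsilon^{2}+\mu_n\upsilon g_n+\mu_n h_n}\to\upsilon$. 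This replaces your step-size bound $|\mathbf{x}_{n+1}-\mathbf{x}_n|\to 0$ by a direct control of $d_{n+1}$ in terms of $d_n$.

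With this in hand the paper proves \emph{permanent} trapping of $d_n$, not merely recurrent visits, and your oscillation worry disappears. Fix $\varepsilon\in(0,1/M]$, pick $\upsilon\in[\varepsilon/4,3\varepsilon/4)$ and indices large enough that $\eta_n\leq\varepsilon/4$, $K_n(\upsilon)\leq 3\varepsilon/4$, $\lambda_n(\upsilon-\varepsilon/4)<1$ and $\epsilon_n(\upsilon-\varepsilon/4)\leq 3\varepsilon/4$. First, the sequence must eventually hit $\{d_n<3\varepsilon/4\}$: otherwise $|\mathbf{x}_n-\mathbf{\theta}|\geq d_n-\eta_n\geq\varepsilon/2$ for all late $n$, the annular recursion (\ref{nierownosc5}) holds at \emph{every} step (so your ``sparse subsequence'' is in fact the full sequence), and Lemma~\ref{zmaxem} forces $d_n\to 0$, a contradiction. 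Second, once $d_n<3\varepsilon/4$ the dichotomy $|\mathbf{x}_n-\mathbf{\theta}|\geq\upsilon-\varepsilon/4$ (use the contracting bound) versus $|\mathbf{x}_n-\mathbf{\theta}|<\upsilon-\varepsilon/4$ (then $d_n<\upsilon$, use $K_n$) gives $d_{n+1}<3\varepsilon/4$ in either case. Induction closes the trap; $\varepsilon\downarrow 0$ finishes.
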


\begin{proof}
Let us set, as before, $\mathbf{S}_{n}=\sum_{i\geq n}\mu_{i}\mathbf{\xi}%
_{i+1}$. We know that $\left\{  \mathbf{S}_{n}\right\}  _{n\geq1}$ converges
almost surely to zero, by assumption. Let us subtract $\mathbf{S}%
_{n+1}+\mathbf{\theta}$ from both sides of procedure (\ref{apr16}). We get
then:
\begin{equation}
\mathbf{x}_{n+1}-\mathbf{\theta-S}_{n+1}=\mathbf{x}_{n}-\mathbf{\theta
}-\mathbf{S}_{n}-\mu_{n}\mathbf{f(x}_{n}). \label{pom16}%
\end{equation}
Let us denote: $d_{n}=\left\vert \mathbf{x}_{n}-\mathbf{\theta-S}%
_{n}\right\vert $. Multiplying both sides of (\ref{pom16}) by their
transposition and using definition $d_{n}$ we get:
\begin{equation}
d_{n+1}^{2}=d_{n}^{2}-2\mu_{n}(\mathbf{x}_{n}-\mathbf{S}_{n}-\mathbf{\theta
)}^{^{\prime}}\mathbf{f(x}_{n})+\mu_{n}^{2}|\mathbf{f(x}_{n})|^{2}.
\label{pom26}%
\end{equation}

Taking advantage of assumptions (\ref{warnasup}), (\ref{war26}) we get :
\begin{gather*}
(\mathbf{x}_{n}-\mathbf{S}_{n}-\mathbf{\theta)}^{^{\prime}}\mathbf{f(x}%
_{n})\geq\left(  \mathbf{x}_{n}-\mathbf{\theta}\right)  ^{^{\prime}%
}\mathbf{f(x}_{n})-\left\vert \mathbf{S}_{n}\right\vert \left\vert
\mathbf{f(x}_{n})\right\vert \geq\\
\geq-\eta_{n}\left(  \kappa_{1}\left\vert \mathbf{x}_{n}-\mathbf{\theta
}\right\vert +\kappa_{2}\right)  \geq\\
\geq-\eta_{n}\kappa_{1}d_{n}-\eta_{n}^{2}\kappa_{1}-\eta_{n}\kappa_{2},
\end{gather*}
where we denoted as before $\eta_{n}=\left\vert \mathbf{S}_{n}\right\vert $.
Moreover, we have:
\begin{gather*}
\left\vert \mathbf{f(x}_{n})\right\vert ^{2}\leq(\kappa_{1}\left\vert
\mathbf{x}_{n}-\mathbf{\theta}\right\vert +\kappa_{2})^{2}\leq\\
\leq(\kappa_{1}d_{n}+\kappa_{1}\eta_{n}+\kappa_{2})^{2}\leq\\
\leq3(\kappa_{1}^{2}d_{n}^{2}+\kappa_{1}^{2}\eta_{n}^{2}+\kappa_{2}^{2}).
\end{gather*}

We have thus recurrent relationship:
\begin{gather*}
d_{n+1}^{2}\leq(1+3\mu_{n}^{2}\kappa_{1}^{2})d_{n}^{2}+2\mu_{n}d_{n}\eta
_{n}\kappa_{1}+2\mu_{n}\kappa_{1}\eta_{n}^{2}+2\mu_{n}\eta_{n}\kappa_{2}%
+3\mu_{n}^{2}(\kappa_{1}^{2}\eta_{n}^{2}+\kappa_{2}^{2})\\
\overset{df}{=}(1+3\mu_{n}^{2}\kappa_{1}^{2})d_{n}^{2}+\mu_{n}d_{n}g_{n}%
+\mu_{n}h_{n}.
\end{gather*}

Let us notice that $g_{n}\underset{n\rightarrow\infty}{\longrightarrow}0$ and
$h_{n}\underset{n\rightarrow\infty}{\longrightarrow}0$ a.s.. Let us consider
the function
\[
K_{n}(\upsilon)=\sqrt{(1+3\mu_{n}^{2}\kappa_{1}^{2})\upsilon^{2}+\mu
_{n}\upsilon g_{n}+\mu_{n}h_{n}}.
\]
We have of course: $\forall\upsilon\geq0:K_{n}(\upsilon)\geq\upsilon$ a.s. and
$K_{n}(\upsilon)\underset{n\rightarrow\infty}{\longrightarrow}\upsilon$.
Moreover, let us notice that:
\[
d_{n}\leq\upsilon\Rightarrow d_{n+1}\leq K_{n}(\upsilon).
\]
If however, we will assume, that $1/\varepsilon\geq\left\vert \mathbf{x}%
_{n}-\mathbf{\theta}\right\vert \geq\varepsilon$, then we have:
\begin{align*}
d_{n+1}^{2}  &  =d_{n}^{2}-2\mu_{n}(\mathbf{x}_{n}-\mathbf{S}_{n}%
-\mathbf{\theta)}^{^{\prime}}\mathbf{f(x}_{n})+\mu_{n}^{2}|\mathbf{f(x}%
_{n})|^{2}\\
&  \leq(1-2\mu_{n}\delta(\varepsilon)+3\mu_{n}^{2}\kappa_{1}^{2})d_{n}^{2}%
+\mu_{n}d_{n}g_{n}^{^{\prime}}(\varepsilon)+\mu_{n}h_{n},
\end{align*}
where we denoted $g_{n}^{^{\prime}}(\varepsilon)=\eta_{n}\left(
2\delta(\varepsilon)+\kappa_{1}\right)  $. Let $\epsilon_{n}(\varepsilon)$
will be positive root equation:
\begin{equation}
\delta(\varepsilon)x^{2}-xg_{n}^{^{\prime}}(\varepsilon)-h_{n}=0.
\label{rownanie*}%
\end{equation}

Let us notice that because of properties of the sequences $\left\{
g_{n}^{^{\prime}}(\varepsilon)\right\}  $ and $\left\{  h_{n}\right\}  $ we
see that $\forall\varepsilon>0:\,\epsilon_{n}(\varepsilon
)\underset{n\rightarrow\infty}{\longrightarrow}0$ a.s. Arguing as in the proof
of previous theorem we get for $1/\varepsilon\geq\left\vert \mathbf{x}%
_{n}-\mathbf{\theta}\right\vert \geq\varepsilon:$%
\[
d_{n+1}^{2}\leq(1-\mu_{n}\delta(\varepsilon)+3\mu_{n}^{2}\kappa_{1}^{2}%
)\max\left(  d_{n}^{2},\epsilon_{n}^{2}(\varepsilon)\right)  ,
\]
or equivalently, that
\begin{equation}
d_{n+1}\leq\lambda_{n}(\varepsilon)\max(d_{n},\epsilon_{n}(\varepsilon)),
\label{nierownosc5}%
\end{equation}
where we denoted: $\lambda_{n}(\varepsilon)=\sqrt{1-\mu_{n}\delta
(\varepsilon)+3\mu_{n}^{2}\kappa_{1}^{2}}$. Let us notice also, that
\begin{equation}
\forall\varepsilon>0\prod_{i=0}^{n}\lambda_{i}(\varepsilon
)\underset{n\rightarrow\infty}{\longrightarrow}0\,a.s.,\underset{n>k}{\sup
}\prod_{i=k}^{n}\lambda_{i}(\varepsilon)<\infty\text{ a.s.}.
\label{o_lambdach}%
\end{equation}
Let us take any $1/M(\omega)\geq\varepsilon>0$. Let $\iota$ will be such
random index, that for $i\geq\iota\;\;$ $\eta_{i}\leq\frac{\varepsilon}{4}$.
Let further $\upsilon$ will be such a positive number, and $\iota_{1}$ such
random index, that for $i>\iota_{1}$
\[
\frac{\varepsilon}{4}\leq\upsilon<\frac{3\varepsilon}{4};\,K_{i}(\upsilon
)\leq\frac{3\varepsilon}{4}.
\]
Let finally $\iota_{2}$ be such a random index, that for $i\geq\iota_{2}\,:$%
\[
\,\lambda_{i}(\upsilon-\frac{\varepsilon}{4})<1,\epsilon_{i}(\upsilon
-\frac{\varepsilon}{4})\leq\frac{3\varepsilon}{4}.
\]
From assumptions it follows that indices $\iota$, $\iota_{1}$ and $\iota_{2}$
are finite almost everywhere. Let $\iota^{\ast}$ will be the first after
$\max(\iota,\iota_{1},\iota_{2})$ random moment such that $d_{\iota^{\ast}%
}<\frac{3\varepsilon}{4}$. From our assumptions it follows that $\iota^{\ast
}<\infty$ a.s. Since if it was otherwise, i.e. such $\iota^{\ast}$ would not
exist, then we would have for all $k>\max(\iota,\iota_{1},\iota_{2})$ always
inequalities
\[
2/\varepsilon\allowbreak>\allowbreak1/\varepsilon\allowbreak\geq\allowbreak
M\allowbreak\geq\allowbreak\left\vert \mathbf{x}_{k}-\mathbf{\theta
}\right\vert \allowbreak\geq\allowbreak\frac{3}{4}\varepsilon\allowbreak
-\frac{1}{4}\varepsilon\allowbreak=\allowbreak\frac{\varepsilon}{2},
\]
which is impossible because we have (\ref{nierownosc5}) and the property
(\ref{o_lambdach}). If $\left\vert \mathbf{x}_{\iota^{\ast}}-\mathbf{\theta
}\right\vert \geq\upsilon-\frac{\varepsilon}{4}$, then
\[
d_{\iota^{\ast}+1}\allowbreak\leq\allowbreak\lambda_{\iota^{\ast}}%
(\upsilon-\frac{\varepsilon}{4})\allowbreak\max(d_{\iota^{\ast}}%
,\epsilon_{\iota^{\ast}}(\upsilon-\frac{\varepsilon}{4})\allowbreak
<\allowbreak\frac{3\varepsilon}{4}%
\]
and consequently
\[
\left\vert \mathbf{x}_{\iota^{\ast}+1}-\mathbf{\theta}\right\vert
\allowbreak\leq d_{\iota^{\ast}+1}+\eta_{\iota^{\ast}+1}<\varepsilon,
\]
if $\left\vert \mathbf{x}_{\iota^{\ast}}-\mathbf{\theta}\right\vert
<\upsilon-\frac{\varepsilon}{4}$, then
\[
d_{\iota^{\ast}}\leq\allowbreak\left\vert \mathbf{x}_{\iota^{\ast}%
}-\mathbf{\theta}\right\vert \allowbreak+\allowbreak\theta_{\iota^{\ast}%
}\allowbreak<\allowbreak\upsilon-\frac{\varepsilon}{4}\allowbreak
+\frac{\varepsilon}{4}\allowbreak=\allowbreak\upsilon
\]
hence $d_{\iota^{\ast}+1}<\frac{3\varepsilon}{4}$, that is also $\left\vert
\mathbf{x}_{\iota^{\ast}+1}-\mathbf{\theta}\right\vert <\varepsilon$. Arguing
in the similar way for $d_{\iota^{\ast}+1},\,d_{\iota^{\ast}+2}$ and so on, we
get $\forall k\geq1$ $d_{\iota^{\ast}+k}<\frac{3\varepsilon}{4}$, $\left\vert
\mathbf{x}_{\iota^{\ast}+k}-\mathbf{\theta}\right\vert <\varepsilon$. Hence,
the sequence $\left\{  \mathbf{x}_{n}\right\}  $ converges almost surely.
\end{proof}

In the sequel of this chapter, we will try to understand the behavior of the
procedure from example \ref{przyklad_z_mala_funkcja}. If analyzing closely the
function $f(x)$ \textquotedblright whose zeros are sought \textquotedblright%
\ by this procedure, we notice, that for $x>4$ values of this function
decrease very quickly to zero. For $x\approx12$ we have $f(x)\approx
1.\,1107\times10^{-3}$, that is practically zero. In connection with this one
can state, that practically the set of zeros of the function $f(x)$ contains
the subset $\left\{  x:x\geq12\right\}  $. Consequently , the procedure is
convergent to one of these zeros! In order to analyze more precisely, such and
similar situations first we have to, consider the problem of boundedness in
$L_{2}$ and with probability $1$ of stochastic approximation procedures.

\subsection{Boundedness\label{ograniczonosc}}

The result that we will prove below will concern a bit more general situation
than the one considered in the procedure (\ref{apr1}). Instead, we will have
to impose some restrictions on distributions of $\left\{  \mathbf{\xi}%
_{n}\right\}  _{n\geq0}$. Let us notice that so far the only assumption
imposed on distributions of noises was the requirement of convergence of the
series $\sum_{i\geq1}\mu_{i}\mathbf{\xi}_{i}$, or even more generally, basing
on Remark \ref{mpwl_szumow}, fulfillment of the generalized strong laws of
large numbers by the noises $\left\{  \mathbf{\xi}_{i}\right\}  _{i\geq1}$.
Now we will assume that noises $\left\{  \mathbf{\xi}_{n}\right\}  _{n\geq0}$
can depend on the previously found estimators $\mathbf{x}_{1},\ldots
,\mathbf{x}_{n-1}$ of the point $\mathbf{\theta}$. More precisely, we will
consider procedure of the form:%

\begin{equation}
\mathbf{x}_{n+1}=\mathbf{x}_{n}-\mu_{n}\left(  \mathbf{f}(\mathbf{x}%
_{n})+\mathbf{\xi}_{n+1}(\mathbf{x}_{n})\right)  ;n\geq0,\mathbf{x}%
_{0}=\mathbf{x0,} \label{proc_uog}%
\end{equation}
wherein the sequence of the disturbances we will be assumed to satisfy:
\begin{equation}
E\left(  \mathbf{\xi}_{n+1}(\mathbf{x}_{n})|\mathcal{F}_{n}\right)  =0,\exists
L>0:E\left(  \left\vert \mathbf{\xi}_{n+1}(\mathbf{x}_{n})\right\vert
^{2}|\mathcal{F}_{n}\right)  \leq L(1+\left\vert \mathbf{x}_{n}\right\vert
^{2}), \label{zal_ogr2}%
\end{equation}
where we denoted $\mathcal{F}_{n}=\sigma(\mathbf{x}_{0},\ldots,\mathbf{x}%
_{n})$. We will be concerned with conditions, under which this procedure is
bounded in $L_{2}$. Let us suppose also, that the function $\mathbf{f}$
satisfies condition (\ref{war2}). Let us multiply both sides of the equation
(\ref{proc_uog}) by its transposition and let us calculate the expectation of
both sides. Let us denote by $D_{n}=E\left\vert \mathbf{x}_{n}-\mathbf{\theta
}\right\vert ^{2}$. We get then utilizing first of the assumptions
\ref{zal_ogr2} and the property $\left\vert \mathbf{x}_{n}\right\vert ^{2}%
\leq2\left\vert \mathbf{x}_{n}-\mathbf{\theta}\right\vert ^{2}+2\left\vert
\mathbf{\theta}\right\vert ^{2}$
\begin{equation}
D_{n+1}\leq D_{n}-2\mu_{n}E(\mathbf{x}_{n}-\mathbf{\theta)}^{\prime
}\mathbf{f(x}_{n})+\mu_{n}^{2}(L(1+2D_{n}+2\left\vert \mathbf{\theta
}\right\vert ^{2})+2\kappa_{1}^{2}D_{n}+2\kappa_{2}^{2}). \label{rown_ na_l2}%
\end{equation}

From this inequality follows the following lemma:

\begin{lemma}
\label{ogr_l2} Let disturbances $\left\{  \mathbf{\xi}_{n}\right\}  _{n\geq1}$
satisfy conditions (\ref{zal_ogr2}). Let us suppose also, that the normal
sequence $\left\{  \mu_{n}\right\}  _{n\geq0}$ satisfies additionally a
condition :
\begin{equation}
\sum_{n}\mu_{n}^{2}<\infty, \label{suma_mi2}%
\end{equation}
while the mapping $\mathbf{f}$ satisfies a condition:
\begin{equation}
\forall\mathbf{x\,:(x-\theta)}^{\prime}\mathbf{f(x)\geq0}, \label{zal_ogr}%
\end{equation}
and condition (\ref{war2}). Then sequence $\left\{  \mathbf{x}_{n}\right\}
_{n\geq0}$, of random vectors generated by the procedure (\ref{proc_uog}%
)\textbf{\ }is bounded in $L_{2}$ and with probability $1.$
\end{lemma}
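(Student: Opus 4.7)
The plan is to read off everything one needs directly from inequality (\ref{rown_ na_l2}), which the authors have already derived. Using the sign hypothesis (\ref{zal_ogr}) to discard the middle term $-2\mu_n E(\mathbf{x}_n-\boldsymbol{\theta})'\mathbf{f}(\mathbf{x}_n)\le 0$, and collecting the remaining terms, one obtains
\[
D_{n+1}\le (1+A\mu_n^2)D_n + B\mu_n^2,
\]
with $A=2L+2\kappa_1^2$ and $B=L(1+2|\boldsymbol{\theta}|^2)+2\kappa_2^2$. This is the whole engine of the proof; the rest is routine iteration.

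First I would establish $L_2$-boundedness. Iterating the scalar inequality above gives
\[
D_n\le \Bigl(\prod_{k=0}^{n-1}(1+A\mu_k^2)\Bigr)D_0 + B\sum_{k=0}^{n-1}\mu_k^2\prod_{j=k+1}^{n-1}(1+A\mu_j^2).
\]
Since $\sum_k\mu_k^2<\infty$ by (\ref{suma_mi2}), the infinite product $\prod_k(1+A\mu_k^2)$ converges to a finite limit (Proposition \ref{o_iloczynach} ii)), so the first factor is uniformly bounded; the same bound multiplied by $B\sum_k\mu_k^2$ controls the second summand. Hence $\sup_nD_n<\infty$, which is boundedness in $L_2$.

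Next, for almost sure boundedness, I would repeat the same computation but without passing to the expectation. Squaring (\ref{proc_uog}) term by term and using the first part of (\ref{zal_ogr2}) (zero conditional mean of $\boldsymbol{\xi}_{n+1}(\mathbf{x}_n)$) together with the second part and (\ref{war2}), the cross term $-2\mu_n(\mathbf{x}_n-\boldsymbol{\theta})'E(\boldsymbol{\xi}_{n+1}(\mathbf{x}_n)|\mathcal{F}_n)$ vanishes, and I obtain
\[
E\bigl(|\mathbf{x}_{n+1}-\boldsymbol{\theta}|^{2}\,\bigl|\,\mathcal{F}_n\bigr)\le (1+A\mu_n^2)|\mathbf{x}_n-\boldsymbol{\theta}|^{2}+B\mu_n^{2}-2\mu_n(\mathbf{x}_n-\boldsymbol{\theta})'\mathbf{f}(\mathbf{x}_n),
\]
with the last term nonnegative by (\ref{zal_ogr}). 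This is exactly the hypothesis of the Robbins--Siegmund almost-supermartingale convergence theorem, which in turn is nothing more than an application of Theorem \ref{ozbwL2} to the martingale differences obtained by subtracting conditional means, combined with Lemma \ref{podstawowy}: after multiplying through by $\prod_{k\ge n}(1+A\mu_k^2)^{-1}$ (which is bounded between two positive constants since $\sum\mu_k^2<\infty$), one gets an honest non-negative almost-supermartingale with summable positive correction, whose a.s. convergence implies $\sup_n|\mathbf{x}_n-\boldsymbol{\theta}|^2<\infty$ a.s., hence $\sup_n|\mathbf{x}_n|<\infty$ a.s.

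The main obstacle is really only bookkeeping in the second part: identifying that the conditional inequality puts us inside the scope of the a.s. convergence machinery already developed in Chapter \ref{zbiez} (Theorem \ref{ozbwL2} together with Lemma \ref{podstawowy} and Corollary \ref{podstawowy1}), so that the non-negative perturbation $-2\mu_n(\mathbf{x}_n-\boldsymbol{\theta})'\mathbf{f}(\mathbf{x}_n)$ can be handled without any additional assumption on $\mathbf{f}$ beyond (\ref{zal_ogr}) and (\ref{war2}). Once this identification is made, both conclusions drop out in parallel from summability of $\{\mu_n^2\}$ alone; no lower bound on $(\mathbf{x}-\boldsymbol{\theta})'\mathbf{f}(\mathbf{x})$ like (\ref{war1}) or (\ref{warnasup}) is needed for mere boundedness.
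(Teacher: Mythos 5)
Your argument is essentially the paper's own proof: drop the term $-2\mu_n E(\mathbf{x}_n-\mathbf{\theta})'\mathbf{f}(\mathbf{x}_n)\le 0$ using (\ref{zal_ogr}), obtain $D_{n+1}\le(1+q\mu_n^2)D_n+C\mu_n^2$, exploit $\sum_n\mu_n^2<\infty$ through a convergent product for the $L_2$ bound, and then turn the conditional version of the same inequality into a nonnegative supermartingale (the paper takes $\mathbf{Y}_n=P_n\left\vert\mathbf{x}_n-\mathbf{\theta}\right\vert^2+P_0C\sum_{i\ge n}\mu_i^2$ with $P_n=\prod_{i\ge n}(1+q\mu_i^2)$) and apply Doob's Theorem \ref{zb_mart}; so both halves match.

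One concrete slip to fix in the almost-sure part: the deterministic weight you propose, $\prod_{k\ge n}(1+A\mu_k^2)^{-1}$, runs the wrong way. Writing $Q_n=\prod_{k\ge n}(1+A\mu_k^2)^{-1}$ one has $Q_{n+1}(1+A\mu_n^2)=(1+A\mu_n^2)^2Q_n\ge Q_n$, so the weighted sequence is \emph{not} a supermartingale and the closure of the recursion fails. The correct multiplier is the tail product $P_n=\prod_{k\ge n}(1+A\mu_k^2)$ itself (or, equivalently, the reciprocal of the partial product $\prod_{k<n}(1+A\mu_k^2)$), because $P_{n+1}(1+A\mu_n^2)=P_n$ exactly; after adding the summable correction $P_0B\sum_{i\ge n}\mu_i^2$ you recover the paper's $\mathbf{Y}_n$, and the relevant convergence result is Doob's nonnegative supermartingale theorem (Theorem \ref{zb_mart}) rather than the martingale-series Theorem \ref{ozbwL2}. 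Since the factor is bounded between positive constants, this is a purely cosmetic repair and the rest of your reasoning (including the harmless sign-of-wording issue about the ``nonnegative'' term, which is in fact the nonnegative quantity $2\mu_n(\mathbf{x}_n-\mathbf{\theta})'\mathbf{f}(\mathbf{x}_n)$ being subtracted) goes through as in the paper.
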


\begin{proof}
On the base of inequality (\ref{rown_ na_l2}) and assumptions (\ref{zal_ogr})
of this lemma we get recurrent relationship:
\[
D_{n+1}\leq D_{n}(1+q\mu_{n}^{2})+C\mu_{n}^{2},
\]
where we denoted%
\[
C=L(1+2\left\vert \mathbf{\theta}\right\vert ^{2})+2\kappa_{2}^{2}%
,q=2(L+\kappa_{1}^{2}).
\]
In order to show boundedness of the sequence $\left\{  D_{n}\right\}  $, let
us denote $P_{n}=\prod_{i=n}^{\infty}(1+q\mu_{i}^{2})$. Elements of the
sequence $\left\{  P_{n}\right\}  $ are finite by our assumptions, and
Moreover, they decrease. Let us denote also $\Delta_{n}=D_{n}P_{n}$. We have:
\[
\Delta_{n+1}\leq P_{n+1}\left(  D_{n}(1+q\mu_{n}^{2})+C\mu_{n}^{2}\right)
\leq\Delta_{n}+P_{0}C\mu_{n}^{2}.
\]
Now it is easy to deduce, that $\Delta_{n}\leq P_{0}C\sum_{i=0}^{n-1}\mu
_{i}^{2}$. Thus, indeed the sequences $\left\{  \Delta_{n}\right\}  _{n\geq1}$
and $\left\{  D_{n}\right\}  _{n\geq1}$ are bounded. In order to show
boundedness with probability $1$ of the sequence $\left\{  \left\vert
\mathbf{x}_{n}-\mathbf{\theta}\right\vert \right\}  $, let us introduce
denotation
\[
\mathbf{Y}_{n}=P_{n}\left\vert \mathbf{x}_{n}-\mathbf{\theta}\right\vert
^{2}+P_{0}C\sum_{i\geq n}\mu_{i}^{2}.
\]
We have
\begin{align*}
E(\mathbf{Y}_{n+1}|\mathcal{F}_{n})  &  \leq P_{n+1}\left(  \left\vert
\mathbf{x}_{n}-\mathbf{\theta}\right\vert ^{2}(1+q\mu_{n}^{2})+C\mu_{n}%
^{2}\right)  +P_{0}C\sum_{i\geq n+1}\mu_{i}^{2}\\
&  \leq P_{n}\left\vert \mathbf{x}_{n}-\mathbf{\theta}\right\vert ^{2}%
+P_{0}C\sum_{i\geq n}\mu_{n}^{2}=\mathbf{Y}_{n}%
\end{align*}
Hence the sequence $\left\{  \mathbf{Y}_{n}\right\}  _{n\geq1}$ is a
nonnegative supermartingale Hence, on the basis of Doob's Theorem
\ref{zb_mart}, converges almost surely to finite limit.
\end{proof}

Thus, we see, that boundedness of the sequence of approximations under rather
loose requirements concerning mappings $\mathbf{f}$ (assumptions
(\ref{zal_ogr}) and (\ref{war2})) already requires some ordered probabilistic
structure of disturbances $\left\{  \mathbf{\xi}_{n}\right\}  $ (assumption of
being martingale differences). One can expect, that in the more complicated
cases of stochastic approximation procedures this assumption to will be also active.

\begin{remark}
Let us notice also, that assumptions (\ref{zal_ogr2}) may not be imposed to
guarantee the almost sure convergence, if it was known, that mapping
$\mathbf{f}$ satisfied condition (\ref{war1}) or (\ref{warzdeltaeps}).
\end{remark}

There exists a way to omit those intensified requirements concerning
disturbances, and aiming to get boundedness of stochastic approximation
procedures. Namely, if we found a bounded set $V$, in which the unknown
parameter would lie for sure, then one could consider the following procedure:

\emph{if in the }$n-$\emph{theorem} \emph{iterative} \emph{step } \emph{the
quantity }
\[
\mathbf{p}_{n}=\mathbf{x}_{n}-\mu_{n}\left(  \mathbf{f}(\mathbf{x}%
_{n})+\mathbf{\xi}_{n+1}(\mathbf{x}_{n})\right)
\]
\emph{lied} \emph{\ inside the set }$V$, \emph{then as} \emph{\ }%
$\mathbf{x}_{n+1}$ \emph{we take }$\mathbf{p}_{n}$, \emph{if we have}
$\mathbf{p}_{n}\notin V$ \emph{then for }$\mathbf{x}_{n+1}$ \emph{we take some
point of } $V.$

It remains to select this point. We have great freedom and it would be good to
select this point properly. It turns out that if the set $V$ is bounded\emph{,
closed and convex} and if for $\mathbf{x}_{n+1}$ we would take orthogonal
projection $\mathbf{\pi}_{n}$ of the point $\mathbf{p}_{n}=\mathbf{x}_{n}%
-\mu_{n}\left(  \mathbf{f}(\mathbf{x}_{n})+\mathbf{\xi}_{n+1}(\mathbf{x}%
_{n})\right)  $ on $V$, then for any point $\mathbf{a\in V}$ $\;\left\vert
\mathbf{p}_{n}-\mathbf{a}\right\vert \geq\left\vert \mathbf{\pi}%
_{n}-\mathbf{a}\right\vert $. In other words, if instead of procedure
(\ref{proc_uog}) we consider the procedure:%

\begin{equation}
\mathbf{x}_{n+1}=\left\{
\begin{array}
[c]{lll}%
\mathbf{p}_{n}=\mathbf{x}_{n}-\mu_{n}\left(  \mathbf{f}(\mathbf{x}%
_{n})+\mathbf{\xi}_{n+1}(\mathbf{x}_{n})\right)  , & \text{when} &
\mathbf{p}_{n}\in V\\
\mathbf{\pi}_{n}=\text{projection }\,\mathbf{p}_{n}\,\text{on}\,V, &
\text{when} & \mathbf{p}_{n}\notin V
\end{array}
\right.  , \label{mod_rzut}%
\end{equation}
then we can do the analysis of the convergence of these procedures and make
use of ordinary, considered earlier, estimation. This fact follows the
following lemmas.

\begin{lemma}
\label{o_zb_wyp}Let $K$ will be sphere, and $V$ a closed, convex subset of $%
\mathbb{R}
^{d}$. Let further $\mathbf{p}\in K\backslash V$, and further let
$\mathbf{\pi}$ be a orthogonal projection of $\mathbf{p}$ on $V$. If the
center of the sphere $K$ lies in the set $V$, then $\mathbf{\pi\in}K.$
\end{lemma}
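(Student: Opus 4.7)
The plan is to exploit the standard variational characterization of orthogonal projection onto a closed convex set, namely that $\mathbf{\pi}$ is the projection of $\mathbf{p}$ onto $V$ if and only if
\[
(\mathbf{p}-\mathbf{\pi})^{\prime}(\mathbf{v}-\mathbf{\pi})\leq 0\quad\text{for every }\mathbf{v}\in V.
\]
I would first recall (or briefly justify) this characterization, which is a consequence of the fact that $\mathbf{\pi}$ minimizes $|\mathbf{p}-\cdot|^{2}$ over $V$, so the directional derivative at $\mathbf{\pi}$ along every admissible direction $\mathbf{v}-\mathbf{\pi}$ (admissible by convexity of $V$) must be nonnegative.

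Next I would denote by $\mathbf{c}$ the center of the sphere $K$ and by $r$ its radius, so that $\mathbf{p}\in K$ means $|\mathbf{p}-\mathbf{c}|\leq r$. Since by hypothesis $\mathbf{c}\in V$, I can plug $\mathbf{v}=\mathbf{c}$ into the variational inequality and obtain $(\mathbf{p}-\mathbf{\pi})^{\prime}(\mathbf{c}-\mathbf{\pi})\leq 0$. Then I would expand
\[
|\mathbf{p}-\mathbf{c}|^{2}=|(\mathbf{p}-\mathbf{\pi})+(\mathbf{\pi}-\mathbf{c})|^{2}=|\mathbf{p}-\mathbf{\pi}|^{2}+2(\mathbf{p}-\mathbf{\pi})^{\prime}(\mathbf{\pi}-\mathbf{c})+|\mathbf{\pi}-\mathbf{c}|^{2},
\]
and observe that the middle term is $\geq 0$ by the inequality just derived, so $|\mathbf{p}-\mathbf{c}|^{2}\geq|\mathbf{\pi}-\mathbf{c}|^{2}$.

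Combining with $|\mathbf{p}-\mathbf{c}|\leq r$ yields $|\mathbf{\pi}-\mathbf{c}|\leq r$, i.e.\ $\mathbf{\pi}\in K$, which is the claim. There is no real obstacle here; the only point requiring some care is the variational inequality for projection onto a closed convex set, and even that reduces to a one-line convexity argument. The geometric content is simply that projection onto a closed convex set is non-expansive with respect to any point already lying in the set, so projecting can only bring $\mathbf{p}$ closer to $\mathbf{c}$, and hence cannot push it outside the ball $K$ centered at $\mathbf{c}$.
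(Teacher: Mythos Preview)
Your proof is correct and takes a genuinely different route from the paper. The paper proceeds purely geometrically: it first proves an auxiliary lemma (the case where $V$ is a closed convex cone with apex at the center $\mathbf{a}$ of $K$), reducing that case to a two-dimensional picture by passing to the plane through $\mathbf{p}$, $\mathbf{\pi}$, and $\mathbf{a}$; it then handles general $V$ by building a cone $C$ from rays $\mathbf{da}$ with $\mathbf{d}\in V\setminus K$, showing $C\cap K\subset V$, and arguing that the nearest point of $V$ to $\mathbf{p}$ must already lie in $K$. Your argument bypasses all of this by invoking the variational inequality $(\mathbf{p}-\mathbf{\pi})^{\prime}(\mathbf{v}-\mathbf{\pi})\leq 0$ for $\mathbf{v}\in V$, specializing to $\mathbf{v}=\mathbf{c}$, and reading off $|\mathbf{\pi}-\mathbf{c}|\leq|\mathbf{p}-\mathbf{c}|$ from the resulting expansion. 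This is shorter, works directly in $\mathbb{R}^{d}$ without any dimension reduction, and makes transparent that the result is just the non-expansiveness of convex projection relative to points of $V$. The paper's approach has the virtue of being visually intuitive and self-contained for a reader unfamiliar with the variational characterization, but your analytic argument is the cleaner one.
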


Proof of this lemma is purely geometrical and is based on the following
auxiliary lemma:

\begin{lemma}
\label{o_stozku}Let $K\subset%
\mathbb{R}
^{d}$ will be sphere, and $C\subset%
\mathbb{R}
^{d}$ closed, convex cone with apex at the center of the sphere. Let further
$\mathbf{p}\in K\backslash C$, and let $\mathbf{\pi}$ be an orthogonal
projection of $\mathbf{p}$ on $C$. Then $\mathbf{\pi\in}K.$
\end{lemma}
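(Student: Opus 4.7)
The plan is to translate coordinates so that the apex of the cone $C$, which is also the center of the ball $K$, sits at the origin; the statement then becomes: if $\mathbf{p}\in\mathbb{R}^d$ satisfies $|\mathbf{p}|\le r$ (where $r$ is the radius of $K$), and $\mathbf{\pi}$ is the orthogonal projection of $\mathbf{p}$ onto the closed convex cone $C$ with apex at the origin, then $|\mathbf{\pi}|\le r$. So it suffices to prove the norm inequality $|\mathbf{\pi}|\le|\mathbf{p}|$.

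The core tool is the standard variational characterization of the metric projection onto a closed convex set: $\mathbf{\pi}$ is the unique point of $C$ satisfying
\[
\langle\mathbf{p}-\mathbf{\pi},\mathbf{v}-\mathbf{\pi}\rangle\le 0\qquad\text{for every }\mathbf{v}\in C.
\]
I would first apply this with $\mathbf{v}=\mathbf{0}\in C$ to get $\langle\mathbf{p}-\mathbf{\pi},\mathbf{\pi}\rangle\ge 0$, and then exploit the cone structure: since $t\mathbf{\pi}\in C$ for every $t\ge 0$, substituting $\mathbf{v}=t\mathbf{\pi}$ gives $(t-1)\langle\mathbf{p}-\mathbf{\pi},\mathbf{\pi}\rangle\le 0$ for all $t\ge 0$. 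Letting $t$ range on both sides of $1$ forces
\[
\langle\mathbf{p}-\mathbf{\pi},\mathbf{\pi}\rangle = 0,
\]
that is, $\mathbf{p}-\mathbf{\pi}$ is orthogonal to $\mathbf{\pi}$.

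With the orthogonality in hand, Pythagoras' theorem yields
\[
|\mathbf{p}|^{2}=|(\mathbf{p}-\mathbf{\pi})+\mathbf{\pi}|^{2}=|\mathbf{p}-\mathbf{\pi}|^{2}+|\mathbf{\pi}|^{2}\ge|\mathbf{\pi}|^{2},
\]
so $|\mathbf{\pi}|\le|\mathbf{p}|\le r$, which places $\mathbf{\pi}$ in $K$ as claimed.

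The only real obstacle is justifying the variational inequality for the metric projection onto a closed convex set; I would either cite it as a standard fact from convex analysis or, since the proof is short, give it inline by differentiating $t\mapsto|\mathbf{p}-(\mathbf{\pi}+t(\mathbf{v}-\mathbf{\pi}))|^{2}$ at $t=0^{+}$ using the fact that $\mathbf{\pi}+t(\mathbf{v}-\mathbf{\pi})\in C$ for $t\in[0,1]$ by convexity. Once this is available, the cone hypothesis does all the remaining work by upgrading a one-sided inequality to the equality $\langle\mathbf{p}-\mathbf{\pi},\mathbf{\pi}\rangle=0$ that drives the proof.
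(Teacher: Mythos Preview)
Your proof is correct and, in fact, cleaner than the paper's. The paper proceeds geometrically: it observes that the three points $\mathbf{a}$ (the center), $\mathbf{p}$, and $\mathbf{\pi}$ span an affine plane, reduces to the two-dimensional case, and then appeals to the closest-point characterization of the projection together with an elementary planar argument that is only sketched. Your argument avoids the dimension reduction entirely by working directly with the variational inequality for projection onto a closed convex set and exploiting the cone structure to upgrade the obtuse-angle inequality $\langle\mathbf{p}-\mathbf{\pi},\mathbf{\pi}\rangle\ge 0$ to the exact orthogonality $\langle\mathbf{p}-\mathbf{\pi},\mathbf{\pi}\rangle=0$, after which Pythagoras finishes the job. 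The paper's route is more visual but less complete as written; yours is a standard convex-analysis argument that is fully rigorous in any dimension and isolates precisely where the cone hypothesis is used.
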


\begin{proof}
Proof of this fact is very simple, that is why we will only sketch it. Let
$\mathbf{a}$ denote the center of the sphere $K$. Let us first consider the
two-dimensional situation. Remembering, that orthogonal projection is also the
point of $K$ closest to the projected point $\mathbf{p}$, we show the
truthfulness of the assertion in a two-dimensional situation with ease. Next,
let us consider the general situation and we will argue as follows. First, let
us notice that the three points $\pi$, $\mathbf{p}$ and $\mathbf{a}$ do not
lie on one straight line. Hence, one can draw a three-dimensional plane by
them and reduce the situation to a two-dimensional one.

Let us return to the proof of lemma \ref{o_zb_wyp}. Let $\mathbf{a}$ denote
center of the sphere. Through every point $\mathbf{d}$ of the set $V\backslash
K$ let us draw a ray $\mathbf{da}$. Collection of these rays forms a cone $C$
with apex at $\mathbf{a}$. We will show that $C\cap K\subset V$. Let
$\mathbf{c}$ be any point of the set $C\cap K$ and let $R_{\mathbf{c}}$ will
be the ray passing through $\mathbf{c}$. It follows from the construction of
the cone that there exists point $\mathbf{d\in}V\backslash K$ lying on
$R_{\mathbf{c}}$. From convexity of $V$ it follows that the segment
$\mathbf{da\subset}V$. However, from the definition of the set $V\backslash K$
it follows that $\mathbf{c\in da}$. Hence, indeed $C\cap K\subset V$.
Moreover, we have $V=(V\cap C)\cup(V\backslash C)$. Taking into account the
construction of the cone it is clear, that the set $V\backslash C$ lies inside
the sphere $K$. Let us consider now any point $\mathbf{c\in}V\cap C\subset C$.
It is clear that the distance $\mathbf{cp}$ is smaller than the distance of a
point $\mathbf{p}$ from its projection on $C$ (denoted by \textbf{\thinspace
}$\mathbf{p}^{\prime})$. From Lemma \ref{o_stozku} It follows, that
$\mathbf{p}^{\prime}\in K\cap C\subset V$. Hence, for every point
$\mathbf{d\in}V\cap C$ we can indicate a point $\mathbf{p}^{\prime}\in K\cap
V$, that lies closer to $\mathbf{p}$ than $\mathbf{d}$. Hence, and projection
of $\mathbf{p}$ on $V$ must lie inside $K.$
\end{proof}

The assumption that the unknown $\mathbf{\theta}$ lies inside some known set
and, that because of this one has to look for $\mathbf{\theta}$ in this
particular set is equivalent to the assumption, that there exist restrictions
imposed on the position of the point $\mathbf{\theta}$. This leads us to
stochastic approximation procedures with restrictions. As it turns out this
class of procedures was and is well known and examined. In particular, the
so-called Kiefer -Wolfowitz version of these procedures turned out to be
important and led to the creation of a new chapter of numerical methods called
stochastic optimization. We will return to these problems in sections
\ref{optymalizacja} and \ref{dalej}.

\section{More complex procedures\label{rozszerzenia}}

Let us notice that so far disturbances of observations were coming as if from
another source than the values of estimators, i.e. points $\left\{
\mathbf{x}_{n}\right\}  $. One cannot thus apply existing methods to examine
the convergence discussed above procedure estimating given quantile of unknown
distribution. Let us recall this example (example \ref{kwantyl}) : $\left\{
\xi_{n}\right\}  _{n\geq1}$ was a\ sequence of independent random variables
drawn from the Normal distribution(generally having cdf $F$). Observations at
point $x$ were given by the formula: $Z_{i}(x)=I(\xi_{i}<x)-.85$ (generally
$Z_{i}(x)=I(\xi_{i}<x)-\alpha)$. Let us notice that we have here
$EZ_{i}(x)=F(x)-.85$ (generally $EZ_{i}(x)=F(x)-\alpha)$, or even we have here
somewhat stronger property that $E\left(  Z_{i}(x)|\xi_{i-1},\ldots,\xi
_{1}\right)  =F(x)-.85$. Hence, let us write%

\[
Z_{i}(x)=F(x)-\alpha+\zeta_{i}(x),
\]
where the sequence of the random variables $\left\{  \zeta_{i}(x)\right\}
_{n\geq1}$ has the following property $E\zeta_{i}(x)=0$ or more generally :
\[
E\left(  \zeta_{i}(x)|\xi_{i-1},\ldots,\xi_{1}\right)  =0\,\,a.s.
\]
Let us recall the used above procedure :
\[
x_{i}=x_{i-1}-\frac{1}{i}Z_{i}(x_{i-1});\;x_{0}=xo;\;i\geq1.
\]
We have here
\[
E\left(  Z_{i}(x_{i-1})-F(x_{i-1})+\alpha|x_{i-1},\ldots,x_{1}\right)  =0
\]
and%
\[
E\left(  Z_{i}(x_{i-1})-F(x_{i-1})+\alpha|\xi_{i-1},\ldots,\xi_{1}\right)
=0,
\]
since of course $\sigma\left(  x_{i},\ldots,x_{1}\right)  \allowbreak
\subset\allowbreak\sigma\left(  \xi_{i}\ldots,\xi_{1}\right)  $, $i\geq1$.
This is a property defining the \emph{\ martingale difference }with respect to
the filtration $\left\{  \sigma\left(  \xi_{1},\ldots,\xi_{n}\right)
\right\}  _{n\geq1}$ (compare definition \ref{def_mart} and situation
considered in the previous section in Lemma \ref{ogr_l2})\emph{. }

In the sequel we will assume, that the normal sequence $\left\{  \mu
_{n}\right\}  _{n\geq0}$ satisfies additionally condition:
\begin{equation}
\sum_{n\geq1}\mu_{n}^{2}<\infty. \label{suma_kw_mi}%
\end{equation}

\begin{remark}
Let us notice that instead of one function $\mathbf{f}$, whose zero has been
sought, one can use a sequence of functions $\left\{  \mathbf{f}_{n}\right\}
_{n\geq1}$ such that, we have e.g.
\begin{equation}
\mathbf{\exists\theta\in}%
\mathbb{R}
^{m}{\large ,}\exists\left\{  {\large \delta}_{n}\right\}  _{n\geq
1}{\large \,\forall}\mathbf{x\in}%
\mathbb{R}
^{m}:\left(  \mathbf{x-\theta}\right)  ^{^{\prime}}\mathbf{f}_{n}%
\mathbf{(x)>\delta}_{n}\left\vert \mathbf{x-\theta}\right\vert ^{2}%
;\underset{n\,\rightarrow\infty}{\lim\inf}\,\delta_{n}>0 \label{warzdeltan}%
\end{equation}
and
\begin{align}
{\large \,\exists}\left\{  \kappa_{1n}\right\}  _{n\geq1},\left\{  \kappa
_{2n}\right\}  _{n\geq1},{\large \forall}\mathbf{x}  &  \in%
\mathbb{R}
^{m}:\left\vert \mathbf{f}_{n}\mathbf{(x)}\right\vert \leq\kappa
_{1n}\left\vert \mathbf{x}-\mathbf{\theta}\right\vert +\kappa_{2n}%
,\label{war2n}\\
\underset{n\,\rightarrow\infty}{\lim\inf}\,\left(  \kappa_{1n}+\kappa
_{2n}\right)   &  <\infty.
\end{align}

\end{remark}

Let us consider the procedure:
\begin{equation}
\mathbf{x}_{n+1}=\mathbf{x}_{n}-\mu_{n}\mathbf{F}_{n}(\mathbf{x}_{n},\xi
_{n}),\;n\geq1. \label{nowaproc}%
\end{equation}

Let us denote
\[
\mathbf{G}_{n}(\mathbf{x})=E\left(  \mathbf{F}_{n}(\mathbf{x},\xi_{n}%
)|\xi_{n-1},\ldots,\xi_{0}\right)  .
\]

We have theorem:

\begin{theorem}
\label{rozne_F}Let us assume that functions $\left\{  \mathbf{G}%
_{n}(\mathbf{x})\right\}  _{n\geq1}$ satisfy conditions (\ref{warzdeltan}) and
(\ref{war2n}) at some point $\theta$. Let us suppose also, that noises
\[
\mathbf{\zeta}_{n}(\mathbf{x)=F}_{n}(\mathbf{x,\xi}_{n})-\mathbf{G}%
_{n}(\mathbf{x),}n=1,2,\ldots\;,
\]
satisfy the condition:
\begin{equation}
E\left(  \left\vert \mathbf{\zeta}_{n}(\mathbf{x)}\right\vert ^{2}%
\mathbf{|\xi}_{n-1},\ldots,\xi_{1}\right)  \leq L_{n}(1+\left\vert
\mathbf{x}\right\vert ^{2}),a.s.\,\sup L_{n}<0\;a.s. \label{naszumy1}%
\end{equation}
Then, under the assumption, that the normal sequence $\left\{  \mu
_{i}\right\}  _{i\geq0}$ satisfies assumptions (\ref{suma_kw_mi}), the
procedure (\ref{nowaproc}) converges almost surely do $\theta.$
\end{theorem}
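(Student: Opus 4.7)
The plan is to combine the boundedness technique of Lemma \ref{ogr_l2} with the estimation scheme of Theorem \ref{najprostsze}, treating the state-dependent noises $\mathbf{\zeta}_n(\mathbf{x}_n)$ by means of the martingale convergence result Theorem \ref{ozbwL2}. Rewrite the procedure as
\[
\mathbf{x}_{n+1}=\mathbf{x}_n-\mu_n\bigl(\mathbf{G}_n(\mathbf{x}_n)+\mathbf{\zeta}_n(\mathbf{x}_n)\bigr),
\]
and set $\mathcal{F}_n=\sigma(\xi_1,\ldots,\xi_n)$, so that $\mathbf{x}_n$ is $\mathcal{F}_{n-1}$-measurable and, by the definition of $\mathbf{G}_n$, the random variables $\{\mathbf{\zeta}_n(\mathbf{x}_n)\}_{n\geq 1}$ form a vector martingale difference sequence relative to $\{\mathcal{F}_n\}$.

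First I would establish boundedness. Since \eqref{warzdeltan} gives in particular $(\mathbf{x}-\mathbf{\theta})'\mathbf{G}_n(\mathbf{x})\geq 0$, squaring the iteration, taking the $\mathcal{F}_{n-1}$-conditional expectation, and using \eqref{war2n} together with \eqref{naszumy1} yields, after absorbing $|\mathbf{x}_n|^2$ into $|\mathbf{x}_n-\mathbf{\theta}|^2$ as in the proof of Lemma \ref{ogr_l2},
\[
E\bigl(|\mathbf{x}_{n+1}-\mathbf{\theta}|^2\mid\mathcal{F}_{n-1}\bigr)\leq (1+q\mu_n^2)|\mathbf{x}_n-\mathbf{\theta}|^2+C\mu_n^2
\]
for random but a.s.\ bounded coefficients $q,C$ (using $\sup L_n<\infty$ a.s.\ and $\liminf(\kappa_{1n}+\kappa_{2n})<\infty$). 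The supermartingale trick from Lemma \ref{ogr_l2} (set $P_n=\prod_{i\geq n}(1+q\mu_i^2)$, which is finite by $\sum\mu_n^2<\infty$) then furnishes boundedness in $L_2$ and, via Doob's convergence theorem applied to $P_n|\mathbf{x}_n-\mathbf{\theta}|^2+P_0C\sum_{i\geq n}\mu_i^2$, almost-sure boundedness of $\{\mathbf{x}_n\}$.

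Next I would show that $\sum_{n\geq 0}\mu_n\mathbf{\zeta}_{n+1}(\mathbf{x}_n)$ converges almost surely. Having just proved $\sup_n|\mathbf{x}_n|<\infty$ a.s., and using \eqref{naszumy1} with $\sup L_n<\infty$, we get
\[
\sum_{n\geq 0}\mu_n^2\,E\bigl(|\mathbf{\zeta}_{n+1}(\mathbf{x}_n)|^2\mid\mathcal{F}_n\bigr)\leq \Bigl(\sup_n L_{n+1}\Bigr)\bigl(1+\sup_n|\mathbf{x}_n|^2\bigr)\sum_{n\geq 0}\mu_n^2<\infty\quad\text{a.s.,}
\]
so that coordinate-wise Theorem \ref{ozbwL2} applied to the martingale difference sequence $\{\mu_n\mathbf{\zeta}_{n+1}(\mathbf{x}_n)\}$ yields almost-sure convergence of the noise series. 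In particular $\mathbf{S}_n:=\sum_{i\geq n}\mu_i\mathbf{\zeta}_{i+1}(\mathbf{x}_i)\to 0$ a.s.

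Finally I would repeat the estimation of Theorem \ref{najprostsze}, adapted to the $n$-dependent data. Setting $d_n=|\mathbf{x}_n-\mathbf{\theta}-\mathbf{S}_n|$ and subtracting $\mathbf{\theta}+\mathbf{S}_{n+1}$ from both sides of the iteration, squaring, and using \eqref{warzdeltan}--\eqref{war2n} pointwise in $n$, one obtains
\[
d_{n+1}^2\leq (1-2\mu_n\delta_n+3\mu_n^2\kappa_{1n}^2)d_n^2+\mu_nd_n g_n+\mu_n h_n,
\]
where $g_n,h_n\to 0$ a.s.\ because $|\mathbf{S}_n|\to 0$ a.s.\ and $\mu_n\to 0$; proceeding as before this reduces to $d_{n+1}^2\leq\lambda_n\max(d_n^2,\epsilon_n^2)$ with $\epsilon_n\to 0$ a.s., and applying Lemma \ref{zmaxem} gives $d_n\to 0$ a.s., hence $\mathbf{x}_n\to\mathbf{\theta}$ a.s. The main obstacle I anticipate is precisely this last step: the coefficients $\delta_n$ and $\kappa_{1n},\kappa_{2n}$ now depend on $n$, so one must verify that $\prod_{i=k}^{n}\lambda_i\to 0$ uniformly in $k$ and that $\sup_{n\geq k}\prod_{i=k}^{n}\lambda_i<\infty$, which is where the assumptions $\liminf\delta_n>0$ and $\liminf(\kappa_{1n}+\kappa_{2n})<\infty$ (read as $\limsup$) are essential in order to preserve the exponential-decay estimate $\prod_i\lambda_i\leq\exp(-\sum_i\mu_i(\delta_i-2\mu_i\kappa_{1i}^2))$ from the original proof.
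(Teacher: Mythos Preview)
Your proposal is correct and follows essentially the same three-step scheme as the paper: first invoke Lemma~\ref{ogr_l2} to obtain boundedness of $\{\mathbf{x}_n\}$ in $L_2$ and almost surely, then use this boundedness together with \eqref{naszumy1} and \eqref{suma_kw_mi} to prove that the noise series $\sum_{i\geq 0}\mu_i\mathbf{\zeta}_{i+1}(\mathbf{x}_i)$ converges almost surely, and finally subtract the tail $\mathbf{S}_n$ and repeat the estimation of Theorem~\ref{najprostsze}. The only cosmetic difference is that the paper argues martingale convergence via $L_2$-boundedness (using the $L_2$ bound on $\{\mathbf{x}_n\}$), whereas you invoke Theorem~\ref{ozbwL2} with the almost-sure bound on the conditional variances; both are valid, and your closing remark about needing $\liminf\delta_n>0$ and bounded $\kappa_{in}$ to control $\prod\lambda_i$ is exactly the extra care the $n$-dependence requires.
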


\begin{proof}
Let us notice firstly, that assumptions of Lemma \ref{ogr_l2} would be
satisfied, if only denotations were changed. In such case our procedure is
bounded with probability $1$ and also in $L_{2}$. Following this fact and the
conditions (\ref{war2n}) and (\ref{naszumy1}) we get boundedness with
probability of the sequences $\left\{  E\left\vert \mathbf{\zeta}%
_{n}(\mathbf{x}_{n})\right\vert ^{2}\right\}  _{n\geq1}$ and $\left\{
\left\vert \mathbf{G}_{n}(\mathbf{x}_{n})\right\vert ^{2}\right\}  _{n\geq1}$
and consequently convergence of the following series.
\[
\sum_{n\geq1}\mu_{n}^{2}\left\vert \mathbf{\zeta}_{n}(\mathbf{x}%
_{n})\right\vert ^{2}\text{ and }\sum_{n\geq1}\mu_{n}^{2}\left\vert
\mathbf{G}_{n}(\mathbf{x}_{n})\right\vert ^{2}.
\]
Having proven this fact, we see that the sequence the $\left\{  \sum_{i=0}%
^{n}\mu_{i}\mathbf{\zeta}_{i+1}(\mathbf{x}_{i})\right\}  _{n\geq1}$ is a
martingale convergent in $L_{2}$, hence also almost surely. Thus, the sequence
of random vectors $\left\{  \mathbf{S}_{n}=\sum_{i=n}\mu_{i}\mathbf{\zeta
}_{i+1}(\mathbf{x}_{i})\right\}  _{n\geq1}$ converges almost surely to zero.
Subtracting $\mathbf{S}_{n+1}$ from both sides of the procedure
(\ref{nowaproc}) we get recurrent relationship combining only functions
$\mathbf{G}_{n}$. Further, we proceed as in the proof of theorem
\ref{najprostsze}.
\end{proof}

Procedures of this type, i.e. with functions $\mathbf{f}_{n}$ depending on
iteration number, and also possibly with disturbances depending on the
iteration number and so far obtained estimator, are used in the so-called
identification of discrete stochastic processes. The problem of identification
will be discussed in chapter \ref{identifikacja}. In this section, we will
present, however a theorem on the convergence of the procedure that is a
generalization of Theorem \ref{rozne_F} and useful just for identification
purposes, and also in problems of the so-called stochastic optimization
discussed briefly below.

In order to do it swiftly, we will prove a few useful numerical lemmas.

\begin{lemma}
\label{prosty-rek}Let us assume that number sequence $\left\{  d_{n}\right\}
_{n\geq1}$satisfies recurrent relationship:
\begin{equation}
d_{n+1}^{2}\leq\left[  1-2\delta_{n}\mu_{n}+\mu_{n}^{2}\gamma_{n}\right]
^{+}d_{n}^{2}+\mu_{n}g_{n}d_{n}+\mu_{n}h_{n}, \label{rek-dn1}%
\end{equation}
where $\left\{  g_{n}\right\}  _{n\geq1}$, $\left\{  h_{n}\right\}  _{n\geq1}%
$, $\left\{  \delta_{n}\right\}  _{n\geq1}$ and $\left\{  \gamma_{n}\right\}
_{n\geq1}$ are some number sequences, satisfying the following assumptions:
\[
\underset{n\,\rightarrow\infty}{\lim\inf}\,\delta_{n}%
>0;\underset{n\,\rightarrow\infty}{\lim}\mu_{n}\gamma_{n}=0.
\]
Then the following recursive relationship is satisfied:
\[
d_{n+1}\leq\lambda_{n}\max(d_{n},\epsilon_{n}),
\]
where $\lambda_{n}=\sqrt{1-\mu_{n}\delta_{n}+\mu_{n}^{2}\gamma_{n}}$, and the
sequence $\left\{  \epsilon_{n}\right\}  _{n\geq1}$ consists of positive roots
of the equations:
\[
\epsilon_{n}^{2}\delta_{n}-\epsilon_{n}g_{n}-h_{n}=0;n\geq1;
\]
In particular, if $g_{n}\underset{n\rightarrow\infty}{\longrightarrow}0$,
$h_{n}\underset{n\rightarrow\infty}{\longrightarrow}0$, then $d_{n}%
\underset{n\rightarrow\infty}{\longrightarrow}0.$
\end{lemma}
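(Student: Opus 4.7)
The plan is to establish the recursive inequality $d_{n+1}\le\lambda_{n}\max(d_{n},\epsilon_{n})$ by a two-case analysis keyed to the sign of $d_{n}-\epsilon_{n}$, and then deduce the convergence to zero by a straightforward bootstrap using that $\lambda_{n}<1$ eventually and $\prod\lambda_{i}\to0$.

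First I would reduce to indices $n$ so large that $1-2\mu_{n}\delta_{n}+\mu_{n}^{2}\gamma_{n}\ge 0$ and $\lambda_{n}<1$; this is legitimate because $\mu_{n}\to 0$, $\mu_{n}\gamma_{n}\to 0$, and $\liminf\delta_{n}>0$, so $1-\mu_{n}\delta_{n}+\mu_{n}^{2}\gamma_{n}\in(0,1)$ from some index on, and the $[\,\cdot\,]^{+}$ in (\ref{rek-dn1}) can be dropped. In the case $d_{n}\ge\epsilon_{n}$, since the parabola $x\mapsto\delta_{n}x^{2}-g_{n}x-h_{n}$ has $\epsilon_{n}$ as its larger root, we get $\delta_{n}d_{n}^{2}-g_{n}d_{n}-h_{n}\ge 0$, and substituting in (\ref{rek-dn1}) yields $d_{n+1}^{2}\le(1-\mu_{n}\delta_{n}+\mu_{n}^{2}\gamma_{n})d_{n}^{2}=\lambda_{n}^{2}d_{n}^{2}$. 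In the case $d_{n}<\epsilon_{n}$, I would replace $d_{n}^{2}$ by $\epsilon_{n}^{2}$ and $d_{n}$ by $\epsilon_{n}$ in the right-hand side of (\ref{rek-dn1}) and then use the defining identity $\delta_{n}\epsilon_{n}^{2}=g_{n}\epsilon_{n}+h_{n}$ to rewrite $\mu_{n}(g_{n}\epsilon_{n}+h_{n})$ as $\mu_{n}\delta_{n}\epsilon_{n}^{2}$; the terms then collapse into $(1-\mu_{n}\delta_{n}+\mu_{n}^{2}\gamma_{n})\epsilon_{n}^{2}=\lambda_{n}^{2}\epsilon_{n}^{2}$. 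Combining the two cases gives the desired recursion after taking square roots.

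For the concluding statement, I would first observe that $g_{n},h_{n}\to 0$ together with $\liminf\delta_{n}>0$ force $\epsilon_{n}=\frac{g_{n}+\sqrt{g_{n}^{2}+4\delta_{n}h_{n}}}{2\delta_{n}}\to 0$. Then I would argue that $\prod_{i=k}^{n}\lambda_{i}\to 0$ for each $k$, using $\lambda_{i}^{2}\le 1-\tfrac{1}{2}\mu_{i}\liminf\delta_{j}$ eventually (since $\mu_{i}\gamma_{i}\to 0$) and $\sum\mu_{i}=\infty$. Given $\varepsilon>0$, pick $N$ with $\epsilon_{n}\le\varepsilon$ and $\lambda_{n}\le 1$ for $n\ge N$. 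A bootstrap argument on $d_{n+1}\le\lambda_{n}\max(d_{n},\epsilon_{n})$ then shows: once $d_{n}\le\varepsilon$, it stays below $\varepsilon$; while if $d_{n}>\varepsilon\ge\epsilon_{n}$, the recursion forces geometric decay $d_{n+k}\le d_{n}\prod_{i=n}^{n+k-1}\lambda_{i}\to 0$, so eventually $d_{n}$ drops below $\varepsilon$ and remains there. Hence $\limsup d_{n}\le\varepsilon$ for every $\varepsilon>0$, i.e.\ $d_{n}\to 0$. Alternatively, one could invoke Lemma \ref{zmaxem} directly, though it only delivers $\liminf d_{k}\cdot\sup_{n\ge k}\prod\lambda_{i}\le\limsup\epsilon_{k}\cdot\sup_{n\ge k}\prod\lambda_{i}=0$, which is weaker than the full convergence.

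The only delicate point is the case $d_{n}<\epsilon_{n}$ in the recursive step: one must use \emph{both} inequalities $d_{n}^{2}\le\epsilon_{n}^{2}$ and $d_{n}\le\epsilon_{n}$ in the right places (which tacitly relies on $g_{n}\ge 0$, as is the case in the stochastic approximation applications) and then the algebra only closes because of the defining equation for $\epsilon_{n}$—this is the identity that ties $2\delta_{n}$ in the hypothesis to the $\delta_{n}$ in $\lambda_{n}$, and is in essence the reason the lemma is phrased as it is.
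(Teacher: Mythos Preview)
Your proof is correct and follows exactly the paper's approach: the two-case analysis (comparing $d_n$ with the positive root $\epsilon_n$ of $\delta_n x^2-g_n x-h_n=0$, then using the defining identity to collapse the right-hand side into $\lambda_n^2\max(d_n^2,\epsilon_n^2)$) is precisely the argument the paper carries out in the proofs of Theorem~\ref{najprostsze} and Lemma~\ref{iteracje}, to which the proof of Lemma~\ref{prosty-rek} simply refers back. Your remark that the substitution $d_n\mapsto\epsilon_n$ in the linear term tacitly needs $g_n\ge 0$ is also implicit in the paper's applications. One minor point: your caution about Lemma~\ref{zmaxem} is based on its stated conclusion with $\liminf$, but this is a typo---the proof of that lemma, and its use in Lemma~\ref{iteracje} and Theorem~\ref{najprostsze}, show the intended conclusion is $\limsup_k d_k q_k\le\limsup_k\epsilon_k q_k$, which does give full convergence; your self-contained bootstrap is a clean alternative that sidesteps this.
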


\begin{proof}
The proof was already a few times presented (without this particular
statement) when we presented proofs of theorems on convergence stochastic
approximation procedures.
\end{proof}

\begin{lemma}
\label{nieprostyrek}Let us assume that number sequence $\left\{
d_{n}\right\}  _{n\geq1}$ satisfies recurrent relationship:
\begin{equation}
d_{n+1}^{2}\leq\left[  1-2\delta_{n}\mu_{n}+\mu_{n}^{2}\gamma_{n}\right]
^{+}d_{n}^{2}+\mu_{n}g_{n}d_{n}+\mu_{n}h_{n}, \label{rek-dn}%
\end{equation}
where $\left\{  g_{n}\right\}  _{n\geq1}$, $\left\{  h_{n}\right\}  _{n\geq1}%
$, $\left\{  \delta_{n}\right\}  _{n\geq1}$ and $\left\{  \gamma_{n}\right\}
_{n\geq1}$ are some number sequences, satisfying the following assumptions:
\begin{align}
\delta_{n}  &  =\delta_{n}^{^{\prime}}+\delta_{n}^{^{\prime\prime}%
};\underset{n\,\rightarrow\infty}{\lim\inf}\,\delta_{n}^{^{\prime}%
}>0;\underset{n\,\rightarrow\infty}{\lim}\mu_{n}\delta_{n}^{^{\prime}%
}=0,\label{rozkl-delt}\\
\infty &  >\underset{m>n}{\sup}\left\vert \sum_{i=n}^{m}\mu_{i}\delta
_{i}^{^{\prime\prime}}\right\vert ;\underset{n\,\rightarrow\infty}{\lim}%
\mu_{n}\delta_{n}^{^{\prime\prime}}=0;\label{sum-deltabis}\\
0  &  =\underset{n\,\rightarrow\infty}{\lim}\mu_{n}\gamma_{n}.
\label{limdelta}%
\end{align}
Then, the following recursive relationship is satisfied by $\left\{
d_{n}\right\}  _{n\geq1}$:
\[
d_{n+1}\leq\lambda_{n}\max(d_{n},\epsilon_{n}),
\]
where $\lambda_{n}=\sqrt{\frac{M_{n}}{M_{n+1}}\left(  1-\mu_{n}\delta
_{n}^{^{\prime}}\exp(2\mu_{n}\delta_{n}^{^{\prime\prime}})+\mu_{n}^{2}%
\gamma_{n}\exp(2\mu_{n}\delta_{n}^{^{\prime\prime}})\right)  }$, $\epsilon
_{n}=\tau_{n}/M_{n}$, $\tau_{n}$ is a positive root of the equation:
\[
\tau_{n}^{2}\delta_{n}^{^{\prime}}\exp(2\mu_{n}\delta_{n}^{^{\prime\prime}%
})-\tau_{n}g_{n}\exp(\mu_{n}\delta_{n}^{^{\prime\prime}})\sqrt{M_{n+1}}%
-h_{n}M_{n+1}=0,
\]
and
\[
M_{n}=\underset{m\rightarrow\infty}{\lim\inf}\,\exp\left(  -2\sum_{i=n}^{m}%
\mu_{i}\delta_{i}^{^{\prime\prime}}\right)  .
\]

\end{lemma}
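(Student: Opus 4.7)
The plan is to absorb the oscillating part $\delta_n''$ into a multiplicative rescaling rather than treat it as a damping term, and then reduce to Lemma \ref{prosty-rek}. The boundedness assumption (\ref{sum-deltabis}) on the partial sums $\sum_{i=n}^m \mu_i \delta_i''$ guarantees that $M_n$ is finite and bounded away from $0$ and $\infty$. The first step is to note the key telescoping identity $M_{n+1}/M_n = \exp(2\mu_n \delta_n'')$: since adding the single term $\mu_n \delta_n''$ shifts every partial sum in $m$ by the same constant, one has $\liminf_{m\to\infty}\sum_{i=n}^m \mu_i \delta_i'' = \mu_n \delta_n'' + \liminf_{m\to\infty}\sum_{i=n+1}^m \mu_i \delta_i''$, whence the identity.

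Next, I would rescale by setting $\tau_n := \sqrt{M_n}\, d_n$. Multiplying the recurrence (\ref{rek-dn}) by $M_{n+1}$, using $M_{n+1}/M_n = e^{2\mu_n\delta_n''}$ and substituting $d_n = \tau_n/\sqrt{M_n}$, one obtains
\[
\tau_{n+1}^2 \leq e^{2\mu_n\delta_n''}\bigl[1 - 2\mu_n(\delta_n'+\delta_n'') + \mu_n^2 \gamma_n\bigr]^{+}\tau_n^2 + \mu_n g_n e^{\mu_n\delta_n''}\sqrt{M_{n+1}}\,\tau_n + \mu_n h_n M_{n+1}.
\]
The crucial algebraic step is the elementary inequality $e^{x}(1-x)\leq 1$, valid for every real $x$ (equivalently, $\log(1+y)\leq y$); applied with $x = 2\mu_n \delta_n''$ it gives $e^{2\mu_n\delta_n''}(1 - 2\mu_n\delta_n'') \leq 1$, which collapses the mixed term and yields
\[
\tau_{n+1}^2 \leq \bigl[1 - 2\mu_n \delta_n^{\ast} + \mu_n^2 \gamma_n^{\ast}\bigr]^{+}\tau_n^2 + \mu_n \tilde g_n\, \tau_n + \mu_n \tilde h_n,
\]
where $\delta_n^{\ast} := \delta_n' e^{2\mu_n\delta_n''}$, $\gamma_n^{\ast} := \gamma_n e^{2\mu_n\delta_n''}$, $\tilde g_n := g_n e^{\mu_n\delta_n''}\sqrt{M_{n+1}}$ and $\tilde h_n := h_n M_{n+1}$.

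This transformed recurrence fits the template of Lemma \ref{prosty-rek} exactly: because $\mu_n \delta_n'' \to 0$, we have $e^{2\mu_n\delta_n''} \to 1$, so $\liminf \delta_n^{\ast} = \liminf \delta_n' > 0$ and $\mu_n \gamma_n^{\ast} \to 0$. Applying Lemma \ref{prosty-rek} to $\tau_n$ produces $\tau_{n+1} \leq \sqrt{1 - \mu_n \delta_n^{\ast} + \mu_n^2 \gamma_n^{\ast}}\,\max(\tau_n, \tau_n^{\ast})$, with $\tau_n^{\ast}$ the positive root of the quadratic displayed in the lemma statement. Dividing through by $\sqrt{M_{n+1}}$ and writing $d_n = \tau_n/\sqrt{M_n}$, one recovers $d_{n+1} \leq \lambda_n \max(d_n, \epsilon_n)$ with $\lambda_n$ and $\epsilon_n$ as stated.

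The main obstacle is purely algebraic: one must route the factor $e^{2\mu_n\delta_n''}$ so that it simultaneously decorates $\delta_n'$ and $\gamma_n$ in the damping bracket while being \emph{absorbed} from the $\delta_n''$ term via $e^{x}(1-x)\leq 1$. The only subtlety beyond bookkeeping is the telescoping identity for $M_n$, which relies on the shift-invariance of $\liminf$ under the finite perturbation $\mu_n \delta_n''$; this is what requires only boundedness of the partial sums in (\ref{sum-deltabis}), without any convergence or sign restriction on $\delta_n''$ itself.
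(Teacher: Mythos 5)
Your proposal is correct and takes essentially the same route as the paper's proof: the elementary inequality $1-x\le e^{-x}$ (equivalently $e^{x}(1-x)\le 1$) applied to $x=2\mu_{n}\delta_{n}^{\prime\prime}$, rescaling by the exponential weights via $M_{n+1}=M_{n}\exp(2\mu_{n}\delta_{n}^{\prime\prime})$, and reduction of the transformed recurrence to Lemma \ref{prosty-rek}. The only differences are cosmetic — you work directly with the $\liminf$ quantities $M_{n}$ through the telescoping identity, whereas the paper multiplies by the finite products $M_{n+1}^{m}$ and lets $m\to\infty$ — and your derivation (like the paper's own, with $d_{n}^{\ast}=d_{n}\sqrt{M_{n}}$) in fact yields $\epsilon_{n}=\tau_{n}/\sqrt{M_{n}}$, indicating that the $\tau_{n}/M_{n}$ in the statement is a misprint rather than a defect of your argument.
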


\begin{proof}
Let us denote: $M_{n}^{m}=\exp(-2\sum_{i=n}^{m}\mu_{i}\delta_{i}%
^{^{\prime\prime}})$. Let $N_{1}$ be the first natural number such that
$\left\vert \mu_{n}\delta_{n}^{^{\prime\prime}}\right\vert <1/2$. From
assumptions it follows that $N_{1}$ is finite.\ Further, let $N$ will be such
index, that for $n\geq N:$ $1-2\delta_{n}\mu_{n}+\mu_{n}^{2}\gamma_{n}\geq0$.
Again from assumptions it follows that $N$ is a finite number. Taking
advantage of Proposition \ref{o_exp}, let us notice that for $n\geq
\max(N,N_{1})$ we have:
\begin{equation}
d_{n+1}^{2}\leq\exp(-2\mu_{n}\delta_{n}^{^{\prime\prime}})d_{n}^{2}+(-2\mu
_{n}\delta_{n}^{^{\prime}}+\mu_{n}^{2}\gamma_{n})d_{n}^{2}+\mu_{n}g_{n}%
d_{n}+\mu_{n}h_{n}. \label{nowa-nier}%
\end{equation}
Let us set: $d_{n}^{\ast}=d_{n}\sqrt{M_{n}}$. Let us multiply both sides of
(\ref{nowa-nier}) by $M_{n+1}^{m}$, $m>n+1$. We get then
\begin{gather}
M_{n+1}^{m}(d_{n+1})^{2}\leq M_{n+1}^{m}d_{n}^{2}\exp(-2\mu_{n}\delta
_{n}^{^{\prime\prime}})+(-2\mu_{n}\delta_{n}^{^{\prime}}+\mu_{n}^{2}\gamma
_{n})d_{n}^{2}M_{n+1}^{m}+\nonumber\\
+\mu_{n}g_{n}d_{n}M_{n+1}^{m}+\mu_{n}h_{n}M_{n+1}^{m}=\nonumber\\
=M_{n}^{m}d_{n}^{2}(1-2\mu_{n}\delta_{n}^{^{\prime}}\exp(2\mu_{n}\delta
_{n}^{^{\prime\prime}})+\mu_{n}^{2}\gamma_{n}\exp(2\mu_{n}\delta_{n}%
^{^{\prime\prime}}))+\label{nier_pom1}\\
+\mu_{n}g_{n}\exp(\mu_{n}\delta_{n}^{^{\prime\prime}})\sqrt{M_{n+1}^{m}}%
\sqrt{M_{n}^{m}d_{n}^{2}}+\mu_{n}h_{n}M_{n+1}^{m}.\nonumber
\end{gather}
Let us denote
\begin{align*}
g_{n}^{\ast}  &  =g_{n}\underset{m>n}{\sup}\exp(\mu_{n}\delta_{n}%
^{^{\prime\prime}})\sqrt{M_{n+1}^{m}},\;h_{n}^{\ast}=h_{n}\underset{m>n}{\sup
}M_{n+1}^{m},\\
\delta_{n}^{\ast}  &  =\delta_{n}^{^{\prime}}\exp(2\mu_{n}\delta_{n}%
^{^{\prime\prime}}),\;\gamma_{n}^{\ast}=\gamma_{n}\exp(2\mu_{n}\delta
_{n}^{^{\prime\prime}})
\end{align*}
and let us pass with $m$ to infinity in (\ref{nier_pom1}). We get then:
\[
\left(  d_{n+1}^{\ast}\right)  ^{2}\leq\left(  1-2\mu_{n}\delta_{n}^{\ast}%
+\mu_{n}^{2}\gamma_{n}^{\ast}\right)  (d_{n}^{\ast})^{2}+\mu_{n}g_{n}^{\ast
}d_{n}^{\ast}+\mu_{n}h_{n}^{\ast}.
\]
Now we apply Lemma \ref{prosty-rek} and see, that $d_{n+1}^{\ast}\leq
\lambda_{n}^{\ast}\max(d_{n}^{\ast},\epsilon_{n}^{\ast})$, where
\newline$\lambda_{n}^{\ast}=\sqrt{1-\mu_{n}\delta_{n}^{\ast}+\mu_{n}^{2}%
\gamma_{n}^{\ast}}$ and $\epsilon_{n}^{\ast}$ is a positive root of the
equation:
\[
x^{2}\delta_{n}^{\ast}-xg_{n}^{\ast}-h_{n}^{\ast}=0.
\]
Returning to 'without star' variables\ we get assertion lemma.
\end{proof}

\begin{theorem}
\label{split_delta_proc}Let us assume that functions $\left\{  \mathbf{G}%
_{n}(\mathbf{x})\right\}  _{n\geq1}$ satisfy conditions (\ref{rozkl-delt}) and
(\ref{war2n}) at some point $\mathbf{\theta}$. Let us suppose also, that
noises $\mathbf{\zeta}_{n}(\mathbf{x)=F}_{n}(\mathbf{x,\xi}_{n})-\mathbf{G}%
_{n}(\mathbf{x)}$ satisfy condition (\ref{naszumy1}). Then, under the
assumption, that normal sequence $\left\{  \mu_{i}\right\}  _{i\geq0}$
satisfies assumptions (\ref{suma_kw_mi}), the procedure (\ref{nowaproc})
converges almost surely do $\theta.$
\end{theorem}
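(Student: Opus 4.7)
The plan is to mimic the proof of Theorem \ref{rozne_F}, but to replace the use of Lemma \ref{prosty-rek} (implicit there) by the refined Lemma \ref{nieprostyrek}, which is precisely designed to handle the split $\delta_{n}=\delta_{n}^{\prime}+\delta_{n}^{\prime\prime}$ with only weighted partial sums of $\delta_{n}^{\prime\prime}$ controlled. The three main steps will therefore be: (i) a.s. boundedness of $\{\mathbf{x}_{n}\}$; (ii) a.s. convergence of the noise tail $\mathbf{S}_{n}=\sum_{i\geq n}\mu_{i}\mathbf{\zeta}_{i+1}(\mathbf{x}_{i})$ to zero; (iii) derivation of the recursion (\ref{rek-dn}) for $d_{n}=|\mathbf{x}_{n}-\mathbf{\theta}-\mathbf{S}_{n}|$ and an application of Lemma \ref{nieprostyrek}.

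For step (i) I cannot quote Lemma \ref{ogr_l2} verbatim, since the hypothesis $(\mathbf{x}-\mathbf{\theta})^{\prime}\mathbf{G}_{n}(\mathbf{x})\geq0$ may fail when $\delta_{n}^{\prime\prime}<-\delta_{n}^{\prime}$. Instead, I would imitate the proof of that lemma while inserting the multiplicative correction $M_{n}=\liminf_{m}\exp\bigl(-2\sum_{i=n}^{m}\mu_{i}\delta_{i}^{\prime\prime}\bigr)$ that is already used inside the proof of Lemma \ref{nieprostyrek}; by (\ref{sum-deltabis}) the sequence $\{M_{n}\}$ is bounded away from $0$ and from $\infty$. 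Squaring (\ref{nowaproc}), taking conditional expectation given $\mathcal{F}_{n}$, and using (\ref{war2n}) together with (\ref{naszumy1}), one obtains a recursion of the form
\[
E(|\mathbf{x}_{n+1}-\mathbf{\theta}|^{2}\mid\mathcal{F}_{n})\leq\bigl(1-2\mu_{n}\delta_{n}^{\prime}-2\mu_{n}\delta_{n}^{\prime\prime}+q_{n}\mu_{n}^{2}\bigr)|\mathbf{x}_{n}-\mathbf{\theta}|^{2}+C_{n}\mu_{n}^{2},
\]
with $\sum\mu_{n}^{2}q_{n}<\infty$ and $\sum\mu_{n}^{2}C_{n}<\infty$ by (\ref{suma_kw_mi}), (\ref{war2n}), (\ref{naszumy1}). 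Multiplying by $M_{n+1}$ and adding the tail $\sum_{i\geq n}\mu_{i}^{2}C_{i}$ should exhibit $\mathbf{Y}_{n}=M_{n}|\mathbf{x}_{n}-\mathbf{\theta}|^{2}+\sum_{i\geq n}\mu_{i}^{2}C_{i}\cdot\mathrm{const}$ as a nonnegative supermartingale, so Doob's theorem \ref{zb_mart} yields $\sup_{n}|\mathbf{x}_{n}|<\infty$ a.s.

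Given boundedness, step (ii) is routine: by (\ref{war2n}) and (\ref{naszumy1}) the conditional second moments $E(|\mu_{n}\mathbf{\zeta}_{n+1}(\mathbf{x}_{n})|^{2}\mid\mathcal{F}_{n})$ are summable a.s., hence Theorem \ref{ozbwL2} applied to the martingale $\bigl\{\sum_{i=0}^{n}\mu_{i}\mathbf{\zeta}_{i+1}(\mathbf{x}_{i})\bigr\}$ gives its a.s. convergence, and consequently $\mathbf{S}_{n}\to0$ a.s. Step (iii) is then the standard calculation from the proof of Theorem \ref{najprostsze}: subtracting $\mathbf{S}_{n+1}+\mathbf{\theta}$ from (\ref{nowaproc}), squaring, and using (\ref{warzdeltan}) with $\delta_{n}=\delta_{n}^{\prime}+\delta_{n}^{\prime\prime}$ together with (\ref{war2n}), one obtains
\[
d_{n+1}^{2}\leq\bigl(1-2\mu_{n}(\delta_{n}^{\prime}+\delta_{n}^{\prime\prime})+\mu_{n}^{2}\gamma_{n}\bigr)d_{n}^{2}+\mu_{n}g_{n}d_{n}+\mu_{n}h_{n},
\]
with $\gamma_{n}=O(\kappa_{1n}^{2})$ and sequences $g_{n},h_{n}$ built from $|\mathbf{S}_{n}|$, $\mu_{n}$, $\kappa_{1n}$, $\kappa_{2n}$; thanks to $\mathbf{S}_{n}\to0$ and (\ref{limdelta}) they tend to zero a.s. This is precisely inequality (\ref{rek-dn}) under the hypotheses of Lemma \ref{nieprostyrek}, and the lemma combined with Lemma \ref{zmaxem} (whose products-of-$\lambda_{n}$ hypothesis is verified using $\liminf\delta_{n}^{\prime}>0$ together with (\ref{sum-deltabis})) produces $d_{n}\to0$ a.s., whence $\mathbf{x}_{n}\to\mathbf{\theta}$ a.s.

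The delicate step will be (i): producing a Lyapunov function whose drift remains controlled in the presence of the oscillating, possibly sign-indefinite part $\delta_{n}^{\prime\prime}$. The correction factor $M_{n}$ used inside Lemma \ref{nieprostyrek} is exactly the right device, but one must verify carefully that the approximation $\exp(-2\mu_{n}\delta_{n}^{\prime\prime})\geq 1-2\mu_{n}\delta_{n}^{\prime\prime}$ (Proposition \ref{o_exp}) can be absorbed into the lower order terms, and that the resulting process is genuinely a supermartingale rather than merely bounded in $L_{2}$. Once this is in place, steps (ii) and (iii) are essentially a transcription of the arguments already given for Theorems \ref{rozne_F} and \ref{najprostsze}.
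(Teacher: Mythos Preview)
Your proposal is correct and follows exactly the approach the paper itself takes: the paper's own proof is a one-line sketch that reads, in its entirety, ``Is similar to the proof of Theorem \ref{rozne_F}, with the proviso that it exploits Lemma \ref{nieprostyrek}.'' Your steps (ii) and (iii) are precisely this transcription.

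You go further than the paper in one respect. You correctly flag that the boundedness step cannot simply quote Lemma \ref{ogr_l2}, since condition (\ref{zal_ogr}) may fail when $\delta_{n}=\delta_{n}^{\prime}+\delta_{n}^{\prime\prime}$ dips negative. The paper's sketch silently passes over this; your proposed fix --- multiplying the Lyapunov candidate by the correction factor $M_{n}$ already appearing inside the proof of Lemma \ref{nieprostyrek}, and using (\ref{sum-deltabis}) to keep $M_{n}$ bounded and bounded away from zero --- is the natural device and works. So your proof is not merely aligned with the paper's but fills a gap the paper leaves implicit.
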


\begin{proof}
Is similar to the proof of Theorem \ref{rozne_F}, with the proviso that it
exploits Lemma \ref{nieprostyrek}.
\end{proof}

\begin{remark}
Let us notice that in likewise way one can prove other, similar theorems
concerning convergence, combining different assumptions dealing with the form
of functions $\left\{  \mathbf{F}_{n}\right\}  _{n\geq1}$ and disturbances. In
particular, instead of Theorem \ref{split_delta_proc}, one can consider
theorems similar to Theorems \ref{najprostsze} and \ref{najprostsze_eps}.
\end{remark}

\section{Complements}

\subsection{Introduction to stochastic optimization\label{optymalizacja}}

The procedures for seeking zeros of the system of functions discussed so far
are called \emph{Robbins-Monro }procedures. Procedures of searching for
extremes of the systems of functions in the random environment are called
\emph{Kiefer-Wolfowitz procedures }since this type of procedures appeared for
the first time in the paper of Kiefer and Wolfowitz \cite{Kiefer52}\emph{. }We
will be concerned in a moment with one-dimensional versions of procedures of
this type.

Let us assume that there is given a function $\psi:%
\mathbb{R}
\mathbb{\rightarrow}%
\mathbb{R}
$, whose minimum is at point $\theta$. We would like to find this point, but
we cannot observe values of functions $\psi$. Instead, these values can be
measured disturbed i.e. with certain random error. Let us take into account
convergent to zero sequence $\left\{  c_{n}\right\}  _{n\geq1}$ of positive
numbers and let us assume, that the function $\psi$ is a differentiable
function\emph{, }having derivative satisfying so-called global \emph{Lipschitz
condition}. Let us notice that values of functions
\[
\hat{\psi}_{n}(x)=\frac{\psi(x+c_{n})-\psi(x-c_{n})}{2c_{n}}%
\]
converge to $\psi^{\prime}(x)$ at every point $x\in%
\mathbb{R}
$. As stated above values of functions $\hat{\psi}$ can not be observed
straightforwardly, but only one can observe values of functions
\[
\Psi_{n}(x)=\psi(x)+\xi_{n},
\]
where $\left\{  \xi_{n}\right\}  $ is a sequence of the random variables with
zero mean and finite variances. Point $\theta$ will be estimated with the help
of the sequence:
\begin{equation}
x_{n+1}=x_{n}-\mu_{n}\left(  \frac{\Psi_{2n+1}(x_{n}+c_{n})-\Psi_{2n}%
(x_{n}-c_{n})}{2c_{n}}\right)  . \label{kiefer}%
\end{equation}
Let us assume that the series
\begin{equation}
\sum_{n=1}^{\infty}\frac{\mu_{n}(\xi_{2n+1}-\xi_{2n})}{c_{n}} \label{szumy}%
\end{equation}
converges with probability $1$. If, e.g., in the simplest situation random
variables $\left\{  \xi_{n}\right\}  $ are \emph{martingale differences}
having jointly bounded\emph{\ }variances, then it is enough to assume that
e.g.
\begin{equation}
\sum_{n=1}^{\infty}\left(  \frac{\mu_{n}}{c_{n}}\right)  ^{2}<\infty.
\label{kwmi/cn}%
\end{equation}
to get convergence.

Let us expand function $\psi(x+c_{n})$ at some point $x$. We have
\[
\psi(x+c_{n})=\psi(x)+c_{n}\psi^{^{\prime}}(x)+r_{n}(x),
\]
where $r_{n}(x)$ is a residue satisfying condition $\left\vert r_{n}%
(x)\right\vert \leq c_{n}^{2}L$, where $L$ is a global Lipschitz constant,
whose existence we postulated. Hence,
\[
\hat{\psi}_{n}(x)=\frac{\psi(x+c_{n})-\psi(x-c_{n})}{2c_{n}}=\psi^{^{\prime}%
}(x)+R_{n}(x),
\]
where $\left\vert R_{n}(x)\right\vert \leq c_{n}L.$

Let us notice that from the previously discussed theorems it follows that to
make the procedure (\ref{kiefer}) convergent one needs, that:%

\begin{equation}
\forall\varepsilon>0:1/\varepsilon\geq\left|  x-\theta\right|  \geq
\varepsilon\Rightarrow\exists\delta(\varepsilon):(x-\theta)\hat{\psi}%
_{n}(x)\geq\delta_{n}(\varepsilon)(x-\theta)^{2}, \label{pierwszy}%
\end{equation}

\begin{equation}
{\large \exists\kappa}_{1},\kappa_{2},{\large \forall}n\geq1{\large ,}\left|
\hat{\psi}_{n}(x)\right|  \leq\kappa_{1n}\left|  x-\theta\right|  +\kappa
_{2n}. \label{drugi}%
\end{equation}

Let us notice however, that in the first case we have for $1/\varepsilon
\geq\left\vert x-\theta\right\vert \geq\varepsilon$:
\begin{align*}
(x-\theta)\hat{\psi}_{n}(x)  &  =(x-\theta)\psi^{^{\prime}}(x)+(x-\theta
)R_{n}(x)\geq\\
&  (x-\theta)\psi^{^{\prime}}(x)-\left\vert x-\theta\right\vert ^{2}%
c_{n}L/\varepsilon.
\end{align*}
Hence, if the gradient of the function $\psi$ satisfies a condition:
\[
\forall\varepsilon>0:1/\varepsilon\geq\left\vert x-\theta\right\vert
>\varepsilon\Rightarrow\exists\delta^{^{\prime}}(\varepsilon):(x-\theta
)\psi^{^{\prime}}(x)\geq\delta^{^{\prime}}(\varepsilon)(x-\theta)^{2},
\]
then condition \ref{pierwszy} is satisfied with constant $\delta
_{n}(\varepsilon)=\delta^{^{\prime}}(\varepsilon)+\delta_{n}^{^{\prime\prime}%
}(\varepsilon)$, where $\delta_{n}^{^{\prime\prime}}(\varepsilon
)=-c_{n}L/\varepsilon$. Similarly postulate of the existence of global
Lipschitz constant implies satisfaction of the condition \ref{drugi}. Thus,
following the standard way, as in the proofs of the previous two theorems on
the convergence of stochastic approximation procedures, we reach the true
estimation for $1/\varepsilon\geq\left\vert x_{n}-\theta\right\vert
\geq\varepsilon$:
\begin{equation}
d_{n+1}^{2}\leq\lbrack1-2\mu_{n}\delta_{n}(\varepsilon)+\mu_{n}^{2}\kappa
_{1n}]^{+}d_{n}^{2}+\mu_{n}d_{n}g_{n}(\varepsilon)+\mu_{n}h_{n}(\varepsilon),
\label{kiefer-oszac}%
\end{equation}
where
\[
d_{n}=\left\vert x_{n}-\theta-G_{n}\right\vert ,G_{n}=\sum_{i\geq n}\mu
_{i}\frac{\xi_{2i+1}-\xi_{2i}}{c_{i}},
\]
and sequences $\left\{  g_{n}(\varepsilon)\right\}  $ and $\left\{
h_{n}(\varepsilon)\right\}  $ depend on the sequences $\left\{  \mu
_{n}\right\}  $, $\left\{  \delta_{n}(\varepsilon)\right\}  $, $\left\{
\kappa_{1n}\right\}  $, $\left\{  \kappa_{2n}\right\}  $ and have property:
\[
\forall\varepsilon>0:g_{n}(\varepsilon)\underset{n\rightarrow\infty
}{\longrightarrow}0;h_{n}(\varepsilon)\underset{n\rightarrow\infty
}{\longrightarrow}0
\]
with probability $1$. Thus, one can argue as in the proof of Theorem
\ref{split_delta_proc} (with sequence $\delta_{n}$ depending on $\varepsilon
)$, using Lemma \ref{nieprostyrek}, and get convergence of the sequence
$\left\{  d_{n}\right\}  $ to zero with probability $1$. In order to apply
this lemma, one has to assume, that the condition (\ref{sum-deltabis}) (i.e.
$\underset{n}{\sup}\left\vert \sum_{i=0}^{n-1}\mu_{i}\delta_{i+1}%
^{^{\prime\prime}}\right\vert <\infty$ $a.s$. and $\mu_{n}\delta
_{n+1}\rightarrow0,\;n\rightarrow\infty)$ is satisfied. Remembering the form
of the sequence $\left\{  \delta_{n}^{^{\prime\prime}}(\varepsilon)\right\}
_{n\geq1}$ it is easy to notice, that this condition will be satisfied when:
\begin{equation}
\sum_{n\geq1}\mu_{n}c_{n}<\infty. \label{summicn}%
\end{equation}

Thus, we have sketched the proof the following theorem:

\begin{theorem}
\label{kiefwolf}. Let number sequences $\left\{  \mu_{n}\right\}  _{n\geq0}$
and $\left\{  c_{n}\right\}  _{n\geq0}$ be chosen in such a way that they
satisfy the conditions (\ref{suma_kw_mi}, \ref{kwmi/cn}, \ref{summicn}). Let
us suppose also, that disturbances $\left\{  \xi_{n}\right\}  $ are such that
the condition (\ref{kwmi/cn}) guarantees convergence of the series
(\ref{szumy}). Let us assume further that the function $\psi:%
\mathbb{R}
\mathbb{\rightarrow}%
\mathbb{R}
$ is differentiable at every point and that its derivative satisfies the
global Lipschitz condition, and also conditions (\ref{pierwszy}, \ref{drugi})
with the selected sequence $\left\{  c_{n}\right\}  $. Then the procedure
(\ref{kiefer}) converges with probability $1$ do $\theta.$
\end{theorem}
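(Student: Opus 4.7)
The plan is to follow the same two-layer strategy used for Theorem \ref{najprostsze_eps}, with the extra twist that here the ``bias'' in the gradient estimator is handled by splitting $\delta_n$ into a dominant and a controllable perturbation and then appealing to Lemma \ref{nieprostyrek} rather than Lemma \ref{zmaxem}. First I would rewrite the procedure in Robbins--Monro form. Define
\[
\hat\psi_n(x)=\frac{\psi(x+c_n)-\psi(x-c_n)}{2c_n},\qquad R_n(x)=\hat\psi_n(x)-\psi'(x).
\]
Using differentiability of $\psi$ and the global Lipschitz condition on $\psi'$, a one-step Taylor expansion gives $|R_n(x)|\le c_n L$ uniformly in $x$. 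Procedure (\ref{kiefer}) then reads
\[
x_{n+1}=x_n-\mu_n\bigl(\hat\psi_n(x_n)+\eta_{n+1}\bigr),\qquad \eta_{n+1}=\frac{\xi_{2n+1}-\xi_{2n}}{2c_n}.
\]
By the assumption guaranteeing convergence of the series (\ref{szumy}), the tail sums $G_n=\sum_{i\ge n}\mu_i\eta_{i+1}$ exist a.s.\ and tend to $0$.

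Next I would set $d_n=|x_n-\theta-G_n|$ and, exactly as in the proofs of Theorems \ref{najprostsze} and \ref{najprostsze_eps}, square both sides of the shifted recursion to obtain, on the event $\varepsilon\le|x_n-\theta|\le 1/\varepsilon$, an inequality of the form
\[
d_{n+1}^{2}\le\bigl[1-2\mu_n\delta_n(\varepsilon)+\mu_n^2\kappa_{1n}\bigr]^{+}d_n^{2}+\mu_n d_n g_n(\varepsilon)+\mu_n h_n(\varepsilon),
\]
where $g_n(\varepsilon),h_n(\varepsilon)\to 0$ a.s.\ because they are built from $|G_n|$ and $\mu_n$. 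The key point is that $(x_n-\theta)\hat\psi_n(x_n)=(x_n-\theta)\psi'(x_n)+(x_n-\theta)R_n(x_n)$, and the first term is bounded below by $\delta'(\varepsilon)(x_n-\theta)^2$ thanks to (\ref{pierwszy}), while the second is bounded below by $-c_n L|x_n-\theta|^2/\varepsilon$. Thus $\delta_n(\varepsilon)=\delta'(\varepsilon)+\delta_n''(\varepsilon)$ with $\delta_n''(\varepsilon)=-c_nL/\varepsilon$.

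To conclude, I would invoke Lemma \ref{nieprostyrek}: the decomposition condition (\ref{rozkl-delt}) holds since $\liminf \delta'(\varepsilon)>0$, and the crucial summability condition (\ref{sum-deltabis}) on $\sum \mu_n\delta_n''(\varepsilon)$ is precisely secured by the hypothesis $\sum \mu_n c_n<\infty$. Lemma \ref{nieprostyrek} then yields $d_{n+1}\le\lambda_n\max(d_n,\epsilon_n)$ with $\prod \lambda_i\to 0$ and $\epsilon_n\to 0$, so $d_n\to 0$ a.s.\ on the event that $x_n$ remains in the annulus.

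The hard part, as in Theorem \ref{najprostsze_eps}, is promoting this ``inside the annulus'' convergence to unconditional a.s.\ convergence, since a priori nothing forces $\{x_n\}$ to stay in any fixed ring around $\theta$. I would handle this by reproducing the first-entry argument from the proof of Theorem \ref{najprostsze_eps}: pick $\varepsilon$ small, find a random index after which $|G_n|$ is tiny, $\epsilon_n(\varepsilon)$ is less than $3\varepsilon/4$, and $\lambda_n(\varepsilon)<1$; show that the first time $d_n$ dips below $3\varepsilon/4$ is a.s.\ finite (using that $\prod \lambda_i\to 0$ inside any annulus forces $x_n$ to eventually leave the annulus inward); and verify that once this happens, the estimate $d_{n+1}\le\lambda_n\max(d_n,\epsilon_n)$ keeps $|x_n-\theta|<\varepsilon$ forever after. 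Note that no separate boundedness lemma is required here because the argument is purely ``local'': the annulus $\{\varepsilon\le|x-\theta|\le 1/\varepsilon\}$ can be enlarged with $\varepsilon$, and the recursion inside it is contractive in the averaged sense above.
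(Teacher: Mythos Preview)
Your proposal follows exactly the route the paper sketches: rewrite (\ref{kiefer}) in Robbins--Monro form, use Taylor expansion and the Lipschitz bound on $\psi'$ to write $\hat\psi_n(x)=\psi'(x)+R_n(x)$ with $|R_n|\le c_nL$, split $\delta_n(\varepsilon)=\delta'(\varepsilon)+\delta_n''(\varepsilon)$ with $\delta_n''(\varepsilon)=-c_nL/\varepsilon$, derive the recursive inequality (\ref{kiefer-oszac}) for $d_n=|x_n-\theta-G_n|$, and then invoke Lemma \ref{nieprostyrek}, the summability hypothesis (\ref{sum-deltabis}) being secured precisely by $\sum\mu_nc_n<\infty$. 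This matches the paper's sketch almost verbatim; the paper simply says ``argue as in the proof of Theorem \ref{split_delta_proc} (with sequence $\delta_n$ depending on $\varepsilon$), using Lemma \ref{nieprostyrek}''.

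The one place where you go beyond the paper's sketch, and where there is a genuine gap, is your final paragraph. You claim that ``no separate boundedness lemma is required'' and intend to recycle the first-entry argument from Theorem \ref{najprostsze_eps}. But that argument \emph{does} assume a priori boundedness: its proof begins by fixing $\varepsilon\le 1/M(\omega)$, where $M$ bounds $\sup_n|x_n|$, and the entire entrapment mechanism relies on the trajectory never leaving the annulus $\{\varepsilon\le|x-\theta|\le 1/\varepsilon\}$ through the outer boundary. Without boundedness, nothing prevents $|x_n-\theta|$ from exceeding $1/\varepsilon$, at which point your contraction inequality---derived only inside the annulus---ceases to hold. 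Saying ``the annulus can be enlarged with $\varepsilon$'' does not close this: for a fixed $\omega$ you need one $\varepsilon$ that works for all $n$, and the existence of such an $\varepsilon$ is exactly a boundedness statement. The paper's sketch handles this implicitly by routing through Theorem \ref{split_delta_proc} and hence Theorem \ref{rozne_F}, whose proof opens with Lemma \ref{ogr_l2} to obtain boundedness in $L_2$ and a.s.; but note that Lemma \ref{ogr_l2} requires the martingale-difference condition (\ref{zal_ogr2}) on the noises, which is strictly stronger than the bare convergence of the series (\ref{szumy}) you are assuming.
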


\begin{remark}
Conditions (\ref{kwmi/cn}) and (\ref{summicn}) that are to be satisfied by the
sequences of coefficients $\left\{  \mu_{i}\right\}  _{i\geq0}$ and $\left\{
c_{i}\right\}  _{i\geq0}$ are known and appear already in the above mentioned
paper of Kiefer and Wolfowitz. Proof of Theorem \ref{kiefwolf} is of course
different. It is interesting, however, this classical theorem was proved as a
particular case of application of Theorem \ref{split_delta_proc}.
\end{remark}

\begin{remark}
We would like to remind in this place, that the procedure (\ref{kiefer}) was
an inspiration for many other authors to find the extension, generalization,
and improvements of the classical procedure. As a result, this procedure was a
germ, around which arose new branch of numerical methods namely stochastic
optimization. There exists huge literature dedicated to it. It will be
partially discussed in section \ref{dalej}.
\end{remark}

\subsection{Speed of convergence}

Let us notice that the presented so far theorems enable to examine the speed
of convergence in LLN.\emph{\ }Let us consider a recursive form of LLN, i.e.%

\begin{equation}
\overline{X}_{n+1}=(1-\mu_{n})\overline{X}_{n}+\mu_{n}X_{n+1}, \label{pwl1}%
\end{equation}
where $\left\{  X_{i}\right\}  _{i\geq1}$ is the sequence of the random
variables with zero expectation. It is known, that if the sequence $\left\{
\mu_{i}\right\}  _{i\geq0}$ satisfies the conditions:
\begin{equation}
\mu_{0}=0,\,\,\mu_{n}\in(0,1),\,\,\,\,\sum_{i\geq0}\mu_{i}=\infty, \label{mi}%
\end{equation}
then sequence $\left\{  \overline{X}_{n}\right\}  _{n\geq1}$ can be expressed
by the formula:
\begin{equation}
\overline{X}_{n}=\frac{\sum_{i=0}^{n-1}\alpha_{i}X_{i+1}}{\sum_{i=0}%
^{n-1}\alpha_{i}}, \label{sr1}%
\end{equation}
where $\alpha_{0}=1$, $\alpha_{n}=\mu_{n}/\prod_{i=1}^{n}(1-\mu_{i})$,
$n\geq1$. Now let $\left\{  \beta_{n}\right\}  _{n\geq1}$ will be some
strictly increasing number sequence. Let us notice that if we denote:
$Z_{n}=\beta_{n}\overline{X_{n}}$, then we get recurrent relationship:
\begin{equation}
Z_{n+1}=(1+\gamma_{n})(1-\mu_{n})Z_{n}+\beta_{n}\mu_{n}X_{n+1}, \label{sz_pwl}%
\end{equation}
where we denoted for symmetry of formulae $\gamma_{n}=(\beta_{n+1}-\beta
_{n})/\beta_{n}$, whose convergence can be examined by the known methods.

Using similar technic one can estimate the speed of convergence of stochastic
approximation procedures.\emph{\ }

More on this topic of speed of convergence of stochastic approximation
procedures one can find e.g. in the papers: \cite{Fabian67} and
\cite{Ruppert82}. In particular, one can find conditions to be imposed on
functions $\mathbf{f}$, under which for $\gamma<1/2$ the sequence $\left\{
n^{\gamma}(\mathbf{x}_{n}-\mathbf{\theta)}\right\}  _{n\geq1}$ converges to
zero almost surely$.$

There exist also papers dedicated to the problem of stopping stochastic
approximation procedures, that is to the following problem. Let us consider
the procedure (\ref{nowaproc}). One has to find such stopping moment (see
Appendix \ref{czas}) $\tau$, such that with probability, not less than
$\delta>0$ the following condition was satisfied:
\[
\left\vert \mathbf{x}_{\tau}-\theta\right\vert <\varepsilon,
\]
where $\varepsilon$ and $\delta$ are given beforehand numbers. Unfortunately,
satisfactory stopping rule $\tau$ was not found so far. Many attempts to find
such a stopping rule were undertaken. Their description can be found in e.g.
\cite{Farrell62}, \cite{Sielkien73} or \cite{Yin90}.

As far as connections stochastic approximation procedures with the Central
Limit Theorem is concerned\emph{, }we have the following particular result.

Let us consider one-dimensional stochastic approximation procedure:
\begin{equation}
X_{n+1}=X_{n}-\frac{a}{n+1}\left(  f(X_{n})+\xi_{n+1}\right)  , \label{as1}%
\end{equation}
with function $f$ satisfying the following condition:
\begin{align}
{\large \forall\epsilon}  &  >0:\underset{|x-\theta|>\epsilon}{\sup
}f(x)(x-\theta)>0,\label{1}\\
{\large \exists\kappa}_{1},\kappa_{2}  &  :|f(x)|\leq\kappa_{1}|x-\theta
|+\kappa_{2},\label{2}\\
\exists B  &  >0:f(x)=B(x-\theta)+\delta(x),\label{lin-zero}\\
\delta(x)  &  =o(x-\theta),\text{if }x\rightarrow\theta.\nonumber
\end{align}

\begin{theorem}
\label{ctg}Let will be given a stochastic approximation procedure (\ref{as1})
with function $f$ satisfying conditions (\ref{1}, \ref{2}, \ref{lin-zero}).
Let us suppose additionally, that $\left\{  \xi_{i}\right\}  _{i\geq0}$ is a
sequence of independent random variables with zero expectations and variances
equal to $\sigma^{2}$. \newline If
\[
aB>\frac{1}{2},
\]
then sequence random variables
\begin{equation}
\sqrt{n}(X_{n}-\theta) \label{unorm}%
\end{equation}
has asymptotically Normal distribution:
\[
N(0,\frac{a^{2}\sigma^{2}}{2aB-1}).
\]

\end{theorem}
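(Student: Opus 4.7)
The plan is to convert the CLT statement into a statement about the weak convergence of an explicitly linearized recursion, and then to invoke the classical Lindeberg--Feller theorem for triangular arrays of independent summands. First I would use Theorem~\ref{najprostsze_eps} (conditions~\ref{1} and~\ref{2} together with $aB>1/2$, $f$ linear near $\theta$, and $\underset{n}{\sup}|\xi_n|^2<\infty$ in $L_2$) to deduce $X_n\to\theta$ almost surely and in $L_2$; in particular $\delta(X_n)/(X_n-\theta)\to 0$ a.s. and the boundedness lemma~\ref{ogr_l2} applies.

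Next I would set $Z_n=X_n-\theta$, substitute $f(X_n)=BZ_n+\delta(X_n)$ into~(\ref{as1}), and iterate to obtain the explicit representation
\begin{equation*}
Z_{n+1}=\Pi_{0,n}Z_0-a\sum_{k=0}^{n}\frac{\xi_{k+1}}{k+1}\,\Pi_{k+1,n}-a\sum_{k=0}^{n}\frac{\delta(X_k)}{k+1}\,\Pi_{k+1,n},
\end{equation*}
where $\Pi_{k,n}=\prod_{j=k}^{n}\bigl(1-\tfrac{aB}{j+1}\bigr)$. The key analytic fact is the asymptotic $\Pi_{k,n}\sim\bigl((k+1)/(n+1)\bigr)^{aB}$, which follows from the standard estimate $\log\Pi_{k,n}=-aB\sum_{j=k}^{n}\tfrac{1}{j+1}+O(1)$. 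Multiplying $Z_{n+1}$ by $\sqrt{n+1}$, the initial-condition term is $O(n^{1/2-aB})\to 0$ because $aB>1/2$.

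The heart of the proof is then the asymptotic analysis of
\begin{equation*}
W_n:=-\frac{a}{n^{aB-1/2}}\sum_{k=0}^{n-1}(k+1)^{aB-1}\xi_{k+1},
\end{equation*}
which represents (up to lower order) the noise contribution to $\sqrt{n}(X_n-\theta)$. Its variance equals $\tfrac{a^2\sigma^2}{n^{2aB-1}}\sum_{k=0}^{n-1}(k+1)^{2aB-2}$, which by Ces\`aro-type summation (Lemma~\ref{Cesaro}, assertion~\emph{iii)}) tends to $\tfrac{a^2\sigma^2}{2aB-1}$ thanks to $2aB-1>0$. The Lindeberg condition for the triangular array with coefficients $c_{n,k}=-\tfrac{a(k+1)^{aB-1}}{n^{aB-1/2}}$ follows from $\max_{k\le n}c_{n,k}^2=O(1/n)$, so Lindeberg--Feller yields $W_n\Rightarrow\mathcal{N}\!\bigl(0,\tfrac{a^2\sigma^2}{2aB-1}\bigr)$.

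The main obstacle will be the remainder term $R_n:=a\sqrt{n}\sum_{k=0}^{n-1}\tfrac{\delta(X_k)}{k+1}\Pi_{k+1,n-1}$, which I would show converges to zero in probability. The strategy is to split the sum at an index $N=N(\varepsilon)$ beyond which $|\delta(X_k)|\le\varepsilon |Z_k|$ holds with probability close to one (possible because $X_n\to\theta$ a.s.\ and $\delta(x)=o(x-\theta)$); on the tail one bounds $|R_n|$ by $\varepsilon$ times a quantity of the same form as $\sqrt{n}|Z_n|$ up to known weights, whose second moment is controlled by the recursion itself (the argument is circular and must be closed by a Gronwall-type step, using the uniform $L_2$ boundedness of $\sqrt{n}Z_n$ that one proves separately by squaring the recursion and applying Lemma~\ref{podstawowy}). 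Finally, combining Slutsky's lemma with the three estimates gives the assertion.
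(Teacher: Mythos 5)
The paper itself contains no proof of Theorem \ref{ctg}: it simply refers the reader to Nevelson and Chasminskij \cite{Nevelson72}, so there is no internal argument to compare yours with. Your outline is the classical Chung--Sacks linearization scheme, and its skeleton is sound: passing to $Z_n=X_n-\theta$, iterating the linearized recursion with the products $\Pi_{k,n}$, checking $\Pi_{k,n}\sim\bigl((k+1)/(n+1)\bigr)^{aB}$, killing the initial term because $aB>1/2$, and computing the limiting variance $a^2\sigma^2/(2aB-1)$ of the noise term are all correct and are exactly where the hypothesis $aB>1/2$ is used. (A minor caveat: to invoke Theorem \ref{najprostsze_eps} you need a.s.\ boundedness of $\{X_n\}$, which you must first extract from Lemma \ref{ogr_l2}, reading condition (\ref{1}) as an infimum so that $(x-\theta)f(x)\geq 0$ everywhere; and a.s.\ convergence plus $L_2$-boundedness gives $L_1$, not $L_2$, convergence --- harmless here but stated too strongly.)

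There are, however, two genuine gaps. First, the Lindeberg step: the theorem as stated assumes only independent $\xi_k$ with zero mean and common variance $\sigma^2$, not identical distributions, and for such arrays the coefficient bound $\max_{k\leq n}c_{n,k}^2=O(1/n)$ does \emph{not} imply the Lindeberg condition (take $\xi_k=\pm\sqrt{k}$ with probability $\sigma^2/(2k)$ each and $0$ otherwise; the Lindeberg ratio stays bounded away from zero and the normalized sums are not asymptotically normal). You need identical distributions or uniform integrability of $\{\xi_k^2\}$ --- arguably a defect of the statement itself, but your justification as written is false. Second, and more seriously, your treatment of the remainder $R_n$ hinges on the rate $\sup_n nE(X_n-\theta)^2<\infty$, which you claim follows ``by squaring the recursion and applying Lemma \ref{podstawowy}''. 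Lemma \ref{podstawowy} yields convergence to zero, never a rate; to get $E(X_n-\theta)^2=O(1/n)$ one needs a Chung-type rate lemma (or the weighted-sequence device (\ref{sz_pwl}) combined with Lemma \ref{prosty-rek}/\ref{nieprostyrek}), and the cross term $E\bigl[(X_n-\theta)\delta(X_n)\bigr]$ must be handled by splitting on $\{|X_n-\theta|\leq\epsilon\}$ and using conditions (\ref{1})--(\ref{2}) on the complement --- this is precisely where $2aB>1$ is needed again and where the real work of the proof lies. Until that estimate is actually established, the ``Gronwall-type closure'' you invoke for $R_n$ remains circular, so the proposal is an outline of the right proof rather than a proof.
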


\begin{proof}
One can find in \cite{Nevelson72}. It is not very simple and elementary that
is why we do not present it here.
\end{proof}

\subsection{Trends in developments\label{dalej}}

Observation expressed in Remark \ref{aspekty} is the base of the division of
the set of problems and methods associated with a stochastic approximation.
Namely, assuming the simple stochastic structure of noises (most often
independence, more seldom the fact, martingale difference assumption and some
assumption concerning the existence of moments) one considers more and
complicated cases of functions $\mathbf{f}$ (in Robbins-Monro version of
stochastic approximation procedures ) and also of functions $\Psi$ (in
Kiefer-Wolfowitz version). As far as Robbins-Monro version is concerned the
generalizations and extensions went mainly into the direction of considering
functions that have many zeros. Procedures look for any of them. Even more,
procedures approach the set of zeros of such functions and then in the limit
we are able to give the probability distribution on this set. There exists a
series of papers dedicated to these problems. To mention only a few more
interesting. These are first of all papers of H. Kushner \cite{Kushner72},
\cite{KushnerGavin73}, \cite{Kusher72b}, \cite{Kushner84}.

As far as methods of minimization in random conditions are concerned, i.e.
extensions of stochastic approximation procedures in Kiefer-Wolfowitz version
again there exists many papers dedicated to this problem. As it was stated
before the set of these extensions created new branch of numerical mathematics
-stochastic optimization. There exists very rich literature dedicated to this
discipline. Application of various methods of deterministic optimization turns
out very fruitful. Probably one can risk a statement, that every known and
proven optimization method (see e.g. monograph \cite{Findeisen77}) has already
its stochastic counterpart. One can mention here papers of Ruszczy\'{n}ski and
others \cite{NorRuszcz98}, \cite{PflRuszcz98}, \cite{ErmoRuszc97},
\cite{Ruszcz97}, \cite{ErmRuszc96}, \cite{RuszcSys86}, \cite{RuszcSys86a}.

Problems of boundedness of stochastic approximation procedures and also
stochastic approximation procedures utilizing information on the position of
the looked for point, that was briefly discussed in subsection
\ref{ograniczonosc}, were also generalized and extended and consist an
important part of the stochastic optimization. The problem of utilizing
existing information to search for zeros, or minima of functions are very
tempting, and moreover , obtained there results very important. It is worth to
mention that in the case of deterministic methods of minimization one
distinguishes the bounds of equality and inequality type. One distinguishes
also the fact if inequality restrictions form a convex set or not. Finally,
there exist typical methods of solving optimization problems with restrictions
such as the method of Lagrange multiplier, a method of penalty functions, a
method of admissible directions. The point is that the methods of stochastic
optimization are classified in the same way as the deterministic ones. One has
developed stochastic counterparts of the above mentioned method, of solving
these problems. Thus, the detailed discussion of stochastic optimization
extends far beyond this book. We refer the interested reader to e.g.
\cite{Ruszc84}, \cite{KushnerClark78}, \cite{Kushner84},
\cite{KushnerSanvincente74}, \cite{KushnerGavin74}, \cite{Ruszc80}.

Finally, let us mention also, that there exists also other direction of
generalization and extension stochastic approximation procedures. Namely, we
mean stochastic approximation procedures in infinite dimensional spaces. Such
procedures are most often constructed and used to find functions having
defined properties: finding zero of a mapping in some functional space into
itself (in the case of generalized Robbins-Monro procedure) or minimizing
functional\ (in the case of generalization of Kiefer-Wolfowitz procedure).
Historically, it was already Dvoretzki in 1956 in the paper \cite{Dvoretzky56}
considered a similar situation.\ Further, one should mention papers of
Schmetterer \cite{Schmetterer58}, Venter \cite{Venter67} or Yin \cite{Yin92}.
Problems of convergence of such procedures are difficult and will not be
presented here. There exists, however, one exception. Namely, we mean
iterative procedures of density and regression estimation and also some
iterative procedures of identification. We will dedicate to these problems in
the next two chapters. It turns out that although procedures of density and
regression estimation constitute a separate branch of nonparametric
estimation, we can view their iterative versions as stochastic approximation
procedures in Robbins-Monro versions acting in infinite dimensional spaces.

\chapter{Density and Regression estimation\label{metody_jadrowe}}

In this chapter, the so-called kernel methods of density and regression
estimation are discussed.

\section{Basic ideas}

Any density function will be called \emph{kernel }%
\index{Kernel}%
. Let hence $K(x)$ be any kernel. Function $F_{y,h}(x)=\frac{1}{h}K(\frac
{x-y}{h})$ has the following property:
\begin{equation}
{\large \forall}y\in%
\mathbb{R}
,h>0,\,\,\int_{%
\mathbb{R}
}F_{y,h}(x)dx=1, \label{calka}%
\end{equation}
hence is also a kernel. If $h<1$, then the plot of the function $F_{y,h}$ is,
if compared with the plot of the function $K$, shifted by $y$ and
\textquotedblright restricted to values in the neighborhood of the point
$y$\textquotedblright\ i.e. e.g., in the case when the support of the density
$K$ is bounded, then the support of the function $F_{y,h}$ is a subset of
support of the function $K$. e.g. if $H(x)=\left\{
\begin{array}
[c]{ccc}%
0 & if & x<0\\
1 & if & x\geq0
\end{array}
\right.  $
\begin{equation}
K(x)=\left\{
\begin{array}
[c]{ccc}%
1-\left\vert x\right\vert  & \;for & \left\vert x\right\vert \leq1\\
0 & \;for & \left\vert x\right\vert >1
\end{array}
\right.  \label{jadro}%
\end{equation}%
\begin{center}
\includegraphics[
height=1.5437in,
width=2.4872in
]%
{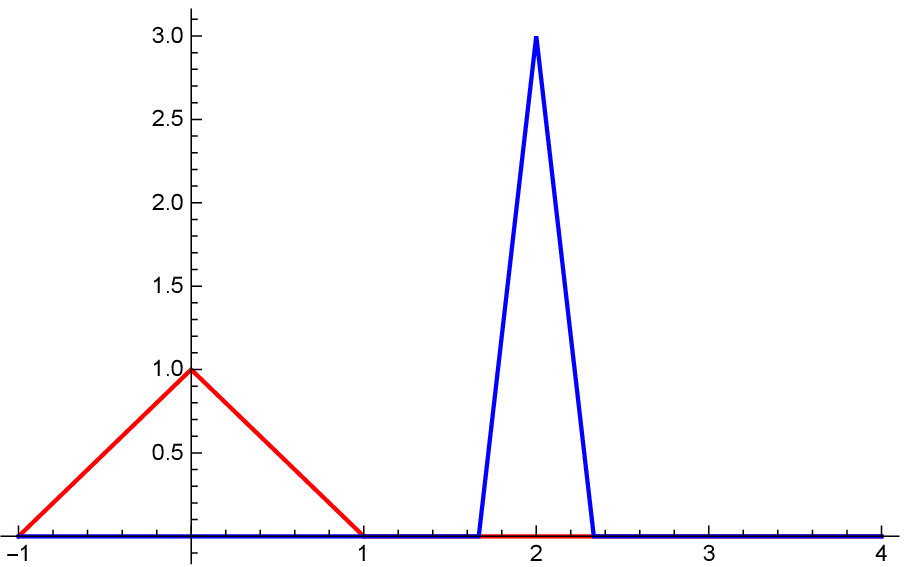}%
\end{center}

i.e. it has the plot as on the above figure drawn in red, while the function
$3K(3(x-2))$ has the plot presnted above in blue.

\begin{example}
In the sequel, the following general theorem will be of use.
\end{example}

\begin{theorem}
\label{wlasnosci_jadra} Let $f$ and $g$ be two Lebesgue integrable functions. Then

$i)\qquad\int_{%
\mathbb{R}
}\left\vert f(x)g(y-x)\right\vert dx\leq\int_{%
\mathbb{R}
}\left\vert f(x)\right\vert dx\int_{%
\mathbb{R}
}\left\vert g(x)\right\vert dx$ (Young inequality )%
\index{Inequality!Young}%
,

and moreover, if additionally we assume, that $\int_{%
\mathbb{R}
}g(x)dx=1$, then

$ii)\qquad\underset{h\downarrow0}{\lim}\int_{%
\mathbb{R}
}\left|  \int_{%
\mathbb{R}
}\frac{1}{h}f(x)g(\frac{y-x}{h})dx-f(y)\right|  dy=0,$

If additionally the function $\hat{g}(x)=\underset{\left\vert y\right\vert
\geq\left\vert x\right\vert }{\sup}\left\vert g(y)\right\vert $ is integrable, then

$iii)\qquad\underset{h\downarrow0}{\lim}\int_{%
\mathbb{R}
}\frac{1}{h}f(y)g(\frac{y-x}{h})dy=f(x)$ for almost all $x\in%
\mathbb{R}
$.
\end{theorem}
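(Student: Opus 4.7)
The three assertions are the standard properties of convolution with an approximate identity, so I would attack them in order, since each is used in the proof of the next. I interpret the left-hand side of (i) as $\iint |f(x)g(y-x)|\,dx\,dy$ (the purely pointwise statement is false, as a rescaling $g\mapsto g(\cdot/h)/h$ shows); this is just the $L^{1}$-form of Young's inequality. Its proof is one line of Fubini-Tonelli together with the translation-invariance of Lebesgue measure: the inner integral $\int|g(y-x)|\,dy$ equals $\int|g|$ for every $x$, and then one pulls the constant out of the remaining integral.

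For (ii), write $\phi_{h}(x)=h^{-1}g(x/h)$, so $\int\phi_{h}=1$ and $\|\phi_{h}\|_{1}=\|g\|_{1}$. The claim is $\|f*\phi_{h}-f\|_{1}\to 0$. The plan is the classical three-$\varepsilon$ argument. First handle $f\in C_{c}(\mathbb{R})$: after the substitution $x=y-hu$, use $\int g=1$ to rewrite
\[ (f*\phi_{h})(y)-f(y)=\int\bigl(f(y-hu)-f(y)\bigr)g(u)\,du, \]
and deduce $L^{1}$-convergence from uniform continuity of $f$ on a large ball, combined with a tail bound $\int_{|u|>R}|g(u)|\,du\to 0$ for the complement. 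Second, for general $f\in L^{1}$, approximate by $\tilde f\in C_{c}$ and use the triangle inequality
\[ \|f*\phi_{h}-f\|_{1}\le\|(f-\tilde f)*\phi_{h}\|_{1}+\|\tilde f*\phi_{h}-\tilde f\|_{1}+\|\tilde f-f\|_{1}, \]
where the first term is bounded by $\|f-\tilde f\|_{1}\|g\|_{1}$ by (i).

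Assertion (iii) is the real obstacle and is where the hypothesis that the radial majorant $\hat g(x)=\sup_{|y|\ge|x|}|g(y)|$ lies in $L^{1}$ is used. The plan is to combine the Lebesgue differentiation theorem with a maximal-function bound. At any Lebesgue point $x$ of $f$ (which is almost every $x$), write again
\[ (f*\phi_{h})(x)-f(x)=\int\bigl(f(x-hu)-f(x)\bigr)g(u)\,du, \]
split the integral at $|u|\le R$ and $|u|>R$, and bound the outer piece by $2\bigl(\sup_{h>0}|f*\phi_{h}|(x)+|f(x)|\bigr)\cdot\int_{|u|>R}|g(u)|\,du$. For the inner piece one uses the Lebesgue-point property: $h^{-1}\int_{|y-x|\le hR}|f(y)-f(x)|\,dy\to 0$, combined with the pointwise bound $|g(u)|\le\hat g(u)$ and $\hat g$ radially nonincreasing. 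The key supporting fact is that for any radially decreasing $\hat g\in L^{1}$, the dilations $\hat g_{h}(u)=h^{-1}\hat g(u/h)$ satisfy $(|f|*\hat g_{h})(x)\le\|\hat g\|_{1}\,(Mf)(x)$, where $Mf$ is the Hardy-Littlewood maximal function; this is the standard layer-cake computation (approximate $\hat g$ by simple functions that are multiples of indicators of intervals centered at $0$, apply the elementary bound for each, and pass to the limit). Because $Mf(x)<\infty$ a.e., the tail bound is uniform in $h$, and letting $h\downarrow 0$ then $R\to\infty$ yields (iii) at every Lebesgue point, hence a.e. The main subtlety is precisely the uniform-in-$h$ control of the tail, which is exactly what integrability of $\hat g$ provides.
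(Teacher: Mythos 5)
The paper itself does not prove this theorem: it only cites Devroye and Gy\"{o}rfi (Theorem 1, p.~16) and remarks that the argument rests on the Lebesgue theorem on density points. Your proposal supplies exactly the standard proof that the reference contains: Fubini--Tonelli plus translation invariance for (i) (and your reading of (i) as an iterated-integral statement is the right one, since the pointwise bound fails, e.g.\ for $f(x)=g(x)=|x|^{-1/2}I(|x|\le 1)$ at $y=0$), density of continuous compactly supported functions in $L^{1}$ together with the uniform bound $\|(f-\tilde f)*\phi_{h}\|_{1}\le\|f-\tilde f\|_{1}\|g\|_{1}$ for (ii), and Lebesgue points plus the maximal-function domination $(|f|*\hat g_{h})(x)\le\|\hat g\|_{1}\,Mf(x)$ for radially nonincreasing $\hat g$ for (iii). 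So in substance you follow the same route as the cited proof.

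One step in (iii) would fail as literally written: the tail term $\int_{|u|>R}|f(x-hu)|\,|g(u)|\,du$ is not bounded by $\sup_{h>0}|f*\phi_{h}|(x)\cdot\int_{|u|>R}|g(u)|\,du$; a convolution restricted to $|u|>R$ cannot be controlled by the full convolution times the tail mass of $g$. The repair uses precisely the lemma you already state: dominate $\hat g(u)I(|u|>R)$ by the radially nonincreasing function $\tilde g_{R}(u)=\hat g(\max(|u|,R))$, whose $L^{1}$ norm is $2R\hat g(R)+\int_{|u|>R}\hat g(u)\,du\rightarrow 0$ as $R\rightarrow\infty$ (note $R\hat g(R)\rightarrow 0$ because $\hat g$ is nonincreasing and integrable); then the tail is at most $\|\tilde g_{R}\|_{1}\,Mf(x)+|f(x)|\int_{|u|>R}\hat g(u)\,du$, which is small uniformly in $h$ at every point where $Mf(x)<\infty$, hence almost everywhere. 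With that substitution your argument for (iii) closes.
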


\begin{proof}
Can be found in the book\cite{devroy} (see theorem. 1 on page 16). The proof
is not probabilistic and is based on (particularly on the assertion
\textit{iii)) }of the\textit{\ }Lebesgue Theorem%
\index{Theorem!Lebesgue on points of continuity}
on density points (see, e.g. textbook of \L ojasiewicz \cite{lojasiewicz}) and
that is why we will not give it here.
\end{proof}

We have also the following generally theorem, being in fact version Lemma
Scheff\'{e}'s (see Appendix \ref{scheffe})

\begin{theorem}%
\index{Theorem!Glick}%
(Glick) Let $\{f_{n}\}$ be a sequence of density estimators of the density
$f$. If $f_{n}\rightarrow f$ \ in probability (with probability $1$) for
almost all $x$ as $n\rightarrow\infty$, then $\int_{%
\mathbb{R}
}\left\vert f_{n}(x)-f(x)\right\vert dx\rightarrow0$ in probability (with
probability $1$) as $n\rightarrow\infty.$
\end{theorem}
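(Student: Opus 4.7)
The plan is to reduce the $L^1$ convergence to a Scheffé-type identity and then apply dominated convergence in the appropriate mode. Specifically, since $f_n$ and $f$ are both densities, $\int(f_n-f)\,dx=0$, so writing $|f_n-f|=(f_n-f)+2(f-f_n)^+$ gives the identity
\[
\int_{\mathbb{R}}|f_n(x)-f(x)|\,dx \;=\; 2\int_{\mathbb{R}}(f(x)-f_n(x))^+\,dx.
\]
The right-hand side, call it $Z_n/2$, is a nonnegative random variable bounded by $1$, and the integrand satisfies the uniform bound $(f(x)-f_n(x))^+\leq f(x)$ for every $\omega$ and every $x$. This uniform domination by the integrable function $f$ is what drives both conclusions.

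For the almost-sure version I would first use Fubini: the hypothesis says that for a.e. $x\in\mathbb{R}$ we have $P(f_n(x)\to f(x))=1$, so on a set of full product measure $(\omega,x)$ one has $f_n(x)(\omega)\to f(x)$. Hence for $P$-a.e. fixed $\omega$, $f_n(\cdot)(\omega)\to f(\cdot)$ at Lebesgue-a.e. $x$; for such $\omega$, classical Lebesgue dominated convergence applied to $(f-f_n)^+\leq f$ gives $\int(f-f_n)^+dx\to 0$. Combined with the Scheffé identity this yields $Z_n\to 0$ almost surely.

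For the in-probability version I would compute $EZ_n$ directly. By Fubini, $EZ_n=2\int_{\mathbb{R}} E\bigl((f(x)-f_n(x))^+\bigr)\,dx$. For each good $x$, $f_n(x)\to f(x)$ in probability; since $(\cdot)^+$ is continuous, $(f(x)-f_n(x))^+\to 0$ in probability, and it is bounded by the constant (in $\omega$) $f(x)$. The standard dominated convergence theorem in probability form (see Proposition \ref{alfa_calk}) then gives $E(f(x)-f_n(x))^+\to 0$ pointwise in $x$. A second application of Lebesgue's dominated convergence, now in $x$, with the integrable majorant $f(x)$, yields $\int E(f-f_n)^+\,dx\to 0$, hence $EZ_n\to 0$. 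Convergence of $Z_n$ to $0$ in $L^1(P)$ implies convergence in probability by Chebyshev's inequality.

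The only delicate point is bookkeeping the two different measures: the convergence hypothesis mixes convergence in $\omega$ (either in probability or a.s.) with the Lebesgue null set in $x$, while the conclusion is stated in $\omega$. Fubini in the a.s. case, and the ``two-level'' application of dominated convergence (first in $P$, then in Lebesgue measure) in the in-probability case, are what reconcile these two integrations; the uniform domination $(f-f_n)^+\leq f$ is exactly the ingredient that makes both layers work without further assumptions on $f_n$.
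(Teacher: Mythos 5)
Your proof is correct and follows essentially the same route the paper has in mind: it only cites Glick (1974) and remarks that the argument is ``very similar to the proof of Scheff\'{e}'s Lemma'' in the Appendix, and your reduction $\int_{\mathbb{R}}|f_n-f|\,dx=2\int_{\mathbb{R}}(f-f_n)^{+}\,dx$ with the domination $(f-f_n)^{+}\leq f$ is exactly that Scheff\'{e}-type argument. The Fubini bookkeeping for the almost-sure case and the two-level dominated convergence (yielding even $EZ_n\to 0$, hence convergence in probability) supply correctly the details the paper leaves to the reference.
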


\begin{proof}
On can to find in the paper \cite{Glick74}. It is very similar to the proof of
Scheff\'{e}'s Lemma presented in the Appendix \ref{scheffe}.
\end{proof}

Let us fix some kernel $K(x)$. Let us suppose now, that we make $N$
observations of some random variable $X$ having a density $f(x)$, obtaining a
number sequence $x_{1},\ldots,x_{N}$. Let $h=h(N)$ be some sequence of
positive numbers, decreasing to zero together with $N$. Let us consider
function
\begin{equation}
\tilde{f}_{N}(y)=\frac{1}{N}\sum_{i=1}^{N}\frac{1}{h}K\left(  \frac{y-x_{i}%
}{h}\right)  . \label{est-gest}%
\end{equation}
It is a density, since we have%

\[
{\large \forall}N\in%
\mathbb{N}
,\,\,y\in%
\mathbb{R}
:\tilde{f}_{N}(y)\geq0,\;\,\,\int_{%
\mathbb{R}
}\tilde{f}_{N}(y)dy=1.
\]
In order to analyze relationship of this function with the density function
$f$, let us consider the problem from the probabilistic point of view. Let be
given sequence $X_{1},\ldots,X_{N}$ of \emph{independent random variables
having the same distribution} with the density $f$. Let us consider random
variables:
\begin{equation}
f_{N}(y)=\frac{1}{N}\sum_{i=1}^{N}\frac{1}{h(N)}K\left(  \frac{y-X_{i}}%
{h(N)}\right)  . \label{los-est}%
\end{equation}
It is clear, that for $\forall n\in%
\mathbb{N}
$ and $\forall y\in%
\mathbb{R}
$ : $f_{N}(y)\geq0$ with probability $1$. Moreover, $\int_{%
\mathbb{R}
}f_{N}(y)dy=1$ with probability $1$. $f_{N}(y)$ is a random variable, whose
one of the realizations is $\tilde{f}_{N}(y).$

Let us calculate sequence $\left\{  \phi_{N}\right\}  $ Fourier
transformations of functions $f_{N}(y)$. We have:
\[
\phi_{N}(t)=\int_{%
\mathbb{R}
}f_{N}(y)\exp(ity)dy=\frac{1}{N}\sum_{i=1}^{N}\int_{%
\mathbb{R}
}\frac{1}{h(N)}K\left(  \frac{y-X_{i}}{h(N)}\right)  \exp(ity)dy,
\]
but
\[
\int_{%
\mathbb{R}
}\frac{1}{h(N)}K\left(  \frac{y-X_{i}}{h(N)}\right)  \exp(ity)dy\allowbreak
\allowbreak=\allowbreak\int_{%
\mathbb{R}
}K(z)\exp\left(  itX_{i}+itzh(N)\right)  dz.
\]
Denoting $\varphi(t)=\int_{%
\mathbb{R}
}K(z)\exp(itz)dz$, we get:
\[
\phi_{N}(t)=\varphi(th(N))\frac{1}{N}\sum_{i=1}^{N}\exp(itX_{i}).
\]
Let us notice that $\forall t\in%
\mathbb{R}
:\varphi(th(N))\underset{N\rightarrow\infty}{\longrightarrow}\varphi
(0)\allowbreak=\allowbreak\int K(z)dz\allowbreak=\allowbreak1$ (since $K$ is
density and $h(N)\rightarrow0$, when $N\rightarrow\infty$). Moreover, taking
into account, that random variables $\left\{  X_{i}\right\}  _{i\geq1}$ are
independent, they satisfy LLN in version of Kolmogorov's (see theorem.
\ref{kolmogor}) and so we see that
\[
\forall t\in%
\mathbb{R}
:\frac{1}{N}\sum_{i=1}^{N}\exp(itX_{i})\underset{N\rightarrow\infty
}{\longrightarrow}\varphi_{X}(t)
\]
almost surely, where by $\varphi_{X}(t)$ we denoted characteristic function of
the random variable $X_{1}$. Thus, the sequence of the random
variables$\left\{  f_{n}(y)\right\}  _{n\geq1}$ converges for almost every
$\omega$ in the distributive sense (as a function of $y$) to the distribution
of the random variable $X_{1}$(see Appendix \ref{dystrybucje} particularly
Theorem \ref{zbiez_dystrybucji}). It means, e.g., that
\begin{equation}
{\large \forall}x{\large \in}%
\mathbb{R}
:\int_{-\infty}^{x}f_{n}(y)dy\underset{n\rightarrow\infty}{\longrightarrow
}\int_{-\infty}^{x}f(y)dy\text{ almost surely.} \label{zbiez_dystr}%
\end{equation}
It turns out that there exists a rich literature concerning density estimation
and one can give deeper and more detailed theorem on convergence.

\begin{remark}
Let us notice that in order to show weak convergence (i.e. in fact,
convergence of characteristic functions) of the sequence of densities to
limiting density, one does not have to assume independence of observations. As
it turned out from the above calculations, it was enough, that the law of
large numbers was satisfied for random variables $\left\{  Y_{i}=\exp
(itX_{i})\right\}  _{i\geq1}$ for every $t\in%
\mathbb{R}
$. Further, to get this law of large numbers, satisfied it is enough (as it
follows e.g. from Theorem \ref{mpwl}), that covariances $\operatorname*{cov}%
(Y_{i},Y_{j})$ decreased with $\left\vert i-j\right\vert $ sufficiently
quickly to zero. How to check this, depends on the particular form of the
sequence of the random variables $\left\{  X_{i}\right\}  _{i\geq1}$. In any
case it is enough only of two-dimensional distributions of this sequence.
\end{remark}

\begin{remark}%
\index{Histogram}%
The other way of density estimation, mentioned in section \ref{sek_ctg}, is
the estimation with the help of histograms. The histogram can be obtained in
the following way. Let us assume, that we are interested in estimating the
density of the random variable $X$. To this end

a) we observe $N$ independent realizations of this random variable getting
values $x_{1}$, $x_{2}$, $\ldots$, $x_{N}.$

b) we divide the interval of variability of the random variable $X$ on
$k\geq2$ disjoint subintervals with the help of points $y_{1},y_{2},\ldots$,
$y_{k-1}$. Next we count how many points among $x_{1},\ldots,x_{N}$ fell into
every of the subintervals $\Delta_{j}\overset{df}{=}<y_{j-1},y_{j})$,
$j=1,\ldots,k$, where we assumed for simplicity $y_{0}=-\infty$ and
$y_{k}=\infty$. In other words, let us calculate: numbers $n_{j}=\#\left\{
x_{i}:x_{i}\in\Delta_{j}\right\}  $. Histogram it is a step that on
$\Delta_{j}$ assumes value $\frac{n_{j}}{N}$. In other words
$Histogram(y)\allowbreak=\allowbreak\sum_{j=1}^{k}\frac{n_{j}}{N}I(\Delta
_{j})(y).$

It is not difficult to notice, that the better histogram approximates the
density of a random variable the greater must be the number of observations
$N$, and the number of intervals $k$. However the ratio $N/k$ should also be
great. The point is that every one of the intervals $\Delta_{j}$ should
contain sufficiently many observations (there should be satisfied modified law
of large numbers).

A drawback of the density estimation with the help histograms is, of course,
the fact, that the histogram is a step function, hence discontinuous. Much
better results we get using kernel methods described in this chapter.
\end{remark}

Let us start by analyzing a few examples. In each of them, the estimation was
based on $n=5000$ simulations.

\begin{example}
We assume in this example $n=5000$, $h(n)=n^{-.5}$. The estimated density was
the density of the uniform distribution on the segment $<0,1>$, density
distribution $U(0,1)$. Two estimation were done : the first with the kernel
given by the formula (\ref{jadro}) (plotted in red), and the second with the
so-called Cauchy kernel, that is the function $\frac{1}{\pi(1+x^{2})}$ was
taken to be a kernel (was plotted in blue).
{\includegraphics[
height=1.977in,
width=3.0649in
]%
{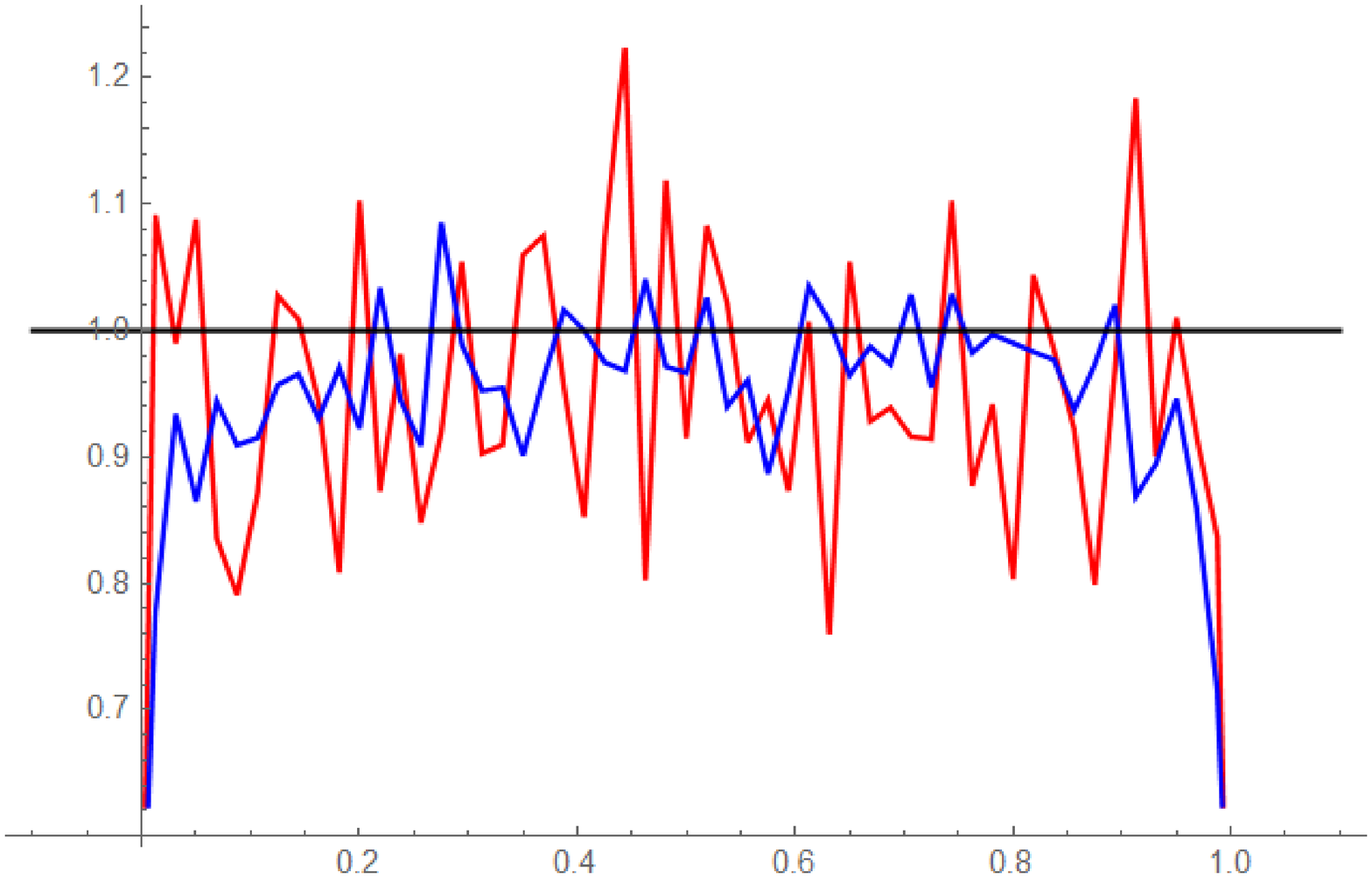}%
}%
.

As one can see an improvement in the quality of estimators one could be observed.
\end{example}

\begin{example}
In this example density function of the exponential distribution $Exp(1)$,
that is function $\exp(-x)$ for $x\geq0$ was estimated. Parameter $n$ and
kernels were the same, as in the previous example. One set $h(n)=n^{-.4}$. As
in the previous example, estimator obtained with the help of triangular kernel
was plotted in red, while in blue the one obtained with the help Cauchy kernel.%

{\includegraphics[
height=1.6146in,
width=2.6368in
]%
{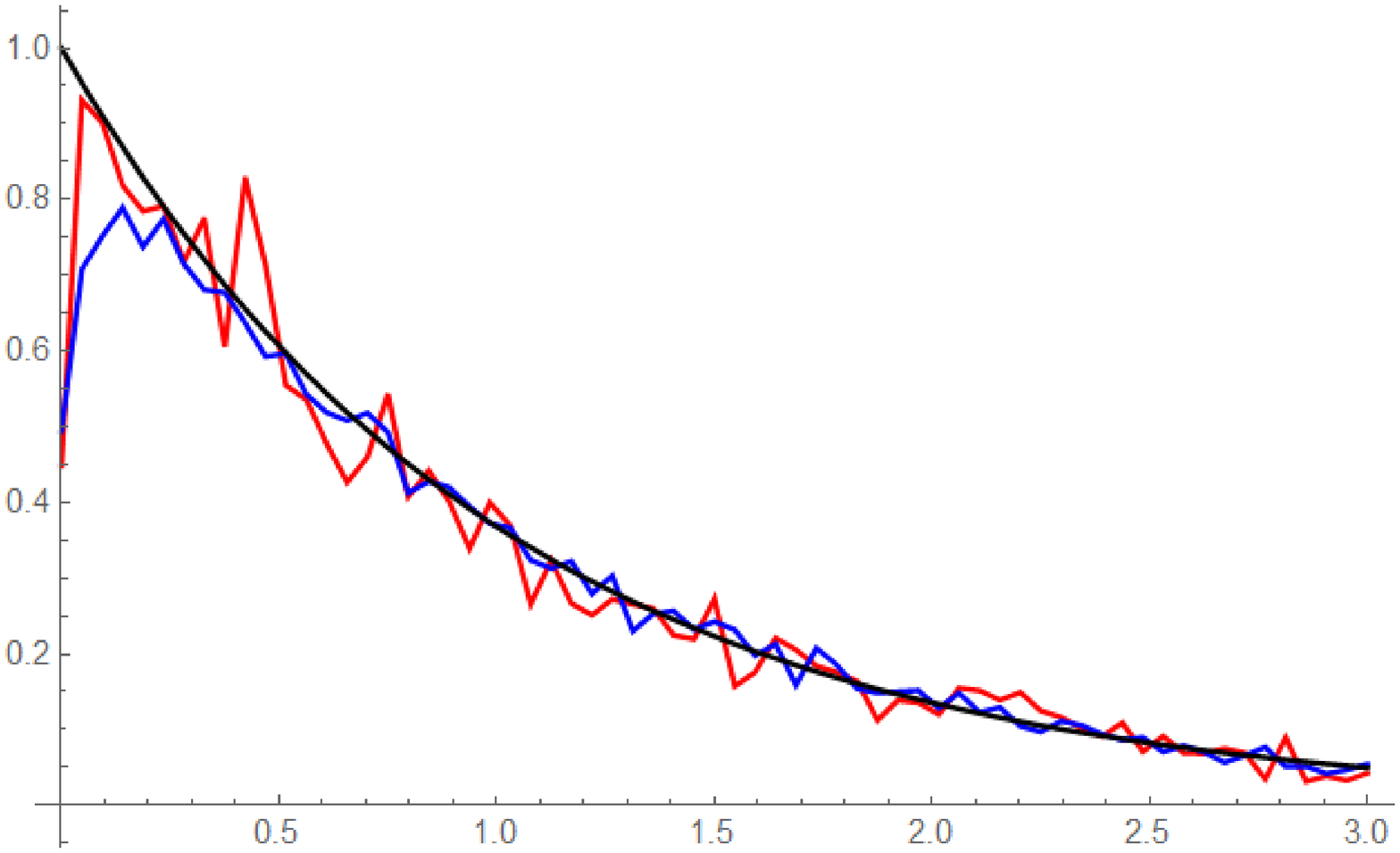}%
}%

\end{example}

\begin{example}
In this example density function of the arc sinus distribution that is the
function $\frac{1}{\pi\sqrt{1-x^{2}}}$ for $\left\vert x\right\vert <1$ was
estimated. Parameter $n$ and kernels were the same, as in the previous
example. One took $h(n)=n^{-.3}$. As in the previous example in red was
plotted estimator obtained with the help of triangular kernel, while in blue
with the help Cauchy kernel.%

{\includegraphics[
height=1.6233in,
width=2.5097in
]%
{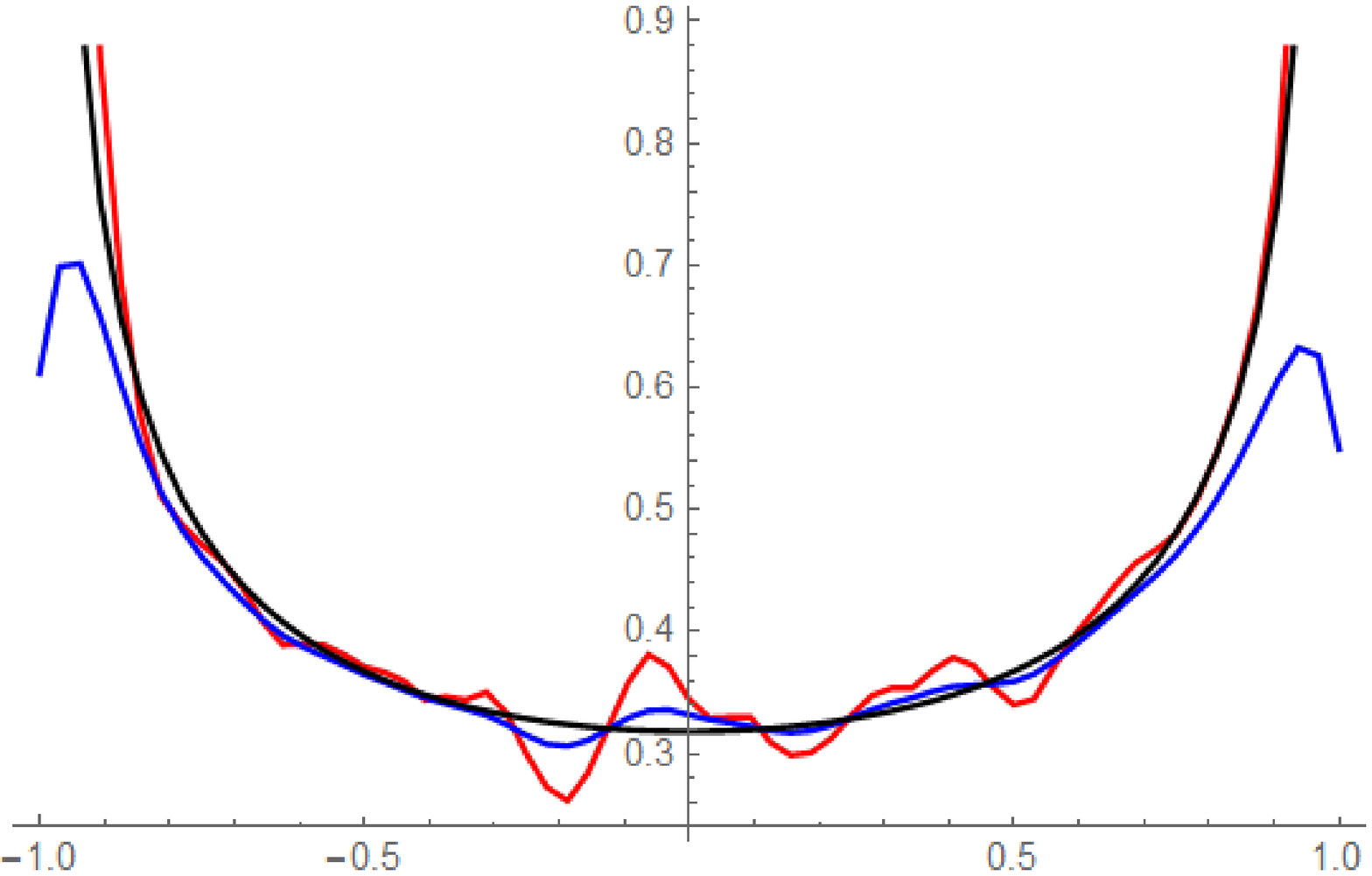}%
}%

\end{example}

\begin{remark}
Let us notice that calculation and reasoning used to justify meaningfulness of
the kernel estimator (\ref{los-est}) is universal in this sense, that it
refers also to random variables not having densities. This argumentation can
be the base for considerations of kernel estimators of cumulative distribution
functions. Namely, denoting by $F_{K}$ the cumulative distribution function
kernel $K$ i.e. $F_{K}(x)=\int_{-\infty}^{x}K(z)dz$, we get from formula
(\ref{zbiez_dystr})
\[
\frac{1}{n}\sum_{i=1}^{n}F_{K}\left(  \frac{x-X_{i}}{h_{n}}\right)
\underset{n\rightarrow\infty}{\rightarrow}F_{X}(x),
\]
with probability $1$, where $F_{X}$ denotes cumulative distribution function
random variable $X_{1}$. As simulations show, this method is good, efficient
and, as it was mentioned, universal. As it seems the first one, who
noticed\ the possibilities embedded in this method of cdf estimation
was\ Azzalini (1981) (see \cite{Azzalini81}). It seems also, that this method
is rather a little known and requires research. \newline We will illustrate it
by the following example. One took $N=6000$ observations discrete random variable%

\[
X=\left\{
\begin{array}
[c]{ccc}%
-1 & \text{with probability} & 1/8\\
0 & \text{with probability} & 4/8\\
2 & \text{with probability} & 2/8\\
3 & \text{with probability} & 1/8
\end{array}
\right.  .
\]
We took either $F_{K}(x)\allowbreak=\allowbreak1/2+\arctan(x)/\pi$, or
\[
F_{K}=\left\{
\begin{array}
[c]{ccc}%
0 & for & x<-\sqrt{5}\\
\frac{3\sqrt{5}}{20}\left(  x-\frac{1}{15}x^{3}+\frac{2\sqrt{5}}{3}\right)  &
for & -\sqrt{5}\leq x<\sqrt{5}\\
1 & for & x\geq\sqrt{5}%
\end{array}
\right.  ,
\]
i.e. so-called Epanechnikov's kernel. The results were presented in the figure
below. Here estimator with Cauchy kernel is plotted in red, estimator with
Epanechnikov's kernel was plotted in blue, and cdf of the random variable $X$
was plotted in black. One took $h(N)=N^{-.4}$%

{\includegraphics[
height=1.7487in,
width=2.783in
]%
{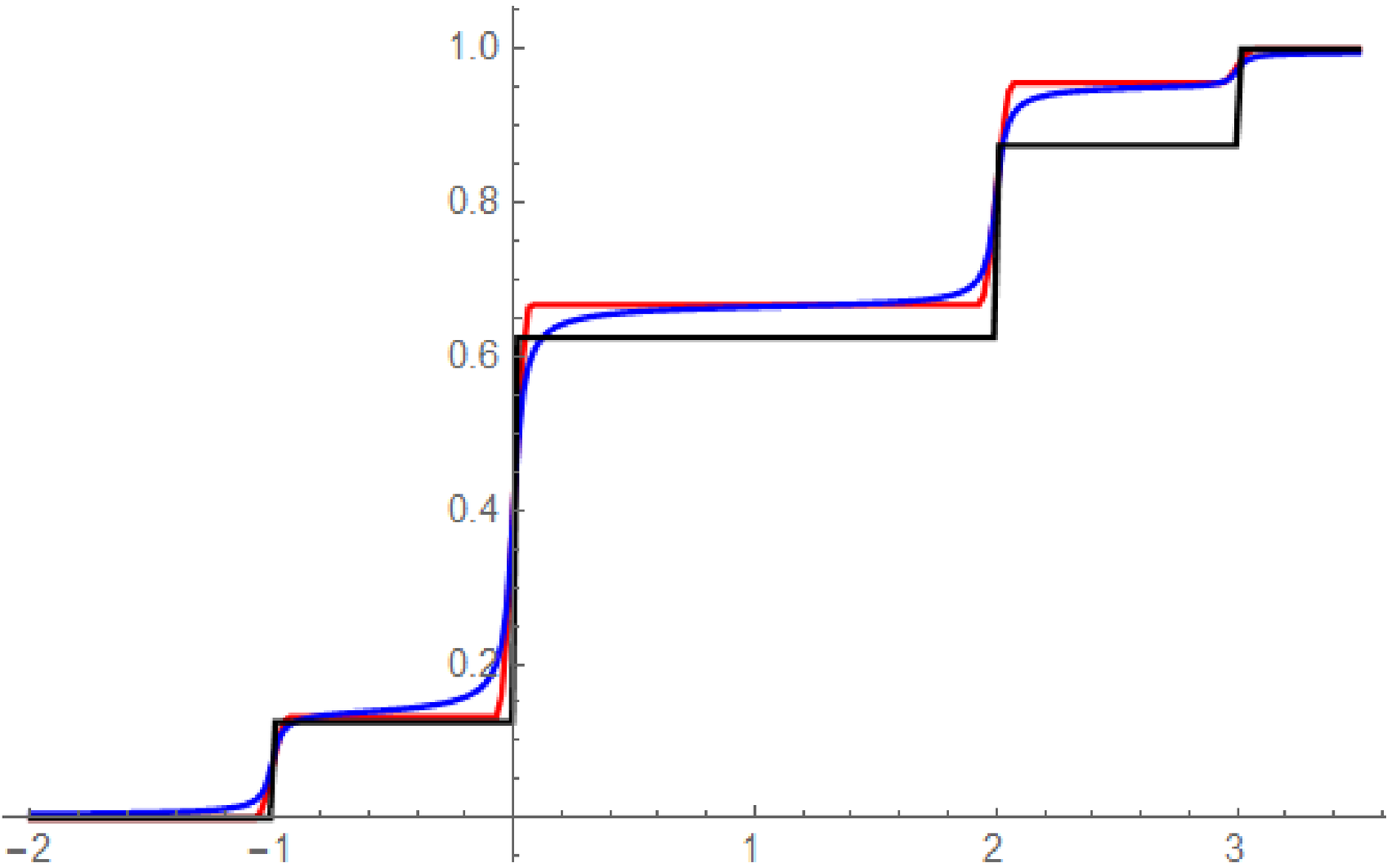}%
}%
.
\end{remark}

\subsection{Properties of the basic estimator}

Since the main aim of this chapter is not the exhausting presentation of
methods of density estimation, but only to indicate that there exists strong
connections of these issues with problems in the laws of large numbers and
relatively precise analysis of this variant of the density estimation method,
that can be presented in the iterative form. Consequently , we will very
briefly present main results of more than 30 years of research dedicated to
density estimation. A series of books as well as long review articles
dedicated to this problem was written. Density estimation methods, as it turns
out, one split into two big sets. Basing on \emph{mean square error}
\[
MISE(h,n)=E\int\left(  f_{n}(y)-f(y)\right)  ^{2}dy,
\]
and basing on the so-called \emph{\ }$L_{1}-$\emph{error}%
\[
MI(L_{1})E=E\int\left\vert f_{n}(y)-f(y)\right\vert dy.
\]
Of course, one can consider other metrics in functional spaces and there exist
papers considering them, but the two metrics defined by the above mentioned
formulae are the most important and about 99\% of the literature is dedicated
to them. Among the works on density estimation basing on $MISE$ let us mention
monograph of Silverman \cite{Silverman86} and a few papers among them, the
eldest as well as a few the newest since they contain references to the
earlier appears: \cite{Rosenblatt56},$\allowbreak$\cite{Parzen62}%
,$\allowbreak$\cite{Koronacki94}, \cite{SheaHettDo94}, \cite{Terrell92},
\cite{Terrell80}, \cite{Mammen97a}, \cite{Park98}, \cite{Wand95}.

As far as the second measure of errors one has to mention nomography of
\cite{devroy}.

We will now present a few general properties of this estimation method, and
then briefly some results concerning the optimal choice of the sequence of
coefficients $\left\{  h(N)\right\}  _{N\geq1}$ i.e. the so-called
\textquotedblright window width\textquotedblright\ (or \textquotedblright band
width\textquotedblright) and of the form of the kernel.

Let $X_{1},\ldots,X_{n}$ be a sequence $n$ i.i.d. random variables, having
density $f$. Previously we considered estimator%
\index{Estimator!basic of the density}
\begin{equation}
f_{n}(y)=\frac{1}{n}\sum_{i=1}^{n}\frac{1}{h_{n}}K\left(  \frac{y-X_{i}}%
{h_{n}}\right)  , \label{podstaw}%
\end{equation}
where $\left\{  h_{n}\right\}  $ is non-increasing number sequence, convergent
to zero. In $d$-dimensional version this estimator has the following form:
\begin{equation}
f_{n}(\mathbf{y})=\frac{1}{n}\sum_{i=1}^{N}\frac{1}{h_{n}^{d}}K\left(
\frac{\mathbf{y}-\mathbf{X}_{i}}{h_{n}^{d}}\right)  , \label{podstaw1}%
\end{equation}

where $\mathbf{y\in}%
\mathbb{R}
^{d}$ a $\left\{  \mathbf{X}_{1},\ldots,\mathbf{X}_{n}\right\}  $ is a simple
random sample of $d-$dimensional vectors with the density $f(\mathbf{x).}$

The basic properties of this estimator are presented by the following lemmas
and Theorems.

\begin{lemma}
\label{wlasnosci_estjadr}Estimator (\ref{podstaw1}) has the following
properties:\newline\emph{i)}
\begin{align*}
Ef_{n}(\mathbf{y})  &  =\int\frac{1}{h_{n}^{d}}K\left(  \frac{\mathbf{y}%
-\mathbf{x}}{h_{n}}\right)  f(\mathbf{x})d\mathbf{x}=\int K(\mathbf{x}%
)f(\mathbf{y}-h_{n}^{d}\mathbf{x})d\mathbf{x},\\
b(\mathbf{y})\overset{df}{=}Ef_{n}(\mathbf{y})-f(\mathbf{y})  &  =\int
K(\mathbf{x})\left(  f(\mathbf{y}-h_{n}^{d}\mathbf{x})-f(\mathbf{y})\right)
d\mathbf{x}%
\end{align*}
\emph{ii)}
\begin{align*}
\operatorname*{var}(f_{n}(\mathbf{y}))  &  =\frac{1}{n}\left[  \int\frac
{1}{h_{n}^{2d}}K^{2}\left(  \frac{\mathbf{y}-\mathbf{x}}{h_{n}^{d}}\right)
f(\mathbf{x})d\mathbf{x}-\left(  Ef_{n}(\mathbf{y})\right)  ^{2}\right] \\
&  =\frac{1}{n}\left[  \int\frac{1}{h_{n}^{d}}K^{2}(\mathbf{x})f(\mathbf{y}%
-\mathbf{x}h_{n}^{d})d\mathbf{x}-\left(  \int K(\mathbf{x})f(\mathbf{y}%
-h_{n}^{d}\mathbf{x})d\mathbf{x}\right)  ^{2}\right]  .
\end{align*}
\newline\emph{iii) }
\[
MISE(h,n)=E\int\left(  f_{n}(\mathbf{y})-f(\mathbf{y})\right)  ^{2}%
d\mathbf{y}+\int\left(  \operatorname*{var}(f_{n}(\mathbf{y}))+b^{2}%
(\mathbf{y})\right)  d\mathbf{y}%
\]

\end{lemma}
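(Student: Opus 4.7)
The plan is to verify the three assertions as straightforward consequences of linearity of expectation, the i.i.d.\ hypothesis, and a single change of variables. All three statements are essentially bookkeeping around the fact that each summand $\frac{1}{h_n^d}K\!\left(\frac{\mathbf{y}-\mathbf{X}_i}{h_n^d}\right)$ is a measurable function of $\mathbf{X}_i$ alone, and the $\mathbf{X}_i$ are i.i.d.\ with density $f$.

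First, for assertion \emph{i)}, I would write $Ef_n(\mathbf{y})$ using linearity and the identical distribution of the $\mathbf{X}_i$: all $n$ summands contribute the same expectation, so the factor $1/n$ cancels and one is left with $\int \frac{1}{h_n^d}K\!\left(\frac{\mathbf{y}-\mathbf{x}}{h_n^d}\right)f(\mathbf{x})\,d\mathbf{x}$. Applying the substitution $\mathbf{x}=\mathbf{y}-h_n^d\mathbf{u}$ (whose Jacobian is $h_n^{d\cdot d}$, canceling the $1/h_n^{d\cdot d}$ in front\footnote{Here we interpret the scaling in the estimator as defined by the paper; the substitution $\mathbf{u}=(\mathbf{y}-\mathbf{x})/h_n^d$ yields the claimed integrand.}) gives the second form $\int K(\mathbf{u})f(\mathbf{y}-h_n^d\mathbf{u})\,d\mathbf{u}$. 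Subtracting $f(\mathbf{y})$ and using $\int K(\mathbf{u})\,d\mathbf{u}=1$ (which lets us write $f(\mathbf{y})=\int K(\mathbf{u})f(\mathbf{y})\,d\mathbf{u}$) produces the stated bias formula.

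For \emph{ii)}, independence of the $\mathbf{X}_i$ implies that the variance of the sum equals the sum of the (identical) variances of the summands, giving a factor $n/n^2=1/n$. Then $\operatorname{var}(Z_1)=EZ_1^2-(EZ_1)^2$ applied to $Z_1=\frac{1}{h_n^d}K\!\left(\frac{\mathbf{y}-\mathbf{X}_1}{h_n^d}\right)$ gives the first displayed expression; the second follows by applying the same change of variables as in \emph{i)} to both the $K^2$ integral and $(Ef_n(\mathbf{y}))^2$.

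For \emph{iii)}, the plan is to apply the pointwise bias--variance decomposition $E(f_n(\mathbf{y})-f(\mathbf{y}))^2=\operatorname{var}(f_n(\mathbf{y}))+b^2(\mathbf{y})$ (which is the elementary identity $E(Z-a)^2=\operatorname{var}(Z)+(EZ-a)^2$ with $a=f(\mathbf{y})$ and $Z=f_n(\mathbf{y})$) and then integrate both sides over $\mathbf{y}\in\mathbb{R}^d$, exchanging expectation and integral by Fubini (justified since the integrand is nonnegative). The main, and essentially the only, obstacle is careful handling of the change of variables and its Jacobian under the $d$-dimensional scaling used in the paper's estimator; once that normalization is pinned down, the rest is immediate from the i.i.d.\ structure and the elementary mean-square decomposition.
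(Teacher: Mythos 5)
Your proposal is correct and follows essentially the same route as the paper's (very brief) proof: linearity plus identical distributions for the expectation, independence for the variance of the sum, an elementary change of variables, and the pointwise mean-square-error decomposition into variance plus squared bias, integrated over $\mathbf{y}$. Your explicit attention to the Jacobian of the scaling and to Fubini is a slightly more careful write-up of the same argument, and correctly reads assertion \emph{iii)} as the equality $E\int\left(f_{n}(\mathbf{y})-f(\mathbf{y})\right)^{2}d\mathbf{y}=\int\left(\operatorname*{var}(f_{n}(\mathbf{y}))+b^{2}(\mathbf{y})\right)d\mathbf{y}$ intended by the paper.
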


\begin{proof}
First equalities of both assertions one gets on the basis of assumptions on
the sameness of distributions of variables $X_{1},\ldots,X_{n}$.\ Further,
equalities follow from an elementary change of variables in respective
integrals and from the fact, that the mean square error is equal to the sum of
the variance and the square of bias.
\end{proof}

\begin{theorem}
[Devroye]%
\index{Theorem!Devroy}%
\cite{devroy}Let $K$ be any kernel on $%
\mathbb{R}
^{d}$. Let us denote $J_{n}=\int_{%
\mathbb{R}
}\left\vert f_{n}(y)-f(y)\right\vert dy$. Then the following statements are
equivalent: \newline\emph{i) }$J_{n}\underset{n\rightarrow\infty
}{\longrightarrow}0$ mod $P$, for some density $f$,\newline\emph{ii) }%
$J_{n}\underset{n\rightarrow\infty}{\longrightarrow}0$ mod $P$, for any
density $f,$\newline\emph{iii) }$J_{n}\underset{n\rightarrow\infty
}{\longrightarrow}0$ almost surely, for any $f$,\newline\emph{iv) }%
$J_{n}\underset{n\rightarrow\infty}{\longrightarrow}0$ exponentially (i.e. for
any $\varepsilon>0$ there exist such $r$ and $n_{0}$, that $P(J_{n}%
>\varepsilon)\leq\exp(-rn)$, for any $n\geq n_{0})$ for any $f$,\newline%
\emph{v) }$\underset{n\,\rightarrow\infty}{\lim}h_{n}=0$,
$\underset{n\,\rightarrow\infty}{\lim}nh_{n}^{d}=\infty.$
\end{theorem}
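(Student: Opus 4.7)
The plan is to close the cycle (v) $\Rightarrow$ (iv) $\Rightarrow$ (iii) $\Rightarrow$ (ii) $\Rightarrow$ (i) $\Rightarrow$ (v). The middle three implications are essentially free: (iv) $\Rightarrow$ (iii) is a direct application of the Borel--Cantelli lemma together with $\sum_n \exp(-rn) < \infty$; (iii) $\Rightarrow$ (ii) and (ii) $\Rightarrow$ (i) are trivial from the standard hierarchy of modes of convergence. So the real work is in (v) $\Rightarrow$ (iv) and the reverse implication (i) $\Rightarrow$ (v).

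For (v) $\Rightarrow$ (iv) my approach is to split $J_n$ into a deterministic and a concentration part via $J_n \le |J_n - EJ_n| + EJ_n$. First I would observe that replacing a single $X_i$ by an independent copy $X_i'$ changes $f_n$ in $L_1$ by at most $\frac{2}{n}\int |K| = \frac{2}{n}$; hence $J_n$, viewed as a function of the i.i.d.\ sample, has bounded differences of order $2/n$ and McDiarmid's inequality yields $P(|J_n - EJ_n| > \varepsilon) \le 2\exp(-n\varepsilon^2/2)$, which is already the exponential decay demanded by (iv). It then suffices to show $EJ_n \to 0$ under (v). I would split
\[
EJ_n \;\le\; \int |Ef_n(y) - f(y)|\,dy \;+\; \int E|f_n(y) - Ef_n(y)|\,dy.
\]
The first (bias) term tends to $0$ because $Ef_n = K_{h_n} * f$ and the condition $h_n \to 0$ together with Theorem \ref{wlasnosci_jadra}(ii) gives convergence in $L_1$. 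For the variance term I would use Jensen's inequality and Lemma \ref{wlasnosci_estjadr}(ii) together with a truncation of $K$ into bounded and tail parts, exploiting $nh_n^d \to \infty$ to force the bounded part to zero and using integrability of $K$ to make the tail part small uniformly in $n$.

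The main obstacle is (i) $\Rightarrow$ (v), which I would argue by contraposition in two cases. \textbf{Case A:} $h_n \not\to 0$; then along a subsequence $h_n \ge h_0 > 0$. Here I would exploit that $Ef_n = K_{h_n} * f$, pick any density $f$ whose characteristic function $\widehat f$ does not vanish outside a small neighbourhood of the origin, and show that $K_{h_n} * f \not\to f$ in $L_1$ via Scheffé's lemma combined with a Fourier argument (the Fourier transform of $Ef_n$ equals $\varphi(h_n t)\widehat f(t)$, which cannot converge pointwise to $\widehat f$ unless $h_n \to 0$, provided $f$ is chosen so that $\widehat f$ is not supported at zeros of $\varphi$); this contradicts $EJ_n \to 0$, which is forced by (i) and uniform integrability of $J_n \in [0,2]$. \textbf{Case B:} $nh_n^d \not\to \infty$; then along a subsequence $nh_n^d \le M < \infty$. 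Here the idea is that with probability bounded away from zero a positive fraction of the terms $h_n^{-d} K((y-X_i)/h_n)$ concentrate on disjoint sets whose total mass is bounded, so $f_n$ cannot be close in $L_1$ to any fixed continuous $f$. Concretely, one chooses a compactly supported continuous $f$ and a small ball $B$ where $f \ge c > 0$; the number of sample points falling in $B$ is at most $n$ (and with positive probability at most $c'n$), but the estimator's mass contributed in a neighbourhood of $B$ from those points is controlled by $c'n/(nh_n^d) \cdot \int K$, and comparing with $\int_B f$ one obtains a positive lower bound for $J_n$ with probability bounded away from $0$, contradicting (i). This combinatorial/geometric step in Case~B is the technical heart of the argument and is where the original proof of Devroye invokes his characterization of $L_1$ consistency; I would refer for the full execution to \cite{devroy}, using only that the reduction above isolates the exact two conditions to be contradicted.
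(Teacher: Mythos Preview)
The paper does not prove this theorem at all: it simply remarks that the proof ``is somewhat long and not very instructive'' and refers the reader to Devroye and Gy\"orfi's book. Your sketch therefore goes considerably further than the paper itself, and the overall architecture --- closing the cycle with McDiarmid's bounded-differences inequality for (v) $\Rightarrow$ (iv) and a bias/variance split for $EJ_n \to 0$ --- is correct and is indeed the standard modern route (and essentially the one in \cite{devroy}).

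There is, however, a genuine quantifier slip in your treatment of (i) $\Rightarrow$ (v). Statement (i) is existential in $f$: it asserts convergence for \emph{some} density. The contrapositive therefore requires you to show that, when (v) fails, $J_n \not\to 0$ in probability for \emph{every} density $f$. In Case~A you instead ``pick any density $f$ whose characteristic function does not vanish outside a small neighbourhood of the origin'' --- you are choosing a convenient $f$ rather than accepting the one handed to you, which is not a valid contraposition (and densities with compactly supported characteristic function do exist, so the restriction is genuine). The repair is straightforward: for an \emph{arbitrary} $f$, pass to a subsequence along which $h_{n_k} \to h \in (0,\infty]$. If $h < \infty$ then $K_{h_{n_k}} * f \to K_h * f$ in $L_1$ and $K_h * f \neq f$ (a nondegenerate kernel cannot act as the identity under convolution, by the Fourier argument applied in a neighbourhood of the origin where $\widehat f \neq 0$); if $h = \infty$ then $\|K_{h_{n_k}} * f - f\|_1 \to 2$ since the convolved law escapes every bounded set. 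Either way the bias is bounded away from $0$ along the subsequence; since $EJ_n \ge \int |Ef_n - f|$ and boundedness $J_n \le 2$ forces $EJ_n \to 0$ under (i), you obtain the contradiction for every $f$. Your Case~B remains, as you acknowledge, only a heuristic deferring to \cite{devroy}; given that the paper defers the entire proof there, this is no worse than the paper's own treatment.
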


\begin{proof}
Proof of this theorem is somewhat long and not very instructive It can be
found in the book\emph{L. Devroye, L. Gy\"{o}rfi: }\cite{devroy}.
\end{proof}

Further considerations, we will lead for the sake of clarity for the
one-dimensional case. We will select the best bandwidth and best kernel among
all kernels satisfying the following conditions:%

\[
\int tK(t)dt=0,\;\;\int t^{2}K(t)dt\overset{df}{=}\kappa^{2}<\infty,\;\;\int
K^{2}(t)dt<\infty
\]
i.e. kernels having zero mean, possessing variances and \textquotedblright
square integrable\textquotedblright.

It turns out that in order to be able to talk about optimal bandwidth one has
to assume, that estimated density is smooth, more precisely has square
integrable, continuous second derivative.

Hence let us suppose, that the estimated density $f$ has continuous second
derivative, i.e. one can expand $f$ at any point in Taylor series in the
following way:
\[
f(y-xh_{n})=f(y)-xh_{n}f^{\prime}(y)+\frac{1}{2}x^{2}h_{n}^{2}f^{\prime\prime
}(y)+o(x^{2}h_{n}^{2}).
\]
Using this expansion and basing on Lemma \ref{wlasnosci_estjadr} we get:
\begin{align*}
b(y)  &  =-h_{n}f^{\prime}(y)\int tK(t)dt+\frac{1}{2}h_{n}^{2}f^{\prime\prime
}(y)\kappa^{2}+o(h_{n}^{2})=\\
&  =\frac{1}{2}h_{n}^{2}f^{\prime\prime}(y)\kappa^{2}+o(h_{n}^{2}),\\
\operatorname*{var}(f_{n}(y))  &  =\frac{1}{n}\left[  \frac{1}{h_{n}}f(y)\int
K^{2}(t)dt+f^{\prime}(y)\int tK^{2}(t)dt+o(1)-\left(  f(y)+O(h_{n}%
^{2})\right)  ^{2}\right]  =\\
&  =\frac{1}{nh_{n}}f(y)\int K^{2}(t)dt+O(\frac{1}{n}).
\end{align*}
Summarizing, we have the following statement.

\begin{proposition}
\label{optymalne}Asymptotically (i.e. for large $n$) the best (in the sense of
minimum of mean square error) bandwidth is
\begin{equation}
h_{\min}=\left[  \frac{\int K^{2}(t)dt}{n\kappa^{4}\int\left(  f^{\prime
\prime}(y)\right)  ^{2}dy}\right]  ^{1/5}. \label{min_szer}%
\end{equation}
The best (in the same sense) kernel, is the Epanechnikov's kernel $K_{E}:$%
\begin{equation}
K_{E}(t)=\left\{
\begin{array}
[c]{ccc}%
\frac{3}{4\sqrt{5}}(1-\frac{1}{5}t^{2}) & ,if & \left\vert t\right\vert
\leq\sqrt{5}\\
0 & ,if & \left\vert t\right\vert >\sqrt{5}%
\end{array}
\right.  . \label{epanecznikov}%
\end{equation}

\end{proposition}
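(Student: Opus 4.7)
The plan is to exploit the asymptotic expansions of bias and variance already obtained just before the statement, assemble them into the asymptotic \emph{MISE}, minimize first over the bandwidth (elementary calculus), and then minimize the remaining functional over the kernel (calculus of variations with Lagrange multipliers).

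First I would write
\[
MISE(h_n,n)=\int\operatorname{var}(f_n(y))\,dy+\int b^2(y)\,dy
=\frac{\int K^2(t)\,dt}{n h_n}+\frac{h_n^{4}}{4}\kappa^{4}\int\bigl(f''(y)\bigr)^{2}dy+o\!\left(\tfrac{1}{n h_n}+h_n^{4}\right),
\]
using $\int f(y)\,dy=1$ and the expansions from Lemma \ref{wlasnosci_estjadr}. Treating this expression as a function $\Phi(h_n)$ of $h_n>0$ for fixed $K$ and $f$, differentiating and solving $\Phi'(h_n)=0$ yields the unique positive critical point
\[
h_n^{5}=\frac{\int K^2(t)\,dt}{n\,\kappa^{4}\int(f''(y))^{2}dy},
\]
which is the global minimum since $\Phi''>0$; this gives the formula \eqref{min_szer} for $h_{\min}$.

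Next I would substitute $h_{\min}$ back into $\Phi$. Both the variance and bias contributions are of the same order in $n$, and after a short calculation one obtains
\[
\Phi(h_{\min})=\frac{5}{4}\,n^{-4/5}\,\Bigl(\int\!K^{2}\Bigr)^{4/5}\kappa^{4/5}\Bigl(\!\int(f'')^{2}\Bigr)^{1/5}.
\]
Thus, up to a factor depending only on $f$, minimizing the asymptotic $MISE$ over admissible kernels reduces to minimizing the scale-invariant functional
\[
\Psi(K)=\Bigl(\!\int K^{2}(t)\,dt\Bigr)\kappa^{2}(K),\qquad \kappa^{2}(K)=\int t^{2}K(t)\,dt,
\]
subject to $K\ge 0$, $\int K=1$, $\int tK=0$. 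Invariance under the rescaling $K\mapsto a K(a\,\cdot)$ allows me to normalize $\kappa^{2}(K)=1$ and then minimize $\int K^{2}$ alone.

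The main work is this last step. Using Lagrange multipliers for the three moment constraints, the Euler condition forces any minimizer to coincide, on its support, with a polynomial $K(t)=A+Bt+Ct^{2}$; symmetry together with $\int tK=0$ kills $B$, and since minimizing $\int K^{2}$ prefers $K$ to vanish at the boundary of its support, the support is a symmetric interval $[-L,L]$ with $K(\pm L)=0$, giving $K(t)=A(1-t^{2}/L^{2})$ on $[-L,L]$. Enforcing $\int K=1$ and $\int t^{2}K=1$ yields two equations whose unique solution is $L=\sqrt{5}$ and $A=3/(4\sqrt{5})$, i.e.
\[
K(t)=\frac{3}{4\sqrt{5}}\Bigl(1-\tfrac{1}{5}t^{2}\Bigr)\mathbf{1}_{|t|\le\sqrt{5}},
\]
which is precisely $K_{E}$ in \eqref{epanecznikov}. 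That this critical point is indeed a global minimum follows by convexity of $K\mapsto\int K^{2}$ on the convex set of admissible kernels with the imposed linear moment constraints.

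The hard part will be the calculus-of-variations step: rigorously justifying that the minimizer is compactly supported (so that the Lagrangian KKT conditions actually imply the quadratic form on the whole support and $K=0$ elsewhere) and handling the inequality constraint $K\ge 0$ carefully. One standard way around this is to argue directly: among symmetric densities with $\kappa^{2}(K)=1$, the scaled Epanechnikov density minimizes $\int K^{2}$ by a short comparison argument using $K-K_{E}=(K-K_{E})\mathbf{1}_{|t|\le\sqrt{5}}+K\mathbf{1}_{|t|>\sqrt{5}}$ and the fact that on $[-\sqrt{5},\sqrt{5}]$ the quadratic $K_{E}$ is exactly an affine function of $1$ and $t^{2}$, so the cross term $\int K_{E}(K-K_{E})$ reduces via the moment constraints to a constant, leaving $\int K^{2}-\int K_{E}^{2}=\int(K-K_{E})^{2}\ge 0$.
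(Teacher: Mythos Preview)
Your argument follows the paper's line closely for the bandwidth part: the paper likewise writes the asymptotic $MISE\cong \frac{1}{nh_n}\int K^2+\frac14 h_n^4\kappa^4\int(f'')^2$, differentiates to obtain \eqref{min_szer}, substitutes back and arrives at $\frac54 n^{-4/5}(\int(f'')^2)^{1/5}\kappa^{4/5}(\int K^2)^{4/5}$. One small slip: the scale-invariant functional is $\kappa\int K^2$, not $\kappa^2\int K^2$ (under $K\mapsto aK(a\cdot)$ one has $\kappa\mapsto\kappa/a$ and $\int K^2\mapsto a\int K^2$); your $\Psi$ as written is not invariant. This is harmless because you then normalize $\kappa^2=1$ anyway, which is exactly what the paper does.

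The genuine difference is in the kernel step. The paper does \emph{not} carry out any variational argument: after reducing to the minimization of $\int K^2$ under $\int K=1$, $\int t^2K=1$, it simply cites Hodges and Lehmann \cite{Hodges56} for the solution $K_E$. You instead give a self-contained proof via Lagrange multipliers and a direct comparison. The Lagrangian heuristic is fine as motivation; your final comparison argument is the right idea but stated a bit loosely: the cross term $2\int K_E(K-K_E)$ does not vanish, it equals $-2\int_{|t|>\sqrt5}\bigl(\tfrac{3}{4\sqrt5}(1-\tfrac15 t^2)\bigr)K(t)\,dt\ge 0$ (extend the quadratic to all of $\mathbb{R}$, use the moment constraints, and note the extension is negative for $|t|>\sqrt5$). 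With that correction you get $\int K^2-\int K_E^2=\int(K-K_E)^2+\text{(nonneg.)}\ge 0$, which is a clean self-contained improvement over the paper's appeal to \cite{Hodges56}.
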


\begin{proof}
[Sketch of the proof]Basing on \ the above-mentioned calculations, we have
approximately:
\begin{align}
\operatorname*{var}(f_{n}(y))+b^{2}(y)  &  \cong\frac{1}{nh_{n}}f(y)\int
K^{2}(t)dt+\frac{1}{4}h_{n}^{4}\left(  f^{\prime\prime}(y)\right)  ^{2}%
\kappa^{4},\nonumber\\
MISE(h,n)  &  \cong\frac{1}{nh_{n}}\int K^{2}(t)dt+\frac{1}{4}h_{n}^{4}%
\kappa^{4}\int\left(  f^{\prime\prime}(y)\right)  ^{2}dy. \label{mise}%
\end{align}
As it is easy to check by differentiating, the band with $h$ minimizing the
above mentioned expression is (\ref{min_szer}). Let us put now this quantity
to (\ref{mise}). We get then
\[
\frac{5}{4}n^{-4/5}\left(  \int\left(  f^{\prime\prime}(y)\right)
^{2}dy\right)  ^{1/5}\kappa^{4/5}\left(  \int K^{2}(t)dt\right)  ^{4/5}.
\]
As it can be seen, the quantity
\[
\kappa^{4/5}\left(  \int K^{2}(t)dt\right)  ^{4/5}=\left(  \kappa\int
K^{2}(t)dt\right)  ^{4/5}.
\]
depends on of the form of the kernel. Hence, the best kernel would minimize
the quantity%
\begin{equation}
\kappa\int K^{2}(t)dt. \label{miarajadra}%
\end{equation}
Let us recall now, that when random variable $X$ has a density $f_{X}(x)$,
expectation $m$ and variance $\sigma^{2}$, then the random variable
$Y=\frac{X-m}{\sigma}$ has the density $f_{Y}(y)\allowbreak=\allowbreak\sigma
f_{X}(m+\sigma y)$, expectation zero and variance equal to $1$. Let us denote
$\tilde{K}(y)=\kappa K(\kappa y)$. We have%

\[
\int\left[  \tilde{K}(y)\right]  ^{2}dy=\int\kappa^{2}K^{2}(\kappa
y)dy=\kappa\int K^{2}(x)dx.
\]
Hence quantity (\ref{miarajadra}) does not depend on the variance of the
kernel and hence on the choice of the optimal kernel, we can select kernels
having variance equal to $1$ and of course satisfying remaining conditions,
that were imposed on the considered kernels. Hodges and Lehman in the paper
\cite{Hodges56} solved the problem of choosing the density minimizing the
quantity $\int K^{2}(x)dx$ under conditions $\int K(x)dx\allowbreak
=\allowbreak1$ and $\int x^{2}K(x)dx\allowbreak=\allowbreak1$. It turned out,
that this density is the so-called Epanechnikov's density (\ref{epanecznikov}).
\end{proof}

Epanechnikov's density has the following plot:%

\begin{center}
\includegraphics[
height=1.6812in,
width=2.6455in
]%
{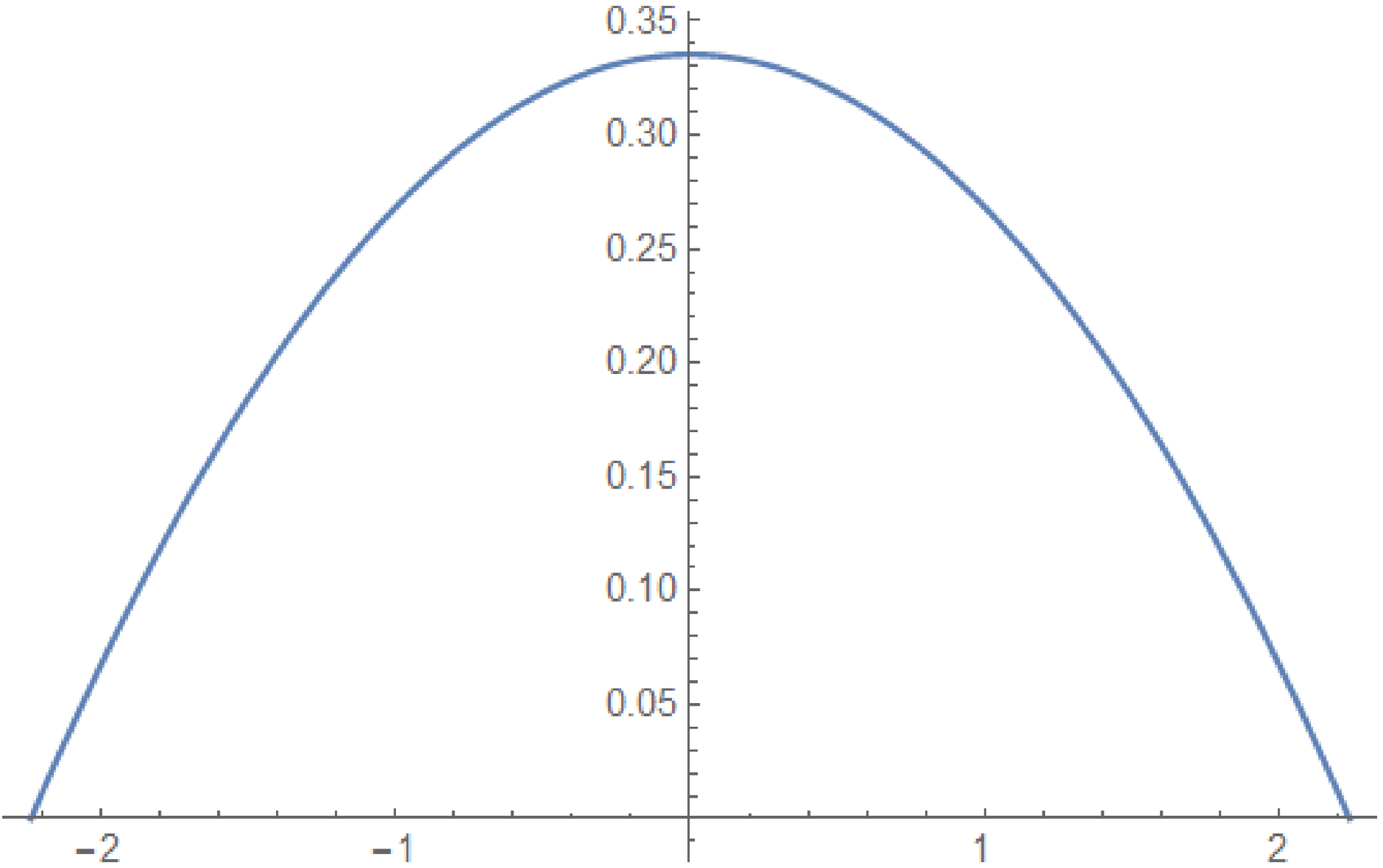}%
\end{center}
. Let us consider a functional\ $C(K)=\left(  \int t^{2}K(t)dt\right)
^{1/2}\int K^{2}(t)dt$ defined for symmetric kernels. As we remember for
Epanechnikov's kernel this functional assumes its smallest value equal
\[
\int_{-\sqrt{5}}^{\sqrt{5}}\left(  K_{E}(t)\right)  ^{2}d\allowbreak=\frac
{3}{5\sqrt{5}}.
\]
In the above mentioned monograph of Silverman \cite{Silverman86} one defines
the following quantity
\[
eff(K)=\frac{C(K_{E})}{C(K)}=\frac{3}{5\sqrt{5}}\frac{1}{\left(  \int
t^{2}K(t)dt\right)  ^{1/2}\int K^{2}(t)dt},
\]
called kernel's effectiveness or effectiveness of the kernel $K$. This
parameter was calculated for several substantially different symmetric
kernels. It turned out that, from this point of view the difference e.g.
between Epanechnikov's kernel and the rectangular one equal to $1/2$ for
$\left\vert t\right\vert \leq1$ was very small. Summarizing effectiveness of
many popular kernels is close to $1$ and in any case greater than $0.9$.

Proposition \ref{optymalne} has unfortunately only theoretical meaning, since
it is not known how much is $\int\left(  f^{\prime\prime}(x)\right)  ^{2}dx$.
Jones, Marron, and Sheather in a review paper \cite{Jones96}, discuss
different estimation methods of this parameter on the basis the same
measurements, that are used for the density estimation. During the last $20$
years, many of such estimators we constructed. We will not discuss this
problem since density estimation is not the main subject of this book. We
refer the reader to the literature.

\subsection{Modifications\label{gest_regresyjna}}

Since, those considered density estimators were in of the iterative form,
further analysis will be dedicated to the following modifications of the basic
estimator, whose iterative form nicely fits the assumptions of this book.
Namely, let us consider instead of the estimator (\ref{podstaw}) the following one:%

\index{Estimator!Iterative of density}
\begin{equation}
\hat{f}_{n}(y)=\frac{1}{n}\sum_{i=1}^{n}\frac{1}{h_{i}}K\left(  \frac{y-X_{i}%
}{h_{i}}\right)  , \label{est_iteracyjny}%
\end{equation}
where sequence $\left\{  h_{i}\right\}  $ is some, convergent to zero sequence
of positive numbers, while $\{X_{n}\}_{n\geq1}$ is a sequence of independent
random variables having same one-dimensional distributions, possessing
density. This estimator has the following recursive form:
\begin{equation}
\hat{f}_{n+1}(y)=\left(  1-\frac{1}{n+1}\right)  \hat{f}_{n}(y)+\frac
{1}{\left(  n+1\right)  h_{n+1}}K\left(  \frac{y-X_{n+1}}{h_{n+1}}\right)  .
\label{iter}%
\end{equation}

Let us introduce also the following denotation for the sake of brevity:
\[
\bar{f}_{n}(y)=E\hat{f}_{n}(y).
\]
Taking the expectation of both sides of tha equality (\ref{iter}) we get
\begin{align}
\bar{f}_{n+1}(y)  &  =\left(  1-\frac{1}{n+1}\right)  \bar{f}_{n}(y)+\frac
{1}{\left(  n+1\right)  h_{n+1}}EK\left(  \frac{y-X_{n+1}}{h_{n+1}}\right)
\label{postac_Efn}\\
&  =\left(  1-\frac{1}{n+1}\right)  \bar{f}_{n}(y)+\frac{1}{\left(
n+1\right)  }\int_{%
\mathbb{R}
}K(z)f(y-zh_{n+1})dz.\nonumber
\end{align}

Let us denote $\mathcal{G}_{n}=\sigma(X_{1},\ldots,X_{n})$. In further
analysis of the estimator $\hat{f}_{n}$ the following lemma will be of use%
$>$%

\begin{lemma}
\label{szacunki_EK_2}If $\underset{n}{\sup}\int_{%
\mathbb{R}
}K^{2}(z)f(y-zh_{n})dz<\infty$ and function $\underset{\left\vert y\right\vert
\geq\left\vert x\right\vert }{\sup}K(y)$ is integrable and random variables
$\{X_{n}\}_{n\geq1}$ are independent, then\newline i)
\begin{equation}
EK^{2}\left(  \frac{y-X_{n}}{h_{n}}\right)  =\allowbreak h_{n}\int_{%
\mathbb{R}
}K^{2}(z)f(y-zh_{n})dz, \label{EK_kwadrat}%
\end{equation}
\newline ii)
\begin{gather}
E\left\vert \int_{%
\mathbb{R}
}W_{n-1}(y)\left[  K\left(  \frac{y-X_{n}}{h_{n}}\right)  -EK\left(
\frac{y-X_{n}}{h_{n}}\right)  \right]  dy\right\vert ^{2}\leq\label{E_W_K_2}\\
\leq h_{n}E\left(  \int_{%
\mathbb{R}
}W_{n-1}^{2}(y)dy\right)  \int_{%
\mathbb{R}
}K^{2}(z)dz,\nonumber\\
E\left\{  \int_{%
\mathbb{R}
}W_{n-1}(y)\left[  K\left(  \frac{y-X_{n}}{h_{n}}\right)  -EK\left(
\frac{y-X_{n}}{h_{n}}\right)  \right]  dy\right\}  =0, \label{EW_K}%
\end{gather}
where $W_{n-1}(y)$ is measurable with respect to $\mathcal{G}_{n-1}$ random
variable such that
\[
E\int_{%
\mathbb{R}
}W_{n-1}^{2}(y)dy<\infty,
\]
\textit{iii)}
\[
\frac{1}{h_{n}}EK\left(  \frac{x-X_{n}}{h_{n}}\right)  \underset{n\rightarrow
\infty}{\longrightarrow}f(x),
\]
for almost all $x\in%
\mathbb{R}
.$\newline
\end{lemma}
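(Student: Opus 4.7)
The plan is to treat the three assertions separately, each of them following from a straightforward application of either a change of variables, a Fubini/conditioning argument, or Theorem \ref{wlasnosci_jadra} already proved.

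For \emph{i)}, I would simply write out the expectation as an integral against $f$ and perform the substitution $z=(y-x)/h_n$. That is,
\[
EK^{2}\!\left(\frac{y-X_n}{h_n}\right)=\int_{\mathbb{R}}K^{2}\!\left(\frac{y-x}{h_n}\right)f(x)\,dx,
\]
and the change $x=y-zh_n$, $dx=h_n\,dz$, yields the claimed identity. The standing hypothesis $\sup_n\int K^2(z)f(y-zh_n)\,dz<\infty$ guarantees finiteness.

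For \emph{ii)}, the key observation is that, by independence, $X_n$ is independent of $\mathcal{G}_{n-1}$, so $E\bigl(K((y-X_n)/h_n)\mid \mathcal{G}_{n-1}\bigr)=EK((y-X_n)/h_n)$ for each $y$. Setting
\[
Y=\int_{\mathbb{R}}W_{n-1}(y)\Bigl[K\!\Bigl(\tfrac{y-X_n}{h_n}\Bigr)-EK\!\Bigl(\tfrac{y-X_n}{h_n}\Bigr)\Bigr]dy,
\]
Fubini's theorem (applied in the conditional sense, since $W_{n-1}$ is $\mathcal{G}_{n-1}$-measurable) immediately gives $E(Y\mid\mathcal{G}_{n-1})=0$, which upon taking expectations proves (\ref{EW_K}). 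For the $L^2$-bound I would estimate $E(Y^2\mid\mathcal{G}_{n-1})$ by the conditional second moment of $V(X_n):=\int W_{n-1}(y)K((y-X_n)/h_n)\,dy$, and then apply the Cauchy--Schwarz inequality in the $y$-variable:
\[
V(X_n)^{2}\le\Bigl(\int W_{n-1}^{2}(y)\,dy\Bigr)\Bigl(\int K^{2}\!\Bigl(\tfrac{y-X_n}{h_n}\Bigr)dy\Bigr).
\]
The inner integral equals $h_n\int K^{2}(z)\,dz$ (again by the change of variables $z=(y-X_n)/h_n$, which, crucially, produces a constant independent of $X_n$), so
\[
E(Y^{2}\mid\mathcal{G}_{n-1})\le h_n\Bigl(\int W_{n-1}^{2}(y)\,dy\Bigr)\int K^{2}(z)\,dz.
\]
Taking unconditional expectations finishes (\ref{E_W_K_2}).

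For \emph{iii)}, I would write
\[
\frac{1}{h_n}EK\!\left(\frac{x-X_n}{h_n}\right)=\int_{\mathbb{R}}\frac{1}{h_n}K\!\left(\frac{x-u}{h_n}\right)f(u)\,du,
\]
which is exactly the convolution appearing in assertion \emph{iii)} of Theorem \ref{wlasnosci_jadra}. The hypothesis that $\hat K(x)=\sup_{|y|\ge|x|}|K(y)|$ is integrable is precisely the condition required there, and the conclusion yields pointwise convergence almost everywhere to $f(x)$.

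The only slightly delicate point in the whole argument is the interchange of integration in $y$ with expectation in $X_n$ in part \emph{ii)}, which is the reason the hypothesis $E\int W_{n-1}^{2}(y)\,dy<\infty$ is imposed; this guarantees absolute integrability via the Cauchy--Schwarz bound above and legitimizes Fubini. Apart from that, every step is a one-line calculation.
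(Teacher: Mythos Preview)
Your proof is correct and follows essentially the same route as the paper: change of variables for (i), Cauchy--Schwarz in $y$ plus conditioning for (ii), and the appeal to Theorem~\ref{wlasnosci_jadra}(iii) for (iii). The only cosmetic difference is in the $L^2$ bound of (ii): you apply Cauchy--Schwarz to the uncentered $V(X_n)$ and use $\operatorname{Var}\le E(\cdot)^2$ at the conditional level, whereas the paper applies Cauchy--Schwarz to the centered integrand, bounds the resulting variance pointwise in $y$ by $EK^2((y-X_n)/h_n)=h_n\int K^2(z)f(y-zh_n)\,dz$ via (i), and then integrates in $y$ using $\int f=1$; your version avoids passing through $f$ and is marginally cleaner.
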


\begin{proof}
\textit{i)} We have:
\begin{align*}
EK^{2}\left(  \frac{y-X_{n}}{h_{n}}\right)   &  =\int_{%
\mathbb{R}
}K^{2}\left(  \frac{y-x}{h_{n}}\right)  f(x)dx\allowbreak=\\
&  =h_{n}\int_{%
\mathbb{R}
}K^{2}(z)f(y-zh_{n})dz,
\end{align*}
after change of variables $z=(y-x)/h_{n}.$

\textit{ii)} Moreover, we have:
\begin{gather*}
E\left\vert \int_{%
\mathbb{R}
}W_{n-1}(y)\left(  K\left(  \frac{y-X_{n}}{h_{n}}\right)  -EK\left(
\frac{y-X_{n}}{h_{n}}\right)  \right)  dy\right\vert ^{2}\leq\\
\leq E\left\vert \sqrt{\int_{%
\mathbb{R}
}W_{n-1}^{2}(y)dy\int_{%
\mathbb{R}
}\left(  K\left(  \frac{y-X_{n}}{h_{n}}\right)  -EK\left(  \frac{y-X_{n}%
}{h_{n}}\right)  \right)  ^{2}dy}\right\vert ^{2}=\\
=E\left(  \int_{%
\mathbb{R}
}W_{n-1}^{2}(y)dy\int_{%
\mathbb{R}
}\left(  K\left(  \frac{y-X_{n}}{h_{n}}\right)  -EK\left(  \frac{y-X_{n}%
}{h_{n}}\right)  \right)  ^{2}dy\right)  =\\
=E\left(  \int_{%
\mathbb{R}
}W_{n-1}^{2}(y)dyE\left(  \int_{%
\mathbb{R}
}\left(  K\left(  \frac{y-X_{n}}{h_{n}}\right)  -EK\left(  \frac{y-X_{n}%
}{h_{n}}\right)  \right)  ^{2}dy|\mathcal{G}_{n-1}\right)  \right)  =\\
=E\left(  \int_{%
\mathbb{R}
}W_{n-1}^{2}(y)dy\int_{%
\mathbb{R}
}E\left(  \left(  K\left(  \frac{y-X_{n}}{h_{n}}\right)  -EK\left(
\frac{y-X_{n}}{h_{n}}\right)  \right)  ^{2}|\mathcal{G}_{n-1}\right)
dy\right)
\end{gather*}
Taking advantage of the fact, that $\operatorname*{var}(X)\leq EX^{2}$ and the
property $i)$ we get:
\begin{gather*}
E\left(  \int_{%
\mathbb{R}
}W_{n-1}^{2}(y)dy\int_{%
\mathbb{R}
}E\left(  \left(  K\left(  \frac{y-X_{n}}{h_{n}}\right)  -EK\left(
\frac{y-X_{n}}{h_{n}}\right)  \right)  ^{2}|\mathcal{G}_{n-1}\right)
dy\right)  \leq\\
\leq h_{n}E\left(  \int_{%
\mathbb{R}
}W_{n-1}^{2}(y)dy\int_{%
\mathbb{R}
}\int_{%
\mathbb{R}
}K^{2}(z)f(y-h_{n}z)dzdy\right)  =\\
=h_{n}E\int_{%
\mathbb{R}
}W_{n-1}^{2}(y)dy\int_{%
\mathbb{R}
}K^{2}(z)dz.
\end{gather*}
In above-mentioned calculations, we used Schwarz inequality and the properties
conditional expectation. Knowing, that one can interchange integration with
respect to $y$ and calculating expectation on the basis of (\ref{E_W_K_2}), we
change this order in (\ref{EW_K}) and we get:
\begin{align*}
&  E\int_{%
\mathbb{R}
}W_{n}(y)\left(  K\left(  \frac{y-X_{n+1}}{h_{n+1}}\right)  -EK\left(
\frac{y-X_{n+1}}{h_{n+1}}\right)  \right)  dy\\
&  =\int_{%
\mathbb{R}
}E\left(  W_{n}(y)E\left(  K\left(  \frac{y-X_{n+1}}{h_{n+1}}\right)
-EK\left(  \frac{y-X_{n+1}}{h_{n+1}}\right)  |\mathcal{G}_{n}\right)  \right)
dy=0.
\end{align*}

\textit{iii)} is a simple consequence of the formula
\[
\frac{1}{h_{n}}EK(\frac{x-X_{n}}{h_{n}})=\int_{%
\mathbb{R}
}K(z)f(x-zh_{n})dz,
\]
and assumed convergence $h_{n}\underset{n\rightarrow\infty}{\longrightarrow}0$
and assertion \textit{iii)} of Theorem \ref{wlasnosci_jadra}.
\end{proof}

The following theorem we get immediately.

\begin{theorem}
\label{o_Ef_n}If sequence $\{X_{n}\}_{n\geq1}$ consists of i.i.d. random
variables, number sequence $\left\{  h_{n}\right\}  _{n\geq1}\,$is such that
\[
\sum_{n\geq1}\frac{1}{n^{2}h_{n}}<\infty\text{ and }\underset{n}{\sup}\int_{%
\mathbb{R}
}K^{2}(z)f(y-h_{n}z)dz<\infty,
\]
for almost all $y$, then:
\begin{equation}
\hat{f}_{n}(y)-E\hat{f}_{n}(y)\underset{n\rightarrow\infty}{\longrightarrow
}0\,\,\,\,a.s.\text{for almost all }y\in%
\mathbb{R}
, \label{1zb}%
\end{equation}%
\begin{equation}
\int_{%
\mathbb{R}
}\left(  \hat{f}_{n}(y)-E\hat{f}_{n}(y)\right)  ^{2}dy\underset{n\rightarrow
\infty}{\longrightarrow}0\,\,\,a.s., \label{2zb}%
\end{equation}

\begin{equation}
\int_{%
\mathbb{R}
}\left\vert \hat{f}_{n}(y)-f(y)\right\vert dy\underset{n\rightarrow
\infty}{\longrightarrow}0\;\;a.s.. \label{4zb}%
\end{equation}

\begin{equation}
\hat{f}_{n}(y)\underset{n\rightarrow\infty}{\rightarrow}%
f(y)\,\;\;a.s.\text{for almost all }y\in%
\mathbb{R}
, \label{3zb}%
\end{equation}

\end{theorem}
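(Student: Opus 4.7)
Set $V_n(y)=\hat f_n(y)-\bar f_n(y)$ and $\eta_{n+1}(y)=\frac{1}{h_{n+1}}K\!\left(\frac{y-X_{n+1}}{h_{n+1}}\right)-\frac{1}{h_{n+1}}EK\!\left(\frac{y-X_{n+1}}{h_{n+1}}\right)$, $\mu_n=1/(n+1)$. Subtracting (\ref{postac_Efn}) from (\ref{iter}) yields
\[
V_{n+1}(y)=(1-\mu_n)V_n(y)+\mu_n\eta_{n+1}(y),
\]
which is precisely the Riesz-means recursion studied in chapter \ref{zbiez}. For a.e.\ $y$, the sequence $\{\eta_i(y)\}_{i\geq1}$ is independent and centered, and by Lemma \ref{szacunki_EK_2}(i), $\operatorname{var}(\eta_i(y))\leq\frac{1}{h_i}\int K^2(z)f(y-zh_i)dz\leq C(y)/h_i$. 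Thus $\sum_i\mu_i^2\operatorname{var}(\eta_{i+1}(y))\leq C(y)\sum_i\frac{1}{(i+1)^2 h_{i+1}}<\infty$, so by Theorem \ref{ozbwL2} the series $\sum_i\mu_i\eta_{i+1}(y)$ converges a.s., and then Lemma \ref{podstawowy} gives $V_n(y)\to 0$ a.s. This proves (\ref{1zb}).

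For (\ref{2zb}), set $U_n=\int V_n^2(y)dy$, $A_n=\int V_n(y)\eta_{n+1}(y)dy$, $B_{n+1}=\int\eta_{n+1}^2(y)dy$. Squaring the recursion for $V_n$ and integrating gives
\[
U_{n+1}=(1-\mu_n)^2 U_n+2\mu_n(1-\mu_n)A_n+\mu_n^2 B_{n+1}.
\]
By Lemma \ref{szacunki_EK_2}(ii), $E(A_n\mid\mathcal G_n)=0$, hence $E(U_{n+1}\mid\mathcal G_n)=(1-\mu_n)^2 U_n+\mu_n^2 EB_{n+1}$. Moreover, Fubini combined with Lemma \ref{szacunki_EK_2}(i) gives $EB_{n+1}\leq\frac{1}{h_{n+1}}\int K^2(z)dz$, so $\sum_n\mu_n^2 EB_{n+1}<\infty$ by hypothesis. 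Introduce
\[
\tilde U_n=U_n+\sum_{k\geq n}\mu_k^2 EB_{k+1}.
\]
Then $\tilde U_n\geq 0$ and $E(\tilde U_{n+1}\mid\mathcal G_n)=\tilde U_n-\mu_n(2-\mu_n)U_n\leq\tilde U_n$, so $\{\tilde U_n\}$ is a nonnegative supermartingale and converges a.s.\ to a finite limit by Doob's theorem; since the tail sum vanishes, $U_n$ converges a.s.\ to some $U_\infty\geq 0$. Taking expectations telescopes to $\sum_n\mu_n(2-\mu_n)EU_n\leq E\tilde U_1<\infty$, and since $\sum\mu_n=\infty$, $\liminf EU_n=0$; Fatou's lemma then yields $EU_\infty\leq 0$, so $U_n\to 0$ a.s.

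For (\ref{3zb}), combine (\ref{1zb}) with a.e.\ convergence of $\bar f_n$ to $f$. From (\ref{postac_Efn}), $\bar f_n(y)=\frac{1}{n}\sum_{i=1}^n g_i(y)$ with $g_i(y)=\int K(z)f(y-zh_i)dz$; under the standard tail-domination assumption on $K$ (Theorem \ref{wlasnosci_jadra}(iii)), $g_i(y)\to f(y)$ for a.e.\ $y$, and the Ces\`aro means inherit this pointwise convergence. Hence $\hat f_n(y)=\bar f_n(y)+V_n(y)\to f(y)$ a.s.\ for a.e.\ $y$, giving (\ref{3zb}). Assertion (\ref{4zb}) then follows by applying Glick's theorem to (\ref{3zb}).

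The main obstacle is Step 2: passing from convergence of $EU_n$ (routine via Lemma \ref{podstawowy}) to a.s.\ convergence of the random $L^2$-errors $U_n$. A naive application of Lemma \ref{podstawowy} to the recursion for $U_n$ fails because the cross term $2\mu_n(1-\mu_n)A_n$ has fluctuating sign and its variance $\lesssim EU_n/h_{n+1}$ must be controlled before $EU_n$ is known to be small. The supermartingale trick $\tilde U_n$ circumvents this circular dependence by absorbing the nonnegative noise contribution into the process itself, leaving a nonpositive drift that forces convergence.
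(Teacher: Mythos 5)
Your proof is correct, and for (\ref{1zb}), (\ref{3zb}) and (\ref{4zb}) it follows essentially the paper's own argument: a.s.\ convergence of the martingale series $\sum_{n}\mu_{n}\eta_{n+1}(y)$ from the variance bound of Lemma \ref{szacunki_EK_2}(i), then the numerical Lemma \ref{podstawowy} pathwise; regularity of Ces\`aro means plus Lemma \ref{szacunki_EK_2}(iii) for the bias; and Glick/Scheff\'e for the $L_{1}$ statement. Where you genuinely diverge is (\ref{2zb}). The paper argues in three steps: it first applies Lemma \ref{podstawowy} to the recursion for $EM_{n}$ (the cross term vanishes in mean by (\ref{EW_K})) to get $EM_{n}\rightarrow0$; it then uses this decay to bound the variance sum of the cross-term martingale $\sum_{n}\frac{1}{nh_{n}}\int T_{n-1}(y)\left(K-EK\right)dy$ via (\ref{E_W_K_2}), obtaining its a.s.\ convergence; and finally it applies Lemma \ref{podstawowy} pathwise to the recursion for $M_{n}$, the quadratic noise series converging a.s.\ by Corollary \ref{zbieznoscbezwzgledna}. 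You instead run a Robbins--Siegmund-type argument: absorbing the deterministic tail $\sum_{k\geq n}\mu_{k}^{2}EB_{k+1}$ into $\tilde{U}_{n}$ makes the drift nonpositive, Doob's theorem gives a.s.\ convergence of $U_{n}$, and the telescoped bound $\sum_{n}\mu_{n}(2-\mu_{n})EU_{n}<\infty$ together with $\sum_{n}\mu_{n}=\infty$ and Fatou identifies the limit as zero. Your route dispenses with the separate cross-term martingale and the pathwise numerical lemma; its only cost is that you invoke Lemma \ref{szacunki_EK_2}(ii) for the conditional statements $E(A_{n}\mid\mathcal{G}_{n})=0$ and $E(B_{n+1}\mid\mathcal{G}_{n})=EB_{n+1}$, whereas the lemma as stated gives only the unconditional mean; a one-line remark that the lemma's own computation (independence of $X_{n+1}$ from $\mathcal{G}_{n}$, plus the inductively verified integrability $EU_{n}<\infty$ which also legitimizes the supermartingale property) yields the conditional versions closes this. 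The paper's route gives the extra byproduct that $EM_{n}\rightarrow0$ and $\sum_{n}\mu_{n}M_{n}<\infty$ a.s.; yours is more self-contained, and the ``circularity'' you flag is in fact also resolved in the paper, simply by proving $EM_{n}\rightarrow0$ before touching the random cross term.
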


\begin{proof}
Let us denote $T_{n}(y)=\hat{f}_{n}(y)-E\hat{f}_{n}(y)$. Taking the
expectation of both sides of (\ref{iter}) and subtracting from both sides of
this equation we get:
\begin{gather*}
T_{n+1}(y)=\left(  1-\frac{1}{n+1}\right)  T_{n}(y)+\\
+\frac{1}{\left(  n+1\right)  h_{n+1}}\left(  K\left(  \frac{y-X_{n+1}%
}{h_{n+1}}\right)  -EK\left(  \frac{y-X_{n+1}}{h_{n+1}}\right)  \right)  .
\end{gather*}
Let us consider sequence $\left\{  \sum_{n=0}^{N}\frac{1}{\left(  n+1\right)
h_{n+1}}\left(  K\left(  \frac{y-X_{n+1}}{h_{n+1}}\right)  -EK\left(
\frac{y-X_{n+1}}{h_{n+1}}\right)  \right)  \right\}  _{N\geq1}$. It is a
martingale with respect to filtration $\left\{  \mathcal{G}_{N}\right\}
_{N\geq1}$. This martingale\ is convergent for example, when the series:
\[
\sum_{n\geq1}\frac{1}{n^{2}h_{n}^{2}}EK^{2}\left(  \frac{y-X_{n}}{h_{n}%
}\right)
\]
convergent is. Taking advantage of our assumption and assertion \textit{i)
}of\textit{\ }Lemma \ref{szacunki_EK_2}, we see that this series is
convergent, if only series $\sum_{n\geq1}\frac{1}{n^{2}h_{n}}$ is convergent.
It is so since we assumed this convergence. Hence, we have (\ref{1zb}).

To show (\ref{2zb}) let us denote additionally $M_{n}=\int T_{n}^{2}(y)dy$.
For the sequence of the random variables $\left\{  M_{n}\right\}  $ we get the
following recurrent relationship:
\begin{gather}
M_{n+1}=\left(  1-\frac{1}{n+1}\right)  ^{2}M_{n}+\label{iteracje_M_n}\\
+\frac{2(1-\frac{1}{n+1})}{(n+1)h_{n+1}}\int T_{n}(y)\left(  K\left(
\frac{y-X_{n+1}}{h_{n+1}}\right)  -EK\left(  \frac{y-X_{n+1}}{h_{n+1}}\right)
\right)  dy+\nonumber\\
+\frac{1}{(n+1)^{2}h_{n+1}^{2}}\int\left(  K\left(  \frac{y-X_{n+1}}{h_{n+1}%
}\right)  -EK\left(  \frac{y-X_{n+1}}{h_{n+1}}\right)  \right)  ^{2}%
dy.\nonumber
\end{gather}
Let us apply property (\ref{EW_K}) of Lemma \ref{szacunki_EK_2} for
$W_{n}=T_{n}$ and assertion \ref{E_W_K_2} of this lemma. We get then the
following recurrent relationship
\begin{gather*}
EM_{n+1}=\left(  1-\frac{1}{n+1}\right)  ^{2}EM_{n}+\frac{1}{(n+1)^{2}%
h_{n+1}^{2}}\times\\
\times\int E\left(  K\left(  \frac{y-X_{n+1}}{h_{n+1}}\right)  -EK\left(
\frac{y-X_{n+1}}{h_{n+1}}\right)  \right)  ^{2}dy\leq\\
\leq\left(  1-\frac{1}{n+1}\right)  ^{2}EM_{n}+\frac{1}{(n+1)^{2}h_{n+1}^{2}%
}\times\\
\times h_{n+1}\int\int K^{2}(z)f(y-zh_{n+1})dzdy,
\end{gather*}
Taking advantage of assumptions and the convergence of the series $\sum
_{n\geq1}\frac{1}{n^{2}h_{n}}$ on the basis of Lemma \ref{podstawowy} we see
that the sequence $\left\{  EM_{n}\right\}  _{n\geq0}$ converges to
zero.\ Further, let us consider sequence random variables:
\begin{equation}
\left\{  \sum_{n=1}^{N}\frac{1}{nh_{n}}\int T_{n-1}(y)\left(  K\left(
\frac{y-X_{n}}{h_{n}}\right)  -EK\left(  \frac{y-X_{n}}{h_{n}}\right)
\right)  dy\right\}  _{N\geq1}. \label{martngal_pomocniczy}%
\end{equation}
Taking advantage of assertion (\ref{EW_K}) of Lemma \ref{szacunki_EK_2} we see
it is a martingale with respect to filtration $\left\{  \mathcal{G}%
_{N}\right\}  _{N\geq1}$. This martingale converges almost surely, if for
example
\[
\sum_{n=1}^{\infty}\frac{1}{n^{2}h_{n}^{2}}E\left(  \int T_{n-1}(y)\left(
K\left(  \frac{y-X_{n}}{h_{n}}\right)  -EK\left(  \frac{y-X_{n}}{h_{n}%
}\right)  \right)  dy\right)  ^{2}<\infty.
\]
On the base of assertion (\ref{E_W_K_2}) of Lemma \ref{szacunki_EK_2} we see,
that this condition is satisfied, when
\[
\sum_{n=1}^{\infty}\frac{1}{n^{2}h_{n}}EM_{n-1}\int K^{2}(z)dz<\infty.
\]
This condition is satisfied, since the sequence $\left\{  EM_{n}\right\}  $
converges to zero. Hence, returning to the relationship (\ref{iteracje_M_n}),
on the basis of Lemma \ref{podstawowy} we deduce that the sequence of the
random variables $\left\{  M_{n}\right\}  $ converges to zero almost surely,
when converge the following series
\[
\sum_{n=1}^{\infty}\frac{1}{n^{2}h_{n}^{2}}\int\left(  K\left(  \frac{y-X_{n}%
}{h_{n}}\right)  -EK\left(  \frac{y-X_{n}}{h_{n}}\right)  \right)  ^{2}dy
\]
and
\[
\sum_{n=1}^{\infty}\frac{2(1-\frac{1}{n})}{nh_{n}}\int T_{n-1}(y)\left(
K\left(  \frac{y-X_{n}}{h_{n}}\right)  -EK\left(  \frac{y-X_{n}}{h_{n}%
}\right)  \right)  dy.
\]
Almost everywhere convergence of the second one was already above proved.
Convergence almost everywhere of the first one follows from the observation,
that its elements are positive, the inequality $\operatorname*{var}(X)\leq
EX^{2}$ and the equality (\ref{EK_kwadrat}).\newline We will prove (\ref{3zb})
by showing, that $Ef_{n}(y)\underset{n\rightarrow\infty}{\longrightarrow}f(y)$
for almost all $y\in%
\mathbb{R}
\mathbb{\,}$. To show this, let us notice that from formula
(\ref{est_iteracyjny}) follows relationship
\[
E\hat{f}_{n}(y)=\frac{1}{n}\sum_{i=1}^{n}E\left(  \frac{1}{h_{i}}K\left(
\frac{y-X_{i}}{h_{i}}\right)  \right)  .
\]
From assertion $iii)$ Lemma \ref{szacunki_EK_2} it follows that $E\left(
\frac{1}{h_{i}}K(\frac{y-X_{i}}{h_{i}})\right)  \underset{i\rightarrow
\infty}{\longrightarrow}f(y)$ for almost all $y$, hence basing on Lemma
\ref{lemosr} we get assertion.

To get (\ref{4zb}), it is enough to recall assertion (\ref{3zb}) and
Scheff\'{e}'s Lemma.
\end{proof}

\begin{remark}
The above-mentioned theorem, slightly differently formulated and with
different proofs appear in papers of different authors (Deheuvels 1974
\cite{Deheuvels74}, Wolverton \& Wagner 1969\cite{Wolverton69}, Yamato 1971
\cite{Yamato71}), Davies 1973 \cite{Davies73}, Carrol 1976\cite{Carrol76},
Ahmad \&Lin 1976\cite{Ahmad76}, Devroye 1979 \cite{Devroye79}, Wegman \&
Davies 1979\cite{Wengman79} and so on). The above-mentioned formulation and
proof seems to be simple and Moreover, use only the means developed in this book.
\end{remark}

\begin{remark}
Let us notice, that in order to show convergence with probability 1 of the
sequence of estimators, we assumed independence of the sequence of
observations $\left\{  X_{i}\right\}  _{i\geq1}$. It was necessary to show the
convergence some of the series (of the series whose sequence of partial sums
is the sequence (\ref{martngal_pomocniczy})). Convergence of this series not
necessarily one has to examine by martingale methods. Possibly such
convergence could have been proved without supposing independence of
observations, using other methods (for example described in chapter
\ref{zbiez}). One has to estimate covariance $\operatorname*{cov}\left(
K\left(  \frac{x-X_{i}}{h_{i}}\right)  ,K\left(  \frac{x-X_{j}}{h_{j}}\right)
\right)  $. But this requires knowledge (partial) of two-dimensional
distributions of the sequence $\{X_{n}\}_{n\geq1}.$
\end{remark}

\begin{example}
\label{przyk_gest_iter}As an example, let us consider sequence random
variables having bimodal density, being a mixture of two Normal distributions
$N(0,1)$ and $N(4,.5)$, with weights respectively $\frac{1}{4}$ and $\frac
{3}{4}$. One performed $N\allowbreak=\allowbreak3000$ iterations
Sequence$\left\{  h_{i}\right\}  _{i\geq0}$ was chosen to be: $h_{i}%
=i^{-.35};i\geq1$. On the figure below one shows plot of the density and its
estimator based on $N$ observations obtained by the iterative method with
Cauchy kernel:%

{\includegraphics[
height=1.9285in,
width=2.9698in
]%
{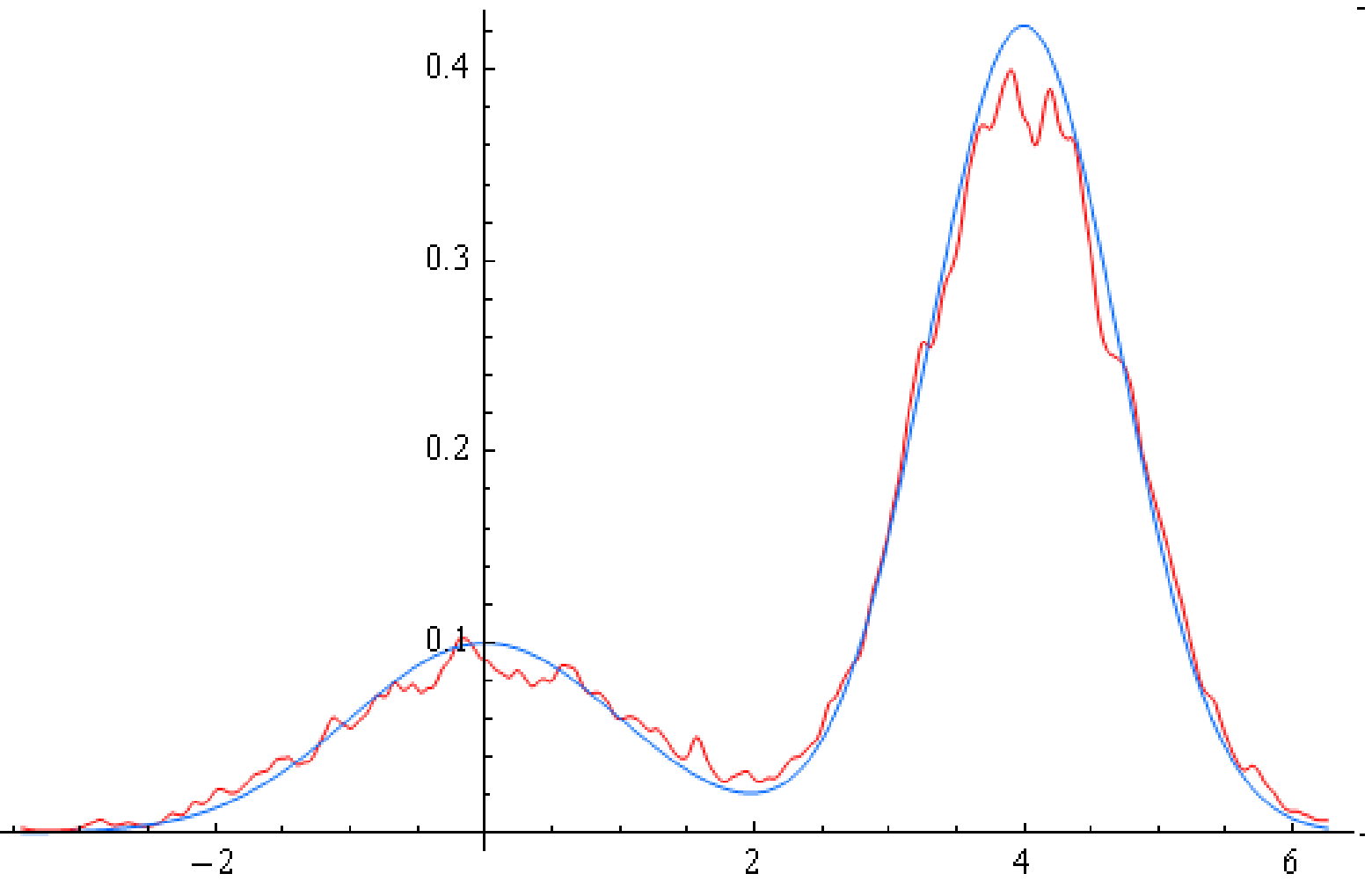}%
}%

\end{example}

\section{Introduction to regression estimation}

Let us consider a sequence of independent realizations of the two-dimensional
random variable $(X,Y)$, i.e. the sequence \newline$\left\{  (X_{1}%
,Y_{1}),\ldots,(X_{n},Y_{n})\right\}  $. Let us assume that the random
variable $X$ has density $f$. Regression function $r(x)$ of the random
variable $Y$ on $X$ that is $r(x)=E(Y|X=x)$ will be estimated with the help of
the following estimators:%
\index{Estimator!Regression}
\begin{equation}
\hat{r}_{n}(x)=\frac{\sum_{i=1}^{n}Y_{i}K\left(  \frac{x-X_{i}}{h_{n}}\right)
}{\sum_{i=1}^{n}K\left(  \frac{x-X_{i}}{h_{n}}\right)  }, \label{regresja}%
\end{equation}
where, as before, $K$ is some kernel, a $h_{n}$ is number sequence convergent
to zero. Let us see how it works.

\begin{example}
In this example, as a kernel we take Epanechnikov's one. Sequence of
observations will be simulated with the help of the sequence \newline$\left\{
(X_{1},Y_{1}),\ldots,(X_{n},Y_{n})\right\}  $, where
\[
Y_{i}=r(X_{i})+0.5\ast\xi_{i},
\]
while sequences of the random variables $\left\{  X_{i}\right\}  _{i\geq1}$
and $\left\{  \xi_{i}\right\}  _{i\geq1}$ are independent. Assume that the
function $r(x)$ is defined by $r(x)=x^{2}$, while in the second example
\newline$r1(x)=\left\{
\begin{array}
[c]{lll}%
-1 & for & x<-1\\
x & for & \left\vert x\right\vert \leq1\\
1 & for & x>1
\end{array}
\right.  $. Sequences $\left\{  X_{i}\right\}  _{i\geq1}$ and $\left\{
\xi_{i}\right\}  _{i\geq1}$ are the sequences of independent variables having
distributions $N(0,2)$. Number of observations $n$ is equal to $1000$,
$h(n)=n^{-.4}$. One obtained then, for the functions%

{\includegraphics[
height=1.759in,
width=2.7216in
]%
{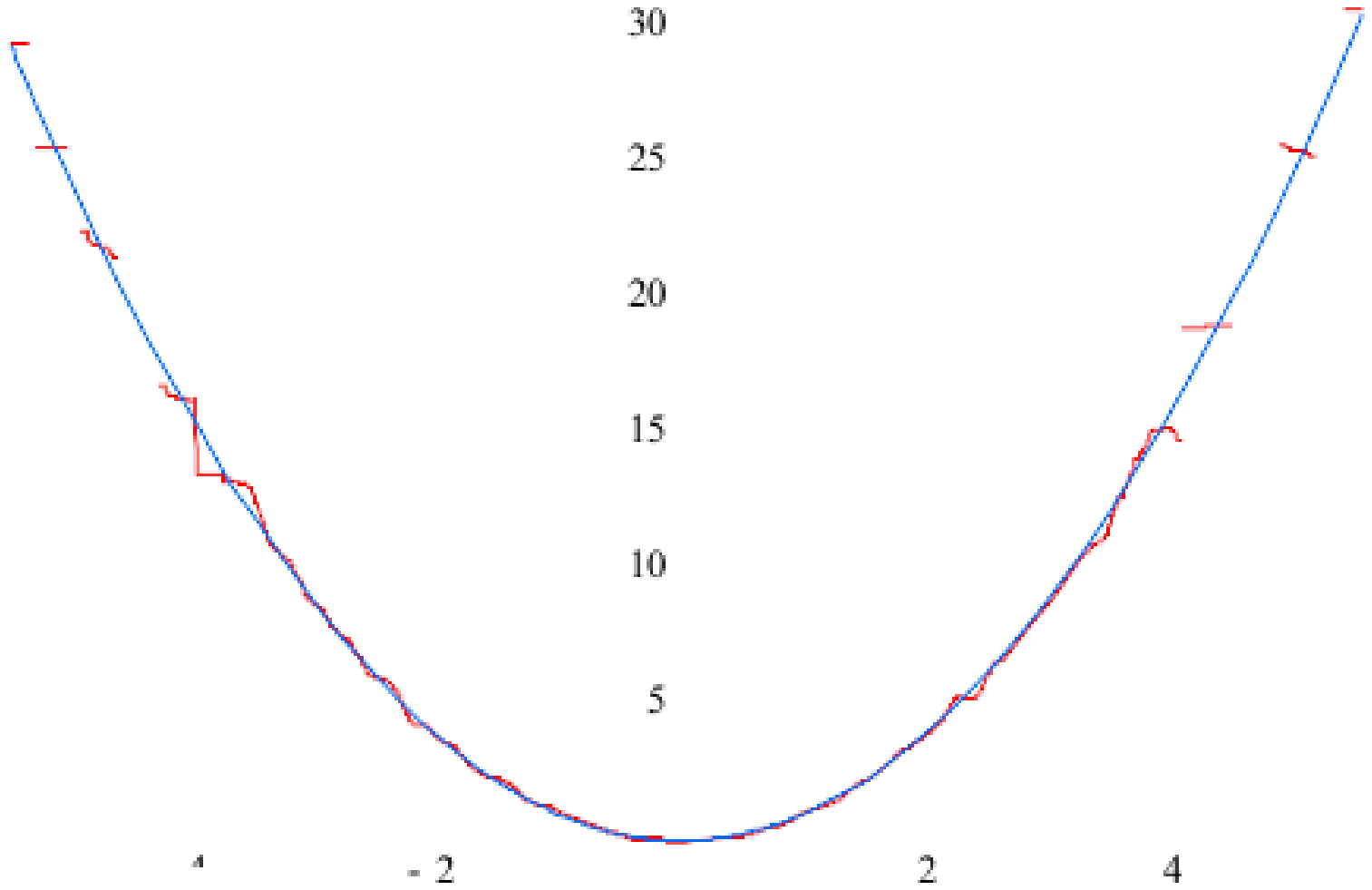}%
}%
$r(x)=x^{2}$ and for $r1$%

{\includegraphics[
height=1.6985in,
width=2.7319in
]%
{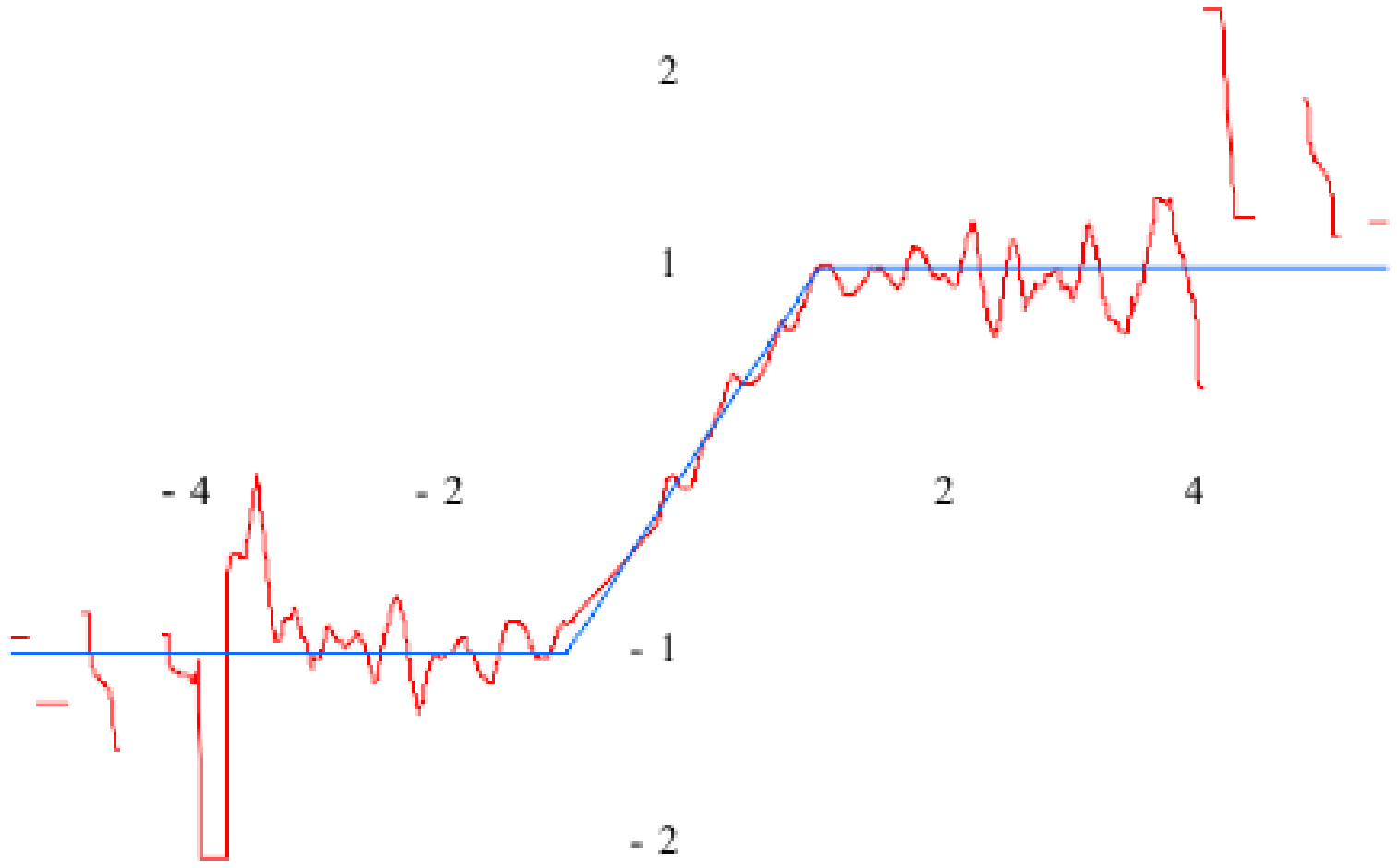}%
}%
.
\end{example}

\subsection{Simple regression estimator}

We will be concerned with the case of one-dimensional random variables $X$ and
$Y$ and firstly we will analyze estimator (\ref{regresja}), as $n$ (number of
observations) diverges to $\infty$, $K(x)$ is a fixed kernel, and the number
sequence $\left\{  h_{n}\right\}  _{n\geq1}$ converges to zero, in such a way
that $nh_{n}\underset{n\rightarrow\infty}{\longrightarrow}\infty$. Moreover,
let us denote $\phi_{n}(x)=\frac{1}{n}\sum_{i=1}^{n}\frac{1}{h_{n}}K\left(
\frac{x-X_{i}}{h_{n}}\right)  $. On the base of material of the previous
section, we know, that the sequence $\phi_{n}$ converges pointwise, and also
in the distributive sense to the density $\phi$ of the random variable $X_{1}%
$. We have the following theorem:

\begin{theorem}%
\begin{equation}
\forall A\in\mathcal{B}\,\,\,:\int_{A}\hat{r}_{n}(x)\phi_{n}%
(x)dx\underset{n\rightarrow\infty}{\longrightarrow}\int_{A}r(x)\phi(x)dx.
\label{zb-dystr}%
\end{equation}

\end{theorem}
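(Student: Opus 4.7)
The plan is to exploit a direct cancellation that reduces the claim to a law of large numbers. The product $\hat{r}_n(x)\phi_n(x)$ simplifies, because the denominator of $\hat{r}_n$ is (up to the factor $nh_n$) equal to $\phi_n(x)$, giving
\[
\hat{r}_n(x)\phi_n(x)=\frac{1}{nh_n}\sum_{i=1}^n Y_iK\!\left(\tfrac{x-X_i}{h_n}\right).
\]
Integrating over $A$ and changing variables $z=(x-X_i)/h_n$ yields the key identity
\[
\int_A \hat{r}_n(x)\phi_n(x)\,dx=\frac{1}{n}\sum_{i=1}^n Y_i\,g_n(X_i),\qquad g_n(u):=\int_{(A-u)/h_n}K(z)\,dz.
\]
Observe that $0\le g_n(u)\le 1$ everywhere (since $K$ is a density), and by Theorem \ref{wlasnosci_jadra}.iii applied to $f=I_A$ (or, for unbounded $A$, by the classical Lebesgue density theorem used locally in a large ball) one has $g_n(u)\to I_A(u)$ for Lebesgue-a.e.\ $u\in\mathbb{R}$.

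Next I would split
\[
\frac{1}{n}\sum_{i=1}^n Y_ig_n(X_i)=\frac{1}{n}\sum_{i=1}^n Y_iI(X_i\in A)+\frac{1}{n}\sum_{i=1}^n Y_i\bigl(g_n(X_i)-I(X_i\in A)\bigr).
\]
The first average is a sum of i.i.d.\ copies of $Y_1 I(X_1\in A)$, so provided $E|Y_1|<\infty$, Kolmogorov's Theorem \ref{kolmogor} gives its almost sure convergence to $E(Y_1I(X_1\in A))=\int_A E(Y_1|X_1=x)\phi(x)\,dx=\int_A r(x)\phi(x)\,dx$, which is exactly the target. For the residual $R_n$ I would compute $ER_n=\int r(x)(g_n(x)-I_A(x))\phi(x)\,dx\to 0$ by dominated convergence (using $|g_n-I_A|\le 2$ and $\int|r|\phi\,dx\le E|Y_1|<\infty$) and, under the mild extra assumption $EY_1^2<\infty$, $\operatorname{Var}(R_n)\le \tfrac{1}{n}E\bigl(Y_1(g_n(X_1)-I_A(X_1))\bigr)^2\to 0$ by another dominated-convergence argument; Chebyshev then gives $R_n\to 0$ in probability. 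This delivers the conclusion in probability, which seems to be the mode intended by the unqualified arrow.

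The main obstacle, should almost sure convergence be the desired mode, is that the weights $g_n$ depend on $n$, so the summands $Y_i(g_n(X_i)-I_A(X_i))$ form a triangular array and Theorem \ref{kolmogor} is not directly applicable. One natural route is to verify a summable rate $ER_n^2=O(n^{-(1+\varepsilon)})$ under a slightly stronger moment hypothesis and apply Borel--Cantelli; an alternative is to truncate $Y_i$ at a level depending on $n$ in the style of Theorem \ref{spwl_iid} and control the discarded tails via Proposition \ref{wlasnosc_iid}. All remaining steps are routine dominated-convergence calculations, relying only on the bounds $0\le g_n\le 1$ and the pointwise convergence $g_n\to I_A$ established in the first paragraph.
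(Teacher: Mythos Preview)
Your argument is correct and takes a genuinely different route from the paper. After the same initial simplification $\hat r_n(x)\phi_n(x)=\frac{1}{nh_n}\sum_i Y_iK\bigl(\tfrac{x-X_i}{h_n}\bigr)$, the paper does \emph{not} integrate over $A$; instead it computes the Fourier transform of this function, obtaining the factorization $\widehat{H}_n(t)=\varphi(th_n)\cdot\frac{1}{n}\sum_i Y_i\exp(itX_i)$, applies Kolmogorov's SLLN to the second factor for each fixed $t$ to get convergence to $\int r(x)\phi(x)e^{itx}\,dx$, and finally invokes the distribution-theoretic machinery of Appendix~\ref{dystrybucje} (in particular Theorem~\ref{zbiez_dystrybucji}) to pass from convergence of Fourier transforms to convergence of $\int_A(\cdot)\,dx$. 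Your approach bypasses the Fourier transform and distribution theory entirely, working with the concrete weights $g_n(u)=\int_{(A-u)/h_n}K(z)\,dz$ and a direct splitting. This is more elementary and arguably more transparent; the paper's route has the virtue of recycling verbatim the argument already used for the basic density estimator a few pages earlier, and of yielding the almost-sure mode uniformly in $A$ in one stroke. One small improvement to your write-up: the second-moment hypothesis $EY_1^2<\infty$ is not needed for the residual, since $E|R_n|\le E\bigl(|Y_1|\,|g_n(X_1)-I_A(X_1)|\bigr)\to 0$ by dominated convergence with majorant $2|Y_1|$, giving $R_n\to 0$ in $L_1$ and hence in probability under only $E|Y_1|<\infty$.
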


\begin{proof}
Let us calculate the Fourier transform of the function (see denotations in
Appendix \ref{fourier})
\[
H_{n}(x)=\allowbreak\hat{r}_{n}(x)\phi_{n}(x)=\allowbreak\frac{1}{nh_{n}}%
\sum_{i=1}^{n}Y_{i}K\left(  \frac{x-X_{i}}{h_{n}}\right)  .
\]
We have:
\begin{align*}
\widehat{H}_{n}(t)  &  =\int_{%
\mathbb{R}
}\hat{r}_{n}(x)\phi_{n}(x)\exp(itx)dx\\
&  =\frac{1}{nh_{n}}\sum_{i=1}^{n}\int_{%
\mathbb{R}
}Y_{i}K\left(  \frac{x-X_{i}}{h_{n}}\right)  \exp(itx)dx,
\end{align*}
but $\int_{%
\mathbb{R}
}\frac{1}{h_{n}}K\left(  \frac{x-X_{i}}{h_{n}}\right)  \exp(itx)dx=\int_{%
\mathbb{R}
}K(z)\exp(itX_{i}+itzh_{n})dz$. Let us denote $\varphi(t)=\int_{%
\mathbb{R}
}K(z)\exp(itz)dz$. Hence:
\[
\widehat{H}_{n}(t)=\varphi(th_{n})\frac{1}{n}\sum_{i=1}^{n}Y_{i}\exp
(itX_{i}).
\]
Let us notice that $\forall t\in%
\mathbb{R}
:\varphi(th_{n})\underset{n\rightarrow\infty}{\longrightarrow}\varphi(0)=1$
(since $K$ is a density). Moreover, because random variables $\left\{  \left(
X_{i},Y_{i}\right)  \right\}  _{i\geq1}$ are independent, they satisfy LLN and
we see that
\begin{align*}
\forall t  &  \in%
\mathbb{R}
:\allowbreak\frac{1}{n}\sum_{i=1}^{n}Y_{i}\exp(itX_{i})\allowbreak
\underset{n\rightarrow\infty}{\longrightarrow}\allowbreak EY_{1}\exp
(itX_{1})=\\
&  =Er(X_{1})\exp(itX_{1})\allowbreak=\int_{%
\mathbb{R}
}r(x)\phi(x)\exp(itx)dx.
\end{align*}
Hence sequence of the random variables $\left\{  \hat{r}_{n}(x)\phi
_{n}(x)\right\}  _{n\geq1}$ converges for almost every elementary event
$\omega$ in the distributive sense to $r(x)\phi(x)$. It means, e.g., that we
have formula (\ref{zb-dystr}) (see Theorem \ref{zbiez_dystrybucji} on page
\pageref{zbiez_dystrybucji}).
\end{proof}

\subsection{Recursive regression estimator}

Similarly as in the case of density estimation, one can consider
\emph{iterative} \emph{forms} of regression estimators, i.e. estimators of the
form:
\begin{equation}
\widehat{R}_{n}(x)=\frac{\sum_{i=1}^{n}\frac{1}{h_{i}}Y_{i}K\left(
\frac{x-X_{i}}{h_{i}}\right)  }{\sum_{i=1}^{n}\frac{1}{h_{i}}K\left(
\frac{x-X_{i}}{h_{i}}\right)  }. \label{est-reg-iter}%
\end{equation}%
\index{Estimator!Iterative of regression}%
The fact, that the sequence $\left\{  \frac{1}{n}\sum_{i=1}^{n}\frac{1}{h_{i}%
}K\left(  \frac{x-X_{i}}{h_{i}}\right)  \right\}  _{n\geq1}$converges under
suitable assumptions to the density of the random variable $X_{1}$, was shown
before. Let us now denote elements of this sequence as before
\[
\phi_{n}(x)=\frac{1}{n}\sum_{i=1}^{n}\frac{1}{h_{i}}K\left(  \frac{x-X_{i}%
}{h_{i}}\right)  .
\]
Let us denote also
\[
Q_{n}(x)=\frac{1}{n}\sum_{i=1}^{n}\frac{1}{h_{i}}Y_{i}K\left(  \frac{x-X_{i}%
}{h_{i}}\right)  .
\]
Hence $\widehat{R}_{n}(x)=\frac{Q_{n}(x)}{\phi_{n}(x)}$. We will show that,
under similar assumptions as in the section \ref{gest_regresyjna}, the
sequence $\left\{  \frac{1}{n}\sum_{i=1}^{n}\frac{1}{h_{i}}Y_{i}K\left(
\frac{x-X_{i}}{h_{i}}\right)  \right\}  _{n\geq1}$ almost surely converges
pointwise to $r(x)\phi(x)$. First, let us notice that this sequence can be
written in the following recursive form:
\begin{equation}
Q_{n+1}(x)=(1-\mu_{n})Q_{n}(x)+\mu_{n}T_{n+1}(x),
\label{iteracje_dla_regresji}%
\end{equation}
where we denoted, $\mu_{n}=\frac{1}{n+1}$, $T_{n+1}=\frac{1}{h_{n+1}}%
Y_{n+1}K\left(  \frac{x-X_{n+1}}{h_{n+1}}\right)  $. Let us denote also
$v(x)=E(Y^{2}|X=x)$, $s^{2}=EY^{2}$. \newline As before, let $\mathcal{G}%
_{n}=\sigma(X_{1},Y_{1},\ldots,X_{n},Y_{n})$ and $\bar{Q}_{n}(x)=EQ_{n}(x)$.
\newline The sequence$\left\{  \bar{Q}_{n}(x)\right\}  _{n\geq1}$ satisfies
the following recursive relationship:
\begin{align}
\bar{Q}_{n+1}(x)  &  =(1-\mu_{n})\bar{Q}_{n}+\mu_{n}\frac{1}{h_{n+1}%
}Er(X_{n+1})K\left(  \frac{x-X_{n+1}}{h_{n+1}}\right)  =\label{postac_EQ_n}\\
&  =(1-\mu_{n})\bar{Q}_{n}+\mu_{n}\int_{%
\mathbb{R}
}K(z)r(x-zh_{n+1})f(x-zh_{n+1})dz.\nonumber
\end{align}
The lemma below and following it Theorem are very similar to respectively
Lemma \ref{szacunki_EK_2} and Theorem \ref{o_Ef_n}.

\begin{lemma}
\label{pomocniczy} If \emph{\ }$\underset{n}{\sup}\int_{%
\mathbb{R}
}K^{2}(z)v(x-zh_{n})f(x-zh_{n})dz<\infty$ and function $\underset{\left\vert
y\right\vert \geq\left\vert x\right\vert }{\sup}K(y)$ is integrable, then

i)
\begin{equation}
EY_{n}^{2}K^{2}(\frac{x-X_{n}}{h_{n}})=h_{n}\int_{%
\mathbb{R}
}K^{2}(z)v(x-zh_{n})f(x-zh_{n})dz, \label{1_pom_reg}%
\end{equation}

ii)
\begin{gather}
E\left\vert \int_{%
\mathbb{R}
}W_{n-1}(x)\left[  Y_{n}K\left(  \frac{x-X_{n}}{h_{n}}\right)  -E\left(
r(X_{n})K\left(  \frac{x-X_{n}}{h_{n}}\right)  \right)  \right]  dx\right\vert
^{2}\label{2_pom_reg}\\
\leq h_{n}s^{2}E\left(  \int_{%
\mathbb{R}
}W_{n-1}^{2}(x)dx\int_{%
\mathbb{R}
}K^{2}(z)dz\right) \nonumber
\end{gather}
and
\begin{equation}
E\left[  \int_{%
\mathbb{R}
}W_{n-1}(x)\left(  Y_{n}K(\frac{x-X_{n}}{h_{n}})-E\left(  r(X_{n}%
)K(\frac{x-X_{n}}{h_{n}})\right)  \right)  dx\right]  =0, \label{3_pom_reg}%
\end{equation}
where $W_{n-1}(x)$ is measurable with respect to $\mathcal{G}_{n-1}$ random
variable and such that
\[
E\left(  \int_{%
\mathbb{R}
}W_{n-1}^{2}(x)dx\right)  <\infty.
\]

iii)
\[
\frac{1}{h_{n}}E\left(  Y_{n}K(\frac{x-X_{n}}{h_{n}})\right)
\underset{n\rightarrow\infty}{\longrightarrow}r(x)f(x)
\]
for almost all $x\in%
\mathbb{R}
.$\newline
\end{lemma}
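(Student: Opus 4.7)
The plan is to mimic the three-part structure of Lemma \ref{szacunki_EK_2}, replacing the kernel $K(\cdot)$ by the weighted kernel $Y_n K(\cdot)$ and using the tower property together with the defining identities $E(Y_n\mid X_n)=r(X_n)$ and $E(Y_n^2\mid X_n)=v(X_n)$.

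For assertion \emph{i)} I would simply condition on $X_n$: writing
\[
EY_n^2 K^2\!\left(\tfrac{x-X_n}{h_n}\right)=E\!\left(K^2\!\left(\tfrac{x-X_n}{h_n}\right)E(Y_n^2\mid X_n)\right)=\int_{\mathbb{R}}K^2\!\left(\tfrac{x-u}{h_n}\right)v(u)f(u)\,du,
\]
and then making the change of variable $z=(x-u)/h_n$ produces the claimed formula. The integrability/finiteness assumption of the lemma is what makes the integral finite.

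For assertion \emph{ii)} the equality (\ref{3_pom_reg}) is the cleaner half. Using Fubini to interchange expectation and $dx$-integration (justified by the $L^2$ bound on $W_{n-1}$ together with part \emph{i)}), I condition on $\mathcal{G}_{n-1}$: since $W_{n-1}(x)$ is $\mathcal{G}_{n-1}$-measurable and $(X_n,Y_n)$ is independent of $\mathcal{G}_{n-1}$, one has $E(Y_n K(\tfrac{x-X_n}{h_n})\mid\mathcal{G}_{n-1})=E(Y_n K(\tfrac{x-X_n}{h_n}))=E(r(X_n)K(\tfrac{x-X_n}{h_n}))$, and the bracket vanishes. For the inequality (\ref{2_pom_reg}) I would apply Schwarz inside the $dx$-integral and then condition on $\mathcal{G}_{n-1}$ exactly as in the proof of Lemma \ref{szacunki_EK_2}, reducing the problem to bounding $\int E(G_n^2(x)\mid\mathcal{G}_{n-1})\,dx$, where $G_n(x):=Y_n K(\tfrac{x-X_n}{h_n})-E\!\left(r(X_n)K(\tfrac{x-X_n}{h_n})\right)$. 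Using $\operatorname{var}(X)\le EX^2$ and part \emph{i)} gives $E(G_n^2(x)\mid\mathcal{G}_{n-1})\le h_n\int K^2(z)v(x-zh_n)f(x-zh_n)\,dz$, and then Fubini followed by the change of variable $u=x-zh_n$ turns $\int\!\int K^2(z)v(x-zh_n)f(x-zh_n)\,dz\,dx$ into $\int K^2(z)\,dz\cdot\int v(u)f(u)\,du=s^2\int K^2(z)\,dz$. This is the step I expect to be the main (mostly bookkeeping) obstacle, because one has to keep careful track of which expectations are conditional and of the order of integration when invoking Fubini.

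For assertion \emph{iii)}, by conditioning on $X_n$ and changing variables,
\[
\tfrac{1}{h_n}E\!\left(Y_n K\!\left(\tfrac{x-X_n}{h_n}\right)\right)=\tfrac{1}{h_n}\int_{\mathbb{R}}K\!\left(\tfrac{x-u}{h_n}\right)r(u)f(u)\,du=\int_{\mathbb{R}}K(z)\,(rf)(x-zh_n)\,dz.
\]
Provided $rf\in L^1(\mathbb{R})$ (which follows from $E|Y|<\infty$ since $\int|r(u)|f(u)\,du\le E|Y|$), assertion \emph{iii)} of Theorem \ref{wlasnosci_jadra} applied to the function $rf$ and the kernel $K$ with bandwidth $h_n\downarrow 0$ yields convergence to $r(x)f(x)$ at almost every $x$, which finishes the proof.
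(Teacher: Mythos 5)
Your proposal is correct and follows essentially the same route as the paper's own proof: conditioning on $X_n$ plus the change of variables $z=(x-u)/h_n$ for \emph{i)}, Schwarz followed by conditioning on $\mathcal{G}_{n-1}$, the bound $\operatorname{var}(Z)\le EZ^2$, part \emph{i)} and Fubini (yielding the factor $s^2\int K^2$) for \emph{ii)}, and assertion \emph{iii)} of Theorem \ref{wlasnosci_jadra} applied to $rf$ for \emph{iii)}. Your explicit remark that $rf\in L^1$ (since $\int|r|f\le E|Y|<\infty$) is a small but welcome detail the paper leaves implicit.
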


\begin{proof}
\textit{i)} We have:
\[
EY_{n}^{2}K^{2}\left(  \frac{y-X_{n}}{h_{n}}\right)  =\int_{%
\mathbb{R}
}K^{2}\left(  \frac{y-x}{h_{n}}\right)  v(x)f(x)dx=h_{n}\int_{%
\mathbb{R}
}K^{2}(z)v(y-zh_{n})f(y-zh_{n})dz,
\]
after change of variables $z=(y-x)/h_{n}.$

\textit{ii)} We have:
\begin{gather*}
E\left\vert \int_{%
\mathbb{R}
}W_{n-1}(y)\left(  Y_{n}K\left(  \frac{y-X_{n}}{h_{n}}\right)  -Er(X_{n}%
)K\left(  \frac{y-X_{n}}{h_{n}}\right)  \right)  dy\right\vert ^{2}\leq\\
\leq E\left\vert \sqrt{\int_{%
\mathbb{R}
}W_{n-1}^{2}(y)dy\int_{%
\mathbb{R}
}\left(  Y_{n}K\left(  \frac{y-X_{n}}{h_{n}}\right)  -Er(X_{n})K\left(
\frac{y-X_{n}}{h_{n}}\right)  \right)  ^{2}dy}\right\vert ^{2}=\\
=E\left(  \int_{%
\mathbb{R}
}W_{n-1}^{2}(y)dy\int_{%
\mathbb{R}
}\left(  Y_{n}K\left(  \frac{y-X_{n}}{h_{n}}\right)  -Er(X_{n})K\left(
\frac{y-X_{n}}{h_{n}}\right)  \right)  ^{2}dy\right) \\
=E\left(  \int_{%
\mathbb{R}
}W_{n-1}^{2}(y)dyE\left(  \int_{%
\mathbb{R}
}\left(  Y_{n}K\left(  \frac{y-X_{n}}{h_{n}}\right)  -Er(X_{n})K\left(
\frac{y-X_{n}}{h_{n}}\right)  \right)  ^{2}dy|\mathcal{G}_{n-1}\right)
\right) \\
=E\left(  \int_{%
\mathbb{R}
}W_{n-1}^{2}(y)dy\int_{%
\mathbb{R}
}E\left(  \left(  Y_{n}K\left(  \frac{y-X_{n}}{h_{n}}\right)  -Er(X_{n}%
)K\left(  \frac{y-X_{n}}{h_{n}}\right)  \right)  ^{2}|\mathcal{G}%
_{n-1}\right)  dy\right) \\
\leq E\left(  \int_{%
\mathbb{R}
}W_{n-1}^{2}(y)dy\int_{%
\mathbb{R}
}\nu(X_{n})K^{2}\left(  \frac{y-X_{n}}{h_{n}}\right)  dy\right)  =\\
=h_{n}E\left(  \int_{%
\mathbb{R}
}W_{n-1}^{2}(y)dy\int_{%
\mathbb{R}
}\int_{%
\mathbb{R}
}K^{2}(z)v(y-h_{n}z)f(y-h_{n}z)dzdy\right) \\
=h_{n}s^{2}E\left(  \int_{%
\mathbb{R}
}W_{n-1}^{2}(y)dy\int_{%
\mathbb{R}
}K^{2}(z)dz\right)
\end{gather*}
In the above-mentioned calculations, we used Schwarz inequality and the
properties of the conditional expectation and inequality $\operatorname*{var}%
(Z)\leq EZ^{2}$ true for any random variable $Z$. Knowing that one can
exchange integration with respect to $y$ and and calculating expectation, on
the basis of (\ref{2_pom_reg}) we change the order of integration in
(\ref{3_pom_reg}) and get:
\begin{align*}
&  E\int_{%
\mathbb{R}
}W_{n}(y)\left(  Y_{n}K\left(  \frac{y-X_{n+1}}{h_{n+1}}\right)
-Er(X_{n})K\left(  \frac{y-X_{n+1}}{h_{n+1}}\right)  \right)  dy\\
&  =\int_{%
\mathbb{R}
}E\left(  W_{n}(y)E\left(  Y_{n}K\left(  \frac{y-X_{n+1}}{h_{n+1}}\right)
-Er(X_{n})K\left(  \frac{y-X_{n+1}}{h_{n+1}}\right)  |\mathcal{G}_{n}\right)
\right)  =0.
\end{align*}

\textit{iii)} is simple consequence of the formula $\frac{1}{h_{n}}%
EY_{n}K(\frac{x-X_{n}}{h_{n}})=\int_{%
\mathbb{R}
}K(z)r(x-h_{n})f(x-zh_{n})dz$, assumed convergence $h_{n}%
\underset{n\rightarrow\infty}{\longrightarrow}0$ and assertion \textit{iii)}
of Lemma \ref{wlasnosci_jadra}.
\end{proof}

Immediately we have the following theorem.

\begin{theorem}
If sequence $\left\{  h_{n}\right\}  \,$is such that $\sum_{n\geq1}\frac
{1}{n^{2}h_{n}}<\infty$ and
\[
\underset{n}{\sup}\int_{%
\mathbb{R}
}K^{2}(z)v(y-zh_{n})f(y-h_{n}z)dz<\infty
\]
for almost all $y$, then
\begin{equation}
Q_{n}(y)-EQ_{n}(y)\underset{n\rightarrow\infty}{\longrightarrow}%
0\,\,\,\,a.s.\text{for almost all }y\in%
\mathbb{R}
\label{1zb_iter}%
\end{equation}%
\begin{equation}
\int_{%
\mathbb{R}
}\left(  Q_{n}(y)-EQ_{n}(y)\right)  ^{2}dy\underset{n\rightarrow
\infty}{\longrightarrow}0\,\,\,a.s. \label{2zb_iter}%
\end{equation}%
\begin{equation}
\widehat{R}_{n}(y)\underset{n\rightarrow\infty}{\rightarrow}%
r(y)\,\;\;a.s.\text{\ for almost all }y\in%
\mathbb{R}
\label{3zb_iter}%
\end{equation}

\end{theorem}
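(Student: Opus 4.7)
The plan is to imitate the proof of Theorem \ref{o_Ef_n} essentially step by step, substituting the tools of Lemma \ref{pomocniczy} for those of Lemma \ref{szacunki_EK_2}, and then combine the conclusions with the already proven Theorem \ref{o_Ef_n} applied to $\phi_n(y)$. Throughout I set $T_n(y)=Q_n(y)-\bar Q_n(y)$, $\mu_n=1/(n+1)$ and $\mathcal{G}_n=\sigma(X_1,Y_1,\dots,X_n,Y_n)$.

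\emph{Step 1 (pointwise convergence \eqref{1zb_iter}).} Subtracting \eqref{postac_EQ_n} from \eqref{iteracje_dla_regresji} yields
\[
T_{n+1}(y)=(1-\mu_n)T_n(y)+\frac{\mu_n}{h_{n+1}}\Bigl(Y_{n+1}K\bigl(\tfrac{y-X_{n+1}}{h_{n+1}}\bigr)-EY_{n+1}K\bigl(\tfrac{y-X_{n+1}}{h_{n+1}}\bigr)\Bigr).
\]
The increments above are martingale differences with respect to $\{\mathcal{G}_n\}$, so the partial sums of $\sum_{n\ge 1}\frac{1}{nh_n}(Y_nK(\tfrac{y-X_n}{h_n})-EY_nK(\tfrac{y-X_n}{h_n}))$ form an $L^2$-bounded martingale provided $\sum\frac{1}{n^2h_n^2}EY_n^2K^2(\tfrac{y-X_n}{h_n})<\infty$. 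By assertion (i) of Lemma \ref{pomocniczy} and the integrability assumption on $\int K^2vf$, this reduces to $\sum \frac{1}{n^2h_n}<\infty$, which is exactly the hypothesis. Hence that martingale converges a.s., and Lemma \ref{podstawowy} (applied to the recursion above) gives $T_n(y)\to 0$ a.s.\ for almost every $y$.

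\emph{Step 2 (integrated convergence \eqref{2zb_iter}).} Put $M_n=\int T_n^2(y)\,dy$. Squaring the recursion for $T_n$ and integrating in $y$ gives
\[
M_{n+1}=(1-\mu_n)^2M_n+\frac{2(1-\mu_n)\mu_n}{h_{n+1}}\!\int T_n(y)\Delta_{n+1}(y)\,dy+\frac{\mu_n^2}{h_{n+1}^{2}}\!\int\Delta_{n+1}^2(y)\,dy,
\]
with $\Delta_{n+1}(y)=Y_{n+1}K(\tfrac{y-X_{n+1}}{h_{n+1}})-EY_{n+1}K(\tfrac{y-X_{n+1}}{h_{n+1}})$. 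Assertion (iii) of Lemma \ref{pomocniczy} with $W_n=T_n$ kills the middle term in expectation, while assertion (ii) bounds the expectation of the last term by $\tfrac{s^2}{h_{n+1}}\int K^2(z)dz$. Thus $EM_{n+1}\le(1-\mu_n)^2 EM_n+\frac{C\mu_n^2}{h_{n+1}}$, and Lemma \ref{podstawowy} together with the assumption $\sum\frac{1}{n^2h_n}<\infty$ yields $EM_n\to 0$. To upgrade to a.s.\ convergence I consider the martingale $\bigl\{\sum_{n=1}^{N}\frac{1}{nh_n}\int T_{n-1}(y)\Delta_n(y)\,dy\bigr\}_{N\ge 1}$; by assertion (ii) its $L^2$-increments are controlled by $\frac{s^2}{n^2h_n}EM_{n-1}\int K^2$, which is summable because $EM_{n-1}$ is bounded. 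A.s.\ convergence of this martingale, combined with the positivity of $\int\Delta_{n+1}^2$ and another appeal to Lemma \ref{podstawowy} (applied pathwise to the recursion for $M_n$), forces $M_n\to 0$ a.s.

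\emph{Step 3 (consistency of $\widehat R_n$).} By assertion (iii) of Lemma \ref{pomocniczy}, $\frac{1}{h_n}EY_nK(\tfrac{y-X_n}{h_n})\to r(y)f(y)$ for almost all $y$, and then Lemma \ref{lemosr} (averaging) gives $\bar Q_n(y)\to r(y)f(y)$ for almost all $y$; combining with \eqref{1zb_iter} yields $Q_n(y)\to r(y)f(y)$ a.s. On the other hand, applying Theorem \ref{o_Ef_n} (or its argument verbatim with $Y_i\equiv 1$) under the hypothesis $\sum\frac{1}{n^2h_n}<\infty$ gives $\phi_n(y)\to f(y)$ a.s.\ for almost all $y$. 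Since $\widehat R_n(y)=Q_n(y)/\phi_n(y)$, the ratio converges a.s.\ to $r(y)$ at every $y$ where $f(y)>0$, which is almost every $y$ in the support of $X_1$.

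The main obstacle is Step 2: the delicate point is that the cross term $\int T_n\Delta_{n+1}$ has mean zero but is only controlled pathwise via the auxiliary martingale, and one must verify that its $L^2$-increments are summable using the already-obtained bound $\sup_n EM_n<\infty$ before Lemma \ref{podstawowy} can be invoked a second time to conclude $M_n\to 0$ almost surely. The rest is bookkeeping parallel to the proof of Theorem \ref{o_Ef_n}.
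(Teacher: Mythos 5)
Your proposal follows essentially the same route as the paper's own proof: the same recursion for $T_n$ and $M_n$, the same two martingale arguments controlled via Lemma \ref{pomocniczy} and the hypothesis $\sum_{n\geq1}\frac{1}{n^{2}h_{n}}<\infty$, Lemma \ref{podstawowy} (in expectation and then pathwise) for \eqref{2zb_iter}, and Lemma \ref{pomocniczy} \textit{iii)} together with Lemma \ref{lemosr} and Theorem \ref{o_Ef_n} for \eqref{3zb_iter}. The only blemish is a labeling slip: the zero-mean identity you invoke for the cross term is \eqref{3_pom_reg}, which belongs to assertion \emph{ii)} of Lemma \ref{pomocniczy}, not assertion \emph{iii)}.
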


\begin{proof}
Firstly, let us notice that our assumptions guarantee satisfaction of
assumptions of Theorem \ref{o_Ef_n}. Hence, we have $\hat{f}_{n}%
(y)\underset{n\rightarrow\infty}{\rightarrow}f(y)\,\;\;a.s$. for almost all
$y\in%
\mathbb{R}
$ and $\int_{%
\mathbb{R}
}\left\vert \hat{f}_{n}(y)-f(y)\right\vert dy\underset{n\rightarrow
\infty}{\longrightarrow}0\;\;a.s$. Let us denote $U_{n}(y)=Q_{n}(y)-EQ_{n}%
(y)$. Taking expectation of both sides (\ref{iteracje_dla_regresji}) and
subtracting those integrals from both sides of this equality we get:
\begin{gather*}
U_{n+1}(y)=(1-\frac{1}{n+1})U_{n}(y)+\frac{1}{\left(  n+1\right)  h_{n+1}%
}\times\\
\times\left(  Y_{n+1}K\left(  \frac{y-X_{n+1}}{h_{n+1}}\right)  -EY_{n+1}%
K\left(  \frac{y-X_{n+1}}{h_{n+1}}\right)  \right)  .
\end{gather*}
Let us consider sequence
\[
\left\{  \sum_{n=0}^{N}\frac{1}{\left(  n+1\right)  h_{n+1}}\left(
Y_{n+1}K\left(  \frac{y-X_{n+1}}{h_{n+1}}\right)  -EY_{n+1}K\left(
\frac{y-X_{n+1}}{h_{n+1}}\right)  \right)  \right\}  _{N\geq1}.
\]
It is a martingale with respect to filtration $\left\{  \mathcal{G}%
_{N}\right\}  _{N\geq1}$. It is convergent for example, when the series
$\sum_{n\geq1}\frac{1}{n^{2}h_{n}^{2}}EY_{n}^{2}K^{2}\left(  \frac{y-X_{n}%
}{h_{n}}\right)  $ is convergent. Taking advantage of assumptions and
assertion \textit{i) }of\textit{\ }Lemma \ref{pomocniczy} we see that this
series is convergent, if only series $\sum_{n\geq1}\frac{1}{n^{2}h_{n}}$ is
convergent. It so because of assumptions concerning $\left\{  h_{n}\right\}
$. Hence, we have (\ref{1zb_iter}).\newline In order to show (\ref{2zb_iter})
let us denote additionally $W_{n}=\int U_{n}^{2}(y)dy$. For the sequence of
the random variables $\left\{  W_{n}\right\}  $ we get the following recurrent
relationship:
\begin{gather}
W_{n+1}=(1-\frac{1}{n+1})^{2}W_{n}+\frac{2(1-\frac{1}{n+1})}{(n+1)h_{n+1}%
}\times\label{iteracje_W_n}\\
\times\int_{%
\mathbb{R}
}U_{n}(y)\left(  Y_{n+1}K\left(  \frac{y-X_{n+1}}{h_{n+1}}\right)
-EY_{n+1}K\left(  \frac{y-X_{n+1}}{h_{n+1}}\right)  \right)  dy+\nonumber\\
+\frac{1}{(n+1)^{2}h_{n+1}^{2}}\int_{%
\mathbb{R}
}\left(  Y_{n+1}K\left(  \frac{y-X_{n+1}}{h_{n+1}}\right)  -EY_{n+1}K\left(
\frac{y-X_{n+1}}{h_{n+1}}\right)  \right)  ^{2}dy.\nonumber
\end{gather}
Since we have an assertion (\ref{3_pom_reg}) of Lemma \ref{pomocniczy} applied
to $W_{n}\allowbreak=\allowbreak U_{n}$ we have the following recurrent
relationship
\begin{gather*}
EW_{n+1}=(1-\frac{1}{n+1})^{2}EW_{n}+\frac{1}{(n+1)^{2}h_{n+1}^{2}}\times\\
\times\int E\left(  Y_{n+1}K\left(  \frac{y-X_{n+1}}{h_{n+1}}\right)
-EY_{n+1}K\left(  \frac{y-X_{n+1}}{h_{n+1}}\right)  \right)  ^{2}dy\\
\leq(1-\frac{1}{n+1})^{2}EW_{n}+\frac{1}{(n+1)^{2}h_{n+1}^{2}}\times\\
\times h_{n+1}\int\int K^{2}(z)v(y-zh_{n+1})f(y-zh_{n+1})dzdy.
\end{gather*}
Hence on the basis of Lemma \ref{podstawowy} we see that the sequence
$\left\{  EW_{n}\right\}  _{n\geq0}$ converges to zero.\ Further, let us
consider sequence random variables:
\[
\left\{  \sum_{n\geq1}^{N}\frac{1}{nh_{n}}\int U_{n-1}(y)\left(  K\left(
\frac{y-X_{n}}{h_{n}}\right)  -EK\left(  \frac{y-X_{n}}{h_{n}}\right)
\right)  dy\right\}  _{N\geq1}.
\]
Taking advantage of the property (\ref{3_pom_reg}) of Lemma \ref{pomocniczy}
we see that it is a martingale with respect to filtration $\left\{
\mathcal{G}_{n}\right\}  $. This martingale\ this converges almost surely, if
only for example
\[
\sum_{n=1}^{\infty}\frac{1}{n^{2}h_{n}^{2}}E\left(  \int U_{n-1}(y)\left(
Y_{n}K\left(  \frac{y-X_{n}}{h_{n}}\right)  -EY_{n}K\left(  \frac{y-X_{n}%
}{h_{n}}\right)  \right)  dy\right)  ^{2}<\infty.
\]
On the base of assertion \ref{2_pom_reg} of Lemma \ref{pomocniczy} one can
see, that this condition is satisfied when
\[
\sum_{n=1}^{\infty}\frac{1}{n^{2}h_{n}}EW_{n-1}\int K^{2}(z)dz<\infty.
\]
This condition is satisfied, since the sequence $\left\{  EW_{n}\right\}  $
converges to zero and the series $\sum_{n\geq1}\frac{1}{n^{2}h_{n}}$ is
convergent. Hence, returning to the relationship (\ref{iteracje_W_n}) on the
basis of Lemma \ref{podstawowy} we deduce that the sequence random variables
$\left\{  W_{n}\right\}  $ converges to zero almost surely, when converge the
following series
\[
\sum_{n=1}^{\infty}\frac{1}{n^{2}h_{n}^{2}}\int\left(  K\left(  \frac{y-X_{n}%
}{h_{n}}\right)  -EK\left(  \frac{y-X_{n}}{h_{n}}\right)  \right)  ^{2}dy
\]
and
\[
\sum_{n=1}^{\infty}\frac{2(1-\frac{1}{n})}{nh_{n}}\int T_{n-1}(y)\left(
K\left(  \frac{y-X_{n}}{h_{n}}\right)  -EK\left(  \frac{y-X_{n}}{h_{n}%
}\right)  \right)  dy.
\]
Convergence almost everywhere of the second one was already proven above. The
convergence almost everywhere of the first series, follows observation, that
its elements are positive, inequality $\operatorname*{var}(Z)\leq EZ^{2}$ true
for any random variable and from the equality (\ref{1_pom_reg}).

(\ref{3zb_iter}) will be proved by showing, that $EQ_{n}%
(y)\underset{n\rightarrow\infty}{\longrightarrow}r(y)f(y)$ for almost all
$y\in%
\mathbb{R}
\mathbb{\,}$. In order to show this, let us notice that from formula
(\ref{postac_EQ_n}) it follows on the basis of Lemma \ref{lemosr}, that
\[
EQ_{n}(y)=\frac{1}{n}\sum_{i=1}^{n}E\left(  \frac{1}{h_{i}}Y_{i}K\left(
\frac{y-X_{i}}{h_{i}}\right)  \right)  .
\]
Moreover, from Lemma \ref{pomocniczy} it follows that $E\left(  \frac{1}%
{h_{i}}Y_{i}K(\frac{y-X_{i}}{h_{i}})\right)  \underset{i\rightarrow
\infty}{\longrightarrow}r(y)f(y)$ for almost all $y$, hence basing on lemma
\ref{lemosr}, we get assertion.
\end{proof}

\begin{example}
Sequence of two-dimensional observations of the random variables was taken as
before, $N\allowbreak=\allowbreak5000$. Observations of the random variables
$\left\{  X_{i}\right\}  _{i\geq1}$ having distribution being a mixture of
Normal distributions (as in example \ref{przyk_gest_iter}). As the second
coordinate of our two-dimensional vector we took the transformed first
coordinate, i.e., more precisely, one took $Y_{i}=f(X_{i})+\xi_{i}$ ;
$i=1,\ldots,N$. Function $f$ was equal to
\[
f(x)=\left\{
\begin{array}
[c]{ccc}%
-1+.2\ast(x+\frac{\pi}{2}) & for & x<-\pi/2\\
\sin x & for & -\pi/2\leq x\leq\pi/2\\
1+.2\ast(x-\frac{\pi}{2}) & for & x>\pi/2
\end{array}
\right.  ,
\]
while sequence $\left\{  \xi_{i}\right\}  _{i\geq1}$ consisted of i.i.d.
having Normal distribution $N(0,1)$. Hence, as it can be seen $E(Y|X)=f(X)$
almost surely. The sequence$\left\{  h_{i}\right\}  $ was\ such as win the
example \ref{przyk_gest_iter}. After $N$ iterations one obtained:\newline%
\begin{center}
\includegraphics[
height=1.5895in,
width=2.5547in
]%
{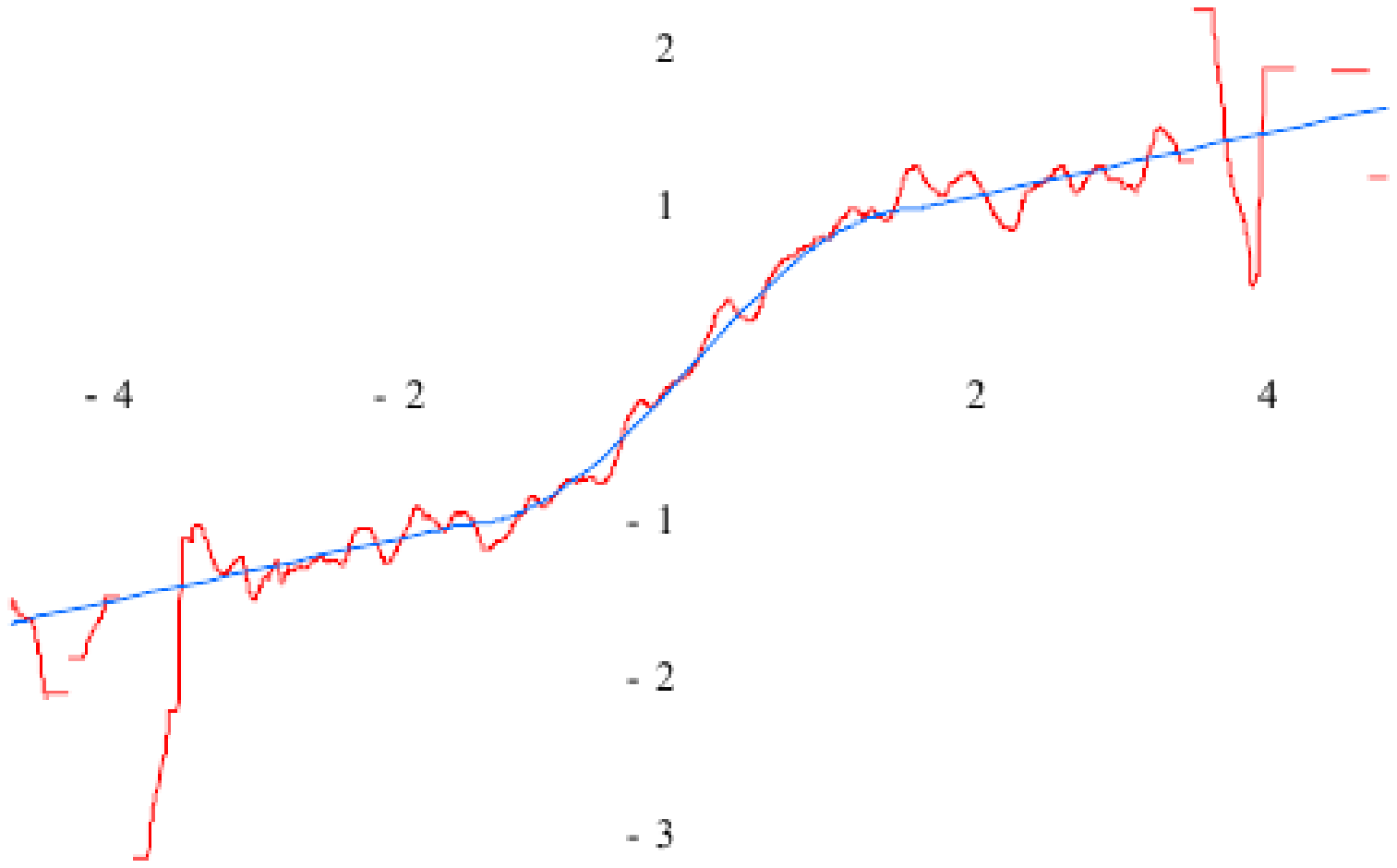}%
\end{center}
In the figure above both regression function and its estimator were plotted.
\end{example}

\begin{remark}
To finish this section let us notice that regression estimator
(\ref{est-reg-iter}) can be viewed as a weighted mean (Riesz's mean) of points
$\left\{  Y_{i}\right\}  $ with weights being random functions $\left\{
\frac{1}{h_{i}}K(\frac{x-X_{i}}{h_{i}})\right\}  $. Hence, one can present
regression estimator in the iterative form
\begin{equation}
\hat{R}_{n+1}(x)=(1-M_{n}(x))\hat{R}_{n}(x)+M_{n}(x)Y_{n+1}, \label{uog_iter}%
\end{equation}
where we denoted :
\[
M_{n}(x)=\frac{\frac{1}{h_{n+1}}K\left(  \frac{x-X_{n+1}}{h_{n+1}}\right)
}{\sum_{i=1}^{n+1}\frac{1}{h_{i}}K\left(  \frac{x-X_{i}}{h_{i}}\right)  }.
\]
Formula (\ref{uog_iter}) differs from the so far considered ones in that : a)
weights are now as it was mentioned, random variables correlated with
\textquotedblright averaged variables\textquotedblright\ $\left\{
Y_{i}\right\}  _{i\geq1}$, b) these weights are also functions of some random
variables, that is we deal with indexed families of \textquotedblright random
weights\textquotedblright. The theory of such averages (far reaching
generalizations of Riesz's means) is waiting for development!
\end{remark}

\chapter{Iterative methods of identification\label{identyf}}

\label{identifikacja}The issue of identification concerns the following
problems. Suppose, that we observe some stochastic processes $\mathcal{Y=}%
\left\{  \mathbf{y}_{i}\right\}  _{i\in I}$, where $I$ is some set of indices
( e.g. time instances). $\mathcal{Y}$ is interesting for us for some reasons.
If e.g. it is a sequence of prices of some shares on the stock exchange that
we are interested in, then it is obvious, that we are interested in this
process, and we even would like to know about it that much so as to be able to
predict its future values. It is not difficult to give other, less egocentric
reasons for which some processes can be interesting for us. E.g. vector
$\mathbf{y}_{i}$ can contain information on the levels of water in different
points of some river basin at the moment $i$. It is clear, that it is
extremely important for the whole community residing in a given territory is
to predict the values of this vector in the future moments of time. Prediction
is the next stage. First one has to define a reasonably reliable model of this
process that is to dutifully it.

The problem of identification is very broad and complex. It appears in
different branches of system engineering and control theory. A broad
discussion of issues of identification would require a separate book. Besides,
one would have to introduce reader in issues of control theory, particularly
in the stability theory of differential equations. In this chapter, we want to
indicate applications of ideas developed previously in chapters
\ref{aproksymacja} and \ref{metody_jadrowe}. We do not even pretend to bring a
comprehensive identification of the problem. We will only indicate partial
problems associated with it, that can be attacked by methods presented in this
book. In total in both parts of the present chapter will mainly present
examples of identification indicating, by the way, theoretical problems. For
the reader not interested much in identifications, such presentation is
enough. Readers more interested in identification are referred to literature.
It is very vast and it is impossible to mention all positions dedicated to
those issues. As we already mentioned, in order to understand well these
problems, one has to get knowledge of notions and results associated with
control theory. That is why we advice interested in identification readers to
get familiar with this theory.

We will distinguish \emph{parametric and nonparametric identification}.

\section{Parametric identification}

\subsection{Estimating functions}%

\index{Estimating Function}%

The problem of parametric identifications can be set in the following way. Let
be given observations of the process $\mathcal{Y}$. We assume, that this
process is generated with the help of the following procedure:%

\begin{equation}
\mathbf{y}_{n+1}=\mathbf{H}_{n}\mathbf{(y}_{n},\mathbf{p,\xi}_{n}%
\mathbf{);}n\in I, \label{gen_procesu}%
\end{equation}
where $\left\{  \mathbf{\xi}_{n}\right\}  _{n\in I}$ is some sequence of
random vectors, and functions $\left\{  \mathbf{H}_{n}\right\}  _{n\in I}$ are
known. Suppose that the values of $\mathcal{Y}$ are assumed to be in $%
\mathbb{R}
^{d}$. Equation (\ref{gen_procesu}) represents the so-called parametric model
of the identified process. Unknown is only the value of the parameter vector
$\mathbf{p}$, assumed values in some subset $\mathbf{P\subseteq}%
\mathbb{R}
^{q}$. One would like to find $\mathbf{p}$. Of course, one can imagine more
complex models of the processes that we are interested in. One has to remember
that we have only a finite number of measurements of the values of the process
at our disposal. If the model contains too many parameters, then having
limited a number of observations, confidence intervals of parameters will be
very large. May it be better to construct a model with a smaller number of
parameters and determine them more precisely? Such questions always accompany
those who deal with identification. When choosing model it is good to the
member, that often excellent results are obtained considering only simple
models of type ARMA of order $2$ or $3$ for the process itself, or for
differences of order at most $3$. It is shown emphatically by examples from
the book of Box and Jenkins \cite{BJ70}.

On the other hand, sometimes we have a sufficiently long sequence of
observations at our disposal, or even limitless in this sense, that
observations come at every time instant (the so-called observations
\emph{on-line}), then, of course, it is tempting to consider the more precise
model. Generally, we will assume, that information about the process' model
are contained in the form of the sequence of the so-called \emph{\ estimating
functions }$\left\{  \mathbf{F}_{i}(\mathbf{Y}_{i},\mathbf{p)}\right\}  _{i\in
I}$, where $\mathbf{Y}_{i}=\{\mathbf{y}_{i},\mathbf{y}_{i-1},\ldots
,\mathbf{y}_{0}\}$. The notion of estimating equations and functions has a
long history, that will not be discussed here in detail. We will mention only,
that these notions were discussed are in the papers: \cite{Durbin62},
\cite{[SZA2]}, \cite{[SZA312]}, \cite{[SZA320]}, \cite{[SZA22]},
\cite{Godambe87}, \cite{Godambe89}, \cite{Godambe91}, \cite{Heyde97}. For the
purpose of this book we will define estimating function in the following way:
a mapping $\mathbf{F}_{i}\allowbreak\mathbf{:}\allowbreak\underset{i+1\text{
times}}{\underbrace{%
\mathbb{R}
^{d}\times\ldots\times%
\mathbb{R}
^{d}}}\allowbreak\times\allowbreak%
\mathbb{R}
^{q}\allowbreak\rightarrow\allowbreak%
\mathbb{R}
^{q}$ that is differentiable with respect to the last argument and satisfies
the following conditions:%

\begin{gather}
\forall\mathbf{\hat{p}\in P:}E_{\mathbf{\hat{p}}}\mathbf{F}_{i}\mathbf{(Y}%
_{i},\mathbf{\hat{p})}=0\label{FE_1}\\
\forall\mathbf{\hat{p}\in P}:\det E_{\mathbf{\hat{p}}}\frac{\partial
\mathbf{F}_{i}\mathbf{(Y}_{i},\mathbf{\hat{p})}}{\partial\mathbf{\hat{p}}^{T}%
}\neq0\label{FE_2}\\
\forall\mathbf{\hat{p}\in P}:\det E_{\mathbf{\hat{p}}}\mathbf{F}%
_{i}\mathbf{(Y}_{i},\mathbf{\hat{p})F}_{i}^{T}\mathbf{(Y}_{i},\mathbf{\hat
{p})\neq0} \label{FE_3}%
\end{gather}
is called an estimating function\emph{\ }based on $i+1$ first observations. In
the above mentioned formula $E_{\mathbf{\hat{p}}}(.)$ denotes expectation
under the assumption, that \textquotedblright the true parameter
\textquotedblright\ is $\mathbf{\hat{p}}$, that is with respect to
distribution in which we set $\mathbf{\hat{p}}$ instead $\mathbf{p}$\textbf{.}
The equation:
\[
\mathbf{F}_{i}(\mathbf{Y}_{i},\mathbf{p)=0}%
\]
is called an estimating equation.

What is the connection of the model (\ref{gen_procesu}) with the estimating
function? Generally, one can state that one model can lead to many different
of estimating functions. For example, we can take:
\[
\mathbf{F}_{i}^{(1)}(\mathbf{Y}_{i},\mathbf{\hat{p})=y}_{i}-E_{\mathbf{\hat
{p}}}\mathbf{H}_{i-1}(\mathbf{y}_{i-1},\mathbf{\hat{p},\xi}_{i-1}),\;i\geq2
\]
if $d=q$, or
\begin{align*}
\mathbf{F}_{i}^{(2)}(\mathbf{Y}_{i},\mathbf{\hat{p})}  &  =\mathbf{w}%
_{i}(\mathbf{\hat{p})}\left(  \mathbf{y}_{i}-E_{\mathbf{\hat{p}}}%
\mathbf{H}_{i-1}(\mathbf{y}_{i-1},\mathbf{\hat{p},\xi}_{i-1})\right)
,\;i\geq2\\
\mathbf{F}_{i}^{(3)}(\mathbf{Y}_{i},\mathbf{\hat{p})}  &  =\sum_{j=2}%
^{i}\mathbf{w}_{j}(\mathbf{\hat{p})}\left(  \mathbf{y}_{j}-E_{\mathbf{\hat{p}%
}}\mathbf{H}_{j-1}(\mathbf{y}_{j-1},\mathbf{\hat{p},\xi}_{j-1})\right)
,\;i\geq2,
\end{align*}
where $\left\{  \mathbf{w}_{i}(\mathbf{\hat{p})}\right\}  $ are some $q\times
d$ -matrices with coordinates depending on $\mathbf{\hat{p}}$\textbf{.} Of
course, it may happen, that for functions $\mathbf{F}_{i}^{(2)}$ will not
satisfy condition (\ref{FE_2}) then one has to give up this estimating
function and select the other, modified one. Generally, model of the process
is something given, unalterable. As far as the choice of estimating functions
we have some freedom. We will not discuss these issues.

In the present chapter, we will assume that the sequence of estimating
functions has been already somehow\ chosen.

Let us notice that if $\mathbf{F}_{i}$ is estimating function, then so is also
$\mathbf{\Lambda(\hat{p})F}_{i}$ for nonsingular. matrix $\mathbf{\Lambda}$ of
order $q$. In order to avoid such trivial situations, and also because we are
going to compare different estimating functions, it would be reasonable to
normalize them somehow. To this end we will further assume that
\begin{equation}
\left.  E_{\mathbf{p}}\frac{\partial\mathbf{F}_{i}\mathbf{(Y}_{i}%
,\mathbf{\hat{p})}}{\partial\mathbf{\hat{p}}^{T}}\right\vert _{\mathbf{\hat
{p}=p}}=\mathbf{I.} \label{unormowanie}%
\end{equation}

We will introduce an order inside the set of estimating functions based on $i
$ observations in the following way. Let be given two estimating functions
$\mathbf{F}_{i}$ and $\mathbf{G}_{i}$, basing on the same number of
observations and satisfying condition (\ref{unormowanie}). Then function
estimating $\mathbf{F}_{i}$ is called \emph{not worse} (\emph{better) }than
estimating function $\mathbf{G}_{i}$, when:\emph{\ }
\begin{equation}
\forall\mathbf{\hat{p}\in}%
\mathbb{R}
^{q}:tr\left\{  E_{\mathbf{\hat{p}}}\mathbf{F}_{i}\mathbf{(Y}_{i}%
,\mathbf{\hat{p})F}_{i}^{T}\mathbf{(Y}_{i},\mathbf{\hat{p})}\right\}
\mathbf{\leq(<)}tr\left\{  E_{\mathbf{\hat{p}}}\mathbf{G}_{i}\mathbf{(Y}%
_{i},\mathbf{\hat{p})G}_{i}^{T}\mathbf{(Y}_{i},\mathbf{\hat{p})}\right\}
\mathbf{,} \label{liniowy}%
\end{equation}
where $tr(A)$ denotes trace of matrix $A.$

One introduces also partial order in the set of estimating functions in a
similar way. Namely, $\mathbf{F}_{i}$ is not worse estimating function than
$\mathbf{G}_{i}$, if:
\begin{equation}
\forall\mathbf{\hat{p}\in}%
\mathbb{R}
^{q}:\text{matrix }E_{\mathbf{\hat{p}}}\mathbf{F}_{i}\mathbf{(Y}%
_{i},\mathbf{\hat{p})F}_{i}^{T}\mathbf{(Y}_{i},\mathbf{\hat{p})-}%
E\mathbf{_{\mathbf{\hat{p}}}G}_{i}\mathbf{(Y}_{i},\mathbf{\hat{p})G}_{i}%
^{T}\mathbf{(Y}_{i},\mathbf{\hat{p})} \label{czesciowy}%
\end{equation}
is negatively semidefinite. It is obvious that if $\mathbf{F}_{i}$ is not
worse than $\mathbf{G}_{i}$ in the sense of partial order given by
(\ref{czesciowy}), then it is also not worse in the sense of linear order
given by (\ref{liniowy}). It turns out that in the sense of partial order
there exists, by some regularity conditions, a maximal element. Namely, we
have the following theorem:

\begin{theorem}
Let $\Phi(\mathbf{Y}_{i},\mathbf{\hat{p})}$ will be the density of the
probability distribution of observations $\mathbf{Y}_{i}$. Let us assume that
$\Phi$ is differentiable with respect to $\mathbf{\hat{p}}$ and that matrix
$\mathbf{V(\hat{p})=}\left\{  E\left[  \frac{\partial\ln\Phi(\mathbf{Y}%
_{i},\mathbf{\hat{p})}}{\partial\mathbf{\hat{p}}^{T}}\frac{\partial\ln
\Phi(\mathbf{Y}_{i},\mathbf{\hat{p})}}{\partial\mathbf{\hat{p}}}\right]
\right\}  ^{-1}$ exists. Then the estimating function
\[
\mathbf{M(Y}_{i},\mathbf{\hat{p})=}\frac{\partial\ln\Phi(\mathbf{Y}%
_{i},\mathbf{\hat{p})}}{\partial\mathbf{\hat{p}}^{T}}%
\]
is the maximal in the sense of partial order introduced by (\ref{czesciowy}).
In other words, for every estimating function $\mathbf{F}_{i}\mathbf{(Y}%
_{i},\mathbf{\hat{p})}$ satisfying condition (\ref{unormowanie}) the matrix:
\[
E_{\mathbf{\hat{p}}}\mathbf{F}_{i}\mathbf{(Y}_{i},\mathbf{\hat{p})F}_{i}%
^{T}\mathbf{(Y}_{i},\mathbf{\hat{p})-V(\hat{p})}%
\]
is positively semidefinite. Moreover, this matrix is a zero matrix if and only
if:
\[
\mathbf{F}_{i}\mathbf{(Y}_{i},\mathbf{\hat{p})=\Lambda(\hat{p})M(Y}%
_{i},\mathbf{\hat{p})}%
\]
for some matrix $\Lambda$ having elements depending only on $\mathbf{\hat{p}%
.}$
\end{theorem}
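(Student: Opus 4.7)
The plan is to reduce the statement to a Schur-complement inequality for the joint second-moment matrix of the vectors $\mathbf{F}_i$ and $\mathbf{M}$. First I would exploit the identity $E_{\mathbf{\hat{p}}}\mathbf{F}_i(\mathbf{Y}_i,\mathbf{\hat{p}})=0$ from (\ref{FE_1}), written as an integral against $\Phi$, and differentiate it with respect to $\mathbf{\hat{p}}^{T}$. Pushing the derivative under the integral (the main regularity hypothesis that must be invoked implicitly), and using $\partial\Phi/\partial\mathbf{\hat{p}}^{T}=\Phi\,\mathbf{M}^{T}$, this yields
\[
E_{\mathbf{\hat{p}}}\!\left[\frac{\partial\mathbf{F}_i}{\partial\mathbf{\hat{p}}^{T}}\right]+E_{\mathbf{\hat{p}}}[\mathbf{F}_i\mathbf{M}^{T}]=0.
\]
Combined with the normalization (\ref{unormowanie}), this forces the cross term $E_{\mathbf{\hat{p}}}[\mathbf{F}_i\mathbf{M}^{T}]=-\mathbf{I}$. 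A parallel computation applied to $\mathbf{M}$ itself (or a direct differentiation of $\int\Phi\,d\mathbf{Y}_i=1$) shows $E_{\mathbf{\hat{p}}}\mathbf{M}=0$, so both $\mathbf{F}_i$ and $\mathbf{M}$ are centered and $E_{\mathbf{\hat{p}}}[\mathbf{M}\mathbf{M}^{T}]=\mathbf{V}(\mathbf{\hat{p}})^{-1}$ by definition of $\mathbf{V}$.

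Next I would assemble the stacked centered random vector $(\mathbf{F}_i^{T},\mathbf{M}^{T})^{T}$ and note that its covariance
\[
\begin{pmatrix} E_{\mathbf{\hat{p}}}[\mathbf{F}_i\mathbf{F}_i^{T}] & -\mathbf{I} \\ -\mathbf{I} & \mathbf{V}(\mathbf{\hat{p}})^{-1}\end{pmatrix}
\]
is necessarily positive semidefinite. Since the $(2,2)$-block is invertible by hypothesis, the standard Schur-complement criterion yields
\[
E_{\mathbf{\hat{p}}}[\mathbf{F}_i\mathbf{F}_i^{T}]-(-\mathbf{I})\mathbf{V}(\mathbf{\hat{p}})(-\mathbf{I})=E_{\mathbf{\hat{p}}}[\mathbf{F}_i\mathbf{F}_i^{T}]-\mathbf{V}(\mathbf{\hat{p}})\succeq 0,
\]
which is exactly the asserted inequality.

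For the equality characterization I would use the standard fact that the Schur complement of a positive semidefinite block matrix vanishes precisely when the first block of the underlying vector lies in the linear span of the second, i.e.\ when $\mathbf{F}_i+\mathbf{V}(\mathbf{\hat{p}})\mathbf{M}=0$ almost surely in $\mathbf{Y}_i$. This is the desired relation $\mathbf{F}_i(\mathbf{Y}_i,\mathbf{\hat{p}})=\mathbf{\Lambda}(\mathbf{\hat{p}})\mathbf{M}(\mathbf{Y}_i,\mathbf{\hat{p}})$ with $\mathbf{\Lambda}(\mathbf{\hat{p}})=-\mathbf{V}(\mathbf{\hat{p}})$, and conversely substituting any such $\mathbf{\Lambda}(\mathbf{\hat{p}})\mathbf{M}$ into the normalization forces $\mathbf{\Lambda}=-\mathbf{V}$, giving equality in the matrix inequality.

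The main obstacle, I expect, is bookkeeping rather than substance: ensuring the derivative-under-integral step is valid (it is implicit in writing $\mathbf{V}(\mathbf{\hat{p}})$ and the existence of all the matrices above, so I would simply flag the standard regularity assumptions) and keeping dimensions and signs straight so that $\mathbf{M}$ is treated consistently as a column vector throughout and the cross-moment genuinely equals $-\mathbf{I}$ rather than $\mathbf{I}$. Everything else is an application of the well-known block positive-semidefinite/Schur-complement lemma.
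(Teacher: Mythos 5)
Your proposal is correct, and its core is the same as the paper's: differentiate the unbiasedness identity $E_{\mathbf{\hat{p}}}\mathbf{F}_{i}(\mathbf{Y}_{i},\mathbf{\hat{p}})=0$ under the integral, use the normalization (\ref{unormowanie}) to obtain the cross-moment identity $E_{\mathbf{\hat{p}}}[\mathbf{F}_{i}\mathbf{M}^{T}]=-\mathbf{I}$, and finish with a matrix Cauchy--Schwarz step. Where you differ is only in how that last step is packaged: the paper applies Cram\'{e}r's generalization of the Schwarz inequality to the vectors $-\mathbf{F}_{i}\Phi^{1/2}$ and $\Phi^{1/2}\mathbf{M}$, while you use the (automatically positive semidefinite) second-moment matrix of the stacked vector $(\mathbf{F}_{i}^{T},\mathbf{M}^{T})^{T}$ together with the Schur-complement criterion with respect to the invertible block $E_{\mathbf{\hat{p}}}[\mathbf{M}\mathbf{M}^{T}]=\mathbf{V}^{-1}(\mathbf{\hat{p}})$; these are equivalent formulations of the same inequality, and since second moments are PSD without centering, $E_{\mathbf{\hat{p}}}\mathbf{M}=0$ is not even needed for the inequality itself. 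What your packaging buys is a clean equality case: the Schur complement equals $E_{\mathbf{\hat{p}}}\bigl[(\mathbf{F}_{i}+\mathbf{V}\mathbf{M})(\mathbf{F}_{i}+\mathbf{V}\mathbf{M})^{T}\bigr]$, so it vanishes exactly when $\mathbf{F}_{i}=-\mathbf{V}(\mathbf{\hat{p}})\mathbf{M}$ almost surely, which identifies $\mathbf{\Lambda}=-\mathbf{V}(\mathbf{\hat{p}})$ explicitly, whereas the paper's sketch merely asserts that the characterization follows. One bookkeeping remark for your converse direction: rather than differentiating the normalization of $\mathbf{\Lambda}(\mathbf{\hat{p}})\mathbf{M}$ (which needs $E_{\mathbf{\hat{p}}}\mathbf{M}=0$ to discard the $\partial\mathbf{\Lambda}/\partial\mathbf{\hat{p}}$ terms and the information identity $E_{\mathbf{\hat{p}}}[\partial\mathbf{M}/\partial\mathbf{\hat{p}}^{T}]=-\mathbf{V}^{-1}$, i.e.\ your step one applied to $\mathbf{M}$), it is quicker to reuse the already established cross-moment identity, $\mathbf{\Lambda}\mathbf{V}^{-1}=E_{\mathbf{\hat{p}}}[\mathbf{F}_{i}\mathbf{M}^{T}]=-\mathbf{I}$, which forces $\mathbf{\Lambda}=-\mathbf{V}$ and hence equality.
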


\begin{proof}
Sketch of the proof. We have $0=E_{\mathbf{\hat{p}}}\mathbf{F}_{i}%
\mathbf{(Y}_{i},\mathbf{\hat{p})\allowbreak=}\int\mathbf{F}_{i}\mathbf{(Y}%
_{i},\mathbf{\hat{p})}\Phi(\mathbf{Y}_{i},\mathbf{\hat{p})}d\mathbf{Y}_{i}$.
Differentiating with respect to $\mathbf{\hat{p}}$ and assuming the
possibility of changing the order of integration and differentiation we get:%

\[
0=\int\left(  \frac{\partial}{\partial\mathbf{\hat{p}}^{T}}\mathbf{F}%
_{i}(\mathbf{Y}_{i},\mathbf{\hat{p})}\right)  \Phi(\mathbf{Y}_{i}%
,\mathbf{\hat{p})}d\mathbf{Y}_{i}+\allowbreak\int\mathbf{F}_{i}\mathbf{(Y}%
_{i},\mathbf{\hat{p})}\left(  \frac{\partial}{\partial\mathbf{\hat{p}}^{T}%
}\Phi(\mathbf{Y}_{i},\mathbf{\hat{p})}\right)  d\mathbf{Y}_{i}.
\]
But
\[
\int\frac{\partial}{\partial\mathbf{\hat{p}}^{T}}\mathbf{F}_{i}(\mathbf{Y}%
_{i},\mathbf{\hat{p})}\Phi(\mathbf{Y}_{i},\mathbf{\hat{p})}d\mathbf{Y}%
_{i}=\allowbreak E_{\mathbf{\hat{p}}}\frac{\partial}{\partial\mathbf{\hat{p}%
}^{T}}\mathbf{F}_{i}(\mathbf{Y}_{i},\mathbf{\hat{p})\allowbreak=I,}%
\]
on the basis of assumptions. Hence%

\[
\mathbf{I=\allowbreak}\int(-\mathbf{F}_{i}\mathbf{(Y}_{i},\mathbf{\hat{p}%
))}\Phi(\mathbf{Y}_{i},\mathbf{\hat{p})}\frac{\partial}{\partial
\mathbf{\hat{p}}^{T}}\ln\Phi(\mathbf{Y}_{i},\mathbf{\hat{p})}d\mathbf{Y}_{i}.
\]
Now we apply generalized by Cramer \cite{Cramer46}, Schwarz inequality to
vectors: $-\mathbf{F}_{i}\mathbf{(Y}_{i},\mathbf{\hat{p})}\Phi^{1/2}%
(\mathbf{Y}_{i},\mathbf{\hat{p})}$ and $\Phi^{1/2}(\mathbf{Y}_{i}%
,\mathbf{\hat{p})M(Y}_{i},\mathbf{\hat{p})}$. As a corollary we get a
statement, that the matrix:
\[
\mathbf{I-}\allowbreak\int\mathbf{F}_{i}\mathbf{(Y}_{i},\mathbf{\hat{p})F}%
_{i}^{T}\mathbf{(Y}_{i},\mathbf{\hat{p})}\Phi(\mathbf{Y}_{i},\mathbf{\hat{p}%
)}d\mathbf{Y}_{i}\mathbf{\times}\allowbreak\int\Phi(\mathbf{Y}_{i}%
,\mathbf{\hat{p})M(Y}_{i},\mathbf{\hat{p})M}^{T}\mathbf{(Y}_{i},\mathbf{\hat
{p})}d\mathbf{Y}_{i}%
\]
is negatively semidefinite. It is easy to get an assertion utilizing this fact
and remembering that $\int\Phi(\mathbf{Y}_{i},\mathbf{\hat{p})M(Y}%
_{i},\mathbf{\hat{p})M}^{T}\mathbf{(Y}_{i},\mathbf{\hat{p})}d\mathbf{Y}%
_{i}\allowbreak=\allowbreak\mathbf{V}^{-1}(\mathbf{\hat{p})}$.
\end{proof}

\begin{remark}
The above-mentioned theorem indicates, that the best estimating equation is
the so-called maximum likelihood equation. Hence, we get different, other than
the traditional justification of the use of the maximum likelihood method.
\end{remark}

Let us assume that there is given a sequence of observed values of estimating
functions $\left\{  \mathbf{F}_{i}(\mathbf{Y}_{i},\mathbf{\hat{p})}\right\}
_{i\geq1}$. Let us pay attention, that on the basis of our assumptions,
equations
\[
E_{\mathbf{p}}\mathbf{F}_{i}(\mathbf{Y}_{i},\mathbf{\hat{p})=0,}%
\]
as functions of $\mathbf{\hat{p}}$ have one common zero equal to $\mathbf{p}$.
Hence, one can use stochastic approximation methods, in order to estimate this
zero. In chapter \ref{aproksymacja} many different versions of stochastic
approximation were discussed. Some of them are really well fitted to be
applied in this case. Particularly useful seem to be a procedure
(\ref{nowaproc}) and the stating its convergence Theorem \ref{rozne_F}. We
will illustrate its use for parametric identification by the following examples.

\begin{example}
\label{poprzedni}Suppose, that process $\left\{  y_{i}\right\}  _{i\geq1}$ is
generated by the system defined by the relationship:
\begin{equation}
y_{i+1}=f(y_{i};p)+\zeta_{i};i\geq0, \label{przyk_rek}%
\end{equation}
where $f(x;p)=\left\{
\begin{array}
[c]{ccc}%
px & ,gdy & x<p\\
p^{2}/2+px/2 & ,gdy & x\geq p
\end{array}
\right.  $, that is function $f$ has e.g. for $p=.9$ the following
plot.\newline%
{\includegraphics[
height=1.5912in,
width=2.3938in
]%
{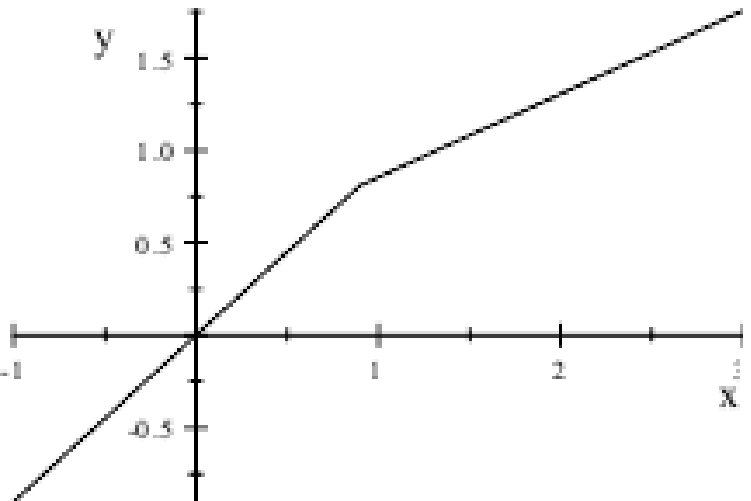}%
}%
. In order to be able to analyze further this procedure of identification of
this system, we will assume, that $p>0$. Moreover, it is not difficult to
notice, that to make the system stable with probability $1$, one has to assume
that, $p<1$. It follows from the fact, that for $p>1$ it would happen, that
for some $i$ we would have $y_{i}<0$ and $\xi_{i}$ not very large. Then
$y_{i+1}$ would be also negative and with positive probability $y_{i+1}%
<<y_{i},$. and so on. the subsequent values of the sequence $\{y_{i}\}$ could
decrease to $-\infty$. Let us notice that we have then%

\begin{align*}
Ey_{i+1}  &  =pEy_{i}I(y_{i}<p)+\frac{p^{2}}{2}+\frac{p}{2}Ey_{i}I(y_{i}\geq
p)=\\
&  =pEy_{i}+\frac{p}{2}(p-Ey_{i}I(y_{i}\geq p)).
\end{align*}
$\left\{  \zeta_{i}\right\}  _{i\geq0}$ is the sequence of the random
variables having zero mean and finite variance not unnecessarily independent.
Other assumptions concerning sequence $\left\{  \zeta_{i}\right\}  $ will be
given in the sequel. They will be concerned with ensuring convergence of
respective stochastic approximation procedures. Generally, these assumptions
will impose that the sequence $\left\{  \zeta_{i}\right\}  $ will satisfy
strong laws of large numbers. How to translate this requirement on assumptions
concerning of covariance functions $K(n,k)=\operatorname*{cov}(\zeta_{n}%
,\zeta_{k})$, given is e.g. in theorem \ref{mpwl} or its generalization. The
problem is to find a point $p$, defining function $f$, on the basis of
sequence of observations $\left\{  y_{i}\right\}  _{i\geq0}$. To appreciate
this ability of getting information about the distribution from the sequence
random variables by stochastic approximation, we will plot sequence $\left\{
y_{i}\right\}  _{i\geq0}$ simulated for $p=.9$ and the sequence $\left\{
\zeta_{i}\right\}  _{i\geq0}$, consisting of the time series of type
ARMA(2,2). Parameter $p$ was estimated with the help of a simple stochastic
approximation procedure:
\[
q_{i+1}=q_{i}-\frac{1}{i+1}(y_{i+1}-f(y_{i};q_{i})).
\]
For $p=.9$ one obtained the following plot of iterations:%
\begin{center}
\includegraphics[
height=1.4875in,
width=2.3696in
]%
{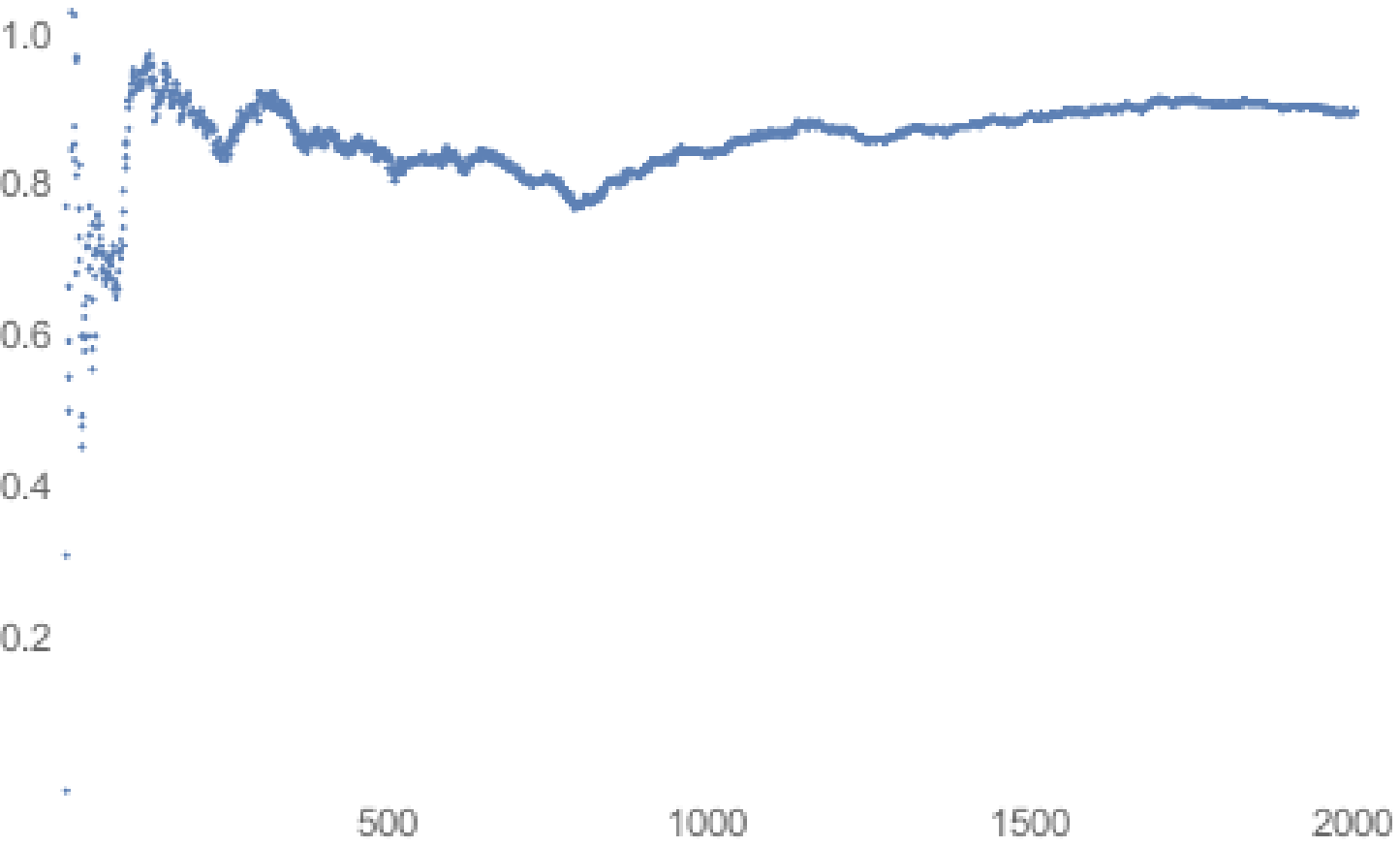}%
\end{center}

Convergence of this procedure followed from the modified Theorem
\ref{split_delta_proc}. This modification is set in this that we split
functions $\mathbf{F}_{n}$ on $\mathbf{G}_{n}=E(\mathbf{F}_{n}|\mathcal{F}%
_{n-1})$ and $\mathbf{F}_{n}-G_{n}$, since additive noises do not depend on
the present value of the estimator. This modification is thus in the spirit of
Theorem \ref{najprostsze}. Besides, one has to utilize Theorem \ref{mpwl}. On
the base of Theorem \ref{mpwl} it is somewhat easy to show that series
$\sum_{i\geq1}\frac{1}{i+1}\zeta_{i}$ is convergent with probability $1$,
remembering that the covariance function of the ARMA process is dominated by
the exponential function. Finally, we will mention only, that when applying
Theorem \ref{rozne_F} one used equality:
\[
(q-p)(f(y_{i};q)-f(y_{i};p))\allowbreak=\allowbreak\left\vert p-q\right\vert
^{2}\chi(y_{i};p,q),
\]
where%

\[
\chi(x;p,q)\allowbreak=\allowbreak\left\{
\begin{array}
[c]{ccc}%
x & ,when & x<\min(p,q)\\
\frac{x}{2}+\frac{p+q}{2}+\frac{\max(p,q)(x-\max(p,q)}{2(\max(p,q)-\min(p,q))}
& ,when & \min(p,q)\leq x<\max(p,q)\\
\frac{x}{2}+\frac{p+q}{2} & ,when & x\geq\max(p,q)
\end{array}
\right.  .
\]
It is easy to notice, that
\begin{equation}
\chi(x,p,q)\geq\left\{
\begin{array}
[c]{ccc}%
x & ,gdy & x<\min(p,q)\\
\frac{x+\min(p,q)}{2} & ,gdy & \min(p,q)\leq x<\max(p,q)\\
\frac{x}{2}+\frac{p+q}{2} & ,gdy & x\geq\max(p,q)
\end{array}
\overset{df}{=}\eta(x,p,q).\right.  \label{chi_}%
\end{equation}
Quantity $\chi(y_{i},p,q)$ or its lower bound $\eta(y_{i},p,q)$ given in the
formula (\ref{chi_}) we treat as $\delta$ appearing in Theorem
\ref{split_delta_proc} and we decompose $\delta$ in the following way
$E\eta(y_{i},p,q)+\eta(y_{i},p,q)-E\eta(y_{i},p,q)$. In order to be able to
make use of this theorem one has to show that i) $lim_{i\rightarrow\infty
}E\eta(y_{i},p,q)>0$ and ii) series $\sum_{i\geq0}\frac{1}{i+1}(\eta
(y_{i},p,q)-E\eta(y_{i},p,q))$ is bounded almost surely. Property ii) is
intuitively obvious. In order to make the discussed series\ convergent, one
has to show, that $\operatorname*{var}(\eta(y_{i},p,q))$ is bounded, which in
the face of assumed stationarity of the process $\left\{  y_{i}\right\}  $ is
obvious and also for example to show, that the covariance function of the
process $\left\{  \eta\left(  y_{i},p,q\right)  \right\}  $ decreases
exponentially, which again confronted with the fact that the process $\left\{
y_{i}\right\}  $ is Markov should be satisfied. To show condition i) is may be
a bit more complex. Let us leave it to the interested reader. Let us notice
only that fact that \underline{$lim$}$_{i\rightarrow\infty}E\eta
(y_{i},p,q)\neq0$ is connected with the stationarity of the process $\left\{
y_{i}\right\}  $ and also with the fact that \underline{$lim$}$_{i\rightarrow
\infty}\operatorname*{var}(\zeta_{i})>0.$
\end{example}

\begin{example}
Let the process $\left\{  y_{i}\right\}  _{i\geq0}$ will be generated with the
help of the recursive equation:
\begin{equation}
y_{i+3}=1.6y_{i+2}-1.475y_{i+1}+.7605y_{i}+\zeta_{i+3}, \label{symualacja1}%
\end{equation}
$y_{0},y_{1},y_{2}$ are given numbers, and $\left\{  \zeta_{i}\right\}
_{i\geq0}$ is sequence independent random variables having $N(0,1)$
distributions. Information that is at our disposal, consists of a sequence of
observations $\mathcal{Y}=\left\{  y_{i}\right\}  _{i\geq0}$ of our process
(\ref{symualacja1}). This data one can e.g. put in the following plot in the
so-called phase coordinates $(y_{i},y_{i+1})$.%
\[%
{\includegraphics[
height=1.8204in,
width=2.9326in
]%
{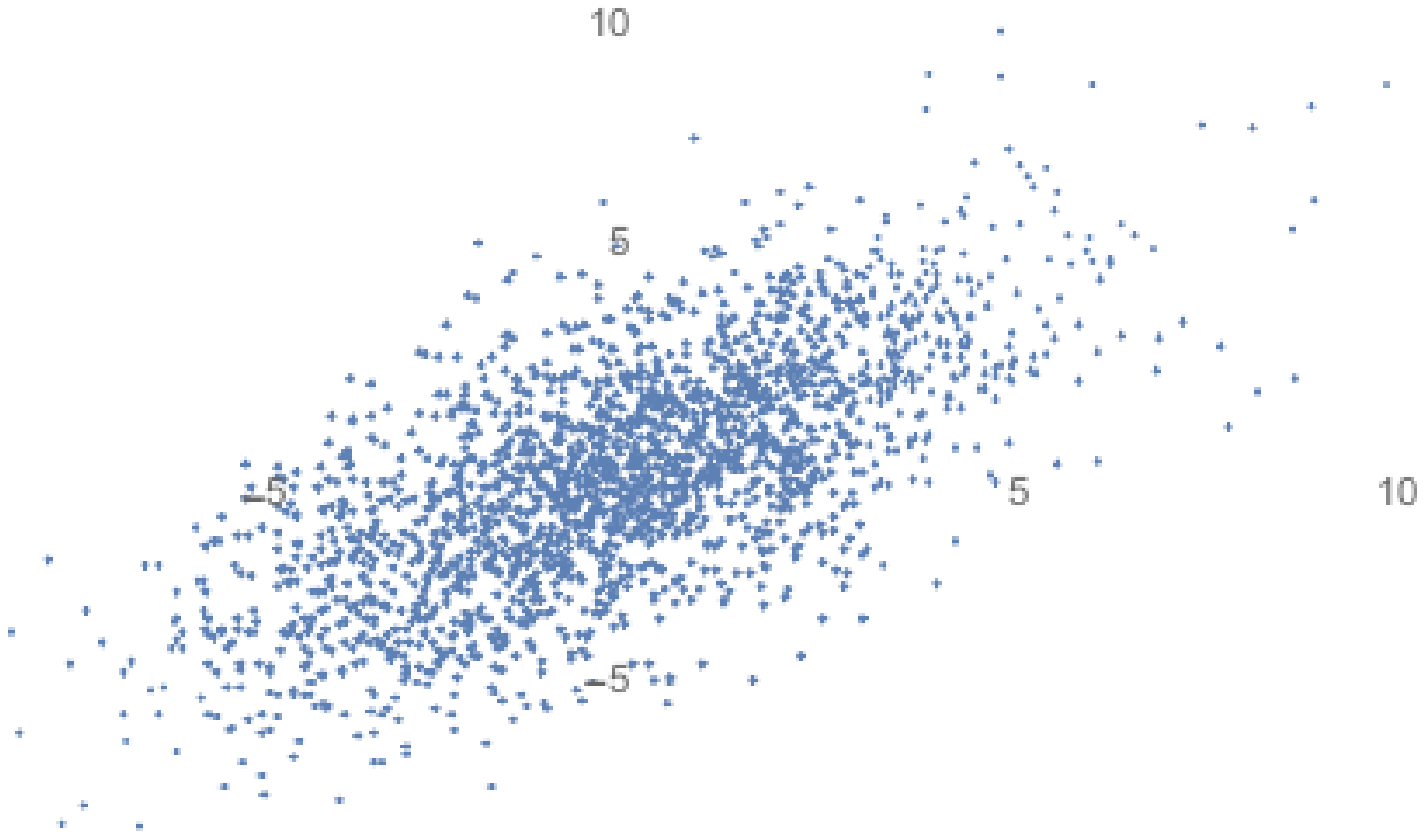}%
}%
.
\]
As one can deduce from these plots it would be rather difficult to deduce that
the values of parameters characterizing process $\mathcal{Y}$ are
$(1.6,\allowbreak-1.475,\allowbreak.7605)$. The issue of identification lies
just in finding these parameters on the base of the sequence of observations
$\mathcal{Y}$. Vector of parameters $\mathbf{a}^{T}=(1.6,-1.475,.7605)$ will
be recreated with the help of one of the following procedures:
\begin{equation}
\mathbf{b}_{i+1}=\mathbf{b}_{i}+\frac{1}{i+1}\mathbf{v}_{i}(y_{i+3}%
-\mathbf{b}_{i}^{T}\mathbf{v}_{i});\mathbf{b}_{0}=(0,0,0)^{T};i\geq0,
\label{procedura1}%
\end{equation}
or
\begin{equation}
\mathbf{a}_{i+1}=\mathbf{a}_{i}+\frac{2}{i+1}(\frac{1}{i+1}\sum_{k=0}%
^{i}\mathbf{v}_{k}\mathbf{v}_{k}^{T})^{-1}\mathbf{v}_{i}(y_{i+3}%
-\mathbf{a}_{i}^{T}\mathbf{v}_{i});\mathbf{a}_{0}=(0,0,0)^{T};i\geq0,
\label{procedura2}%
\end{equation}
where we denoted $\mathbf{v}_{i}=(y_{i+2},y_{i+1},y_{i})^{T}$. The results
were the following:

a) for the procedure (\ref{procedura1})%
\begin{center}
\includegraphics[
height=1.5601in,
width=2.5555in
]%
{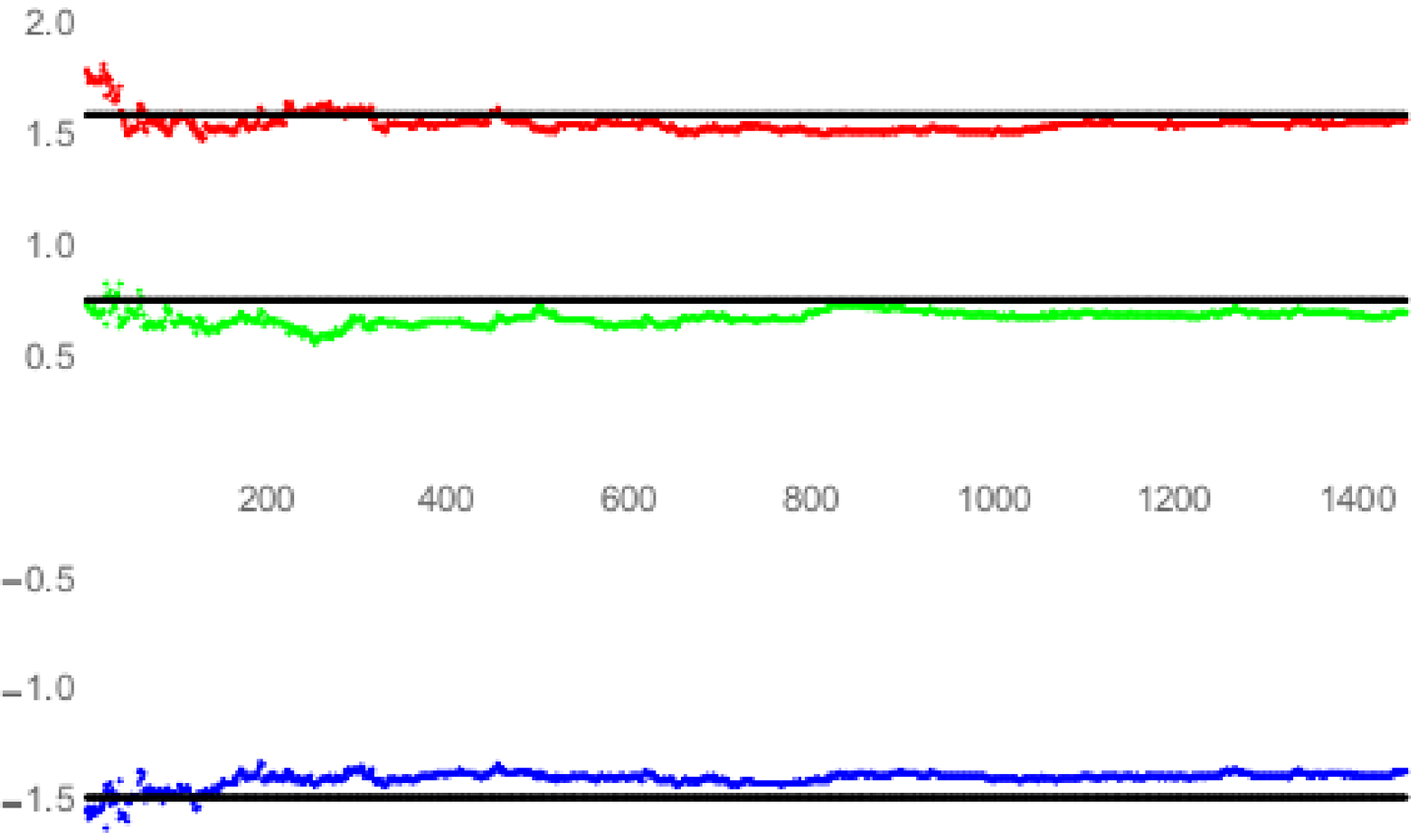}%
\end{center}

b) for the procedure (\ref{procedura2}):%
\begin{center}
\includegraphics[
height=1.4909in,
width=2.5105in
]%
{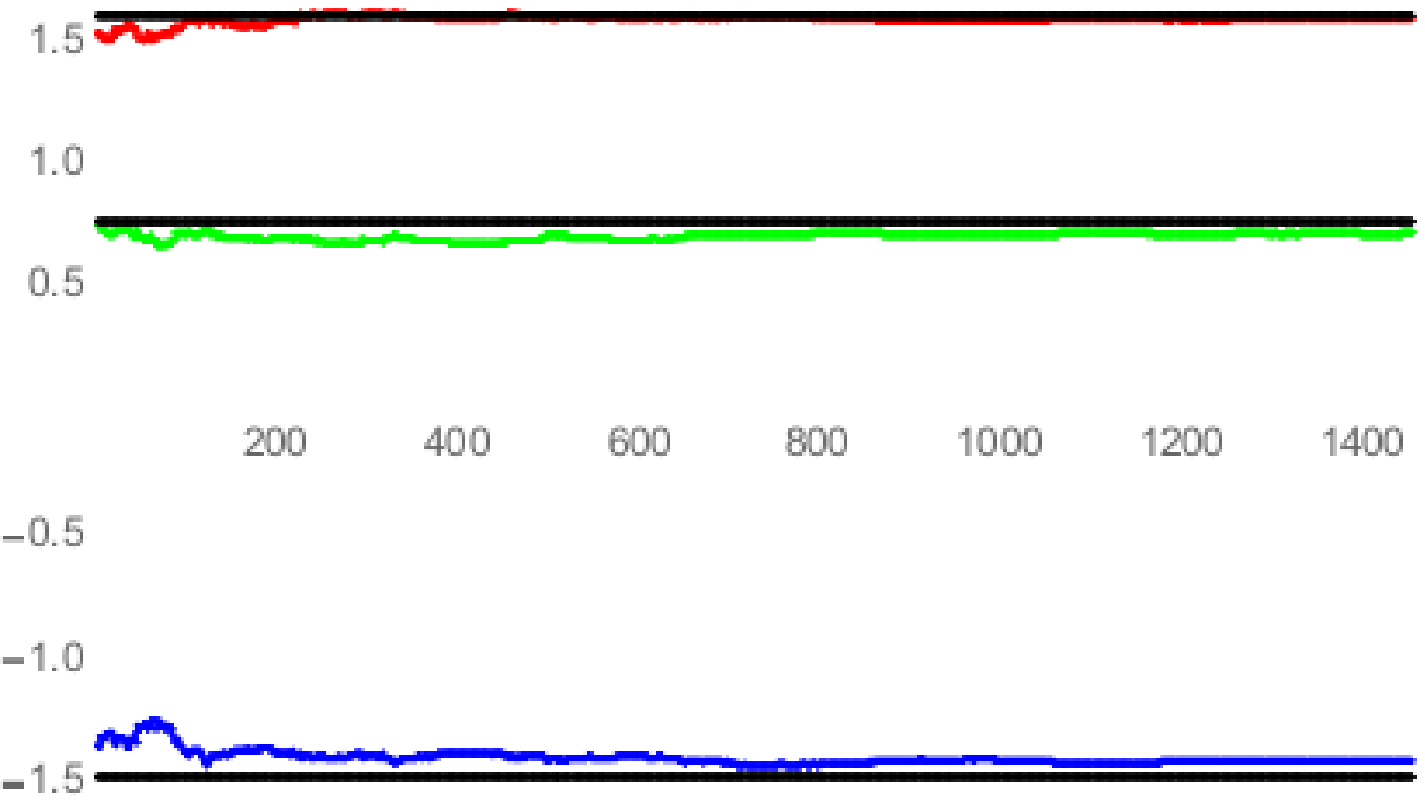}%
\end{center}
. As one can observe the convergence was relatively quick. The justification
of convergence is supplied by Lemma \ref{nieprostyrek} and modification based
on it. We used the procedure defined in Theorem \ref{rozne_F}. We will not
provide details. In chapter \ref{aproksymacja} there were presented many
different stochastic approximation procedures, whose convergence have similar
proofs and that one can easily modify and extend. The use of Lemma
\ref{nieprostyrek} seems to be crucial in this case, since a characteristic
feature of both discussed procedures is the fact, that matrix $\frac{\partial
}{\partial\mathbf{a}^{T}}\left(  \mathbf{v}_{i}(y_{i+3}-\mathbf{v}_{i}%
^{T}\mathbf{a)}\right)  =\mathbf{v}_{i}\mathbf{v}_{i}^{T}$ is of order $1$,
hence condition:
\[
\left(  \mathbf{a-\alpha}\right)  ^{T}\mathbf{v}_{i}\mathbf{v}_{i}%
^{T}(\mathbf{a-\alpha)\geq}\delta_{i}\left\vert \mathbf{a-\alpha}\right\vert
^{2};\underset{i\rightarrow\infty}{\lim}\delta_{i}>0
\]
is not satisfied. Instead, one can notice, that matrix $E\mathbf{v}%
_{i}\mathbf{v}_{i}^{T}$ is nonsingular. Hence, one can decompose sequence
$\left\{  \delta_{i}\right\}  $ in the following way: $\delta_{i}=\delta
_{i}^{^{\prime}}+\delta_{i}^{^{\prime\prime}}$, $\underset{i\rightarrow
\infty}{\lim}\delta_{i}^{^{\prime}}>0$, series $\sum_{i\geq0}\mu_{i}\delta
_{i}^{^{\prime\prime}}$ is convergent a.s. and $\underset{i\rightarrow
\infty}{\lim}\mu_{i}\delta_{i}^{^{\prime^{\prime}}}=0$ a.s. so that one can
apply Lemma \ref{nieprostyrek}.
\end{example}

To finish this part dedicated to parametric identification let us mention the
following problem. It concerns the construction of \textquotedblright
optimal\textquotedblright\ identification procedures. Namely, let us treat a
given sequence of estimating functions $\left\{  F_{i}\right\}  _{i\geq1}$ as
a sequence of \textquotedblright elementary estimating
functions\textquotedblright. Suppose, that we will use these functions to
recursive estimation utilizing procedures of stochastic approximation, as it
was done in the above-mentioned examples. Can one, and if so, then how to
improve or modify data coming from estimating functions, in order to
accelerate the convergence of the respective stochastic approximation
procedure. The problem is important and non-trivial. Partial result in this
direction was presented is in the paper \cite{szab5}. It was assumed there,
that together with every estimating function, we have at our disposal some
auxiliary information in the following of the form. Namely, let us assume,
that
\[
\forall i\geq1\exists k\leq i:\left.  E(\mathbf{F(Y}_{i},\mathbf{\hat{p}%
})|\mathbf{Y}_{k})\right\vert _{\mathbf{\hat{p}=p}}=0
\]
The problem, that was aimed to be solved in the discussed paper was : how to
find sequence of \textquotedblright weights\textquotedblright\ - random
variables that depend on the measurements up to moment $k(i)$ and
$\mathbf{\hat{p}}$ so that respective identification procedure with new
estimating functions of the form
\[
\mathbf{\tilde{F}}_{i}\mathbf{(Y}_{i},\mathbf{\hat{p})=w}_{i}(\mathbf{Y}%
_{k(i)}\mathbf{\hat{p})F(Y}_{i},\mathbf{p)}%
\]
converge quicker (the quickest?!). We will not go into details here. Let us
mention only that such \textquotedblright weights \textquotedblright\ were
found. It turns out that they have a relatively simple form when $k(i)=i-1.$

\section{Nonparametric identification}

We want to indicate in this part, the possibilities of using methods of
regression estimation for identification. The general idea behind this method
of identification is the following. Suppose, that some stochastic processes
$\left\{  x_{i}\right\}  _{i\geq1}$ is generated, by the following recursive
equation:
\[
x_{i+1}=f(x_{i})+\xi_{i},
\]
where the sequence $\left\{  \xi_{i}\right\}  _{i\geq1}$ consists of
independent random variables (more precisely, it is enough to assume, that
this sequence this is a sequence of martingale differences) having zero
expectations. Then, of course, we have:
\[
E(x_{i+1}|x_{i})=f(x_{i}).
\]
In one word function $f$ is a regression of \textquotedblright the
next\textquotedblright\ on \textquotedblright the previous\textquotedblright%
\ observation of the process. Sequence of observations $\left\{
x_{i}\right\}  $ of this process contains information about its way of
generation. Wherein we do not have to parametrize function $f$ and seek
\textquotedblright the true values of parameters\textquotedblright\ as we were
doing in the previous section. The only constraint is independence (and
integrability of course) of the sequence of disturbances $\left\{  \xi
_{i}\right\}  $. It is worth to notice, that distributions of these variables
do not have to be identical! The whole procedure is, however sensitive on the
assumption of independence (more precisely on "being a martingale
difference"). That is if this sequence consists of dependent random variables,
then a function $f$ cannot be obtained by the density estimation method. The
parametric method described above should be used instead. The examples below
show that is is so indeed.

\begin{example}
\label{poprzedni1}Let us return to example \ref{poprzedni} and we will assume,
that process $\left\{  y_{i}\right\}  _{i\geq1}$ generated is with the help
equation (\ref{przyk_rek}) with sequence of disturbances $\left\{  \xi
_{i}\right\}  _{i\geq1}$ consisting of independent random variables having
Normal distribution $\xi_{i}\sim N(0,1+\sin^{2}i).$%
\[%
{\includegraphics[
height=2.1127in,
width=2.9456in
]%
{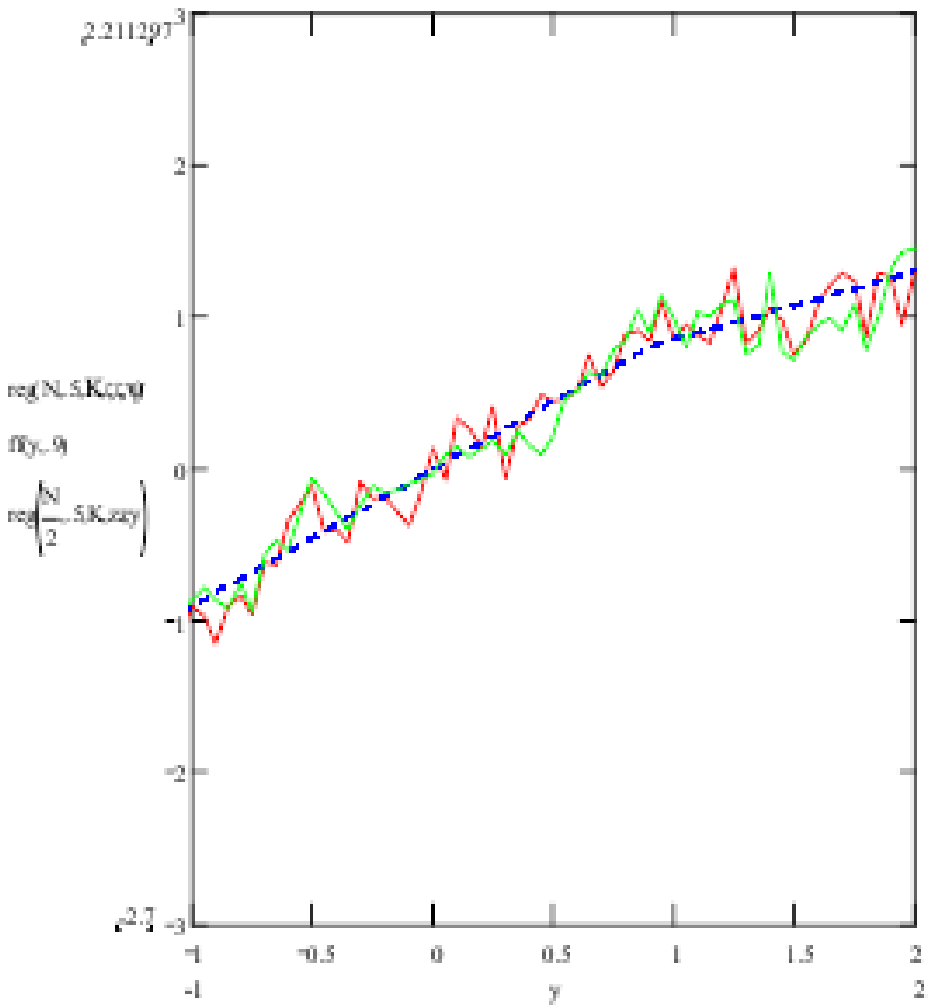}%
}%
\]

The nonparametric estimator obtained after $N\allowbreak=\allowbreak6000$
iterations are presented in red. The same estimator obtained after
$N\allowbreak=\allowbreak3000$ iterations are presented in green, while the
estimated function was plotted in blue. We chose density Cauchy distributions
as the kernel, and coefficient $\alpha\allowbreak=\allowbreak.5.$
\end{example}

\begin{example}
In the second example regression function is substantially more complex. It
would require many parameters in order to parametrize . That is if one wanted
to use stochastic approximation one should use its multidimensional version
(converging slower of course). Namely, as a regression function, we took
function $h(x)\allowbreak=\allowbreak\left\{
\begin{array}
[c]{ccc}%
.8x & ,when & x<-2\\
-.4+-.8(x+2) & ,when & -2\leq x<0\\
1 & ,when & 0\leq x<.5\\
1-.9x & ,when & x>.5
\end{array}
\right.  $
\begin{center}
\includegraphics[
height=1.9285in,
width=2.4734in
]%
{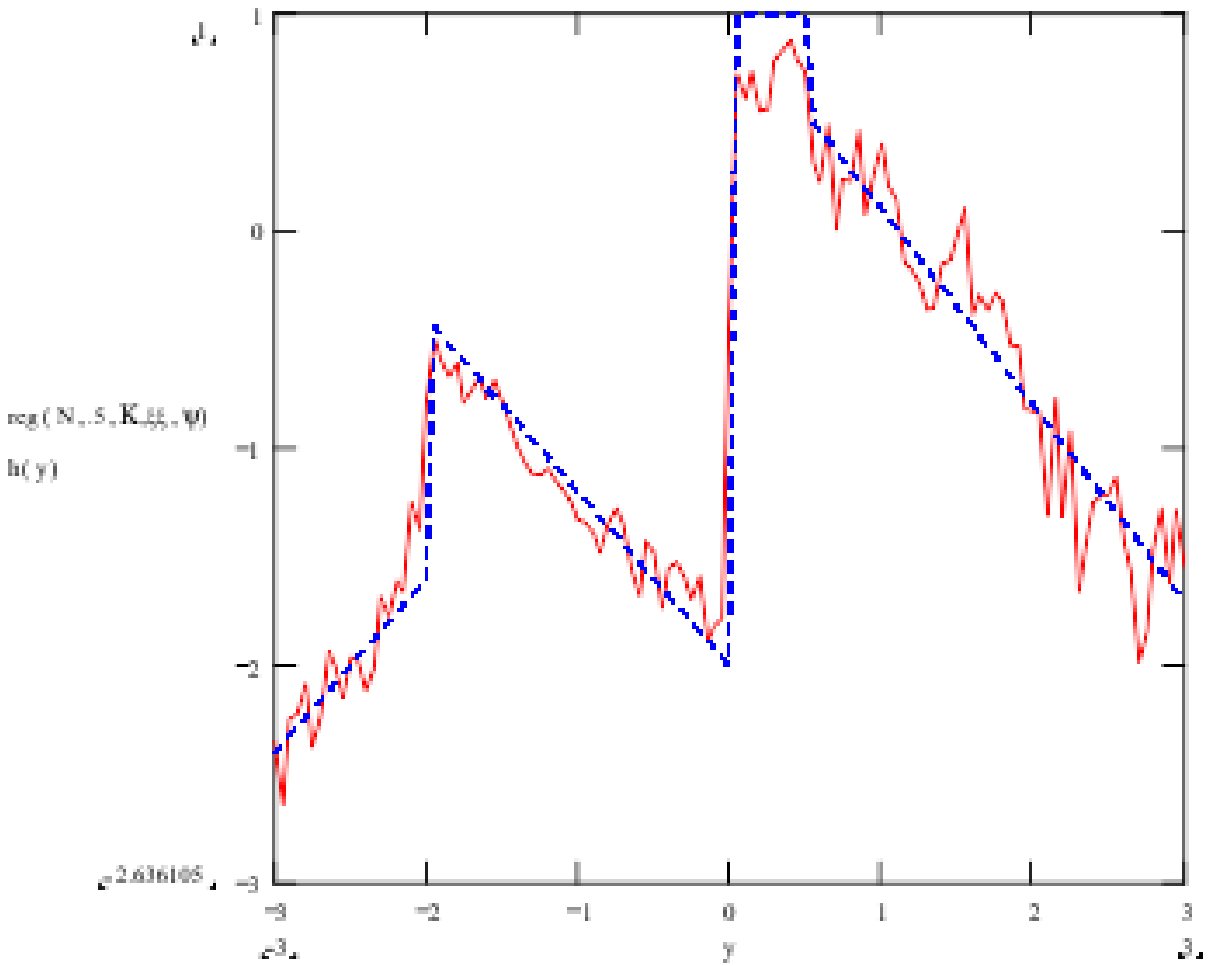}%
\end{center}

After $N\allowbreak=\allowbreak6000$ iterations we get the following estimator
of the function $h$ (plotted in blue). The process was disturbed by the noise
as in the previous example. The value of the parameter $\alpha$ and the kernel
were also identical as before.
\end{example}

\begin{example}
In the above-mentioned example, one can notice the superiority of the kernel
method over parametric methods. These are not, as it turns out very sensitive
on the assumptions of independence of disturbances appearing in the processes
equation. In the next example, we will consider the process that was analyzed
in example \ref{poprzedni1} with the proviso that we will assume this time,
that disturbing noises are slightly correlated.
\end{example}

\begin{example}
Namely, we will assume, that the noise $\left\{  \xi_{n}\right\}  $ is
generated by the moving average process of order 3, i.e. $\xi_{n}=\zeta
_{n}+.3\zeta_{n-1}-.2\xi_{n-2}$, where the sequence $\left\{  \zeta
_{n}\right\}  $ is a sequence of independent random variables having Gaussian
distribution $N(0,1)$. The one obtained after $N=6000$ iterations is the
following result:%
\begin{center}
\includegraphics[
height=1.6829in,
width=2.162in
]%
{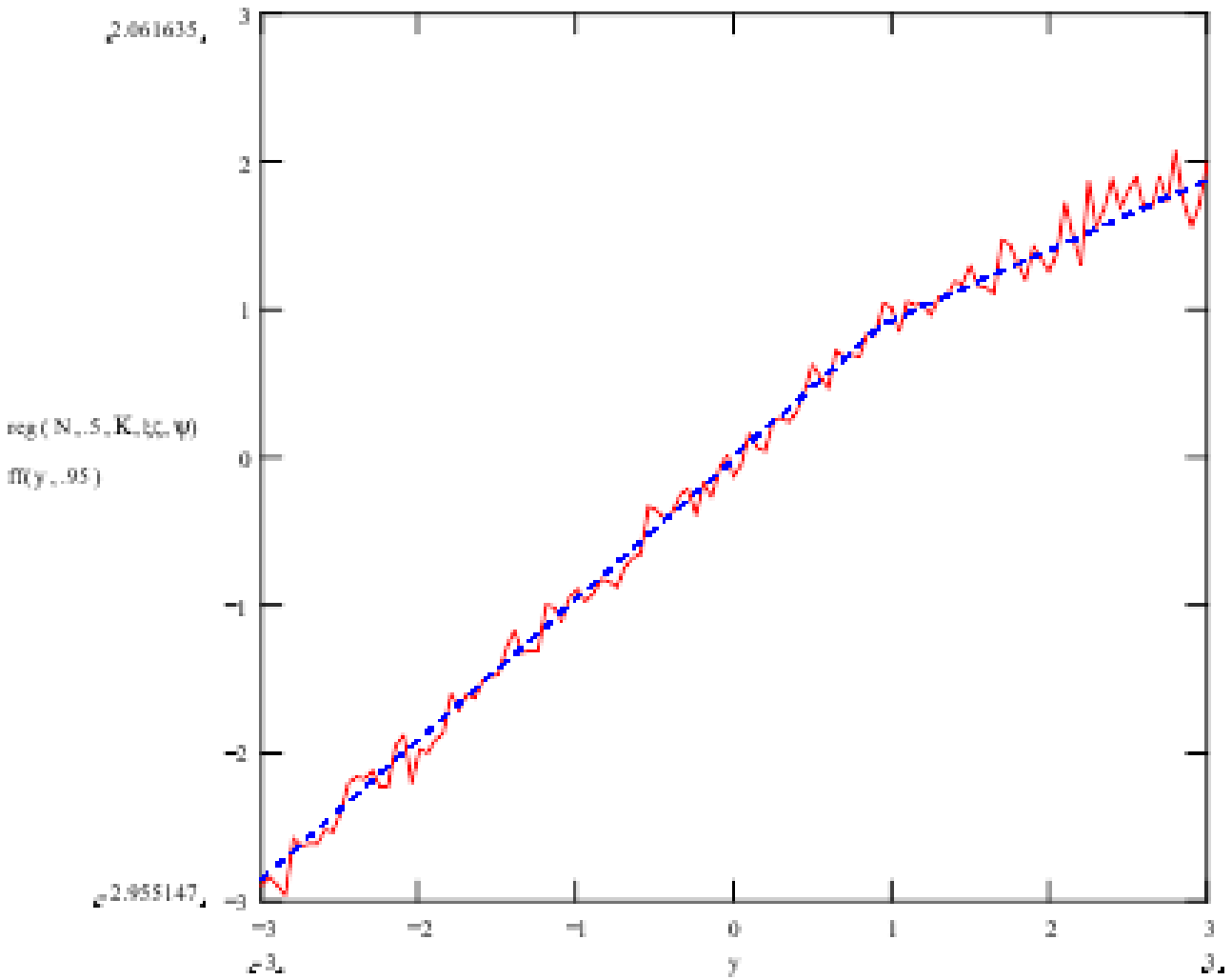}%
\end{center}
For the sake for clarity the estimated function was again plotted in blue.
\end{example}

\begin{remark}
Examples considered in the previous and this section clearly show that
parametric methods are substantially quicker. Practically after 200 -300
iterations we got already reasonable approximations of estimated parameters
Applying of nonparametric methods of estimation requires 1000 or more
iterations, to make the estimator visibly approximating estimated function. It
is not very surprising. One should expect this. The nonparametric estimation
has to \textquotedblright examine the shape of the estimated
function\textquotedblright\ and \textquotedblright to find approximated values
of parameters\textquotedblright. In the case of parametric estimation, the
first of these problems are already solved.
\end{remark}

\appendix

\chapter{Calculus of probability}

\section{Probability continuity%
\index{Continuity!Probability}
\label{cptwo}}

Let $(\Omega,\mathcal{F},P)$ be a probability space. Let us consider sequence
of events $\left\{  A_{i}\right\}  _{i\geq1}\subset\mathcal{F}$. We have the
following statement:

\begin{proposition}
i) If the sequence of events $\left\{  A_{i}\right\}  _{i\geq1}$ is
non-decreasing i.e. $\forall i\geq1$: $A_{i}\subseteq A_{i+1}$, then
$P(\bigcup_{i=1}^{\infty}A_{i})\allowbreak=\allowbreak\underset{i\rightarrow
\infty}{\lim}P(A_{i}),$\newline ii) If the sequence of events $\left\{
A_{i}\right\}  _{i\geq1}$ is non-increasing i.e. $\forall i\geq1$:
$A_{i}\supseteq A_{i+1}$, then $P(\bigcap_{i=1}^{\infty}A_{i})\allowbreak
=\allowbreak\underset{i\rightarrow\infty}{\lim}P(A_{i}).$
\end{proposition}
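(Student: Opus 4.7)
The plan is to reduce both parts to countable additivity of $P$, which is the defining property available in the probability space $(\Omega, \mathcal{F}, P)$. For part (i), my strategy is to disjointify the non-decreasing sequence. I would set $B_1 = A_1$ and $B_i = A_i \setminus A_{i-1}$ for $i \geq 2$. The key observations are that the $B_i$ are pairwise disjoint (since $A_{i-1} \subseteq A_i$ makes the set differences well-behaved), that $B_i \in \mathcal{F}$, that $\bigcup_{i=1}^n B_i = A_n$ for every finite $n$, and that $\bigcup_{i=1}^\infty B_i = \bigcup_{i=1}^\infty A_i$.

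Once the disjointification is set up, the argument is essentially one line: by countable additivity,
\[
P\!\left(\bigcup_{i=1}^\infty A_i\right) = P\!\left(\bigcup_{i=1}^\infty B_i\right) = \sum_{i=1}^\infty P(B_i) = \lim_{n\to\infty}\sum_{i=1}^n P(B_i) = \lim_{n\to\infty} P(A_n),
\]
where in the last equality I use finite additivity on the disjoint union $A_n = \bigsqcup_{i=1}^n B_i$. Note the limit exists because $\{P(A_n)\}$ is monotone non-decreasing and bounded above by $1$.

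For part (ii), the cleanest route is to pass to complements and invoke part (i). If $\{A_i\}$ is non-increasing, then $\{A_i^c\}$ is non-decreasing, and by De Morgan $\bigcap_{i\geq 1} A_i = \left(\bigcup_{i\geq 1} A_i^c\right)^c$. Applying part (i) to $\{A_i^c\}$ and using $P(E^c) = 1 - P(E)$ on both sides converts the statement about intersections into the one about unions that has already been proved.

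The only substantive step that requires care is verifying the disjointification identities in part (i); everything else is bookkeeping with countable additivity and complements. I do not anticipate a genuine obstacle, since this is one of the foundational continuity properties of a probability measure, and no deeper machinery (such as the Borel--Cantelli lemma or Fatou-type arguments recalled earlier in the chapter) is needed.
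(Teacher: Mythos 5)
Your proposal is correct and follows essentially the same route as the paper: disjointify the increasing sequence via $C_1=A_1$, $C_{i+1}=A_{i+1}\setminus A_i$, apply countable additivity to get $P(\bigcup_i A_i)=\lim_n P(A_n)$, and handle the non-increasing case by De Morgan's laws and complements. No gaps.
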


\begin{proof}
It is easy to notice, that assertion $i)$ and $ii)$ are equivalent due de
Morgan's laws. Hence, we will prove assertion $i)$. Let us denote
$C_{1}\allowbreak=\allowbreak A_{1}$, $C_{2}\allowbreak=\allowbreak
A_{2}-A_{1}$, $\ldots.,C_{n+1}\allowbreak=\allowbreak A_{n+1}-A_{n}$, $\ldots$
Events $\left\{  C_{i}\right\}  _{i\geq1}$ are disjoint, and Moreover, we have:%

\[
\bigcup_{i=1}^{\infty}A_{i}=\bigcup_{i=1}^{\infty}C_{i}.
\]
Hence, by the countable additivity of probability, we get:
\[
P(\bigcup_{i=1}^{\infty}A_{i})=\sum_{i=1}^{\infty}P(C_{i}).
\]
Moreover, let us notice, that event $C_{i+1}$ and $A_{i}$ are also disjoint
and $A_{i+1}=C_{i+1}\cup A_{i}$. Hence, $P(C_{i+1})\allowbreak=\allowbreak
P(A_{i+1})\allowbreak-\allowbreak P(A_{i})$, $i=1,2,\ldots$. Thus, $\sum
_{i=1}^{n}P(C_{i})\allowbreak=\allowbreak P(A_{n})$. In other words
$\sum_{i=1}^{\infty}P(C_{i})\allowbreak=\allowbreak\underset{i\rightarrow
\infty}{\lim}P(A_{i}).$
\end{proof}

\begin{remark}
One can easily show, that the property of probability continuity is equivalent
to the properties of countable additivity. One part of this equivalence was
already shown. It remained to show, that from probability continuity follows
countable additivity. This easy task we leave to the reader.
\end{remark}

\section{Chebyshev inequality
\index{Inequality!Chebishev}%
\label{czebyszew}}

Let $X$ be a random variable with expectation $EX$ and variance
$\operatorname*{var}(X)$. Chebyshev inequality states:
\begin{equation}
\forall\varepsilon>0:P\left(  \left\vert X-EX\right\vert \geq\varepsilon
\right)  \leq\frac{\operatorname*{var}(X)}{\varepsilon^{2}}.
\label{nier_Czeb1}%
\end{equation}
This inequality appears often in the following equivalent form:
\begin{equation}
\forall k>0:P\left(  \left\vert X-EX\right\vert <k\sqrt{\operatorname*{var}%
(X)}\right)  >1-\frac{1}{k^{2}}. \label{nier_Czeb2}%
\end{equation}

The proof is based on the so-called inequality Markov%
\index{Inequality!Markov}
\begin{equation}
\forall\epsilon>0:P\left(  Y\geq\epsilon\right)  \leq\frac{EY}{\epsilon}
\label{nier_Markowa}%
\end{equation}
that is true for nonnegative, integrable random variables. Now we set
$Y=E\left(  X-EX\right)  ^{2}$, $\epsilon=\varepsilon^{2}$ in%
\index{Inequality!Markov}%
. Markov's inequality one obtains by taking the expectation of both sides of
the inequality:
\[
I(Y\geq\epsilon)\leq\frac{Y}{\epsilon},
\]
true for all values $Y\geq0$ (make a plot!).

\section{Borel-Cantelli Lemma\label{Borel-Cantelli}}

Let $\{A_{i}\}_{i\geq1}$ will be a family of events. Let us denote
\[
\underset{i\rightarrow\infty}{\lim\inf}\,A_{i}=\bigcap_{i=1}^{\infty}%
\bigcup_{j=i}^{\infty}A_{j}.
\]

Sometimes the event $\underset{n\rightarrow\infty}{\lim\sup\,}A_{n}$ will be
denoted $\left\{  A_{n}:i.o.\right\}  $ coming from \textquotedblright
infinitely often\textquotedblright. Complementary event to $\left\{
A_{n}:i.o.\right\}  $ is an event $\bigcup_{i=1}^{\infty}\bigcap_{j=i}%
^{\infty}A_{j}^{c}$. Events of such form are called lower union of events
$\left\{  A_{i}^{c}\right\}  $. Sometimes it is denoted as
$\underset{n\rightarrow\infty}{\lim\inf}\,A_{n}^{c}$, or $\left\{  A_{n}%
^{c}:f.o.\right\}  $ coming from the words \textquotedblright finitely
often\textquotedblright.

\begin{lemma}
[Borel-Cantelli]%
\index{Lemma!Borel-Cantelli}%
\emph{i) }If $\sum_{i=1}^{\infty}P(A_{i})<\infty$ then, \newline%
$P(\underset{i\rightarrow\infty}{\lim\inf}\,A_{i})=0.$

\emph{ii) }If events $\{A_{i}\}_{i\geq1}$ are independent and $\sum
_{i=1}^{\infty}P(A_{i})=\infty$, \newline then $P(\underset{i\rightarrow
\infty}{\lim\inf}\,A_{i})\allowbreak=\allowbreak1.$

$\emph{iii)}$ If $P(\underset{i\,\longrightarrow\infty}{\lim\inf}%
\,A_{i})\allowbreak=\allowbreak1$ and event $A_{i}$ are independent, \newline
then $\sum_{i=1}^{\infty}P(A_{i})<\infty.$
\end{lemma}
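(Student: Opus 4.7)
The plan is to prove the three assertions in the order given, using only the continuity of probability (Appendix \ref{cptwo}), countable additivity, and the elementary inequality $1-x \leq \exp(-x)$ from Proposition \ref{o_exp}. I treat $\{A_n : i.o.\} = \bigcap_{i\geq 1}\bigcup_{j\geq i} A_j$ as the event appearing in the statement.

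For part i), I would note that $\{A_n : i.o.\} \subseteq \bigcup_{j\geq i} A_j$ for every $i$, so by countable subadditivity $P(\{A_n : i.o.\}) \leq \sum_{j\geq i} P(A_j)$. Since $\sum_{j\geq 1} P(A_j)<\infty$, the tail $\sum_{j\geq i} P(A_j)$ tends to $0$ as $i\to\infty$, which forces $P(\{A_n : i.o.\}) = 0$. Nothing beyond the convergence of the series is needed here; no independence assumption enters.

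For part ii), the natural route is to show that the complementary event $\bigcup_{i\geq 1}\bigcap_{j\geq i} A_j^c$ has probability zero. Fix $i$ and consider $\bigcap_{j=i}^N A_j^c$. By independence of the $A_j$'s (and hence of the $A_j^c$'s) this has probability $\prod_{j=i}^N(1-P(A_j))$, which by Proposition \ref{o_exp} is bounded by $\exp\!\left(-\sum_{j=i}^N P(A_j)\right)$. Since $\sum_{j\geq 1} P(A_j)=\infty$, the exponent tends to $-\infty$ as $N\to\infty$, so by continuity of probability (the decreasing case in Appendix \ref{cptwo}) $P\bigl(\bigcap_{j\geq i} A_j^c\bigr)=0$. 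Taking the countable union over $i$ still yields zero, so its complement $\{A_n : i.o.\}$ has probability one. The main (and only) subtle point is the switch from a finite product to the limit, which is where the exponential bound really earns its keep; the rest is just bookkeeping with $\sigma$-additivity.

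For part iii), I would simply invoke the contrapositive of part i). If $\sum_{i\geq 1} P(A_i)$ were finite, then by i) we would have $P(\{A_n : i.o.\})=0$, contradicting the hypothesis $P(\{A_n : i.o.\})=1$. Therefore $\sum_{i\geq 1} P(A_i)=\infty$. In fact, independence is not used in this direction — it is listed in the statement presumably to emphasize that, together with part ii), one obtains under independence the full equivalence: $\sum P(A_i)<\infty \iff P(\{A_n : i.o.\})=0$ and $\sum P(A_i)=\infty \iff P(\{A_n : i.o.\})=1$, a zero-one type dichotomy. The only genuine analytic work is in part ii); the remaining parts are essentially one-line consequences of continuity of measure and contrapositive reasoning.
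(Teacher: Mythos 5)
Your parts i) and ii) are correct and essentially reproduce the paper's own argument: the paper likewise bounds the probability of $\bigcup_{j\geq i}A_j$ by the tail $\sum_{j\geq i}P(A_j)$ of the convergent series, and in ii) likewise uses independence together with $\prod_{j\geq i}(1-P(A_j))\leq\exp\bigl(-\sum_{j\geq i}P(A_j)\bigr)$; your only deviations (making the passage from finite to infinite products explicit via continuity from above, and replacing monotone continuity by plain subadditivity in i)) are cosmetic.

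Part iii) is where you have a genuine gap: you prove the wrong implication. The conclusion to be established is $\sum_{i\geq1}P(A_i)<\infty$, whereas your contrapositive-of-i) argument delivers $\sum_{i\geq1}P(A_i)=\infty$. The source of the confusion is the paper's inconsistent notation: in i) and ii) the symbol written as a lim inf denotes $\bigcap_{i}\bigcup_{j\geq i}A_j$ (the ``infinitely often'' event), but in iii) the hypothesis, as the paper's own proof makes clear, is that with probability $1$ only finitely many $A_i$ occur, i.e. $P\bigl(\bigcap_{i}\bigcup_{j\geq i}A_j\bigr)=0$, equivalently $P\bigl(\bigcup_{i}\bigcap_{j\geq i}A_j^c\bigr)=1$. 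Under that intended hypothesis, iii) is the converse of i) valid under independence, and independence cannot be discarded, contrary to your closing remark: take nested events $A_1\supseteq A_2\supseteq\cdots$ with $P(A_i)=1/i$ and $\bigcap_{i}A_i=\emptyset$; then almost surely only finitely many $A_i$ occur, yet $\sum_{i}P(A_i)=\infty$. The correct argument, which is the paper's, reuses the computation from ii): if $\sum_{i}P(A_i)$ were infinite, then for every $i$ independence would give $P\bigl(\bigcap_{j\geq i}A_j^c\bigr)=\prod_{j\geq i}(1-P(A_j))\leq\exp\bigl(-\sum_{j\geq i}P(A_j)\bigr)=0$, so by continuity from below $P\bigl(\bigcup_{i}\bigcap_{j\geq i}A_j^c\bigr)=\lim_{i}P\bigl(\bigcap_{j\geq i}A_j^c\bigr)=0$, contradicting the hypothesis that this probability is $1$; hence $\sum_{i}P(A_i)<\infty$, and independence is precisely what licenses the product formula.
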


\begin{proof}
\emph{i). }Let us denote $C_{i}\allowbreak=\allowbreak\bigcup_{j=i}^{\infty
}A_{j}$. We have $C_{i+1}\subseteq C_{i}$. Hence, \newline$P(\lim\sup
A_{i})\allowbreak=\allowbreak P\left(  \bigcap_{i=1}^{\infty}C_{i}\right)
\allowbreak=\allowbreak\underset{i\rightarrow\infty}{\lim}P(C_{i})$. Moreover,
$P(C_{i})\leq\sum_{j=i}^{\infty}P(A_{j})\rightarrow0$, as $i\rightarrow\infty
$, since $\sum_{i=1}^{\infty}P(A_{i})<\infty$. \emph{ii) }We have
$\overline{\underset{i\rightarrow\infty}{\lim\inf}\,A_{i}}=\bigcup
_{i=1}^{\infty}\bigcap_{j=i}^{\infty}A_{j}^{c}$. Let us denote $D_{i}%
\allowbreak=\allowbreak\bigcap_{j=i}^{\infty}A_{j}^{c}$. From the property of
probability continuity we have: $P($ $\overline{\underset{i\rightarrow
\infty}{\lim\sup}A_{i}})=\allowbreak\underset{i\rightarrow\infty\,}{\lim
}P(D_{i})$, since $D_{i}\subseteq D_{i+1}$. Moreover, since the events
$\left\{  A_{i}\right\}  $ are independent, we have $P(D_{i})=\allowbreak
\prod_{j=i}^{\infty}P(A_{j}^{c})\allowbreak=\prod_{j=i}^{\infty}%
(1-P(A_{j}))\allowbreak\leq\exp(-\sum_{j=i}^{\infty}P(A_{j}))\allowbreak=0$.
\emph{iii) }Basing on considerations from point \emph{ii) }%
$P(\underset{n\,\longrightarrow\infty}{\lim\inf}\,A_{i})\allowbreak
=\allowbreak1$ implies, that $\underset{i\rightarrow\infty}{\lim}%
P(D_{i})\allowbreak=\allowbreak1$. But then we have: $P(D_{i})=\prod
_{j=i}^{\infty}(1-P(A_{j}))$. If $\sum_{i=1}^{\infty}P(A_{i})\allowbreak
=\allowbreak\infty$, then as it follows from point $\emph{ii)}$ we would have
$\underset{i\rightarrow\infty}{\lim}P(D_{i})\allowbreak=\allowbreak0$, hence
we must have $\sum_{i\geq1}P(A_{i})<\infty.$
\end{proof}

\begin{remark}
Let us notice that the event $\left\{  A_{i}:f.o.\right\}  $ is equivalent to
the event $\left\{  \sum_{i\geq1}I(A_{i})<\infty\right\}  $. Similarly the
event $\left\{  A_{i}:i.o.\right\}  $ is equivalent to the event
\newline$\left\{  \sum_{i\geq1}I(A_{i})=\infty\right\}  .$
\end{remark}

\section{Types of convergence of sequences of the random
variables\label{rzbiez}}

\label{zbieznosc}Let $(\Omega,\mathcal{F},P)$ be a probability space. Let the
sequence $\{X_{n}\}_{n\geq0}$ of the random variables be defined on it. We say
that this sequence :

\begin{enumerate}
\item -Converges\emph{\ }with probability $1$%
\index{Convergence!with probability 1}
to a random variable $X$, when\newline$P\{{\omega:}X_{n}{(\omega)\allowbreak
}\underset{n\rightarrow\infty}{\longrightarrow}X(\omega)\}\allowbreak
\allowbreak=\allowbreak1$. (We write, then $\allowbreak X_{n}%
\underset{n\rightarrow\infty}{\longrightarrow}X$ with probability $1$ or
a.(lmost) s.(\textit{urely} ).

\item -Converges in probability%
\index{Convergence! in probability}
to a random variable $X$, when $\forall\epsilon>0:P\{\omega:|X_{n}%
(\omega)-X(\omega)|>\epsilon\}\longrightarrow0$ as $n\rightarrow\infty$. (We
write, then $X_{n}\underset{n\rightarrow\infty}{\longrightarrow}X$ in
probability or mod P).

\item -Converges in $r$ -th mean
\index{Convergence!in r-th mean}
to $X$ (also with $r$-th mean, or simply in $L_{r}$) ($r>0$), if
$E|X_{n}-X|^{r}\rightarrow0$ for $n\rightarrow\infty$. (We write, then
$X_{n}\overset{(r)}{\rightarrow}X$ or $X_{n}\overset{L_{r}}{\rightarrow}X$, as
$n\rightarrow\infty$). \newline\textbf{Remark}: In of the case $r\allowbreak
=\allowbreak2$ we talk about mean-squares convergence!

\item -Converges weakly%
\index{Convergence!weak}%
\emph{\ }to\textit{\ }$X$ ( according to cumulative distribution function or
in distribution), when the sequence $F_{n}$ (of cdf's of $X_{n}$ ) converges
to $F_{X}$ (cumulative distribution function of $X$) at every continuity point
of the cumulative distribution function $F$. (We write, then $X_{n}%
\overset{\ast}{\rightarrow}X$, where in place of (*) we can put (\textit{w})
or (\textit{D}).
\end{enumerate}

\begin{remark}
Let us denote by $A_{k}$ the following event:
\[
\bigcap_{m=1}^{\infty}\bigcup_{n=m}^{\infty}\{{\omega:}\allowbreak{|}%
X_{n}{(\omega)-}X{(\omega)|>}\frac{{1}}{k}{\}.}%
\]
Using the definition of the limit we see, that convergence with probability
$1$ of the sequence $\{X_{n}\}$ to $X$ is equivalent to the fact that event
$\bigcup_{k=1}^{\infty}A_{k}$ has zero probability. Since we have
\[
\bigcup_{k=1}^{\infty}A_{k}\supset A_{k},k=1,2,\ldots
\]
then
\[
P(\bigcup_{k=1}^{\infty}A_{k})=0\allowbreak\Longrightarrow\allowbreak
P(A_{k})=0,\text{ }k=1,2,\ldots.
\]
Let us notice now, that denoting $B_{m}\allowbreak=\allowbreak\bigcup
_{n=m}^{\infty}\{\omega:\allowbreak{|}X_{n}{(\omega)-}X{(\omega)|>}\frac{{1}%
}{k}\}$ we have $B_{m+1}\subset B_{m}$, $m=1,2,\ldots$ and $A_{k}%
=\bigcap_{m=1}^{\infty}B_{m}$. Hence, once again from continuity of
probability we get: $\underset{m\rightarrow\infty}{\lim}P(B_{m})=0$. It
remains to note, keeping in mind definition of events $B_{m}$, that
\[
B_{m}\supset\allowbreak\{\omega:\left\vert X_{m}(\omega)\allowbreak-X\left(
\omega\right)  \right\vert \allowbreak>\frac{1}{k}\}.
\]
Hence summarizing, if the sequence $\left\{  X_{n}\right\}  _{n\geq1}$
converges with probability $1$ do $X$, then
\[
\forall k\in%
\mathbb{N}
\mathbb{\;}P(\left\vert X_{m}-X\right\vert >\frac{1}{k})\underset{m\rightarrow
\infty}{\longrightarrow}0,
\]
that is a convergence with probability follows the convergence with
probability $1$.
\end{remark}

\begin{remark}
\label{p1}The following example shows, that from the convergence in
probability of a sequence one cannot deduce its convergence with probability
$1$. Let us consider the following probability space : $([0,1],\mathcal{B}%
([0,1]),|.|)$ and let us define on it, the following sequence of the random
variables:
\begin{align*}
X_{0}(\omega)  &  =1;\\
X_{1}(\omega)  &  =\left\{
\begin{array}
[c]{lll}%
1 & for & \omega\in\lbrack0,1/2)\\
0 & for & \omega\in\lbrack1/2,1]
\end{array}
\right.  ;\\
X_{2}(\omega)  &  =\left\{
\begin{array}
[c]{lll}%
0 & for & \omega\in\lbrack0,1/2)\\
1 & for & \omega\in\lbrack1/2,1]
\end{array}
\right.  ;\\
X_{3}(\omega)  &  =\left\{
\begin{array}
[c]{lll}%
1 & for & \omega\in\lbrack0,1/3)\\
0 & for & \omega\in\lbrack1/3,1]
\end{array}
\right.  ;\\
X_{4}(\omega)  &  =\left\{
\begin{array}
[c]{lll}%
1 & for & \omega\in\lbrack1/3,2/3)\\
0 & for & \omega\in\lbrack0,1/3)\cup\lbrack2/3,1]
\end{array}
\right.  ;\\
X_{5}(\omega)  &  =\left\{
\begin{array}
[c]{lll}%
1 & for & \omega\in\lbrack2/3,1]\\
0 & for & \omega\in\lbrack0,2/3)
\end{array}
\right.  ;\\
X_{6}(\omega)  &  =\left\{
\begin{array}
[c]{lll}%
1 & for & \omega\in\lbrack0,1/4)\\
0 & for & \omega\in\lbrack1/4,1]
\end{array}
\right.  ;\\
X_{7}(\omega)  &  =\left\{
\begin{array}
[c]{lll}%
1 & for & \omega\in\lbrack1/4,2/4)\\
0 & for & \omega\in\lbrack0,1/4)\cup\lbrack2/4,1]
\end{array}
\right.  ;\text{ and so on}%
\end{align*}
This sequence converges to zero in probability, since for $1>\varepsilon>0$ we
have: $P(\left\vert X_{n}\right\vert >\varepsilon)\allowbreak=P(X_{n}=1)$, and
$\left\{  P(X_{n}=1)\right\}  =\{1,\allowbreak\frac{1}{2},\allowbreak\frac
{1}{2},\allowbreak\frac{1}{3},\allowbreak\frac{1}{3},\allowbreak\frac{1}%
{3},\allowbreak\frac{1}{4}.\allowbreak\frac{1}{4},\allowbreak\frac{1}%
{4},\allowbreak\frac{1}{4},\allowbreak\frac{1}{5},\allowbreak\ldots\}$. This
sequence is however divergent at almost every point, since for fixed
$\omega\neq0,1$ the sequence $\left\{  X_{n}(\omega)\right\}  _{n=1}^{\infty}$
will contain infinitely many $1$. On can notice it from e.g. the figure below,
where, for the clarity, values of functions $X$ were multiplied by decreasing
coefficients.\newline%
{\includegraphics[
height=1.5316in,
width=2.3151in
]%
{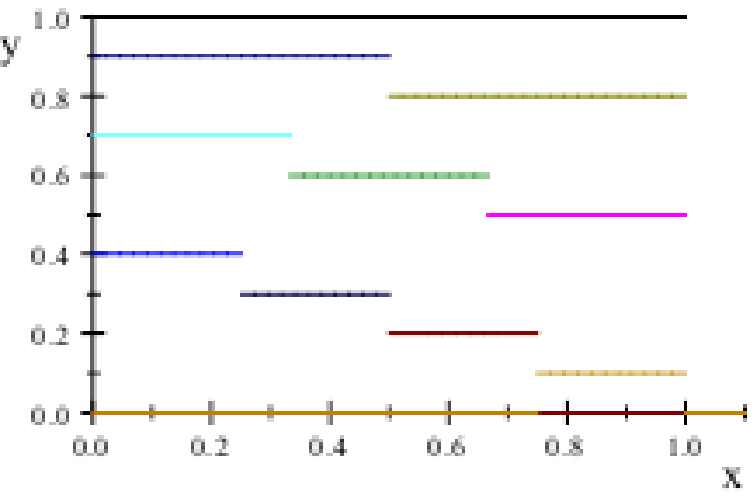}%
}%

\end{remark}

\begin{remark}
Analyzing the above mentioned example, one can notice, that from the sequence
$\left\{  X\right\}  _{n}$ (convergent in probability), one was able to select
a subsequence convergent almost surely. Such sequence is e.g. the sequence
$\left\{  X_{0},X_{1},X_{3},X_{6},\ldots\right\}  $. This observation can be
generalized and we have the following theorem.
\end{remark}

\begin{theorem}
[Riesz]%
\index{Theorem!Riesz}%
Every convergent in probability sequence of the random variables contains a
subsequence convergent almost surely. Conversely, if any sequence of the
random variables has the following property: each of its subsequences contains
a convergent almost surely subsequence, the sequence is convergent in probability!
\end{theorem}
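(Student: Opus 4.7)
The plan is to prove the two implications separately, each using tools already developed in the appendix, principally the Borel-Cantelli lemma and the observation in Remark~\ref{p1} that convergence with probability $1$ implies convergence in probability.

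For the forward direction, I would exploit the fact that convergence in probability gives me control over $P(|X_n - X| > \varepsilon)$ for every $\varepsilon > 0$. The idea is to extract a subsequence $\{X_{n_k}\}$ along which the deviations shrink fast enough for Borel-Cantelli to apply. Concretely, for each $k \geq 1$ I would choose $n_k$ strictly increasing in $k$ so that $P(|X_{n_k} - X| > 1/k) \leq 1/2^k$; the existence of such $n_k$ follows directly from the definition of convergence in probability applied with $\varepsilon = 1/k$. Then $\sum_{k \geq 1} P(|X_{n_k} - X| > 1/k) < \infty$, so by assertion i) of the Borel-Cantelli lemma the events $A_k = \{|X_{n_k} - X| > 1/k\}$ occur only finitely often with probability $1$. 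On the complement of $\limsup_k A_k$, for every $\omega$ and every $\varepsilon > 0$, eventually $|X_{n_k}(\omega) - X(\omega)| \leq 1/k < \varepsilon$, which is exactly the condition for $X_{n_k} \to X$ almost surely.

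For the converse, I would argue by contradiction. Suppose that every subsequence of $\{X_n\}$ admits a further subsequence converging almost surely to $X$, but $X_n$ does not converge to $X$ in probability. Then there exist $\varepsilon > 0$, $\delta > 0$ and a subsequence $\{X_{n_k}\}$ with $P(|X_{n_k} - X| > \varepsilon) \geq \delta$ for all $k$. Applying the assumed property to this subsequence, I obtain a further subsequence $\{X_{n_{k_j}}\}$ converging almost surely to $X$. Invoking Remark~\ref{p1}, almost sure convergence implies convergence in probability, hence $P(|X_{n_{k_j}} - X| > \varepsilon) \to 0$ as $j \to \infty$, contradicting the bound $\geq \delta$ inherited from the original subsequence.

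The only mildly delicate step is the choice of the summable bound in the first part: essentially any sequences $\varepsilon_k \downarrow 0$ and $p_k$ with $\sum p_k < \infty$ would work, and the pairing $\varepsilon_k = 1/k$, $p_k = 2^{-k}$ is just a convenient choice. There is no substantive obstacle here; both implications are short applications of the Borel-Cantelli lemma together with the elementary fact about almost sure versus in-probability convergence already recorded in Remark~\ref{p1}.
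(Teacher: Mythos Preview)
Your proof is correct and follows the standard textbook argument. The paper itself does not supply a proof of this theorem; it simply refers the reader to \L ojasiewicz \cite{lojasiewicz}, so there is no in-paper argument to compare against. One small remark: the implication ``almost sure convergence $\Rightarrow$ convergence in probability'' that you invoke is established in the unlabelled remark immediately \emph{preceding} Remark~\ref{p1}, not in Remark~\ref{p1} itself (which is the counterexample showing the converse fails); adjust the cross-reference accordingly.
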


Proof of this theorem one can find in e.g. \cite{lojasiewicz}.

Relationships between different types of convergence of sequences random
variables:
\[
\left.
\begin{array}
[c]{c}%
\begin{tabular}
[c]{|l|}\hline
$\text{convergence}$\\
$\text{almost surely}$\\\hline
\end{tabular}
\\%
\begin{tabular}
[c]{|l|}\hline
$\text{convergence }$\\
with$\text{ }r\text{-th mean}$\\\hline
\end{tabular}
\end{array}
\right.  \Rightarrow%
\begin{tabular}
[c]{|l|}\hline
$\text{convergence }$\\
in probability\\\hline
\end{tabular}
\ \Rightarrow%
\begin{tabular}
[c]{|l|}\hline
$\text{convergence }$\\
in$\text{ distribution}$\\\hline
\end{tabular}
\]

Moreover, if th sequence converges in $L_{r}$ and $r\geq s$, then also
converges in $L_{s}.$

Counterexamples:

\begin{enumerate}
\item Let $X$ have Cauchy distribution and $X_{n}=X/n$, $n\geq1$. Then of
course $X_{n}\rightarrow0$, $n\rightarrow\infty$ a.s. but $E\left\vert
X_{n}\right\vert ^{r}=\infty$ for $n,r\geq1.$

\item Sequence considered in remark \ref{p1} converges with any mean to zero,
but of course, does not converge with probability $1$.
\end{enumerate}

\begin{remark}
The fact, that convergence with $r-$th mean implies convergence in probability
follows directly from Chebyshev inequality:
\[
P(\left\vert X_{n}-X\right\vert >\epsilon)\leq E\left\vert X_{n}-X\right\vert
^{r}/\epsilon^{r}.
\]

\end{remark}

\begin{remark}
The fact, that convergence in probability implies weak convergence is given
without the proof. Convergence in probability to a constant imply convergence
in probability to a constant.
\end{remark}

To give other examples of the sequences that are convergent with probability
$1,$ we will use Borel-Cantelli Lemma (see Appendix \ref{Borel-Cantelli}).

Using this lemma, we will give an example of convergent and divergent with
probability $1$ sequence of the random variables.

\begin{example}
Let $\left\{  X_{n}\right\}  _{n\geq1}$ be the following sequence random
variables:
\[
X_{n}=\left\{
\begin{array}
[c]{ll}%
1 & \text{with probability }1/n^{2}\\
0 & \text{with probability }1-1/n^{2}%
\end{array}
\right.  .
\]
Let us notice that we do not specify on which probability space this sequence
is defined and whether or not its elements are independent. From the
Borel-Cantelli Lemma, it follows that since: $\sum_{n\geq1}P(X_{n}%
\neq0)<\infty$, hence with probability $1$ the event $\left\{  X_{n}%
\neq0\right\}  _{n\geq1}$ will happen only a finite number of times. In other
words, starting from some $N$ $\,\,$(may be random), for all $n\geq N$ we will
have, $X_{n}=0$, that is, a sequence $\left\{  X_{n}\right\}  _{n\geq1}$ will
converge \ with probability $1$ to zero.
\end{example}

\begin{example}
Let us consider now a sequence of independent random variables $\left\{
X_{n}\right\}  _{n\geq1}$ having the following distributions:
\[
X_{n}=\allowbreak\left\{
\begin{array}
[c]{ll}%
1 & withprobability1/n\\
0 & withprobability1-1/n
\end{array}
\right.  .
\]
Now we have $\sum_{n\geq1}P(X_{n}=1)=\infty$, and Moreover, events $\left\{
X_{n}=1\right\}  $ are independent, hence according to Borel-Cantelli Lemma
with probability $1$ these events will happen infinite number of times. In
other words, for almost every $\omega$ the sequence $\left\{  X_{n}%
(\omega)\right\}  $ will have an infinite number of $1^{\prime}s$, that is the
sequence cannot converge to zero.
\end{example}

\section{Conditional expectation\label{wwocz}}

\subsection{Definition and basic examples}

Let $(\Omega,\mathcal{F},P)$ be a probability space, $X$- random variable
defined on it and $\mathcal{A}\subset\mathcal{F}$ some $\sigma$-field of
subsets of $\Omega$. If $\sigma(X)\footnote{$\sigma(X)$ - a $\sigma-field$
generated by the random variable $X.$ i.e. $\sigma(X)\allowbreak
\allowbreak=\allowbreak\sigma\left(  X^{-1}(B):B\in\mathcal{B}_{1}\right)  ,$
$\mathcal{B}_{1}$ denotes here Borel $\sigma-$field of subsets on $\mathbb{R}%
$}\subset\mathcal{A}$, then we say that the random variable $X$ is
$\mathcal{A}$-measurable. Let us assume that $E\left\vert X\right\vert
<\infty$ .

\begin{remark}
\label{o_mierzalnych} If $\mathcal{A}\allowbreak=\allowbreak\sigma
(\mathbf{Y})$ for some random vector $\mathbf{Y}$, then random variable
$\mathbf{X}$ is $\mathcal{A}$-measurable if and only if, there exists Borel
function $\mathbf{g}$ such, that $\mathbf{X}=g(\mathbf{Y})$ a.s.
\end{remark}

\begin{definition}
Conditional expectation of a random variable $X$ with respect to $\sigma
$-field $\mathcal{A}$ we call such $\mathcal{A}$-measurable random variable
(denoted $E(X|\mathcal{A})$), that satisfies the following condition:
\begin{equation}
E(YE(X|\mathcal{A}))=E(YX) \label{definicja_warunkowej}%
\end{equation}
for every $\mathcal{A}$-measurable random variable $Y$ such, that
$E|XY|<\infty$.
\end{definition}

\begin{remark}
If $\sigma$- field $\mathcal{A}\allowbreak=\allowbreak\sigma(Y)$, for some
random vector $\mathbf{Y}$, then we write $E(X|\mathbf{Y})$ instead
$E(X|\sigma(\mathbf{Y}))$. (From the remark \ref{o_mierzalnych} it follows
that there exists then a Borel function $h$ such that $E(X|\mathbf{Y}%
)=h(\mathbf{Y})$ a.s.) .
\end{remark}

\begin{example}
Let $\mathcal{A}=\{\emptyset,\Omega\}$. Then every random variable measurable
with respect to $\mathcal{A}$ is almost surely equal to a constant.
$E(X|\mathcal{A)}$ is thus also equal to a constant. Which one? From the
condition (\ref{definicja_warunkowej}) it follows immediately, that this
constant (let us call it $e$ ) has to satisfy the condition :
\[
\forall y\in%
\mathbb{R}
:Eye=EyX.
\]
Thus, we immediately deduce, that $E(X|\mathcal{A})=EX$ a.s.
\end{example}

\begin{example}
Let $(X,Y)$ have joint density $f(x,y)$. let us find $E(X|Y)$. We have here
$\mathcal{A}=$ $\sigma(Y$), hence every random variable $\mathcal{A}%
-$measurable is of the form $g(Y)$ for some Borel functions $g$. $E(X|Y$) is
also of this form e.g. for the function $h$. One has to find this function.
From the condition defining conditional expectation we have for every Borel
function $g$:
\[
\iint\nolimits_{%
\mathbb{R}
^{2}}xg(y)f(x,y)dxdy=\int\nolimits_{%
\mathbb{R}
}g(y)h(y)f_{Y}(y)dy.
\]
Hence
\[
h(y)=\int\nolimits_{%
\mathbb{R}
}x\frac{f(x,y)}{f_{Y}(y)}dx.
\]
Interpretation: Let us consider the above mentioned formula and let us reshape
its right hand side slightly in the following way:
\begin{equation}
\int\nolimits_{%
\mathbb{R}
}x\frac{f(x,y)}{f_{Y}(y)}dx=\frac{\int_{%
\mathbb{R}
}xf(x,y)dxdy}{\int_{%
\mathbb{R}
}f(x,y)dxdy}. \label{wyrazenie wrunkowe}%
\end{equation}
Next on the plane $(x,y)$ let us consider a horizontal strip of width $dy$
containing line $y=y0$:%

{\includegraphics[
height=1.8075in,
width=3.141in
]%
{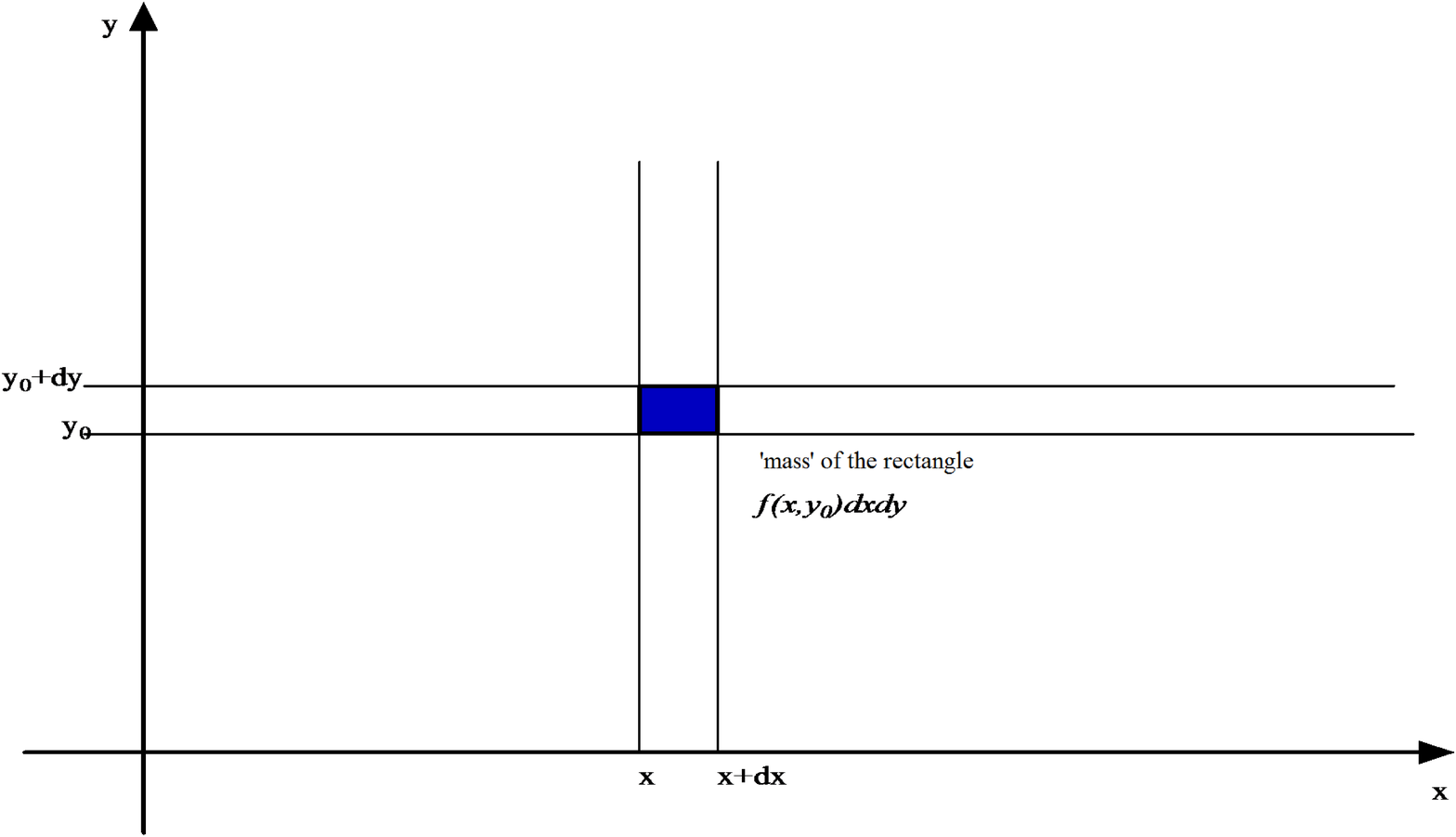}%
}%
. The denominator of the expression (\ref{wyrazenie wrunkowe}) is equal to
\textquotedblright sum of moments of mass f(x,y0)dxdy\ with respect to the
axis oy (multiplied by x)\textquotedblright\ that is equal to the moment
(mechanical) of the strip with respect to axis oy. The denominator is equal to
the 'mass' of this strip (sum of masses f(x,y)dxdy along the line ox). The
value of the conditional expectation at point $y0$ that is $h(y0)$ is thus
equal to \textquotedblright center of the mass of the strip with width $dy$
containing straight line $y=y0"$.
\end{example}

Conditional expectation is just the curve joining 'centers of the masses' of
parallel horizontal strips and the conditional variance is a curve joining
moments of inertia of such strips!!!

\begin{remark}
The values of the functions $h(.)$ at point $y$ are traditionally denoted
$E(X|Y=y)$. Analogously, we show, that:
\[
E(w(X)|Y=y)=\int\nolimits_{%
\mathbb{R}
}w(x)\frac{f(x,y)}{f_{Y}(y)}dx.
\]

\end{remark}

\subsection{Properties}

\begin{enumerate}
\item \label{istnienie_war_ocz}If $E|X|<\infty$ then for any $\sigma$-field
$\mathcal{A}$ $E(X|\mathcal{A})$ exists and is defined uniquely in the
following sense: if $Z_{1}$ and $Z_{2}$ are two $\mathcal{A}$- measurable
random variables satisfying condition (\ref{definicja_warunkowej}), then
$P(Z_{1}=Z_{2})=1$ or other in words $Z_{1}=Z_{2}$ a.s. .

\begin{remark}
Because of expressed in the above mentioned property properties of the
ambiguity of the $\mathcal{A}$- measurable random variables satisfying
condition (\ref{definicja_warunkowej}) they are called versions of the
conditional expectation $E(X|\mathcal{A}).$
\end{remark}

\item For every $\alpha,\beta${}$\in%
\mathbb{R}
$ and random variables $X$, $Y$ such that $E|X|<\infty$, $E|Y|<\infty$ the
following equality{} is satisfied:
\[
E(\alpha X+\beta Y|\mathcal{A})=\alpha E(X|\mathcal{A})+\beta{}E(Y|\mathcal{A}%
).
\]

\begin{proof}
For any $\mathcal{A-}$measurable random variable $T$ we have:
\[
ET(\alpha X+\beta Y)=ETE(\alpha X+\beta Y|\mathcal{A))}.
\]
Let us denote $Z_{1}=E(\alpha X+\beta Y|\mathcal{A)}$. On the other hand by
linearity of expectation we have:
\[
ET(\alpha X+\beta Y)=\alpha EXT+\beta EYT.
\]
Using(\ref{definicja_warunkowej}) we get:
\begin{align*}
\alpha EXT+\beta EYT  &  =\alpha ETE(X|\mathcal{A})+\beta ETE(Y|\mathcal{A}%
)=\\
&  =ET(\alpha E(X|\mathcal{A})+\beta E(Y|\mathcal{A})).
\end{align*}
Random variable $Z_{2}\allowbreak=\allowbreak\alpha E(X|\mathcal{A}%
)\allowbreak+\allowbreak\beta E(Y|\mathcal{A})$ is of course $\mathcal{A}%
$-measurable (as the sum of the random variables is a random variable). Hence,
we have%

\[
ET(\alpha X+\beta Y)=ETZ_{1}%
\]
and
\[
ET(\alpha X+\beta Y)=ETZ_{2}%
\]
for any random variable $\mathcal{A}$ measurable $T$. Hence, by
\ref{istnienie_war_ocz} it follows that $Z_{1}\allowbreak=\allowbreak Z_{2}$
a.s. .
\end{proof}

\item If $X\geq0$ a.s. and $E|X|<\infty$, then $E(X|\mathcal{A})\geq0$ a.s.
\newline for every $\sigma$-field $\mathcal{A}$ $\subset\mathcal{F}$.

\item $E(E(X|\mathcal{A}))=EX$ a.s.

\item[5.] If $E|X|$ $<$ $\infty$ and $\mathcal{A}$ and $\mathcal{B}$ are two
$\sigma-$fields such that $\mathcal{A}\subset\mathcal{B}\subset\mathcal{F}$,
then
\[
E(E(X|\mathcal{B})|\mathcal{A)}=E(X|\mathcal{A)}.
\]
\newline In particular, if $\mathbf{Y}_{1}$ and $\mathbf{Y}_{2}$ are some
random vectors, then $E(E(X|(\mathbf{Y}_{1},\mathbf{Y}_{2}))|\mathbf{Y}%
_{1})=E(X|\mathbf{Y}_{1})$ a.s.

\item[6.] If $E|X|$ $<$ $\infty$ and $X$ is $\mathcal{A}-$measurable random
variable, then
\[
E(X|\mathcal{A)}=X\,\,\,a.s.
\]

\item[7.] If $E|X|$ $<$ $\infty$ and $Y$ is $\mathcal{A}-$measurable random
variable, then
\[
E(YX|\mathcal{A)}=YE(X|\mathcal{A)\,\,}a.s.
\]

\item[8.] \label{niezalezne}If $E|X|$ $<$ $\infty$ and $X$ is a random
variable independent on (i.e. event $\{\omega:$ $X(\omega)$ $<$ $x\}$ and $A$
are independent for every $x\in%
\mathbb{R}
$ and $A\in A)$ and $E|X|$ $<$ $\infty$, then
\[
E(X|\mathcal{A})=EX\,\text{a.s.}%
\]

\item[9.] If $E|X|^{2}$ $<$ $\infty$, then
\begin{equation}
\underset{Y}{\min}E(X-Y)^{2}=E(X-E(X|\mathcal{A}))^{2}, \label{minimalizacja}%
\end{equation}
where the minimum is taken with respect to all $\mathcal{A}$ -measurable
random variables.
\end{enumerate}

\begin{remark}
This is the most important (from the point of view of the application)
property of the conditional expectation. Its importance will be better seen if
we, considered a particular case, namely if we assume, that
$\mathcal{A\allowbreak=\allowbreak}\sigma(\mathbf{Z})$ for some random vector.
Then, as we remember all $\mathcal{A}$- measurable random variables are of the
form $g(\mathbf{Z)}$. The property \ref{minimalizacja} will now take the
following form:
\begin{equation}
\underset{g-Borel~function}{\min}E(X-g(\mathbf{Z))}^{2}=E(X-E(X|\mathbf{Z}%
))^{2}. \label{minim_Borel}%
\end{equation}
In other words, conditional expectation is the best (in the mean-squares
sense) approximation of the random variable $X$ with the help of Borel
functions of the vector $\mathbf{Z.}$
\end{remark}

\begin{enumerate}
\item[10.] If $E|X|$ $<$ $\infty$, then for every $\mathcal{A}-$ measurable
random variable $Y$ and such that, $E|XY|$ $<$ $\infty$, random variables $Y $
and $X-E(X|\mathcal{A)}$ are uncorrelated.\newline\textbf{Proof.} We have
$EY(X-E(X|\mathcal{A}))=E\left(  YE\left(  X-E(X|\mathcal{A})|\mathcal{A}%
\right)  \right)  =0.$
\end{enumerate}

\begin{remark}
Conditional expectation (e.g. $E(X|Y=y)$) is a.s. equal to a Borel functions
minimizing expression: $\min E(X-h(Y))^{2}=E(X-E(X|Y))^{2}$. Hereof its name
(sometimes met) nonlinear regression!.
\end{remark}

\section{Uniform integrability\label{UnInt}}

\begin{definition}%
\index{Uniform integrability}%
Class $\mathcal{C}$ $\;$of the random variables is called uniformly\emph{\ }%
integrable, if for every $\varepsilon>0$ exists a constant $K$ such, that
$\forall X\in\mathcal{C}:E(\left\vert X\right\vert I(\left\vert X\right\vert
>K))<\varepsilon.$
\end{definition}

We have two important examples of such classes of the random variables.

\begin{example}
\label{alpha}Let the family of the random variables $\mathcal{C}$ be
defined$\mathcal{\allowbreak}$ by conditions: $\exists p>1;\allowbreak
A\allowbreak\in(0,\infty)\allowbreak\forall X\allowbreak\in\allowbreak
\mathcal{C}:\allowbreak E|X|^{p}\allowbreak<\allowbreak A$, then
$\mathcal{C\allowbreak}$ is uniformly integrable. It follows then from by the
following argument: for $v\geq K>0$ we have $v\leq K^{1-p}v^{p}$. Hence, for
$\forall X\allowbreak\in\allowbreak\mathcal{C}$ we have
\[
E\left(  |X|I\left(  |X|>K\right)  \right)  \leq K^{1-p}E\left(
|X|^{p}I\left(  |X|>K\right)  \right)  \leq K^{1-p}A.
\]

\end{example}

\begin{example}
\label{zdominowane}Let the family of the random variables
$\mathcal{C\allowbreak}$ be defined$\mathcal{\allowbreak}$ by the conditions:
$\exists Y\allowbreak\geq\allowbreak0\allowbreak,E\left(  Y\right)  <\infty
:$\allowbreak$|X|\leq Y$, then $\mathcal{C}$ is uniformly integrable.

Because we have for $K>0$ and $X\in\mathcal{C}$
\[
E\left(  |X|I\left(  |X|>K\right)  \right)  \leq E(YI\left(  Y>K\right)  .
\]
We have also $EY\allowbreak=\allowbreak E(YI\left(  Y>K\right)  \allowbreak
+E(YI\left(  Y\leq K\right)  )$. But $YI\left(  Y\leq K\right)  $ is not
decreasing as a function of $K$ and $\underset{K->\infty}{\lim}$ $YI\left(
Y\leq K\right)  \allowbreak=\allowbreak Y$, Hence, by the so-called Lesbesgue
Theorem, we see, that $E(YI\left(  Y>K\right)  ->$ $0$ when $K->\infty$. In
other words, one can find $K$ so that $E\left(  |X|I\left(  |X|>K\right)
\right)  $ was sufficiently small.
\end{example}

Another important example of the uniformly integrable family of the random
variables is supplied by the following statement:

\begin{proposition}
Let $X$ will be integrable random variable defined on $(\Omega,\mathcal{F}%
,P)$. Then the class of the random variables
\[
\left\{  Y:\exists\mathcal{G\subseteq F},\;\mathcal{G}\text{ is }%
\sigma-\text{field, such that }Y\text{ is a version of\thinspace
}E(X|\mathcal{G)}\right\}
\]
is jest uniformly integrable.
\end{proposition}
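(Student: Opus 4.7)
The plan is to reduce the tail integral $E(|Y|I(|Y|>K))$ for an arbitrary conditional expectation $Y=E(X|\mathcal{G})$ to a tail integral of $|X|$ over a set of small probability, and then to use the absolute continuity of the Lebesgue integral of the fixed integrable random variable $|X|$. Because the estimates will depend only on $E|X|$ (not on the particular $\mathcal{G}$), uniformity will follow automatically.

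First I would invoke the conditional Jensen inequality applied to the convex function $|\cdot|$, which gives $|Y|=|E(X|\mathcal{G})|\leq E(|X|\,|\,\mathcal{G})$ a.s. Next I would observe that the event $B_{K}=\{|Y|>K\}$ is $\mathcal{G}$-measurable, so that by property 7 of the conditional expectation (Appendix \ref{wwocz}) one has
\[
E\bigl(|Y|\,I(B_{K})\bigr)\leq E\bigl(I(B_{K})\,E(|X|\,|\,\mathcal{G})\bigr)=E\bigl(|X|\,I(B_{K})\bigr).
\]
Thus it suffices to make $E(|X|\,I(B_{K}))$ small, uniformly in $\mathcal{G}$. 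For this I would use Markov's inequality (Appendix \ref{czebyszew}) together with the property $E|Y|\leq E|X|$, giving
\[
P(B_{K})=P(|Y|>K)\leq \frac{E|Y|}{K}\leq \frac{E|X|}{K},
\]
a bound that does not depend on $\mathcal{G}$.

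The remaining step, which I regard as the main (and only non-routine) analytic input, is the absolute continuity of the integral: for every $\varepsilon>0$ there exists $\delta>0$ such that $P(A)<\delta$ implies $E(|X|\,I(A))<\varepsilon$. This is a standard consequence of the Lebesgue monotone/dominated convergence theorem applied to $|X|I(|X|\leq n)\uparrow |X|$, exactly as in Example \ref{zdominowane}. Granting this, I would choose $K$ so large that $E|X|/K<\delta$; then $P(B_{K})<\delta$ for every $Y$ in the class, hence $E(|Y|\,I(|Y|>K))\leq E(|X|\,I(B_{K}))<\varepsilon$, which is the required uniform bound. Since $K$ depends only on $\varepsilon$ and on $E|X|$, uniform integrability of the entire family follows.
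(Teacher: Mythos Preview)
Your proof is correct and follows essentially the same approach as the paper: conditional Jensen to get $|Y|\leq E(|X|\,|\,\mathcal{G})$, Markov's inequality together with $E|Y|\leq E|X|$ to bound $P(|Y|>K)$ uniformly, $\mathcal{G}$-measurability of $\{|Y|>K\}$ to pass from $|Y|$ to $|X|$ inside the expectation, and absolute continuity of the integral of $|X|$ to finish. The only difference is the order of presentation.
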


\begin{proof}
Let us take $\varepsilon>0$ and select $\delta>0$ such that $\forall
F\in\mathcal{F}:$%
\[
P(F)<\delta\text{ implies, that }E(\left\vert X\right\vert I(F))<\varepsilon.
\]
The fact that it can be done follows basic properties of Lebesgue integrals
(see, e.g. \cite{lojasiewicz}). Let us select $K$ so that $E\left\vert
X\right\vert /K<\delta$. Let $Y$ be version of $E(X|\mathcal{G)}$ for some
$\sigma-$field $\mathcal{G}$. By the Jensen's inequality we have
\begin{equation}
\left\vert Y\right\vert \leq E(\left\vert X\right\vert |\mathcal{G}),\;a.s.\,.
\label{UI1}%
\end{equation}
Hence $E\left\vert Y\right\vert \leq E\left\vert X\right\vert $. Moreover, we
have:
\[
KP(\left\vert Y\right\vert >K)\leq E\left\vert Y\right\vert \leq E\left\vert
X\right\vert ,
\]

that
\[
P(\left\vert Y\right\vert >K)\leq\delta.
\]
We have $\left\{  \left\vert Y\right\vert >K\right\}  \in\mathcal{G}$, hence
by the property (\ref{UI1}) we have:
\[
\left\vert Y\right\vert I(\left\vert Y\right\vert >K)\leq E\left(  \left\vert
X\right\vert I\left(  \left\vert Y\right\vert >K\right)  |\mathcal{G}\right)
,
\]
thus:
\[
E(\left\vert Y\right\vert I(\left\vert Y\right\vert >K))\leq E(\left\vert
X\right\vert I(\left\vert Y\right\vert >K))<\varepsilon.
\]

\end{proof}

We have two very important theorems connected with the notion of uniform
integrability. The first one is a version known Lebesgue Theorem on bounded
passage to the limit under the integrals.

\begin{theorem}
\label{UI2}Let $\{X_{n}\}_{n\geq1}$ be a sequence random variables and $X$ a
random variable such that $X_{n}\rightarrow X$ in probability, as
$n\rightarrow\infty$. Moreover, let us suppose, that
\[
\forall n\geq1:\left\vert X_{n}\right\vert <K
\]
for some positive constant $K$. Then
\[
E\left\vert X_{n}-X\right\vert \rightarrow0,\,n\rightarrow\infty.
\]

\end{theorem}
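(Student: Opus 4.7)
The plan is to exploit the uniform bound $|X_n|<K$ to dominate the integrand and then split the integral according to whether $|X_n-X|$ is small or large.

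First I would observe that since $X_n\to X$ in probability, the Riesz theorem (quoted earlier in Section \ref{zbieznosc}) guarantees a subsequence $\{X_{n_k}\}$ converging to $X$ almost surely. Because $|X_{n_k}|\leq K$ for every $k$, passing to the almost sure limit yields $|X|\leq K$ a.s., and consequently $|X_n-X|\leq 2K$ a.s. for every $n$. This uniform bound is what replaces the appeal to uniform integrability one would use in a more general version of the statement.

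Next, for arbitrary $\varepsilon>0$, I would split
\[
E|X_n-X| = E\bigl(|X_n-X|\,I(|X_n-X|\leq \varepsilon)\bigr) + E\bigl(|X_n-X|\,I(|X_n-X|>\varepsilon)\bigr).
\]
The first term is bounded by $\varepsilon$. For the second term I would use the pointwise estimate $|X_n-X|\leq 2K$ a.s. to obtain
\[
E\bigl(|X_n-X|\,I(|X_n-X|>\varepsilon)\bigr) \leq 2K\,P(|X_n-X|>\varepsilon).
\]
Convergence in probability forces the right-hand side to $0$ as $n\to\infty$, so $\limsup_{n\to\infty} E|X_n-X|\leq \varepsilon$. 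Since $\varepsilon>0$ was arbitrary, $E|X_n-X|\to 0$.

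There is no serious obstacle here; the only subtle point is justifying $|X|\leq K$ a.s., which requires extracting an almost-surely convergent subsequence via Riesz's theorem. Once that is in hand, the two-term split with the trivial bound $2K$ on the tail piece completes the proof in a couple of lines.
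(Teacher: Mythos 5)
Your proof is correct and follows essentially the same route as the paper: the identical split of $E\left\vert X_{n}-X\right\vert$ into the part where $\left\vert X_{n}-X\right\vert$ is at most $\varepsilon$ and the tail, with the tail bounded by $2K\,P(\left\vert X_{n}-X\right\vert >\varepsilon)\rightarrow 0$. The only (harmless) deviation is in justifying $\left\vert X\right\vert \leq K$ a.s.: you extract an almost surely convergent subsequence via Riesz's theorem, whereas the paper argues directly from the inclusion $\left\{ \left\vert X\right\vert >K+1/k\right\} \subseteq\left\{ \left\vert X_{n}-X\right\vert >1/k\right\}$ and continuity of probability; both justifications are valid.
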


\begin{proof}
We have for $k\in%
\mathbb{N}
$,
\[
\forall n\geq1:P(\left\vert X\right\vert >K+1/k)\leq P(\left\vert
X_{n}-X\right\vert >1/k),
\]
hence $P(\left\vert X\right\vert >K+1/k)=0.$Thus,
\[
P(\left\vert X\right\vert >K)=P\left(  \bigcup_{k=1}^{\infty}\left\{
\left\vert X\right\vert >K+1/k\right\}  \right)  =0.
\]
Let us select $\varepsilon>0$ and $n_{0}$ so that:
\[
P(\left\vert X_{n}-X\right\vert >\varepsilon/3)<\frac{\varepsilon}%
{3K}\,\text{for }n\geq n_{0}.
\]
For $n\geq n_{0}$ we have:
\begin{align*}
E\left\vert X_{n}-X\right\vert  &  =E\left(  \left\vert X_{n}-X\right\vert
I(\left\vert X_{n}-X\right\vert >\varepsilon/3)\right)  +E\left(  \left\vert
X_{n}-X\right\vert I(\left\vert X_{n}-X\right\vert \leq\varepsilon/3)\right)
\\
&  \leq2KP(\left\vert X_{n}-X\right\vert >\varepsilon/3)+\varepsilon
/3\leq\varepsilon,
\end{align*}
since of course by the inequality $\left\vert X\right\vert \leq K$ we have
$\left\vert X_{n}-X\right\vert \leq2K.$
\end{proof}

\begin{theorem}
[on convergence in $L_{1}$]Let $\{X_{n}\}_{n\geq1}$, $X\in L_{1}$. Then
$E\left\vert X_{n}-X\right\vert \rightarrow0$, as $n\rightarrow\infty$ (that
is $X_{n}\rightarrow X$ in $L_{1})$ if and only if,when $i)$ $X_{n}\rightarrow
X$ in probability, $ii)$ the family $\{X_{n}\}_{n\geq1}$ is uniformly integrable.
\end{theorem}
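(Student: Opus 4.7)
The plan is to prove each direction separately, exploiting Theorem \ref{UI2} (the bounded convergence version for convergence in probability) as the main workhorse for the sufficiency direction, and using standard absolute continuity of the integral together with finite-family uniform integrability for the necessity direction.

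For the necessity direction, convergence in probability follows immediately from Markov's inequality: $P(|X_n-X|>\varepsilon)\leq E|X_n-X|/\varepsilon \to 0$. The more delicate part is uniform integrability. The strategy is to fix $\varepsilon>0$, choose $n_0$ so that $E|X_n-X|<\varepsilon/2$ for $n\geq n_0$, and handle the initial segment $\{X_1,\ldots,X_{n_0-1}\}$ together with $X$ as a finite (hence uniformly integrable) family. For $n\geq n_0$ I would estimate $E(|X_n|I(|X_n|>K)) \leq E|X_n-X| + E(|X|I(|X_n|>K))$, and control the second summand via absolute continuity of the integral of $|X|$: pick $\delta>0$ so that $P(A)<\delta$ forces $E(|X|I(A))<\varepsilon/2$, and observe that $P(|X_n|>K)\leq E|X_n|/K$ can be made smaller than $\delta$ uniformly in $n$ because $\sup_n E|X_n|<\infty$ (a consequence of $E|X_n|\to E|X|$).

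For the sufficiency direction, the plan is to introduce the truncation $\varphi_K(x) = x$ for $|x|\leq K$ and $\varphi_K(x) = K\,\mathrm{sgn}(x)$ for $|x|>K$, and decompose
\begin{equation*}
E|X_n - X| \leq E|X_n - \varphi_K(X_n)| + E|\varphi_K(X_n) - \varphi_K(X)| + E|\varphi_K(X) - X|.
\end{equation*}
The first term equals $E((|X_n|-K)^+ I(|X_n|>K)) \leq E(|X_n|I(|X_n|>K))$, which uniform integrability makes small, uniformly in $n$, by choosing $K$ large. The third term is similarly small because $X\in L_1$ (so $\{X\}$ is uniformly integrable). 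For the middle term, since $\varphi_K$ is continuous and $X_n \to X$ in probability, we get $\varphi_K(X_n) \to \varphi_K(X)$ in probability, and the sequence is bounded by $K$; applying Theorem \ref{UI2} yields $E|\varphi_K(X_n)-\varphi_K(X)|\to 0$ as $n\to\infty$. Combining, one fixes $K$ to control the outer two terms by $\varepsilon/3$ each, then lets $n$ be large enough for the middle term.

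The main obstacle, as I see it, is keeping the estimates in the necessity direction clean: one must simultaneously juggle the initial segment, the tail $n\geq n_0$, and the reference variable $X$, and ensure that the constant $K$ chosen works uniformly. The trick of bounding $|X_n|$ in terms of $|X_n-X|+|X|$ and exploiting absolute continuity of $\int|X|\,dP$ on small sets is what makes this work; without it, one would be tempted to try to bound $E(|X_n|I(|X_n|>K))$ directly, which is difficult without extra information on the distributions of $X_n$. The sufficiency direction is more routine once one has the truncation idea and Theorem \ref{UI2} at hand.
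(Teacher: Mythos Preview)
Your proposal is correct and follows essentially the same route as the paper: the same truncation $\varphi_K$ and three-term decomposition (invoking Theorem~\ref{UI2} for the middle term) in the sufficiency direction, and the same absolute-continuity-plus-finite-initial-segment argument in the necessity direction. The only cosmetic difference is that the paper invokes the Lipschitz bound $|\varphi_K(x)-\varphi_K(y)|\leq|x-y|$ rather than mere continuity to pass convergence in probability through $\varphi_K$, but your version works equally well.
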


\begin{proof}
$\Leftarrow$ Let us assume that the conditions $i)$ and $ii)$ are satisfied.
Let us fix $K>0$ and let us introduce function:
\[
\varphi_{K}(x)=\left\{
\begin{array}
[c]{ccc}%
K & gdy & x>K\\
x & gdy & \left\vert x\right\vert \leq K\\
-K & gdy & x<-K
\end{array}
\right.  .
\]
Let us select $\varepsilon>0$. From the properties of uniform integrability we
can select $K$ so that
\begin{equation}
E\left\vert \varphi_{k}(X_{n})-X_{n}\right\vert <\varepsilon/3\;\,\text{and
}E\left\vert \varphi_{K}(X)-X\right\vert <\varepsilon/3, \label{UI3}%
\end{equation}
since of course $\left\vert \varphi_{k}(X_{n})-X_{n}\right\vert =\left\vert
X_{n}\right\vert I(\left\vert X_{n}\right\vert >K)$. Moreover, from inequality
$\left\vert \varphi_{K}(x)-\varphi_{K}(y)\right\vert \leq\left\vert
x-y\right\vert $ it follows that $\varphi_{K}(X_{n})\rightarrow\varphi_{K}(X)$
in probability. On the base of Theorem \ref{UI2} we can select $n_{0}$ so that
for $n\geq n_{0}$,
\[
E\left\vert \varphi_{K}(X_{n})-\varphi_{K}(X)\right\vert <\varepsilon/3.
\]
Inequalities (\ref{UI3}), the last inequality and triangle inequality give:
\[
E\left\vert X_{n}-X\right\vert <\varepsilon.
\]

$\Rightarrow$ Let $X_{n}\rightarrow X$, as $n\rightarrow\infty$ in $L_{1}$.
Let us select $\varepsilon>0$ and $n_{0}$ so that for $n\geq n_{0}:$%
\[
E\left\vert X_{n}-X\right\vert <\varepsilon/2.
\]
The properties of Lebesgue integrals imply that one can select $\delta>0$, so
that if only $P(F)<\delta$, then
\[
E\left\vert XI(F)\right\vert <\varepsilon/2,\;E\left\vert X_{i}I(F)\right\vert
<\varepsilon,\;i=1,\ldots,n_{0}.
\]
Since, the sequence $\left\{  X_{n}\right\}  _{n\geq1}$ is bounded in $L_{1}$,
we can select $K$ so that
\[
\underset{n}{\sup}E\left\vert X_{n}\right\vert <\delta K.
\]
Then for $n\geq n_{0}$ we have $P(\left\vert X_{n}\right\vert >K)<\delta$ and
\begin{gather*}
E(\left\vert X_{n}\right\vert I(\left\vert X_{n}\right\vert >K))\leq\\
\leq E(\left\vert X\right\vert I(\left\vert X_{n}\right\vert >K))+E(\left\vert
X_{n}-X\right\vert I(\left\vert X_{n}\right\vert >K))\leq\varepsilon.
\end{gather*}
For $i=1,\ldots n_{0}$ we have $P(\left\vert X_{i}\right\vert >K)<\delta$ and
$E(\left\vert X_{i}I(\left\vert X_{i}\right\vert >K)\right\vert )<\varepsilon
$. Hence, $\{X_{n}\}_{n\geq1}$ is uniformly integrable family. Moreover, we
have
\[
\epsilon P(\left\vert X_{n}-X\right\vert >\epsilon)\leq E\left\vert
X_{n}-X\right\vert \rightarrow0,\;\text{when }n\rightarrow\infty.
\]
Since the number $\epsilon$ was selected to be positive, we deduce that
$X_{n}\rightarrow X$ in probability.
\end{proof}

\begin{proof}
[Proof of Proposition \ref{alfa_calk}]Condition $\underset{n}{\sup}E\left\vert
X_{n}\right\vert ^{\alpha}<\infty$ implies, that the family $\left\{
X_{n}\right\}  $ is uniformly integrable (compare example \ref{alpha}), which
together with the assumed convergence in probability on the basis of the above
mentioned theorems gives assertion.
\end{proof}

\section{Discrete time martingales\label{martyngaly}}

\subsection{Basic definitions and properties}

\begin{proof}
Let $\left(  \Omega,\mathcal{F},P\right)  $ be probability space. Let us
assume we are given also an \emph{increasing} sequence of $\sigma
-$fields\ $\left\{  \mathcal{G}_{i}\right\}  _{i\geq1}$, that is such that
$\mathcal{G}_{i}\subset\mathcal{G}_{i+1}\subset\mathcal{F}$. Such sequence of
$\sigma-$fields\ is called \emph{filtration.}
\end{proof}

\begin{definition}%
\index{Martingale}%
\label{def_mart}Sequence of integrable random variables $\left\{
X_{i}\right\}  _{\geq1}$ is called \emph{martingale} with respect to
filtration $\left\{  \mathcal{G}_{i}\right\}  _{i\geq1}$, if
\[
\forall i\geq1:X_{i}\;\text{is }\mathcal{G}_{i}\;\text{measurable and
}E(X_{i+1}|\mathcal{G}_{i})=X_{i}\text{ \ a.s.}%
\]
The sequence of the random variables $\left\{  Y_{i}\right\}  _{i\geq1}$ is
called sequence of martingale differences\emph{%
\index{Martingale difference}%
${{}^3}$%
owa@r\'{o}\.{z}nica martynga\l owa} with respect to $\left\{  \mathcal{G}%
_{i}\right\}  _{i\geq1}$ if
\[
\forall i\geq1:Y_{i}\text{ is }\mathcal{G}_{i}\;\text{measurable and
}E(Y_{i+1}|\mathcal{G}_{i})=0.
\]

\end{definition}

\begin{remark}
Let us notice that if a sequence $\left\{  X_{i}\right\}  _{i\geq1}$ is
martingale, then the sequence \newline$\left\{  X_{i+1}-X_{i}\right\}
_{i\geq1}$ is a sequence of martingale differences. Similarly, if a sequence
$\left\{  Y_{i}\right\}  _{i\geq1}$ is a sequence of martingale differences,
then the sequence $\left\{  \sum_{j=1}^{i}Y_{j}\right\}  _{i\geq1}$ is a martingale.
\end{remark}

\begin{remark}
If $\{X_{n}\}_{n\geq1}$ are martingale differences, then for $i\neq j$
\newline$\operatorname*{cov}(X_{i},X_{j})=0.$
\end{remark}

\begin{example}
Let sequence $\left\{  \xi_{i}\right\}  _{i\geq1}$ consist of independent
random variables and let us assume, that $E\xi_{1}=0$. Let $\mathcal{G}%
_{i}=\sigma(\xi_{1},\ldots,\xi_{i})$. Then, $\left\{  \xi_{i}\right\}
_{i\geq1}$ is a sequence of martingale differences (of course with respect to
filtration $\left\{  \mathcal{G}_{i}\right\}  _{i\geq1})$.
\end{example}

\begin{example}
Let $\left\{  \mathcal{G}_{i}\right\}  _{i\geq1}$ will be increasing sequence
of $\sigma-$fields, $X$ integrable random variable. Then sequence
$Y_{i}=E(X|\mathcal{G}_{i})$ is a martingale.
\end{example}

\begin{definition}
The sequence$\left\{  X_{i}\right\}  _{i\geq1}$ is called super(sub)martingale
with respect to $\left\{  \mathcal{G}_{i}\right\}  _{i\geq1}$, if
\[
E(X_{i+1}|\mathcal{G}_{i})\leq(\geq)X_{i}\,\;a.s.
\]

\end{definition}

\begin{remark}
If $\left\{  X_{i}\right\}  _{i\geq1}$ is supermartingale, to $\left\{
-X_{i}\right\}  $ is submartingale.
\end{remark}

\begin{example}
Let $\left\{  X_{i}\right\}  _{i\geq1}$ will be a martingale, such that
$\forall i\geq1:E\left\vert X_{i}\right\vert ^{\alpha}<\infty$, for some
$\alpha\geq1$. Then $Y_{i}=\left\vert X_{i}\right\vert ^{\alpha}$ is
submartingale, since from Jensen's inequality we have $E\left(  \left\vert
X_{i}\right\vert ^{\alpha}|\mathcal{G}_{i-1}\right)  \allowbreak\geq\left\vert
E\left(  X_{i}|\mathcal{G}_{i-1}\right)  \right\vert ^{\alpha}\allowbreak
=\left\vert X_{i-1}\right\vert ^{\alpha}$ a.s.. Of course, every martingale is
simultaneously super and submartingale.
\end{example}

For submartingales we have the following very important theorem.

\begin{theorem}
[Doob]%
\index{Theorem!Doob}%
\label{zb_mart}Every nonnegative supermartingale\ $\left\{  X_{i}\right\}
_{i\geq0}$ converges (as $i\,\rightarrow\infty)$ with probability $1$ do
finite, nonnegative random variable.
\end{theorem}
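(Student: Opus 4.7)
The plan is to prove Doob's theorem by the classical upcrossing method. Fix rationals $a<b$ and, for a finite horizon $N$, let $U_N[a,b]$ denote the number of upcrossings of the interval $[a,b]$ by the sample path $X_0,X_1,\ldots,X_N$, that is, the number of disjoint pairs of times $s<t$ with $X_s\le a$ and $X_t\ge b$. The first major step will be Doob's upcrossing inequality: for a nonnegative supermartingale
\[
(b-a)\,E U_N[a,b] \leq E(X_N-a)^{-} \leq a + EX_N \leq a + EX_0 .
\]
I would prove this by defining inductively a predictable $\{0,1\}$-valued process $H_k$ that switches to $1$ when $X$ first dips below $a$ and back to $0$ when it next exceeds $b$, forming the martingale transform $Y_N=\sum_{k=1}^{N} H_k(X_k-X_{k-1})$. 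Each completed upcrossing contributes at least $b-a$ to $Y_N$, while any partial excursion contributes at least $-(X_N-a)^-$, so $Y_N \geq (b-a)U_N[a,b] - (X_N-a)^-$. Since $H_k$ is $\mathcal{G}_{k-1}$-measurable and $\{X_k\}$ is a supermartingale, $EY_N \leq 0$, which yields the bound.

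The second step will be to pass to the limit. The sequence $U_N[a,b]$ is nondecreasing in $N$, so $U_\infty[a,b]=\lim_{N\to\infty}U_N[a,b]$ is well defined. By monotone convergence (Lebesgue's theorem as recalled in the excerpt) and the bound from step one,
\[
(b-a)\,E U_\infty[a,b] \leq a + EX_0 < \infty,
\]
so $U_\infty[a,b]<\infty$ almost surely. The event that $\{X_n\}$ fails to have a limit in $[0,\infty]$ is contained in
\[
\bigcup_{\substack{a<b\\ a,b\in\mathbb{Q}_{+}}} \{\liminf_n X_n < a < b < \limsup_n X_n\},
\]
and on each set in this countable union one has $U_\infty[a,b]=\infty$. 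Hence each has zero probability, and $X_n$ converges almost surely to some $X_\infty$ with values in $[0,\infty]$.

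Finally, to rule out that $X_\infty=\infty$ on a set of positive probability, I would apply Fatou's lemma (Lemma~\ref{Fatou}) to the nonnegative sequence $\{X_n\}$: $EX_\infty = E\liminf_n X_n \leq \liminf_n EX_n \leq EX_0<\infty$, so $X_\infty<\infty$ almost surely and the limit is nonnegative and integrable. The main obstacle is the upcrossing inequality itself — constructing the predictable transform $H$ correctly so that partial excursions are controlled and showing that $EY_N\leq 0$ despite the stopping rule being data-dependent; once this is in hand, the remainder is a routine assembly of monotone convergence, a countable-union argument, and Fatou's lemma.
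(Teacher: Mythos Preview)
Your proposal is correct and follows essentially the same route as the paper: the paper proves the upcrossing inequality (Lemma~\ref{upcrossing} and Corollary~\ref{o przejsciach}) via exactly the predictable $\{0,1\}$-valued process you describe, then uses the bound $EH_{a,b}^{(N)}\le a/(b-a)$ (for nonnegative $X_N$), a countable union over rational pairs to exclude $\liminf<\limsup$, and Fatou's lemma to exclude an infinite limit. The only cosmetic difference is the order in which Fatou and the upcrossing argument are invoked.
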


Proof of this theorem is based on the following inequalities of combinatorial
nature. Let us consider the finite number sequence $\mathcal{X}=\{x_{1}%
,\ldots,x_{N}\}$. Let $a<b$ be two fixed reals. The greatest number $k\leq N$,
such that it is possible to find a sequence of indices
\[
1\leq s_{1}<t_{1}<s_{2}<t_{2}<s_{k}<t_{k}\leq N
\]
such that
\[
x_{s_{i}}<a,\;\;\text{and\ \ \ }x_{t_{i}}>b,\;\;1\leq i\leq k
\]
is called a number of upcrossings from below of the segment $[a,b]$ by the
sequence $\mathcal{X}.$

\begin{lemma}
[upcrossing lemma]\label{upcrossing}%
\index{Lemma!about upcrossings}%
Let $\mathcal{X}=\{x_{1},\ldots,x_{N}\}$ be any number sequence Let $b>a$ be
two reals, and $H_{a,b}^{(N)}$ let denote the number of upcrossings from below
of the segment $[a;b]$ by the sequence $\mathcal{X}$. Then:
\begin{equation}
(b-a)H_{a,b}^{(N)}\leq(a-x_{N})^{+}+\sum_{i=1}^{N}I(i)(x_{i+1}-x_{i})
\label{upcross}%
\end{equation}
wherein $I(i)$ takes only two values $0$ and $1$, and Moreover, the value
$I(i)$ is defined only by values $x_{1},\ldots,x_{i}.$
\end{lemma}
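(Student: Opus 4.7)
The plan is to use the classical ``gambling strategy'' construction that underlies Doob's proof of the supermartingale convergence theorem. The indicator $I(i)$ will mark those indices at which we are currently ``inside an upcrossing attempt'': it is switched from $0$ to $1$ the first moment the sequence drops to level $\leq a$, stays at $1$ while the sequence remains below $b$, and is reset to $0$ the moment the sequence first reaches $\geq b$, thereby completing one upcrossing.

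Concretely, I would define $I$ recursively by setting $I(1) = 1$ if $x_1 \leq a$ and $I(1) = 0$ otherwise, and for $i \geq 2$ declaring $I(i) = 1$ precisely when either $I(i-1) = 0$ and $x_i \leq a$, or $I(i-1) = 1$ and $x_i < b$. By construction $I(i) \in \{0,1\}$ and $I(i)$ depends only on $x_1, \ldots, x_i$, as required. The successive maximal blocks of consecutive indices on which $I \equiv 1$ are precisely the ``upcrossing attempts'': the $k$-th such block starts at some index $\tau_k$ with $x_{\tau_k} \leq a$, and if $I$ ever flips back to $0$ the block closes at the first $\sigma_k > \tau_k$ with $x_{\sigma_k} \geq b$, which by definition completes exactly one upcrossing.

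The next step is to telescope the sum $S := \sum_{i=1}^{N-1} I(i)(x_{i+1} - x_i)$ block by block. On each completed block $[\tau_k, \sigma_k - 1]$ the summand equals $x_{i+1} - x_i$, so the telescope gives $x_{\sigma_k} - x_{\tau_k} \geq b - a$; thus completed blocks together contribute at least $(b-a) H_{a,b}^{(N)}$. There is at most one ``open'' block still in progress at time $N-1$, and it contributes $x_N - x_{\tau_\ell}$, which since $x_{\tau_\ell} \leq a$ is bounded below by $x_N - a = -(a - x_N) \geq -(a - x_N)^+$. Adding these two bounds gives $S \geq (b-a) H_{a,b}^{(N)} - (a - x_N)^+$, which rearranges to the asserted inequality (\ref{upcross}).

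The one genuinely delicate point is the clean accounting for the final, possibly open, block. I would verify explicitly that by the recursive definition at most one block can be open at any given time, and then split into the case $I(N-1) = 0$, for which no negative correction appears and the trivial bound $(a - x_N)^+ \geq 0$ suffices, and the case $I(N-1) = 1$, for which the last-block telescope is exactly as above. The whole argument is purely combinatorial, with no probability entering; the probabilistic content of Theorem \ref{zb_mart} is obtained only in a subsequent step by taking expectations in (\ref{upcross}), exploiting the supermartingale property (so that $E \sum I(i)(X_{i+1} - X_i) \leq 0$ for the predictable bet $I(i)$) to conclude that $E H_{a,b}^{(N)}$ is uniformly bounded in $N$.
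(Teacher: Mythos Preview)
Your argument is correct and is essentially the same as the paper's: both construct the predictable indicator $I(i)$ that is switched on at the first drop to (or below) $a$ and switched off at the first rise to (or above) $b$, telescope the sum block by block, and handle the possibly incomplete final block via the correction term $(a-x_N)^+$. The only cosmetic differences are that you define $I$ recursively (making the dependence on $x_1,\dots,x_i$ explicit) whereas the paper defines it through the stopping times $\tau_1,\tau_2,\dots$, and you use weak inequalities $\le a$, $\ge b$ where the paper uses strict ones; neither changes the substance of the proof.
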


\begin{proof}
Let us denote by $\tau_{1}$ the first moment, when $\mathcal{X}$ assumes a
value less than $a$, by $\tau_{2}$ the first after $\tau_{1}$ moment, when
$\mathcal{X}$ assumes a value greater than $b$, by $\tau_{3}$ the first after
$\tau_{2}$ moment, when $\mathcal{X}$ assumes a value smaller than $a$ and so
on. Let us notice that on the segment ($\tau_{2H},N)$ the sequence
$\mathcal{X}$ will not upcross the segment $[a,b]$ from below (since it would
have to reach a value below $a$ before). Besides, we have:
\[
(b-a)H_{a,b}^{(N)}\leq\sum_{i=1}^{H}(x_{\tau_{2i}}-x_{\tau_{2i-1}}).
\]
The situation is illustrated below:%

\begin{center}
\includegraphics[
height=1.9164in,
width=3.3442in
]%
{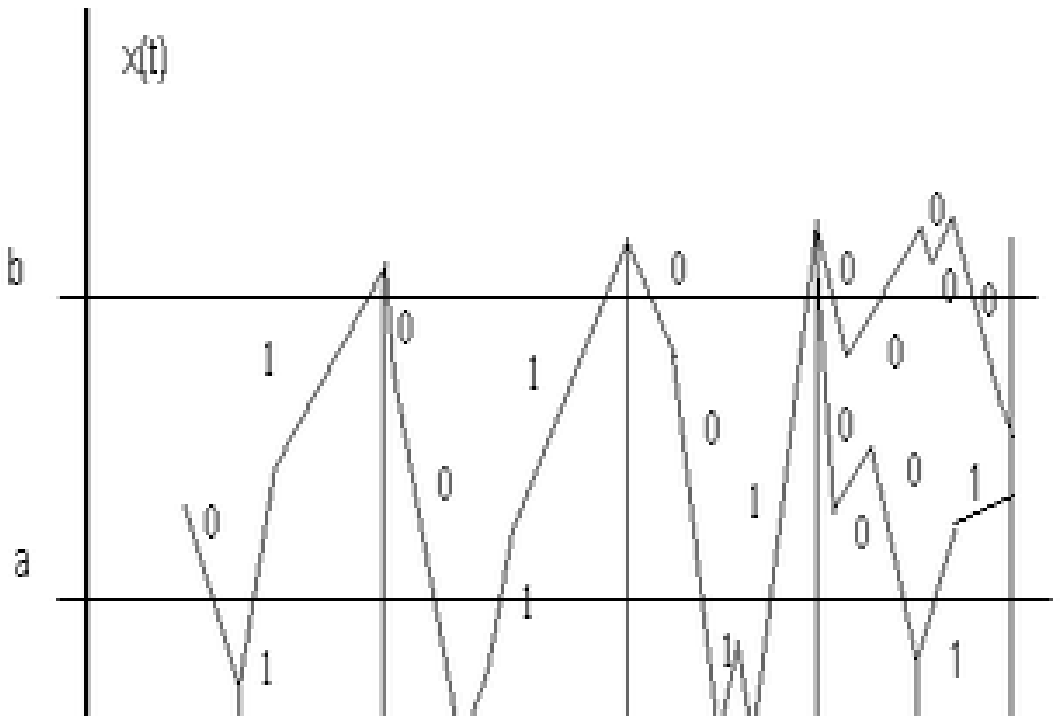}%
\end{center}
Quantity $I(t)$ is defined so that it changes its value from $0$ to $1$ or
conversely only at points $\tau_{i}$ $i=1,\ldots,H$. We define $I(\tau_{1}%
)=1$. Hence, for example $I(1)=0$, if $\tau_{1}>1$ and $I(1)=1$, when
$\tau_{1}=1$. These values are exposed on the figure above. To define value
$I(N)$ let us consider two situations (also exposed on the figure above):

1) if on the segment $(\tau_{2H},N)$ sequence $\mathcal{X}$ not once will not
assume values less than $a$, then $I(N)=0,$

2) if there exists after $\tau_{2H}$ moment $\tau_{2H+1}$, at which
$\mathcal{X}$ will assume value less than $a$, to $I(N)=1.$

Inequality \ref{upcross} is true, if $H_{a,b}^{(N)}=0$. Suppose, that
$H_{a,b}^{(N)}\geq1$. Of course, values of functions $I(t)$ at a fixed time
instant $t$ is fully defined by values $x_{1},\ldots,x_{t}$, wherein true is
equality:
\[
\sum_{t=1}^{N-1}I(t)(x_{t+1}-x_{t})=\left\{
\begin{array}
[c]{lll}%
\sum_{i=1}^{H}(x_{\tau_{2i}}-x_{\tau_{2i-1}}) & \text{in case} & 1\\
\sum_{i=1}^{H}(x_{\tau_{2i}}-x_{\tau_{2i-1}})+x_{N}-x_{\tau_{2H+1}} & \text{in
case} & 2
\end{array}
\right.  .
\]
Hence we always have:
\[
\sum_{t=1}^{N-1}I(t)(x_{t+1}-x_{t})\geq\left\{
\begin{array}
[c]{lll}%
(b-a)H_{a,b}^{(N)} & \text{in case} & 1\\
(b-a)H_{a,b}^{(N)}-x_{\tau_{2H+1}}+x_{N} & \text{in case} & 2
\end{array}
\right.
\]
Since we always have $x_{\tau_{2H+1}}<a$, we get inequality (\ref{upcross}).
\end{proof}

From of this lemma, it follows immediately the following corollary.

\begin{corollary}
\label{o przejsciach}Let $\mathcal{X}=\left\{  X_{i}\right\}  _{i=1}^{N}$ will
be finite supermartingale with respect to filtration $\left\{  \mathcal{G}%
_{i}\right\}  _{i=1}^{N}$. Let further $H_{a,b}^{(N)}$ be a random variable,
denoting number of upcrossing from below of the segment $[a,b]$ by the
supermartingale\ $\mathcal{X}$ during the time interval $[0;T]$. Then:
\[
EH_{a,b}^{(N)}\leq\frac{E(a-X_{N})^{+}}{(b-a)}.
\]

\end{corollary}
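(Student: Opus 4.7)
The plan is to lift the pathwise inequality \eqref{upcross} from Lemma \ref{upcrossing} to an inequality of expectations, using the supermartingale property to kill the sum on the right-hand side. I would apply the lemma separately to each sample path of the sequence $\{X_i(\omega)\}_{i=1}^N$, with the same fixed reals $a<b$. This gives, pointwise in $\omega$,
\[
(b-a)H_{a,b}^{(N)}(\omega)\leq (a-X_N(\omega))^{+}+\sum_{i=1}^{N-1}I(i)(\omega)\bigl(X_{i+1}(\omega)-X_i(\omega)\bigr),
\]
where $I(i)$ is the $\{0,1\}$-valued indicator produced by the construction in the proof of the lemma.

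The key observation, explicitly recorded in Lemma \ref{upcrossing}, is that $I(i)$ is determined by $X_1,\ldots,X_i$, so $I(i)$ is $\mathcal{G}_i$-measurable. Taking expectations of the pathwise inequality and interchanging $E$ with the finite sum yields
\[
(b-a)\,EH_{a,b}^{(N)}\leq E(a-X_N)^{+}+\sum_{i=1}^{N-1}E\bigl(I(i)(X_{i+1}-X_i)\bigr).
\]
For each term in the sum I would condition on $\mathcal{G}_i$ and pull out the $\mathcal{G}_i$-measurable factor $I(i)\geq 0$:
\[
E\bigl(I(i)(X_{i+1}-X_i)\bigr)=E\bigl(I(i)\,E(X_{i+1}-X_i\mid\mathcal{G}_i)\bigr)\leq 0,
\]
because $\{X_i\}$ is a supermartingale, so $E(X_{i+1}-X_i\mid\mathcal{G}_i)\leq 0$ a.s., and $I(i)\in\{0,1\}$. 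Substituting this into the previous display and dividing by $b-a>0$ yields the claimed bound.

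The argument is essentially routine once the combinatorial lemma is in hand; the only real point requiring care is ensuring that $I(i)$ is $\mathcal{G}_i$-measurable and nonnegative, which is exactly what the explicit construction in Lemma \ref{upcrossing} guarantees. A minor technical caveat is the implicit integrability needed to justify pulling the conditional expectation out and to separate $E(a-X_N)^+$; since $\{X_i\}$ is a supermartingale, each $X_i$ is by assumption integrable, so $(a-X_N)^+\in L^1$ and every term $I(i)(X_{i+1}-X_i)$ is dominated by $|X_{i+1}|+|X_i|\in L^1$, which legitimizes the interchange of $E$ and the finite sum.
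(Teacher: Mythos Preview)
Your proof is correct and follows essentially the same approach as the paper: take expectations of the pathwise inequality from Lemma~\ref{upcrossing}, use the $\mathcal{G}_i$-measurability of $I(i)$ together with the supermartingale property to show each summand has nonpositive expectation, and divide by $b-a$. You have simply filled in more detail (integrability, the conditioning step) than the paper's two-line version.
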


\begin{proof}
Let us notice that for the supermartingale $\mathcal{X}$ we have:
$E(I(t)(X_{t+1}-X_{t})|\mathcal{G}_{t})=I(t)(E(X_{t+1}|\mathcal{G}_{t}%
)-X_{t})\leq0$ a.s., since $I(t)$ is $\mathcal{G}_{t}-$measurable random
variable. Hence,
\[
(b-a)EH_{a,b}^{(N)}\leq E(a-X_{N})^{+}.
\]

\end{proof}

\begin{proof}
[Proof of Theorem \ref{zb_mart}]Let us denote by $A_{1}$ an event that
$\lim\inf_{t\rightarrow\infty}X_{t}=\infty$, and by $A_{2}$ an event that
$\lim\inf_{t\longrightarrow\infty}X_{t}<\lim\sup_{t\longrightarrow\infty}%
X_{t}$. It is clear, that $A_{1}\cup A_{2}$ is the event that the
supermartingale\ will not have a finite limit. But from the condition
$\lim_{t\longrightarrow\infty}EX_{t}<\infty$ and Fatou's Lemma (Lemma
\ref{Fatou}, p. \pageref{Fatou}) it follows that $P(A_{1})\allowbreak
=\allowbreak0$. Let us denote by $Q$ the of all rationals. The event $A_{2}$
one can present in the following form:
\[
A_{2}=\bigcup_{p<q;p,q\in Q}\{\omega:\underset{t\rightarrow\infty}{\lim\inf
}X_{t}(\omega)<p<q<\underset{t\rightarrow\infty}{\lim\inf}\,X_{t}(\omega)\}.
\]
Hence it is easy to deduce, that
\[
A_{2}\subseteq\bigcup_{p<q;p,q\in Q}\{\omega:H_{p,q}^{\infty}(\omega
)=\infty\}.
\]
Of course $H_{p,q}^{\infty}=\lim_{N\longrightarrow\infty}H_{p,q}^{(N)}$. From
corollary \ref{o przejsciach} and from the fact, that $\{X_{n}\}_{n\geq1}$ is
a nonnegative sequence it follows that
\[
EH_{p,q}^{N}\leq\frac{p}{q-p},
\]
since $E(a-X_{N})^{+}\leq p$. Hence, on the basis of Fatou's Lemma we have
\[
EH_{p,q}^{\infty}\leq\frac{p}{q-p},
\]
thus immediately, it follows that $P(H_{p,q}^{\infty}=\infty)=0$. and further,
that $P(A_{2})=0.$
\end{proof}

\begin{remark}
Let us notice that not necessarily $\underset{i\rightarrow\infty}{\lim}%
EX_{i}=E\underset{i\rightarrow\infty}{\lim}X_{i}.$
\end{remark}

\subsection{Theorem about convergence}

\begin{theorem}%
\index{Theorem!Martingale convergence}%
Let $\left\{  X_{i}\right\}  _{i\geq1}$ be supermartingale with respect to
filtration $\left\{  \mathcal{G}_{i}\right\}  _{i\geq1},$such that quantity
$EX_{i}^{-}=E\max(-X_{i},0)$ is bounded. Then with probability $1$ it has (as
$i\,\rightarrow\infty)$ finite limit. If additionally for $\alpha>1$
$\underset{i\rightarrow\infty}{\lim}E\left\vert X_{i}\right\vert ^{\alpha
}<\infty$, to we also have $E\underset{i\rightarrow\infty}{\lim}%
X_{i}=\underset{i\rightarrow\infty}{\lim}EX_{i}$ and $\underset{i\rightarrow
\infty}{\lim}E\left\vert X_{i}-X\right\vert =0.$
\end{theorem}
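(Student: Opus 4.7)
The plan is to adapt the argument given for nonnegative supermartingales (Theorem \ref{zb_mart}) by exploiting the upcrossing inequality from Corollary \ref{o przejsciach} and Lemma \ref{upcrossing}, which were proved for general finite supermartingales and do not require nonnegativity. First I would observe that, since $\{X_i\}$ is a supermartingale, $EX_i\leq EX_1$, and combined with the hypothesis that $\sup_i EX_i^-<\infty$ this yields $\sup_i EX_i^+=\sup_i(EX_i+EX_i^-)\leq EX_1+\sup_i EX_i^-<\infty$, hence $\sup_i E|X_i|<\infty$.

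Next, for any rationals $a<b$, denote by $H_{a,b}^{(N)}$ the number of upcrossings of $[a,b]$ by $X_1,\ldots,X_N$. By Corollary \ref{o przejsciach},
\[
(b-a)EH_{a,b}^{(N)}\leq E(a-X_N)^+\leq|a|+EX_N^-\leq|a|+\sup_i EX_i^-,
\]
so the right-hand side is bounded uniformly in $N$. Since $H_{a,b}^{(N)}$ is nondecreasing in $N$, Fatou's lemma (Lemma \ref{Fatou}) gives $EH_{a,b}^{\infty}<\infty$, whence $H_{a,b}^{\infty}<\infty$ almost surely. As in the proof of Theorem \ref{zb_mart}, the set
\[
\Bigl\{\omega:\underset{i\rightarrow\infty}{\lim\inf}\,X_i(\omega)<\underset{i\rightarrow\infty}{\lim\sup}\,X_i(\omega)\Bigr\}\subseteq\bigcup_{p<q,\;p,q\in\mathbb{Q}}\{H_{p,q}^{\infty}=\infty\}
\]
has probability zero, so $X_i$ converges in $[-\infty,\infty]$ with probability $1$. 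Calling this limit $X$, Fatou's lemma applied to $|X_i|$ yields $E|X|\leq\liminf E|X_i|<\infty$, so $X$ is finite almost surely.

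For the second part, assume $\sup_i E|X_i|^\alpha<\infty$ for some $\alpha>1$. Almost sure convergence implies convergence in probability, and the moment hypothesis is precisely the condition needed to invoke Proposition \ref{alfa_calk}. This gives both $E|X_i-X|\to 0$ and $EX_i\to EX$, which is exactly what is claimed.

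The only delicate step will be the first paragraph, where one must carefully combine the supermartingale inequality with the hypothesis $\sup_i EX_i^-<\infty$ to get a uniform $L_1$ bound, because without this bound the estimate on $E(a-X_N)^+$ in the upcrossing inequality would blow up with $N$, and one could not rule out $\limsup X_i=+\infty$ via Fatou. The rest follows the classical recipe, with the second assertion reduced to an off-the-shelf application of the uniform integrability result \ref{alfa_calk}.
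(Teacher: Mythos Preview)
Your proof is correct and follows essentially the same route as the paper's: the upcrossing inequality (Corollary \ref{o przejsciach}) plus Fatou's Lemma to establish almost sure convergence to a finite limit, then Proposition \ref{alfa_calk} for the $L_1$ part. Your argument is in fact more explicit than the paper's, which merely says ``argue as in the proof of Doob's Theorem \ref{zb_mart}'' without spelling out how the bound $E(a-X_N)^+\leq|a|+\sup_i EX_i^-$ replaces the nonnegativity used there; you also handle both infinities at once via $\sup_i E|X_i|<\infty$ and Fatou on $|X_i|$, whereas the paper separately rules out $\liminf X_n=-\infty$ via Fatou on $X_i^-$.
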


\begin{proof}
From condition $\underset{i}{\sup}EX_{i}^{-}<\infty$ it follows on the basis
of Fatou's Lemma (Lemma \ref{Fatou}, p. \pageref{Fatou}), that
$E\underset{i\,\longrightarrow\infty}{\lim\inf}\,X_{i}^{-}<\infty$. Now we
argue as in the proof Doob's Theorem \ref{zb_mart} considering additionally an
event $\underset{n\,\longrightarrow\infty}{\lim\inf}\,X_{n}=-\infty$, whose
probability is equal to zero, since $E\underset{i\,\longrightarrow\infty
}{\lim\inf}\,X_{i}^{-}<\infty$. It remains to show, that if
$\underset{i\rightarrow\infty}{\lim}E\left\vert X_{i}\right\vert ^{\alpha
}<\infty$ for some $\alpha>1$, then $E\underset{i\rightarrow\infty}{\lim}%
X_{i}=\underset{i\rightarrow\infty}{\lim}EX_{i}$. Having proven convergence of
the sequence $\{X_{n}\}_{n\geq1}$ to a finite limit it is enough now to recall
Proposition \ref{alfa_calk}.
\end{proof}

\begin{remark}
Condition of finiteness of $EX_{i}^{-}$ one can substitute by the condition of
finiteness of $E\left\vert X_{i}\right\vert ^{\alpha}$ for some $\alpha\geq1.$
\end{remark}

\begin{remark}
Since, if the sequence $\{X_{n}\}_{n\geq1}$ is a supermartingale with respect
to some filtration, then the sequence $\{-X_{n}\}_{n\geq1}$ is a
submartingale, hence from the previous theorems it follows that if
$\underset{n}{\sup}EX_{n}^{+}<\infty$, then submartingale\ $\{X_{n}\}_{n\geq
1}$ is convergent with probability $1.$
\end{remark}

Submartingales have yet another, interesting property namely they satisfy the
so-called maximal inequality. This inequality we will use in the proof of the
law of iterated logarithm in the Appendix \ref{PIL}.

\subsection{Maximal inequality}

\begin{theorem}
[Maximal inequality]%
\index{Theorem!Maximal inequality}%
Let $\{X_{n}\}_{n\geq0}$ be nonnegative submartingale with respect to
filtration $\left\{  \mathcal{G}_{i}\right\}  _{i\geq0}$. Then for any $c>0:$%
\begin{equation}
cP(\underset{k\leq n}{\sup}X_{k}\geq c)\leq EX_{n}. \label{nier_max}%
\end{equation}

\end{theorem}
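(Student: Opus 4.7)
The plan is to use the standard stopping-time argument, exploiting the fact that a submartingale's expectation over an event determined before time $n$ controls the value of the process at that earlier time.

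First I would define the first passage time $\tau = \min\{k \leq n : X_k \geq c\}$, with the convention $\tau = \infty$ if no such $k$ exists. The event $A = \{\sup_{k \leq n} X_k \geq c\}$ is then precisely $\{\tau \leq n\}$, and it decomposes into the disjoint union $A = \bigcup_{k=0}^{n} \{\tau = k\}$. The key observation is that $\{\tau = k\}$ depends only on $X_0, \ldots, X_k$, hence lies in $\mathcal{G}_k$, and moreover on $\{\tau = k\}$ we have $X_k \geq c$ by construction.

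Next I would use the submartingale property on each slice. Since $\{\tau = k\} \in \mathcal{G}_k$ and $E(X_n \mid \mathcal{G}_k) \geq X_k$ a.s., we obtain
\[
E\bigl(X_n I(\tau = k)\bigr) = E\bigl(I(\tau = k)\, E(X_n \mid \mathcal{G}_k)\bigr) \geq E\bigl(X_k I(\tau = k)\bigr) \geq c\, P(\tau = k).
\]
Summing over $k = 0, 1, \ldots, n$ yields $E\bigl(X_n I(A)\bigr) \geq c\, P(A)$. Finally, since $X_n \geq 0$, one has $EX_n \geq E\bigl(X_n I(A)\bigr)$, which gives the desired inequality~(\ref{nier_max}).

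There is no real obstacle here, since the argument is entirely elementary once one introduces the stopping time; the only point requiring a touch of care is verifying the measurability $\{\tau = k\} \in \mathcal{G}_k$, which follows from writing $\{\tau = k\} = \{X_0 < c\} \cap \cdots \cap \{X_{k-1} < c\} \cap \{X_k \geq c\}$ and using that each $X_j$ is $\mathcal{G}_j \subseteq \mathcal{G}_k$ measurable for $j \leq k$.
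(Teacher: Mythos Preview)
Your proof is correct and essentially identical to the paper's: the paper defines the disjoint events $F_k = \{X_0 < c, \ldots, X_{k-1} < c, X_k \geq c\}$, which are exactly your events $\{\tau = k\}$, and then carries out the same conditional-expectation estimate and summation. The only difference is notational---you package the decomposition as a stopping time while the paper writes out the level sets directly.
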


\begin{proof}
Let us denote $F\allowbreak=\allowbreak\left\{  \underset{k\leq n}{\sup}%
X_{k}\geq c\right\}  $, $F_{0}=\left\{  X_{0}\geq c\right\}  $, $F_{1}%
=\allowbreak\left\{  X_{0}<c,X_{1}\geq c\right\}  \allowbreak,\ldots
,\allowbreak F_{n}=\allowbreak\bigcap_{j=0}^{n-1}\left\{  X_{j}<c\right\}
\allowbreak\cap\allowbreak\left\{  X_{n}\geq c\right\}  $. Of course, we have
$F=\bigcup_{i=0}^{n}F_{i}\,$.The events $\left\{  F_{i}\right\}  $ are
disjoint and $F_{i}\in\mathcal{G}_{i}$ for every $i=0,1,\ldots,n$. Moreover,
we have:
\begin{equation}
E(X_{n}I(F_{k}))=E\left(  E\left(  X_{n}|\mathcal{G}_{k}\right)
I(F_{k})\right)  \geq E(X_{k}I(F_{k}))\geq cEI(F_{k})=cP(F_{k}), \label{pom3}%
\end{equation}
since on the event $F_{k}$ we have $X_{k}\geq c$. By $I(F_{k})$ we denoted
here random variable, that is equal to $1$, when the event $F_{k}$ is
satisfied and $0$ in the opposite case.

Inequalities (\ref{pom3}) we add side by side and we use the fact, that
$I(F)\allowbreak=\allowbreak\sum_{i=0}^{n}I(F_{i})$. We use the fact, that
$X_{n}$ is a nonnegative random variable, hence, that $EX_{n}\allowbreak
\geq\allowbreak EX_{n}I(F).$
\end{proof}

At the end let us consider the sequence $\{X_{n}\}_{n\leq-1}$ having the
following properties: for some filtration $\left\{  \mathcal{G}_{n}\right\}
_{n\leq-1}$ we have: $X_{n}$ is a $\mathcal{G}_{n}$-measurable random variable
and $E(X_{n+1}|\mathcal{G}_{n})\allowbreak=\allowbreak X_{n}$ for
$n=\ldots-3,-2,-1$. Such sequence is called sometimes reversed martingale. Let
us notice that if we consider this sequence for $n=-N,\ldots,-1$ for some
$N>1$, then the \textquotedblright upcrossing lemma\textquotedblright, i.e.
Lemma \ref{upcrossing} is true and we have
\[
EH_{a,b}^{(N)}\leq\frac{E(a-X_{-1})^{+}}{(b-a)},
\]
for the number of upcrossings from below of the segment $[a,b]$ by the
sequence $X_{-N},\ldots,X_{-1}$. Hence, it is easy to show (as in the proof
Doob's Theorem ), that $P(\underset{n\,\longrightarrow\infty}{\lim\inf
}\,X_{-n}<\underset{n\rightarrow\infty}{\,\lim\sup\,}X_{-n})\allowbreak
=\allowbreak0$. Hence, we impose conditions that $P(\underset{n\rightarrow
\infty}{\lim}X_{-n}=-\infty\cup\underset{n\rightarrow\infty}{\lim}X_{n}%
=\infty)\allowbreak=\allowbreak0$, then we are able to state, that the
sequence $\{X_{n}\}_{n\leq-1}$ converges as $n\rightarrow-\infty$ to a finite
limit. Hence, we have;

\begin{theorem}
\label{odwrotny_martyngal}Let $\{X_{n}\}_{n\leq-1}$ will be a reversed
martingale. If $\underset{n}{\sup}E\left\vert X_{-n}\right\vert <\infty$, then
$\underset{n\rightarrow\infty}{\lim}X_{-n}$ exists and is finite with
probability 1..
\end{theorem}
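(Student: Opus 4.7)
The plan is to adapt the proof of Doob's Theorem \ref{zb_mart} to the reverse-time direction, essentially formalizing the sketch that precedes the theorem's statement.

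First, I would observe that for each fixed $N \geq 1$, the finite sequence $X_{-N}, X_{-N+1}, \ldots, X_{-1}$ is an ordinary (forward) martingale with respect to the increasing filtration $\mathcal{G}_{-N} \subset \mathcal{G}_{-N+1} \subset \cdots \subset \mathcal{G}_{-1}$. Therefore Lemma \ref{upcrossing} and Corollary \ref{o przejsciach} apply and yield, for every pair $a < b$,
\[
(b-a)\, E H_{a,b}^{(N)} \leq E(a - X_{-1})^{+},
\]
where $H_{a,b}^{(N)}$ denotes the number of upcrossings of $[a,b]$ by $X_{-N},\ldots,X_{-1}$. The crucial point is that the right-hand side is independent of $N$, and is finite because $E|X_{-1}|<\infty$.

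Second, I would let $H_{a,b}^{\infty}$ denote the total number of upcrossings of $[a,b]$ performed by the whole reversed sequence as $n \to \infty$. Since the collection of upcrossings can only grow as $N$ increases, $H_{a,b}^{(N)} \uparrow H_{a,b}^{\infty}$, and by the monotone convergence theorem $(b-a) E H_{a,b}^{\infty} \leq E(a-X_{-1})^{+} < \infty$. In particular $H_{a,b}^{\infty} < \infty$ a.s. Decomposing
\[
\{\liminf_{n\to\infty} X_{-n} < \limsup_{n\to\infty} X_{-n}\} \subseteq \bigcup_{p<q,\, p,q \in \mathbb{Q}} \{H_{p,q}^{\infty} = \infty\},
\]
a countable union of null events, I conclude that $\lim_{n\to\infty} X_{-n}$ exists in $[-\infty,+\infty]$ with probability $1$.

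Finally, I would rule out infinite limits using the $L^{1}$-boundedness hypothesis. By Fatou's Lemma \ref{Fatou} applied to the nonnegative sequence $|X_{-n}|$,
\[
E\, \liminf_{n\to\infty} |X_{-n}| \leq \liminf_{n\to\infty} E|X_{-n}| \leq \sup_{n} E|X_{-n}| < \infty,
\]
so $\liminf_{n\to\infty} |X_{-n}| < \infty$ a.s., which together with the existence of the (possibly infinite) limit forces the limit to be finite a.s. The main obstacle is the first step: verifying that the upcrossing inequality transfers cleanly to the reversed-time setting and that $H_{a,b}^{(N)}$ really does increase monotonically to $H_{a,b}^{\infty}$ as we adjoin earlier terms; once this is in hand, the rest of the argument is a faithful repetition of the forward-time proof.
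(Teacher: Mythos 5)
Your argument is correct and follows essentially the same route as the paper: restrict to the finite forward martingale $X_{-N},\ldots,X_{-1}$, apply the upcrossing bound whose right-hand side $E(a-X_{-1})^{+}/(b-a)$ does not depend on $N$, pass to the limit to kill the oscillation event, and use $L^{1}$-boundedness (via Fatou's Lemma, as the paper does in its forward-time convergence theorems) to exclude infinite limits. The monotonicity of $H_{a,b}^{(N)}$ in $N$ that you flag as the main obstacle is immediate, since prepending terms can only enlarge the admissible families of upcrossing indices.
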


True is also the following fact.

\begin{theorem}
Let be given two filtrations $\left\{  \mathcal{G}_{n}\right\}  _{n\geq1}$ and
$\left\{  \mathcal{H}_{n}\right\}  _{n\leq-1}$. Let further $Z$ will be
integrable random variable. Then the families of the random variables
\[
\left\{  E(Z|\mathcal{G}_{n})\right\}  _{n\geq1}\;\text{and }\left\{
E(Z|\mathcal{H}_{n})\right\}  _{n\leq-1}%
\]
are respectively\ martingale and reversed martingale with respect to
respective filtrations. Moreover, we have:
\begin{align*}
\underset{n\rightarrow\infty}{\lim}E(Z|\mathcal{G}_{n})  &  =E(Z|\mathcal{G}%
_{\infty})\;a.s.\;\text{and in }L_{1},\\
\underset{n\rightarrow-\infty}{\lim}E(Z|\mathcal{H}_{n})  &  =E(Z|\mathcal{H}%
_{-\infty})\;a.s.\,\text{and in }L_{1}.
\end{align*}

\end{theorem}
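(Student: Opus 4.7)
The plan is to handle the two assertions in parallel, since each splits naturally into three steps: (i) verifying the (reversed) martingale property, (ii) obtaining almost sure convergence, and (iii) upgrading this to $L_1$ convergence and identifying the limit.

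First I would check the martingale property. For the forward case, $E(Z|\mathcal{G}_n)$ is $\mathcal{G}_n$-measurable by definition, and since $\mathcal{G}_n\subset\mathcal{G}_{n+1}$, the tower property gives $E(E(Z|\mathcal{G}_{n+1})|\mathcal{G}_n)=E(Z|\mathcal{G}_n)$ a.s.; the reversed case is identical, using $\mathcal{H}_{n-1}\subset\mathcal{H}_n$. Next, I would invoke the proposition from Appendix~\ref{UnInt} asserting that the family $\{E(Z|\mathcal{G}):\mathcal{G}\text{ a sub-}\sigma\text{-field}\}$ is uniformly integrable; in particular, $\sup_n E|E(Z|\mathcal{G}_n)|\leq E|Z|<\infty$ in both cases. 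For the forward martingale this boundedness in $L_1$ together with Doob's convergence theorem (Theorem~\ref{zb_mart}, applied to positive and negative parts via Jensen's inequality) gives a finite a.s. limit $X_\infty$; for the reversed martingale I would apply Theorem~\ref{odwrotny_martyngal} directly to the sequence $\{E(Z|\mathcal{H}_n)\}_{n\leq -1}$, obtaining an a.s. limit $X_{-\infty}$.

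Having a.s. convergence plus uniform integrability, the theorem on convergence in $L_1$ from Appendix~\ref{UnInt} yields $E|E(Z|\mathcal{G}_n)-X_\infty|\to 0$ and analogously in the reversed case. It remains to identify the limits. Set $\mathcal{G}_\infty=\sigma(\bigcup_n\mathcal{G}_n)$ and $\mathcal{H}_{-\infty}=\bigcap_n\mathcal{H}_n$. In the forward case, $X_\infty$ is $\mathcal{G}_\infty$-measurable as a limit of $\mathcal{G}_\infty$-measurable variables. For any fixed $k$ and any $A\in\mathcal{G}_k$, one has $A\in\mathcal{G}_n$ for all $n\geq k$, so $\int_A E(Z|\mathcal{G}_n)\,dP=\int_A Z\,dP$; passing to the limit using $L_1$ convergence yields $\int_A X_\infty\,dP=\int_A Z\,dP$. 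The collection $\bigcup_k\mathcal{G}_k$ is an algebra generating $\mathcal{G}_\infty$, so by a standard $\pi$--$\lambda$ (or monotone class) argument the identity extends to all $A\in\mathcal{G}_\infty$, whence $X_\infty=E(Z|\mathcal{G}_\infty)$ a.s. In the reversed case, $X_{-\infty}$ is $\mathcal{H}_n$-measurable for every $n$ (as the tail of a sequence adapted to $\mathcal{H}_n$ from index $n$ downward), hence $\mathcal{H}_{-\infty}$-measurable; for $A\in\mathcal{H}_{-\infty}\subset\mathcal{H}_n$ the same $L_1$-limit argument gives $\int_A X_{-\infty}\,dP=\int_A Z\,dP$, so $X_{-\infty}=E(Z|\mathcal{H}_{-\infty})$ a.s.

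The main obstacle, in my view, is step (iii) for the forward case, specifically the measure-theoretic extension of $\int_A X_\infty\,dP=\int_A Z\,dP$ from the generating algebra $\bigcup_k\mathcal{G}_k$ to all of $\mathcal{G}_\infty$; one must confirm that the class of sets on which the identity holds is closed under monotone limits, which follows from the monotone convergence theorem applied to indicator approximations, but it is the one place where care with $\sigma$-algebra generation is needed. Everything else is a routine application of uniform integrability of conditional expectations and the two convergence theorems (Doob's \ref{zb_mart} and the reversed martingale Theorem~\ref{odwrotny_martyngal}) already established in the text.
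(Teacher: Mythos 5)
Your argument is correct and follows essentially the same route as the paper's own (very terse) proof: trivial verification of the (reversed) martingale property, uniform integrability of the family $\{E(Z|\mathcal{G})\}$ from Appendix \ref{UnInt}, and then the almost sure convergence theorems (Doob's Theorem \ref{zb_mart} and Theorem \ref{odwrotny_martyngal}) combined with the $L_1$ convergence theorem. The only difference is that you carefully carry out the identification of the limits as $E(Z|\mathcal{G}_\infty)$ and $E(Z|\mathcal{H}_{-\infty})$ via a monotone class argument, a step the paper leaves entirely implicit; this is a correct completion, not a divergence in method.
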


\begin{proof}
To check that these sequences are indeed martingales are trivial. These
sequences form a uniformly integrable family (see Appendix \ref{UnInt}).
Hence, they are convergent almost surely and in $L_{1}.$
\end{proof}

\section{Stopping times\label{czas}%
\index{Stopping time}%
}

Let $\left\{  \mathcal{G}_{i}\right\}  _{i\geq1}$ be a filtration.

\begin{definition}
Nonnegative integer valued random variable $T$ is called \emph{stopping time
}with respect to filtration $\left\{  \mathcal{G}_{i}\right\}  _{i\geq1}$,
if:
\begin{equation}
\forall i\in%
\mathbb{N}
:\left\{  T\leq i\right\}  \in\mathcal{G}_{i}. \label{ST1}%
\end{equation}

\end{definition}

\begin{remark}
Let us notice that $\left\{  T=i\right\}  \in\mathcal{G}_{i}$ and $\left\{
T\geq i\right\}  =\left\{  T\leq i-1\right\}  ^{c}\in\mathcal{G}_{i-1}$.
Hence, of course $\left\{  T<i\right\}  \in\mathcal{G}_{i-1}.$
\end{remark}

Let $\{X_{n}\}_{n\geq1}$ be martingale (or submartingale), a random variable
$T$ a stopping time with respect to filtration $\left\{  \mathcal{G}%
_{i}\right\}  _{i\geq1}$. Let us denote:
\[
\forall n\in%
\mathbb{N}
:X_{n}^{(T)}=X_{\min(T,n)}=\left\{
\begin{array}
[c]{ccc}%
X_{n} & gdy & T\geq n\\
X_{T} & gdy & T<n
\end{array}
\right.  .
\]

\begin{proposition}
i) If $\{X_{n}\}_{n\geq1}$ is a martingale, then also $\{X_{n}^{(T)}%
\}_{n\geq1}$ is a martingale. Moreover, $\forall n\in%
\mathbb{N}
:$ $EX_{n}^{(T)}=EX_{1}.$

ii) If $\{X_{n}\}_{n\geq1}$ is a submartingale, then also $\{X_{n}%
^{(T)}\}_{n\geq1}$ is a submartingale.
\end{proposition}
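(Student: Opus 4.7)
The plan is to show that $\{X_n^{(T)}\}_{n\geq 1}$ inherits the (sub)martingale property directly from $\{X_n\}_{n\geq 1}$ by expressing the one-step increment as an indicator-weighted increment of the original process.

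First, I would verify the two structural prerequisites. For measurability, write
\[
X_n^{(T)} = \sum_{k=1}^{n-1} X_k\, I(T=k) + X_n\, I(T\geq n),
\]
and observe that each $I(T=k)\in\mathcal{G}_k\subseteq\mathcal{G}_n$ for $k<n$, while $I(T\geq n)=1-I(T\leq n-1)\in\mathcal{G}_{n-1}\subseteq\mathcal{G}_n$; since each $X_k$ is $\mathcal{G}_k\subseteq\mathcal{G}_n$-measurable, so is $X_n^{(T)}$. Integrability follows from $|X_n^{(T)}|\leq \sum_{k=1}^{n}|X_k|$, which is integrable.

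The key step is the increment identity
\[
X_{n+1}^{(T)} - X_n^{(T)} = I(T\geq n+1)\,(X_{n+1}-X_n),
\]
which one checks by considering the two cases $T\leq n$ (both sides are $0$) and $T\geq n+1$ (left side is $X_{n+1}-X_n$, right side also). The factor $I(T\geq n+1)=1-I(T\leq n)$ is $\mathcal{G}_n$-measurable by the defining property \eqref{ST1} of a stopping time, so it can be pulled outside the conditional expectation:
\[
E\!\left(X_{n+1}^{(T)} - X_n^{(T)}\,\big|\,\mathcal{G}_n\right) = I(T\geq n+1)\, E(X_{n+1}-X_n\,|\,\mathcal{G}_n).
\]
In the martingale case the right-hand conditional expectation is $0$ a.s., giving $E(X_{n+1}^{(T)}|\mathcal{G}_n)=X_n^{(T)}$; in the submartingale case it is $\geq 0$ a.s., and since $I(T\geq n+1)\geq 0$ the whole product is $\geq 0$ a.s., yielding $E(X_{n+1}^{(T)}|\mathcal{G}_n)\geq X_n^{(T)}$. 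This proves both \emph{i)} and \emph{ii)}.

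For the ``moreover'' claim in \emph{i)}, I would simply note that $X_1^{(T)} = X_{\min(T,1)} = X_1$ (since $T\geq 1$) and that for any martingale $\{Y_n\}$ one has $EY_{n+1}=E\,E(Y_{n+1}|\mathcal{G}_n)=EY_n$; applying this to $Y_n=X_n^{(T)}$ gives $EX_n^{(T)}=EX_1^{(T)}=EX_1$ by induction on $n$. The only mildly subtle point in the whole argument is the pulling out of $I(T\geq n+1)$ from the conditional expectation, which is precisely where the stopping time property \eqref{ST1} is used; everything else is bookkeeping.
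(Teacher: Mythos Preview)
Your proof is correct and follows essentially the same approach as the paper: both hinge on the $\mathcal{G}_{n}$-measurability of $I(T\geq n+1)$ (equivalently $I(T\geq n)\in\mathcal{G}_{n-1}$) to pull the indicator outside the conditional expectation. The paper computes $E(X_{n}^{(T)}\mid\mathcal{G}_{n-1})$ directly from the decomposition $X_{n}^{(T)}=X_{n}I(T\geq n)+X_{T}I(T<n)$, while you work with the increment identity $X_{n+1}^{(T)}-X_{n}^{(T)}=I(T\geq n+1)(X_{n+1}-X_{n})$; these are two presentations of the same computation, and the paper even uses the telescoping-increment form for its integrability bound.
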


\begin{proof}
We will prove only assertion $i)$. Proof of $ii)$ is almost identical.
Firstly, let us notice that $X_{n}^{(T)}$ is a $\mathcal{G}_{n}-$ measurable
random variable. It follows from the fact, that we have \ $X_{n}^{(T)}%
=X_{n}I(T\geq n)+X_{T}I(T<n)$. Moreover, denoting $X_{0}\allowbreak
=\allowbreak0$ we have:
\[
\left\vert X_{n}^{(T)}\right\vert =\left\vert \sum_{i=1}^{n}(X_{i}%
-X_{i-1})I(T\geq i)\right\vert \leq\sum_{i=1}^{n}\left\vert X_{i}%
-X_{i-1}\right\vert .
\]
Hence $E\left\vert X_{n}^{(T)}\right\vert <\infty$ for every $n$. Besides we have:%

\begin{align*}
E(X_{n}^{(T)}|\mathcal{G}_{n-1})  &  =E(X_{n}I(T\geq n)|\mathcal{G}%
_{n-1})+E(X_{T}I(T<n)|\mathcal{G}_{n-1})=\\
&  =I(T>n-1)X_{n-1}+X_{T}I(T\leq n-1)=X_{n-1}^{(T)},
\end{align*}
since $I(T>n-1)\allowbreak=\allowbreak I(T\geq n)\in\mathcal{G}_{n-1}$,
$I(T\leq n-1)\allowbreak=\allowbreak I(T<n)$ and random variable $X_{T}I(T<n)
$ is $\mathcal{G}_{n-1}$ measurable. It remained to show, that $\forall n\in%
\mathbb{N}
:$ $EX_{n}^{(T)}=EX_{1}$. We have however $EX_{n}^{(T)}=EX_{n-1}^{(T)}%
=\ldots=EX_{1}.$
\end{proof}

\section{Proof of the law of iterated logarithm\label{PIL}}

Proof of this theorem is very complex and will not present it here in full
generality. We will prove this result under additional assumption, that the
respective random variables have a normal distribution. Since, that value of
$\sigma^{2}$ is not essential we will assume, that $X_{1}\sim N(0,1)$. Let us
denote
\[
S_{n}=\sum_{i=1}^{n}X_{i},\;h(n)=\sqrt{2n\log\log n};n\geq3.
\]
We start from easy relationship for normal random variables stating that:
\[
E\exp(\eta X_{1})=\exp(\frac{1}{2}\eta^{2}).
\]
Hence for $S_{n}$ we have:
\begin{equation}
E\exp(\eta S_{n})=\exp(\frac{1}{2}\eta^{2}n). \label{normalne}%
\end{equation}
Let us now notice, that the function $x\longmapsto\exp(\eta x)$ is convex,
hence the sequence $Y_{n}=\exp(\eta S_{n})$ is a positive submartingale i.e.
we have:
\[
E(Y_{n+1}|X_{1},\ldots,X_{n})\geq Y_{n}.
\]
For nonnegative submartingales we have the following Doob's maximal inequality
(see Appendix \ref{martyngaly} formula(\ref{nier_max})). Applying i to the
sequence $\left\{  Y_{n}\right\}  $ and putting $\gamma\allowbreak
=\allowbreak\exp(\eta c)>0$ and utilizing relationship (\ref{normalne}) we
get
\[
P(\underset{k\leq n}{\sup}S_{k}\geq c)=P(\underset{k\leq n}{\sup}\exp(\eta
S_{n})\geq\exp(\eta c))\leq\exp(-c\eta)\exp(\frac{1}{2}\eta^{2}n).
\]
Let us select now $\eta$ equal to be $c/n$. We get then:
\[
P(\underset{k\leq n}{\sup}S_{k}\geq c)\leq\exp(-\frac{1}{2}c^{2}/n).
\]
Let us now set $n=K^{j}$ and $c_{j}=Kh(K^{j-1})$ for some $K>1$. We have then
after simple algebra using a definition of logarithm:
\[
P(\underset{k\leq K^{j}}{\sup}S_{k}\geq c_{j})\leq\exp(-\frac{1}{2}c_{j}%
^{2}/K^{j})=\frac{1}{(j-1)^{K}(\log K)^{K}}.
\]
Let us notice that $\sum_{j\geq2}\frac{1}{(j-1)^{K}(\log K)^{K}}<\infty$.
Hence, on the basis of the Borel-Cantelli Lemma (see Lemma
\ref{Borel-Cantelli}) the events $\left\{  \underset{k\leq K^{j}}{\sup}%
S_{k}\geq c_{j}\right\}  $ happen only a finite number of times, hence
starting from some large $j$ we have for $m\in\lbrack K^{j-1},K^{j}]$
\[
S_{m}\leq\underset{m\leq K^{j}}{\sup}S_{m}\leq c_{j}=Kh(K^{j-1})\leq Kh(m),
\]
since function $h$ is non-decreasing. Hence, $\underset{n\rightarrow
\infty}{\lim\inf}\,\frac{S_{n}}{h(n)}\leq K$, which, considering freedom of
$K>1$, gives
\[
\underset{n\rightarrow\infty}{\lim\inf}\,\frac{S_{n}}{h(n)}\leq1.
\]
It remained to show, that $\underset{n\rightarrow\infty}{\lim\inf}%
\,\frac{S_{n}}{h(n)}\geq1$ since, the limit $\underset{n\rightarrow
\infty}{\lim\inf}\,\frac{S_{n}}{h(n)}\allowbreak=\allowbreak-1$ we will get
considering sequence $-S_{n}$. Hence, to show $\underset{n\rightarrow
\infty}{\lim\inf}\,\frac{S_{n}}{h(n)}\geq1$ let us consider an event:
\[
F_{n}=\left\{  S_{N^{n+1}}-S_{N^{n}}>(1-\varepsilon)h(N^{n+1}-N^{n})\right\}
,
\]
for some $\varepsilon\in(0,1)$ and $N>1$. To calculate the probability of
$F_{n}$ we have to use the following lemma about normal random variables:

\begin{lemma}%
\index{Lemma!About cdf of Normal distribution}%
\label{o normalnych}Let $Y\sim N(0,1)$. Then
\begin{align*}
P(Y  &  >x)\leq\frac{\phi(x)}{x},\\
P(Y  &  >x)\geq(x+\frac{1}{x})^{-1}\phi(x),
\end{align*}
where $\phi(x)$ is the density function of distribution $N(0,1).$
\end{lemma}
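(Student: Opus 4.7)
The plan is to exploit the basic identity $\phi'(t)=-t\phi(t)$, which turns integrals involving $\phi$ into explicit boundary terms and makes both Mills' ratio bounds accessible via a short calculation.

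First, for the upper bound, I would write
\[
P(Y>x)=\int_{x}^{\infty}\phi(t)\,dt=\int_{x}^{\infty}\frac{1}{t}\bigl(t\phi(t)\bigr)dt,
\]
and estimate $1/t\le 1/x$ on the range of integration. Since $t\phi(t)=-\phi'(t)$, the remaining integral evaluates to $\phi(x)$, which yields $P(Y>x)\le \phi(x)/x$ immediately.

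For the lower bound I would start from the identity obtained by differentiating $g(t)=\phi(t)/t$: a direct computation gives $g'(t)=-\phi(t)(1+1/t^{2})$, so
\[
\frac{\phi(x)}{x}=\int_{x}^{\infty}\phi(t)\left(1+\frac{1}{t^{2}}\right)dt=P(Y>x)+\int_{x}^{\infty}\frac{\phi(t)}{t^{2}}\,dt.
\]
On the interval $[x,\infty)$ one has $1/t^{2}\le 1/x^{2}$, so the tail integral is at most $P(Y>x)/x^{2}$. Plugging this in gives $\phi(x)/x\le P(Y>x)(1+1/x^{2})$, which rearranges to $P(Y>x)\ge \phi(x)/(x+1/x)$, the desired lower bound.

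The proof is short and essentially computational; there is no serious obstacle provided one notices the right primitive. The only mildly delicate step is the choice of the function $g(t)=\phi(t)/t$ for the lower bound — the natural first guess of iterating integration by parts on $\int_{x}^{\infty}\phi(t)\,dt$ does produce a lower bound of the form $\phi(x)(x^{2}-1)/x^{3}$, which is weaker than $\phi(x)/(x+1/x)$ and only valid for $x>1$. Recognizing that $g'(t)=-\phi(t)(1+1/t^{2})$ gives the identity in one step and yields the sharper bound on all of $x>0$ is the key observation.
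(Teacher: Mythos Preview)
Your proof is correct and follows essentially the same route as the paper: both use $\phi'(t)=-t\phi(t)$ to get the upper bound, and both compute $\bigl(\phi(t)/t\bigr)'=-(1+1/t^{2})\phi(t)$ to obtain the identity that yields the lower bound after estimating $1/t^{2}\le 1/x^{2}$. The only cosmetic difference is that the paper phrases the upper bound as $\phi(x)=\int_{x}^{\infty}t\phi(t)\,dt\ge x\int_{x}^{\infty}\phi(t)\,dt$, while you factor $\phi(t)=\tfrac{1}{t}\cdot t\phi(t)$ first; the inequality is the same.
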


\begin{proof}
We have for the density $\phi$ : $\phi^{\prime}(x)=-x\phi(x)$. Hence,
\[
\phi(x)=\int_{x}^{\infty}y\phi(y)dy\geq x\int_{x}^{\infty}\phi(y)dy=xP(Y>x),
\]
that is we have the first assertion.\ Further, let us notice that $\left(
\frac{\phi(y)}{y}\right)  ^{\prime}=-(1+\frac{1}{y^{2}})\phi(y)$. \newline
Hence:
\[
\frac{\phi(y)}{y}=\int_{y}^{\infty}(1+\frac{1}{x^{2}})\phi(x)dx\leq(1+\frac
{1}{y^{2}})\int_{y}^{\infty}\phi(x)dx=(1+\frac{1}{y^{2}})P(Y>y).
\]
Thus, we have the second assertion.

Returning to the theorem's proof, we have on the basis of the second assertion
of the just proved lemma :
\[
P(F_{n})=1-\phi(a)\geq\frac{1}{\sqrt{2\pi}}(a+\frac{1}{a})^{-1}\exp
(-\frac{a^{2}}{2}),
\]
where we denoted: $a=(1-\varepsilon)\sqrt{2\log\log(N^{n+1}-N^{n})}$. But
$(a+a^{-1})\geq2$ for $a\geq0$ and.
\[
\exp(-\frac{a^{2}}{2})\cong\frac{1}{(n\log N)^{(1-\varepsilon)^{2}}}.
\]
Hence
\[
\sum_{n\geq1}P(F_{n})=\infty.
\]
Random variables $\{X_{n}\}_{n\geq1}$ are independent and events $\left\{
F_{n}\right\}  _{n\geq1}$ are independent. On the base of the second part of
Borel-Cantelli Lemma \ref{Borel-Cantelli}, we deduce that infinitely many
times we will have:
\[
S_{N^{n+1}}\geq(1-\varepsilon)h(N^{n+1}-N^{n})+S_{N^{n}}.
\]
Using just proved inequality
\[
\underset{n\rightarrow\infty}{\lim\inf}\,\frac{S_{n}}{h(n)}\leq1
\]
applied to $-S_{n}$ gives
\[
S_{N^{n}}\geq-h(N^{n})
\]
for sufficiently large $n$. Hence, we have:
\[
S_{N^{n+1}}\geq(1-\varepsilon)h(N^{n+1}-N^{n})-h(N^{n}).
\]
Thus, we have:
\[
\underset{n\rightarrow\infty}{\lim\inf}\,\frac{S_{n}}{h(n)}\geq
\underset{n\rightarrow\infty}{\lim\inf}\,\frac{S_{N^{n+1}}}{h(N^{n+1})}%
\geq(1-\varepsilon)\sqrt{1-\frac{1}{N}}-\frac{1}{\sqrt{N}}%
\]
since $\sqrt{\frac{\log\log(N^{n+1}-N^{n})}{\log\log N^{n+1}}}\cong1$ for
sufficiently large $n$. Now taking sufficiently large $N$ and small
$\varepsilon$ we see that indeed $\underset{n\rightarrow\infty}{\lim\inf
}\,\frac{S_{n}}{h(n)}\geq1.$
\end{proof}

\section{Symmetrization\label{symetryzacja}}

\begin{definition}
We say, that random variable $Y$ has symmetric distribution, when:
\[
\forall x\in%
\mathbb{R}
:P(Y<x)=P(-Y<x).
\]

\end{definition}

Let $Y$ be symmetric random variable. Let us take any positive number $M$ and
let us denote $Y^{<M}=YI(\left\vert Y\right\vert \leq M)$ and $Y^{>M}%
=YI(\left\vert Y\right\vert >M)$. Of course, $Y=Y^{<M}+Y^{>M}$. We have the
following simple fact:

\begin{proposition}
Random variables $Y$ and $Y^{>M}-Y^{<M}$ have the same distribution.
\end{proposition}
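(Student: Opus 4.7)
The plan is to exhibit a deterministic Borel transformation $\phi:\mathbb{R}\to\mathbb{R}$ such that $Y^{>M}-Y^{<M}=\phi(Y)$, and then show that $\phi(Y)$ and $Y$ agree on every Borel set by using the symmetry $Y\stackrel{d}{=}-Y$ together with the fact that the set $\{|y|\le M\}$ is itself symmetric about the origin.

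Concretely, I would define
\[
\phi(y)=y\,I(|y|>M)-y\,I(|y|\le M),
\]
so that $\phi(y)=y$ on $\{|y|>M\}$ and $\phi(y)=-y$ on $\{|y|\le M\}$, and observe that $\phi(Y)=Y^{>M}-Y^{<M}$. Then for an arbitrary Borel set $A\subseteq\mathbb{R}$ I would split
\[
P(\phi(Y)\in A)=P(Y\in A,\,|Y|>M)+P(-Y\in A,\,|Y|\le M).
\]
The first term already matches the corresponding piece of $P(Y\in A)$, so everything reduces to showing that the second term equals $P(Y\in A,\,|Y|\le M)$.

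For this, set $B=A\cap\{|y|\le M\}$; because $B\subseteq\{|y|\le M\}$, membership of $-Y$ in $B$ is the same event as $\{-Y\in B,\,|-Y|\le M\}=\{-Y\in B\}$, and similarly $\{Y\in B,\,|Y|\le M\}=\{Y\in B\}$. The symmetry hypothesis $Y\stackrel{d}{=}-Y$ gives $P(-Y\in B)=P(Y\in B)$, so
\[
P(-Y\in A,\,|Y|\le M)=P(-Y\in B)=P(Y\in B)=P(Y\in A,\,|Y|\le M).
\]
Adding the two pieces yields $P(\phi(Y)\in A)=P(Y\in A)$, which is the desired equality of distributions.

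There is really no hard step here; the only subtlety is to translate the hypothesis of symmetry (stated via cumulative distribution functions) into the operational statement $P(-Y\in B)=P(Y\in B)$ for all Borel $B$, and to keep track of the fact that the truncation set $\{|y|\le M\}$ is invariant under $y\mapsto -y$, so that the indicator factor survives the change of variables intact.
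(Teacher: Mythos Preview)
Your proof is correct and rests on the same idea as the paper's: split according to whether $|Y|>M$ or $|Y|\le M$, leave the first piece alone, and use the symmetry $Y\stackrel{d}{=}-Y$ together with the symmetry of the truncation region $\{|y|\le M\}$ to handle the second piece.

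The only difference is packaging. The paper works directly with the cumulative distribution function of $Z=Y^{>M}-Y^{<M}$, performing a three-way case analysis on $x$ (namely $x\le -M$, $-M<x\le M$, $M<x$) and checking in each case that $P(Z<x)=P(Y<x)$. Your argument instead tests against an arbitrary Borel set $A$ and handles everything in one stroke via the transformation $\phi$. Your version is slightly cleaner---it avoids the case split and the bookkeeping with endpoints---while the paper's version is perhaps more concrete for a reader who thinks in terms of distribution functions. Substantively they are the same proof.
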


\begin{proof}
Let us denote $Z=Y^{>M}-Y^{<M}$. We have
\[
P(Z<x)=\left\{
\begin{array}
[c]{ccc}%
P(Y<x) & \text{when} & x\leq-M\\
P(Y\leq-M)+P(-x<Y<M) & \text{when} & -M<x\leq M\\
P(Y<x) & \text{when} & M<x
\end{array}
\right.  .
\]
Let us notice now, that $P(-x<Y<M)=P(x>-Y>-M)=P(-Y<x)-P(-Y\leq-M)$. However
taking into account symmetry of the random variable $Y$ we have $P(-Y\leq
-M)=P(Y\leq-M)$ and $P(-Y<x)=P(Y<x)$. Hence, indeed $P(Z<x)=P(Y<x).$
\end{proof}

Let $X$ be a random variable. Let $X^{\prime}$ be random variable independent
of $X$ and having the same as $X$ distribution. Let us consider random
variable:
\[
X^{s}=X-X^{\prime}.
\]
It is called symmetrization of the random variable $X.$

\begin{proposition}
Random variable $X^{s}$ has symmetric distribution.
\end{proposition}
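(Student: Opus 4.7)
The plan is to verify the definition of symmetry directly by exploiting the exchangeability of the pair $(X, X')$. I would first write $-X^{s} = X' - X$, and then observe that since $X$ and $X'$ are independent and identically distributed, the joint distribution of the pair $(X, X')$ is invariant under the swap $(X, X') \mapsto (X', X)$. This is the only real content in the argument.

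Next, I would apply this invariance to the measurable map $(u, v) \mapsto u - v$: the image measure of $(X, X')$ under this map equals the image measure of $(X', X)$ under the same map. Concretely, for every $x \in \mathbb{R}$,
\[
P(X^{s} < x) = P(X - X' < x) = P(X' - X < x) = P(-X^{s} < x),
\]
which is exactly the condition in the definition of a symmetric random variable.

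The argument has no real obstacle; the only point to be careful about is to invoke identical distributions and independence together (in the form of the invariance of the joint law under coordinate swap), rather than trying to manipulate the cumulative distribution function of $X - X'$ directly via a convolution formula, which would force unnecessary regularity assumptions on the distribution of $X$.
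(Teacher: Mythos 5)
Your argument is correct and is essentially the same as the paper's: the paper proves symmetry via the chain $P(X^{s}<x)=P(X-X^{\prime}<x)=P(X^{\prime}-X<x)=P(-X^{s}<x)$, which is exactly your computation, and your appeal to the invariance of the joint law of $(X,X^{\prime})$ under swapping coordinates is just an explicit justification of the middle equality that the paper leaves implicit.
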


\begin{proof}
Of course we have for any $x$ $P(X^{s}<x)=P(X-X^{\prime}<x)=P(X^{\prime
}-X<x)=P(-X^{s}<x).$
\end{proof}

\section{0-1 laws\label{prawo01}}

Let $\{X_{n}\}_{n\geq1}$ be a sequence of independent random variables. Let us
denote
\[
\mathcal{G}_{n}=\sigma(X_{n},X_{n+1},\ldots),\;\;\mathcal{G}_{\infty}%
=\bigcap_{n=1}^{\infty}\mathcal{G}_{n}.
\]
Kolmogorov's Theorem states.

\begin{theorem}
[ Kolmogorov's 0-1 law]%
\index{Law!!Kolmogorov 0-1}%
Let $A\in\mathcal{G}_{\infty}$. Then either $P(A)\allowbreak=\allowbreak0$ or
$P(A)\allowbreak=\allowbreak1.$
\end{theorem}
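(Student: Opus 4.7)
The plan is to show that every tail event $A\in\mathcal{G}_\infty$ is independent of \emph{itself}, which forces $P(A)=P(A)^2$ and hence $P(A)\in\{0,1\}$. To realize this, I would introduce the complementary (growing) filtration $\mathcal{F}_n=\sigma(X_1,\ldots,X_n)$ and its limit $\mathcal{F}_\infty=\sigma(X_1,X_2,\ldots)$, and then follow three natural steps.

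First, I would note that for each fixed $n$ the event $A$ belongs to $\mathcal{G}_{n+1}=\sigma(X_{n+1},X_{n+2},\ldots)$ (this is the whole point of $A$ being in the tail $\sigma$-field). Since the sequence $\{X_i\}_{i\geq 1}$ consists of independent random variables, the $\sigma$-fields $\sigma(X_1,\ldots,X_n)$ and $\sigma(X_{n+1},X_{n+2},\ldots)$ are independent; in particular $A$ is independent of every event in $\mathcal{F}_n$. This being true for every $n$, $A$ is independent of every event in the algebra $\mathcal{A}=\bigcup_{n\geq 1}\mathcal{F}_n$, which is a $\pi$-system (closed under finite intersections thanks to the nesting $\mathcal{F}_n\subset\mathcal{F}_{n+1}$) and which generates $\mathcal{F}_\infty$.

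Second, I would extend the independence from $\mathcal{A}$ to the generated $\sigma$-field $\mathcal{F}_\infty$. This is where the main technical content sits: consider the class $\mathcal{L}=\{B\in\mathcal{F}_\infty:P(A\cap B)=P(A)P(B)\}$. Using the continuity of probability from Appendix~\ref{cptwo}, one checks that $\mathcal{L}$ is closed under countable monotone limits and under proper differences, hence is a $\lambda$-system (or a monotone class containing $\mathcal{A}$). Since $\mathcal{L}\supseteq\mathcal{A}$ and $\mathcal{A}$ is a $\pi$-system generating $\mathcal{F}_\infty$, Dynkin's $\pi$-$\lambda$ theorem (equivalently, the monotone class theorem) gives $\mathcal{L}=\mathcal{F}_\infty$.

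Third, the conclusion is immediate: since $\mathcal{G}_\infty\subseteq\mathcal{F}_\infty$, the event $A$ itself lies in $\mathcal{F}_\infty$, so taking $B=A$ in the independence relation yields $P(A)=P(A\cap A)=P(A)^2$, whence $P(A)\in\{0,1\}$. The only real obstacle is the second step, the extension of independence from the cylinder algebra to the full $\sigma$-field; everything else is a direct bookkeeping consequence of the definition of independence and the structure of the tail.
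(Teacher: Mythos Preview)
Your argument is correct and is in fact the most common textbook proof of Kolmogorov's 0--1 law. The paper, however, takes a different route: instead of extending the independence from the algebra $\bigcup_n \mathcal{F}_n$ to $\mathcal{F}_\infty$ via a $\pi$--$\lambda$ argument, it exploits the martingale machinery developed earlier in the appendix. Concretely, the paper considers the martingale $Z_n = E(I(A)\mid X_1,\ldots,X_n)$; since $A\in\mathcal{G}_\infty$ is independent of $\sigma(X_1,\ldots,X_n)$ for every $n$ (your first step), one has $Z_n = P(A)$ identically. On the other hand, by the martingale convergence theorem $Z_n \to E(I(A)\mid \mathcal{B}_\infty) = I(A)$ a.s.\ (because $\mathcal{G}_\infty\subset\mathcal{B}_\infty$), whence $I(A)=P(A)$ a.s.\ and $P(A)\in\{0,1\}$. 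Thus both proofs share the same starting observation---$A$ is independent of each $\mathcal{F}_n$---but your version upgrades this to independence from $\mathcal{F}_\infty$ via Dynkin's theorem, while the paper's version replaces that step with martingale convergence. Your approach is arguably more self-contained (it avoids any limit theorems for stochastic processes), whereas the paper's approach is consistent with its emphasis on martingale techniques and avoids invoking the $\pi$--$\lambda$ theorem, which is not otherwise recalled in the text.
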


\begin{proof}
Let $I(A)(\omega)=\left\{
\begin{array}
[c]{ccc}%
1 & gdy & \omega\in A\\
0 & gdy & \omega\notin A
\end{array}
\right.  $. Notice that the sequence of the random variables $Z_{n}%
=E(I(A)|X_{1},\ldots,X_{n})$ is a martingale with respect to filtration
$\left\{  \sigma(X_{1},\ldots,X_{n})\right\}  _{n\geq1}$. From one side,
considering the independence of $\sigma-$fields\ $\sigma(X_{1},\ldots X_{n})$
and $\mathcal{G}_{\infty}$ for every $n$ we have $Z_{n}=EI(A)=P(A)$. But on
the other hand, considering martingale convergence theorem, we get:
\[
\underset{n\rightarrow\infty}{\lim}Z_{n}=E(I(A)|\mathcal{B}_{\infty
})=I(A)\;a.s.\,,
\]
where by $\mathcal{B}_{\infty}$ we denoted $\sigma(\bigcup_{i=1}^{n}%
\sigma(X_{1},\ldots,X_{i}))$. It is obvious, that $\mathcal{G}_{\infty}%
\subset\mathcal{B}_{\infty}$. Hence, $I(A)\allowbreak=\allowbreak P(A)$. That
is $P(A)=0$ or $1$.
\end{proof}

To formulate Hewitt-Savage law one has to define the notion of the symmetric
event. Let us denote $%
\mathbb{R}
^{\infty}$ set of sequences of reals. $\sigma$-field $\mathcal{B}_{\infty} $
of Borel subsets of $%
\mathbb{R}
^{\infty}$ is of the form $\mathcal{B}_{\infty}=\sigma(\bigcup_{n=1}^{\infty
}\mathcal{B}_{n})$, where $\mathcal{B}_{n}$ denotes $\sigma$-field of Borel
subsets of $%
\mathbb{R}
^{n}$ .

\begin{definition}
\label{sym3}Let $\{X_{n}\}_{n\geq1}$ be a sequence of the random variables. An
event $B\in\sigma(X_{n},n\geq1)$ is called symmetric, if there exists a Borel
set $C_{\infty}\in\mathcal{B}_{\infty}$ such that for every $m\geq1$ and every
permutation $\left\{  i_{1},\ldots,i_{m}\right\}  $ of the set $\left\{
1,\ldots,m\right\}  $ we have
\begin{align*}
B  &  =\left\{  \omega:(X_{1}(\omega),\ldots,X_{m}(\omega),\ldots)\in
C_{\infty}\right\}  =\\
&  \left\{  \omega:(X_{i_{1}},\ldots,X_{i_{m}},\ldots)\in C_{\infty}\right\}
.
\end{align*}

\end{definition}

\begin{theorem}
[prawo 0-1 Hewitta-Savege'a]%
\index{Law!Hewitt -Savage 0-1!}%
Let $\{X_{n}\}_{n\geq1}$ be a sequence of independent random variables having
identical distributions. Then every event symmetric, belonging to
$\sigma(X_{n},n\geq1)$ has probability $0$ or $1$.
\end{theorem}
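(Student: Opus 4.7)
The plan is to exploit the symmetry of $A$ together with a finite-dimensional approximation, so that one can replace $A$ (up to small error) by an event depending only on $X_1,\ldots,X_n$ and simultaneously by an independent copy depending only on $X_{n+1},\ldots,X_{2n}$; independence will then force $P(A)=P(A)^2$.

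First, since $A\in\sigma(X_n,n\geq 1)=\sigma\bigl(\bigcup_{n\geq 1}\sigma(X_1,\ldots,X_n)\bigr)$, standard approximation in measure (the fact that the algebra $\bigcup_n\sigma(X_1,\ldots,X_n)$ generates the $\sigma$-field and is dense in it in the sense of symmetric difference) gives, for every $\varepsilon>0$, an index $n=n(\varepsilon)$ and an event $A_n\in\sigma(X_1,\ldots,X_n)$ of the form $A_n=\{(X_1,\ldots,X_n)\in C_n\}$ for some $C_n\in\mathcal{B}_n$, with $P(A\triangle A_n)<\varepsilon$.

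Second, I would introduce the permutation $\pi$ that swaps the blocks $\{1,\ldots,n\}$ and $\{n+1,\ldots,2n\}$ and leaves all other indices fixed, and set $A_n'=\{(X_{n+1},\ldots,X_{2n})\in C_n\}$. Because the $X_i$ are i.i.d., the vectors $(X_1,X_2,\ldots)$ and $(X_{\pi(1)},X_{\pi(2)},\ldots)$ have the same distribution; because $A$ is symmetric (Definition \ref{sym3}), the event $A$ is invariant under this relabelling, whereas $A_n$ is carried to $A_n'$. Therefore
\begin{equation*}
P(A\triangle A_n')=P(A\triangle A_n)<\varepsilon.
\end{equation*}
The key bookkeeping step, and what I expect to be the main subtle point, is to be precise about what \emph{symmetric} means for infinite sequences: the equality $P(A\triangle A_n)=P(A\triangle A_n')$ must be read through the representation of $A$ as $\{\omega:(X_1,X_2,\ldots)\in C_\infty\}$ for a Borel set $C_\infty\in\mathcal{B}_\infty$, using that $C_\infty$ is invariant under finite coordinate permutations.

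Third, combining the two estimates by the triangle inequality for $\triangle$ yields $P(A_n\triangle A_n')<2\varepsilon$, and hence
\begin{equation*}
|P(A_n\cap A_n')-P(A_n)|<2\varepsilon,\qquad |P(A_n)-P(A)|<\varepsilon,\qquad |P(A_n')-P(A)|<\varepsilon.
\end{equation*}
Since $A_n$ is $\sigma(X_1,\ldots,X_n)$-measurable and $A_n'$ is $\sigma(X_{n+1},\ldots,X_{2n})$-measurable, independence of the $X_i$ gives $P(A_n\cap A_n')=P(A_n)P(A_n')$. Letting $\varepsilon\downarrow 0$ we obtain $P(A)=P(A)\cdot P(A)$, so $P(A)\in\{0,1\}$, which is the desired conclusion. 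The only nontrivial ingredients are the approximation lemma from measure theory and the invariance of $A$ under the block-swap permutation; the rest is algebra on the $\triangle$-distance.
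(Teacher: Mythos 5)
Your proof is correct and follows essentially the same route as the paper's: approximate $A$ by a finite-dimensional event $A_n\in\sigma(X_1,\ldots,X_n)$, form the shifted copy $A_n'$ on coordinates $n+1,\ldots,2n$, use the i.i.d. assumption together with the symmetry of $A$ (via its representation by a permutation-invariant $C_\infty\in\mathcal{B}_\infty$) to get $P(A\triangle A_n')=P(A\triangle A_n)$, and then exploit independence of $A_n$ and $A_n'$ to force $P(A)=P(A)^2$. The only difference from the paper is cosmetic bookkeeping (explicit $\varepsilon$-estimates versus passing to limits along the approximating sequence).
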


\begin{proof}
Let $B$ be a symmetric event. By the properties of measure it follows that
there exists a sequence of events $B_{n}\in\sigma(X_{1},\ldots,X_{n})$,
$n\geq1$ such that
\begin{equation}
P(B_{n}\vartriangle B)\rightarrow0,\,n\rightarrow\infty. \label{H-S1}%
\end{equation}
Let $C_{n}$ and $C_{\infty}$ be such Borel subsets of respectively $%
\mathbb{R}
^{n}$ and $%
\mathbb{R}
^{\infty}$, that
\[
B_{n}=\left\{  \omega:(X_{1}(\omega),\ldots,X_{n}(\omega))\in C_{n}\right\}
\text{ and }B=\left\{  \omega:(X_{1}(\omega),\ldots,X_{n}(\omega),\ldots)\in
C_{\infty}\right\}  .
\]
Let us define also the following event:
\[
B_{n}^{^{\prime}}=\left\{  \omega:(X_{n+1}(\omega),\ldots,X_{2n}(\omega))\in
C_{n}\right\}
\]
and
\[
B^{^{\prime}}=\left\{  \omega:(X_{n+1}(\omega),\ldots,X_{2n}(\omega
),X_{1},\ldots,X_{n},X_{2n+1},\ldots)\in C_{\infty}\right\}  .
\]
Taking into account independence and identity of distributions of elements of
the sequence $\{X_{n}\}_{n\geq1}$ we have: $P(B_{n})=P(B_{n}^{^{\prime}})$
and
\begin{equation}
P(B_{n}^{^{\prime}}\cap B_{n})=P(B_{n})P(B_{n}^{^{\prime}})\rightarrow
P^{2}(B),\;n\rightarrow\infty. \label{H-S2}%
\end{equation}
Moreover, further arguing in the same way we have:
\begin{equation}
P(B_{n}^{^{\prime}}\vartriangle B^{^{\prime}})=P(B_{n}\vartriangle
B),\;n\geq1. \label{H-S3}%
\end{equation}
Symmetry of $B$ implies:
\[
P(B_{n}^{^{\prime}}\vartriangle B^{^{\prime}})=P(B_{n}^{^{\prime}}\vartriangle
B),\;n\geq1.
\]
Taking into account (\ref{H-S1}), and (\ref{H-S3}) we see that:
\[
P(B_{n}^{^{\prime}}\vartriangle B)\rightarrow0,\;n\rightarrow\infty.
\]
this convergence confronted with (\ref{H-S1}) gives:
\[
P(B_{n}\cap B_{n}^{^{\prime}})\rightarrow P(B).
\]
On its sides the above mentioned convergence combined with (\ref{H-S2}) gives
equality $P(B)=P^{2}(B)$ from which immediately follows assertion.
\end{proof}

\section{Proof Strassen's Theorem\label{strassen}}

\begin{proof}
The event described by (\ref{odwr_pil}) belongs to the so-called tail
$\sigma-\,$field. By the Kolmogorov's 0-1 law (see Appendix \ref{prawo01})
such $\sigma$-field contains only events whose probability are equal either
$0$ or $1$. Hence, this event is satisfied in fact for almost all elementary
events. In other words, we have to have:
\[
\underset{n\,\rightarrow\infty}{\lim}\frac{\sum_{i=1}^{n}X_{i}}{n}=0,
\]
with probability $1$. Hence, on the basis of Kolmogorov's Theorem discussed in
chapter \ref{simpwl} we must have $X_{1}\in L_{1}$ and $EX_{1}=0$. Thus, it
remained to show, that the condition (\ref{odwr_pil}) implies, that $X_{1}\in
L_{2}$. Let us notice that, without loss of generality, that we can assume,
that random variables $X_{i}$ are symmetric (possibly symmetrizing them). Let
us assume, that $EX^{2}=\infty$. We will show that then
$\underset{n\rightarrow\infty}{\lim\inf}\,\frac{\left\vert \sum_{i=1}^{n}%
X_{i}\right\vert }{\sqrt{2n\log\log n}}=\infty.$

Let us select $M>0$. Let us denote $X^{<M}=XI(\left\vert X\right\vert \leq
c_{M})$. We will select constant $c_{M}$ so that $E\left(  X^{<M}\right)
^{2}\geq M$. Let us denote for brevity $S_{n}=\sum_{i=1}^{n}X_{i}$,
$S_{n}^{^{\prime}}=\sum_{i=1}^{n}X^{<M}$ and $S_{n}^{^{\prime\prime}}%
=\sum_{i=1}^{n}X^{>M}$. In the part of the Appendix dedicated to
symmetrization we have shown, that random variables $X$ and $X^{<M}-X^{>M}$
have the same distributions. Thus, random variables $S_{n}$ and $S_{n}%
^{^{\prime}}-S_{n}^{^{\prime\prime}}$ have the same distributions. Hence, the
following two events
\begin{equation}
\bigcap_{i=1}^{\infty}\bigcup_{n\geq i}^{\infty}\left\{  S_{n}^{^{\prime}%
}>(Mn\log\log n)^{1/2},S_{n}-S_{n}^{^{\prime}}\geq0\right\}  \label{sym1}%
\end{equation}
and
\begin{equation}
\bigcap_{i=1}^{\infty}\bigcup_{n\geq i}^{\infty}\left\{  S_{n}^{^{\prime}%
}>(Mn\log\log n)^{1/2},S_{n}-S_{n}^{^{\prime}}\leq0\right\}  , \label{sym2}%
\end{equation}
have the same probability. It follows then from the fact, that $S_{n}%
-S_{n}^{^{\prime}}=S_{n}^{^{\prime\prime}}$, from $0-1$ Hewitt-Savage law, we
deduce that probability of every one of the two is either $0$ or $1$.
Moreover, by the LIL applied to random variables $X^{<M}$ that have variances
we get :
\[
P(\bigcap_{i=1}^{\infty}\bigcup_{n\geq i}^{\infty}\left\{  S_{n}^{^{\prime}%
}>(Mn\log\log n)^{1/2}\right\}  )=1.
\]
The event $\bigcap_{i=1}^{\infty}\bigcup_{n\geq i}^{\infty}\left\{
S_{n}^{^{\prime}}>(Mn\log\log n)^{1/2}\right\}  $ is a sum of events
(\ref{sym1}) and (\ref{sym2}), hence
\[
P(\bigcap_{i=1}^{\infty}\bigcup_{n\geq i}^{\infty}\left\{  S_{n}^{^{\prime}%
}>(Mn\log\log n)^{1/2},S_{n}-S_{n}^{^{\prime}}\geq0\right\}  )=1.
\]
Thus, we have
\[
P(\bigcap_{i=1}^{\infty}\bigcup_{n\geq i}^{\infty}\left\{  S_{n}>(Mn\log\log
n)^{1/2}\right\}  )=1,
\]
for any $M>0$. It simply means, that $\underset{n\rightarrow\infty}{\lim\sup
}\frac{\left\vert \sum_{i=1}^{n}X_{i}\right\vert }{\sqrt{2n\log\log n}}%
=\infty.$
\end{proof}

\section{Scheff\'{e}'s Lemma\label{scheffe}}

\begin{lemma}%
\index{Lemma!Scheffe}%
Let $\left\{  f_{n}\right\}  _{n\geq1}$ be a sequence of nonnegative,
integrable functions defined on some measure space $\left(  S,\mathcal{S}%
,\mu\right)  $, that are convergent almost surely to a function $f$. Then
$\int_{S}\left\vert f_{n}-f\right\vert d\mu\underset{n\rightarrow
\infty}{\longrightarrow}0$ if and only if, when $\int_{S}f_{n}d\mu
\underset{n\rightarrow\infty}{\longrightarrow}\int_{S}fd\mu.$
\end{lemma}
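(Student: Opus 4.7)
The plan is to prove the two implications separately. The forward implication is immediate from the triangle inequality for integrals: if $\int_S |f_n - f| d\mu \to 0$, then $\left| \int_S f_n d\mu - \int_S f d\mu \right| \leq \int_S |f_n - f| d\mu \to 0$, so in particular $\int_S f_n d\mu \to \int_S f d\mu$. This direction uses nothing beyond basic properties of the integral and does not even require the pointwise convergence hypothesis.

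The reverse implication is the substantive one. The key algebraic identity I would use is
\[
|f_n - f| = (f_n - f) + 2(f - f_n)^{+},
\]
where $(x)^{+} = \max(x, 0)$. Integrating both sides gives
\[
\int_S |f_n - f| d\mu = \int_S (f_n - f) d\mu + 2 \int_S (f - f_n)^{+} d\mu.
\]
By hypothesis the first term on the right tends to $0$, so the whole problem reduces to showing that $\int_S (f - f_n)^{+} d\mu \to 0$.

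For this I would invoke dominated convergence. The crucial observation is that since $f_n \geq 0$, we have $(f - f_n)^{+} \leq f$ pointwise, and $f$ is integrable (being the a.e.\ limit of nonnegative integrable $f_n$ with convergent integrals, or, more directly, by Fatou's lemma applied to $\{f_n\}$). Moreover, the a.e.\ convergence $f_n \to f$ implies $(f - f_n)^{+} \to 0$ almost everywhere. Therefore, Lebesgue's dominated convergence theorem yields $\int_S (f - f_n)^{+} d\mu \to 0$, which completes the proof.

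The main obstacle, if one can call it that, is simply recognizing the right decomposition of $|f_n - f|$ that isolates a dominated piece. Without the trick of writing $|f_n - f| = (f_n - f) + 2(f - f_n)^{+}$, one might try to bound $|f_n - f|$ directly by $f_n + f$, but that bound is not uniformly dominated by an integrable function since $\int f_n d\mu$ only converges rather than being bounded by a fixed integrable majorant from the outset; the decomposition above sidesteps this by cancelling out the $f_n$ contribution inside the integral and leaving only the part dominated by $f$.
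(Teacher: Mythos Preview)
Your proof is correct and essentially identical to the paper's: both hinge on the observation that $(f-f_n)^{+}=(f_n-f)^{-}\leq f$ and apply dominated convergence to this piece. The only cosmetic difference is that you package the algebra via the single identity $|f_n-f|=(f_n-f)+2(f-f_n)^{+}$, whereas the paper treats $\int(f_n-f)^{-}$ and $\int(f_n-f)^{+}$ separately and recombines them; the content is the same.
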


\begin{proof}
(\cite{Williams95}) Implication $\int_{S}\left\vert f_{n}-f\right\vert
d\mu\underset{n\rightarrow\infty}{\longrightarrow}0$ $\allowbreak
\Longrightarrow$ $\int_{S}f_{n}d\mu\allowbreak\underset{n\rightarrow
\infty}{\longrightarrow}\allowbreak\int_{S}fd\mu$ is obvious. Hence, let us
consider the reverse one. To this end let us assume, that $\int_{S}f_{n}%
d\mu\underset{n\rightarrow\infty}{\longrightarrow}\int_{S}fd\mu$. Since we
have $\left(  f_{n}-f\right)  ^{-}\leq f$, hence by the Lebesgue Theorem on
"dominated passage to the limit under the sign of the integral" we have as
$n\,\longrightarrow\infty:$
\begin{equation}
\underset{n\rightarrow\infty}{\lim}\int_{S}(f_{n}-f)^{-}d\mu=\int%
_{S}\underset{n\rightarrow\infty}{\lim}(f_{n}-f)^{-}d\mu=0. \label{dla_-}%
\end{equation}
Next we have
\[
\int_{S}(f_{n}-f)^{+}d\mu=\int_{f_{n}\geq f}(f_{n}-f)=\int f_{n}d\mu-\int
fd\mu-\int_{f_{n}<f}(f_{n}-f)d\mu.
\]
Moreover, we have:
\[
\int_{f_{n}<f}(f_{n}-f)d\mu\leq\int_{S}(f_{n}-f)^{-}d\mu\underset{n\rightarrow
\infty\infty}{\longrightarrow}0.
\]
Hence:
\[
\int_{S}(f_{n}-f)^{+}d\mu\underset{n\rightarrow\infty}{\longrightarrow}0.
\]
This and (\ref{dla_-}) imply already the assertion.
\end{proof}

\chapter{Digression on the theory of distributions\label{dystrybucje}}

\section{Space of sample functions}

For simplicity and clarity of exposition, we will consider only functions of
one variable having complex values. The closure of the set $\left\{  x\in%
\mathbb{R}
:f(x)\neq0\right\}  $ is called support of the function $f$. Sample functions
are all functions that are infinitely many times differentiable and having
compact supports. The set of all sample functions will be denoted by
$\mathcal{D}$.

The most important example of a sample function is the following:
$f(x)=\left\{
\begin{array}
[c]{ccc}%
\exp(\frac{1}{x^{2}-1}) & ,gdy & \left\vert x\right\vert <1\\
0 & ,gdy & \left\vert x\right\vert \geq1
\end{array}
\right.  $ that has the following plot:
\begin{center}
\includegraphics[
height=1.6518in,
width=2.4846in
]%
{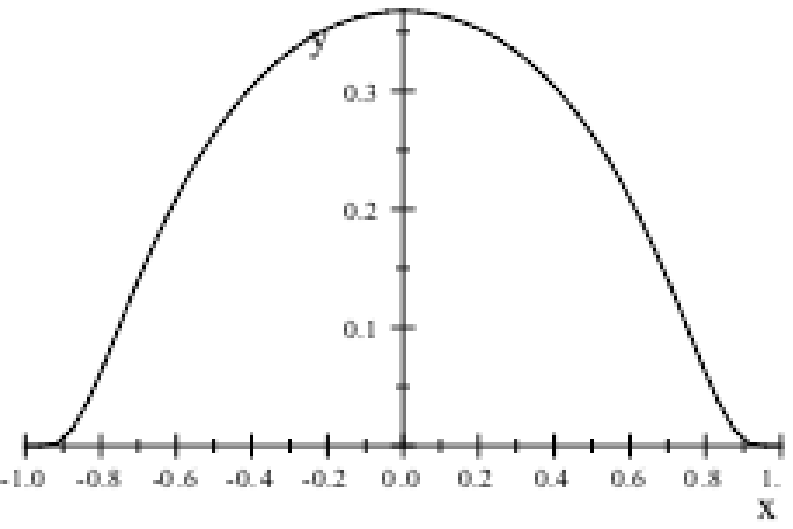}%
\end{center}

We have the following theorem:

\begin{theorem}
\label{tw_aproksym}For any $\varepsilon>0$ every continuous function $f$
having bounded support $K$ can be uniformly approximated with accuracy
$\varepsilon$ by the function $\varphi\in\mathcal{D}$. $\varphi$ can be
selected in such a way that its support can be contained in any neighborhood
of the support of the function $f$.
\end{theorem}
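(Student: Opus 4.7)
The plan is to use a convolution with a mollifier constructed from the explicit sample function $\psi(x)=\exp\!\bigl(\frac{1}{x^{2}-1}\bigr)I(|x|<1)$ displayed just above the theorem. First I would set $c=\int_{\mathbb{R}}\psi(x)\,dx>0$ and define the normalized bump $\rho(x)=\psi(x)/c$, so $\rho\in\mathcal{D}$, $\rho\geq 0$, $\int\rho=1$, and $\operatorname{supp}\rho\subseteq[-1,1]$. For $h>0$, put $\rho_{h}(x)=\frac{1}{h}\rho(x/h)$, which is again in $\mathcal{D}$ with $\operatorname{supp}\rho_{h}\subseteq[-h,h]$ and $\int\rho_{h}=1$. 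The candidate approximation is the convolution
\[
\varphi_{h}(x)=(f*\rho_{h})(x)=\int_{\mathbb{R}}f(y)\rho_{h}(x-y)\,dy.
\]

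Next I would verify the three required properties of $\varphi_{h}$. Smoothness: since $\rho_{h}$ is $C^{\infty}$ with compact support and $f$ is bounded and integrable (continuous with compact support $K$), differentiation under the integral sign is legal and yields $\varphi_{h}^{(k)}(x)=\int f(y)\rho_{h}^{(k)}(x-y)\,dy$ for every $k$, so $\varphi_{h}\in C^{\infty}(\mathbb{R})$. Support: if $\varphi_{h}(x)\neq 0$, then some $y\in K$ satisfies $|x-y|\leq h$, hence $\operatorname{supp}\varphi_{h}\subseteq K+[-h,h]$; given any open neighborhood $U$ of $K$, compactness of $K$ lets us choose $h$ small enough that $K+[-h,h]\subseteq U$, which handles the support requirement.

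For the uniform approximation, I would use that a continuous function with compact support is uniformly continuous on $\mathbb{R}$. Writing
\[
\varphi_{h}(x)-f(x)=\int_{\mathbb{R}}\bigl(f(x-t)-f(x)\bigr)\rho_{h}(t)\,dt,
\]
(using $\int\rho_{h}=1$ and the substitution $t=x-y$), I would fix $\varepsilon>0$, pick $\delta>0$ from uniform continuity so that $|s|<\delta$ implies $|f(x-s)-f(x)|<\varepsilon$ for all $x$, and then take $h<\delta$. Since $\rho_{h}$ is supported in $[-h,h]\subseteq(-\delta,\delta)$ and nonnegative with total mass one, the integral is bounded in absolute value by $\varepsilon\int\rho_{h}=\varepsilon$, uniformly in $x$. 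Choosing $h$ small enough to also satisfy the support condition above yields the desired $\varphi=\varphi_{h}\in\mathcal{D}$.

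There is no serious obstacle; the proof is essentially the classical mollification argument. The only point that deserves a moment of care is the interchange of differentiation and integration justifying $\varphi_{h}\in C^{\infty}$, which is immediate because $\rho_{h}^{(k)}$ is continuous with compact support (hence bounded) and $f$ is integrable. The simultaneous control of accuracy and support location is handled by choosing a single $h$ that is small enough for both requirements.
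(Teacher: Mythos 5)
Your proof is correct. Note that the paper itself states this theorem in the appendix on distribution theory without giving any proof at all (it is quoted as a standard fact), so there is no argument of the paper's to compare against; your mollification argument is the classical route and you execute it completely: the normalized bump $\rho_{h}$ built from the explicit sample function, smoothness of $f*\rho_{h}$ via differentiation under the integral sign (legitimate since $\rho_{h}^{(k)}$ is bounded with compact support and $f$ is integrable), the inclusion $\operatorname{supp}\varphi_{h}\subseteq K+[-h,h]$ with $h$ chosen small by compactness of $K$ so that this closed set sits inside the prescribed neighborhood, and the uniform bound $\left\vert \varphi_{h}(x)-f(x)\right\vert \leq\varepsilon$ from uniform continuity of $f$ together with $\int\rho_{h}=1$, $\rho_{h}\geq0$. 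The single choice of $h$ small enough for both the accuracy and the support requirement is handled explicitly, which is the only place where care is needed.
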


The following topology is introduced in the set of sample functions
$\mathcal{D}$ : \newline sequence of sample functions $\left\{  \varphi
_{i}\right\}  _{i\geq1}$ converges to a sample function $\varphi$, if
\newline\emph{i) }there\emph{\ }exists a compact set $K$ such that
$\operatorname{supp}\varphi_{i},\varphi\subset K\allowbreak,i=1,2,\ldots$
\newline\emph{ii)} derivatives of any order of functions\emph{\ }$\varphi_{i}$
converge uniformly to respective derivatives of $\varphi.$

\section{Distributions}

\begin{definition}%
\index{Distribution}%
Distribution $T$ is called any complex valued, linear functional, that is also
continuous in $\mathcal{D}$. Values of this functional will be denoted either
as $T(\varphi)$ or as $<T,\varphi>$.
\end{definition}

In other words, we have:

\begin{enumerate}
\item $T(\alpha\varphi_{1}+\beta\varphi_{2})=\alpha T(\varphi_{1})+\beta
T(\varphi_{2})$

\item If $\left\{  \varphi_{i}\right\}  $ has the limit $\varphi$, then
$T(\varphi_{i})\underset{i\rightarrow\infty}{\longrightarrow}T(\varphi)$
\end{enumerate}

\begin{remark}
Distributions are the elements of the vector space $\mathcal{D}^{^{\prime}}$.
The sum and the product by the scalar are defined in the following way:
\newline$\emph{a)}$ $(T_{1}+T_{2})(\varphi)=T_{1}(\varphi)+T_{2}(\varphi
),$\newline$\emph{b)}$ $(\lambda T)(\varphi)=\lambda T(\varphi)$
\end{remark}

\begin{example}
Let $f$ be a locally integrable function i.e. integrable on every bounded
measurable set. We define distribution $T_{f}$ in the following way:
\[
<T_{f},\varphi>\overset{df}{=}\int_{%
\mathbb{R}
}f(x)\varphi(x)dx,
\]
for $\varphi\in\mathcal{D}$. This integral has sense, since function $\varphi$
and its support are bounded! It is easy to check continuity of this functional.
\end{example}

Moreover, it turns out, that :\emph{\ }Two functions\emph{\ }$f$ and $g$
define the same functional\emph{\ (}$T_{f}=T_{g})$ if and only if, they are
equal almost everywhere. Keeping this in mind it is reasonable to identify
locally integrable functions if they are equal almost everywhere, and
Moreover, it is also reasonable to identify distribution $T_{f}$ with locally
integrable function $f$.

\begin{example}
Distribution $\delta$ defined by the equality:
\[
<\delta,\varphi>=\varphi(0)
\]
we call Dirac's delta distribution. One considers also distributions
$\delta_{(a)}$ defined by:
\[
<\delta_{(a)},\varphi>=\varphi(a).
\]

\end{example}

\begin{example}
Heaviside's distribution $H$ is defined in the following way:
\[
<H,\varphi>=\int_{0}^{\infty}\varphi(x)dx.
\]
On can identify it with the Heaviside's function
\[
H(x)=\left\{
\begin{array}
[c]{lll}%
1 & ,when & x\geq0\\
0 & ,when & x<0
\end{array}
\right.  .
\]
Sometimes we talk about unit jump.
\end{example}

\begin{example}
Let $\mu$ will be any measure on $%
\mathbb{R}
$. Then the integral
\[
\int_{%
\mathbb{R}
}\varphi(x)d\mu(x)
\]
defines distribution $T_{\mu}$. Hence, measures are distributions!
\end{example}

\begin{definition}
By the support of the distribution $T$ ($\operatorname{supp}T$) we mean closed
set $D$ is having such property that for every function $\varphi$ having
compact support contained in the complement of $D$ we have $T(\varphi)=0$.
\end{definition}

\begin{remark}
It is easy to show, that $\operatorname{supp}\delta=\left\{  0\right\}  $ and
$\operatorname{supp}H=[0,\infty).$
\end{remark}

\begin{definition}
By the derivative of the distribution $T$ we mean distribution $DT$ defined by
the formula:
\[
<DT,\varphi>=-<T,D\varphi>.
\]

\end{definition}

\begin{remark}
Every distribution has a derivative of any order! For example, one can perform
the following calculation:
\begin{align*}
&  <DH,\varphi>=-<H,D\varphi>\\
&  =-\int_{0}^{\infty}D\varphi(x)dx=-\varphi(x)|_{x=0}^{\infty}\\
&  =\varphi(0)=<\delta,\varphi>,
\end{align*}
hence $DH=\delta$!
\end{remark}

\begin{definition}
We say, that a sequence of distributions $\left\{  T_{i}\right\}  _{i\geq1}$is
convergent to distribution $T$, if $\forall\varphi\in\mathcal{D}%
:<T_{i},\varphi>\underset{i\rightarrow\infty}{\longrightarrow}<T,\varphi>$.
\end{definition}

\begin{remark}
From the Lebesgue Theorem about the passage to the limit under the sign of the
integrals, one can deduce, that if a sequence of locally integrable functions
$\left\{  f_{j}\right\}  _{j\geq1}$converges almost surely to $f$ on every
bounded set and moreover , they are bounded by the locally integrable function
$g\geq0$, then the sequence of distributions $T_{f_{i}}$ converges to
distribution $T_{f}$. Unfortunately, , not the other way around. Since we have:
\end{remark}

\begin{theorem}
\label{zbiez_dystrybucji}If a sequence of distributions $T_{f_{i}}$ defined by
an absolutely integrable on $%
\mathbb{R}
$ functions $f_{i}$, and such that $\underset{i}{\sup}\int_{%
\mathbb{R}
}\left\vert f_{i}\right\vert dx<\infty$ converges to distribution $T_{f}$
defined by the function $f$ absolutely integrable, then on every Borel set $A$
we have:
\[
\int_{A}f_{i}(x)dx\underset{i\rightarrow\infty}{\longrightarrow}\int%
_{A}f(x)dx.
\]

\end{theorem}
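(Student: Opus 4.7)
The plan is to bootstrap distributional convergence up through successively larger classes of test objects, ending at indicators of Borel sets. The three layers are: (i) from $\mathcal{D}$ to $C_c(\mathbb{R})$, (ii) from $C_c(\mathbb{R})$ to indicators of bounded intervals, (iii) from bounded intervals to arbitrary Borel sets, with a separate argument to handle the tails at infinity.

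For step (i), fix $\psi\in C_c(\mathbb{R})$ with support $K$. Theorem~\ref{tw_aproksym} supplies $\varphi_{\varepsilon}\in\mathcal{D}$ with $\|\psi-\varphi_{\varepsilon}\|_{\infty}<\varepsilon$ and $\mathrm{supp}\,\varphi_{\varepsilon}$ contained in a fixed bounded neighborhood of $K$. Writing
\begin{equation*}
\int f_{i}\psi\,dx-\int f\psi\,dx=\int(f_{i}-f)\varphi_{\varepsilon}\,dx+\int f_{i}(\psi-\varphi_{\varepsilon})\,dx-\int f(\psi-\varphi_{\varepsilon})\,dx,
\end{equation*}
the middle two terms are bounded by $\varepsilon\bigl(\sup_{i}\|f_{i}\|_{1}+\|f\|_{1}\bigr)$, while the first vanishes as $i\to\infty$ by the assumed distributional convergence. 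Letting $\varepsilon\downarrow0$ extends the convergence to every $\psi\in C_{c}(\mathbb{R})$; this is precisely where the uniform $L^{1}$ bound is needed.

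For step (ii), restrict attention to the situation (as in the application to kernel density estimators) in which the $f_{i}$ and $f$ are nonnegative. Given $(a,b)$ bounded, sandwich $I_{(a,b)}$ between continuous compactly supported functions $\psi_{\delta}^{-}\le I_{(a,b)}\le\psi_{\delta}^{+}$ with $\mathrm{supp}\,\psi_{\delta}^{+}\subset(a-\delta,b+\delta)$ and $\psi_{\delta}^{-}=1$ on $[a+\delta,b-\delta]$. Step (i) gives $\int\psi_{\delta}^{\pm}f_{i}\to\int\psi_{\delta}^{\pm}f$, hence
\begin{equation*}
\int\psi_{\delta}^{-}f\,dx\;\le\;\liminf_{i}\int_{a}^{b}f_{i}\,dx\;\le\;\limsup_{i}\int_{a}^{b}f_{i}\,dx\;\le\;\int\psi_{\delta}^{+}f\,dx.
\end{equation*}
Dominated convergence (using $\psi_{\delta}^{\pm}\le I_{[a-1,b+1]}$ and $f\in L^{1}$) drives both bounds to $\int_{(a,b)}f\,dx$ as $\delta\downarrow0$. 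A Dynkin $\pi$--$\lambda$ argument then promotes the conclusion from bounded intervals to every Borel subset of a fixed window $[-R,R]$.

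Step (iii), the passage $R\to\infty$, is the main obstacle. In full generality the theorem as stated can fail by mass escaping to infinity (the sequence $f_{i}=I_{[i,i+1]}$ converges to $0$ distributionally yet $\int_{\mathbb{R}}f_{i}=1$), so one must invoke tightness of $\{f_{i}\}$. In the intended density setting $\int f_{i}=\int f=1$, and applying step (i) to a smooth cutoff $\chi_{R}$ equal to $1$ on $[-R,R]$ yields $\int_{|x|>R}f_{i}\,dx\to\int_{|x|>R}f\,dx$, which combined with the integrability of $f$ gives the required tightness. Split $\int_{A}f_{i}=\int_{A\cap[-R,R]}f_{i}+\int_{A\setminus[-R,R]}f_{i}$; by step (ii) the first term converges to $\int_{A\cap[-R,R]}f$, while both tails are uniformly $O(\eta)$ for $R$ large. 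Letting $\eta\downarrow0$ completes the proof. The delicate point throughout is recognising that distributional convergence plus an $L^{1}$ bound is \emph{not} by itself enough; tightness (automatic for densities, implicit in the context where the result is used) is the indispensable extra ingredient.
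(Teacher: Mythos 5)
Your step (i) is sound: extending the convergence from $\mathcal{D}$ to $C_{c}(\mathbb{R})$ via Theorem \ref{tw_aproksym} together with the uniform $L^{1}$ bound is exactly the right use of the hypotheses, and your remark about mass escaping to infinity (the example $f_{i}=I_{[i,i+1]}$) is legitimate --- the paper's own proof quietly restricts itself to bounded $A$. The genuine gap is in step (ii), at the words ``a Dynkin $\pi$--$\lambda$ argument then promotes the conclusion from bounded intervals to every Borel subset of a fixed window.'' The family $\mathcal{L}$ of Borel sets $A\subset[-R,R]$ with $\int_{A}f_{i}\rightarrow\int_{A}f$ is closed under relative complements and finite disjoint unions, but it is \emph{not} closed under increasing countable unions: if $A_{n}\uparrow A$, interchanging the two limits requires a bound on $\int_{A\setminus A_{n}}f_{i}$ that is uniform in $i$, i.e. uniform absolute continuity (uniform integrability) of the family $\left\{ f_{i}\right\}$, and neither distributional convergence, nor the uniform $L^{1}$ bound, nor nonnegativity, nor the tightness you add in step (iii) supplies it. So $\mathcal{L}$ need not be a $\lambda$-system and the $\pi$--$\lambda$ theorem does not apply.

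The failure is not merely formal: convergence on every interval genuinely does not imply convergence on every Borel set. Let $C\subset[0,1]$ be a fat Cantor set (closed, nowhere dense, $\left\vert C\right\vert>0$) and let $f_{i}\geq0$ be bump functions supported in the open dense set $[0,1]\setminus C$, built by a Riemann-sum construction so that $f_{i}\,dx\rightarrow I_{[0,1]}\,dx$ weakly; then $\int\left\vert f_{i}\right\vert dx=1$ and $\int_{a}^{b}f_{i}\rightarrow\int_{a}^{b}I_{[0,1]}$ for every interval, yet $\int_{C}f_{i}=0$ while $\int_{C}f=\left\vert C\right\vert>0$. Weak convergence of the measures $f_{i}\,dx$ only gives $\int_{A}f_{i}\rightarrow\int_{A}f$ for sets whose boundary is $f\,dx$-null, and a general Borel set can have boundary of positive Lebesgue measure; so the passage from intervals (or from $C_{c}$ test functions) to arbitrary Borel sets needs a strictly stronger input than tightness --- e.g. $L^{1}$ convergence, which in the density application the paper gets from pointwise convergence via Scheff\'{e}'s Lemma and Glick's Theorem, or uniform integrability. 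For comparison, the paper's proof takes a different route (contradiction, reduction to closed bounded $A$, and an approximation of $I(A)$ by a sample function), and it is terse at exactly this same pressure point; but that does not repair your step (ii) as written.
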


\begin{proof}
We will prove this theorem for bounded set and not directly. Hence, suppose
that there exists a Borel set $A$, number $\varepsilon>0$ and a subsequence
$\left\{  i_{j}\right\}  _{j\geq1}$such that $\forall j\geq1$ $\left\vert
\int_{A}f_{i_{j}}(x)dx-\int_{A}f(x)dx\right\vert \geq\varepsilon$. Firstly,
let us notice that set $A$ has nonzero Lebesgue's measure. Secondly, keeping
this in mind, we have $\int_{\delta A}\left\vert f(x)\right\vert dx=0$ and
$\int_{\delta A}\left\vert f_{i_{j}}(x)\right\vert dx=0;j\geq1$ where $\delta
A$ denotes the boundary of the set $A$. Hence, one can assume, that the set
$A$ is closed. Hence, the characteristic function of the set $A$ i.e.
\thinspace$I(A)$ is continuous on $A$. From the approximation Theorem about
approximation \ref{tw_aproksym} we see that one can approximate $I(A)$ by
$\phi\in\mathcal{D}$ so that $\left\vert I(A)-\phi\right\vert \leq\epsilon$.
Hence, we have
\begin{align*}
\varepsilon &  \leq\left\vert \int_{A}\left(  f_{i_{j}}(x)-f(x)\right)
dx\right\vert \leq\\
&  \leq\left\vert \int\phi(x)\left(  f_{i_{j}}(x)-f(x)\right)  dx\right\vert
+\left\vert \int_{A}\left(  f_{i_{j}}(x)-f(x)\right)  (1-\phi(x))dx\right\vert
\\
&  \leq\left\vert \int\phi(x)\left(  f_{i_{j}}(x)-f(x)\right)  dx\right\vert
+\epsilon(\sup\int_{%
\mathbb{R}
}\left\vert f_{i_{j}}\right\vert +\int_{%
\mathbb{R}
}\left\vert f\right\vert ).
\end{align*}
The first summand converges following the assumption to zero. The second one
can be made sufficiently small, hence the contradiction.
\end{proof}

\section{Tempered distributions%
\index{Distribution!Tempered}%
}

In order to be able to introduce Fourier transform one has to confine a bit
the notion of distribution and consider functionals. on slightly larger space
of sample functions. Namely, we introduce space $\mathcal{S}$ of $C^{\infty}(%
\mathbb{R}
\mathbb{)}$ class functions that quickly converge to zero. Namely,
$\mathcal{S}$ consists of functions $f$ such that for any $i,k\in%
\mathbb{N}
$ we have $\underset{\left\vert x\right\vert \rightarrow\infty}{\lim
}\left\vert x\right\vert ^{k}\left\vert D^{(j)}f(x)\right\vert =0.$

For example, functions $\exp(-\alpha\left\vert x\right\vert ^{2})$ for
$\alpha>0$ belong to $\mathcal{S}$. Of course, $\mathcal{D}\subset\mathcal{S}%
$. A linear functional on $\mathcal{D}$ that can be continuously extended to
$\mathcal{S}$ is called tempered distribution.

\begin{remark}
Examples of tempered distributions are integrable functions, bounded
functions, and also functions, increasing to infinity not quicker than some polynomial.
\end{remark}

\begin{remark}
Every distribution that has bounded support is tempered.
\end{remark}

\begin{remark}
If a distribution is tempered, then all its derivatives are tempered.
\end{remark}

$\mathcal{S}^{^{\prime}}$ will denote set of all tempered distributions.

\section{Fourier transform\label{fourier}}

Let $\phi\in\mathcal{S}$. A Fourier transform $\mathcal{F}\phi$ of the
function $\phi$ is a function defined by:
\[
\hat{\phi}(t)=\int_{%
\mathbb{R}
}\phi(x)\exp(-itx)dx.
\]

\begin{remark}
It turns out that $\hat{\phi}\in\mathcal{S}$ \thinspace hence $\mathcal{F}%
:\mathcal{S}\rightarrow\mathcal{S}$ $\,$and the transform is mutually unique.
It is linear and continuous. Similarly the inverse transform defined by the
formulae:
\[
\widetilde{\phi}(t)=\frac{1}{2\pi}\int_{%
\mathbb{R}
}\phi(x)\exp(itx)dx
\]

\end{remark}

Let $T\in\mathcal{S}_{x}^{^{\prime}}$ and $\phi\in\mathcal{S}_{\lambda}$. Then
the Fourier transform $\mathcal{F}T$ of distribution $T$ is defined by the
formula:
\[
<\mathcal{F}T,\phi.>=<T,\mathcal{F}\phi>.
\]

\begin{remark}
It turns out that $\mathcal{F}T\in\mathcal{S}^{^{\prime}}$ that is
$\mathcal{F}:\mathcal{S}^{^{\prime}}\rightarrow\mathcal{S}^{^{\prime}}$.
Moreover, this mapping is continuous and linear and mutually unique. Thus,
there exists an inverse transform having similar properties.
\end{remark}

\begin{remark}
Of course, one can insert space $\mathcal{S}$ \thinspace in the space
$\mathcal{S}^{^{\prime}}$. It turns out also that we have the following
inclusion:
\[
\mathcal{S}\subset L_{2}\subset\mathcal{S}^{^{\prime}},
\]
and moreover, that $\mathcal{F}(L_{2})\allowbreak=\allowbreak L_{2}.$
\end{remark}

\begin{remark}
Since, the Fourier transform is a continuous mapping of the relative spaces in
themselves, one can deduce from the convergence of Fourier transform the
distributive convergence of respective distributions. We used this fact when
e.g. deriving formula (\ref{zbiez_dystr}) on page \pageref{zbiez_dystr}.
\end{remark}

\end{document}